\numberwithin{equation}{section}
\def\epsilon{\varepsilon}
\def\eps{\varepsilon}
\newcommand{\ol}{\overline}
\newcommand{\wt}{\widetilde}
\def\alb#1\ale{\begin{align*}#1\end{align*}}
\newcommand{\eqb}{\begin{equation}}
\newcommand{\eqe}{\end{equation}}
\DeclareMathOperator{\diam}{diam}
\newcommand{\bbC}{\mathbb{C}}
\newcommand{\bbD}{\mathbb{D}}
\newcommand{\bbE}{\mathbb{E}}
\newcommand{\bbH}{\mathbb{H}}
\newcommand{\bbN}{\mathbb{N}}
\newcommand{\bbQ}{\mathbb{Q}}
\newcommand{\bbR}{\mathbb{R}}
\newcommand{\bbP}{\mathbb{P}}
\newcommand{\bbT}{\mathbb{T}}
\newcommand{\bbZ}{\mathbb{Z}}
\newcommand{\cA}{\mathcal{A}}
\newcommand{\cB}{\mathcal{B}}
\newcommand{\cC}{\mathcal{C}}
\newcommand{\cF}{\mathcal{F}}
\newcommand{\cN}{\mathcal{N}}
\newcommand{\din}{\partial^{\textrm{in}}}
\newcommand{\dout}{\partial^{\textrm{out}}}
\newcommand{\de}{\partial^{\textrm{e}}}
\newcommand{\Fill}{\mathrm{Fill}}
\newcommand{\ep}[1]{\overline{\overline{#1}}}
\newcommand{\one}{\mathbf{1}}
\newcommand{\wh}{\widehat}
\newcommand{\LF}{\mathrm{LF}}
\newcommand{\SLE}{\mathrm{SLE}}
\newcommand{\epmu}{\overline{\mu}_\kappa}
\newcommand{\epnu}{\overline{\nu}_\kappa}
\newcommand{\epmumu}{\overline{\widehat{\mu}}_\kappa}
\newcommand{\mumu}{\widehat{\mu}_\kappa}
\newtheorem{theorem}{Theorem}[section]
\newtheorem{lemma}[theorem]{Lemma}
\newtheorem{proposition}[theorem]{Proposition}
\newtheorem*{proposition*}{Proposition}
\newtheorem*{corollary*}{Corollary}
\newtheorem{definition}[theorem]{Definition}
\newtheorem*{definitions*}{Definitions}
\newtheorem*{example*}{\bf Example}
\theoremstyle{remark}
\newtheorem{remark}{\bf Remark}[section]
\newtheorem{descrip}{\bf Description}[section]
\numberwithin{equation}{section}
\title{Backbone exponent for two-dimensional percolation}
\author{Pierre Nolin\thanks{City University of Hong Kong} \qquad Wei Qian\thanks{City University of Hong Kong. On leave from CNRS, Laboratoire de Math\'ematiques d'Orsay, Universit\'e Paris-Saclay.} \qquad Xin Sun\thanks{Beijing International Center for Mathematical Research, Peking University.} \qquad Zijie Zhuang\thanks{University of Pennsylvania}}
\date{}
\begin{document}

\maketitle 

\begin{abstract}
We derive an exact expression for the celebrated backbone exponent for Bernoulli percolation in dimension two at criticality. It turns out to be a root of an elementary function. Contrary to previously known arm exponents for this model, which are all rational, it has a transcendental value. Our derivation relies on the connection to the SLE$_\kappa$ bubble measure, the coupling between SLE and Liouville quantum gravity, and the integrability of Liouville conformal field theory.  Along the way,  we derive a formula not only for $\kappa=6$ (corresponding to percolation), but for all $\kappa \in (4,8)$.
\end{abstract}

\section{Introduction}
Bernoulli percolation is a simple and very natural process of statistical mechanics, defined on a lattice. It was introduced by Broadbent and Hammersley \cite{BH57} to model the large-scale properties of a random material.
Two-dimensional (2D) percolation is especially well understood, thanks to its connection to  conformal invariance and Schramm-Loewner evolution (SLE). 
Introduced by Schramm in the groundbreaking work~\cite{Sc00}, SLE is a one-parameter family of random non-self-crossing curves characterized by conformal invariance and domain Markov property, denoted by $\SLE_\kappa$. It is conjectured to describe the scaling limits of a large class of two-dimensional random systems at criticality.
In another breakthrough \cite{Sm01} published shortly after, Smirnov proved that critical site percolation on the triangular lattice converges to a conformally invariant scaling limit, which can thus be described by $\SLE_6$.

The arm exponents are a set of scaling exponents encoding important geometric information of percolation at and near its criticality. They  describe the probability of observing connections across annuli of large modulus by disjoint connected paths of specified colors.
When there is at least one arm of each of the two colors, such exponents are called polychromatic
arm exponents. Otherwise, they are called monochromatic arm exponents. Based on the link with   SLE, the exact values of the one-arm exponent and all the polychromatic arm exponents were derived rigorously for site percolation on the triangulation lattice in \cite{LSW02} and \cite{SW01}, respectively. The value of these exponents were also predicted in the physics literature; see~\cite{ADA99} and references therein.

Despite the SLE connection, the evaluation of monochromatic arm exponents beyond the one-arm case has been a longstanding mystery. In this paper, we derive the exact value for the monochromatic two-arm exponent, namely with two disjoint connections of the same color; see its formal definition in Section~\ref{subsec:def}. This exponent is also known as the backbone exponent, with a rich history that we will review in Section~\ref{subsec:def}. Prior to our work, there is no theoretical prediction for the backbone exponent in the literature that is consistent with numerical approximations. We show that it is the root of a simple elementary function, see \eqref{eq:sol-xi} below. Moreover, it is transcendental with the first few digits given by $0.35666683671288$.

Our derivation consists of two steps. First, we express the backbone exponent in terms of a variant of SLE$_6$ call the SLE$_6$  bubble measure, see Section~\ref{subsec:intro-sle}. Then we exactly solve the SLE$_6$ problem using the coupling between SLE and Liouville quantum gravity (LQG),  and the exact solvability of Liouville conformal field theory (CFT).  This approach to the exact solvability of SLE was developed by the third-named author with Ang, Holden, Remy in~\cite{AHS21,AS21,ARS21}. In Section~\ref{subsec:intro-lqg}, we summarize the main idea and the novel difficulty in our setting. Our method also works for SLE$_\kappa$ with $\kappa\in (4,8)$. The result we obtained is expected to give the monochromatic two-arm exponent for the $q$-random-cluster model with cluster weight $q\in (0,4)$, and the  Hausdorff dimension of the so-called thin gasket for conformal loop ensemble with $\kappa\in (4,8)$. For all rational $\kappa$, the exponent is transcendental. 

\subsection{Main results on the backbone exponent}\label{subsec:def}

Bernoulli percolation can be described as follows (precise definitions will be given in Section~\ref{subsec:percolation}). We consider an infinite lattice, such as the hypercubic lattice $\bbZ^d$: vertices are points with integer coordinates, and edges connect any two vertices which are at a Euclidean distance $1$, i.e. differing by $\pm 1$ along exactly one of the $d$ coordinates. Let $p \in (0,1)$ ($p$ is called the \emph{percolation parameter}). Bernoulli site percolation is obtained by tossing a (biased) coin for each vertex $v$: $v$ is colored black with probability $p$, and white otherwise (there is also another standard version of Bernoulli percolation, called bond percolation, where one colors the edges instead). We are then typically interested in the connectivity property of the random coloring, i.e. of the picture obtained by grouping vertices into black and white connected components, also called \emph{clusters}. In particular, under general hypotheses on the underlying lattice, there is a percolation threshold $p_c \in (0,1)$ such that there exists (almost surely) no infinite connected component of black sites when $p < p_c$, while for $p > p_c$, there is at least one (and exactly one in the case of finite-dimensional lattices such as $\bbZ^d$).

A lot is now known in the case of the (planar) triangular lattice $\bbT$, where $p_c = \frac{1}{2}$ from a ``self-duality'' property of $\bbT$. A groundbreaking result of Smirnov in 2001 \cite{Sm01} established the conformal invariance of percolation at criticality (i.e. for $p=\frac{1}{2}$) in the continuum limit, a very strong property which had been uncovered in physics \cite{Ca92, LPS94} (and is expected to hold on any ``reasonable'' two-dimensional lattice with enough symmetry). The introduction of Schramm Loewner Evolution (SLE) processes in \cite{Sc00} revolutionized the study of such systems, and one of its early striking successes was the computation of most classical critical exponents for Bernoulli percolation on $\bbT$ -- the link between the discrete model and SLE (with parameter $\kappa = 6$ in this case) relying on Smirnov's result. In particular, it was shown in \cite{SW01} that the probability $\theta(p)$ for a given vertex on the lattice to belong to an infinite (black) cluster decays like $\theta(p) = (p-p_c)^{5/36+o(1)}$ as $p \searrow p_c$.

This result used the so-called $1$-arm and $4$-arm exponents for Bernoulli percolation, derived in \cite{LSW02} and \cite{SW01} based on earlier computations for SLE \cite{LSW01a, LSW01b}. 
Let us denote these exponents by $\alpha_1$ and $\alpha_4$, respectively. They describe the asymptotic behavior, as $n \to \infty$, of two events shown in Figure~\ref{fig:arm_events}. First, the event that the connected component of $0$ reaches distance at least $n$. This is equivalent to the existence of a path (``arm'') which is black, i.e. made entirely of black vertices, from $0$ to a vertex at distance $n$, and its probability decays like $n^{-\alpha_1+o(1)}$ as $n \to \infty$. Second, the existence of four paths with alternating colors, as shown in Figure~\ref{fig:arm_events}, going to distance $n$ as well. This latter event can be interpreted as two \emph{distinct} black clusters getting close to each other (within a distance $1$) around $0$, and it has a probability $n^{-\alpha_4+o(1)}$. In order to get the exponent $\frac{5}{36}$ for $\theta$ (as well as various other critical exponents), another key input was Kesten's scaling relations for two-dimensional percolation \cite{Ke87}, which connect quantities such as $\theta$ to the above-mentioned arm events.

More generally, arm events can be considered for any prescribed sequence of colors, replacing $B$ and $BWBW$ (with obvious notation) by sequences such as $BWWBWBBW$. Here, we always require two successive arms with the same color to be disjoint, i.e. two different arms cannot use the same vertex. The corresponding exponents can be computed for almost all patterns: these are the celebrated polychromatic exponents \cite{ADA99}, established in \cite{SW01}. They take the value $\alpha_j = \frac{j^2-1}{12}$, for any sequence of length $j \geq 2$ containing both colors (the actual pattern does not matter for the asymptotic behavior, just the total number of arms). Furthermore, the one-arm exponent is also known rigorously \cite{LSW02}, equal to $\alpha_1 = \frac{5}{48}$ (note that it does not fit into the previous family).

\begin{figure}
	\centering
	\subfigure{\includegraphics[width=.3\textwidth]{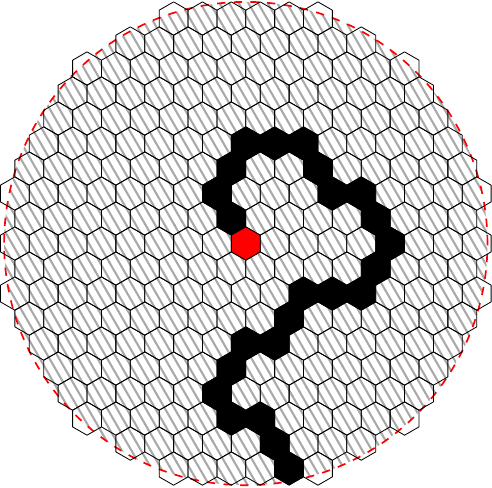}}
	\hspace{0.5cm}
	\subfigure{\includegraphics[width=.3\textwidth]{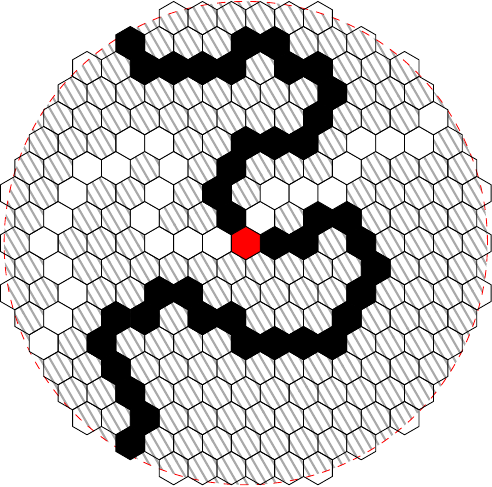}}
		\hspace{0.5cm}
	\subfigure{\includegraphics[width=.3\textwidth]{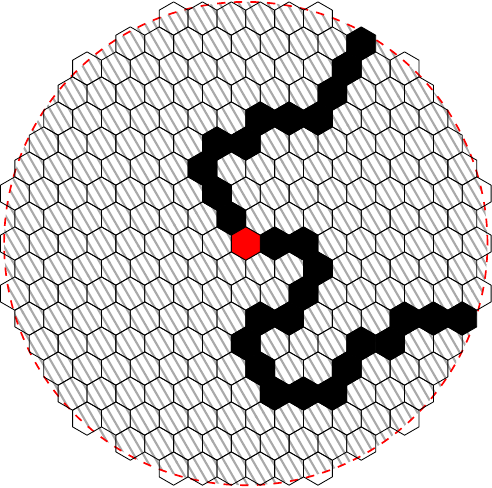}}
	\caption{Site percolation on the triangular lattice is often represented as a random coloring of the faces of the dual hexagonal lattice. \emph{Left:} The one-arm event corresponds to the existence of a black path connecting the vertex $0$ (indicated by a red hexagon) to distance $n$. \emph{Center:} The four-arm event requests the existence of four paths with alternating colors, i.e. two black ones and two white ones, each connecting a neighbor of $0$ to distance $n$. It can be interpreted as the event that two distinct connected components ``meet'' in $0$. \emph{Right:} In this paper, we determine the exponent corresponding to the existence of two disjoint black arms. This monochromatic two-arm exponent is most often called \emph{backbone exponent}.}
	\label{fig:arm_events}
\end{figure}

However, the exponents are different in the case of $j \geq 2$ arms with the same color (this was shown rigorously in \cite{BN11}, based on a discrete combinatorial argument), and the computation of these monochromatic exponents remained elusive so far. Of particular interest is the \emph{backbone exponent}, corresponding to $j=2$ disjoint arms of the same color. This exponent was originally introduced in physics, so as to describe the ``inner skeleton'' of a large percolation cluster at criticality (see Figure~\ref{fig:sim_bb}), where e.g. electrical current between distant vertices would flow. Formally, the backbone exponent $\xi$ is defined as
\begin{equation}\label{eq:backbone-def}
    \xi := - \lim_{n\to\infty}\frac{\log \pi_{BB}(0,n)}{\log n},
\end{equation}
where $\pi_{BB}(0,n)$ denotes the probability at $p_c$ that there exist two disjoint black arms, each from a neighboring vertex of $0$ to distance $n$ (this limit was shown to exist in \cite{BN11}). Our main result is the exact evaluation of $\xi$.
\begin{theorem}
\label{thm:kappa=6}
    The backbone exponent $\xi$ is the unique solution in the interval $(\frac{1}{4},\frac{2}{3})$ to the equation
    \begin{equation}
        \label{eq:sol-xi}
        \frac{\sqrt{36 \xi +3}}{4} + \sin \Big(\frac{2 \pi \sqrt{12\xi+1}}{3} \Big) =0.   
    \end{equation}
\end{theorem}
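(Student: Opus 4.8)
The plan is to follow the two-step strategy announced above: first reduce the combinatorial quantity $\xi$ from \eqref{eq:backbone-def} to an exact $\SLE_6$ observable, and then evaluate that observable via the $\SLE$/LQG coupling and the integrability of Liouville CFT. For the first reduction I would use quasi-multiplicativity and arm separation for monochromatic arm events (\cite{BN11}), together with the convergence of critical site percolation on $\bbT$ to $\SLE_6$ and $\mathrm{CLE}_6$, to rewrite $\pi_{BB}(0,n)$ as a quantity attached to the continuum exploration of the cluster of $0$. Across each dyadic scale this exploration decomposes into an essentially i.i.d.\ sequence of ``moves'', and the requirement of two disjoint arms of the same color selects a prescribed pattern of moves at each scale; in the limit $\xi$ becomes the exponential decay rate of an iterated integral against the $\SLE_6$ bubble measure $\mu^{\mathrm{bub}}$ rooted at a boundary point. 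Concretely, for a boundary-rooted bubble one records the conformal radius (seen from a fixed interior point) of the region it cuts off, together with the event that a marked point lies on the correct side, and $\xi$ is characterized by the value $\lambda^\ast$ of a spectral/renewal parameter at which a one-parameter family of moments of $\mu^{\mathrm{bub}}$ passes from finite to infinite --- equivalently, by a generalized eigenvalue equation for the transfer operator built from $\mu^{\mathrm{bub}}$. The upshot is a clean statement $\xi = g(\lambda^\ast)$ with $g$ an explicit elementary function.

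For the second step, following \cite{AHS21,AS21,ARS21}, I would realize this bubble-measure quantity as a correlation function on a quantum surface: take a quantum disk (or a quantum sphere with one boundary marked point, whichever normalizes most cleanly) carrying Liouville vertex operators at the root of the bubble and at the interior marked point, with weights tuned so that an independent $\SLE_6$-type curve decorating the surface has precisely the law appearing in the first step. The two marked arms force this curve to be an $\SLE_6(\rho_1,\rho_2)$ process with force points rather than a plain $\SLE_6$, which is the ``novel difficulty'' flagged in the introduction. Conformally welding the surface along the bubble --- using the welding of quantum disks established in \cite{AHS21} and mating-of-trees identities --- decomposes it into independent pieces with explicitly known laws; integrating out the remaining moduli then expresses the $\SLE_6$ integral in terms of Liouville structure constants (the DOZZ formula, the boundary two- and three-point functions, and the bulk--boundary correlator). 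This yields an implicit equation for $\lambda^\ast$ written with the double gamma function $\Gamma_b$ at the value $b = 2/\sqrt{6}$ corresponding to $\kappa=6$.

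It then remains to simplify and to prove uniqueness. At $\kappa=6$ the ratios of $\Gamma_b$-factors collapse, via the shift and reflection identities for $\Gamma_b$, into elementary trigonometric expressions; carrying out this reduction and substituting into $g$ should turn the implicit equation for $\lambda^\ast$ into precisely \eqref{eq:sol-xi}. I would finish by checking directly that the left-hand side of \eqref{eq:sol-xi} is strictly monotone on $(\tfrac14,\tfrac23)$ and changes sign between the endpoints, so the root there is unique, and by recalling the a priori bounds $\tfrac14 < \xi < \tfrac23$ (the monochromatic two-arm exponent is sandwiched strictly between the polychromatic two-arm exponent $\tfrac14$ and three-arm exponent $\tfrac23$, by \cite{BN11}); together these identify $\xi$ with that root.

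The main obstacle is the Liouville-CFT input in the second step: the insertion configuration imposed by the two marked monochromatic arms produces a correlation function, or equivalently a conformal welding, that lies outside the range covered by the existing DOZZ-type results, so one must either extend those results --- by analytic continuation in the insertion weights while controlling convergence of the associated quantum-surface integrals --- or establish the needed special-function identity by hand. A secondary but delicate point is the precise dictionary $g$ between percolation arm exponents, $\SLE_6$ conformal weights, and Liouville momenta, since a single misplaced additive constant there would already show up in \eqref{eq:sol-xi}.
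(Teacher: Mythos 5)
Your overall two-step architecture (percolation $\to$ $\SLE_6$ bubble observable $\to$ LQG/Liouville CFT evaluation) is the paper's, but the quantitative core of the first step is either missing or wrong as stated. The backbone exponent is \emph{not} characterized by the point where a one-parameter family of moments of the bubble measure passes from finite to infinite: by Theorem~\ref{thm:moment}, $\nu_6[|\psi'(i)|^{\lambda}-1]$ stays finite for all $\lambda>2/\kappa-1=-2/3$, so that blow-up threshold is the polychromatic three-arm exponent $2/3$, not $\xi$. The correct characterization is the renewal identity $\nu_6[|\psi'(i)|^{-\xi}-1]=1$ under the specific normalization $\mu_6(E_i)=1$; it is obtained by representing the outermost ${\rm CLE}_6$ loop whose outer boundary surrounds $0$ (which approximates the external frontier of the relevant white cluster, via Menger's theorem and the $3$- and $6$-arm a priori bounds) as a Poisson point process of $\SLE_6$ bubbles along a radial $\SLE_6(\kappa-6)$ exploration, and then applying Campbell's formula and a Tauberian theorem to $\bbE[\exp(\beta(\lambda)\tau)]$ with $\tau$ an independent exponential time. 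Your ``transfer-operator eigenvalue'' phrasing gestures at this, but without fixing the normalization and the ``$=1$'' equation no formula like \eqref{eq:sol-xi} can emerge; moreover the discrete-to-continuum comparison needs the strict inequality $\alpha_3>\xi$, which in the paper is only available \emph{after} the exact computation of Theorem~\ref{thm:moment}. You also misidentify the main difficulty in the LQG step: it is not that the decorating curve is an $\SLE_6(\rho_1,\rho_2)$ outside known welding results, but that $\nu_\kappa$ is an \emph{infinite} measure, so the bare moment diverges; the paper subtracts $1$, introduces the auxiliary derivative $|\varphi'(1)|$, computes a joint moment via Wu's bubble welding, quantum triangles and the Remy--Zhu boundary structure constants (not DOZZ), and recovers \eqref{eq:thm-moment} by analytic continuation.

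Your concluding uniqueness check is also incorrect as written. The left-hand side of \eqref{eq:sol-xi} vanishes at $\xi=\tfrac14$ (since $\sqrt{12}/4=\sqrt3/2$ and $\sin(4\pi/3)=-\sqrt3/2$) and is positive at $\xi=\tfrac23$, and it is not monotone on $(\tfrac14,\tfrac23)$: it first decreases to negative values and then increases. So ``strictly monotone and changes sign between the endpoints'' fails; the paper's argument substitutes $\rho=\sqrt{12\xi+1}$ and uses $f(2)=0$, $f(3)>0$ together with $f$ decreasing on $(2,\rho_0)$ and increasing on $(\rho_0,3)$. Your appeal to \cite{BN11} for the strict sandwich $\tfrac14<\xi<\tfrac23$ is a legitimate way to place $\xi$ in the open interval (the paper instead uses Theorem~\ref{thm:1.2}), and it matters precisely because $\xi=1-\kappa/8=\tfrac14$ is itself a root of the same equation; but the monotonicity claim must be repaired before the identification of $\xi$ with the interior root is complete.
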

To see that Equation~\eqref{eq:sol-xi} has a unique solution in  $(\frac{1}{4},\frac{2}{3})$, by taking $\rho = \sqrt{12\xi+1}$, it suffices to show that $f(\rho) = \frac{\sqrt{3}}{4} \rho +\sin(\frac{2\pi \rho}{3})$ has a unique root in the interval $(2,3)$. This holds because $f(2) = 0$ and $f(3) = \frac{3\sqrt{3}}{4}>0$; moreover, $f(\rho)$ is decreasing in $(2,\rho_0)$ and increasing in $(\rho_0,3)$, where $\rho_0 = 3-\frac{3}{2\pi} \arccos(-\frac{3\sqrt{3}}{8\pi}) \in (2,3)$. Using~\eqref{eq:sol-xi}, we obtain the numerical value of $\xi$:
\begin{equation}
  \xi = 0.35666683671288\ldots.   
\end{equation} 
The best numerical result obtained so far is $\xi = 0.35661 \pm 0.00005$ in \cite{FKZD22}, which is based on Monte Carlo simulations.  See also \cite{G99, JZJ02, DBN04} for earlier numerical approximation results on $\xi$.

As mentioned above, the one-arm exponent and the polychromatic $j$-arm exponents, $j \geq 2$, all have rational values. In contrast, the backbone exponent is irrational. Recall that a complex number is said to be algebraic if it belongs to the algebraic closure of $\bbQ$ (the set of rational numbers), i.e. if it can be represented as one of the roots of a polynomial with rational coefficients, and it is called transcendental otherwise. Using Theorem~\ref{thm:kappa=6} and standard results in transcendental number theory, we can show that the backbone exponent is not only irrational, but in fact transcendental.
\begin{theorem}\label{thm:tran6}
   The backbone exponent  $\xi$ in Theorem~\ref{thm:kappa=6} is a transcendental number.
\end{theorem}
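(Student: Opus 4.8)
The plan is to deduce transcendence of $\xi$ from transcendence of $\rho := \sqrt{12\xi+1}$: by Theorem~\ref{thm:kappa=6} and the substitution recorded just below it, $\rho$ is the unique root in the open interval $(2,3)$ of $f(\rho)=\tfrac{\sqrt3}{4}\rho+\sin\!\big(\tfrac{2\pi\rho}{3}\big)$. Since the field $\overline{\mathbb{Q}}$ of algebraic numbers is algebraically closed, $\rho$ (a root of $w^2-(12\xi+1)$) is algebraic whenever $\xi$ is, while conversely $\xi=\tfrac{\rho^2-1}{12}\in\overline{\mathbb{Q}}$ whenever $\rho$ is; hence it suffices to show that $\rho$ is transcendental.

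Suppose, towards a contradiction, that $\rho$ is algebraic, and put $z:=e^{2\pi i\rho/3}$, a point on the unit circle. Writing the equation $f(\rho)=0$ as $\tfrac{z-z^{-1}}{2i}=\sin\!\big(\tfrac{2\pi\rho}{3}\big)=-\tfrac{\sqrt3}{4}\rho$ and clearing denominators shows that $z$ is a root of
\[
w^2+\tfrac{i\sqrt3}{2}\,\rho\,w-1=0,
\]
whose coefficients $1,\ \tfrac{i\sqrt3}{2}\rho,\ -1$ are all algebraic; therefore $z$ is algebraic. Now split into cases. If $\rho$ is algebraic and irrational, then $b:=\tfrac{2\rho}{3}$ is algebraic and irrational, and the Gelfond--Schneider theorem applied to the base $a:=-1$ (algebraic, $\notin\{0,1\}$) and this exponent says that $a^{b}:=\exp(b\,\mathrm{Log}\,a)$ is transcendental for every branch of the logarithm; taking $\mathrm{Log}(-1)=i\pi$ gives $a^{b}=e^{2\pi i\rho/3}=z$, contradicting the algebraicity of $z$. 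So $\rho$ must be rational.

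The remaining task --- excluding a rational root of $f$ in $(2,3)$, where transcendence theory provides no leverage --- is the one step that requires a bespoke argument, and is the main (if modest) obstacle. Squaring $\sin\!\big(\tfrac{2\pi\rho}{3}\big)=-\tfrac{\sqrt3}{4}\rho$ and using $\sin^2\theta=\tfrac{1-\cos 2\theta}{2}$ yields $\cos\!\big(\tfrac{4\pi\rho}{3}\big)=1-\tfrac{3\rho^2}{8}$. If $\rho\in\mathbb{Q}$, the right side is rational, while the left side is $\cos(\pi q)$ with $q=\tfrac{4\rho}{3}\in\mathbb{Q}$; by Niven's theorem a rational value of $\cos(\pi q)$ with $q\in\mathbb{Q}$ can only be $0,\pm\tfrac12,\pm1$, so $1-\tfrac{3\rho^2}{8}\in\{0,\pm\tfrac12,\pm1\}$. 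Going through the five cases gives $\rho^2\in\{0,\tfrac43,\tfrac83,4,\tfrac{16}{3}\}$, of which the only value with $\rho$ rational is $\rho=2$ --- which is not in the open interval $(2,3)$ (equivalently $\xi=\tfrac14\notin(\tfrac14,\tfrac23)$). Hence $f$ has no rational root in $(2,3)$, so $\rho$, and therefore $\xi$, is transcendental. Besides this finite check, the only subtleties to watch are the choice of branch $\mathrm{Log}(-1)=i\pi$ in Gelfond--Schneider and confirming that the quadratic for $z$ truly has algebraic coefficients so that $z$ may be declared algebraic; the rest is routine.
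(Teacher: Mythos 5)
Your proof is correct and follows essentially the same route as the paper: pass to $\rho=\sqrt{12\xi+1}$, use Gelfond--Schneider on $e^{2\pi i\rho/3}$ to force $\rho$ rational, then rule out rational $\rho$ via $\cos(\tfrac{4\pi\rho}{3})=1-\tfrac{3\rho^2}{8}$ and Niven's theorem. Your explicit enumeration of the Niven cases (ending with only $\rho=2$, excluded by the interval $(2,3)$) just spells out the final step that the paper treats more tersely.
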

Given Theorem~\ref{thm:kappa=6}, the proof of Theorem~\ref{thm:tran6} is quite short and we provide it here.
\begin{proof}
    Let $\rho = \sqrt{12\xi+1}$. If $\rho$ is algebraic, then $x = e^{\frac{2 \pi i \rho}{3}} = (-1)^\frac{2 \rho}{3}$ is also algebraic as it solves $\frac{\sqrt{3}\rho}{4} + \frac{1}{2i}(x-x^{-1})=0$. By the Gelfond-Schneider theorem (see e.g.\ Theorem 10.1 in \cite{N56}), $\frac{2 \rho}{3}$ must be rational, and therefore, $\rho$ must be rational. However, if $\rho$ is rational, then necessarily $\cos(\frac{4 \pi \rho}{3}) = 1-2 \sin(\frac{2 \pi \rho}{3})^2 = 1-\frac{3\rho^2}{8}$ is rational. This is in contradiction with Niven's theorem (see Corollary 3.12 in \cite{N56}), which ensures that if $\frac{4\rho}{3}$ and $\cos(\frac{4\rho}{3}\pi)$ are both rational, then $\frac{4\rho}{3} = \frac{k}{6}$ for some integer $k$. Therefore, $\rho$ is transcendental, and so is $\xi$.
\end{proof}

\begin{figure}[!t]
\centering
\includegraphics[width=0.7\textwidth]{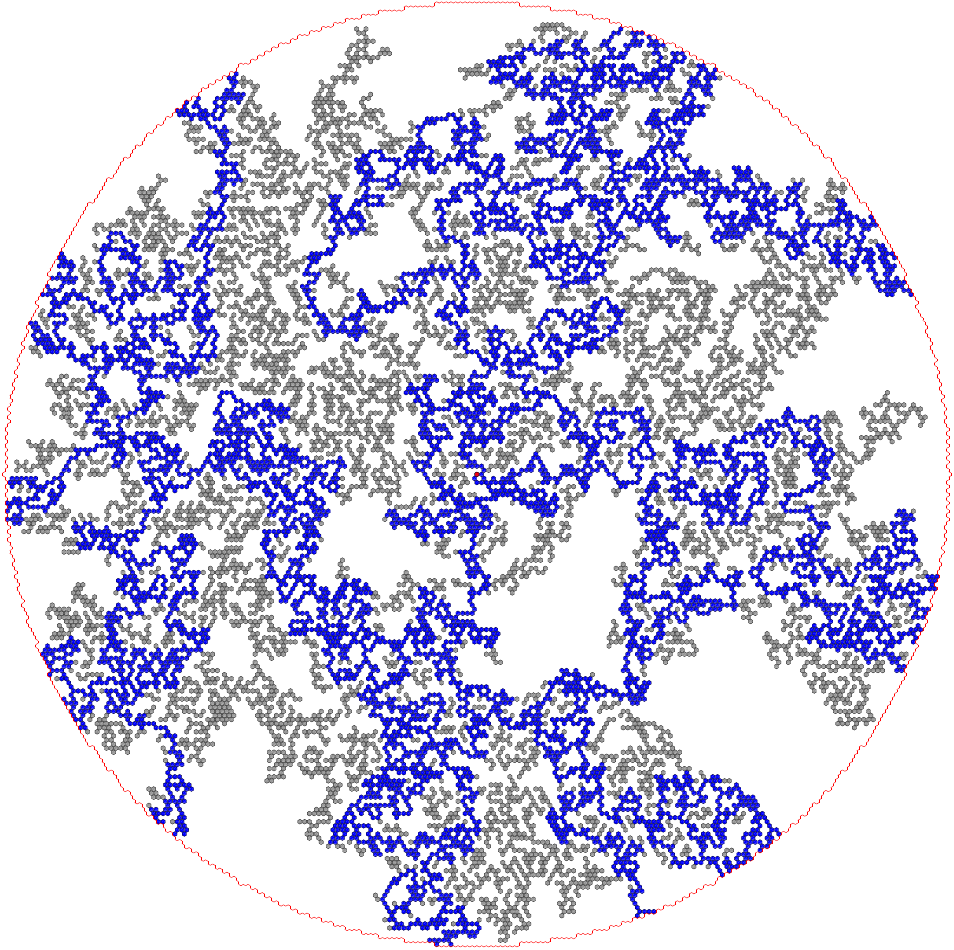}
\caption{This figure considers the (Euclidean) ball $B_n$ with radius $n=100$ centered on the vertex $0$ (indicated by a red hexagon). It shows the cluster of $0$ in $B_n$ at $p = p_c = \frac{1}{2}$, conditionally on the existence of one arm, that is, under the requirement that this cluster reaches the boundary of $B_n$. The backbone in the sense of \cite{Ke86b}, i.e. the set of the vertices which are connected by two disjoint paths to 0 and the boundary, respectively, is then depicted in blue. In words, this means that the gray parts are those where a random walk is just ``wasting time''.}
\label{fig:sim_bb}
\end{figure}

We conclude this subsection with some historical remarks. The backbone is a very natural geometric object, and because of this, it arises in a wide variety of problems in physics. In 1971, Last and Thouless \cite{LT71} first measured experimentally the conductivity of a randomly perforated sheet of conducting graphite paper. Punching holes at the sites of a square lattice, they observed that as the density of holes approaches $1- p_c$, the conductivity $\Sigma$ between two opposite sides plummets much less sharply than the density $\theta$ of the infinite cluster. To explain this phenomenon, they argued that as $p$ approaches $p_c$, paths on the infinite cluster become very constricted, and many are actually dead ends, which lead nowhere and are thus useless for conduction. The idea that slightly supercritical infinite clusters can be represented as a skeleton, to which many dangling ends are attached, was developed successfully in many subsequent works, see for example \cite{SS75}, \cite{DG76}, \cite{St77} (in particular Figure~1 in that paper).

We also want to mention that in our derivation of the backbone exponent, an important role is played by the external frontier (or accessible perimeter) of a percolation cluster, obtained by removing those sections of the boundary which can only be reached through ``narrow'' passages. To our knowledge, this object was first studied in \cite{GA86}, and its introduction was motivated by a clear physical question: which part of the boundary is really visible from the outside, e.g. by particles diffusing randomly in the medium around the cluster? For more references about the geometry of percolation clusters and the related physical aspects, the reader can consult \cite{SA92} (especially Chapters~5 and 6).

As $p$ approaches $p_c$ from above, the unique infinite cluster looks more and more like a critical cluster conditioned to be large, an object called the Incipient Infinite Cluster (IIC for short) which was constructed rigorously by Kesten \cite{Ke86a}. On the mathematical side, there have been several works using that the backbone has a negligible size compared to the whole IIC. The fact that a random walk spends most of its time in dead ends was used by Kesten to establish subdiffusivity of the random walk on the IIC \cite{Ke86b}. Roughly speaking, it was shown in that paper that it takes a time at least $n^{2+\eps}$, for some $\eps > 0$, for the random walk to reach distance $n$. Such statements are made quantitative in \cite{DHS13}, in terms of the one-arm and the backbone exponents, in this situation and also for a related model coming from statistical physics, known as invasion percolation. Finally, a very recent work \cite{GL22} establishes subdiffusivity in the chemical distance (i.e. the distance on the percolation cluster), which is a stronger result. Moreover, that paper provides explicit bounds in terms of the same two exponents $\alpha_1$ and $\xi$.

So far, few results were known rigorously about the backbone exponent, and more generally the monochromatic $j$-arm exponents, $j \geq 2$. In \cite{BN11}, their existence was proved, as well as the fact that the $j$th exponent lies strictly between the $j$th and the $(j+1)$th polychromatic ones (so that in particular, they do form a distinct family of exponents). This comparison was based on a counting inequality for subsets $A$, $B$ of $\{0,1\}^n$, for any $n \geq 1$, namely $|A \circ B| \leq |A \cap \tilde{B}|$, where $A \circ B$ denotes the \emph{disjoint occurrence} of $A$ and $B$ (see \cite{BK85}), and $\tilde{B}$ is obtained from $B$ by switching $0$ and $1$, i.e. $\tilde{B} := \{ 1 - \omega \: : \: \omega \in B\}$ (in this general form, this inequality is due to Reimer \cite{Re00}). In the proof, it appeared that monochromatic events are qualitatively very different from polychromatic ones. In the latter case, the ``winding'' angle of the arms is essentially prescribed by the configuration, while in the former case, we have a lot of freedom on how to choose the arms, so a whole interval of winding angles can be achieved, typically. This observation actually highlights the fundamental difference between the two cases, and illustrates why the monochromatic exponents are seemingly more complicated to grasp.

\subsection{The backbone exponent in terms of the SLE bubble measure}\label{subsec:intro-sle}
By the seminal works of Schramm~\cite{Sc00} and Smirnov~\cite{Sm01}, it is known that for critical site percolation on the triangular lattice with mesh size $\delta$, boundaries of connected components are described by $\SLE_6$ in the scaling limit, i.e.\ as $\delta\to 0$. More specifically, given a simply connected domain $D$ with a nice enough boundary, and two boundary points $a$ and $b$, we can consider critical percolation in $D$ with Dobrushin boundary conditions (i.e.\ with one boundary arc colored black and the complementary arc colored white). If we consider the white and black clusters of these two arcs, the interface between them converges to a chordal $\SLE_6$ in $D$ from $a$ to $b$. 
On the other hand, if we color the entire boundary with the same color instead, say white, it is proved in \cite{CN06} that the whole collection of interfaces between black and white clusters converges to a continuum limit. This limit is a random collection of non-simple non-crossing loops which can be constructed using $\SLE_6$. In \cite{MR2494457}, Sheffield introduced a more general one-parameter family of random collections of non-crossing loops, under the name conformal loop ensemble (CLE) and denoted by CLE$_\kappa$. The collection of loops in \cite{CN06} corresponds to CLE$_6$.

The starting point of our proof of Theorem~\ref{thm:kappa=6} is to express the backbone exponent in terms of CLE$_6$. More precisely, we relate the percolation backbone exponent to the external frontier of an outermost white cluster, which can be approximated by the outer boundary of the corresponding CLE$_6$ loop. We then exploit a relation between CLE$_\kappa$ and the SLE$_\kappa$ bubble measure, analogous to the one described in \cite[Section 6.1]{MR3708206}. The SLE$_\kappa$ bubble measure was introduced in \cite{MR2979861}, as it arises naturally in the construction of CLE$_\kappa$ for $\kappa\in(8/3,4]$. The works \cite{MR3708206,MSW-non-simple} pointed out that the same procedure also allows one to define measures on SLE$_\kappa(\rho)$ bubbles,  and further related SLE$_\kappa(\rho)$ bubbles to CLE$_\kappa$ and BCLE$_\kappa$ (boundary CLE) in the case $\kappa\in(2,8)$.
Recently, \cite{Z22} systematically studied SLE$_\kappa(\rho)$ bubble measures for $\kappa>0$ and $\rho>-2$, and proved further useful properties.

\begin{figure}[h!]%
    \centering
    \includegraphics[width=5cm]{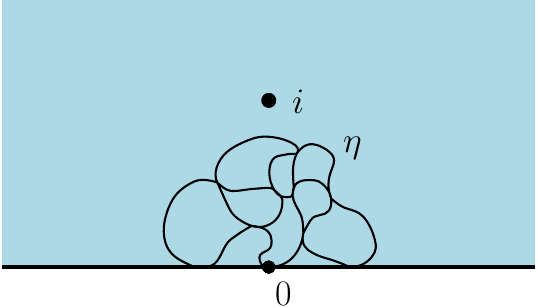}
    \qquad \qquad
    \includegraphics[width=5cm]{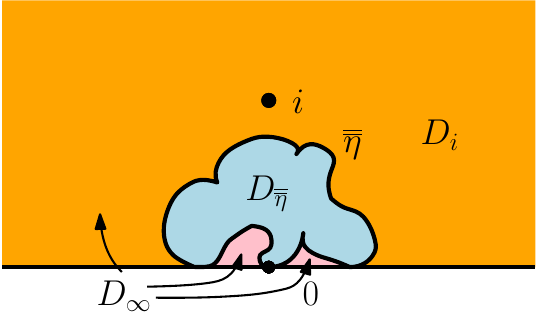}
    \caption{In the left picture, we show an ${\rm SLE}_\kappa$ bubble $\eta$ on the complement of the event $E_i$. In the right picture, we show its outer boundary $\ep \eta$, as well as the orange domain $D_i$ and the light cyan domain $D_{\ep \eta}$. The domain $D_\infty$ refers to the union of the orange and pink regions.
    }%
    \label{fig:sle-bubble}%
\end{figure}

In this subsection, we only provide the minimal amount background on the  $\SLE_\kappa$ bubble measure in order to convey the main idea of the proof of Theorem~\ref{thm:kappa=6} and its generalization to $\kappa\in (4,8)$. More details will be given in Section~\ref{sec:bubble}. 
For $\kappa\in (4,8)$, consider the SLE$_\kappa$ bubble measure $\mu_\kappa$ rooted at the origin on the upper half plane $\bbH$. It is an infinite measure supported on random curves on $\bbH \cup \bbR$ passing through the origin. The random curve  is almost surely non-simple and touches $\partial \bbH=\bbR$ infinitely many times. See Figure~\ref{fig:sle-bubble} for an illustration.  The measure $\mu_\kappa$ can be obtained from the $\epsilon\to 0$ limit of the chordal SLE$_\kappa$ measure on $\bbH$ from $0$ to  $-\epsilon$, which is only defined up to a multiplicative constant. We will fix the constant in the following way. Let $D_i$ be the connected component of the complement of the curve on $\bbH$ containing $i\in \bbH$.  Let $E_i$ be the event that $\partial D_i\cap \bbR$ is empty. It can be shown that $\mu_\kappa(E_i)<\infty$. We fix the multiplicative constant by requiring $\mu_\kappa(E_i)=1$. Let $\nu_\kappa$ be the restriction of $\mu_\kappa$ to the complement of $E_i$.   Let $\psi$ be a conformal map from $\mathbb{H}$ to $D_i$ such that $\psi(i)=i$. Then the backbone exponent can be expressed as follows.

\begin{theorem} 
    \label{thm:solution}
    The backbone exponent $\xi$ from~\eqref{eq:backbone-def} satisfies
    \begin{equation}
    \label{eq:thm-solution}
    \nu_6[|\psi'(i)|^{-\xi} -1  ]=1. 
    \end{equation}
\end{theorem}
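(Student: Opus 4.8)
The plan is to pass from the discrete monochromatic two-arm event to a conformally natural event for continuum critical percolation, reformulate it through the $\SLE_6$ bubble measure $\mu_6$, and then extract $\xi$ from the self-similarity of the bubble exploration by a renewal argument. The first -- and conceptually most delicate -- step is the reduction to $\mathrm{CLE}_6$. The key geometric observation is that, up to bounded-cost local surgery near $0$ and near $\partial B_n$, the existence of two disjoint black arms from $0$ to $\partial B_n$ is equivalent to $0$ being connected to $\partial B_n$ inside the region enclosed by the \emph{external frontier} (accessible perimeter) of the outermost white cluster reaching near $0$. This is exactly where the winding freedom of monochromatic arms stressed in Section~\ref{subsec:def} is used: it lets one trade the combinatorially rigid two-arm condition for a conformally invariant one. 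Combining Kesten's quasi-multiplicativity and the stability of arm events under boundary perturbations with Smirnov's convergence theorem and the Camia--Newman convergence of percolation interfaces to $\mathrm{CLE}_6$, one identifies $\lim_{n\to\infty}-\log\pi_{BB}(0,n)/\log n$ with the decay exponent of a $\mathrm{CLE}_6$ observable measuring how far the outer boundary $\ep{\eta}$ of the relevant loop extends; via the correspondence between $\mathrm{CLE}_\kappa$ and the $\SLE_\kappa$ bubble measure \cite{MR3708206,MSW-non-simple,Z22}, this observable is rewritten in terms of a single bubble $\eta\sim\mu_6$, its outer boundary $\ep{\eta}$, the complementary domains $D_i,D_{\ep{\eta}},D_\infty$, and the loop-closing event $E_i$.

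The analytic heart is then a renewal/self-similarity argument for the bubble exploration toward the interior target $i$. This exploration is Markovian: conditionally on the first bubble $\eta\notin E_i$, the configuration inside $D_i$ pushed forward by $\psi^{-1}$ is again a fresh bubble picture, and each such step contracts lengths near $i$ by $|\psi'(i)|\le1$ (the Schwarz lemma at $i$), so that ``reaching distance $n$'' corresponds to accumulating total magnification $n$ along the successive steps, while the event $E_i$ -- which by our normalization carries $\mu_6$-mass exactly $1$ -- plays the role of the ``source''. Writing $P(n)$ for the relevant $\mu_6$-mass at scale ratio $n$, conformal invariance of $\mu_6$ and this Markov property yield, schematically, a multiplicative-convolution identity of the form
\begin{equation*}
P(n)\;=\;\nu_6\!\left[\,P\!\left(n\,|\psi'(i)|\right)\,\right]\;+\;\mu_6|_{E_i}\!\left[\,(\text{loop-closing contribution})\,\right],
\end{equation*}
and inserting the Ansatz $P(n)\asymp n^{-\xi}$ (via a Mellin transform) singles out $\xi$. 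Because bubbles that are tiny near the origin have $|\psi'(i)|\to1$ yet carry infinite $\mu_6$-mass, the naive eigenvalue relation ``$\nu_6[|\psi'(i)|^{-\xi}]=1$'' diverges and must be renormalized by subtracting the trivial ``do-nothing'' contribution; this is precisely how the $-1$ enters, producing $\nu_6[\,|\psi'(i)|^{-\xi}-1\,]=1$, equivalently $\mu_6|_{E_i^c}[\,|\psi'(i)|^{-\xi}-1\,]=\mu_6(E_i)$. A renewal/Tauberian argument then converts this into $\pi_{BB}(0,n)=n^{-\xi+o(1)}$ and, since the Mellin exponent is unique, shows that \eqref{eq:thm-solution} indeed pins down $\xi$ -- once one verifies the finiteness assertions $\mu_6(E_i)<\infty$ and $\nu_6[\,|\psi'(i)|^{-\xi}-1\,]<\infty$, the latter from the near-origin estimate $|\psi'(i)|=1-O(\diam(\eta)^2)$ together with a bound on the $\mu_6$-mass of bubbles of a given diameter lying in $E_i^c$, valid for $\xi$ in the range $(\tfrac14,\tfrac23)$.

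The main obstacle, I expect, is the reduction in the first paragraph: rigorously showing that the monochromatic two-arm event is, up to $n^{o(1)}$ factors, governed by the accessible-perimeter description, and that this passes to the scaling limit. Since the two-arm monochromatic event is not rigid, one must use the winding freedom and quasi-multiplicativity carefully to sandwich it between comparable continuum quantities, and one must correctly match the discrete ``narrow passages'' excised from the external frontier with the loop-closing event $E_i$ on the continuum side. A secondary technical point, essential to getting the constant right in \eqref{eq:thm-solution}, is the renormalization taming the small-scale divergence of the renewal: one has to show that the infinite mass of small bubbles contributes only the explicit ``$-1$'' and nothing more. By contrast, the Markov structure of the $\SLE_6$ bubble exploration needed for the renewal is available from \cite{Z22}, and the Mellin/Tauberian bookkeeping is then comparatively routine.
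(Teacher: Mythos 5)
Your route is the same as the paper's: reduce the discrete two-arm event to the continuum event that the outer boundary $\ep{\gamma}$ of the relevant ${\rm CLE}_6$ loop approaches the target point, pass to the ${\rm SLE}_6$ bubble measure, and extract $\xi$ from a compensated identity for the bubble exploration plus a Mellin/Tauberian step. The genuine gap is that you treat this last analytic core as ``comparatively routine'', whereas it is exactly where the paper needs its heaviest inputs. First, there is no ``first bubble'': your renewal identity must be replaced by the statement that a \emph{radial} ${\rm SLE}_\kappa(\kappa-6)$ exploration targeted at the interior point decomposes the loops into a Poisson point process of bubbles with intensity $\mu_\kappa$, stopped at the first bubble encircling the target (Description~\ref{descrip}, proved in Appendix~\ref{app:descrip}); this is not contained in \cite{Z22}, which only supplies properties of the bubble measure, and the chordal analogue of \cite{MR3708206} does not directly apply because the bubble measure is only conformally covariant, so the radial re-rooting requires the time-change argument of Proposition~\ref{prop:ap_ppp}. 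Once this structure is in place, Campbell's formula gives $\bbE[|\Psi_{\tau^-}'(i)|^{-\lambda}]=\bbE[\exp(\nu_6[|\psi'(i)|^{\lambda}-1]\,\tau)]$ with $\tau$ an independent rate-one exponential, which is the rigorous form of your compensated relation.

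Second, converting this into the statement that the backbone exponent equals the crossing point of $\lambda\mapsto\nu_6[|\psi'(i)|^{-\lambda}-1]$ is not bookkeeping. Divergence/finiteness of moments alone yields only one-sided (limsup-type) tail bounds; the paper obtains the genuine limit by the Tauberian theorem of \cite{MR2417689} (Theorem~\ref{thm:tauberian}), which requires the abscissa of convergence to be a \emph{pole} of the Laplace transform, and this is verified only through the explicit formula of Theorem~\ref{thm:moment}, i.e.\ through the LQG computation. Moreover, your finiteness checks address $\nu_6$ on small bubbles, but the dangerous contributions are (i) large bubbles whose outer boundary approaches $i$ without encircling it, whose control at exponent $\xi$ requires the strict inequality $\xi<\alpha_{3}(6)=2/3$, again extracted from Theorem~\ref{thm:moment}, and (ii) the terminal, loop-closing bubble $e_\tau$, for which one needs $\bbE[|\psi_{e_\tau}'(i)|^{\zeta}]<\infty$ for $\zeta<\alpha_3(6)$ (Lemma~\ref{lem:er2}, imported from \cite{Wu23}) together with $\xi<\alpha_3(6)$ to prevent the last bubble from fattening the tail; your sketch does not identify either ingredient. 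Finally, your proposed matching of the discrete ``narrow passages'' with the event $E_i$ conflates two unrelated things: fjords are a discrete-versus-continuum issue handled by the a priori bounds \eqref{eq:3_arm} and \eqref{eq:6_arm}, while $E_i$ is simply the stopping rule of the bubble exploration. None of this changes the skeleton, but without these inputs the passage from the compensated identity to \eqref{eq:thm-solution} does not go through.
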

Theorem~\ref{thm:solution} is proved in Section~\ref{sec:bubble}. It is believed that the connection to CLE$_6$ holds for a wide class of 2D percolation models, in particular for bond percolation (obtained by ``coloring'' the edges, instead of the vertices) on the square lattice $\bbZ^2$. Recently, rotational invariance was proved for sub-sequential limits in that case~\cite{DC-invariance}. Our Theorem~\ref{thm:moment} below exactly solves $\nu_\kappa[|\psi'(i)|^{-\xi} -1]$, which combined with Theorem~\ref{thm:solution} yields Theorem~\ref{thm:kappa=6}.
\begin{theorem}
    \label{thm:moment}
    Fix $4<\kappa<8$. Then $\nu_\kappa[|\psi'(i)|^\lambda -1 ]<\infty$ for  $\lambda > \frac{2}{\kappa}-1$ and $\lambda\mapsto \nu_\kappa[|\psi'(i)|^\lambda -1]$ is analytic in $D := \{ \lambda: \mathrm{Re} \lambda>\frac{2}{\kappa}-1  \}$.
  Let $\theta=\pm\sqrt{(\frac{\kappa}{4}-1)^2-\frac{\kappa}{2}\lambda}$. Then for all $\lambda \in D$,
    \begin{equation}
    \label{eq:thm-moment}
    \nu_\kappa[|\psi'(i)|^\lambda -1 ] =1 + \frac{2   \Gamma(\frac{4(1-\theta)}{\kappa})\Gamma(\frac{4(1+\theta)}{\kappa}) }{\kappa\cos(\frac{4\pi}{\kappa}) \Gamma(\frac{8}{\kappa}-1) \sin(\frac{4\pi  \theta}{\kappa})}\Big(\sin(\frac{8\pi  \theta}{\kappa}) - \theta \sin(\frac{8\pi}{\kappa}) \Big)\,.
    \end{equation}
Here the right side of~\eqref{eq:thm-moment} is an even function in $\theta$ so that its value is fixed by  $\theta^2=(\frac{\kappa}{4}-1)^2-\frac{\kappa}{2}\lambda$.
\end{theorem}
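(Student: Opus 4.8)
The goal is to compute $\nu_\kappa[|\psi'(i)|^\lambda - 1]$ for $\kappa \in (4,8)$ via the SLE/LQG coupling. The strategy, following the framework of \cite{AHS21,AS21,ARS21}, is to realize the $\SLE_\kappa$ bubble measure $\mu_\kappa$ as the interface in a \emph{conformal welding} of two quantum disks (or a quantum disk and a quantum triangle/wedge), so that the law of the conformal modulus $|\psi'(i)|$ — equivalently, of the quantum areas or boundary lengths of the complementary components $D_i$ and its complement — becomes tractable through the integrability of Liouville CFT. Concretely, I would first recall (from Section~\ref{sec:bubble} or the LQG preliminaries) the welding identity expressing a Liouville quantum disk decorated by an independent $\SLE_\kappa$ bubble as an integral over two quantum disks of prescribed weights glued along their boundaries; this is the bubble-measure analogue of the Ang--Sun welding of quantum disks. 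Under this coupling, $|\psi'(i)|^\lambda$ becomes (after integrating out the embedding / the marked bulk point $i$) a ratio of partition-function-type quantities, and the insertion of the power $\lambda$ corresponds to shifting a Liouville vertex-insertion weight by an amount parametrized by $\theta = \pm\sqrt{(\tfrac{\kappa}{4}-1)^2 - \tfrac{\kappa}{2}\lambda}$, which is exactly why this combination appears.

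The main computational engine would then be the exact formulas for Liouville CFT structure constants: the DOZZ formula for the three-point function on the sphere and, crucially here, the \emph{boundary} structure constants — the FZZ formula for the bulk-boundary two-point function and the Ponsot--Teschner formula for the boundary three-point function — whose rigorous probabilistic derivations (Remy--Zhu, Ang--Remy--Sun, etc.) underlie the toolkit of \cite{AHS21,ARS21}. The plan is: (i) express $\nu_\kappa[|\psi'(i)|^\lambda - 1]$ (note $\nu_\kappa = \mu_\kappa|_{E_i^c}$, so the ``$-1$'' subtracts off the $E_i$ contribution normalized to $1$) as a two-pointed quantum disk observable; (ii) apply the welding to split it; (iii) evaluate the resulting Liouville correlation functions using DOZZ/FZZ; (iv) simplify the product of Gamma functions using the shift equations / reflection identities $\Gamma(x)\Gamma(1-x) = \pi/\sin(\pi x)$ and the functional equations of the double Gamma function, collapsing everything to the elementary trigonometric expression on the right-hand side of \eqref{eq:thm-moment}. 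The analyticity in $D = \{\mathrm{Re}\,\lambda > \tfrac{2}{\kappa}-1\}$ and finiteness would be handled first, by a direct moment/tail estimate on $|\psi'(i)|$ near the boundary of the region (using distortion estimates for conformal maps together with tail bounds on the bubble measure from \cite{Z22}), ensuring the LCFT identities — initially valid in a smaller range where all integrals converge absolutely — extend by analytic continuation; the fact that the final answer is even in $\theta$ is automatic since $\theta$ enters only through $\theta^2 = (\tfrac{\kappa}{4}-1)^2 - \tfrac{\kappa}{2}\lambda$.

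I expect the main obstacle — and the ``novel difficulty'' alluded to in Section~\ref{subsec:intro-lqg} — to be step (ii): the bubble measure is an \emph{infinite} measure and the relevant welding does not match a standard quantum-disk-to-quantum-disk gluing with generic weights, because the $\SLE_\kappa$ bubble is rooted at a single boundary point and is non-simple (it touches $\bbR$ infinitely often), so the complementary region $D_i$ is a quantum disk with a boundary arc that has been pinched/identified in a nontrivial way. Setting up the correct welding — identifying the right generalized quantum surface (likely a quantum disk with two boundary marked points at a common location, i.e.\ a degenerate/zero-length boundary arc, weighted so as to produce the $\SLE_\kappa$ bubble rather than $\SLE_\kappa(\rho)$) and tracking the exact normalization so that $\mu_\kappa(E_i) = 1$ is respected — is where the real work lies, and it is presumably what forces the somewhat unusual normalization and the appearance of the $\cos(\tfrac{4\pi}{\kappa})$ and $\sin(\tfrac{8\pi}{\kappa})$ terms. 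Once the welding is correctly identified, the LCFT computation, while lengthy, is routine manipulation of DOZZ/FZZ-type expressions of the kind carried out in \cite{AHS21,ARS21}.
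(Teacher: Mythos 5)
There is a genuine gap, and it is precisely at the point your plan treats as routine. Your parenthetical reading of the ``$-1$'' is incorrect: $\nu_\kappa$ is already the restriction of $\mu_\kappa$ to $E_i^c$, so nothing is being ``subtracted off for $E_i$''. The subtraction is a regularization forced by the fact that $\nu_\kappa$ is an infinite measure whose mass accumulates on tiny bubbles, for which $|\psi'(i)|$ is close to $1$; consequently $\nu_\kappa[|\psi'(i)|^\lambda]$ is $+\infty$ for \emph{every} $\lambda$, and only the integrand $|\psi'(i)|^\lambda-1$, which vanishes on small bubbles, has a chance of being integrable. Because of this, your steps (i)--(iv) as written compute a divergent quantity: the welding identity itself (which is essentially available from Wu's work on $\SLE_{\gamma^2}(\rho)$ bubbles, so setting it up is not the real bottleneck you suggest) is an identity between infinite measures, and disintegrating it to extract the single moment $\nu_\kappa[|\psi'(i)|^\lambda]$ produces infinite structure-constant integrals; no DOZZ/FZZ manipulation can recover a finite answer from that. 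A tail estimate on $|\psi'(i)|$ near $\mathrm{Re}\,\lambda=\frac{2}{\kappa}-1$ does not repair this, since the divergence is caused by the infinite total mass at $|\psi'(i)|\approx 1$, not by the tail at $|\psi'(i)|\to 0$.

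The missing idea is the auxiliary conformal derivative: one introduces the second map $\varphi:(\bbH,0,1,\infty)\to(\bbH\setminus\overline D_i,a,0,b)$ and studies the \emph{joint} moment $\nu_\kappa\big[|\psi'(i)|^{2\Delta_\alpha-2\Delta_\gamma}\,|\varphi'(1)|^{\Delta_\beta-\Delta_{\beta_0}}\big]$, which is finite (no ``$-1$'' needed) when the boundary insertion $\beta$ at the bubble root is large enough. This joint moment is produced by a reweighted welding ${\rm LF}_\bbH^{(\alpha,i),(\beta,0)}\times\nu_\kappa^{\alpha,\beta}=c\,\Wd\big({\rm LF}_\bbH^{(\frac{2}{\gamma},0),(\beta,1),(\frac{2}{\gamma},\infty)},\mathcal M^{\rm disk}_{1,2}(\alpha)\big)$ and evaluated via the Remy--Zhu boundary structure constants $\overline G$ and $\overline H$ (no bulk DOZZ is needed). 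One then takes \emph{differences} in the bulk insertion, $\nu_\kappa[|\psi'(i)|^{2\Delta_{\alpha_1}-2}-|\psi'(i)|^{2\Delta_{\alpha_2}-2}]$, analytically continues in $\beta$ down to $\beta_0=\frac4\gamma-\gamma$, and finally sends $\alpha_2\to\gamma$; since $2\Delta_\gamma=2$, this limit is exactly what generates the ``$-1$'' and the additive ``$1$'' on the right-hand side of \eqref{eq:thm-moment}. The analyticity and finiteness on $\{\mathrm{Re}\,\lambda>\frac2\kappa-1\}$ then come from a Laplace-transform analytic-continuation lemma applied to this regularized quantity, not from a priori distortion/tail bounds. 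Without this auxiliary-insertion-plus-differencing device, the computation you outline cannot be carried through.
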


In light of Theorems~\ref{thm:solution} and~\ref{thm:moment}, for $\kappa\in (4,8)$, we define the SLE$_\kappa$ bubble exponent $\xi=\xi(\kappa)$ to be the unique number in  $(0,1-\frac{2}{\kappa})$ such that 
    \begin{equation}\label{eq:exponent-def}
    \nu_\kappa[|\psi'(i)|^{-\xi(\kappa)} -1  ]=1. 
    \end{equation}
We have the following description of $\xi(\kappa)$  which implies Theorem~\ref{thm:kappa=6} once setting $\kappa=6$.  
\begin{theorem}
\label{thm:1.2}
 For $\kappa \in (4,8)$,  the equation
    \begin{equation}
    \label{eq:backbone-generick}
    \sin(\frac{8 \pi}{\kappa})\sqrt{\frac{\kappa  x}{2} + (1-\frac{\kappa}{4})^2} - \sin \bigg( \frac{8\pi}{\kappa}\sqrt{\frac{\kappa x}{2} + (1-\frac{\kappa}{4})^2} \bigg) = 0
    \end{equation}
has at most two solutions in $(0,1-2/\kappa)$, which are $1-\kappa/8$ and   $\xi(\kappa)$. Let $\kappa_0$ be the unique point in $(4,8)$ such that $\tan(8\pi/\kappa_0)=8\pi/\kappa_0$.  For $\kappa=\kappa_0$, we have $\xi(\kappa)=1-\kappa/8$.    Otherwise, $\xi(\kappa)\neq 1-\kappa/8$.
\end{theorem}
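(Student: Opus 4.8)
The plan is to reduce everything to the explicit formula of Theorem~\ref{thm:moment} via a single change of variables. Set $\theta = \theta(x) := \sqrt{\tfrac{\kappa x}{2} + (1-\tfrac{\kappa}{4})^2}$; for $\kappa \in (4,8)$ this is a strictly increasing bijection from $(0,1-\tfrac{2}{\kappa})$ onto $(\tfrac{\kappa}{4}-1,\tfrac{\kappa}{4})$. Let $h(\theta) := \sin(\tfrac{8\pi\theta}{\kappa}) - \theta\sin(\tfrac{8\pi}{\kappa})$, so that \eqref{eq:backbone-generick} is exactly the equation $h(\theta(x)) = 0$. On the other hand, specializing \eqref{eq:thm-moment} to $\lambda = -\xi$ with $\xi \in (0,1-\tfrac{2}{\kappa})$ (so $\lambda \in D$ and $\theta^2 = (\tfrac{\kappa}{4}-1)^2 + \tfrac{\kappa}{2}\xi$) gives
\[
\nu_\kappa[|\psi'(i)|^{-\xi} - 1] \;=\; 1 + P(\theta)\,h(\theta), \qquad P(\theta) := \frac{2\,\Gamma(\tfrac{4(1-\theta)}{\kappa})\,\Gamma(\tfrac{4(1+\theta)}{\kappa})}{\kappa\cos(\tfrac{4\pi}{\kappa})\,\Gamma(\tfrac{8}{\kappa}-1)\,\sin(\tfrac{4\pi\theta}{\kappa})},
\]
so that, by the defining relation \eqref{eq:exponent-def}, $\xi(\kappa)$ is the unique $\xi \in (0,1-\tfrac{2}{\kappa})$ with $P(\theta(\xi))\,h(\theta(\xi)) = 0$.

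For the bound on the number of solutions I would argue as follows. Since $\tfrac{8\pi}{\kappa} \in (\pi,2\pi)$ we have $\sin(\tfrac{8\pi}{\kappa}) < 0$, so any solution $\theta \in (\tfrac{\kappa}{4}-1,\tfrac{\kappa}{4})$ of $h(\theta)=0$ must satisfy $\sin(\tfrac{8\pi\theta}{\kappa}) = \theta\sin(\tfrac{8\pi}{\kappa}) < 0$; as $\tfrac{8\pi\theta}{\kappa}$ runs over $(2\pi - \tfrac{8\pi}{\kappa},2\pi) \subset (0,2\pi)$, this forces $\tfrac{8\pi\theta}{\kappa} \in (\pi,2\pi)$, i.e.\ $\theta \in (\tfrac{\kappa}{8},\tfrac{\kappa}{4})$. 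On that sub-interval $h''(\theta) = -(\tfrac{8\pi}{\kappa})^2\sin(\tfrac{8\pi\theta}{\kappa}) > 0$, so $h$ is strictly convex and has at most two zeros; hence \eqref{eq:backbone-generick} has at most two solutions in $(0,1-\tfrac{2}{\kappa})$. Moreover $x = 1-\tfrac{\kappa}{8}$ lies in $(0,1-\tfrac{2}{\kappa})$ and gives $\theta(x) = 1$ with $h(1) = \sin(\tfrac{8\pi}{\kappa}) - \sin(\tfrac{8\pi}{\kappa}) = 0$, so it is always one of these (at most two) solutions.

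It remains to identify $\xi(\kappa)$ and to decide when it coincides with $1-\tfrac{\kappa}{8}$. On $(\tfrac{\kappa}{4}-1,\tfrac{\kappa}{4})$ the quantities $\cos(\tfrac{4\pi}{\kappa})$, $\Gamma(\tfrac{8}{\kappa}-1)$, $\Gamma(\tfrac{4(1+\theta)}{\kappa})$ and $\sin(\tfrac{4\pi\theta}{\kappa})$ are all finite and nonzero, while $\Gamma(\tfrac{4(1-\theta)}{\kappa})$ is finite and nonzero except for a simple pole at $\theta = 1$ (its argument crosses $0$ there and otherwise stays in $(-1,1)$); thus $P$ is finite and nonzero on $(\tfrac{\kappa}{4}-1,\tfrac{\kappa}{4})\setminus\{1\}$, with a simple pole at $\theta = 1$. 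Using $\Gamma(z)\sim 1/z$ as $z \to 0$, the recursion $\Gamma(z+1) = z\Gamma(z)$ and $2\sin a\cos a = \sin 2a$, a short computation gives the residue identity
\[
\lim_{\theta \to 1} P(\theta)\,h(\theta) \;=\; \frac{\big(1-\tfrac{8}{\kappa}\big)\,h'(1)}{\sin(\tfrac{8\pi}{\kappa})}, \qquad h'(1) = \tfrac{8\pi}{\kappa}\cos(\tfrac{8\pi}{\kappa}) - \sin(\tfrac{8\pi}{\kappa}).
\]
Since $\nu_\kappa[|\psi'(i)|^{-\xi}-1]$ is analytic, hence continuous, its value at $\xi = 1-\tfrac{\kappa}{8}$ equals $1 + \tfrac{(1-8/\kappa)\,h'(1)}{\sin(8\pi/\kappa)}$; and because $\tfrac{1-8/\kappa}{\sin(8\pi/\kappa)} > 0$ for $\kappa \in (4,8)$, this equals $1$ if and only if $h'(1) = 0$, i.e.\ $\tan(\tfrac{8\pi}{\kappa}) = \tfrac{8\pi}{\kappa}$, i.e.\ $\kappa = \kappa_0$ (one checks that $y \mapsto \tan y - y$ has a unique zero $y_0 \in (\pi,2\pi)$, lying in $(\pi,\tfrac{3\pi}{2})$, which gives a unique $\kappa_0 = 8\pi/y_0 \in (4,8)$). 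Consequently: if $\kappa = \kappa_0$ then $1-\tfrac{\kappa}{8}$ solves \eqref{eq:exponent-def}, so $\xi(\kappa) = 1-\tfrac{\kappa}{8}$ by uniqueness; if $\kappa \neq \kappa_0$ then $\theta(\xi(\kappa)) \neq 1$, so $P(\theta(\xi(\kappa)))$ is finite and nonzero, forcing $h(\theta(\xi(\kappa))) = 0$, i.e.\ $\xi(\kappa)$ solves \eqref{eq:backbone-generick}, and $\xi(\kappa) \neq 1-\tfrac{\kappa}{8}$. (Existence and uniqueness of $\xi(\kappa)$, if one wishes to reprove them, follow from $P(\theta)h(\theta) \to -1$ as $\theta \downarrow \tfrac{\kappa}{4}-1$ and $P(\theta)h(\theta) \to +\infty$ as $\theta \uparrow \tfrac{\kappa}{4}$, from the explicit endpoint values of $P$ and $h$, combined with the strict convexity of $h$ on $(\tfrac{\kappa}{8},\tfrac{\kappa}{4})$.)

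The main obstacle is the pole--zero cancellation at $\theta = 1$: one must confirm that the simple pole of the prefactor $P$ is exactly matched by the simple zero of $h$ there, and carry the residue computation through cleanly enough to recognize the transcendental threshold $\tan(\tfrac{8\pi}{\kappa}) = \tfrac{8\pi}{\kappa}$. Everything else --- the change of variables, the convexity bound, and the endpoint sign analysis --- is elementary.
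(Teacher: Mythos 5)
Your proposal is correct and follows the paper's own route: the proof given in the paper is exactly the substitution $\lambda=-\xi$, $\theta=\pm\sqrt{(\kappa/4-1)^2+\kappa\xi/2}$ into Theorem~\ref{thm:moment} combined with \eqref{eq:exponent-def}, with all the elementary analysis (your change of variables, the convexity bound on $(\kappa/8,\kappa/4)$, and the pole--zero/residue computation at $\theta=1$ identifying the threshold $\tan(8\pi/\kappa)=8\pi/\kappa$) left implicit, and your computations check out. The only loose end is the case $\kappa=\kappa_0$ of the claim that the solutions are exactly $1-\kappa/8$ and $\xi(\kappa)$: there you should add the one-line observation that $h(1)=h'(1)=0$ together with the strict convexity of $h$ on $(\kappa/8,\kappa/4)$ forces $\theta=1$ to be the \emph{only} zero, so no third solution can appear --- this follows immediately from facts you have already established.
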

\begin{proof}
Taking $\lambda = -\xi$ and $\theta = \pm \sqrt{(\frac{\kappa}{4} - 1)^2 + \frac{\kappa \xi}{2} }$ in Theorem~\ref{thm:moment} and combining it with~\eqref{eq:exponent-def} yields the desired result.
\end{proof}

From our proof of Theorem~\ref{thm:solution} in Section~\ref{sec:bubble}, it will be natural to expect that $2-\xi(\kappa)$ gives the fractal dimension of the so-called thin gasket of CLE$_\kappa$, which is the remaining region after removing every loop in the CLE$_\kappa$ and all the bounded regions that are disconnected from $\infty$ by the loops.
Recall that percolation is the Fortuin-Kasteleyn (FK) random-cluster model with cluster weight $q=1$.
It is widely conjectured that CLE$_\kappa$ for $\kappa\in [4,8)$ describes the scaling limit of these models with $q\in (0,4]$ and  $\kappa=\frac{4\pi}{\pi - \arccos(\sqrt{q}/2)}$. This conjecture was proved for $q=2$ and $\kappa=16/3$, namely, the FK Ising case~\cite{CDHC-Ising}. We expect that $\xi(\kappa)$ describes the backbone exponent for the $q$-random-cluster model for all $q\in (0,4]$, with $\xi(4)=\lim_{\kappa\to 4}\xi(\kappa)$.
For the FK Ising case, we believe that all the key ingredients are available; See Remark~\ref{rmk:Ising} for detail. 
Sharp numerical approximation for $q\in \{1,2,2+\sqrt{3},3,4\}$ was obtained in~\cite{DBN04, FKZD22}.  
Our exact value of $\xi(\kappa)$ in Theorem~\ref{thm:1.2} matches their numerical results perfectly, see the table below. 
\begin{table}[ht] 
\centering
 \begin{tabular}{||c c c||} 
 \hline
 $(q,\kappa)$ & Numerical & Our formula \\ [0.5ex] 
 \hline\hline
 (1,6) & 0.35661(5) & 0.3566668367\ldots  \\
 (2,16/3) & 0.26773(8) & 0.2678678166\ldots \\
 (3,24/5) & 0.2062(3) & 0.2059232891\ldots  \\
 (2+$\sqrt{3}$,48/11) & 0.1616(5)  & 0.1602191369 \ldots \\
 (4,4) & 0.1247(6) & 0.125  \\ [1ex] 
 \hline
 \end{tabular}
\end{table}

As an intriguing fact, the solution  $1-\kappa/8$ to~\eqref{eq:backbone-generick} should be equal to the polychromatic $2$-arm exponent of the FK random-cluster model. We are curious if there is a conceptual explanation for this fact. Note that the monochromatic 2-arm exponent is smaller than the polychromatic one if and only if $\kappa\in(4,\kappa_0)$. The numerical value of $\kappa_0$ is $5.593245\ldots$. On the other hand, the number $1-2/\kappa$ in Theorem~\ref{thm:1.2} should correspond to the polychromatic 3-arm exponent,  which is greater than the monochromatic 2-arm exponent (see Lemma~\ref{lem:er2} and Remark~\ref{rmk:alpha}).

As a final observation, we have the following extension of Theorem~\ref{thm:tran6}.
\begin{theorem}
\label{thm:transc}
    For any rational $\kappa \in (4,8)$, the  exponent $\xi(\kappa)$ is transcendental.
\end{theorem}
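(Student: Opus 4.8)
The plan is to reduce the transcendence of $\xi(\kappa)$, for rational $\kappa\in(4,8)$, to a statement about linear independence of logarithms (or exponentials) that can be fed into the Gelfond--Schneider theorem, exactly as in the proof of Theorem~\ref{thm:tran6}, but now starting from Equation~\eqref{eq:backbone-generick} instead of~\eqref{eq:sol-xi}. First I would introduce the substitution $\rho=\rho(\kappa):=\sqrt{\tfrac{\kappa}{2}\xi(\kappa)+(1-\tfrac{\kappa}{4})^2}$, so that~\eqref{eq:backbone-generick} becomes the relation
\begin{equation}\label{eq:transc-pf}
\sin\Big(\tfrac{8\pi}{\kappa}\rho\Big)=\rho\,\sin\Big(\tfrac{8\pi}{\kappa}\Big).
\end{equation}
Since $\kappa$ is rational and $\xi(\kappa)$ is algebraic iff $\rho$ is algebraic (they are related by a polynomial identity over $\bbQ$ with rational coefficients involving only $\kappa$), it suffices to prove that $\rho$ is transcendental. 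Assume, for contradiction, that $\rho$ is algebraic.

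Next I would exponentiate. Write $a:=\tfrac{8\pi}{\kappa}$, a rational multiple of $\pi$, and set $x:=e^{ia\rho}$ and $y:=e^{ia}$. Then~\eqref{eq:transc-pf} reads $\tfrac{1}{2i}(x-x^{-1})=\tfrac{\rho}{2i}(y-y^{-1})$, i.e. $x-x^{-1}=\rho(y-y^{-1})$. Here $y=e^{i\cdot 8\pi/\kappa}$ is an algebraic number (a root of unity composed with an algebraic power; more precisely, since $8/\kappa\in\bbQ$, $y$ is a root of unity and in particular algebraic), and by our contradiction hypothesis $\rho$ is algebraic, so the right-hand side $\rho(y-y^{-1})$ is algebraic. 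Therefore $x-x^{-1}$ is algebraic, hence $x=e^{ia\rho}$ is algebraic (it satisfies a quadratic over $\overline{\bbQ}$). Now apply the Gelfond--Schneider theorem in the form used in the proof of Theorem~\ref{thm:tran6}: $x=e^{ia\rho}=(e^{ia})^{\rho}=y^{\rho}$ with $y$ algebraic, $y\notin\{0,1\}$, and $\rho$ algebraic; if $\rho$ were irrational this would force $y^\rho$ to be transcendental, contradicting that $x$ is algebraic. Hence $\rho$ must be \emph{rational}.

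It then remains to rule out rational $\rho$, and this is where I expect the only genuine subtlety. If $\rho=p/m$ is rational, then both $\tfrac{8\pi}{\kappa}\rho$ and $\tfrac{8\pi}{\kappa}$ are rational multiples of $\pi$, so $\sin(\tfrac{8\pi}{\kappa}\rho)$ and $\sin(\tfrac{8\pi}{\kappa})$ are algebraic numbers; moreover each is, up to sign, $\sqrt{(\text{rational})}$ or more precisely lies in a cyclotomic field and has degree bounded in terms of the denominators. Equation~\eqref{eq:transc-pf} then becomes an algebraic relation $\sin(\tfrac{8\pi}{\kappa}\rho)=\rho\sin(\tfrac{8\pi}{\kappa})$ between such numbers. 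I would argue this is impossible except in the already-excluded degenerate case by invoking Niven's theorem (Corollary 3.12 in \cite{N56}) together with the following elementary observation: for rational $\rho$ write $\rho\sin(\tfrac{8\pi}{\kappa})=\sin(\tfrac{8\pi}{\kappa}\rho)$, then use $\cos$ of doubled angles to pass to an equation with \emph{rational} cosine values, and conclude via Niven that the relevant angles must be among $\{0,\pm\tfrac\pi3,\pm\tfrac\pi2,\ldots\}$, i.e. the associated cosines lie in $\{0,\pm\tfrac12,\pm 1\}$. One checks by hand that the only solutions of~\eqref{eq:transc-pf} with $\rho$ rational correspond to $\xi(\kappa)=1-\kappa/8$ (the spurious root already identified in Theorem~\ref{thm:1.2}) or to $\rho\in\{0,1\}$, which are outside the admissible range $\xi(\kappa)\in(0,1-2/\kappa)$ or force $\kappa=\kappa_0$, which is irrational. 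Hence $\rho$, and therefore $\xi(\kappa)$, cannot be rational; combined with the previous paragraph, $\xi(\kappa)$ is transcendental. The main obstacle is the careful case analysis in this last paragraph: one must be sure that no rational $\rho$ other than the spurious solution satisfies~\eqref{eq:transc-pf} for rational $\kappa\in(4,8)$, and this requires a clean packaging of Niven's theorem together with the observation that $1-\kappa/8$ is the unique "trivial" root of~\eqref{eq:backbone-generick}, so that any other rational solution would give a second distinct rational root, contradicting the "at most two solutions" part of Theorem~\ref{thm:1.2} unless it coincides with $1-\kappa/8$ itself.
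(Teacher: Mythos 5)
Your first step — substituting $\rho=\sqrt{\tfrac{\kappa}{2}\xi(\kappa)+(1-\tfrac{\kappa}{4})^2}$, exponentiating, and using Gelfond--Schneider to conclude that an algebraic $\rho$ would have to be rational — is exactly the paper's argument and is fine (including the observation that $e^{8\pi i/\kappa}$ is a root of unity $\neq 1$ for $\kappa\in(4,8)$ rational). The genuine gap is in your second step, where you claim that rational $\rho$ can be excluded by ``a clean packaging of Niven's theorem'' via a doubled-angle trick. Niven's theorem only says: if $q$ and $\cos(\pi q)$ are \emph{both} rational then $2\cos(\pi q)\in\{0,\pm1,\pm2\}$. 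But for a general rational $\kappa\in(4,8)$ the relation $\sin(\tfrac{8\pi\rho}{\kappa})=\rho\sin(\tfrac{8\pi}{\kappa})$ with $\rho$ rational involves two sines of rational multiples of $\pi$ that are typically \emph{irrational} algebraic numbers whose \emph{ratio} is rational; Niven says nothing about this situation. The doubled-angle manoeuvre that worked for $\kappa=6$ gives $\cos(\tfrac{16\pi\rho}{\kappa})=1-2\rho^{2}\sin^{2}(\tfrac{8\pi}{\kappa})$, and this is rational only when $\sin^{2}(\tfrac{8\pi}{\kappa})$ is rational — true for $\kappa=6$ (where it equals $3/4$), false generically (e.g.\ $\kappa=5$ gives $\sin^{2}(\tfrac{8\pi}{5})=(10-2\sqrt5)/16$). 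This is precisely why the paper does not use Niven here but proves the substantially stronger Lemma~\ref{lem:rational}: if $\cos(\pi q_1)=q\cos(\pi q_2)$ with $q_1,q_2,q$ rational, then either $2\cos(\pi q_1),2\cos(\pi q_2)$ are integers or $q\in\{0,\pm1\}$. Its proof is a real piece of work (cyclotomic polynomials, the degree $\phi(n)/2$ of $2\cos(2\pi k/n)$, the identity $\bbQ[\zeta_{n_1}]\cap\bbQ[\zeta_{n_2}]=\bbQ[\zeta_{\gcd(n_1,n_2)}]$, and a case analysis $n_1=n_2$, $n_1=2n_2$, $\{n_1,n_2\}=\{3m,2m\}$), and nothing in your sketch supplies a substitute for it.

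A secondary problem is your closing appeal to the ``at most two solutions'' clause of Theorem~\ref{thm:1.2}: if $\xi(\kappa)$ were rational and distinct from $1-\kappa/8$, it would simply \emph{be} the second admissible root of \eqref{eq:backbone-generick}; there is no contradiction with that theorem, so this argument is circular and cannot replace the missing lemma. What does survive from your case analysis is the treatment of $\rho=1$: there $\xi(\kappa)=1-\kappa/8$ forces $\tan(8\pi/\kappa)=8\pi/\kappa$, which is impossible for rational $\kappa$ since the left side is algebraic and the right side transcendental — this is the paper's case (1). But after Lemma~\ref{lem:rational} one still has to handle the case where $2\cos(\tfrac{8\pi}{\kappa}-\tfrac{\pi}{2})$ and $2\cos(\tfrac{8\pi\rho}{\kappa}-\tfrac{\pi}{2})$ are both integers (the paper's enumeration of the subcases with values in $\{-2,-1\}$ and $\rho\in\{1,\tfrac12\}$), which your sketch does not address. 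So the overall architecture is right, but the heart of the proof — ruling out rational $\rho$ when the trigonometric values are irrational — is missing.
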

\noindent The proof of Theorem~\ref{thm:transc} is similar to Theorem~\ref{thm:tran6} but more technical. We give the details in Section~\ref{sec:trans}.

\subsection{Exact solvability of SLE via Liouville quantum gravity and Liouville CFT}\label{subsec:intro-lqg}

Liouville quantum gravity (LQG) is a theory of random surfaces originated from string theory~\cite{Polyakov81}. Starting from~\cite{DS11}, LQG has become a central topic in random geometry. See~\cite{Sheffield-ICM} for a comprehensive survey on LQG, including its connection to random planar maps~\cite{LeGall-BM,Mier-BM,GMS-RWRE,HS-Cardy}. A key aspect of LQG is its coupling with SLE, which describes the scaling limit of random planar maps decorated with statistical physics models. As first demonstrated by Sheffield~\cite{Shef-zipper}, when two LQG surfaces are welded together conformally along their boundaries, the resulting interface is an SLE curve. This perspective was further developed by Duplantier, Miller, and Sheffield~\cite{DMS14} which gives birth to the mating-of-trees theory, a framework to study the SLE/LQG coupling in terms of more classical probabilistic objects such as Brownian motion, L\'evy process, and Bessel process. Among subsequent works in this vein,  \cite{MSW-simple, MSW-non-simple} established useful connections between CLE, SLE bubbles, and LQG surfaces. See~\cite{GHS-survey} for a survey.

The idea of using LQG to derive results which are purely about lattice models and SLE is an important motivation to have a probabilistic framework for LQG in the first place~\cite{DS11}, especially under the name of Knizhnik-Polyakov-Zamolodchikov (KPZ) relation. There are a number of successful applications of this idea, such as~\cite{GHM-KPZ} and~\cite{MSW-non-simple} discussed in Section~\ref{subsub:related} below. 
The main novelty of recent applications, including those in our current paper, lies in the incorporation of Liouville CFT, which we now review.

The geometry of LQG surfaces is governed by variants of Gaussian free field (GFF), a family of random generalized functions with  logarithmic correlation. In particular, the random area of the surface is of the form $e^{\gamma h} \, dxdy$ where $\gamma\in (0,2)$ is a parameter and $h$ is a variant of GFF.
As suggested in Polyakov's seminal work~\cite{Polyakov81}, the variants of GFF  that are most relevant to string theory and 2D quantum gravity are sampled according to Liouville conformal field theory (CFT), namely  the quantum field theory with Liouville action. Liouville CFT was made sense rigorously  by David, Kupiainen, Rhodes, and Vargas~\cite{DKRV} and follow-up works. As a primary example of CFT in the framework of Belavin, Polyakov, and Zamolodchikov~\cite{BPZ84}, Liouville CFT enjoys rich and deep exact solvability. This was demonstrated in physics and rigorously shown by Kupiainen, Guillarmou,  Rhodes, and Vargas  recently; see~\cite{KRV-DOZZ,GKRV-sphere} and references therein.

It was proved at various levels of generality and precision that natural  LQG surfaces in the mating of trees framework, despite their different-looking appearance, can be described by Liouville CFT; see~\cite{AHS-sphere,Cerle-disk,AHS21,ASY22}. Recently, with Ang, Holden, and Remy in~\cite{AHS21,ARS21,AS21}, the third named author combined the mating of trees theory and the Liouville CFT   to produce a number of exact results in both frameworks which are hard to access using a single framework. Moreover, several exact formulae purely about SLE were derived through the coupling with LQG. Besides the aforementioned foundational work on mating of trees and Liouville CFT, the key ingredients for this set of work include the conformal welding of finite-area LQG surfaces (see~\cite{AHS20,AHS21,ASY22})  and the exact solvability of Liouville CFT on surfaces with boundary~\cite{Remy-Duke,RZ22,ARS21,ARSZ23}.

Our proof of Theorem~\ref{thm:moment} is another example of an exact formula for SLE obtained from the SLE/LQG coupling and the Liouville CFT. Our starting point is a result from Wu~\cite{Wu23}, which explains how an independent coupling of SLE bubble and Liouville fields arises from the conformal welding of LQG surfaces for $\kappa\in (0,4)$. Although we are considering the SLE$_\kappa$ bubble for $\kappa\in (4,8)$, Theorem~\ref{thm:moment} is in fact a statement about the outer boundary of the SLE$_\kappa$ bubble, which is a variant of SLE$_{16/\kappa}$ bubble covered by~\cite{Wu23}. Following the method in~\cite{AHS21}, the paper~\cite{Wu23}
gave an exact formula for the moment of a conformal derivative defined in terms of the SLE bubble measure.  However, in all previous exact results on SLE obtained this way, including the ones in~\cite{AHS21,AS21,ASY22}, the measure on curves is a probability measure. In the contrast, the measure $\nu_\kappa$ in Theorem~\ref{thm:moment} is infinite. In fact, if we consider $ \nu_\kappa[|\psi'(i)|^\lambda] $ instead of $ \nu_\kappa[|\psi'(i)|^\lambda -1 ]$ in~\eqref{eq:thm-moment}, we would get infinity. To handle this difficulty, we introduce an auxiliary conformal derivative in addition to $\psi'(i)$ and consider the joint moment of the two. In a certain range of parameters their joint moment is finite without subtracting 1 as in~\eqref{eq:thm-moment}.  We first derive an exact formula for this joint moment and then prove~\eqref{eq:thm-moment} via analytic continuation.

We prove Theorem~\ref{thm:moment}  in Sections~\ref{sec:welding} and~\ref{sec:solve}. Section~\ref{sec:welding} contains  the needed background on LQG and Liouville CFT, and most of the geometric arguments. Section~\ref{sec:solve} is devoted to calculations, which are quite involved. We find it surprising that the end result~\eqref{eq:thm-moment} is so simple. In fact,
we first express the aforementioned joint moment of conformal derivatives  in terms of the so-called boundary  structure constants in the Liouville theory without bulk potential. Although they were fully determined in~\cite{RZ22}, the formulae contain complicated special functions and integrals. We have to use the exact solvability from the mating-of-trees framework to land at~\eqref{eq:thm-moment}.

\subsection{Outlook and perspectives}\label{subsec:outlook}
We conclude the introduction with comments on related works  and open questions. 
\subsubsection{Related work}\label{subsub:related}
\begin{itemize}
\item There was an early attempt in \cite{LSW02} to derive the backbone exponent for percolation by using directly SLE$_6$. In Appendix~B of that paper, Lawler, Schramm and Werner sketch how to derive an explicit PDE problem, which should yield the backbone exponent (as the leading eigenvalue of some differential operator, with prescribed boundary conditions). However, the operator which arises is two-dimensional, contrary to the one-arm exponent and the polychromatic case where the derivations involve only one-dimensional operators, and it is quite singular. Because of this, it has not been proved useful to predict the value of the exponent.
More recently, various mathematical issues related to that operator, its regularity in particular, were considered by Garban and Mourrat~\cite{GM}.

\item As mentioned below Theorem~\ref{thm:1.2}, the continuum counterpart of the backbone of a FK random cluster is the thin gasket of CLE$_\kappa$, whose fractal dimension should be $2-\xi(\kappa)$. In~\cite{GHM-KPZ}, Gwynne, Holden, and Miller established a version of the KPZ relation. It implies a relation between the dimension of fractal sets defined by CLE$_\kappa$ for $\kappa\in (4,8)$ and 2D Brownian motion, the latter of which encodes the former via the mating-of-trees theory~\cite{DMS14}. This relates the backbone exponent with the dimension of a fractal set defined by Brownian motion; see~\cite[Section 7]{GHM-KPZ}. This reduction does not seem to simplify the problem.
In fact, our result yields the value of the dimension of that fractal set, which is also 
transcendental since the KPZ relation is quadratic.

\item The fuzzy Potts model is a generalization of the $q$-Potts model, where we color each connected component in  the critical  $q$-random-cluster model in black and white, with probability $r$ and $1-r$ respectively.
Based on the CLE percolation framework of~\cite{MR3708206}, the papers \cite{MSW-non-simple, KL-Fuzzy} explained that under the assumption that the random cluster model  converges to CLE$_\kappa$, all the boundary arm exponents and polychromatic interior arm exponents of the fuzzy Potts model can be expressed in terms of an SLE variant called  $\SLE_\kappa(\rho;\kappa-6-\rho)$.  Once reduced to a question about $\SLE_\kappa(\rho;\kappa-6-\rho)$, these arm exponents can be computed via traditional methods, without LQG, in contrast to our case. However, the exact correspondence between $r \in [0,1]$ and $\rho$ is highly nontrivial, and was derived in \cite{MSW-non-simple} using the CLE/LQG coupling developed in~\cite{MSW-simple,MSW-non-simple}, which yields exponents with rather unusual forms. The monochromatic $k$-arm exponent for the fuzzy Potts model does not depend on $k$, and it remains unknown. It would be interesting to see if our method can be used to derive it, and on the other hand, if the techniques in~\cite{MSW-simple,MSW-non-simple} allow one to derive the percolation backbone exponent without Liouville CFT. 

\end{itemize}
\subsubsection{Open questions}
\begin{itemize}
    \item An obvious open question is the exact evaluation of  the monochromatic $j$-arm exponents with $j \ge 3$. Given the simplicity of the backbone exponent, there is a strong hope that these exponents have explicit yet transcendental values. However, we do not  have a conjectural formula for them right now. There  are essential additional complications when we try to extend our method to treat the case of $j \ge 3$ arms, as we discuss in Remark~\ref{rmk:three-arm}.

\item Many exact predictions for 2D critical percolation are (non-rigorously) derived from conformal field theory (CFT) considerations, including the celebrated Cardy's formula~\cite{Ca92}. Recently, a formula on the four-point connection probability was proposed in~\cite{HJS-4pt}.  Is there a  more direct  CFT derivation for the backbone exponent? Such a derivation  could shed lights on the $j \ge 3$ case as well. 
\end{itemize}

\medskip
\noindent\textbf{Acknowledgements.} 
We are grateful to Bertrand Duplantier and Wendelin Werner for helpful communications. We also thank Sylvain Ribault and Rob van den Berg for useful comments on an earlier version of this manuscript.
P.N.\ is partially supported by a GRF grant from the Research Grants Council of the Hong Kong SAR (project CityU11318422).
W.Q.\ is partially supported by a GRF grant from the Research Grants Council of the Hong Kong SAR (project CityU11305823).
X.S.\ was partially supported by the NSF Career award 2046514, a start-up grant from the University of Pennsylvania, and a fellowship from the Institute for Advanced Study (IAS) at Princeton. 
Z.Z.\ was partially supported by NSF grant DMS-1953848. 
W.Q., X.S. and Z.Z. thank the warm hospitality and the stimulating atmosphere of the IAS, where they visited in 2022--2023.

\section{The backbone exponent and the SLE bubble measure}\label{sec:bubble}

In this section, we focus on Bernoulli percolation. In Section~\ref{subsec:percolation}, we first set notations and collect results that are used in our proofs. In Section~\ref{subsec:bb_exp}, we then establish a relation between the backbone exponent $\xi$ for critical percolation and an exponent for outer boundaries of CLE$_\kappa$ loops with $\kappa=6$.  This latter exponent is further related in Section~\ref{subsec:link_bubble} to the exponent $\xi(\kappa)$ defined in \eqref{eq:exponent-def}, which is a quantity about SLE bubbles. The fact that $\xi(\kappa)$ is well defined by \eqref{eq:exponent-def} is a consequence of Theorems~\ref{thm:moment} and~\ref{thm:1.2}. This section takes Theorem~\ref{thm:moment} as an input, which will be proved in the remainder of the paper.

\subsection{Preliminaries on Bernoulli percolation} \label{subsec:percolation}

Let $\|.\|$ denote the Euclidean distance in $\bbR^2$, and for any $r \geq 0$, let $\cB_r := \{z \in \bbR^2 \: : \: \|z\| \leq r\}$ be the closed ball of radius $r$ centered on the point $0$. For a subset $A \subseteq \bbR^2$, its interior and its closure are denoted by $\mathring{A}$ and $\bar{A}$, respectively, and we write $\partial A = \bar{A} \setminus \mathring{A}$ for the boundary of $A$. We also introduce the closed annulus $\cA_{r,r'} := \cB_{r'} \setminus \mathring{\cB_{r}}$, for $0 \leq r < r'$.

In the whole paper, we consider the triangular lattice $\bbT = (V_{\bbT}, E_{\bbT})$, embedded in $\bbR^2$. It is the planar graph with set of vertices
$$V_{\bbT} := \big\{ x + y e^{i \pi / 3} \in \bbC \: : \: x, y \in \bbZ \big\}$$
(identifying, as usual, $\bbC \simeq \bbR^2$), and set of edges
$$E_{\bbT} := \big\{ \{v,v'\} \: : \: v, v' \in V_{\bbT} \text{ with } \|v-v'\|=1 \big\}.$$
That is, we connect by an edge any two vertices $v$, $v'$ at a Euclidean distance $1$. Such vertices are said to be neighbors, and we use the notation $v \sim v'$. For any $r \geq 0$, we denote by $B_r := \cB_r \cap V_{\bbT}$ the set of vertices in $V_{\bbT}$ at a distance at most $r$ from $0$, and we let $A_{r,r'} := \cA_{r,r'} \cap V_{\bbT}$. A path $\gamma$ on $\bbT$ is a finite sequence of vertices $v_0, v_1, \ldots, v_n$, for some $n \geq 0$ called the length of the path, such that $v_i \sim v_{i+1}$ for all $i =0, \ldots, n-1$. We say that $\gamma$ connects $v_0$ and $v_n$. A circuit is a path $\gamma$, with some length $n$, whose vertices are all distinct, except that its endpoints $v_0$ and $v_n$ coincide.

The inner vertex boundary of a subset $A$ of $V_{\bbT}$ is defined as $\din A := \{v \in A \: : \: v \sim v' \text{ for some } v' \in V_{\bbT} \setminus A\}$, while its outer vertex boundary is $\dout A := \{v \in V_{\bbT} \setminus A \: : \: v \sim v' \text{ for some } v' \in A\}$ ($= \din(V_{\bbT} \setminus A)$). We also consider the edge boundary of $A$, which is $\de A := \{ \{v,v'\} \: : \: v \in A \text { and } v' \in V_{\bbT} \setminus A\}$. In the case when $A$ is finite, $\de A$ can be represented as a collection of circuits on the dual hexagonal lattice, the lattice obtained by drawing a hexagon around each $v \in V_{\bbT}$ (appearing in Figure~\ref{fig:arm_events}). The filling of $A$, denoted by $\Fill(A)$, is obtained by considering all finite connected components of $V_{\bbT} \setminus A$, and adding them to $A$. In words, $\Fill(A)$ is obtained by adding to $A$ all the finite ``holes'', containing the vertices disconnected from $\infty$ by $A$. If $A$ is finite, $\Fill(A)$ is simply the complement of the unbounded connected component of $V_{\bbT} \setminus A$. The subset $\de_{\infty} A := \de (\Fill(A))$ of $\de A$ is called external edge boundary of $A$, and it is a single dual circuit if $A$ is finite. We also sometimes use the external vertex boundaries $\din_{\infty} A := \din (\Fill(A))$ $(\subseteq \din A)$ and $\dout_{\infty} A := \dout (\Fill(A))$ $(\subseteq \dout A)$.

Finally, still in the case when $A$ is finite, we introduce its external frontier, which is a key object in our proofs. It is defined as the subset of all $v \in \dout_{\infty} A$ from which it is possible to find two disjoint infinite self-avoiding paths, each starting from a neighbor of $v$ and remaining in $V_{\bbT} \setminus A$. Note that it is a circuit contained in $\dout A$, which surrounds $A$. Informally, it is the boundary of $A$ obtained by ``filling'' passages with width $1$ (``fjords'' in the terminology of \cite{ADA99}).

We can now define Bernoulli site percolation on $\bbT$, by declaring each $v \in V_{\bbT}$ to be, independently of the other vertices, either black or white, with probability $p$ and $1-p$ respectively, for a given value $p \in (0,1)$. We obtain in this way a random coloring of $V_{\bbT}$, represented as a configuration $\omega = (\omega_v)_{v \in V} \in \Omega := \{B,W\}^V$ (where $B$ and $W$ stand for black and white, resp.). We denote by $\bbP_p$ the corresponding product probability measure on $\Omega$. A black (resp. white) path is a path whose vertices are all black (resp. white). Two vertices $v$ and $v'$ are said to be connected if they are both black, and they are connected by a black path, i.e. if there exists a black path $v = v_0, v_1, \ldots, v_n = v'$ for some $n \geq 0$. We denote it by $v \leftrightarrow v'$, and we let $\cC(v) := \{v' \in V_{\bbT} \: : \: v \leftrightarrow v'\}$ be the (black) connected component, or cluster, of $v$ (by convention, $\cC(v) = \emptyset$ if $v$ is white). We observe that if $\cC(v) \neq \emptyset$, then $\din (\cC(v))$ and $\dout (\cC(v))$ are respectively made of black and white vertices.

We let $\theta(p) := \bbP_p(|\cC(v)| = \infty)$, $p \in [0,1]$, be the probability that $v$ belongs to an infinite black cluster (which of course does not depend on $v$). Obviously $\theta(0) = 0$ and $\theta(1) = 1$ (all vertices are white for $p=0$, resp. black for $p=1$). A simple coupling argument yields that $\theta$ is a non-decreasing function, which leads to introduce the site percolation threshold (on $\bbT$) as
$$p_c^{\textrm{site}}(\bbT) := \sup\{ p \in [0,1] \: : \; \theta(p) = 0 \} \quad (= \inf\{ p \in [0,1] \: : \; \theta(p) > 0 \}) \in [0,1].$$
In the following, we simply write $p_c$. One of the first results of percolation theory states that under suitable hypotheses on the underlying graph, which are in particular satisfied by $\bbT$, $0 < p_c < 1$. Furthermore, a symmetry argument suggests that $p_c = \frac{1}{2}$ in this particular case, which is a celebrated result of Kesten \cite{Ke80}. Similarly, we say that two vertices are white-connected if there is a white path between them, giving rise to white clusters.

For any $0 \leq r < r'$, we consider the event $\cA_{BB}(A_{r,r'})$ that there exist two disjoint black paths, each connecting a vertex of $\din B_{r'}$ and a vertex of $\dout B_r$. This event belongs to a more general family of so-called arm events, where we ask the existence of some given number $j \geq 1$ of monochromatic paths (i.e. each of them is either black or white) ``crossing'' the annulus $A_{r,r'}$, with colors prescribed in counterclockwise order (here, by the sequence of colors $BB$). Later, we will use in particular the arm events associated with the sequences $BWW$ and $BWWBWW$.

We write
\begin{equation}
\pi_{BB}(r,r') := \bbP_{p_c}(\cA_{BB}(A_{r,r'})).
\end{equation}
In order to estimate two-arm events asymptotically, the fact that this quantity $\pi_{BB}$ is ``quasi-multiplicative'' plays an important role.
\begin{lemma} \label{lem:qm}
There exist $c_1, c_2 > 0$ such that: for all $0 < r_1 < r_2 < r_3$ with $r_2 \geq 2 r_1$ and $r_3 \geq 2 r_2$,
\begin{equation} \label{eq:qm}
c_1 \cdot \pi_{BB}(r_1,r_2) \pi_{BB}(r_2,r_3) \leq \pi_{BB}(r_1,r_3) \leq c_2 \cdot \pi_{BB}(r_1,r_2) \pi_{BB}(r_2,r_3).
\end{equation}
\end{lemma}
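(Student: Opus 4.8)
The statement to be proved is the quasi-multiplicativity of $\pi_{BB}$, i.e.\ Lemma~\ref{lem:qm}. This is a standard-type result in two-dimensional percolation, and the plan is to follow the classical strategy used for polychromatic arm events (as in \cite{Ke87, SW01, BN11, Nolin08}), adapted to the fact that here both arms carry the \emph{same} color. The upper bound in~\eqref{eq:qm} is the easy direction: if two disjoint black paths cross $A_{r_1,r_3}$, then in particular they cross the sub-annuli $A_{r_1,r_2}$ and $A_{r_2,r_3}$, so by independence of the configuration in disjoint regions (the two annuli involve disjoint vertex sets since $r_2 \geq 2r_1$ ensures $A_{r_1, r_2/2}$ separates them—more precisely one intersects $A_{r_1, r_2}$ with $A_{r_2, r_3}$), $\cA_{BB}(A_{r_1,r_3}) \subseteq \cA_{BB}(A_{r_1,r_2}) \cap \cA_{BB}(A_{r_2,r_3})$ with the two events independent, giving $\pi_{BB}(r_1,r_3) \leq \pi_{BB}(r_1,r_2)\pi_{BB}(r_2,r_3)$, i.e.\ $c_2 = 1$ works.

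\textbf{The lower bound} is the substantive direction. First I would record the ingredients: (i) the Russo--Seymour--Welsh (RSW) estimates for critical percolation on $\bbT$, which give uniformly positive probability of black/white crossings of annuli and rectangles of bounded aspect ratio; (ii) the resulting existence, with probability bounded below, of a black circuit in any annulus $A_{s,2s}$; (iii) a separation-of-arms lemma, asserting that conditionally on $\cA_{BB}(A_{r,r'})$ one can, at a multiplicative cost bounded below, require the two arms to land at the inner and outer boundaries well-separated from each other and extendable into the adjacent scales (the ``well-separatedness'' or ``fencing'' construction, cf.\ \cite{Ke87, DSV, Nolin08}). The argument then glues: take the two well-separated arms realizing $\cA_{BB}(A_{r_1,r_2})$ at the scale-$r_2$ interface, the two well-separated arms realizing $\cA_{BB}(A_{r_2,r_3})$ at the same interface, and in the transition annulus $A_{r_2, 2r_2}$ (or a bounded number of dyadic annuli near $r_2$) use RSW to build, with probability bounded below and independently of everything else, black connections joining the two inner landing zones to the two outer landing zones in a \emph{disjoint} way. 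Disjointness is where the monochromatic case needs slightly more care than the polychromatic one: rather than relying on a color interface to keep the two arms apart, one routes the two connecting pieces through disjoint sub-sectors of the transition annulus, using that RSW crossings of disjoint topological rectangles are independent.

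\textbf{Main obstacle.} The technically delicate step is the separation lemma for the monochromatic two-arm event: one must show that demanding the two black arms arrive at prescribed, well-separated arcs of $\din B_{r_2}$ and $\dout B_{r_2}$ (and similarly at $\dout B_{r_1}$, $\din B_{r_3}$) costs only a bounded factor. In the polychromatic setting the standard proof uses that consecutive arms of different colors are automatically kept apart by a dual arm; with two arms of the same color one instead proves it by a direct conditioning/pushing argument, exploring the arms from the boundary, and using RSW plus the FKG inequality to reroute them into disjoint corridors. This is entirely analogous to arguments in the literature (e.g.\ the treatment of same-color arms in \cite{BN11} or the general separation framework), so I would state it as a lemma, indicate the FKG+RSW+exploration proof in a few lines, and refer to \cite{Nolin08, BN11} for the full details. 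Everything else---the upper bound, the gluing, the reduction from general radii to dyadic scales via a bounded number of applications of RSW---is routine. Consequently the write-up would be short: state the separation lemma, prove the upper bound in one line, and assemble the lower bound from the separation lemma and RSW in a short paragraph.
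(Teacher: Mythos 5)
Your proposal is correct in spirit and its upper bound coincides with the paper's (independence across the two sub-annuli, $c_2=1$), but for the lower bound you take a genuinely heavier route than the paper does, and your remark that ``the monochromatic case needs slightly more care than the polychromatic one'' for disjointness is exactly backwards. The paper's proof (Appendix~\ref{sec:app_perc}, see Figure~\ref{fig:qm}) uses no separation-of-arms lemma at all: one defines an event $E_{r_2}$ requiring a bounded number of black circuits in concentric annuli of fixed aspect ratio around radius $r_2$, together with two black crossings of half-annuli there; by RSW and the Harris inequality \eqref{eq:Harris} this event has probability at least a universal $c_1>0$. On $E_{r_2}\cap\cA_{BB}(A_{r_1,r_2})\cap\cA_{BB}(A_{r_2,r_3})$ the union of the two pairs of arms and the circuits/crossings is an all-black set that cannot be disconnected between $\dout B_{r_1}$ and $\din B_{r_3}$ by removing a single vertex, so Menger's theorem yields two \emph{disjoint} black arms across $A_{r_1,r_3}$; a final application of Harris to the three increasing events gives $\pi_{BB}(r_1,r_3)\geq c_1\,\pi_{BB}(r_1,r_2)\,\pi_{BB}(r_2,r_3)$. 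The point is that precisely because everything requested is black, FKG glues all the pieces and Menger converts $2$-connectivity of the union into two disjoint arms for free; it is in the \emph{polychromatic} case that FKG cannot join arms of opposite colors and one must first prove the separation/fencing property as in \cite{Ke87,No08}. Your route—separation of the two black arms into prescribed landing arcs plus routing through disjoint sub-sectors—can be made to work (separation lemmas do hold for arbitrary color sequences, cf.\ \cite{BN11}), but as written it outsources the only nontrivial step to the literature and makes the lemma look harder than it is; the circuit-plus-Menger argument is shorter, self-contained, and is the one the paper uses and later reuses (the same Menger observation underlies Section~\ref{subsec:bb_exp}).
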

We stress that $c_1$, $c_2$ are ``universal'' constants, i.e. they do not depend on $r_1, r_2, r_3$. The right-hand side inequality follows immediately from the spatial independence of Bernoulli percolation, which implies that the events $\cA_{BB}(A_{r_1,r_2})$ and $\cA_{BB}(A_{r_2,r_3})$ are independent. We can thus simply take $c_2=1$, which we sometimes do. The left-hand side inequality is not difficult to obtain in this monochromatic case, from the Russo-Seymour-Welsh lower bounds. This was briefly mentioned in \cite{BN11} (see the end of Section~2.1 there), and for the sake of completeness we decided to provide a proof, in Appendix~\ref{sec:app_perc}. This quasi-multiplicativity property is standard in the case of Bernoulli percolation. It holds in general for any sequence of colors, but it is harder to establish in the polychromatic case, i.e. when arms of both colors are requested (such as in the $4$-arm case corresponding to $BWBW$). In that case, proving the left-hand side inequality requires first to prove a ``separation'' property for the endpoints of the arms, as done in \cite{Ke87} (see also \cite{No08}). Many other models are known to satisfy such a property, which is often key to connect the discrete model to its continuum limit. In the remainder of the paper, we are only concerned with the monochromatic sequence $BB$, so most often we drop it from the notation. In particular, we simply write $\cA$ and $\pi$.

As sketched in Section~4 of \cite{BN11}, the quasi-multiplicativity property is already enough to ensure the existence of an exponent, as soon as the existence of a limiting continuum probability, in the scaling limit, is checked. Indeed these probabilities are then automatically submultiplicative, which allows one to deduce the existence of the exponent from a standard argument.

In order to relate the discrete $2$-arm monochromatic event to its continuous analog, we rely on two classical a-priori estimates on $3$- and $6$-arm polychromatic events, which follow from standard arguments. For some universal $\beta > 0$, the following upper bounds hold, for all $r, r' > 0$ with $r' \geq 2r$.
\begin{enumerate}[(i)]
\item A-priori estimate on $3$ arms:
\begin{equation} \label{eq:3_arm}
\pi_3(r,r') := \pi_{BWW}(r,r') = \bbP_{p_c}(\cA_{BWW}(A_{r,r'})) \leq \bigg( \frac{r}{r'} \bigg)^{\beta} \pi(r,r').
\end{equation}

\item A-priori estimate on $6$ arms:
\begin{equation} \label{eq:6_arm}
\pi_6(r,r') := \pi_{BWWBWW}(r,r') = \bbP_{p_c}(\cA_{BWWBWW}(A_{r,r'})) \leq \bigg( \frac{r}{r'} \bigg)^{2+ \beta}.
\end{equation}
\end{enumerate}
Note that the exponents corresponding to $\pi_3$ and $\pi_6$ are known ($\alpha_3 = \frac{2}{3}$ and $\alpha_6 = \frac{35}{12}$, respectively) but they will not be needed. The upper bounds above can be established by soft arguments (essentially the Russo-Seymour-Welsh bounds, that we mention in Appendix~\ref{sec:app_perc}). As explained in \cite{BN11}, the $6$-arm event plays an important role to show that discrete events converge to continuous ones. Indeed, the fact that this exponent is strictly larger than $2$ implies that two macroscopic parts of loops (either of the same loop, or of two distinct ones) cannot come close to each other without touching in the discrete level.

\subsection{Derivation of the discrete exponent} \label{subsec:bb_exp}

In this subsection, we derive the backbone exponent, corresponding to $\pi(0,n)$, from estimates in the continuum which are proved in the remainder of the paper. Our starting point is a standard result from graph theory, Menger's theorem. In any annulus $A_{r_1,r_2}$, $0 \leq r_1 < r_2$, this result implies that the complement of $\cA_{BB}(A_{r_1,r_2})$ is exactly the event that there exists a ``quasi-white'' circuit surrounding $B_{r_1}$ in $A_{r_1,r_2}$, whose vertices are all white, except at most one (which is then black). This leads to the two cases depicted in Figure~\ref{fig:loops}, center and right. Consider the white cluster containing this circuit: its external edge boundary is a discrete loop that converges to a ${\rm CLE}_6$ loop \cite{CN06}, and its external frontier is a black circuit surrounding $B_{r_1}$. Conversely, the existence of such a black circuit does not guarantee the occurrence of $\cA_{BB}(A_{r_1,r_2})^c$. Indeed, \emph{fjords} (as in \cite{ADA99}) are attached to the external frontier, and black arms can take advantage of them to reach $B_{r_1}$ (see Figure~\ref{fig:bubbles}). However, we will argue that typically, such fjords do not come too close to $B_{r_1}$, so that the frontier still provides a good approximation. More precisely, we will use that the cost of such fjords (intersecting $B_{r_1}$ without covering it completely) can be controlled by the exponent $\alpha_3$, which is $> \xi$.

\begin{figure}
	\centering
	\subfigure{\includegraphics[width=.3\textwidth]{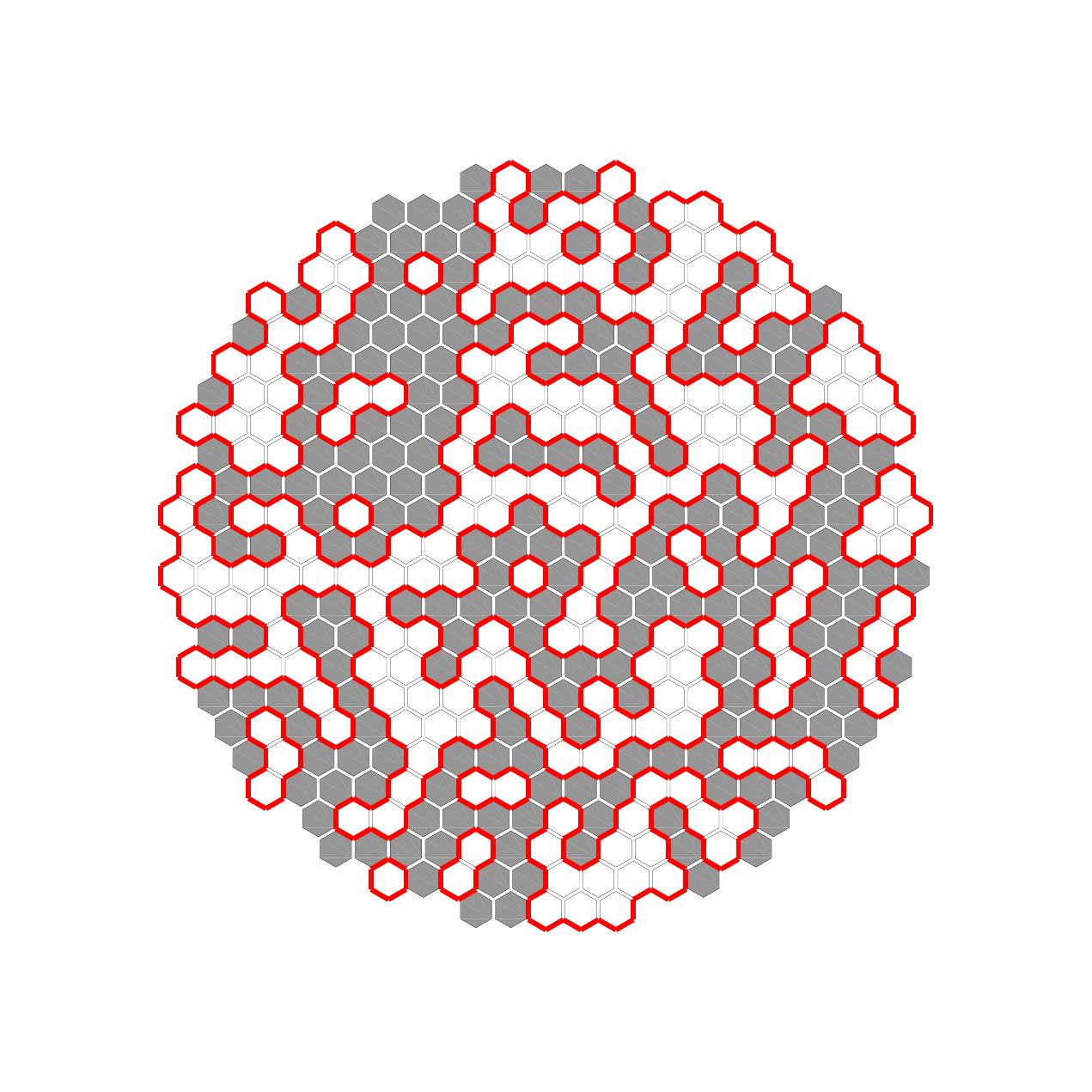}}
	\hspace{0.5cm}
	\subfigure{\includegraphics[width=.3\textwidth]{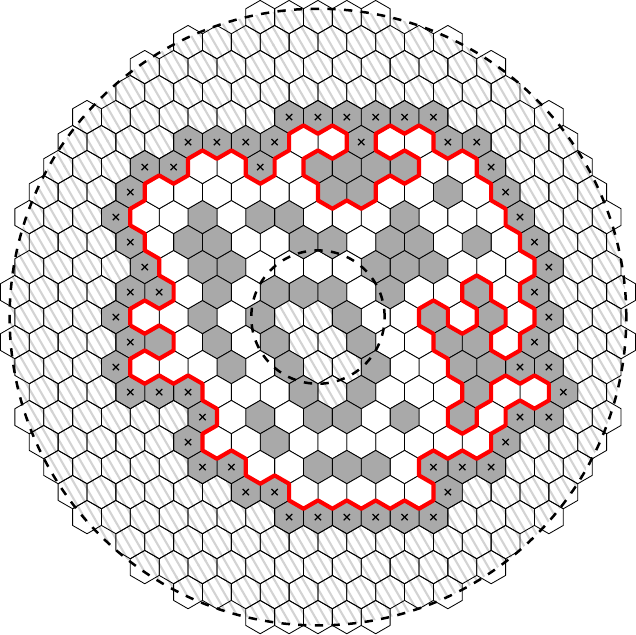}}
		\hspace{0.5cm}
	\subfigure{\includegraphics[width=.3\textwidth]{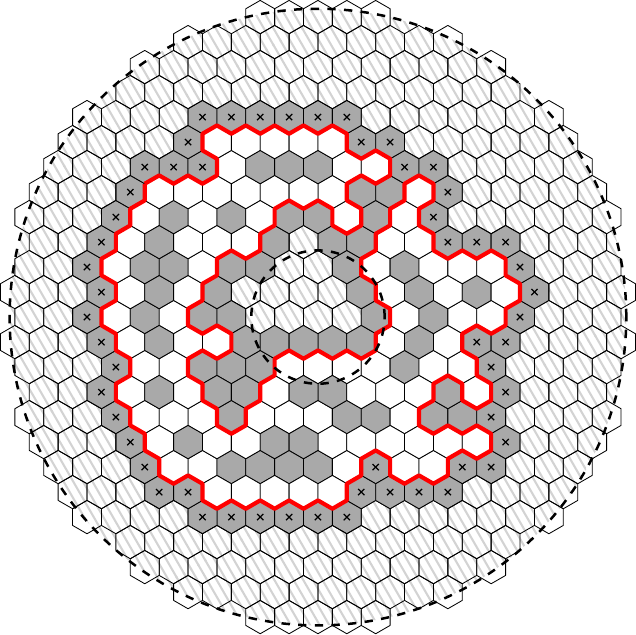}}
	\caption{Let $0 \leq r_1 < r_2$. Given a percolation configuration in $B_{r_2}$, we consider interfaces, shown in red: they are self-avoiding paths, on the dual hexagonal lattice, separating black and white clusters. \emph{Left:} If we consider black \emph{boundary conditions}, which amounts to adding an extra layer of black sites (along $\dout B_{r_2}$), the set of interfaces is a collection of circuits. \emph{Center:} There exists a white circuit surrounding the smaller ball $B_{r_1}$, i.e. there is no black arm in $A_{r_1,r_2}$. We consider the white connected component of this circuit, and its external edge boundary, which is a dual circuit. The external frontier is a black circuit, marked with crosses. \emph{Right:} It there is a quasi-white circuit, containing exactly one black vertex, then there may exist one arm in $A_{r_1,r_2}$ (but not two disjoint arms). Again, we consider the white cluster of this circuit, its external edge boundary, and its external frontier.}
	\label{fig:loops}
\end{figure}

\begin{figure}[t]
\centering
\includegraphics[width=0.45\textwidth]{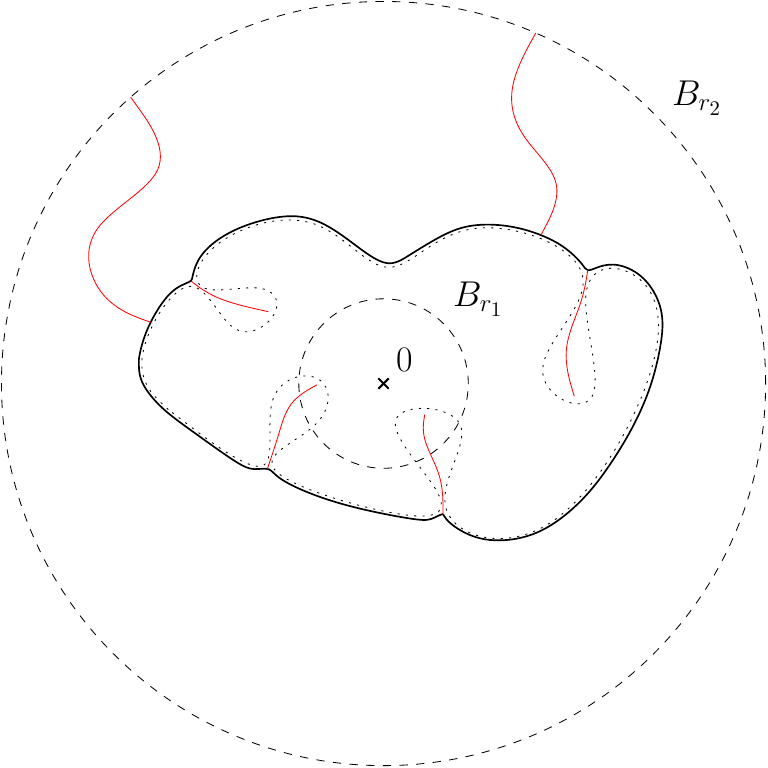}
\caption{Black paths emanating from the external frontier of a loop may ensure the occurrence of the monochromatic two-arm event in the annulus $A_{r_1,r_2}$, through passages of width $1$, even in the case when that frontier surrounds $B_{r_1}$.}
\label{fig:bubbles}
\end{figure}

Throughout this section, we consider (nested) ${\rm CLE}_6$ in the closed unit disk $\bbD$, which is a collection of continuous, non-simple loops. These loops have multiple points, and touch other loops, but no crossing can occur -- between two different loops, or within the same loop. Among these loops, we let $\gamma$ be the outermost loop (so in the non-nested ${\rm CLE}_6$) disconnecting $0$ from $\partial \bbD$, and we let $\ep{\gamma}$ be the outer boundary of $\gamma$, i.e. corresponding to the boundary of the connected component of $\bbC \setminus \gamma$ intersecting $\partial \bbD$. It is a simple loop, which may touch $\partial \bbD$ (since $\gamma$ itself may intersect $\partial \bbD$). We introduce, for all $r \in (0,1)$, the event
$$E_r := \{ \ep{\gamma} \cap \cB_r \neq \emptyset\}.$$
Note that $E_r$ does not correspond directly, in the continuum, to the existence of two disjoint black arms for the discrete model, and we will need to compare these two events. Let
\begin{equation}
f(r) := \bbP(E_r) \quad  \textrm{for }   r \in (0,1).
\end{equation}
In Section~\ref{subsec:link_bubble}, we will prove purely from the continuum the following proposition, which relates the asymptotic behavior of $f(r)$ as $r \searrow 0$  to the ${\rm SLE}_6$ bubble.
\begin{proposition} \label{prop:est_bb}
Recall the exponent $\xi(\kappa)$ defined by \eqref{eq:exponent-def}. We have
\begin{equation} \label{eq:bb_cont}
\lim_{r \searrow 0} \frac{\log f(r)}{\log r} = \xi(6).
\end{equation}
\end{proposition}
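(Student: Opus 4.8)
The plan is to identify the event $E_r = \{\ep\gamma \cap \cB_r \neq \emptyset\}$ in the disk with a corresponding event for the ${\rm SLE}_6$ bubble measure $\mu_\kappa$ at $\kappa = 6$, and then to extract the exponent from the ``iterated'' / Markovian structure of the bubble measure. First I would set up the dictionary: the outermost ${\rm CLE}_6$ loop $\gamma$ surrounding $0$, together with its outer boundary $\ep\gamma$, can be encoded by an ${\rm SLE}_6$ bubble via the standard exploration construction (as in \cite[Section 6.1]{MR3708206} and \cite{MSW-non-simple}), once one zooms in near a boundary point. Concretely, one restricts attention to the connected component $D_i$ of $\bbD \setminus \gamma$ ``outside'' the loop — the analog of $D_i$ in the definition of $\nu_\kappa$ in Section~\ref{subsec:intro-sle} — versus the inside region. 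The key scaling input is that, under the bubble measure $\mu_6$, conditionally on the outer boundary and on the ``outside'' region, the law of what happens inside (near $0$) is again governed by an independent ${\rm CLE}_6$, while the conformal radius of the inside region as seen from $0$ gets multiplied, at each exploration step, by an independent copy of $|\psi'(i)|$ (up to the fixed normalization $\mu_6(E_i) = 1$). Thus the number of nested loops one must peel before reaching scale $r$ is, in the $r \searrow 0$ limit, a renewal/branching process whose step distribution is exactly $\nu_6[\,\cdot\,]$ on $\log|\psi'(i)|$.

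The second step is to turn this into the exponent. I would argue that $f(r) = \bbP(E_r)$ satisfies, up to multiplicative constants that are harmless in the log-limit, a renewal-type relation: decomposing according to the first ${\rm CLE}_6$ loop surrounding $0$ and its outer boundary $\ep\gamma$, one has schematically
\begin{equation*}
f(r) \;\approx\; \nu_6\!\big[\,\one_{\{\ep\gamma \cap \cB_r \neq \emptyset\}}\,\big] \;+\; \nu_6\!\big[\,\one_{\{\ep\gamma \cap \cB_r = \emptyset\}} \, f\big(r/|\psi'(i)|\big)\,\big],
\end{equation*}
where the first term accounts for the frontier already reaching $\cB_r$ at this level, and the second iterates into the smaller inside domain after rescaling by $|\psi'(i)|$ (here I am using $|\psi'(i)|$ as a proxy for the ratio of conformal radii, with $\psi$ the map $\bbH \to D_i$). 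Plugging the ansatz $f(r) = r^{\xi + o(1)}$ into this relation and using that $\nu_6$ assigns finite mass to $E_i^c$-type events (Theorem~\ref{thm:moment} with the subtraction of $1$), the exponential rate is forced to satisfy
\begin{equation*}
\nu_6\big[\,|\psi'(i)|^{-\xi} - 1\,\big] = 1,
\end{equation*}
which is precisely the defining equation \eqref{eq:exponent-def} for $\xi(6)$; the ``$-1$'' arises exactly because the first term above (the ``stopping'' contribution) combines with the mass-one normalization $\mu_6(E_i) = 1$ to subtract the total mass. Uniqueness of the solution in $(0, 1 - 2/6) = (0, \tfrac23)$ is guaranteed by Theorems~\ref{thm:moment} and~\ref{thm:1.2}, so the rate is identified as $\xi(6)$, giving \eqref{eq:bb_cont}.

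The routine parts are: (a) the a-priori probabilistic estimates ensuring the renewal relation holds up to constants — this needs a Russo--Seymour--Welsh-type / quasi-multiplicativity argument in the continuum ${\rm CLE}_6$ (the analog of Lemma~\ref{lem:qm}), controlling the probability that $\ep\gamma$ ``overshoots'' $\cB_r$ by a bounded factor, and showing the error terms do not affect the log-limit; (b) verifying that the conformal-radius step law under the (normalized) bubble measure is exactly $\nu_6$ composed with $-\log|\psi'(i)|$, which is where the precise normalization $\mu_\kappa(E_i) = 1$ and the relation between ${\rm CLE}_\kappa$ and the ${\rm SLE}_\kappa(\rho)$ bubble measure from \cite{MR3708206,MSW-non-simple,Z22} enter. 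The main obstacle I expect is (b) together with the careful bookkeeping of the boundary-touching behavior: because ${\rm CLE}_6$ loops and their outer boundaries touch $\partial\bbD$, one must be careful about how the exploration is defined near the boundary and about which component plays the role of ``$D_i$,'' and one must check that the event $E_r$ really is comparable (in the $r \searrow 0$ exponent) to the event that a bubble-frontier reaches scale $r$ rather than some slightly different event. This is the point where the distinction between $\mu_6$ and its restriction $\nu_6$ to $E_i^c$, and the finiteness $\mu_6(E_i) = 1 < \infty$, must be used decisively; getting the subtraction of $1$ to come out exactly right in \eqref{eq:exponent-def}, rather than some other affine correction, is the delicate point of the whole argument.
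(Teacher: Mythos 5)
Your dictionary is the same as the paper's (explore the CLE$_6$ loops via the SLE$_6$ bubble measure, and characterize the exponent by $\nu_6[|\psi'(i)|^{-\xi}-1]=1$), but the mechanism you propose for extracting the exponent --- a one-step renewal equation into which you plug the ansatz $f(r)=r^{\xi+o(1)}$ --- has a genuine gap. There is no ``first'' loop to decompose on: in the exploration (Description~\ref{descrip}) the loops arrive as a Poisson point process with the \emph{infinite} intensity $\mu_6$, so between any two discovered loops there are infinitely many others. Moreover, in your schematic relation the ``stopping'' term is written under $\nu_6$, whereas the terminal loop $\gamma$ is governed by the probability measure $\widehat{\mu}_6=\mu_6|_{E_i}$, and it contributes an extra conformal factor $|\psi_{e_\tau}'(i)|$ to the conformal radius that your relation omits entirely. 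Finally, even granting a corrected relation, plugging in an ansatz neither proves that $\lim_{r\searrow 0}\log f(r)/\log r$ exists nor gives matching upper and lower bounds: since $\nu_6[|\psi'(i)|^{-\xi}]=\infty$, the subtraction of $1$ cannot be produced by formally distributing the infinite measure over the two terms, which is exactly the point you flag as ``delicate'' but do not resolve.

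The paper replaces the renewal heuristic by an exact computation. After Koebe's $1/4$-theorem reduces $E_r$ to a tail event for $|\Psi_\tau'(i)|=|\Psi_{\tau^-}'(i)|\,|\psi_{e_\tau}'(i)|$, Campbell's formula for the Poisson point process of bubbles run up to the independent $\mathrm{Exp}(1)$ time $\tau$ (this is precisely where the normalization $\mu_6(E_i)=1$ enters) gives $\bbE[|\Psi_{\tau^-}'(i)|^{-\lambda}]=(1-\beta(\lambda))^{-1}$ with $\beta(\lambda)=\nu_6[|\psi'(i)|^{\lambda}-1]$, so $-\xi(6)$ appears as the abscissa of convergence; the tail exponent is then obtained from the Tauberian theorem (Theorem~\ref{thm:tauberian}), which requires verifying, via the explicit formula of Theorem~\ref{thm:moment}, that $-\xi(6)$ is actually a pole (Lemma~\ref{lem:er1}). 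The terminal factor $|\psi_{e_\tau}'(i)|$ is handled separately using its independence from $|\Psi_{\tau^-}'(i)|$, the moment bound of Lemma~\ref{lem:er2}, and the strict inequality $\alpha_3(6)=1-2/6>\xi(6)$, which again comes from the exact formula. These ingredients --- the compound-Poisson/Exp$(1)$ structure in place of a discrete renewal, the Tauberian step with pole verification, and the $\alpha_3>\xi$ comparison for the last bubble --- are missing from your plan and are what make the argument a proof; note also that the Poissonian description you invoke is needed for the \emph{radial} exploration targeting the interior point, which is not an off-the-shelf citation but is established in Appendix~\ref{app:descrip}.
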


Proposition~\ref{prop:est_bb} means in particular that $\lim_{r \searrow 0} \frac{\log f(r)}{\log r}$ exists.  In the rest of this subsection, assuming Proposition~\ref{prop:est_bb}, we show Theorem~\ref{thm:bb_exp}, which states that this limit equals the discrete backbone exponent $\xi$ describing $\pi(0,n)$.
In order to do so, we first need to introduce analogous events for Bernoulli percolation. For any $r_2 \geq 1$, consider the lattice ball $B_{r_2}$. Similarly to the continuous situation above, we can introduce, in this discrete setting, the set of all edge boundaries of clusters, with suitable boundary conditions so that we get a collection of dual loops. This collection of discrete loops converges, in a sense made precise below, to nested ${\rm CLE}_6$. We can then consider the outermost such loop $\Gamma$, which is a boundary of a white cluster and such that its external frontier $\ep{\Gamma}$ (which is thus black) disconnects $0$ from $\dout B_{r_2}$. Note that this loop may not exist, but this event has a vanishing probability as $r_2 \to \infty$, so we do not need to worry about it in the upcoming reasonings.

Still in the ball $B_{r_2}$, for $0 < r_1 < r_2$, we introduce the discrete event corresponding to $E_r$, that is,
$$\tilde{E}_{r_1,r_2} := \{ \ep{\Gamma} \cap (B_{r_1} \cup (\dout B_{r_1})) \neq \emptyset\},$$
and we let
$$\tilde{f}(r_1,r_2) := \bbP_{p_c}(E_{r_1,r_2}).$$
From the connection to ${\rm CLE}_6$ in the scaling limit, it is of course natural to expect $\tilde{f}(r_1,r_2) \simeq f(\frac{r_1}{r_2})$. We will rely on the following intermediate result.
\begin{lemma} \label{lem:scaling_limit}
Let $R > 1$. For all $\eps \in (0,1)$,
\begin{equation} \label{eq:limit_proba}
\liminf_{n \to \infty} \tilde{f}(n,Rn) \geq f(R^{-1-\eps}) \quad \text{and} \quad \limsup_{n \to \infty} \tilde{f}(n,Rn) \leq f(R^{-1+\eps}).
\end{equation}
\end{lemma}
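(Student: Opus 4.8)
The plan is to realize the discrete event $\tilde E_{n,Rn}$ and the continuum event $E_r$ on a common probability space through the convergence of the discrete interface collection to nested $\mathrm{CLE}_6$, and then exploit that both $r\mapsto f(r)$ and its discrete counterpart are monotone so that the approximation error is absorbed into the $R^{\pm\eps}$ slack. First I would fix the scaling: consider critical site percolation on $\frac1n\bbT\cap\cB_R$ (equivalently percolation on $\bbT$ in $B_{Rn}$, rescaled by $n^{-1}$) with the boundary conditions of Figure~\ref{fig:loops} (left), so that the collection of outermost dual loops surrounding $0$ converges in distribution, in the Hausdorff-type topology of \cite{CN06}, to the outermost $\mathrm{CLE}_6$ loop $\gamma$ in $\cB_R$ separating $0$ from $\partial\cB_R$; by conformal invariance and the restriction property of $\mathrm{CLE}_6$ under $z\mapsto z/R$, this is the same as $R\cdot\gamma_{\bbD}$ where $\gamma_{\bbD}$ is the outermost loop in $\bbD$. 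The rescaled discrete loop $\frac1n\Gamma$ then converges to $\gamma$, and — this is the key deterministic input — its external frontier $\frac1n\ep\Gamma$ converges to the outer boundary $\ep\gamma$ of $\gamma$. The latter is the statement that the accessible/external perimeter of the discrete cluster converges to the outer boundary of the $\mathrm{CLE}_6$ loop, which is justified by the a-priori $6$-arm bound \eqref{eq:6_arm}: since $\alpha_6>2$, macroscopic "fjords" of the discrete cluster cannot pinch off regions of positive diameter, so the only discrepancy between $\din_\infty$, the external frontier, and $\ep\gamma$ occurs at microscopic scale.

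Granting this, I would argue the two inequalities separately. For the $\liminf$ bound: the event $\tilde E_{n,Rn}$ is (up to the negligible event that the outermost loop fails to exist) the event $\{\frac1n\ep\Gamma\cap \cB_{1}\neq\emptyset\}$ after rescaling by $n^{-1}$ — or rather $\cB_{1+o(1)}$ since $B_1\cup\dout B_1$ is a lattice ball of radius $1+O(n^{-1})$ once rescaled by $n^{-1}$; equivalently it is $\{\ep\Gamma\cap\cB_n\neq\emptyset\}$ inside $B_{Rn}$, which after dividing by $Rn$ is the continuum-type event at radius $R^{-1}$. The function $r\mapsto\{\ep\gamma\cap\cB_r\neq\emptyset\}$ is increasing and its indicator is a.s.\ continuous at a fixed deterministic $r$ (because $\ep\gamma$ is a simple loop and $\bbP(\min_{z\in\ep\gamma}\|z\|=r)=0$ for each fixed $r$ — the law of that minimum has no atoms, which follows from the conformal invariance / scaling of $\mathrm{CLE}_6$), so by the portmanteau theorem and the convergence $\frac1{Rn}\ep\Gamma\to\ep\gamma$ one gets $\tilde f(n,Rn)\to\bbP(\ep\gamma\cap\cB_{R^{-1}}\neq\emptyset)=f(R^{-1})\geq f(R^{-1-\eps})$ by monotonicity of $f$. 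Strictly, to avoid having to prove continuity of the limiting law I would instead only use one-sided semicontinuity: $\{\ep\gamma\cap\cB_{R^{-1-\eps}}\neq\emptyset\}$ is (essentially) open as a subset of the loop space, $R^{-1-\eps}<R^{-1}$ leaves room for the $o(1)$ lattice corrections, and the $6$-arm estimate guarantees that the discrete event is captured by the continuum open event with error $o(1)$; hence $\liminf_n\tilde f(n,Rn)\geq f(R^{-1-\eps})$. Symmetrically, $\{\ep\gamma\cap\cB_{R^{-1+\eps}}\neq\emptyset\}$ is essentially closed, $R^{-1+\eps}>R^{-1}$ again absorbs the lattice corrections and the $3$-arm cost \eqref{eq:3_arm} of a fjord reaching distance $n$ from radius $n$ into the target ball, and therefore $\limsup_n\tilde f(n,Rn)\leq f(R^{-1+\eps})$.

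The main obstacle is the passage $\frac1n\ep\Gamma\to\ep\gamma$ — i.e.\ controlling the external frontier rather than the loop itself. Convergence of the raw interfaces to $\mathrm{CLE}_6$ is \cite{CN06}, but the external frontier is a nonlocal functional: it erases exactly those pieces of $\ep\Gamma$ accessible only through width-$1$ passages, and one must show (a) no macroscopic piece of $\ep\gamma$ is "hidden" behind a macroscopic fjord in the discrete picture and (b) no discrete fjord of the cluster survives into a macroscopic feature of the external frontier. Both are consequences of $\alpha_6>2$: the existence of a fjord separating two points at mutual distance $\delta$ and depth $\gg 1$ (lattice units) forces a $6$-arm event across an annulus of modulus $\asymp\delta n$, whose probability is $O((\delta n)^{-(2+\beta)})$ and is summable over the $O(n^2\delta^{-2})$ candidate locations; a Borel–Cantelli / first-moment argument then rules out macroscopic fjords with probability $1-o(1)$ as $n\to\infty$. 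Assembling this quantitative fjord-control with the topological convergence of \cite{CN06}, and then inserting the $\eps$-slack to kill all remaining $o(1)$ discretization errors, yields \eqref{eq:limit_proba}.
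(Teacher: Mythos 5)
Your overall route is the same as the paper's: couple the discrete configuration with its ${\rm CLE}_6$ limit via Skorokhod, reduce the lemma to a Hausdorff-closeness statement between $\frac{1}{Rn}\ep{\Gamma}$ and $\ep{\gamma}$, control pinchings/fjords with the a-priori six-arm bound \eqref{eq:6_arm}, and let the slack between $R^{-1-\eps}$ and $R^{-1+\eps}$ absorb the remaining discretization errors (the paper likewise uses the one-sided version precisely to avoid proving continuity of $f$). So the architecture is right; the problem is in the one quantitative step that carries the whole argument.

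The gap is in your fjord estimate. You bound a dangerous fjord by a six-arm event ``across an annulus of modulus $\asymp\delta n$'', i.e.\ from lattice scale to $\delta n$, giving $O((\delta n)^{-(2+\beta)})$ per site and a union bound that is $o(1)$ in $n$. That only rules out fjords whose mouth has width of order $1$. The configurations that actually threaten the Hausdorff bound are fjords of $\ep{\Gamma}$ with a \emph{mesoscopic} mouth: when the discrete loop shadows a double point of $\gamma$, the coupling only tells you that $\frac{1}{Rn}\Gamma$ is within some $\bar{\delta}$ of $\gamma$, hence the two discrete strands come within $2\bar{\delta}Rn$ of each other; the passage width can be anything in $[2,\,2\bar{\delta}Rn]$, and inside that radius you have no arm information at all. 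The six-arm event you can actually exhibit therefore lives only in the annulus from $\approx\bar{\delta}Rn$ to $\approx\frac{\delta}{4}Rn$ around the pinch, with probability $\approx(\bar{\delta}/\delta)^{2+\beta}$ per point of a $\bar{\delta}Rn$-grid, and the union bound yields $c(R,\delta)\,\bar{\delta}^{\beta}$ --- small only if you first choose $\bar{\delta}$ small as a function of the target error $\eta$ and then send $n\to\infty$, not $o(1)$ in $n$ alone, and not a Borel--Cantelli statement. This is exactly how the paper organizes the proof (the events $F_1$ and $\tilde F_2$ at the two scales $\bar{\delta}$ and $\delta$); a dyadic union bound over mouth widths would also work, but its dominant term is again the top scale $\bar{\delta}Rn$ and forces the same order of quantifiers, so your single-scale first-moment bound cannot be repaired without introducing the intermediate scale. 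Two minor points: your appeal to \eqref{eq:3_arm} in the limsup direction is misplaced (that interior $BWW$ bound is used later, in the proof of Theorem~\ref{thm:bb_exp}); what this lemma needs instead is the case of loops touching $\partial B_{Rn}$, where only three arms are available and one uses that the corresponding half-plane three-arm exponent exceeds $1$.
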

In principle, it should be true that $\lim_{n \to \infty} \tilde{f}(n,Rn) = f(R^{-1})$. However, proving it would require a little bit more care, e.g. to show that $f$ is continuous. Here, we are content with this weaker version, which is good enough to derive Theorem~\ref{thm:bb_exp}.

In order to discuss convergence of collections of loops, we use the setting of \cite{AB99}. Curves in $\bbD$ are defined as continuous functions $\gamma : [0,1] \to \bbD$, up to strictly monotone reparametrization. That is, we consider equivalence classes for the relation: $\gamma \sim \gamma'$ if and only if there exists a bijection $\varphi: [0,1] \to [0,1]$, either strictly increasing or strictly decreasing, such that $\gamma = \gamma' \circ \varphi$. We then consider uniform convergence up to reparametrization: the distance between two curves $\gamma$ and $\gamma'$ is defined as
$$d(\gamma,\gamma') = \inf_{\varphi} \sup_{t \in [0,1]} \|\gamma(t) - \gamma'(\varphi(t))\|,$$
where the infimum is taken over all strictly monotone bijections $\varphi$ from $[0,1]$ onto itself. Sets of curves are then simply equipped with the Hausdorff distance induced by the distance $d$.

\begin{proof}[Proof of Lemma~\ref{lem:scaling_limit}]
In the whole proof, we consider Bernoulli percolation in $B_{R n}$, for large $n$. We can choose a suitable coupling to ensure that $\frac{1}{Rn} \Gamma$ converges to the continuous loop $\gamma$ (from Skorokhod's theorem). Note that in the following, we are interested in the outermost loop $\gamma$ in ${\rm CLE}_6$ such that $\ep{\gamma}$ disconnects $0$ from $\partial \bbD$, in particular whether the loop $\ep{\gamma}$ intersects $\cB_{R^{-1}}$. We observe that if $\diam(\gamma) \leq R^{-1}$, then $\gamma$ would need to be included in $\cB_{R^{-1}}$, so in particular $\ep{\gamma}$ would intersect that ball. Hence, only ``macroscopic'' loops, with a diameter larger than $R^{-1}$, need to be examined. We can then use that for any $\eps > 0$, a.s. there are only finitely loops in ${\rm CLE}_6$ with a diameter $> \eps$.

Let $\eta > 0$. We claim that for all $n$ large enough, we have the following: with probability at least $1 - \eta$, the rescaled external frontier $\frac{1}{Rn} \ep{\Gamma}$ lies within distance $\delta := \frac{1}{2}(R^{-1} - R^{-1-\eps})$ ($\leq \frac{1}{2}(R^{-1+\eps} - R^{-1})$) from $\ep{\gamma}$. Then if we denote by $F$ the corresponding event, which satisfies $\bbP(F^c) \leq \eta$, we have the inclusion of events
$$F \cap \{ \ep{\gamma} \cap \cB_{R^{-1-\eps}} \neq \emptyset \} \subseteq F \cap \{ \ep{\Gamma} \cap (B_n \cup (\dout B_n)) \neq \emptyset \} \subseteq F \cap \{ \ep{\gamma} \cap \cB_{R^{-1+\eps}} \neq \emptyset \}.$$
This would then imply immediately the inequalities
$$f(R^{-1-\eps}) - \eta \leq \tilde{f}(n,Rn) \leq f(R^{-1+\eps}) + \eta,$$
for all $n$ large enough, and so the desired conclusion \eqref{eq:limit_proba}.

We now prove the claim. If $\gamma$ was a simple curve, this would be obvious. It is clear that $\tilde{\Gamma} = \frac{1}{Rn} \ep{\Gamma}$ cannot wander too much from $\ep{\gamma}$ to the outside (since $\ep{\gamma}$ is known to be a simple curve). However, $\gamma$ has double points and we have to take care of these ``pinchings'', to show that $\tilde{\Gamma}$ cannot penetrate too deeply by making use of them (which would well be conceivable, if passages of width $2, 3, \ldots$ existed on the discrete picture). This issue can be handled using properties of Bernoulli percolation (similarly to Section~4 of \cite{BN11}). For this, we use the a-priori bound \eqref{eq:6_arm} to ensure that in the discrete model, it is not possible for microscopic bottlenecks to arise. More specifically, on a suitable event with probability at least $1 - \frac{\eta}{10}$, there is no fjord which has both a diameter $\geq \frac{\delta}{4} R n$ and a passage width $\geq 2$.

Let $\bar{\delta} > 0$. First, we can ensure that with probability at least $1 - \frac{\eta}{2}$, $\frac{1}{Rn} \Gamma$ is within a distance $\bar{\delta}$ of $\gamma$ (we explain later how to choose $\bar{\delta}$). We denote by $F_1$ the corresponding event. Now, let
$$F_2 := \{ \exists z \in \cB_{Rn} \: : \: \cA_6(A_{\bar{\delta} R n, \frac{\delta}{4} R n}(z)) \text{ occurs}\},$$
where for simplicity, we write $\cA_6 = \cA_{BWWBWW}$. We can restrict to $z$ on the lattice $\bar{\delta} n \bbZ^2$, i.e. consider the event
$$\tilde{F}_2 := \{ \exists z \in \cB_{Rn} \cap (\bar{\delta} n \bbZ^2) \: : \: \cA_6(A_{2 \bar{\delta} R n, \frac{\delta}{8} R n}(z)) \text{ occurs}\},$$
and clearly, $\tilde{F}_2 \supseteq F_2$. Here, $R$ is given, and for some universal constant $c$, $|\cB_{Rn} \cap (\bar{\delta} n \bbZ^2)| \leq c R^2 (\bar{\delta})^{-2}$. We deduce from the union bound and \eqref{eq:6_arm} that
$$\bbP(\tilde{F}_2) \leq c R^2 (\bar{\delta})^{-2} \cdot \bigg( \frac{2 \bar{\delta} R n}{\frac{\delta}{8} R n} \bigg)^{2 + \beta} = c' (\bar{\delta})^{\beta},$$
for some $c' = c'(R,\delta)$. Hence, we can ensure that $\bbP(\tilde{F}_2) \leq \frac{\eta}{10}$, and thus $\bbP(F_2) \leq \frac{\eta}{10}$, by choosing $\bar{\delta}$ small enough. Then on the event $F_1 \cap F_2^c$, we are sure that any bubble with diameter at least $\frac{\delta}{2} R n$ in the continuum leads to a pinching in the discrete as well. Indeed, if $\ep{\Gamma}$ created a passage of width $\geq 2$, then we would get $6$ arms around the pinching point $z$, from a distance $\bar{\delta} Rn$ to distance $\frac{\delta}{4} R n$.

This completes the proof of the claim, up to some technical issue: the loop $\Gamma$ may touch the boundary. In this situation, instead of six arms, we would only get three arms locally. Nevertheless, we can still conclude in this case that $\frac{1}{Rn} \ep{\Gamma}$ and $\ep{\gamma}$ are close, by using that these three arms lie in a half-plane and that the corresponding exponent is $>1$.
\end{proof}

We now establish the following result, which immediately implies Theorems~\ref{thm:kappa=6} and~\ref{thm:solution}.

\begin{theorem} \label{thm:bb_exp}
Recall $\xi(6)$ from Proposition~\ref{prop:est_bb}. We have
\begin{equation}
\lim_{n \to \infty} \frac{\log \pi(0,n)}{\log n} = - \xi(6).
\end{equation}
Hence, the backbone exponent $\xi$ is equal to $\xi(6)$.
\end{theorem}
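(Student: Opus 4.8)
The plan is to bridge the discrete quantity $\pi(0,n)$ and the continuum quantity $f(r)$ using the quasi-multiplicativity of $\pi$ (Lemma~\ref{lem:qm}), the scaling-limit comparison of Lemma~\ref{lem:scaling_limit}, and the a-priori $3$-arm estimate \eqref{eq:3_arm} to control the discrepancy coming from fjords. First I would establish the two-sided comparison between $\pi(r_1,r_2)$ and $\tilde f(r_1,r_2)$ for $1 \ll r_1 \ll r_2$. The easy direction is $\pi(r_1,r_2) \leq C\,\tilde f(r_1,r_2)$ (up to shifting radii by a constant factor): if $\cA_{BB}(A_{r_1,r_2})$ occurs, then the two disjoint black arms prevent any quasi-white circuit from surrounding $B_{r_1}$, so the external frontier $\ep\Gamma$ of the outermost white loop $\Gamma$ must be intersected by (a neighborhood of) $B_{r_1}$, i.e.\ $\tilde E_{r_1,r_2}$ essentially holds; one must also dispose of the negligible-probability event that the relevant loop $\Gamma$ fails to exist. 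For the reverse direction, the point is that $\tilde E_{r_1,r_2}$ can occur with no genuine two-arm connection only if a fjord of the external frontier reaches near $B_{r_1}$ through a width-$1$ passage; but a width-$1$ passage reaching from the ``bulk'' of $\ep\Gamma$ down to scale $r_1$ forces a $BWW$ three-arm event in some annulus $A_{r_1, \rho}$, and \eqref{eq:3_arm} says $\pi_3(r_1,\rho) \leq (r_1/\rho)^\beta \pi(r_1,\rho)$. Summing this cost over dyadic scales $\rho$ between $r_1$ and $r_2$ (using quasi-multiplicativity to factor $\pi$) gives a geometric series, so that $\tilde f(r_1,r_2) \leq C\big(\pi(r_1,r_2) + \text{(exponentially small correction)}\big)$, hence $\tilde f(r_1,r_2) \asymp \pi(r_1,r_2)$ up to constants and bounded radius shifts.

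Next I would convert these relations into a statement about exponents. Fix $R > 1$. Iterating quasi-multiplicativity, $\pi(0,R^k) = \pi(0,n_0)\prod_{j} \pi(R^{j}, R^{j+1})$ up to a multiplicative constant to the power $k$, so $-\frac{\log \pi(0,n)}{\log n}$ converges, along powers of $R$ and then along all $n$, to $-\frac{1}{\log R}\log\big(\lim_{n\to\infty}\tilde f(n,Rn)\big)$ modulo a bounded error that vanishes when combined with the $\limsup/\liminf$ sandwich. Here is where Lemma~\ref{lem:scaling_limit} enters: it gives $f(R^{-1-\eps}) \leq \liminf_n \tilde f(n,Rn) \leq \limsup_n \tilde f(n,Rn) \leq f(R^{-1+\eps})$, and $\tilde f(n,Rn) \asymp \pi(n,Rn)$ from the previous paragraph. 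Taking logarithms, dividing by $\log R$, and using Proposition~\ref{prop:est_bb} (namely $\log f(r)/\log r \to \xi(6)$ as $r\searrow 0$), I would get, for every $R$ large and every $\eps>0$,
\begin{equation}
(1+\eps)\,\xi(6) - \tfrac{o_R(1)}{\log R} \;\geq\; -\lim_{n\to\infty}\frac{\log \pi(0,n)}{\log n} \;\geq\; (1-\eps)\,\xi(6) + \tfrac{o_R(1)}{\log R}.
\end{equation}
Letting $R\to\infty$ and then $\eps\to 0$ pins the limit to $-\xi(6)$. Finally, combining this with the definition \eqref{eq:backbone-def} of the backbone exponent $\xi$ — and noting that $\pi_{BB}(0,n)$ from that definition differs from $\pi(0,n)=\pi_{BB}(B_0 \text{-neighbor to } n)$ only by a harmless shift that does not affect the exponent — yields $\xi = \xi(6)$, and via Theorem~\ref{thm:1.2} (with $\kappa=6$) this is exactly the content of Theorems~\ref{thm:kappa=6} and~\ref{thm:solution}.

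I expect the main obstacle to be the reverse comparison $\tilde f(r_1,r_2) \lesssim \pi(r_1,r_2) + (\text{small})$, i.e.\ the careful accounting of fjords: one must verify that whenever $\ep\Gamma$ comes within a bounded lattice distance of $B_{r_1}$ without the genuine $BB$ event occurring, the geometry really does force a $BWW$ three-arm configuration centered somewhere along the fjord at a well-chosen intermediate scale, and then sum the resulting $\sum_{j} (R^{-j})^{\beta}\,\frac{\pi(r_1,r_2)}{\pi(R^j,r_2)}$-type bounds correctly using Lemma~\ref{lem:qm} so that the series converges (this is the only place the hypothesis $\alpha_3 > \xi$, equivalently the gain $(r/r')^\beta$, is used). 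A secondary technical point, already flagged in the proof of Lemma~\ref{lem:scaling_limit}, is the possibility that the relevant white loop touches $\partial B_{r_2}$, where one has only three arms locally rather than six; there the relevant half-plane $3$-arm exponent being $>1$ saves the argument, and the same remark applies to any boundary effects in the fjord estimate. None of these is conceptually deep, but each requires the standard Russo–Seymour–Welsh / separation-of-arms toolbox for Bernoulli percolation to make rigorous.
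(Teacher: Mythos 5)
Your overall architecture --- comparing $\pi$ with $\tilde f$ scale by scale, feeding in Lemma~\ref{lem:scaling_limit} and Proposition~\ref{prop:est_bb}, and extracting the exponent via the quasi-multiplicativity of Lemma~\ref{lem:qm} --- is the same as the paper's, but you have swapped the two comparison directions, and the one you dismiss as ``easy'' is exactly the one that fails as you justify it. The inclusion that is genuinely immediate is $\tilde E_{r_1,r_2}\subseteq \cA_{BB}(A_{r_1,r_2})$, i.e.\ $\tilde f\le \pi$ (this is \eqref{eq:bb_exp_pf3} in the paper): by Menger, if the two-arm event fails there is a quasi-white circuit around $B_{r_1}$, and the external frontier of its white cluster is a black circuit enclosing $B_{r_1}$ while avoiding $B_{r_1}\cup\dout B_{r_1}$, precisely because the frontier fills width-$1$ passages; the outermost such frontier can only lie further out. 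Your ``easy direction'' is the converse, $\pi\lesssim\tilde f$, and your one-line justification (``no quasi-white circuit, hence $\ep{\Gamma}$ must meet a neighborhood of $B_{r_1}$'') is a non sequitur: the frontier can enclose $B_{r_1}$ at macroscopic distance while the two black arms sneak down to $B_{r_1}$ through two distinct width-$1$ fjords --- exactly the configuration of Figure~\ref{fig:bubbles}, and the whole reason the fjord/three-arm analysis exists. This bad event is not negligible either: with positive probability of order $\pi(r_1,r_2)$ one has two arms while $\ep{\Gamma}$ stays outside $B_{2r_1}$ (it costs only a $BWW$ event across one bounded-ratio annulus), so no ``bounded radius shift'' rescues a pointwise bound $\pi(r_1,r_2)\le C\tilde f(r_1,r_2)$. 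The correct treatment is the paper's: decompose $\cA_{BB}(A_{n,R^in})$ according to the innermost annulus $A_{R^{j-1}n,R^jn}$ that $\ep{\Gamma}$ reaches, pay the three-arm cost $\pi_3(n,R^{j-1}n)\le (R^{-(j-1)})^{\beta}\pi(n,R^{j-1}n)$ from \eqref{eq:3_arm} only across a large ratio, use independence and Lemma~\ref{lem:qm} to resum, and absorb a term $c_3R^{-\beta}\pi(n,R^in)$ into the left-hand side; this yields only $\pi(n,R^in)\le(1-c_3R^{-\beta})^{-1}\tilde f(Rn,R^in)$ with $R$ large, the loss of one scale out of $i$ being harmless at the level of exponents.

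Correspondingly, the step on which you spend the fjord machinery is vacuous: since $\tilde E_{r_1,r_2}\subseteq\cA_{BB}(A_{r_1,r_2})$, the event ``$\tilde E$ occurs with no genuine two-arm connection'' is empty, and $\tilde f\le\pi$ needs no three-arm input at all. Your geometric mechanism --- a fjord reaching down to scale $r_1$ forces a $BWW$ three-arm event between $r_1$ and the scale at which the frontier sits --- is correct and is indeed the key point, but it must be attached to the upper bound on $\pi$, not on $\tilde f$. Once the two directions are reassigned, the remainder of your plan (the multiplicative decomposition into scales of ratio $R$, the $o_R(1)/\log R$ bookkeeping of the quasi-multiplicativity constants, the limits $R\to\infty$ then $\eps\to 0$, and the boundary three-arm remark) coincides with the paper's proof.
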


\begin{proof}
Consider an arbitrary $\eps > 0$. From \eqref{eq:bb_cont}, we have, for some $r_0 > 0$: for all $r \leq r_0$,
\begin{equation} \label{eq:bb_exp_pf1}
r^{\xi(6) + \frac{\eps}{10}} \leq f(r) \leq r^{\xi(6) - \frac{\eps}{10}}.
\end{equation}
In addition, we may assume that $r_0$ is chosen so small that for the (universal) constants $c_1$ and $c_2$ appearing in \eqref{eq:qm},
\begin{equation} \label{eq:bb_exp_pf2}
c_1 \geq r_0^{\frac{\eps}{10}} \quad \text{and} \quad c_2 \leq r_0^{-\frac{\eps}{10}}.
\end{equation}
We now fix a large integer $i \geq 2$, that we explain how to choose later. Let $R = r_0^{-1}$. Applying Lemma~\ref{lem:scaling_limit} twice, we obtain immediately that for all $n \geq n_0$, we have: for $j=0,1$,
\begin{equation}
R^{(-\xi(6) - \frac{2 \eps}{5})(i-j)} \leq (R^{i-j})^{-\frac{\eps}{10}} f((r_0^{i-j})^{1+\frac{\eps}{5}}) \leq \tilde{f}(R^j n,R^i n) \leq (R^{i-j})^{\frac{\eps}{10}} f((r_0^{i-j})^{1-\frac{\eps}{5}}) \leq R^{(-\xi(6) + \frac{2 \eps}{5})(i-j)}.
\end{equation}
Clearly,
\begin{equation} \label{eq:bb_exp_pf3}
\pi(n,R^i n) \geq \tilde{f}(n, R^i n) \geq R^{(-\xi(6) - \frac{2 \eps}{5})i}.
\end{equation}
In the other direction, we can consider the configuration in $B_{R^i n}$ and decompose the event $\cA_{BB}(A_{n,R^i n})$ according to the innermost annulus of the form $A_{R^{j-1} n, R^j n}$, $1 \leq j \leq i$, that $\ep{\Gamma}$ intersects. Note that $\ep{\Gamma}$ is a black circuit disconnecting $0$ from $\dout B_{R^i n}$.

In the case when $j \geq 2$, we make the following observation, assuming the existence of two black arms from $\din B_{R^i n}$ to $\dout B_n$.
\begin{itemize}
\item In particular, the event $\cA_{BB}(A_{R^{j-1} n,R^i n})$ occurs, and in the same annulus $A_{R^{j-1} n,R^i n}$, there exists a black circuit which surrounds $B_{R^{j-1} n}$ and intersects $B_{R^j n}$. We denote this latter event by $\cC(A_{R^{j-1} n,R^i n};B_{R^j n})$.

\item In addition, the event $\cA_{BWW}(A_{n,R^{j-1} n})$ occurs.
\end{itemize}

Hence, the union bound gives
\begin{align*}
\pi(n,R^i n) & = \bbP_{p_c}(\cA_{BB}(A_{n,R^i n}))\\
& \leq \bbP_{p_c}(\cA_{BWW}(A_{n,R^{i-1} n}))+ \tilde{f}(R n, R^i n)\\
& \hspace{1cm} + \sum_{j=2}^{i-1} \bbP_{p_c}(\cA_{BWW}(A_{n,R^{j-1} n}) \cap \cA_{BB}(A_{R^{j-1} n,R^i n}) \cap \cC(A_{R^{j-1} n,R^i n};B_{R^j n})).
\end{align*}
Clearly, the two events $\cA_{BWW}(A_{n,R^{j-1} n})$ and $\cA_{BB}(A_{R^{j-1} n,R^i n}) \cap \cC(A_{R^{j-1} n,R^i n};B_{R^j n})$ are independent, since they involve disjoint subsets of vertices. Furthermore, $\cA_{BB}(A_{R^{j-1} n,R^i n}) \cap \cC(A_{R^{j-1} n,R^i n};B_{R^j n})$ implies that $\ep{\Gamma} \cap B_{R^j n} \neq \emptyset$. We deduce
\begin{align*}
\pi(n,R^i n) & \leq \pi_3(n,R^{i-1} n) + \sum_{j=2}^{i-1} \pi_3(n,R^{j-1} n) \cdot \tilde{f}(R^j n, R^i n) + \tilde{f}(R n, R^i n)\\
& \leq (R^{-(i-1)})^{\beta} \pi(n,R^{i-1} n) + \sum_{j=2}^{i-1} (R^{-(j-1)})^{\beta} \pi(n,R^{j-1} n) \cdot \pi(R^j n, R^i n) + \tilde{f}(R n, R^i n)\\
& \leq c_3 R^{- \beta} \pi(n,R^i n) + \tilde{f}(R n, R^i n),
\end{align*}
for some universal constant $c_3$. Here, we used \eqref{eq:3_arm} and \eqref{eq:bb_exp_pf3} for the second inequality, and then \eqref{eq:qm} for the third inequality. Hence, it suffices to choose, in the beginning, $i$ sufficiently large so that $\frac{i-1}{i} \geq 1 - \frac{\eps}{10}$. In addition, we can assume that $(1 - c_3 R^{- \beta})^{-1} \leq R^{\frac{\eps}{10}}$, which yields
\begin{equation} \label{eq:bb_exp_pf4}
\pi(n,R^i n) \leq R^{(-\xi(6) + \frac{4 \eps}{5})i}.
\end{equation}
We can now conclude the proof by applying repeatedly Lemma~\ref{lem:qm}, combined with \eqref{eq:bb_exp_pf2}, \eqref{eq:bb_exp_pf3} and \eqref{eq:bb_exp_pf4}.
\end{proof}

\subsection{Link with ${\rm SLE}_\kappa$ bubbles} \label{subsec:link_bubble}
Throughout this section, we assume that $\kappa\in (4,8)$, and sometimes omit $\kappa$ in the notations. We also extend certain notations in Section~\ref{subsec:bb_exp} to all $\kappa\in(4,8)$.
Hence, let $\gamma$ be the outermost loop in CLE$_\kappa$ that disconnects $0$ from $\infty$, and let $\ep\gamma$ be the outer boundary of $\gamma$. For $r\in (0,1)$, we still consider $E_r := \{ \ep{\gamma} \cap \cB_r \neq \emptyset\}$, and denote $f(r)=\bbP(E_r)$. 

The goal of this section is to prove the following proposition, which contains Proposition~\ref{prop:est_bb} as the special case $\kappa=6$. The proof  uses Theorem~\ref{thm:moment} as an input, which will be proved in later sections.
\begin{proposition}\label{prop:backbone}
For $\kappa\in(4,8)$, let $\xi(\kappa)$ be the exponent defined by \eqref{eq:exponent-def}, then
\begin{equation} \label{eq:bb_cont1}
\lim_{r \searrow 0} \frac{\log f(r)}{\log r} = \xi(\kappa).
\end{equation}
\end{proposition}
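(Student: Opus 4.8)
The plan is to relate the event $E_r = \{\ep\gamma \cap \cB_r \neq \emptyset\}$ to a quantity about the $\SLE_\kappa$ bubble measure, and then extract the exponent $\xi(\kappa)$ by a renewal/iteration argument parallel to what was done for the discrete exponent. First I would fix a conformal map sending the outer domain of $\gamma$ (the component of $\bbC\setminus\gamma$ touching $\partial\bbD$) to $\bbH$, and use the known description of the conditional law of $\ep\gamma$ — the outer boundary of the outermost $\CLE_\kappa$ loop surrounding $0$ — in terms of an $\SLE_{16/\kappa}(\rho)$-type curve, or more precisely in terms of the $\SLE_\kappa$ bubble measure $\mu_\kappa$ via the relation to $\CLE_\kappa$ analogous to \cite[Section~6.1]{MR3708206} (cited in Section~\ref{subsec:intro-sle}); this is also the content of the setup behind Theorem~\ref{thm:solution}. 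Under this map, the event that $\ep\gamma$ comes within distance $r$ of $0$ translates, up to bounded distortion (using Koebe), into the event that the image of $\cB_r$ — a region of conformal radius comparable to $r|\varphi'(0)|$ seen from the marked point — is hit by the bubble curve.

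Next I would set up a multiplicative/renewal structure. Conditionally on $\ep\gamma$ not reaching $\cB_{r}$, one re-explores inside: the configuration inside the domain cut out by $\ep\gamma$ is again a $\CLE_\kappa$ (by the renewal property of nested $\CLE$), so one finds the next outermost loop surrounding $0$, and iterates. Each step contributes a multiplicative factor: the probability of survival for one scale, plus a contribution from the conformal derivative $|\psi'(i)|$ that records how much the domain has shrunk, where $\psi$ is the conformal map from $\bbH$ to the relevant sub-domain $D_i$ as in the statement of Theorem~\ref{thm:solution}. This is exactly the mechanism by which the quantity $\nu_\kappa[|\psi'(i)|^{-\xi}-1]$ arises: the exponent $\xi(\kappa)$ is the unique $\xi$ making the associated transfer operator have spectral radius $1$, i.e.\ making $\nu_\kappa[|\psi'(i)|^{-\xi}-1]=1$ as in \eqref{eq:exponent-def}, which is well-defined by Theorems~\ref{thm:moment} and~\ref{thm:1.2}. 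Concretely, I expect $f(r)$ to satisfy, up to multiplicative constants, a bound of the form $f(r) \asymp \bbE[\prod |\psi_k'(i)|^{\xi(\kappa)}]$ over the chain of nested loops needed to get from scale $1$ down to scale $r$, and a standard renewal-theory argument (submultiplicativity in one direction, and the matching lower bound via restricting to "typical" chains) then gives $\log f(r)/\log r \to \xi(\kappa)$.

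To make the two-sided bound precise I would proceed as in the proof of Theorem~\ref{thm:bb_exp}: for the upper bound, decompose $E_r$ according to the last nested loop whose outer boundary still has macroscopic size, and use the tail estimate on $|\psi'(i)|$ from Theorem~\ref{thm:moment} (finiteness of $\nu_\kappa[|\psi'(i)|^\lambda-1]$ for $\lambda>\tfrac{2}{\kappa}-1$) to control rare configurations where the frontier dips in through a thin "fjord"; for the lower bound, restrict to a positive-probability event at each scale that forces the outer boundary to stay at comparable size and to cover a ball of definite relative size, iterate independently across scales, and apply the Borel--Cantelli / law-of-large-numbers heuristic to the i.i.d.-like sequence $\log|\psi_k'(i)|$ under the normalized measure. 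The main obstacle I anticipate is the rigorous justification of the renewal structure for \emph{nested} $\CLE_\kappa$ in terms of the \emph{infinite} bubble measure $\nu_\kappa$ — in particular, showing that the "restart" after conditioning on $E_r^c$ genuinely reproduces an independent copy and that the contribution of the non-normalizable part (the reason one must subtract $1$ and why $\nu_\kappa[|\psi'(i)|^\lambda]=\infty$) does not spoil the exponent. This is where one must carefully invoke the structure of $\mu_\kappa$, the normalization $\mu_\kappa(E_i)=1$, and the splitting $\mu_\kappa = (\text{part on } E_i) + \nu_\kappa$, together with the a priori estimates on $3$- and $6$-arm-type events in the continuum (the $\CLE$ analogues of \eqref{eq:3_arm}--\eqref{eq:6_arm}) to ensure fjords are negligible. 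Everything else is a soft iteration once Theorem~\ref{thm:moment} is granted.
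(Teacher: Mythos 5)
There is a genuine gap, and it is structural. Your renewal decomposition is built on the wrong chain of objects: the event $E_r$ is determined by the single loop $\gamma$ (the outermost one whose outer boundary surrounds the origin), so ``conditioning on $\ep\gamma$ not reaching $\cB_r$ and re-exploring the nested CLE inside the domain cut out by $\ep\gamma$'' contributes nothing to $f(r)=\bbP(E_r)$ --- once $\ep\gamma$ is known, $E_r$ has either occurred or not, and deeper nested loops are irrelevant. The multiplicative structure that actually produces the exponent is over the loops discovered \emph{before} $\gamma$: in the paper one explores the CLE$_\kappa$ along a radial $\SLE_\kappa(\kappa-6)$ targeting $i$ (Description~\ref{descrip}), and the loops visited correspond to a Poisson point process of $\SLE_\kappa$ bubbles with intensity $\mu_\kappa$, stopped at the first bubble whose outer boundary encircles $i$; because of the normalization $\mu_\kappa(E_i)=1$, this stopping time is an independent $\mathrm{Exp}(1)$ random variable, and the pre-stopping bubbles form a PPP with intensity $\nu_\kappa$. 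By Koebe, $\dist(i,\ep\gamma)\asymp|\Psi_\tau'(i)|^{-1}=|\Psi_{\tau^-}'(i)|^{-1}|\psi_{e_\tau}'(i)|^{-1}$ (cf.\ \eqref{eq:koebe}), so the whole problem reduces to the tail of $|\Psi_{\tau^-}'(i)|$ plus control of the independent last factor.

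The second, decisive gap is that you never actually derive the identification of the tail exponent with the root of \eqref{eq:exponent-def}; you only assert that a ``transfer operator with spectral radius $1$'' should produce $\nu_\kappa[|\psi'(i)|^{-\xi}-1]=1$, and your displayed heuristic $f(r)\asymp\bbE[\prod_k|\psi_k'(i)|^{\xi(\kappa)}]$ is not even well formed (the right-hand side does not depend on $r$). A submultiplicativity/renewal argument of the kind you sketch would at best give existence of \emph{some} exponent, not its value. The mechanism that forces the specific combination ``$-1$ inside, $=1$ outside'' is Campbell's theorem for the PPP run up to the independent exponential time: $\bbE[|\Psi_{\tau^-}'(i)|^{-\lambda}]=\bbE[e^{\beta(\lambda)\tau}]=(1-\beta(\lambda))^{-1}$ with $\beta(\lambda)=\nu_\kappa[|\psi'(i)|^{\lambda}-1]$, finite exactly when $\beta(\lambda)<1$, i.e.\ $\lambda>-\xi(\kappa)$. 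Converting this Laplace-transform blow-up into the polynomial tail of $|\Psi_{\tau^-}'(i)|$ is not automatic either: the paper invokes a Tauberian theorem (Theorem~\ref{thm:tauberian}) which requires showing that $-\xi(\kappa)$ is a genuine \emph{pole}, and this uses the explicit meromorphic formula of Theorem~\ref{thm:moment}, not just finiteness/analyticity. Your treatment of the last bubble (moments up to the $3$-arm exponent $\alpha_3(\kappa)=1-2/\kappa$, together with $\alpha_3(\kappa)>\xi(\kappa)$) is the right idea and matches Lemma~\ref{lem:er2}, but the $6$-arm-type continuum estimates you invoke play no role here --- they belong to the discrete-to-continuum comparison, not to this proposition. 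Without the PPP-at-exponential-time structure, the Campbell computation, and the pole verification, the proposed proof does not reach \eqref{eq:bb_cont1}.
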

Let us recall some background leading to the definition of $\xi(\kappa)$.
The ${\rm SLE}_\kappa$ bubble measure in the upper half plane $\bbH$ rooted at 0  is defined by
\begin{align}\label{eq:sle_bubble}
\mu_\kappa: =c(\kappa) \lim_{\eps\to 0} \eps^{-1+8/\kappa}\mu_\kappa^\eps,
\end{align}
where $\mu_\kappa^\eps$ is the probability law of a chordal ${\rm SLE}_\kappa$ from $0$ to $-\eps$ in $\bbH$, and $c(\kappa)>0$ is a renormalizing constant. We choose the constant $c(\kappa)$ in the following way. Suppose that $e$ is a non-simple loop rooted at 0. Let $\ep e$ be the outer boundary of $e$, and $D_{\ep e}$ be the domain enclosed by $\ep e$. 
We chose $c(\kappa)$ such that $\mu_\kappa[ i \in D_{\ep e}] = 1$. We define $\nu_\kappa$ as the restriction of $\mu_\kappa$ to the event $\{ i \not \in D_{\ep e} \}$. If $i \not \in D_{\ep e}$, we define $D_i$ as the simply connected component of $\bbH \setminus e$ that contains $i$. Let $\psi$ be a conformal map from $(\mathbb{H},i)$ to $(D_i, i)$. It follows that $|\psi'(i)|<1$, and this quantity does not depend on the choice of $\psi$. Equation \eqref{eq:exponent-def} defines $\xi(\kappa)$ as the unique solution in the interval $(0, 1-2/\kappa)$ to 
\begin{equation}
\label{eq:backbone}
\nu_\kappa[|\psi'(i)|^{-\xi(\kappa)} -1  ]=1\,.
\end{equation}

\medbreak

\noindent\textbf{Some background on $\mathrm{CLE}_\kappa$.}
Let us now give more background on CLE$_\kappa$, which is first constructed by Sheffield in \cite{MR2494457} using the \emph{continuum exploration tree}. The definition in \cite{MR2494457} allows one to choose a parameter $\beta$ which consists of a random swapping of the orientations of the loops, but we will only look at the simplest case $\beta=1$, where all the loops are traced in the  counterclockwise direction.
For $\kappa\in(4,8)$, the SLE$_\kappa(\kappa-6)$ process (see Appendix~\ref{app:descrip} for a definition) is  target-invariant \cite{MR2188260,MR2494457}, and proved to be a continuous curve in  \cite{MR3477777}. Let $a_1, a_2, \ldots$ be a countable dense set of points in $\bbH$. We run a branching SLE$_\kappa(\kappa-6)$ process in $\bbH$ from $0$ with marked point $0^-$, targeting all $a_1, a_2, \ldots$ simultaneously. For example, the SLE$_\kappa(\kappa-6)$ curve targeting $a_i$ and $a_j$ is the same until the first time that it disconnects $a_i$ from $a_j$ in $\bbH$, and then it branches into two curves which continue in the two connected components containing $a_1$ and $a_2$ respectively. 
For each $a_j\in \bbH$, the unique loop $\ell$ in the non-nested CLE$_\kappa$ which encircles $a_j$ (i.e., winds around $a_j$ in the counterclockwise direction) is constructed as follows (see Figure~\ref{fig:cle} Left):
\begin{itemize}
\item Let $\eta$ be a radial SLE$_\kappa(\kappa-6)$  in $\bbH$ from $0$ with marked point $0^-$, targeting $a_j$. Let $\sigma$ be the first time that $\eta$ makes a closed loop around $a_j$ in the counterclockwise direction. 
\item For $t\ge 0$, let $z_t$ be the marked point of $\eta$ at time $t$. More precisely, $z_t$ is a prime end in the connected component of $\bbH \backslash \eta([0,t])$ that contains $a_j$. Let $z_{\sigma^-}$ be the marked point of $\eta$ just before time $\sigma$. Let $t_0$ be the last time before $\sigma$ that $\eta$ visits $z_{\sigma^-}$.
\item Let $\wt\eta$ be a chordal SLE$_\kappa$ from $\eta(\sigma)$ to $z_{\sigma^-}$, in the connected component of $\bbH\setminus \eta([0,\sigma])$ which contains $z_{\sigma^-}$. The concatenation of $\eta([t_0, \sigma])$ and $\wt\eta$ constructs the loop $\ell$.
\end{itemize}
This fully describes the construction of the non-nested CLE$_\kappa$. For each $a_i$, if we continue $\eta$ beyond the stopping time $\sigma$, then we can also construct the nested  CLE$_\kappa$. However, we are only interested in the non-nested CLE$_\kappa$ in this article.
\begin{figure}[t]
\centering
\includegraphics[width=.9\textwidth]{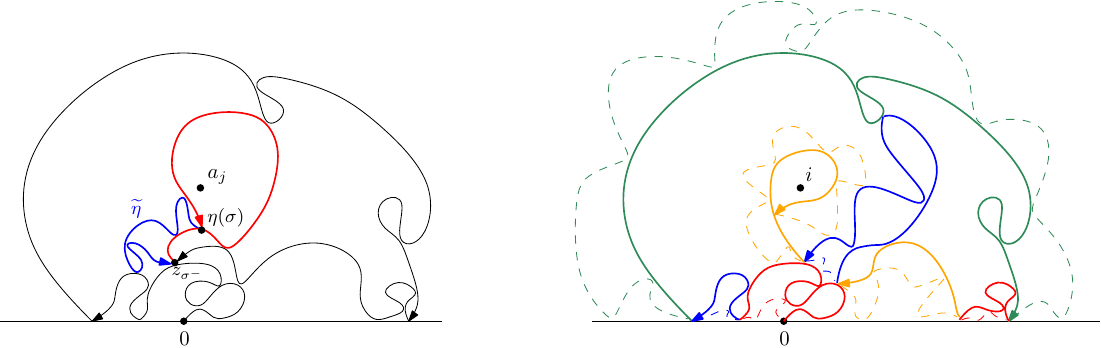}
\caption{\textbf{Left:} Construction of CLE$_\kappa$ out of a branching SLE$_\kappa(\kappa-6)$ process. The black curve followed by the red curve is a radial SLE$_\kappa(\kappa-6)$ from $0$, with marked point $0^-$, targeting $a_j$, stopped at the first time $\sigma$ that $\eta$ makes a counterclockwise loop around $a_j$. The concatenation of the red and blue curves form the unique loop $\ell$ in the non-nested CLE$_\kappa$ which surrounds $a_j$. \textbf{Right:} The radial SLE$_\kappa(\kappa-6)$ process $\eta$ from $0$, with marked point $0^-$, targeting $i$ is drawn in plain curve. The successive loops in the CLE$_\kappa$ that it traces are drawn in the order red, blue, green, red, yellow, blue, yellow. The parts of the loops which are not traced by $\eta$ are drawn in dashed lines.}
\label{fig:cle}
\end{figure}

\medbreak

Let us now relate this non-nested CLE$_\kappa$ to a Poisson point process of SLE$_\kappa$ bubbles, in Description \ref{descrip} below.
This description is analogous to the one given in \cite[Section 6.1]{MR3708206}, where the loops in the CLE$_\kappa$ were explored in a certain order, along a \emph{chordal} SLE$_\kappa(\kappa-6)$ process targeting $\infty$, inducing a Poisson point process of SLE$_\kappa$ bubbles. For our purpose, we will explore along a \emph{radial} SLE$_\kappa(\kappa-6)$ process targeting $i$.
We will  provide a proof of Description \ref{descrip} in Appendix~\ref{app:descrip}.

\begin{descrip}\label{descrip}
\textbf{$\mathrm{CLE}_\kappa$ and $\mathrm{SLE}_\kappa$ bubbles.} Let $\Gamma$ be a non-nested CLE$_\kappa$ in $\bbH$, constructed using a branching SLE$_\kappa(\kappa-6)$ exploration tree. 
Let $\eta$ be a radial SLE$_\kappa(\kappa-6)$  in $\bbH$ from $0$ with marked point $0^-$, targeting $i$.
Note that $\eta$ forms one branch of the exploration tree, and traces a sequence of loops in $\Gamma$ in a certain order. 
To be precise, $\eta$ only traces a portion of each loop that it visits (the remaining parts of the loops are traced by other branches in the exploration tree).
We only look at $\eta$ up to some stopping time that we will specify later. This stopping time will be chosen in such a way that $\eta$ has only explored the non-nested CLE$_\kappa$ (i.e., $\Gamma$) until that time.

There is a time-indexed Poisson point process with intensity $\mu_\kappa$, denoted by $\{e_s, s\in I\}$, where $I\subset \bbR^+$ is the set of times $s$ at which an SLE$_\kappa$ bubble $e_s$ appears. 
Let $\tau>0$ be the first time that a bubble $e_\tau$ appears in the Poisson point process so that $\ep e_\tau$ encircles $i$. There is a one-to-one correspondence between the set of bubbles $\{e_s, s\in I, s\le \tau\}$ and the ordered set of loops that $\eta$ visits in $\Gamma$. We  denote the latter set by  $\{\ell_s, s\in I, s\le \tau \}$, so that the loop $\ell_s$ corresponds to the bubble $e_s$ for each $s\in I$. We remind that $\eta$ is stopped at a certain time that we have not specified yet, but it will be easier to define this stopping time once we describe the correspondence between $\{e_s, s\in I, s\le \tau\}$ and $\{\ell_s, s\in I, s\le \tau \}$. We also emphasize that $I$ is the set of time indices for the Poisson point process, and does not correspond to the time parametrization for the curve $\eta$ (by the usual radial capacity). We parametrize the set $\{\ell_s, s\in I, s\le \tau \}$ by $I$, only to indicate the order in which the loops are discovered by $\eta$.

The exact correspondence between $\{e_s, s\in I, s\le \tau\}$ and $\{\ell_s, s\in I, s\le \tau \}$ is described via the following conformal maps.
For each bubble $e$ rooted at $0$ such that $\ep e$ does not encircle $i$, let $D_{\ep e}$ and $D_i$ be defined as earlier. Let $a(e)$ be the starting point of the arc $\partial D_i \cap \bbH$, if we go along $\partial D_i$ in the clockwise direction. 
Let $\psi_{e}$ be the conformal map  from the connected component containing $i$ of $\bbH\setminus \ep e$ onto $\bbH$ with $\psi_{e}(i)=i$ and $\psi_{e}(a(e))=0$.
For all $u\in (0, \tau]$, let $\Psi_u$ be the composition of the conformal maps $\psi_{e_s}$ for all $s\in I$ with $s\le u$, in the order of their appearance. 
We also define $\Psi_{u^-}$ to be the composition of  $\psi_{e_s}$  for $s\in I, s<u$ in the order of their appearance.  For $u\in I$, we have $\Psi_{u} = \psi_{e_u} \circ \Psi_{u^{-}}.$  
The correspondence between $\{e_s, s\in I, s\le \tau\}$ and $\{\ell_s, s\in I, s\le \tau \}$ is determined by the fact that for each $s\in I$,  $\ell_s$ is equal to the image under $\Psi_{s^-}^{-1}$ of $e_s$. 
In particular, for all $u\in I$, $\Psi_u$ (resp.\ $\Psi_{u^-}$) is a conformal map from $O_u$ (resp.\ $O_{u^-}$) onto $\bbH$, where $O_u$ (resp.\ $O_{u^-}$) is  the connected component containing $i$ of  $\bbH\setminus \cup_{s\in I, s\le u} \ep\ell_s$ (resp.\ $\bbH\setminus \cup_{s\in I, s< u} \ep\ell_s$).
It is clear from this description that we should stop $\eta$ at the time that it has traced the loop $\ell_\tau$, or more precisely, at the time that $\eta$ has finished tracing the portion of $\ell_\tau$ that belongs to $\eta$.

This correspondence is also illustrated in Figure~\ref{fig:cle} Right. The curve $\eta$ makes ``bridges'' (see Definition~\ref{def:bridge}) that are marked by different colors. Each bridge corresponds to a loop $\ell_s$ (i.e., it is a portion of $\ell_s$). Conditionally on all the loops (or equivalently, bridges) that $\eta$ has already traced until a certain time, if we map the unexplored connected component containing $i$ conformally back onto $\bbH$, then the image of the ``next'' loop is an independent SLE$_\kappa$ bubble in $\bbH$. Note that it does not really make sense to talk about the next loop, because for any two instances $s_1, s_2\in I$, there are infinitely many intermediate ones in $(s_1,s_2)$. However, such a description still provides a meaningful intuition, and it could be made rigorous by first considering (the finite set of) loops with diameter greater than $\eps$, and then leting $\eps\to 0$.
\end{descrip}

Using the notations in Description \ref{descrip}, let us state the following lemma, which will be a key input in the proof of Proposition~\ref{prop:backbone}.

\begin{lemma}\label{lem:er1}
 Let $\xi(\kappa)$ be the solution to \eqref{eq:backbone}. We have 
 \begin{equation*}
\lim_{x\to\infty} \frac{\log \bbP(|\Psi_{\tau^-}'(i)|>x)}{\log x} = - \xi(\kappa).
\end{equation*}
\end{lemma}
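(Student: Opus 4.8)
The plan is to interpret $|\Psi_{\tau^-}'(i)|$ as a multiplicative functional along the Poisson point process of $\SLE_\kappa$ bubbles, and then to compute its tail using the exponent $\xi(\kappa)$ defined through the fixed-point equation~\eqref{eq:backbone}. First I would recall that by the multiplicativity of conformal derivatives under composition, $|\Psi_{\tau^-}'(i)| = \prod_{s\in I,\, s<\tau} |\psi_{e_s}'(i)|$, where each factor satisfies $|\psi_{e_s}'(i)|>1$ (equivalently $|\psi'(i)|<1$ for the inverse map considered in Theorem~\ref{thm:moment}), so $\log|\Psi_{\tau^-}'(i)| = \sum_{s\in I,\, s<\tau} \log|\psi_{e_s}'(i)|$ is a nonnegative additive functional. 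The process $u\mapsto \log|\Psi_{u^-}'(i)|$ is then (after a time change by the exponential clock) a subordinator run up to the independent stopping time $\tau$, where $\tau$ is the first arrival time of a bubble whose outer boundary encircles $i$. By the definition of the normalization, the rate of such ``encircling'' bubbles is $\mu_\kappa[i\in D_{\ep e}]=1$, so $\tau$ is an $\mathrm{Exp}(1)$ random variable, independent of the restriction of the point process to non-encircling bubbles.

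Next I would set up the renewal/tilting argument. For $\lambda\ge 0$, by the exponential formula for Poisson point processes,
\begin{equation}
\bbE\big[|\Psi_{\tau^-}'(i)|^{\lambda}\big] = \int_0^\infty e^{-t}\, \exp\!\Big( t\,\nu_\kappa\big[|\psi'(i)|^{-\lambda}-1\big]\Big)\, dt,
\end{equation}
which is finite precisely when $\nu_\kappa[|\psi'(i)|^{-\lambda}-1]<1$, and in that case equals $\big(1-\nu_\kappa[|\psi'(i)|^{-\lambda}-1]\big)^{-1}$. (Here I use that the rate-$1$ clock for $\tau$ and the intensity $\nu_\kappa$ on non-encircling bubbles combine: the ``$-1$'' accounts for the encircling bubbles competing with growth, which is exactly the structure of~\eqref{eq:thm-moment}.) Since $\xi(\kappa)$ is defined by $\nu_\kappa[|\psi'(i)|^{-\xi(\kappa)}-1]=1$ and, by Theorem~\ref{thm:moment}, $\lambda\mapsto\nu_\kappa[|\psi'(i)|^{-\lambda}-1]$ is analytic and (one checks) strictly increasing on the relevant interval, the moment $\bbE[|\Psi_{\tau^-}'(i)|^{\lambda}]$ is finite for $\lambda<\xi(\kappa)$ and infinite for $\lambda\ge\xi(\kappa)$. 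A standard Markov-inequality argument then gives the upper bound $\bbP(|\Psi_{\tau^-}'(i)|>x)\le x^{-\lambda}\bbE[|\Psi_{\tau^-}'(i)|^{\lambda}]$ for all $\lambda<\xi(\kappa)$, hence $\limsup_{x\to\infty}\frac{\log\bbP(|\Psi_{\tau^-}'(i)|>x)}{\log x}\le -\xi(\kappa)$.

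For the matching lower bound, I would use that at the moment $\tau$ there is a ``last big step'': conditionally on $\tau$ being large is not the mechanism; rather, the heavy tail comes from the possibility that one of the non-encircling bubbles $e_s$ with $s<\tau$ has an atypically large $|\psi_{e_s}'(i)|$. Concretely, one shows $\bbP(|\Psi_{\tau^-}'(i)|>x) \ge c\,\nu_\kappa[|\psi'(i)|^{-1}>x]\cdot\bbP(\tau>\epsilon)$ by conditioning on exactly one bubble of large derivative arriving before the encircling bubble and before the rest of the mass can shrink it back; this needs a tail estimate $\nu_\kappa[|\psi'(i)|^{-1}>x]\ge x^{-\xi(\kappa)-o(1)}$, which itself follows from analyticity of the moment generating function near $\lambda=\xi(\kappa)$ together with a Tauberian/Karamata argument (the moment diverges exactly at $\xi(\kappa)$, so the tail cannot decay faster than $x^{-\xi(\kappa)-o(1)}$). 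Combining, $\liminf_{x\to\infty}\frac{\log\bbP(|\Psi_{\tau^-}'(i)|>x)}{\log x}\ge -\xi(\kappa)$, and the lemma follows.

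\textbf{Main obstacle.} The delicate point is the lower bound: turning ``the moment of order $\xi(\kappa)$ is infinite'' into a genuine polynomial lower bound on the tail. This requires either a regularity statement on the tail of the single-bubble derivative $\nu_\kappa[|\psi'(i)|^{-1}>x]$ (to rule out, e.g., a tail like $x^{-\xi(\kappa)}/\log x$ with oscillations), or a more hands-on large-deviations construction exhibiting an event of probability $\ge x^{-\xi(\kappa)-o(1)}$ on which $|\Psi_{\tau^-}'(i)|>x$. I expect the cleanest route is the latter: exhibit one bubble, arriving early (before the $\mathrm{Exp}(1)$ encircling clock rings), whose conformal radius seen from $i$ is tiny — of order $x^{-1}$ — using the explicit $\SLE_\kappa$ bubble asymptotics, while controlling that the remaining bubbles contract $i$'s image by at most a bounded factor with positive probability. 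Making the independence/domain-Markov bookkeeping in Description~\ref{descrip} precise enough to run this argument is the real work.
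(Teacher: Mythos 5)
Your first half is exactly the paper's computation: Campbell's formula gives $\bbE[|\Psi_{\tau^-}'(i)|^{\lambda}]=\bbE[\exp(\beta(-\lambda)\tau)]=(1-\nu_\kappa[|\psi'(i)|^{-\lambda}-1])^{-1}$ for the independent $\mathrm{Exp}(1)$ time $\tau$, so the moment is finite precisely for $\lambda<\xi(\kappa)$, and Markov's inequality gives the upper bound on the tail exponent. The genuine gap is your lower bound, and it is not just a missing technicality: the mechanism you propose is the wrong one. The tail of $|\Psi_{\tau^-}'(i)|$ does \emph{not} come from a single bubble with an atypically small conformal radius seen from $i$. By Theorem~\ref{thm:moment} (equivalently Lemma~\ref{lem:er2} for the encircling bubble), the single-bubble moment $\nu_\kappa[|\psi'(i)|^{-\zeta}-1]$ is finite for every $\zeta<\alpha_3(\kappa)=1-\frac{2}{\kappa}$, and $\alpha_3(\kappa)>\xi(\kappa)$. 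Hence $\nu_\kappa[\,|\psi'(i)|^{-1}>x\,]\le C_\zeta x^{-\zeta}$ for every $\zeta<\alpha_3(\kappa)$, so your claimed estimate $\nu_\kappa[\,|\psi'(i)|^{-1}>x\,]\ge x^{-\xi(\kappa)-o(1)}$ is false (the single-bubble moment diverges at $\alpha_3(\kappa)$, not at $\xi(\kappa)$; $\xi(\kappa)$ is only the fixed point of $\beta(-\xi)=1$, where the divergence of the \emph{compound} moment comes from the vanishing of the denominator $1-\beta$, i.e.\ from Poissonian accumulation over a long exponential time, not from any one jump). Consequently your one-big-jump event has probability of order $x^{-\alpha_3(\kappa)+o(1)}$, which only yields $\liminf\ge-\alpha_3(\kappa)$, strictly weaker than the needed $\liminf\ge-\xi(\kappa)$.

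To close the gap you should apply the Tauberian input to the random variable $X=\log|\Psi_{\tau^-}'(i)|$ itself rather than to a single bubble. This is what the paper does: having identified $-\xi(\kappa)$ as the abscissa of convergence of $F(\lambda)=\bbE[e^{-\lambda X}]$, it uses the explicit formula \eqref{eq:thm-moment} (in the form \eqref{eq:CR-good}) to check that $\lambda=-\xi(\kappa)$ is a genuine \emph{pole} of $F$ (the factor $\theta\sin(\frac{8\pi}{\kappa})-\sin(\frac{8\pi\theta}{\kappa})$ has a zero there while the Gamma-factor prefactor is analytic and nonzero), and then invokes Theorem~\ref{thm:tauberian} (Nakagawa's Tauberian theorem), which converts ``the abscissa of convergence is a pole'' into $\lim_{t\to\infty}t^{-1}\log\bbP(X>t)=-\xi(\kappa)$, giving both directions at once. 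Without some such regularity statement at the abscissa (a pole, or a Cram\'er--Lundberg change-of-measure/renewal argument for the subordinator stopped at $\tau$), ``the moment diverges at $\xi(\kappa)$'' alone does not rule out tails like $e^{-\xi(\kappa)t}/t$ with oscillations, which is exactly the obstacle you flagged — but your proposed resolution attacks the wrong measure.
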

Before proving Lemma~\ref{lem:er1}, let us explain how to complete the proof of Proposition~\ref{prop:backbone} assuming Lemma~\ref{lem:er1}. Another input in the proof of Proposition~\ref{prop:backbone} is the following lemma.
 \begin{lemma}\label{lem:er2}
 Let $\alpha_{3}(\kappa):=1-2/\kappa$. For all $\zeta<\alpha_{3}(\kappa)$, we have 
 \begin{align*}
\bbE [|\psi_{e_\tau}'(i)|^\zeta]<\infty.
\end{align*}
 \end{lemma}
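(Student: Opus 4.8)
The plan is to convert Lemma~\ref{lem:er2} into a one-sided tail bound for $|\psi_{e_\tau}'(i)|$ and then feed in an $\SLE$ one-point estimate. Recall first that $|\psi_{e_\tau}'(i)|\ge1$: any conformal map $\psi_{e_\tau}^{-1}\colon\bbH\to D_{\ep{e_\tau}}$ fixes $i$ and sends $\bbH$ into the bounded, hence proper, subdomain $D_{\ep{e_\tau}}\subsetneq\bbH$, so $|(\psi_{e_\tau}^{-1})'(i)|<1$; in particular $\bbE[|\psi_{e_\tau}'(i)|^\zeta]\le1$ for $\zeta\le0$, and $\bbE[|\psi_{e_\tau}'(i)|^\zeta]=\zeta\int_0^\infty x^{\zeta-1}\bbP(|\psi_{e_\tau}'(i)|>x)\,dx$ for $\zeta>0$. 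So it suffices to prove that for every $b<\alpha_3(\kappa)$ there is $C_b<\infty$ with $\bbP(|\psi_{e_\tau}'(i)|>x)\le C_b\,x^{-b}$ for all $x\ge1$: given $\zeta<\alpha_3(\kappa)$ one then picks $b\in(\zeta,\alpha_3(\kappa))$ and the integral converges.

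To obtain this tail bound, first I would pass from the derivative to a distance. The quantity $|\psi_{e_\tau}'(i)|$ does not depend on the choice of $\psi_{e_\tau}$, and the conformal radius of $D_{\ep{e_\tau}}$ at $i$ equals $2/|\psi_{e_\tau}'(i)|$, so the Koebe quarter theorem gives $\dist(i,\partial D_{\ep{e_\tau}})\le 8/|\psi_{e_\tau}'(i)|$. Since $\dist(i,\bbR)=1$, for $x>8$ the event $\{|\psi_{e_\tau}'(i)|>x\}$ forces the nearest boundary point of $D_{\ep{e_\tau}}$ to $i$ to lie on $\ep{e_\tau}$ rather than on $\bbR$, i.e.\ $\dist(i,\ep{e_\tau})<8/x$. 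Thus the task reduces to bounding $\bbP(\dist(i,\ep{e_\tau})<\delta)$ as $\delta\searrow0$.

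Next I would identify the law of the bubble $e_\tau$. By Description~\ref{descrip}, $\{e_s\}$ is a Poisson point process with intensity $dt\otimes\mu_\kappa$ and $e_\tau$ is the first bubble whose outer boundary encircles $i$; since our normalization of $\mu_\kappa$ gives $\mu_\kappa[\,i\in D_{\ep e}\,]=1$, the bubble $e_\tau$ is distributed according to the probability measure $\mu_\kappa|_{\{i\in D_{\ep e}\}}$. Consequently $\bbP(\dist(i,\ep{e_\tau})<\delta)=\mu_\kappa\big(\{i\in D_{\ep e}\}\cap\{\dist(i,\ep e)<\delta\}\big)$, and the heart of the matter is the estimate
\begin{equation}\label{eq:plan-onepoint}
\mu_\kappa\big(\{i\in D_{\ep e}\}\cap\{\dist(i,\ep e)<\delta\}\big)\ \le\ C_b\,\delta^{\,b}\qquad\text{for all }\delta\in(0,1)\text{ and }b<\alpha_3(\kappa).
\end{equation}
I would prove \eqref{eq:plan-onepoint} using that $\ep e$, the outer boundary of an $\SLE_\kappa$ bubble with $\kappa\in(4,8)$, is --- by $\SLE$ duality and the conformal-welding description underlying \cite{Wu23,Z22} --- locally absolutely continuous, away from the root $0$ and $\infty$ and hence in a fixed neighbourhood of $i$, with respect to $\SLE_{16/\kappa}$, whose almost sure Hausdorff dimension is $d=1+\tfrac2\kappa=2-\alpha_3(\kappa)$. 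The standard one-point upper bound for simple $\SLE_{\kappa'}$ with $\kappa'=16/\kappa\in(2,4)$ --- namely $\bbP(\dist(z,\eta)<\delta)\le C_b\,\delta^{b}$ for every $b<2-d$ and every interior point $z$ at distance of order $1$ from the boundary, which follows from Beffara's dimension theorem together with the quasi-multiplicativity of interior $\SLE$ arm events --- then yields \eqref{eq:plan-onepoint} after transferring through the local absolute continuity near $i$ and using $\mu_\kappa[\,i\in D_{\ep e}\,]=1$. As an alternative route to \eqref{eq:plan-onepoint}, one could work directly from the renormalization \eqref{eq:sle_bubble}, expressing the left-hand side as $c(\kappa)\lim_{\eps\to0}\eps^{-1+8/\kappa}\bbP\big(i\in D_{\ep\eta},\ \dist(i,\ep\eta)<\delta\big)$ for a chordal $\SLE_\kappa$ curve $\eta$ from $0$ to $-\eps$, and estimating this uniformly in $\eps$ via one-point estimates for chordal $\SLE_\kappa$ and for its frontier.

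The step I expect to be the main obstacle is the sharpness of the exponent in \eqref{eq:plan-onepoint}. Obtaining \emph{some} polynomial decay of $\mu_\kappa(\dist(i,\ep e)<\delta,\ i\in D_{\ep e})$ is soft, the $\SLE_\kappa$ bubble measure being non-degenerate near the interior point $i$; the difficulty is to pin the exponent down to $\alpha_3(\kappa)=1-\tfrac2\kappa$, which forces one to use the identification of $\ep e$ with (a variant of) $\SLE_{16/\kappa}$ and the value $1+\tfrac2\kappa$ of its Hausdorff dimension. A minor further point is that the conditioning $\{i\in D_{\ep e}\}$ and the requirement that $\ep e$ \emph{encircle} $i$ must not spoil the estimate; for the upper bound they cause no trouble, since intersecting with $\{i\in D_{\ep e}\}$ only removes $\mu_\kappa$-mass and that is precisely the event of unit $\mu_\kappa$-mass.
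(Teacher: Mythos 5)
Your reduction is sound: since $|\psi_{e_\tau}'(i)|\ge 1$ and, by Koebe, $\{|\psi_{e_\tau}'(i)|>x\}\subseteq\{\dist(i,\ep{e_\tau})<8/x\}$ for $x>8$, the lemma does follow from a tail bound $\bbP(\dist(i,\ep{e_\tau})<\delta)\le C_b\,\delta^{b}$ for every $b<1-2/\kappa$, and your identification of the law of $e_\tau$ with $\mu_\kappa$ restricted to $E_i$ is exactly the one we use. But your route is genuinely different from the paper's. We never prove a one-point estimate for $\ep{e_\tau}$: we cite the exact formula for $\bbE[|\psi_{e_\tau}'(i)|^{2-2\Delta_\alpha}]$ from \cite[Proposition 7.12]{Wu23}, in the form of \cite{ARSZ23} recalled in Remark~\ref{rmk:CR-formula}; its right-hand side is analytic in $\alpha\in(\frac{\gamma}{2},Q)$, and the analytic continuation argument of Lemma~\ref{lem:analtic0} then yields finiteness for all $\zeta<1-2/\kappa$. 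That formula comes out of the conformal welding and Liouville CFT machinery, so your approach would buy a proof of Lemma~\ref{lem:er2} independent of the LQG input --- which is appealing, and is precisely the route the paper describes as ``possible'' but does not carry out.

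The price is that the step you assert --- local absolute continuity, with bounded density near $i$, of $\ep e$ under $\mu_\kappa|_{E_i}$ with respect to chordal $\SLE_{16/\kappa}$, so that the sharp interior one-point exponent $1-2/\kappa$ transfers --- is the crux and is not free. Two things make it delicate. First, by Lemma~\ref{lem:A.4} the relevant measure is the $\SLE_{16/\kappa}(16/\kappa-4)$ bubble measure restricted to $E_i$, an infinite measure arising as a degenerate limit of chordal $\SLE_{16/\kappa}(16/\kappa-4)$ processes whose two endpoints merge at the root; absolute continuity near $i$ must be extracted through that limit (your ``alternative route'' needs estimates uniform in $\epsilon$ for the same reason). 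Second, restricting to $E_i$ is a global conditioning on an event whose probability under the approximating chordal laws tends to $0$, so one must argue --- e.g.\ by stopping the curve on first entry to $B(i,1/2)$ and showing the conditional probability of subsequently encircling $i$ is bounded below --- that this conditioning does not distort the local exponent at $i$. (A minor point: the one-point estimate is an input to Beffara's dimension theorem, not a consequence of it, though the estimate itself is indeed available in \cite{MR2435854}.) These steps are standard in spirit but require genuine work; as written, your key estimate is a claim rather than a proof, which is exactly why the paper opts to cite the exact moment formula instead.
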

Note that $e_\tau$ is distributed as an SLE$_\kappa$ bubble sampled according to $\mu_\kappa$ restricted to $i \in D_{\ep e_\tau}$ (this is a probability measure). 
The exponent $\alpha_{3}(\kappa)=1-2/\kappa$ is the exponent for an SLE$_{16/\kappa}$ to get $\eps$-close to an interior point, see \cite{MR1740371,MR2435854}. By duality, $\ep e_\tau$  locally looks like an SLE$_{16/\kappa}$ process (we prove the duality for $\ep e_\tau$ in Appendix~\ref{sec:app}). 
A rigorous proof  of Lemma~\ref{lem:er2} can possibly be made based on this, but in fact, a stronger result is obtained in \cite[Proposition 7.12]{Wu23}, where the exact formula for $\bbE [|\psi_{e_\tau}'(i)|^\zeta]$ is given for all $\zeta<\alpha_{3}(\kappa)$. A more transparent formula was derived in~\cite{ARSZ23}, which we recall in Remark~\ref{rmk:CR-formula} in Appendix~\ref{sec:app}.
For simplicity, we cite this as a proof of Lemma~\ref{lem:er2}.

Let us first prove Proposition~\ref{prop:backbone} assuming Lemma~\ref{lem:er1}. 
The proof  relies on the inequality $\alpha_{3}(\kappa)>\xi(\kappa)$, which follows from Theorem~\ref{thm:moment}, where the exact value of $\xi(\kappa)$ is computed.

\begin{proof}[Proof of Proposition~\ref{prop:backbone}]
Let $R$ be the Euclidean distance from $i$ to $\ep e_\tau$. The event $E_r$ is equal to $\{R \le r\}$. By Koebe $1/4$ theorem, we have 
\begin{align}\label{eq:koebe}
(2|\Psi_{\tau}'(i)|)^{-1} \le R \le 2 |\Psi_{\tau}'(i)|^{-1}, \qquad
\bbP(|\Psi_{\tau}'(i)| \ge 2/r) \le \bbP(E_r) \le  \bbP(|\Psi_{\tau}'(i)| \ge 1/(2 r)).
\end{align}
Therefore, in order to prove Proposition~\ref{prop:backbone}, it is enough to prove
\begin{align}\label{eq:enough_limit}
\lim_{x\to\infty}\frac{\log \bbP(|\Psi_\tau'(i)| > x)}{\log x} = -\xi(\kappa).
\end{align}
Note that $|\psi_{e_\tau}'(i)|>1$, hence $|\Psi_\tau'(i)|=|\psi_{e_\tau}'(i)|| \Psi_{\tau^-}'(i)|>|\Psi_{\tau^-}'(i)|$. Therefore, $\bbP(|\Psi_\tau'(i)| > x) \ge \bbP(|\Psi_{\tau^-}'(i)| > x)$. Thus the $\ge$ direction of \eqref{eq:enough_limit} follows from Lemma~\ref{lem:er1}. 

It remains to prove the $\le$ direction of \eqref{eq:enough_limit}.
By Lemma~\ref{lem:er1}, we know that for all $\eps>0$, there exists $x_0>0$, such that for all $x\ge x_0$,
\begin{align}\label{eq:>=x}
\bbP(|\Psi_{\tau^-}'(i)|>x) \le x^{-\xi(\kappa)+\eps}.
\end{align}
By Theorem~\ref{thm:moment}, we know $\alpha(\kappa)>\xi(\kappa)$. Let $\delta(\kappa)$ be a fixed number such that $\xi(\kappa)/\alpha(\kappa) < \delta(\kappa)<1$. 
For all $x\ge x_0^{1/(1-\delta(\kappa))}$, we have
\begin{align}
\notag
&\bbP(|\Psi_\tau'(i)| > x)\\
\notag
=&\bbP(|\psi_{e_\tau}'(i)| |\Psi_{\tau^-}'(i)|>x)
\le \bbP\big(|\psi_{e_\tau}'(i)| >x^{\delta(\kappa)}\big)+ \bbP\big(|\psi_{e_\tau}'(i)| \le x^{\delta(\kappa)}, \,|\Psi_{\tau^-}'(i)|>x |\psi_{e_\tau}'(i)|^{-1}\big)\\
\notag
\le &x^{-\xi(\kappa)} \bbE\big[|\psi_{e_\tau}'(i)|^{ \xi(\kappa)/\delta(\kappa)}\big] + \bbE\left[  \one_{|\psi_{e_\tau}'(i)| \le x^{\delta(\kappa)}} \bbP\left(|\Psi_{\tau^-}'(i)|>x |\psi_{e_\tau}'(i)|^{-1} \mid |\psi_{e_\tau}'(i)|\right)\right] \\
\label{eq:c2}
\le & c_4 x^{-\xi(\kappa)}  + \bbE\big[ (x |\psi_{e_\tau}'(i)|^{-1})^{-\xi(\kappa)+\eps}  \big] =c_4 x^{-\xi(\kappa)} + c_5 x^{-\xi(\kappa)+\eps} ,
\end{align}
where $c_4 =  \bbE\big[\psi_{e_\tau}'(i)^{\xi(\kappa)/\delta(\kappa)}\big]$ and $c_5 =  \bbE\big[  \psi_{e_\tau}'(i)^{ \xi(\kappa)-\eps}  \big]$ are finite, due to Lemma~\ref{lem:er2}, and because  $\xi(\kappa)/\delta(\kappa)<\alpha_{3}(\kappa)$ and $\xi(\kappa)-\eps<\alpha_{3}(\kappa)$.
The last inequality in \eqref{eq:c2} follows from the independence between $|\psi_{e_\tau}'(i)|$ and $|\Psi_{\tau^-}'(i)|$, \eqref{eq:>=x} and the fact that $x |\psi_{e_\tau}'(i)|^{-1} \ge x^{1-\delta(\kappa)}\ge x_0$ on the event $|\psi_{e_\tau}'(i)| \le x^{\delta(\kappa)}$. This implies that
\begin{align*}
\lim_{x\to\infty}\frac{\log \bbP(|\Psi_\tau'(i)| \ge x)}{\log x} \le -\xi(\kappa)+\eps.
\end{align*}
Letting $\eps$ tend to $0$, we obtain the $\le$ direction of \eqref{eq:enough_limit}. This completes the proof of \eqref{eq:enough_limit} and the proposition.
\end{proof}

Let us now prove Lemma~\ref{lem:er1}. For this purpose, we need to use a version of Tauberian theorem, due to \cite{MR2417689}.
Before stating the theorem, let us define the notion of the abscissa of convergence. Suppose that $\nu$ is a measure on $[0,\infty)$, then it is known (see e.g.\ \cite[p37]{MR0005923}) that there exists $s_0\in\bbR$ such that the integral
\begin{align*}
f(z)=\int_0^\infty e^{-z x} \nu(dx)
\end{align*}
converges for $\Re z> s_0$, diverges for $\Re z<s_0$ and has a singularity at $s_0$. The number $s_0$ is called the abscissa of convergence of $f$.
The following theorem gives the decay rate of the tail probability of a random variable, under certain conditions on its Laplace transform.
\begin{theorem}[Theorem 3, \cite{MR2417689}]\label{thm:tauberian}
Let $X$ be a non-negative random variable. Let 
$\varphi(s):= \bbE[\exp(-s X)]$ be the Laplace transform of $X$.
Suppose that $s_0 < 0$ is the abscissa of convergence of $\varphi$. If $s_0$ is a pole of $\varphi$, then we have
\begin{align*}
\lim_{x\to \infty} x^{-1} \log \bbP(X>x) =s_0.
\end{align*}
\end{theorem}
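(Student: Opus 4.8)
\medskip
\noindent\textbf{Proof proposal for Theorem~\ref{thm:tauberian}.}
The plan is to reduce the statement to Karamata's Tauberian theorem applied to the tail $\bar F(x):=\bbP(X>x)$, using the pole hypothesis to fix the behaviour of the relevant Laplace transform near $s_0$, and then to transfer the resulting integrated asymptotics to pointwise logarithmic asymptotics of $\bar F$ via monotonicity. First I would record the elementary identity $\varphi(s)=1-s\int_0^\infty e^{-sx}\bar F(x)\,dx$ for $s>0$ (obtained by integrating $e^{-sX}-1=-\int_0^X se^{-sx}\,dx$ and applying Fubini), so that $G(z):=\frac{1-\varphi(z)}{z}$ equals the Laplace transform $\int_0^\infty e^{-zx}\bar F(x)\,dx$ on $\{\mathrm{Re}\,z>0\}$. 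Since $\varphi$ is analytic on $\{\mathrm{Re}\,z>s_0\}$ and $s_0<0$, the point $z=0$ is a removable singularity of $G$, so $G$ is analytic on $\{\mathrm{Re}\,z>s_0\}$; and because $s_0$ is a pole of $\varphi$ of some finite order $m\ge 1$, $G$ extends meromorphically across $s_0$ with a pole of order $m$, say $G(z)\sim B(z-s_0)^{-m}$ as $z\to s_0$ with $B\neq 0$. (The degenerate cases are excluded by the hypotheses: if $X\equiv 0$ or $X$ is bounded, $\varphi$ is entire and has no pole; hence $X$ is unbounded and $\bar F(x)>0$ for all $x\ge 0$.)

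The crucial step, which I expect to be the main obstacle, is to identify the abscissa of convergence $\sigma_c$ of $\int_0^\infty e^{-zx}\bar F(x)\,dx$ as exactly $s_0$, and here the pole hypothesis is essential. Since $\bar F\ge 0$, Landau's theorem on Laplace transforms of nonnegative functions (see Widder's book on the Laplace transform) guarantees that $\sigma_c$ is a singular point of the function the integral represents; as $G$ furnishes an analytic continuation to all of $\{\mathrm{Re}\,z>s_0\}$, we cannot have $\sigma_c>s_0$, so $\sigma_c\le s_0$. Thus the integral converges for real $z$ slightly above $s_0$, where it agrees with $G$ by analytic continuation, and by monotone convergence $G(s_0+w)=\int_0^\infty e^{-wx}\tilde F(x)\,dx\uparrow \int_0^\infty e^{-s_0 x}\bar F(x)\,dx$ as $w\downarrow 0$, where $\tilde F(x):=e^{-s_0 x}\bar F(x)\ge 0$. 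Because $s_0$ is a \emph{pole} rather than a milder singularity, this limit is $+\infty$; this also forces $B>0$, since the integral is positive and blows up like $Bw^{-m}$. It is precisely at this point that the hypothesis ``$s_0$ is a pole'' does its work: a logarithmic singularity at $s_0$, for instance, would be compatible with $\limsup$ and $\liminf$ of $x^{-1}\log\bbP(X>x)$ differing, so the conclusion would be false; keeping the complex-analytic bookkeeping straight and invoking Landau's theorem correctly is the delicate part.

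With $\int_0^\infty e^{-wx}\tilde F(x)\,dx\sim Bw^{-m}$ as $w\downarrow 0$ and $\tilde F\ge 0$, Karamata's Tauberian theorem gives $U(T):=\int_0^T\tilde F(x)\,dx\sim \frac{B}{m!}\,T^m$ as $T\to\infty$; write $c:=B/m!>0$. It remains to pass from this to pointwise control of $\tilde F$, hence of $\bar F$ (recall $\bar F(x)=e^{s_0 x}\tilde F(x)$), and monotonicity of $\bar F$ supplies exactly what is needed. For the upper bound, $\bar F$ non-increasing gives $\tilde F(x)\ge e^{s_0}\tilde F(T)$ for $x\in[T-1,T]$, so $\tilde F(T)\le e^{-s_0}\big(U(T)-U(T-1)\big)\le e^{-s_0}U(T)\le CT^m$ eventually; hence $\limsup_{T\to\infty}T^{-1}\log\tilde F(T)\le 0$, i.e.\ $\limsup_{T\to\infty}T^{-1}\log\bar F(T)\le s_0$. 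For the matching lower bound I would argue by contradiction: if $\tilde F(T_n)\le e^{-\delta T_n}$ along some $T_n\to\infty$ and $\delta>0$, then $\bar F$ non-increasing yields $\tilde F(x)\le e^{-s_0(x-T_n)}e^{-\delta T_n}$ for $x\ge T_n$, so on $[T_n,T_n+\Delta_n]$ with $\Delta_n:=\frac{\delta}{-2 s_0}T_n$ one has $\tilde F(x)\le e^{-\delta T_n/2}$, whence $U(T_n+\Delta_n)-U(T_n)\le \Delta_n e^{-\delta T_n/2}\to 0$; but $U(T_n+\Delta_n)-U(T_n)\sim c\,T_n^m\big[(1+\tfrac{\delta}{-2 s_0})^m-1\big]\to+\infty$, a contradiction. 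Therefore $\liminf_{T\to\infty}T^{-1}\log\bar F(T)\ge s_0$, and combining the two bounds yields $\lim_{x\to\infty}x^{-1}\log\bbP(X>x)=s_0$, which is the assertion.
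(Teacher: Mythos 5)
Your proof is correct. There is nothing in the paper to compare it against: the statement is imported verbatim as Theorem~3 of the cited reference and used as a black box in the proof of Lemma~\ref{lem:er1}, so your write-up supplies an argument the authors deliberately omit. Checking the steps: the identity $\varphi(s)=1-s\int_0^\infty e^{-sx}\bbP(X>x)\,dx$ and the removability of the singularity of $G$ at $0$ (valid because $s_0<0$ makes $\varphi$ analytic at $0$ with $\varphi(0)=1$) are fine; the Landau--Pringsheim step correctly forces the abscissa of convergence of $\int_0^\infty e^{-zx}\bbP(X>x)\,dx$ to be $\le s_0$, after which positivity of the integral for real $w>0$ pins down $B>0$ and the divergence at $s_0$; Karamata applies since $\tilde F$ is nonnegative and locally bounded; and the two monotonicity arguments (the window $[T-1,T]$ for the upper bound, the window $[T_n,T_n+\Delta_n]$ with $\Delta_n=\frac{\delta}{-2s_0}T_n$ for the lower bound) are both sound, with the degenerate case of bounded $X$ correctly excluded so that $\log\bbP(X>T)$ is defined for all $T$. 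You are also right that the pole hypothesis is exactly what makes the argument work: it simultaneously guarantees the divergence $\int_0^\infty\tilde F=\infty$, the sign $B>0$, and --- crucially --- the clean power-law asymptotic $Bw^{-m}$ needed to feed Karamata; a weaker singularity would break the matching of the upper and lower bounds. Your route (real Tauberian theorem plus an elementary monotonicity sandwich) is more elementary and self-contained than the complex-Tauberian argument in the cited source, and it delivers precisely the logarithmic asymptotics that the statement, and the paper's application to $\log|\Psi_{\tau^-}'(i)|$, require.
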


We are now ready to prove Lemma~\ref{lem:er1}.
\begin{proof}[Proof of Lemma~\ref{lem:er1}]
Note that
\begin{align*}
\log|\Psi_{\tau^-}'(i)|= \sum_{s\in I, s< \tau}\log |\psi_{e_s}'(i)|.
\end{align*}
We can view $\{e_s, s\in I, s < \tau\}$ as the bubbles from a Poisson point process with intensity $\nu_\kappa$ (instead of $\mu_\kappa$) stopped at an exponential random time $\tau$ with parameter $\mu_\kappa[ i \in D_{\ep e}] = 1$, where $\tau$ is independent from the Poisson point process. Then, Campbell's theorem for Poisson point process implies that
\begin{align*}
F(\lambda):=\bbE \left[  |\Psi_{\tau^{-}}'(i)|^{-\lambda} \right]= \bbE \left[ \exp (-\lambda \log |\Psi_{\tau^{-}}'(i)|)\right]=\bbE\left[ \exp\left(\nu_\kappa [e^{\lambda \log |\psi'(i)|} -1] \tau\right)\right] =\bbE\left[\exp\left(\beta(\lambda) \tau\right) \right],
\end{align*}
where
$\beta(\lambda):= \nu_\kappa [|\psi'(i)|^{\lambda} -1]$.
Let us first consider the function $F$ above as a real function. 
Since $\tau$ is an exponential random variable with parameter $1$, we have
$F(\lambda)<\infty$ if and only if $\beta(\lambda)<1$. 
Let $\xi(\kappa)$ be the solution to \eqref{eq:backbone}, so that $\beta(-\xi(\kappa))=1$.
By Theorem~\ref{thm:moment},  $\beta(\lambda)<1$ if and only if $\lambda> -\xi(\kappa)$.
This implies that $F(\lambda)<\infty$ if $\lambda > -\xi(\kappa)$ and $F(\lambda)=\infty$ if $\lambda\le -\xi(\kappa)$.
Therefore, $-\xi(\kappa)$ is the abscissa of convergence of $F$. 
For $\lambda\in \bbC$ with $\Re \lambda > -\xi(\kappa)$,  we further have
\begin{align}\label{eq:CR-good}
F(\lambda) = \frac{\kappa\cos(\frac{4\pi}{\kappa}) \Gamma(\frac{8}{\kappa}-1) \sin(\frac{4\pi  \theta}{\kappa})}{2   \Gamma(\frac{4(1-\theta)}{\kappa})\Gamma(\frac{4(1+\theta)}{\kappa}) }
   \Big( \theta \sin(\frac{8\pi}{\kappa})-\sin(\frac{8\pi  \theta}{\kappa}) \Big)^{-1}, 
\end{align}
where  $\theta^2= (1-\frac{\kappa}{4})^2-\frac{\kappa \lambda}{2}$.
Let $\hat G(\theta):=\left(\theta \sin(\frac{8\pi}{\kappa}) - \sin(\frac{8\pi \theta}{\kappa})\right)\left( \sin(\frac{4\pi  \theta}{\kappa})\right)^{-1}$.  Then $\hat G(\theta)$ is meromorphic in $\theta$ on the entire complex plane $\bbC$. 
Since $\hat G(\theta)=\hat G(-\theta)$,
the function \(G(\lambda):= \hat G(\sqrt{(1-\frac{\kappa}{4})^2-\frac{\kappa \lambda}{2}}) \)
is meromorphic in  $\lambda$ on $\bbC$ and has a zero at $\lambda=-\xi(\kappa)$. Therefore, $\lambda=-\xi(\kappa)$ is a pole of $G(\lambda)^{-1}$. On the other hand, the function 
\(\lambda \mapsto  \frac{\kappa\cos(\frac{4\pi}{\kappa}) \Gamma(\frac{8}{\kappa}-1)}
{2   \Gamma(\frac{4(1-\theta(\lambda))}{\kappa})\Gamma(\frac{4(1+\theta(\lambda))}{\kappa})  }
\)
 is analytic in an neighborhood of $\lambda=-\xi(\kappa)$, and is not zero at $\lambda=-\xi(\kappa)$. This proves that $\lambda=-\xi(\kappa)$ is a pole of $F(\lambda)$. Applying Theorem~\ref{thm:tauberian} to $\log  |\Psi_{\tau^{-}}'(i)|$, we conclude that
\begin{align*}
\lim_{x\to\infty} x^{-1}\log \bbP(|\Psi_{\tau^{-}}'(i)| >x) =-\xi(\kappa). 
\end{align*}
This completes the proof of the lemma.
\end{proof}

\subsection{Concluding remarks}
We conclude this section with a few remarks.
\begin{remark} [Comparison between exponents]\label{rmk:alpha}
The proof of Proposition~\ref{prop:backbone} relies on the inequality $\alpha_3(\kappa)>\xi(\kappa)$ which follows from Theorem~\ref{thm:moment}. The rigorous proof of Theorem~\ref{thm:moment} will be given in the following sections, based on a series of explicit computations which are rather convoluted, but the inequality just mentioned can also be informally seen in various ways. For example, in the discrete: $\alpha_{3}(\kappa)$ corresponds to the three-arm polychromatic exponent, which should be greater than the two-arm monochromatic exponent (e.g.\ from Russo-Seymour-Welsh-type bounds). In the continuum: we believe that $2-\xi(\kappa)$ should be the dimension of the thin gasket of CLE$_\kappa$ (in the same way that the dimension of the gasket of CLE$_\kappa$ is given by $2-\alpha_1(\kappa)$ \cite{MSWCLEgasket}, where $\alpha_1(\kappa)$ is the one-arm exponent). On the other hand, $2-\alpha_{3}(\kappa)$ is the dimension of the outer boundary of a single loop in CLE$_\kappa$. Intuitively, the latter should be strictly thinner than the former, which would yield $2-\alpha_{3}(\kappa)<2-\xi(\kappa)$.
\end{remark}

\begin{figure}
	\centering
	\subfigure{\includegraphics[width=.3\textwidth]{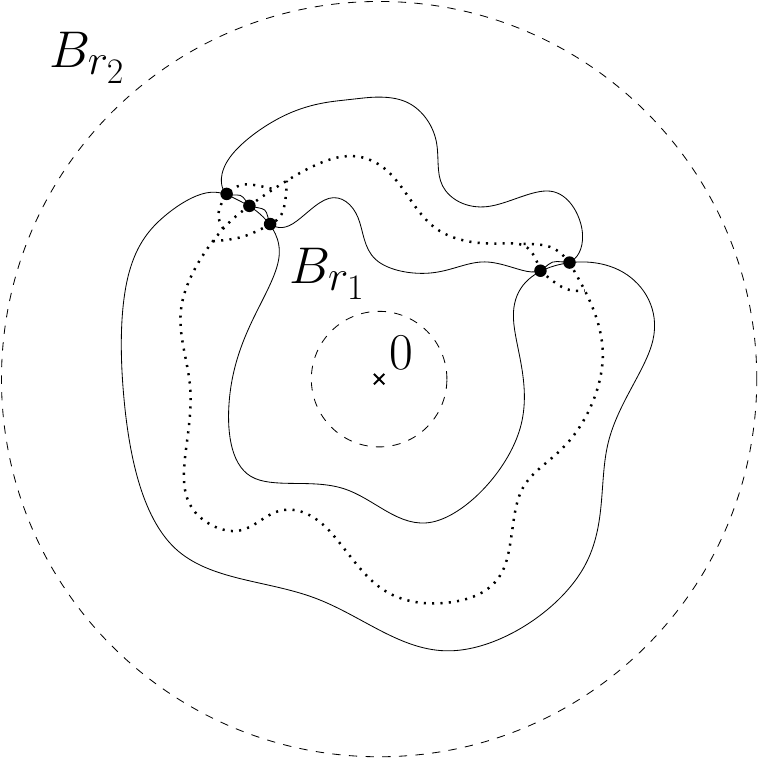}}
	\hspace{0.5cm}
	\subfigure{\includegraphics[width=.3\textwidth]{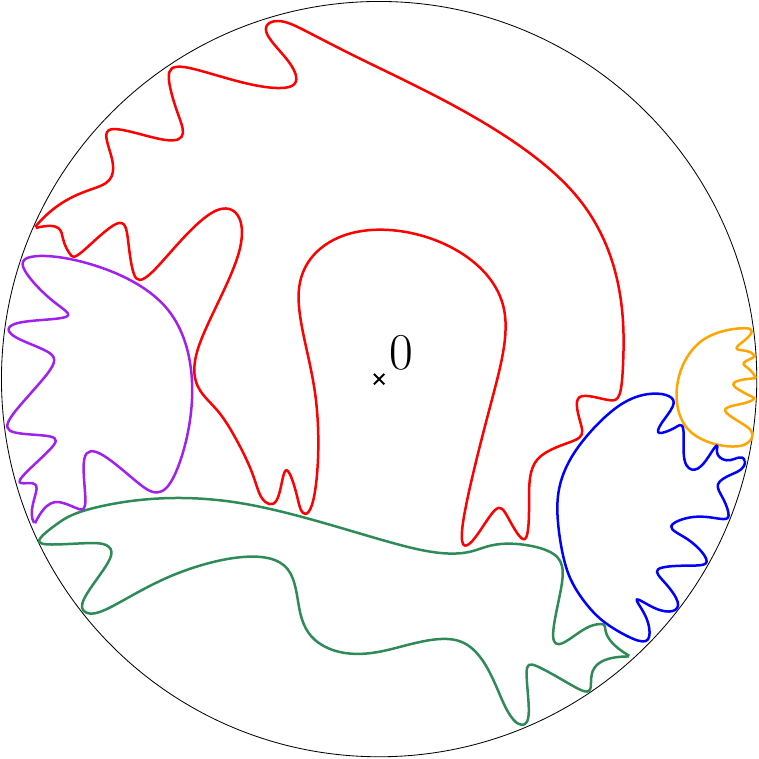}}
		\hspace{0.5cm}
	\subfigure{\includegraphics[width=.3\textwidth]{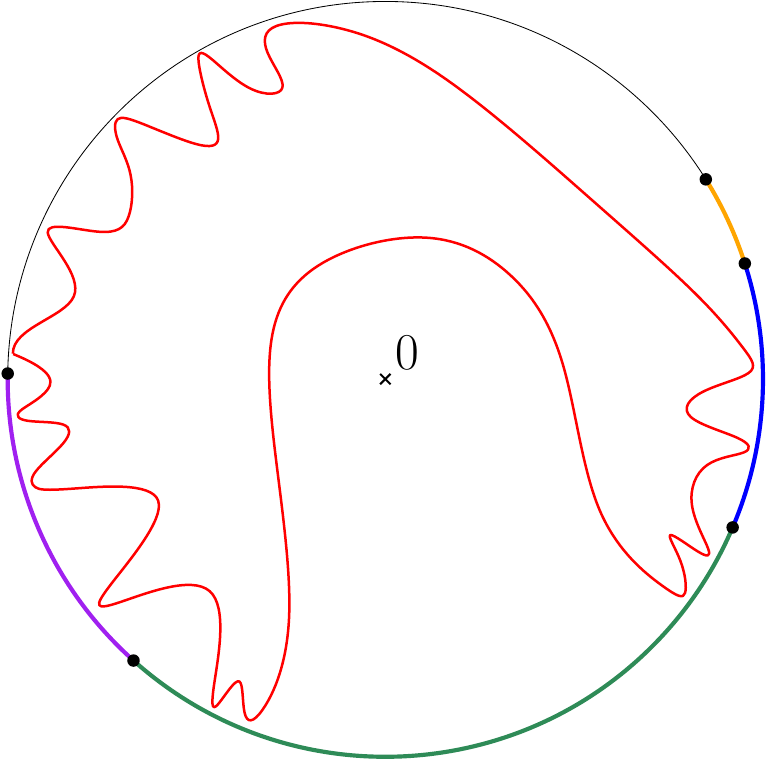}}
	\caption{Consider the monochromatic $3$-arm event in $A_{r_1,r_2}$, $0 \leq r_1 < r_2$. Its complement can be expressed in terms of the existence of a circuit around $B_{r_1}$ so that all its vertices, except at most two, are white. This leads to consider three subcases, in particular the one shown on the left: the two white clusters are so that the external frontier of their union is a black circuit surrounding $B_{r_1}$. In the middle, we show a sequence of outer boundaries of the loops that are discovered in the CLE$_\kappa$ (in the order yellow, blue, green, purple and red), according to the exploration process in Description~\ref{descrip} (one should discover countably many loops, but we only draw finitely many for illustration). The union of the red and green loops disconnect $0$ from $\infty$.
	On the right,  we map out all the loops discovered before the red one, which become boundary arcs of respective colors. The image of the red loop is an SLE$_\kappa$ bubble, whose union with the green arc disconnects $0$ from $\infty$.}
	\label{fig:three_arms}
\end{figure}

\begin{remark}[Other values of cluster weight $q$]\label{rmk:Ising}
In our proofs, we decided to focus on the case of Bernoulli percolation, that is, the random-cluster process with $q=1$. However, we expect similar reasonings to be applicable for all $q \in [1,4)$, as long as the convergence of the discrete process to ${\rm CLE}_{\kappa}$ is known rigorously, which so far is the case only for $q=2$ (corresponding to $\kappa=\frac{16}{3}$), i.e. the FK Ising process, following the breakthrough \cite{Sm10}. Indeed, our proofs in this Section are based on Russo-Seymour-Welsh type crossing estimates and a-priori bounds on arm events, which are known for the FK Ising model \cite{DCHN11,CDCH16}. Hence, we believe that all ingredients are available in the literature for this case. Moreover, such estimates have now been established for all values of $q$ between $1$ and $4$, see \cite{DCST17, DCMT21}, so the beginning of the argument would also apply.
\end{remark}

\begin{remark}[Difficulty for more than two arms]\label{rmk:three-arm}
In principle, one can try to follow a similar approach to compute all $j$-arm monochromatic exponents, $j \geq 3$. However, already for $j = 3$, the situation becomes more complicated: in order to fix ideas, let us discuss this case. Using again Menger's theorem as a starting point, the complement of the existence of $3$ disjoint black arms in an annulus $A_{r_1,r_2}$ is the event that one can find a quasi-white circuit surrounding $B_{r_1}$, containing at most two black sites. This leads in particular to analyze the subevent that there exist two distinct white clusters, which together surround $B_{r_1}$ as shown on Figure~\ref{fig:three_arms} (Left). There should not be any particular difficulty to relate this discrete event to the corresponding continuum event for ${\rm CLE}_6$, again relying on a-priori arm events. Hence, it should be possible to repeat the beginning of the story, up to minor adaptations. However, that continuum event turns out to be much more convoluted to estimate, due to the fact that it involves two distinct loops (instead of a single one). If we explore the loops in the CLE$_\kappa$ using Description~\ref{descrip}, through a Poisson point process of bubbles, then the stopping time for this exploration process is no longer independent from the process: The previously discovered loops are mapped out to some boundary arcs of the domain, see Figure~\ref{fig:three_arms}. We need to keep track of a countable collection of boundary arcs $\{l_j\}_{j\in J}$ from the past, in order to determine the first time $\tau$ that a bubble $e_\tau$ occurs, so that there exists $j\in J$ such that $e_\tau \cup l_j$ disconnects $0$ from $\infty$. It seems difficult to determine the law of this stopping time, so we have not been able to derive the corresponding exponent with the methods of the present paper.
\end{remark}

\begin{figure}[h!]
    \centering
    \includegraphics[width=0.7\textwidth]{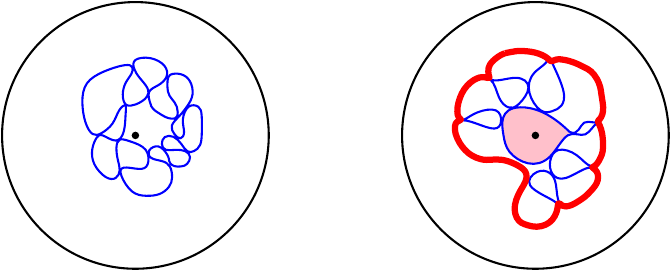}
    \caption{${\rm CLE}_6$ loops related to the one-arm and the backbone exponents. \emph{Left:} The outermost ${\rm CLE}_6$ loop $\gamma_o$ that surrounds the origin,  with inner boundary $\mathcal{L}_o$. The one-arm exponent captures the probability tail of $\mathrm{CR}(0,\mathcal L_o)$. \emph{Right:} The outermost ${\rm CLE}_6$ loop  $\gamma$ whose outer boundary $\ep\gamma$ (in bold red) surrounds the origin. The backbone exponent $\xi$ captures the probability tail of $\mathrm{CR}(0,\ep\gamma)$.  In our figure, the origin is contained in the pink domain, which is not surrounded by  $\gamma$. 
    }%
    \label{fig:explore-CR}%
\end{figure}

\begin{remark}[Connection with the conformal radius]
We notice that both the one-arm and the backbone exponents can be expressed in terms of the conformal radii of certain CLE$_6$ related domains, as poles of their moment generating functions. The one-arm exponent $\alpha_1=\frac{5}{48}$ was computed \cite{LSW02} before CLE was introduced, but it can be related to the following quantity in CLE$_6$.
Let $\gamma_o$ be outermost loop in CLE$_6$ that surrounds the origin.
Let $\mathcal L_o$ be the boundary of the connected component containing $0$ of $\mathbb{C}\setminus \gamma_o$; see Figure~\ref{fig:explore-CR} (left). Then 
\begin{equation}\label{eq:CR1}
\alpha_1=\inf \{x>0:\mathbb E[\mathrm{CR}(0,\mathcal L_o)^{-x}]=\infty \}
\end{equation}
where $\mathrm{CR}(0,\mathcal L_o)$ is the conformal radius of $\mathcal L_o$ viewed from the origin. In~\cite{SSW09}, $\mathbb E[\mathrm{CR}(0,\mathcal L_o)^{-x}]$ was exactly computed. For the backbone exponent $\xi$, we have the following counterpart for the loop $\ep \gamma$ defined in Section~\ref{subsec:bb_exp}, 
see Figure~\ref{fig:explore-CR} (right):
\begin{equation}\label{eq:CR}
 \xi=\inf \{x>0:\mathbb E[\mathrm{CR}(0,\ep\gamma)^{-x}] =\infty \}.
\end{equation}
Indeed, in Section~\ref{subsec:link_bubble}, we have shown $\mathrm{CR}(0,\ep\gamma) =  |\Psi_\tau'(i)|^{-1} = |\Psi_{\tau^-}'(i)|^{-1} |\psi_{e_\tau}'(i)|^{-1}$, and that
\begin{align}\label{eq:CR2}
\xi=\inf\{x>0: \mathbb E[|\Psi_{\tau^-}'(i)|^x]=\infty\}.
\end{align}
Since $ |\Psi_{\tau^-}'(i)|$ and $|\psi_{e_\tau}'(i)| $ are independent, by Lemma~\ref{lem:er2}  and Theorem~\ref{thm:moment} (see Remark~\ref{rmk:alpha})
we have that \eqref{eq:CR} and \eqref{eq:CR2} yield the same exponent.
Although not needed, we can also derive an exact formula for $\mathbb{E}[\mathrm{CR}(0,\ep\gamma)^{-x}]$, as the product of the moment of $|\Psi_{\tau^-}'(i)|$ explicitly given by~\eqref{eq:CR-good} and the  moment of $|\psi_{e_\tau}'(i)| $. The latter was obtained in~\cite{Wu23}, as explained in Remark~\ref{rmk:CR-formula}.

\end{remark}

\section{SLE bubble measure and conformal welding of LQG surfaces}\label{sec:welding}
Recall the setting of Theorem~\ref{thm:moment}. Let $\eta$ be a sample from the SLE bubble measure $\mu_\kappa$ on $\bbH$ rooted at 0. Define $D_i$ as the connected component of $\bbH\setminus \eta$ containing $i\in \bbH$, and let $E_i$ be the event that $\partial D_i\cap \bbR$ is empty. Define $D_{\ep \eta}$ as the domain enclosed by the outer boundary of $\eta$, and let $D_\infty = \mathbb{H} \backslash \overline{D}_{\ep \eta}$ be the complement of $D_{\ep \eta}$, as depicted in Figure~\ref{fig:sle-bubble}. Then $E_i$ is equivalent to the event that $i \in D_{\ep \eta}$ almost surely. 
We normalize $\mu_\kappa$ such that $\mu_\kappa(E_i)=1$ and let $\nu_\kappa$ be the restriction of $\mu_\kappa$ to the complementary event $E^c_i$ of $E_i$.
Next, let $\psi$ be a conformal map from $\mathbb{H}$ to $D_i$ such that $\psi(i)=i$. Theorem~\ref{thm:moment} concerns the law of $|\psi'(i)|$ under the infinite measure $\nu_\kappa$. Since $|\psi'(i)|$ does not depend on the choice of $\psi$, we fix it by requiring that $\psi(0) = b$ and define the point $x = \psi^{-1}(a)$, where $a$ and $b$ are the two endpoints of $\partial D_i \cap \mathbb{R}$ such that $a, 0, b$ are aligned counterclockwise. See Figure~\ref{fig:psi} for an illustration. In addition, on the event $E_i^c$, we introduce $\varphi: \mathbb{H} \rightarrow  \mathbb{H} \backslash \overline{D}_i$ as the conformal map that maps the boundary points $0, 1, \infty$ to $a, 0, b$, respectively. In this section, we derive conformal welding results for LQG surfaces that allow us to access the joint moment of $|\psi'(i)|$ and $|\varphi'(1)|$ under  $\nu_\kappa$, which will lead to the proof of Theorem~\ref{thm:moment}  in Section~\ref{sec:solve}.

\begin{figure}[H]
\centering
\includegraphics[width=8cm]{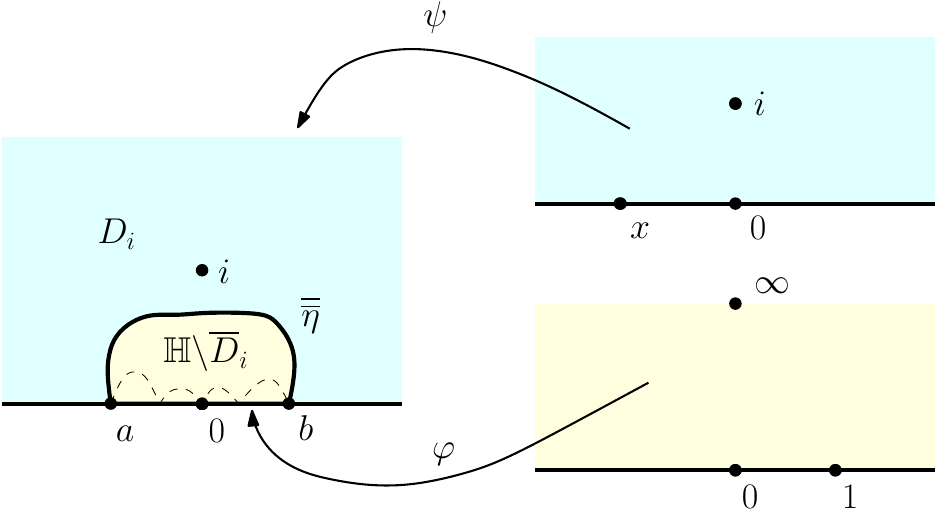}
\caption{Illustration of $\psi$ and $\varphi$. On the left-hand side, the union of the solid line separating the light cyan and light yellow regions and the dashed line represents $\ep \eta$, the outer boundary of $\eta$. The light cyan region corresponds to $D_i$, and the light yellow region corresponds to $\mathbb{H} \backslash \overline{D}_i$. In addition, we have two conformal maps, $\psi : (\mathbb{H}, i,0,x) \rightarrow (D_i, i,b,a)$ and $\varphi: ( \mathbb{H}, 0, 1 ,\infty) \rightarrow (\mathbb{H} \backslash \overline{D}_i, a,0,b)$.}
\label{fig:psi}
\end{figure}
The key result in this section is Proposition~\ref{prop:reweight}, which is the base of our calculations in Section~\ref{sec:solve}. After providing preliminaries about ${\rm LQG}$ surfaces and Liouville fields in 
Sections~\ref{sec:QD} and~\ref{subsub:LF}, we first prove some intermediate conformal welding results in Sections~\ref{sec:bubble-weld}---\ref{sec:weld-lcft}. The proof are based on earlier results from~\cite{Wu23} and~\cite{ASY22}, and on techniques developed in~\cite{AHS21,ARS21}. However, for calculations in Section~\ref{sec:solve} we must  keep track of constants that are implicit in those works. We finally prove Proposition~\ref{prop:reweight} in Section~\ref{sec:reweight} using the strategy from~\cite{AHS21}.

We fix some conventions used in Sections~\ref{sec:welding} and~\ref{sec:solve}. 
We will use the following SLE and LQG parameters:
\begin{equation}\label{eq:parameter}
 \gamma\in (\sqrt2,2), \quad    \kappa=16/\gamma^2\in (4,8),\quad \textrm{and}\quad Q = \frac{2}{\gamma}+\frac{\gamma}{2}. 
\end{equation}

We  will frequently consider infinite measures and extend the probability terminology to this setting. 
In particular, suppose $M$ is a $\sigma$-finite measure on a measurable space $(\Omega, \mathcal F)$.
Suppose  $X:(\Omega,\mathcal F)\rightarrow (E,\mathcal E)$ is an $\mathcal F$-measurable function taking values in $(E,\mathcal E)$.
Then we say that  $X$ is a random variable on $(\Omega,\mathcal F)$
and call  the pushforward measure $M_X = X_*M$ on $(E,\sigma(X))$ the \emph{law} of $X$.
We say that $X$ is \emph{sampled} from $M_X$. We also write the integral $\int X(\omega) M(d\omega)$ as $M[X]$  for simplicity. For a finite measure $M$, we write $|M|$ as its total mass  
and write $M^{\#}=|M|^{-1}M$ as the probability measure proportional to $M$.
 
We also need the concept of disintegration.	Let $M$ be a measure on  a measurable space $(\Omega, \mathcal F)$. 
	Let $X:\Omega\rightarrow \mathbb R^n$ be a measurable function with respect to $\mathcal F$, where $\mathbb R^n$ is endowed with the Borel  $\sigma$-algebra. 
	A family of measures $\{M_x: x\in \mathbb R^n \}$ on $(\Omega, \mathcal F)$ 
	is called a disintegration of $M$ over $X$ if for each set $A\in\mathcal F$, the function $x\mapsto M_x(A)$ is Borel measurable, and 
\begin{equation}\label{eq:disint}
	\int_{A} f(X) \, dM= \int_{\mathbb R^n}   f(x)  M_x(A) \,d^nx \textrm{ for each non-negative measurable function } f \textrm{ on } \mathbb R^n. 
	\end{equation}
When~\eqref{eq:disint} holds, we simply write $M=\int_{\mathbb R^n} M_x \, d^nx$.

\subsection {Quantum disks in LQG}\label{sec:QD}
For $a \in \mathbb{R}$, we write $a_+ := \max \{a,1\}$. Let $\mathcal{S}$ be the horizontal strip $\mathbb{R} \times (0, \pi)$, and $\mathbb{H}$ be the upper half plane $\{z : {\rm Im} z>0 \}$. Let ${\rm exp}:\mathcal{S} \rightarrow \mathbb{H}$ be the exponential map $z \rightarrow e^z$. All random functions or distributions on $\mathbb{H}$ or $\mathcal{S}$ considered in this section will belong to the Sobolev space with index $-1$.

We will consider the Gaussian free field $h_D$ with free boundary conditions defined on two different domains $D = \mathcal{S}$ or $D = \mathbb{H}$. The resulting $h_D$ are random functions modulo an additive constant. We fix the additive constant such that the average of $h_{\mathcal{S}}$ over the vertical segment $\{ 0  \} \times (0,\pi)$ is zero, and the average of $h_{\mathbb{H}}$ over the semicircle $\{ e^{i \theta} : \theta \in (0,\pi)\}$ is zero. In other words, $h_D$ is the centered Gaussian process on $D=\mathcal{S}$ or $D = \mathbb{H}$ with the covariance kernel $G_D(z,w):=\mathbb{E}[h_D(z)h_D(w)]$ given by
\begin{equation}
\begin{aligned}
\label{eq:def-gff}
&G_{\mathbb S}(z,w) = -\log |e^z -e^w| - \log |e^z - e^{\bar w }| + 2 \max \{ {\rm Re}z,0\} + 2\max \{ {\rm Re}w,0\}\,,\\
&G_{\mathbb H}(z,w) = G_{\mathbb S}(e^z,e^w) = -\log |z-w|  - \log |z-\bar w|+2\log|z|_+ +2\log |w|_+\,.
\end{aligned}
\end{equation}

We now review the concept of quantum surfaces. For $n \in \mathbb{N}$, we consider tuples $(D,h,z_1,\ldots,z_n)$, where $D$ is a simply connected domain on $\mathbb{C}$, $h$ is a distribution on $D$ and $z_i$ are marked point on the bulk and the boundary of $D$. Let $(\widetilde D,\widetilde h,\widetilde z_1,\ldots,\widetilde z_n)$ be another tuple. We say that
$$
(D,h,z_1,\ldots,z_n) \sim_\gamma (\widetilde D,\widetilde h,\widetilde z_1,\ldots,\widetilde z_n)
$$
if there is a conformal mapping $f:D \rightarrow \widetilde D$ such that 
\begin{equation}\label{def:push}
    \widetilde h = f \bullet_\gamma h := h\circ f^{-1} + Q \log |(f^{-1})'|
\end{equation} 
and $f(z_i) = \widetilde z_i$ for each $1 \leq i \leq n$. We call each tuple $(D, h, z_1, \ldots, z_m)$ modulo the equivalence relation $\sim_\gamma$ a ($\gamma$-)quantum surface. We can similarly define a quantum surface decorated with curves. For a quantum surface, we define the \textit{quantum area measure} $\mu_h :=\lim_{\epsilon \rightarrow 0} \epsilon^{\gamma^2/2} e^{\gamma h_\epsilon(z)} d^2z$, where $d^2z$ is the Lebesgue measure and $h_\epsilon(z)$ is the circle average of $h$ over $\partial B_\epsilon(z)$. When $D = \mathbb{H}$, we can define the \textit{quantum boundary length measure} $\nu_h :=\lim_{\epsilon \rightarrow 0} \epsilon^{\gamma^2/4} e^{\frac{\gamma}{2} h_\epsilon(z)} dz$, where $dz$ is the Lebesgue measure on $\mathbb{R} = \partial \mathbb{H}$ and $h_\epsilon(z)$ is the semicircle average of $h$ over $\{z + \epsilon e^{i\theta} :\theta \in (0,
\pi) \}$. We can extend this definition to any simply connected domain by using conformal maps. The quantum area measure and quantum boundary length measure are well-defined under the equivalence relation $\sim_\gamma$.

Now we define the notion of push-forward measure that we will use in this paper. Let $f:D \rightarrow \widetilde D$ be a conformal map. For a measure $M$ on the space of random functions on the domain $D$, we define $f_* M$ as the push-forward measure of $M$ under the mapping $h \rightarrow f \bullet_\gamma h$. For a measure $m$ on random curves on the domain $D$, we define $f_* m$ as the push-forward measure of $m$ under the mapping $\eta \rightarrow f(\eta)$.

The two-pointed (thick) quantum disk was introduced in \cite{DMS14}, which we recall now.

\begin{definition}\label{def:thick}
    Fix $W \geq \frac{\gamma^2}{2}$, and let $\beta = \gamma + \frac{2-W}{\gamma} < Q$. Let $h = h^1 + h^2 + \bm{c}$ where $h^1, h^2$ are random distributions on $\mathcal{S}$, $\bm{c} \in \mathbb{R}$ and they are independent such that
    \begin{enumerate}
        \item $h^1(z) = X_{{\rm Re} z}$ for each $z \in \mathcal{S}$, and
        \begin{equation*}
        X_t \overset{d}{=} \begin{cases}
        B_{2t} - (Q -\beta)t \quad  \mbox{  if }t \geq 0 \\
        \widetilde B_{-2t} + (Q-\beta) t \quad \mbox{if }t < 0
        \end{cases},
        \end{equation*}
        where $(B_t)_{t \geq 0}$ and $(\widetilde B_t)_{t \geq 0}$ are two independent standard Brownian motions conditioned on $B_{2t} - (Q-\beta)t<0$ and $\widetilde B_{2t} - (Q-\beta)t<0$ for all $t > 0$.
        \item $h^2$ has the same law as the lateral part of $h_{\mathcal{S}}$, i.e., the projection of $h_{\mathcal{S}}$ onto the subspace of functions which have mean zero on each vertical segment $\{ t \} \times (0,\pi)$ for all $t \in \mathbb{R}$ in the Hilbert space endowed with the Dirichlet inner product.
        \item $\bm c$ is sampled from the infinite measure $\frac{\gamma}{2} e^{(\beta-Q)c} dc$.
    \end{enumerate}
    Let $\mathcal{M}^{\rm disk}_2(W)$ be the infinite measure describing the law of $(\mathcal{S},h,-\infty,\infty) /\sim_\gamma$. We call a sample from $\mathcal{M}^{\rm disk}_2(W)$ a (two-pointed) quantum disk of weight $W$.
\end{definition}

When $W \in (0,\frac{\gamma^2}{2})$, the two-pointed (thin) quantum disk was defined in \cite{AHS20} as follows.
\begin{definition}
\label{def:thin-disk}
    For $W \in (0,\frac{\gamma^2}{2})$, the infinite measure $\mathcal{M}^{\rm disk}_2(W)$ on two-pointed beaded quantum surfaces is defined as follows. First, sample $T$ from $(1-\frac{2}{\gamma^2} W)^{-2} {\rm Leb}_{\mathbb{R}_+}$. Then, sample a Poisson point process $\{(u, \mathcal{D}_u)\}$ from the measure ${\rm Leb}_{\mathbb{R}_+} \times \mathcal{M}^{\rm disk}_2(\gamma^2-W)$, and finally concatenate the disks $\{\mathcal{D}_u\}$ for $u\leq T$ according to the ordering induced by $u$. The left (resp.\ right) boundary length of $\mathcal{M}^{\rm disk}_2(W)$ is defined as the total sum of the left (resp.\ right) boundary lengths of all the $\mathcal{D}_u$'s before time $T$.
\end{definition}

We now review the definition of quantum disks, either unmarked or marked by $m$ bulk points and $n$ boundary points, see \cite[Definition 2.2]{AHS21}.
\begin{definition}
\label{def:QD}
    Let $(\mathcal{S}, h, -\infty, \infty)/\sim_\gamma$ be a sample from $\mathcal{M}^{\rm disk}_2(2)$. Let ${\rm QD}$ be the law of $(\mathcal{S}, h) / \sim_\gamma$
    under the reweighted measure $\nu_h(\partial \mathcal{S})^{-2} \mathcal{M}^{\rm disk}_2(2)$.
    Given integers $m,n \geq 0$, let $(\mathcal{S}, h)$ be a sample from $\mu_h(\mathcal{S})^m \nu_h(\partial \mathcal{S})^n {\rm QD}$. Next, sample $z_1,\ldots, z_m$ and $w_1,\ldots w_n$ independently according to the probability measures $\mu_h^\#$ and $\nu_h^\#$, respectively. Let ${\rm QD}_{m,n}$ be the law of $$(\mathcal{S},h,z_1,\ldots,z_m,w_1,\ldots ,w_n)/\sim_\gamma.
    $$
    Let  $\widetilde {\rm QD}_{m,n}$ be the restriction of ${\rm QD}_{m,n}$ on the event that $w_1,\ldots,w_n$ are in counterclockwise order.
\end{definition}

Next, we define a two-pointed quantum disk with a quantum typical bulk marked point. 
\begin{definition}\label{def:M2bulk}
For $W \geq \frac{\gamma^2}{2}$, let $(\mathcal{S}, h, -\infty,\infty)$ be a sample from the reweighted measure $\mu_h(\mathcal{S}) \mathcal{M}_{2}^{\rm disk}(W)$, and independently sample a point $z$ on $\mathcal{S}$ from the probability measure $\mu_h^\#$. We define $\mathcal{M}_{2,\bullet}^{\rm disk}(W)$ as the law of $(\mathcal{S},h,-\infty,\infty,z)/\sim_\gamma$. Similarly, for $W \in (0, \frac{\gamma^2}{2})$, $\mathcal{M}_{2,\bullet}^{\rm disk}(W)$ is defined as the law on three-pointed beaded quantum surfaces with two boundary points and one bulk point. This measure is obtained by first sampling a two-pointed beaded quantum surface from $\mathcal{M}_{2}^{\rm disk}(W)$ reweighted by its quantum area measure, and then marking a bulk point independently according to a probability measure proportional to its quantum area measure.
\end{definition}

The following lemma is a direct consequence of Definition~\ref{def:thin-disk}. It follows verbatim from Lemma 4.1 and Proposition 4.4 of \cite{AHS20}, and therefore, we omit the proof here.

\begin{lemma}
\label{lem:thin-thick}
For $W \in (0,\frac{\gamma^2}{2})$, we have
$$
\mathcal{M}_{2, \bullet}^{\rm disk}(W) = \big(1 - \frac{2W}{\gamma^2}\big)^2 \mathcal{M}_{2}^{\rm disk}(W) \times \mathcal{M}_{2,\bullet}^{\rm disk}(\gamma^2-W) \times \mathcal{M}_{2}^{\rm disk}(W)\,,
$$
where the right-hand side denotes the infinite measure on ordered collections of quantum surfaces obtained
by concatenating samples from the three independent measures.
\end{lemma}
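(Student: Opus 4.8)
The plan is to obtain the identity directly from the Poissonian definition of the thin quantum disk (Definition~\ref{def:thin-disk}) together with the Palm/Mecke formula for Poisson point processes; this is precisely the route of Lemma~4.1 and Proposition~4.4 of \cite{AHS20}. Write $q := 1-\tfrac{2W}{\gamma^2}\in(0,1)$. Unfolding the definitions, a sample from $\mathcal{M}^{\mathrm{disk}}_2(W)$ is built by picking $T$ from $q^{-2}\,\mathrm{Leb}_{\mathbb{R}_+}$, sampling a Poisson point process $\omega=\{(u,\mathcal{D}_u)\}$ from $\mathrm{Leb}_{\mathbb{R}_+}\times\mathcal{M}^{\mathrm{disk}}_2(\gamma^2-W)$ (note $\gamma^2-W>\tfrac{\gamma^2}{2}$, so the beads $\mathcal{D}_u$ are \emph{thick} disks), and concatenating the $\mathcal{D}_u$ with $u\le T$ in the order induced by $u$. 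By Definition~\ref{def:M2bulk}, $\mathcal{M}^{\mathrm{disk}}_{2,\bullet}(W)$ is obtained from $\mathcal{M}^{\mathrm{disk}}_2(W)$ by reweighting by the total quantum area $\mathcal{A}=\sum_{u\le T}\mathrm{area}(\mathcal{D}_u)$ and then adding a bulk point sampled from the normalized quantum area measure of the concatenated surface. Since the area measure of the concatenation restricted to the $u$-th bead is the area measure of $\mathcal{D}_u$, adding this bulk point amounts, after absorbing the $\mathcal{A}$-reweighting, to reweighting some bead $\mathcal{D}_u$ by $\mathrm{area}(\mathcal{D}_u)$ and marking a point inside it from its normalized area measure.

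\emph{The key step} is to apply the Mecke equation to the Poisson point process $\omega$ with the functional $\sum_{u\le T}\mathrm{area}(\mathcal{D}_u)\,(\cdots)$. This replaces the area-size-biasing by the insertion of one extra ``special'' bead $\widetilde{\mathcal{D}}$ at a location $u$ distributed jointly with the rest as $\mathrm{Leb}_{\mathbb{R}_+}$, where the law of $\widetilde{\mathcal{D}}$ together with its marked bulk point is exactly $\mathrm{area}(\cdot)\,\mathcal{M}^{\mathrm{disk}}_2(\gamma^2-W)$ with an independent typical bulk point, i.e.\ $\mathcal{M}^{\mathrm{disk}}_{2,\bullet}(\gamma^2-W)$ by Definition~\ref{def:M2bulk}. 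Conditionally on $u$ and $T$ with $u\le T$, the remaining points of $\omega$ in $[0,u)$ and in $(u,T]$ are independent Poisson point processes on those intervals by the Poisson property on disjoint sets, and by translation invariance of Lebesgue measure the one on $(u,T]$ has the law of a Poisson point process on $[0,T-u)$. Reading off the $u$-ordering, the resulting surface is the concatenation of the piece coming from $[0,u)$, then $\widetilde{\mathcal{D}}$, then the piece coming from $[0,T-u)$; the left and right boundary lengths add across the three pieces by the very definition of concatenation, so the right-hand side is of the expected type (a weight-$W$ thin disk carrying one bulk-marked weight-$(\gamma^2-W)$ bead).

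Finally, change variables $(T,u)\mapsto(u,s)$ with $s=T-u$, under which $q^{-2}\,dT\,\one_{\{u\le T\}}=q^{-2}\,du\,ds$. Writing $q^{-2}\,du\,(q^{-2}\,ds)=q^{2}\,(q^{-2}\,du)(q^{-2}\,ds)$, the factor $q^{-2}\,du$ together with its Poisson point process on $[0,u)$ is exactly a sample from $\mathcal{M}^{\mathrm{disk}}_2(W)$, and likewise for $q^{-2}\,ds$; the left-over scalar is $q^{2}=(1-\tfrac{2W}{\gamma^2})^{2}$, which is the origin of the prefactor in the statement (one ``loses'' a factor $q^{-2}$ when splitting the single $T$-Lebesgue into two independent ones). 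This gives $\mathcal{M}^{\mathrm{disk}}_{2,\bullet}(W)=(1-\tfrac{2W}{\gamma^2})^{2}\,\mathcal{M}^{\mathrm{disk}}_2(W)\times\mathcal{M}^{\mathrm{disk}}_{2,\bullet}(\gamma^2-W)\times\mathcal{M}^{\mathrm{disk}}_2(W)$. The main point requiring care is that $\mathcal{M}^{\mathrm{disk}}_2(\gamma^2-W)$ is an infinite measure, so $\omega$ has infinitely many beads with $u\le T$; one should first restrict to beads of boundary length (or area) bounded below, where everything is $\sigma$-finite and the Mecke manipulation is straightforward, check that the total quantum area is a.s.\ finite so that the reweighting is well defined, and then pass to the limit. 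This is exactly the bookkeeping carried out in \cite{AHS20}, which is why we merely cite it.
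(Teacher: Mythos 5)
Your argument is correct and is essentially the same Poissonian size-biasing/Mecke decomposition that the paper invokes by citing Lemma 4.1 and Proposition 4.4 of \cite{AHS20} (the paper omits the details entirely). In particular, your bookkeeping of the prefactor via the change of variables $s=T-u$, yielding $q^{-2}\,du\,ds=q^{2}(q^{-2}du)(q^{-2}ds)$ with $q=1-\tfrac{2W}{\gamma^2}$, correctly recovers the constant $(1-\tfrac{2W}{\gamma^2})^{2}$.
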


\subsection{Liouville fields and quantum surfaces}\label{subsub:LF}

We first recall the definition of Liouville field on the upper half plane. We write $P_{\mathbb{H}}$ as the law of the Gaussian free field $h_{\mathbb{H}}$ considered at the beginning of Section~\ref{sec:QD}. 
 
\begin{definition}
    Sample $(h,\bm c)$ from the infinite measure $P_{\mathbb{H}} \times [e^{-Qc} dc]$ and let $\phi(z) = h(z) - 2Q \log|z|_+ + \bm c$. We say $\phi$ is a Liouville field on $\mathbb{H}$ and denote its law by ${\rm LF}_{\mathbb{H}}$.
\end{definition}

We also need Liouville fields with one bulk and multiple boundary insertions, which we define below. The same description without bulk insertion was given in~\cite[Section 2]{AHS21}, and the case involving only bulk insertion was given in~\cite[Section 2]{ARS21}.

\begin{definition}
    \label{def:LF}
    Let $(\alpha, u) \in \mathbb{R} \times \mathbb{H}$ and $(\beta_i,s_i) \in \mathbb{R} \times (\partial \mathbb{H} \cup \{\infty \})$ for $1 \leq i \leq m$ where $m \geq 0$ and all $s_i$'s are distinct. We also assume that $s_i \neq \infty$ for $i \geq 2$. Let the constant
    \begin{equation*}
    \begin{aligned}
    &\quad C_{\mathbb{H}}^{(\alpha,u),(\beta_i,s_i)_i} := \\
    & \quad \begin{cases}
        (2 {\rm Im} u)^{-\frac{\alpha^2}{2}} |u|_+^{-2\alpha(Q-\alpha)} \prod_{i=1}^m |s_i|_+^{-\beta_i(Q-\frac{\beta_i}{2})} \times e^{ \sum_{ 1 \leq i < j \leq m } \frac{\beta_i\beta_j}{4} G_{\mathbb{H}}(s_i,s_j) +  \sum_{i=1}^m \frac{\alpha\beta_i}{2} G_{\mathbb{H}}(u,s_i)}\quad  \mbox{  if }s_1 \neq \infty \\
        (2 {\rm Im} u)^{-\frac{\alpha^2}{2}} |u|_+^{-2\alpha(Q-\alpha)} \prod_{i=2}^m |s_i|_+^{-\beta_i(Q-\frac{\beta_i}{2})} \times e^{ \sum_{ 1 \leq i < j \leq m} \frac{\beta_i\beta_j}{4} G_{\mathbb{H}}(s_i,s_j) +  \sum_{i=1}^m \frac{\alpha\beta_i}{2} G_{\mathbb{H}}(u,s_i)}\quad  \mbox{  if }s_1 = \infty.
    \end{cases}
    \end{aligned}
    \end{equation*}
    Here, we use the convention that $G_{\mathbb{H}}(z,\infty) := \lim_{w \rightarrow \infty} G_{\mathbb{H}} (z,w) = 2 \log|z|_+$. 
    
    Sample $(h, \bm c)$ from $C_{\mathbb{H}}^{(\alpha,u),(\beta_i,s_i)_i} P_{\mathbb{H}} \times [e^{(\frac{1}{2} \sum_{i=1}^m \beta_i + \alpha - Q)c} dc]$, and let $\phi(z) = h(z) - 2Q\log|z|_+ +\frac{1}{2} \sum_{i=1}^m \beta_i G_{\mathbb{H}} (z,s_i) + \alpha G_{\mathbb{H}}(z,u) + \bm c$. Then we define ${\rm LF}_{\mathbb{H}}^{(\alpha,u),(\beta_i,s_i)_i}$ as the law of $\phi$. When $\alpha = 0$, we simply write it as ${\rm LF}_{\mathbb{H}}^{(\beta_i,s_i)_i}$.
\end{definition}

The following lemma shows that the definition mentioned above corresponds to a field obtained by formally reweighting the law ${\rm LF}_{\mathbb H}$ through adding a bulk insertion $e^{\alpha \phi(u)}$ or boundary insertions $e^{ \frac{\beta_i}{2} \phi_\epsilon(s_i)}$.

\begin{lemma}
\label{lem:def-LF}
    With notation as in Definition~\ref{def:LF}, the following equations hold under vague topology:
    \begin{align*}
        &{\rm LF}_{\mathbb{H}}^{(\alpha,u),(\beta_i,s_i)_i} =\lim_{\epsilon \rightarrow 0}\epsilon^{\frac{1}{2}\alpha^2 + \frac{1}{4} \sum_{i=1}^m \beta_i^2 }e^{\alpha \phi_\epsilon(u) + \sum_{i=1}^m \frac{\beta_i}{2} \phi_\epsilon(s_i) } {\rm LF}_{\mathbb H}(d \phi)  \quad\qquad \mbox{  if }s_1 \neq \infty \\
        &{\rm LF}_{\mathbb{H}}^{(\alpha,u),(\beta_i,s_i)_i} =\lim_{\epsilon \rightarrow 0}\epsilon^{\frac{1}{2}\alpha^2 - Q \beta_1 +  \frac{1}{4} \sum_{i=1}^m \beta_i^2 }e^{\alpha \phi_\epsilon(u) + \sum_{i=1}^m \frac{\beta_i}{2} \phi_\epsilon(s_i) } {\rm LF}_{\mathbb H}(d \phi) \quad  \mbox{  if }s_1 = \infty,
    \end{align*} where $\phi_\epsilon(z)$ is the average of $\phi$ over $\{ z + \epsilon e^{i \theta} : \theta \in (0,\pi)\}$ for $z \in \partial \mathbb{H}$ and $\phi_\epsilon(\infty)$ is defined to be $\phi_{1/\epsilon}(0)$.
\end{lemma}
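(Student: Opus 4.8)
\textbf{Proof proposal for Lemma~\ref{lem:def-LF}.}

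The plan is to verify the two displayed limits by a Girsanov-type argument applied to the Gaussian free field $h_{\mathbb H}$ under $P_{\mathbb H}$, tracking the effect of the insertion factors $e^{\alpha\phi_\epsilon(u)}$ and $e^{\frac{\beta_i}{2}\phi_\epsilon(s_i)}$ as $\epsilon\to 0$. First I would substitute $\phi = h - 2Q\log|\cdot|_+ + \bm c$ (the unweighted Liouville field) into the right-hand side. Since $\phi_\epsilon(z) = h_\epsilon(z) - 2Q\log|z|_+ + \bm c$ up to an error tending to $0$ for boundary points $z$ (and analogously with the convention $\phi_\epsilon(\infty) = \phi_{1/\epsilon}(0)$), the insertion factor becomes $e^{\alpha h_\epsilon(u) + \sum_i \frac{\beta_i}{2} h_\epsilon(s_i)}$ times deterministic factors involving $\log|u|_+$, $\log|s_i|_+$, and $e^{(\alpha + \frac12\sum_i\beta_i)\bm c}$. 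The deterministic $\bm c$-factor is exactly what changes the Radon–Nikodym weight $e^{-Qc}dc$ into $e^{(\frac12\sum\beta_i + \alpha - Q)c}dc$ in Definition~\ref{def:LF}.

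Next I would apply the Cameron–Martin/Girsanov theorem to the Gaussian random variable $Z_\epsilon := \alpha h_\epsilon(u) + \sum_{i=1}^m \frac{\beta_i}{2} h_\epsilon(s_i)$. Weighting $P_{\mathbb H}$ by $e^{Z_\epsilon}/\mathbb E[e^{Z_\epsilon}]$ shifts the field $h$ by the covariance function $z\mapsto \mathrm{Cov}(h(z), Z_\epsilon) = \alpha\,\mathbb E[h(z)h_\epsilon(u)] + \sum_i \frac{\beta_i}{2}\mathbb E[h(z)h_\epsilon(s_i)]$, which converges as $\epsilon\to 0$ to $\alpha G_{\mathbb H}(z,u) + \sum_i \frac{\beta_i}{2} G_{\mathbb H}(z,s_i)$ — precisely the deterministic shift appearing in the definition of $\phi$ in Definition~\ref{def:LF}. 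The normalization $\mathbb E[e^{Z_\epsilon}] = \exp(\tfrac12 \mathrm{Var}(Z_\epsilon))$ produces the divergent prefactor: using $\mathrm{Var}(h_\epsilon(z)) = -\log\epsilon + \log(\text{something bounded})$ for a boundary point $z$ (with the factor $2$ in the boundary Green's function giving $\mathrm{Var}(h_\epsilon(s_i)) \sim -2\log\epsilon$ — wait, with the circle-average convention on the boundary semicircle one gets $\mathrm{Var} \sim -\log\epsilon$ for interior and the stated normalizations), one checks that $\tfrac12\mathrm{Var}(Z_\epsilon) = -(\tfrac12\alpha^2 + \tfrac14\sum\beta_i^2)\log\epsilon + O(1)$ when all $s_i$ are finite, so multiplying by $\epsilon^{\frac12\alpha^2 + \frac14\sum\beta_i^2}$ gives a finite limit; the cross terms $\mathrm{Cov}(h_\epsilon(s_i), h_\epsilon(s_j))$ and $\mathrm{Cov}(h_\epsilon(u), h_\epsilon(s_i))$ converge to $G_{\mathbb H}(s_i,s_j)$ and $G_{\mathbb H}(u,s_i)$ and, together with the $|u|_+$, $|s_i|_+$ factors from the deterministic part, assemble exactly into the constant $C_{\mathbb H}^{(\alpha,u),(\beta_i,s_i)_i}$. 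When $s_1 = \infty$ I would instead use $\phi_\epsilon(\infty) := \phi_{1/\epsilon}(0)$ and the asymptotics $\mathrm{Var}(h_{1/\epsilon}(0)) = 2\log(1/\epsilon) + o(1)$ coming from the $2\log|z|_+$ term in $G_{\mathbb H}$, which accounts for the modified power $\epsilon^{\frac12\alpha^2 - Q\beta_1 + \frac14\sum\beta_i^2}$ and the $G_{\mathbb H}(z,\infty) = 2\log|z|_+$ convention.

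The main obstacle I expect is the careful bookkeeping at the insertion points and at $\infty$: one must justify the interchange of the $\epsilon\to 0$ limit with integration (vague convergence of measures on the distribution space), control the error between $h_\epsilon(z)$ and the genuine circle/semicircle average that defines $G_{\mathbb H}$, and handle the $\bm c$-integral — which is an infinite measure $e^{(\cdots)c}dc$ — so that the limit holds in the vague rather than weak topology. This is exactly the type of estimate carried out in \cite[Section 2]{AHS21} and \cite[Section 2]{ARS21} for the cases with no bulk insertion and with only a bulk insertion respectively, and the general statement follows by combining their arguments; I would structure the proof so that the new content is only the interaction terms $\frac{\alpha\beta_i}{2}G_{\mathbb H}(u,s_i)$ between the bulk point and the boundary points, which are handled identically via the bilinearity of the covariance in the Girsanov shift.
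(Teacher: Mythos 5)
Your proposal is correct and follows essentially the same route as the paper, whose proof simply cites the boundary-insertion-only case and the bulk-insertion-only case and combines them; those cited arguments are precisely the Girsanov/Cameron--Martin computation you spell out (shift of $h$ by the covariance with $Z_\epsilon$, variance normalization producing the $\epsilon$-power, cross-covariances and the deterministic $-2Q\log|\cdot|_+$ terms assembling into $C_{\mathbb H}^{(\alpha,u),(\beta_i,s_i)_i}$ and the modified $\bm c$-weight). One bookkeeping correction for the case $s_1=\infty$: since $\mathrm{Var}(h_{1/\epsilon}(0)) = 2\log(1/\epsilon)+O(1)$ yields the same $\epsilon^{-\beta_1^2/4}$ as at a finite boundary point, the extra $-Q\beta_1$ in the exponent does not come from the variance asymptotics but from the deterministic term $-2Q\log|\cdot|_+$ in $\phi$ averaged over the semicircle of radius $1/\epsilon$, which contributes a factor $\epsilon^{Q\beta_1}$ through $e^{\frac{\beta_1}{2}\phi_{1/\epsilon}(0)}$ and must be compensated in the prefactor (correspondingly, the constant omits the $|s_1|_+$ factor); your framework produces this automatically once that term is tracked.
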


\begin{proof}
    We refer to~\cite[Lemma 2.8]{SY23} for the case of $\alpha = 0$, i.e., there is no bulk insertion, and \cite[Lemma 2.2]{ARS21} for the case with only bulk insertion. The result follows by combining their arguments.
\end{proof}

We now describe how Liouville fields change under conformal maps. 
Recall the notation $f\bullet_\gamma $ from~\eqref{def:push} and the notion of pushforward of field measures above Definition~\ref{def:thick}.
For $\alpha\in\mathbb R$, we define the corresponding scaling exponent by
\begin{equation}\label{eq:KPZ}
   \Delta_\alpha:= \frac{\alpha}{2}(Q-\frac{\alpha}{2}).
\end{equation} 

\begin{lemma}
    \label{lem:coordinate-change}
   Fix $(\alpha,u) \in \mathbb{R} \times \mathbb{H}$ and $(\beta_i,s_i) \in \mathbb{R} \times \partial \mathbb{H}$ for $1 \leq i \leq m$ where $m \geq 0$ and all $s_i$'s are distinct. Suppose $f:\mathbb{H} \rightarrow \mathbb{H}$ is a conformal map such that $f(s_i) \neq \infty$ for each $i$. Then, we have
    $$
    {\rm LF}_{\mathbb{H}}^{(\alpha,f(u)),(\beta_i,f(s_i))_i} = |f'(u)|^{-2\Delta_\alpha} \prod_{i=1}^m |f'(s_i)|^{-\Delta_{\beta_i}} f_*{\rm LF}_{\mathbb{H}}^{(\alpha,u),(\beta_i,s_i)_i}\,.
    $$
\end{lemma}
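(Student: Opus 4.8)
The plan is to reduce Lemma~\ref{lem:coordinate-change} to the insertion-free case via the $\varepsilon\to 0$ description of Liouville fields in Lemma~\ref{lem:def-LF}, so that the conformal factors $|f'(u)|^{-2\Delta_\alpha}$ and $|f'(s_i)|^{-\Delta_{\beta_i}}$ emerge purely from tracking how the regularized insertions $e^{\alpha\phi_\varepsilon(u)}$ and $e^{\frac{\beta_i}{2}\phi_\varepsilon(s_i)}$ transform under $\phi\mapsto f\bullet_\gamma\phi$. As a first step I would record that $f_*{\rm LF}_{\mathbb H}={\rm LF}_{\mathbb H}$ for every conformal automorphism $f$ of $\mathbb H$; since $\Delta_0=0$ this is exactly the $\alpha=0$, $m=0$ case of the lemma. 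Writing $\phi=h_{\mathbb H}-2Q\log|\cdot|_+ +\bm c$ with $(h_{\mathbb H},\bm c)$ sampled from $P_{\mathbb H}\times[e^{-Qc}\,dc]$, one has $f\bullet_\gamma\phi=h_{\mathbb H}\circ f^{-1}-2Q\log|f^{-1}(\cdot)|_+ +Q\log|(f^{-1})'|+\bm c$, and the claim is a Cameron–Martin computation: $h_{\mathbb H}\circ f^{-1}$ agrees in law with $h_{\mathbb H}$ once one subtracts its (Gaussian) average over the unit semicircle, and this shift of the additive constant, together with the deterministic discrepancy between $-2Q\log|f^{-1}(\cdot)|_+ +Q\log|(f^{-1})'|$ and $-2Q\log|\cdot|_+$, is compensated exactly by the reweighting built into the law $e^{-Qc}\,dc$ of $\bm c$. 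This is carried out in \cite[Section~2]{AHS21} and \cite[Section~2]{ARS21}, which I would cite.

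Next I would bootstrap to general insertions. By Lemma~\ref{lem:def-LF}, which applies with no caveat here since all $s_i$ and all $f(s_i)$ are finite,
\[
{\rm LF}_{\mathbb H}^{(\alpha,f(u)),(\beta_i,f(s_i))_i}=\lim_{\varepsilon\to 0}\varepsilon^{\frac12\alpha^2+\frac14\sum_i\beta_i^2}\,e^{\alpha\psi_\varepsilon(f(u))+\sum_i\frac{\beta_i}{2}\psi_\varepsilon(f(s_i))}\,{\rm LF}_{\mathbb H}(d\psi).
\]
Using the first step I would sample $\psi=f\bullet_\gamma\phi$ with $\phi\sim{\rm LF}_{\mathbb H}$. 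The elementary input is that for $z\in\{u,s_1,\dots,s_m\}$ and $\delta\to 0$,
\[
(f\bullet_\gamma\phi)_\delta(f(z))=\phi_{\delta/|f'(z)|}(z)-Q\log|f'(z)|+o(1),
\]
since $f^{-1}$ maps a semicircle of radius $\delta$ about $f(z)$ to a curve that is, to leading order, a semicircle of radius $\delta/|f'(z)|$ about $z$, over which the average of $Q\log|(f^{-1})'|$ tends to $-Q\log|f'(z)|$. Substituting this (with $\delta=\varepsilon$) yields, for each insertion, $e^{\alpha\psi_\varepsilon(f(u))}=|f'(u)|^{-\alpha Q}e^{\alpha\phi_{\varepsilon/|f'(u)|}(u)}(1+o(1))$ and likewise at each $s_i$; rewriting $\varepsilon^{\frac12\alpha^2}=|f'(u)|^{\frac12\alpha^2}(\varepsilon/|f'(u)|)^{\frac12\alpha^2}$ and $\varepsilon^{\frac14\beta_i^2}=|f'(s_i)|^{\frac14\beta_i^2}(\varepsilon/|f'(s_i)|)^{\frac14\beta_i^2}$ and collecting the deterministic powers of $|f'|$ produces exactly $|f'(u)|^{\frac12\alpha^2-\alpha Q}\prod_i|f'(s_i)|^{\frac14\beta_i^2-\frac{\beta_i}{2}Q}=|f'(u)|^{-2\Delta_\alpha}\prod_i|f'(s_i)|^{-\Delta_{\beta_i}}$, using $\Delta_\alpha=\frac{\alpha Q}{2}-\frac{\alpha^2}{4}$. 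What remains inside the limit is the regularized insertion weight at $u$ and the $s_i$, now against $\phi\sim{\rm LF}_{\mathbb H}$, with the radii $\varepsilon/|f'(u)|$ and $\varepsilon/|f'(s_i)|$ going to $0$ at possibly different speeds. Invoking the version of Lemma~\ref{lem:def-LF} with independent regularization radii — which has the same proof, as the Green-function cross terms $G_{\mathbb H}(u,s_i)$ and $G_{\mathbb H}(s_i,s_j)$ are continuous and the limit is insensitive to the relative rates — this limit equals $f_*{\rm LF}_{\mathbb H}^{(\alpha,u),(\beta_i,s_i)_i}$, which gives the claimed identity. Alternatively one can cite the pure-boundary case from \cite{AHS21} and the bulk-only case from \cite{ARS21} and combine the two arguments, exactly as in the proof of Lemma~\ref{lem:def-LF}.

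The genuinely technical points, which I expect to be the main obstacle, are (i) the insertion-free covariance above, i.e. matching the Gaussian constant-shift and the $|\cdot|_+$ anomaly against the $e^{-Qc}\,dc$ reweighting, and (ii) verifying that the $o(1)$ errors in the semicircle-average change of variables are uniform enough (in the relevant $L^1$/vague sense) to pass to the $\varepsilon\to 0$ limit under the infinite measure ${\rm LF}_{\mathbb H}$. Both are routine in the present context, and I would dispatch them by citation to \cite{AHS21,ARS21} rather than reprove them; no new idea appears to be required.
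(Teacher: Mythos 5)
Your proposal is correct, but it takes a different route from the paper. The paper disposes of this lemma in one line: it invokes the Weyl/conformal covariance of Liouville correlation measures proved in Huang--Rhodes--Vargas (\cite[Theorem 3.5]{HRV18} with $\mu=\mu_\partial=0$), remarking only that the unit-disk statement transfers to $\mathbb H$ via their Proposition 3.7. You instead rebuild the statement from two ingredients internal to this framework: the exact invariance $f_*{\rm LF}_{\mathbb H}={\rm LF}_{\mathbb H}$ (the $\alpha=0$, $m=0$ case, a Cameron--Martin computation as in \cite{AHS21,ARS21}) and the regularized-insertion description of Lemma~\ref{lem:def-LF}, tracking how circle/semicircle averages transform under $\phi\mapsto f\bullet_\gamma\phi$; your exponent bookkeeping $|f'(u)|^{\frac{\alpha^2}{2}-\alpha Q}=|f'(u)|^{-2\Delta_\alpha}$ and $|f'(s_i)|^{\frac{\beta_i^2}{4}-\frac{\beta_i Q}{2}}=|f'(s_i)|^{-\Delta_{\beta_i}}$ is exactly right. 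What your route buys is transparency — the covariance factors are seen to come purely from the regularized insertions, and the argument stays entirely within the toolkit already set up in Section~3 — at the price of two technical debts the paper's citation avoids: you need a version of Lemma~\ref{lem:def-LF} with independent (and unequal) regularization radii at the different marked points, and you must control the \emph{random} $o(1)$ error when replacing the average of $\phi$ over $f^{-1}(\partial B_\varepsilon(f(z)))$ by the average over the genuine (semi)circle of radius $\varepsilon/|f'(z)|$, uniformly enough to pass to the vague limit under the infinite measure. Both points are standard (they are implicit in the proofs you cite, and an alternative is to work directly from Definition~\ref{def:LF} and Girsanov, transforming the Green-function shifts and the constant $C_{\mathbb H}^{(\alpha,u),(\beta_i,s_i)_i}$ deterministically, which removes the interchange-of-limits issue altogether), so flagging them and citing \cite{AHS21,ARS21} is a legitimate way to close the argument.
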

\begin{proof}
    Taking $\mu = \mu_\partial = 0$ in \cite[Theorem 3.5]{HRV18} yields the result. Although their statement is for the unit disk, the argument can be adapted to $\mathbb{H}$ by using their Proposition 3.7. 
\end{proof}

We need an instance of the following principle: adding a quantum typical point in the bulk or the boundary means adding a $\gamma$-singularity to the Liouville field.
\begin{lemma}\label{lem:typ-pt}
    For any $(\alpha,u) \in \mathbb{R} \times \mathbb{H}$ and $(\beta_i,s_i) \in \mathbb{R} \times \partial \mathbb{H}$, we have
    \begin{align*}
    &{\rm LF}_{\mathbb{H}}^{(\alpha,u),(\beta_i,s_i)_i}(d\phi) \nu_\phi(dq) = {\rm LF}_{\mathbb{H}}^{(\alpha,u),(\beta_i,s_i)_i,(\gamma,q)}(d\phi) dq,\\
    &{\rm LF}_{\mathbb{H}}^{(\beta_i,s_i)_i}(d\phi) \mu_\phi(d^2p) = {\rm LF}_{\mathbb{H}}^{(\gamma,p),(\beta_i,s_i)_i}(d\phi) d^2p.
    \end{align*}
    Here, $d^2p$ is the Lebesgue measure on $\mathbb{H}$ and $dq$ is the Lebesgue measure on $\partial \mathbb{H}$.
\end{lemma}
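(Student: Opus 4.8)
The statement to prove is Lemma~\ref{lem:typ-pt}: that adding a quantum-typical boundary (resp.\ bulk) point to a Liouville field amounts to inserting a $\gamma$-singularity at the weight-$\gamma$ boundary (resp.\ bulk) location, after integrating against Lebesgue measure.

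\textbf{Plan of proof.} The strategy is to reduce both identities to the defining relation in Lemma~\ref{lem:def-LF}, which characterizes Liouville fields with insertions as limits of exponential reweightings of ${\rm LF}_{\mathbb H}$. I will treat the boundary case; the bulk case is entirely analogous (and in fact the bulk statement without boundary insertions is already essentially in~\cite{ARS21,AHS-sphere}, so I would cite that and supply only the boundary argument in detail). First I would recall the definition of the quantum boundary length measure $\nu_\phi(dq) = \lim_{\epsilon\to 0}\epsilon^{\gamma^2/4}e^{\frac{\gamma}{2}\phi_\epsilon(q)}\,dq$, where $\phi_\epsilon(q)$ is the semicircle average. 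Plugging this into the left-hand side and using Fubini (justified by $\sigma$-finiteness and nonnegativity after testing against nonnegative functions), the claim becomes: for every nonnegative measurable $F$ on the field and every nonnegative measurable $g$ on $\partial\mathbb H$,
\begin{equation*}
\int_{\partial\mathbb H} g(q)\,\Big(\lim_{\epsilon\to 0}\epsilon^{\gamma^2/4}\,{\rm LF}_{\mathbb H}^{(\alpha,u),(\beta_i,s_i)_i}[F\,e^{\frac{\gamma}{2}\phi_\epsilon(q)}]\Big)\,dq
= \int_{\partial\mathbb H} g(q)\,{\rm LF}_{\mathbb H}^{(\alpha,u),(\beta_i,s_i)_i,(\gamma,q)}[F]\,dq,
\end{equation*}
so it suffices to show the integrands agree for Lebesgue-a.e.\ $q$ (indeed for every $q$ distinct from the $s_i$). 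That is precisely the content of Lemma~\ref{lem:def-LF} with one extra insertion $(\gamma,q)$: note that for $\beta=\gamma$ the prefactor exponent $\tfrac14\beta^2 = \tfrac{\gamma^2}{4}$ matches the $\epsilon^{\gamma^2/4}$ from $\nu_\phi$, and (since $q\neq\infty$) no $-Q\beta$ term appears. Thus the boundary identity follows immediately once one checks that the $\epsilon\to 0$ limit can be taken inside the $dq$-integral (dominated convergence, using that the $C_{\mathbb H}$-type normalization constants and the Girsanov shift depend continuously and locally boundedly on $q$ away from the other insertions, and that ${\rm LF}_{\mathbb H}^{\cdots,(\gamma,q)}$ has finite mass against $F$ when $F$ is, say, bounded with bounded support — then extend to general $F$ by monotone convergence).

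For the bulk identity the only change is that $\mu_\phi(d^2p) = \lim_{\epsilon\to 0}\epsilon^{\gamma^2/2}e^{\gamma\phi_\epsilon(p)}\,d^2p$ carries prefactor $\epsilon^{\gamma^2/2} = \epsilon^{\frac12\gamma^2}$, which is exactly the exponent $\tfrac12\alpha^2$ appearing in Lemma~\ref{lem:def-LF} when one inserts a bulk point of weight $\alpha=\gamma$; here one uses the bulk version of that lemma (cited from~\cite[Lemma 2.2]{ARS21}), and the rest of the argument — Fubini, reduce to fixed $p$, dominated convergence in $d^2p$ — is identical. One should also record that the resulting field ${\rm LF}_{\mathbb H}^{(\gamma,p),(\beta_i,s_i)_i}$ is well-defined for a.e.\ $p$, i.e.\ $p\notin\{s_i\}$, which is automatic since $\{s_i\}$ has zero Lebesgue measure in $\mathbb H$.

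\textbf{Main obstacle.} The routine parts (the semicircle/circle-average definitions, Fubini) are genuinely routine; the only delicate point is justifying the interchange of the $\epsilon\to 0$ limit with integration over the insertion location $q$ (resp.\ $p$). Near the other insertion points $s_i$ the Green's-function shift $\tfrac{\beta_i}{2}G_{\mathbb H}(q,s_i)$ blows up and the normalizing constant $C_{\mathbb H}$ degenerates, so one needs a uniform (in small $\epsilon$) integrable bound there; this is handled by the standard Cameron--Martin/Girsanov computation showing the $\epsilon$-regularized mass is dominated by $|q-s_i|^{-\beta_i\gamma/2}$ near $s_i$ (integrable on $\partial\mathbb H$ since $\beta_i\gamma/2<1$ in the relevant parameter range, or by localizing $F$ away from $s_i$ if needed), and similarly $|p-s_i|^{-\beta_i\gamma/2}$, $(2\,{\rm Im}\,p)^{-\gamma\alpha/2}$, $|u-p|^{-\gamma\alpha/2}$ near the bulk insertion in the bulk case — all locally integrable in two dimensions. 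Once this domination is in place, dominated convergence finishes the proof. I would present this as: \emph{state the reduction to fixed $q$ via Fubini, invoke Lemma~\ref{lem:def-LF} pointwise, then cite the standard GMC moment/continuity estimates (as in~\cite{AHS21,ARS21,SY23}) to pass the limit inside.}
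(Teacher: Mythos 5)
Your proposal is correct in substance and rests on the same underlying mechanism as the paper, but it is organized differently: the paper simply cites \cite[Lemma 2.13]{SY23} (quantum-typical boundary point for a field with boundary insertions) and \cite[Equation (3.5)]{ARS21} (quantum-typical bulk point for ${\rm LF}_{\mathbb H}$), and extends both to the mixed bulk/boundary setting by adding the missing insertions via Lemma~\ref{lem:def-LF}; you instead re-derive the statement from scratch from the regularized definitions of $\nu_\phi$ and $\mu_\phi$ together with Fubini, a Girsanov computation, and a dominated-convergence step. Your route is more self-contained, while the paper's buys brevity by outsourcing exactly the Girsanov/limit-interchange bookkeeping you sketch. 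Two details in your write-up should be tightened. First, Lemma~\ref{lem:def-LF} as stated expresses ${\rm LF}_{\mathbb H}^{(\alpha,u),(\beta_i,s_i)_i,(\gamma,q)}$ as a limit of reweightings of the \emph{plain} field ${\rm LF}_{\mathbb H}$, so the pointwise step is not literally "the content of Lemma~\ref{lem:def-LF}": what you need is the one-extra-insertion identity $\lim_{\epsilon\to0}\epsilon^{\gamma^2/4}e^{\frac{\gamma}{2}\phi_\epsilon(q)}\,{\rm LF}_{\mathbb H}^{(\alpha,u),(\beta_i,s_i)_i}(d\phi)={\rm LF}_{\mathbb H}^{(\alpha,u),(\beta_i,s_i)_i,(\gamma,q)}(d\phi)$ relative to the already-inserted field, which does not follow formally by comparing two limits but is exactly the Cameron--Martin/Girsanov computation you invoke in your "obstacle" paragraph (the same computation the paper carries out, in a reweighting context, in the proof of Lemma~\ref{lem:reweight1}); that computation, not the citation of Lemma~\ref{lem:def-LF}, should be the pointwise step. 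Second, since the lemma allows arbitrary $\beta_i\in\mathbb{R}$, the bound $|q-s_i|^{-\beta_i\gamma/2}$ need not be locally integrable, so the domination must be set up after localizing the \emph{insertion location}: restrict the boundary test function $g$ (resp.\ the $d^2p$ integration) to compact sets at positive distance from $\{s_i\}$ and from $u$, which loses nothing because $\{s_i\}$ is Lebesgue-null and a.s.\ $\nu_\phi$-null (GMC measures have no atoms), and $\{s_i\}\cup\{u\}$ is Lebesgue- and $\mu_\phi$-null; localizing $F$, which is a functional of the field rather than of $q$ or $p$, does not achieve this. With these two fixes your argument goes through and recovers the lemma.
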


\begin{proof}
    The first identity follows from \cite[Lemma 2.13]{SY23}, where they derived the case with only boundary insertions. This can be extended to our case by adding one bulk insertion in the end and then applying Lemma~\ref{lem:def-LF}. The second identity follows from \cite[Equation (3.5)]{ARS21} where they derived the result for ${\rm LF}_{\mathbb H}$. We can extend this to our case by adding multiple boundary insertions to their equation and then applying Lemma~\ref{lem:def-LF}.
\end{proof}

We will now introduce the two quantum surfaces that will be used in Proposition~\ref{prop:reweight}. They are both defined by Liouville fields. Under specialization of certain insertion parameters, they are related to some aforementioned quantum surfaces up to an explicit constant. The first surface has a free boundary parameter.
\begin{definition}
\label{def:M12}
Fix $\alpha \in \mathbb{R}$. Sample $(x,\phi)$ from 
${\rm LF}_\mathbb{H}^{(\alpha, i), (\gamma,0), (\gamma,x)}(d \phi) d x$, 
where $d x$ is the Lebesgue measure on $\mathbb R$. Let $\mathcal{M}_{1,2}^{\rm disk}(\alpha)$ be the law of the marked quantum surface $(\mathbb{H}, \phi, i, 0, x)/{\sim_\gamma}$.
\end{definition}
The parameters $W$ in $\mathcal{M}^{\rm disk}_{2,\bullet}(W)$ and $\alpha$ in $\mathcal{M}^{\rm disk}_{1,2}(\alpha)$ have different meanings. The former comes from $\mathcal{M}^{\rm disk}_{2}(W)$ and denotes the weight of a two-pointed disk, while the latter means the magnitude of the bulk logarithmic singularity. The measures $\mathcal{M}^{\rm disk}_{2,\bullet}(2)$ and $\mathcal{M}^{\rm disk}_{1,2}(\gamma)$ are equal up to an explicit constant.  
\begin{lemma}
\label{lem:embedding-2}
Recall $\mathcal{M}^{\rm disk}_{2,\bullet}(2)$ from Definition~\ref{def:M2bulk}. When viewed as  laws of quantum surfaces, we have
\begin{equation}
\label{eq:embedding-2}
\mathcal{M}_{2,\bullet}^{\rm disk}(2) = \frac{\gamma}{2(Q-\gamma)^2} \mathcal{M}^{\rm disk}_{1,2}(\gamma)\,.
\end{equation}
\end{lemma}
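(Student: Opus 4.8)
The plan is to verify~\eqref{eq:embedding-2} by computing both sides as laws of quantum surfaces and matching them via an explicit change-of-coordinates argument. First I would recall that $\mathcal{M}^{\rm disk}_{2,\bullet}(2)$ is obtained from $\mathcal{M}^{\rm disk}_2(2)$ by reweighting by the quantum area $\mu_h(\mathcal{S})$ and then adding a bulk point sampled from $\mu_h^\#$; by Lemma~\ref{lem:typ-pt} (the bulk version), adding such a quantum-typical bulk point amounts to inserting a $\gamma$-singularity, so I expect $\mathcal{M}^{\rm disk}_{2,\bullet}(2)$ to be expressible as a Liouville field measure on $\mathcal{S}$ (or $\mathbb{H}$ via the exponential map) with a $(\gamma,\cdot)$ bulk insertion and two $(\gamma,\cdot)$ boundary insertions at the two marked boundary points $-\infty,\infty$. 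The key technical input here is the known identification of $\mathcal{M}^{\rm disk}_2(2)$ (equivalently $\mathrm{QD}_{0,2}$ up to the boundary-length reweighting in Definition~\ref{def:QD}) with a Liouville field on $\mathbb{H}$ with two $\gamma$-insertions on the boundary, which is the content of the $W=2$ case of the quantum-disk/Liouville-field correspondence from~\cite{AHS21} (their Definition 2.2 and the surrounding discussion). Pinning down the precise multiplicative constant in that identification is where the factor $\frac{\gamma}{2(Q-\gamma)^2}$ will come from.

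Concretely, the steps in order: (1) Write $\mathcal{M}^{\rm disk}_2(2)$ on the strip $\mathcal{S}$ using Definition~\ref{def:thick} with $W=2$, so $\beta = \gamma + \frac{2-2}{\gamma} = \gamma$, and note the field decomposition $h = h^1+h^2+\bm c$ with $\bm c$ sampled from $\frac{\gamma}{2}e^{(\beta-Q)c}dc = \frac{\gamma}{2}e^{(\gamma-Q)c}dc$. (2) Transport to $\mathbb{H}$ via $\exp$, using that $-\infty,\infty$ map to $0,\infty$ and tracking the $Q\log|(\exp^{-1})'|$ correction from~\eqref{def:push}; this identifies $\mathcal{M}^{\rm disk}_2(2)$ (up to a $\nu_h(\partial)^{-2}$-type reweighting, which I would carry along explicitly) with a measure of the form $c_0\,\mathrm{LF}_{\mathbb{H}}^{(\gamma,0),(\gamma,\infty)}$ for an explicit $c_0$; the $(Q-\gamma)^{-2}$ should appear here from normalizing the boundary-length measure, compare the thin-disk formula in Lemma~\ref{lem:thin-thick} and the structure of Definition~\ref{def:QD}. (3) Reweight by quantum area and add a quantum-typical bulk point: by Lemma~\ref{lem:typ-pt} this turns $\mathrm{LF}_{\mathbb{H}}^{(\gamma,0),(\gamma,\infty)}(d\phi)\mu_\phi(d^2p)$ into $\mathrm{LF}_{\mathbb{H}}^{(\gamma,p),(\gamma,0),(\gamma,\infty)}(d\phi)d^2p$. (4) Use Lemma~\ref{lem:coordinate-change} to move the bulk insertion from the generic point $p$ to the point $i$ and the boundary insertion from $\infty$ to a generic point $x\in\mathbb{R}$: integrating $d^2p$ against the Möbius group of $\mathbb{H}$ fixing $\{0,\infty\}$ (dilations) produces, after the coordinate change, the Lebesgue measure $dx$ on the third boundary insertion point, matching exactly the description of $\mathcal{M}^{\rm disk}_{1,2}(\gamma)$ in Definition~\ref{def:M12}. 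The Jacobian factors $|f'(\cdot)|^{-2\Delta_\gamma}$ and $|f'(\cdot)|^{-\Delta_\gamma}$ from Lemma~\ref{lem:coordinate-change}, together with the dilation-group integral, collapse to a pure constant because $\Delta_\gamma = \frac{\gamma}{2}(Q-\frac{\gamma}{2})$ is precisely tuned so that the $\gamma$-insertion is the "quantum typical" weight.

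The main obstacle I anticipate is bookkeeping of the multiplicative constants at each step — in particular getting the $\frac{\gamma}{2}$ (which should be traceable to the $\frac{\gamma}{2}e^{(\gamma-Q)c}dc$ law of $\bm c$ in Definition~\ref{def:thick}) and the $(Q-\gamma)^{-2}$ (from the interplay of the $\nu_h(\partial\mathcal{S})^{-2}$ reweighting defining $\mathrm{QD}$ and the Liouville-field normalization) to combine correctly, while the $\exp$-coordinate change and the symmetry-group integral do not introduce spurious factors. A secondary subtlety is that $\mathcal{M}^{\rm disk}_{2,\bullet}(2)$ as stated uses $\mathcal{M}^{\rm disk}_2(2)$ directly (not the $\mathrm{QD}$-normalized version), so one must be careful about whether the $\nu_h(\partial)^{-2}$ factor is present; I would handle this by working with $\mathcal{M}^{\rm disk}_2(2)$ throughout and only invoking the $\mathrm{QD}$/Liouville-field dictionary at the level of the raw measure, citing the relevant statement from~\cite{AHS21} (or re-deriving the $W=2$ case, which is short). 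Once all constants are assembled, the identity~\eqref{eq:embedding-2} follows by comparing the two explicit Liouville-field descriptions of the same marked quantum surface $(\mathbb{H},\phi,i,0,x)/\sim_\gamma$.
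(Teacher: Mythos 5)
Your overall strategy (identify $\mathcal M_2^{\rm disk}(2)$ with a two-boundary-insertion Liouville field, weight by quantum area and add a quantum-typical bulk point via Lemma~\ref{lem:typ-pt}, then re-embed to match Definition~\ref{def:M12}) is a legitimate alternative in spirit, and the external input you would lean on (the $W=2$ case of the quantum-disk/Liouville-field correspondence from~\cite{AHS21}, with its explicit constant) is indeed where the factor $\frac{\gamma}{2(Q-\gamma)^2}$ lives. Two corrections on provenance, though: that constant does not come from the $\nu_h(\partial\mathcal S)^{-2}$ normalization or from the thin-disk formula of Lemma~\ref{lem:thin-thick}; it comes from the conditioned-Brownian-motion normalization in Definition~\ref{def:thick} (equivalently, in the paper's route, from the quoted identity ${\rm QD}_{1,0}=\frac{\gamma}{2\pi(Q-\gamma)^2}{\rm LF}_{\mathbb H}^{(\gamma,i)}$ of~\cite{ARS21}). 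The paper's actual proof is shorter and never leaves rigid embeddings: it adds a typical boundary point to ${\rm QD}_{1,0}$, moves it from $x$ to $0$ by the unique M\"obius map fixing $i$ (the only group integral being the convergent $\int_{\mathbb R}\frac{dx}{1+x^2}=\pi$), and then adds a second typical boundary point to both sides, identifying ${\rm QD}_{1,2}=\mathcal M^{\rm disk}_{2,\bullet}(2)$ on the left and $\mathcal M^{\rm disk}_{1,2}(\gamma)$ on the right via Lemma~\ref{lem:typ-pt}.

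The genuine gap in your plan is step (4). After step (3) you hold ${\rm LF}_{\mathbb H}^{(\gamma,p),(\gamma,0),(\gamma,\infty)}(d\phi)\,d^2p$, an embedding with boundary insertions at $0$ and $\infty$, and you must convert it into ${\rm LF}_{\mathbb H}^{(\gamma,i),(\gamma,0),(\gamma,x)}(d\phi)\,dx$ as laws of marked quantum surfaces. This is not a routine application of Lemma~\ref{lem:coordinate-change}: that lemma explicitly excludes insertions at (or sent to) $\infty$, and your embedding has a noncompact residual symmetry, namely the dilation group fixing $\{0,\infty\}$, so the assertion that ``the Jacobian factors together with the dilation-group integral collapse to a pure constant'' is precisely what needs proof. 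Indeed $\Delta_\gamma=1$, and a naive count of the covariance factors of ${\rm LF}_{\mathbb H}^{(\gamma,p),(\gamma,0),(\gamma,\infty)}$ under $z\mapsto\lambda z$ (bulk factor $\lambda^{-2\Delta_\gamma}$, with the two boundary factors at $0$ and $\infty$ cancelling) against $d^2p=\lambda\,d\lambda\,d\theta$ leaves a scale-invariant $d\lambda/\lambda$ integral; resolving this requires tracking the non-scale-invariant $|\cdot|_+$ conventions entering Definition~\ref{def:LF} at $\infty$, or gauge-fixing before adding the bulk point, or invoking the uniform-embedding/Haar-measure machinery that the paper only deploys later (proof of Proposition~\ref{prop:weld3}). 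Whichever way it resolves, it is not the automatic collapse you describe, and without it your route does not yet yield~\eqref{eq:embedding-2}; the simplest fix is to switch at that point to the paper's rigid-embedding argument sketched above.
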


\begin{proof} 

We first show that, when viewed as laws of quantum surfaces,
\begin{equation}
\label{eq:lem-embed-2-1}
{\rm QD}_{1,1} = \frac{\gamma}{2(Q-\gamma)^2} {\rm LF}_\mathbb{H}^{(\gamma, i), (\gamma,0)}.
\end{equation}
This result follows from \cite[Proposition 3.9]{ARS21}, where the explicit value of the constant has not been derived. However, following verbatim their argument, we can obtain the constant as elaborated below. By \cite[Theorem 3.4]{ARS21}, we have ${\rm QD}_{1,0} =  \frac{\gamma}{2 \pi (Q-\gamma)^2} {\rm LF}_\mathbb{H}^{(\gamma,i)}$. Adding a quantum typical boundary marked point to both sides of this equation and then applying Lemma~\ref{lem:typ-pt}, we obtain that
$$
{\rm QD}_{1,1} =  \frac{\gamma}{2 \pi (Q-\gamma)^2} {\rm LF}_\mathbb{H}^{(\gamma,i),(\gamma,x)}(d\phi)dx\,.
$$
For each fixed $x$, applying the conformal map $f_x(z) = \frac{z-x}{xz+1}$ which maps $(\mathbb{H}, i, x)$ to $(\mathbb{H}, i ,0)$ to the Liouville field on the right-hand side, and then using the coordinate change formula (Lemma~\ref{lem:coordinate-change}) yields
\begin{align*}
{\rm QD}_{1,1} &= \frac{\gamma}{2 \pi (Q-\gamma)^2} \int_\mathbb{R} \frac{1}{x^2+1} dx \cdot {\rm LF}_\mathbb{H}^{(\gamma,i),(\gamma,0)}(d\phi)\,.
\end{align*}
Combining with $\int_\mathbb{R} \frac{1}{x^2+1} dx = \pi$ yields~\eqref{eq:lem-embed-2-1}. Finally, by adding a quantum typical boundary marked point to both sides of~\eqref{eq:lem-embed-2-1} yields the desired lemma: the left-hand side becomes ${\rm QD}_{1,2}$, which equals to $\mathcal{M}^{\rm disk}_{2,\bullet}(2)$ by Definitions~\ref{def:QD} and \ref{def:M2bulk}, and the right-hand side becomes $\mathcal{M}^{\rm disk}_{1,2}(\gamma)$ by Lemma~\ref{lem:typ-pt}. \qedhere

\end{proof}

The second quantum surface we need is the quantum triangle introduced in \cite{ASY22}.  
\begin{definition}
\label{def:QT}
Let $W_1, W_2$, and $W_3$ be fixed with $ W_1,W_2,W_3 > \frac{\gamma^2}{2}$. We set $\beta_i = \gamma + \frac{2-W_i}{\gamma}< Q$ for $i=1,2,3$. Sample $\phi$ from $\frac{1}{(Q-\beta_1)(Q-\beta_2)(Q-\beta_3)}{\rm LF}_\mathbb{H}^{(\beta_1,0),(\beta_2,1),(\beta_3,\infty)}$ and define the infinite measure ${\rm QT}(W_1, W_2, W_3)$ as the law of $(\mathbb{H}, \phi, 0,1,\infty) / \sim_\gamma$. We call a sample from ${\rm QT}(W_1, W_2, W_3)$ a quantum triangle of weights $W_1$, $W_2$, and $W_3$.
\end{definition}

Definition~\ref{def:QT} is equivalent to \cite[Definition 2.17]{ASY22} by their Lemma 2.9. The quantum triangle ${\rm QT}(W_1, W_2, W_3)$ was defined for all $W_i>0$  in \cite{ASY22}, but we will focus on the case of $W_1,W_2,W_3>\frac{\gamma^2}{2}$. Recall $\widetilde {\rm QD}_{0,3}$ from Definition~\ref{def:QD}. Then, as shown in the lemma below, the measures $\widetilde {\rm QD}_{0,3}$ and ${\rm QT}(2,2,2)$ are equal up to a constant. This is a special case of \cite[Proposition 2.18]{AHS21}, as explained in Remark 2.19 there.
\begin{lemma}
\label{lem:QT} 
    There exists a constant $C \in (0,\infty)$ such that when viewed as laws of quantum surfaces with marked points, we have
    $$
    \widetilde {\rm QD}_{0,3}= C {\rm QT}(2,2,2)\,.
    $$
\end{lemma}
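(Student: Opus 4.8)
The plan is to reduce the statement to the known correspondence between quantum disks with boundary marked points and Liouville fields with $\gamma$-boundary insertions, and then track the constants exactly as in the proof of Lemma~\ref{lem:embedding-2}. First, by Definition~\ref{def:QT} with $W_1 = W_2 = W_3 = 2$ we have $\beta_i = \gamma + \frac{2-2}{\gamma} = \gamma$, and $\gamma < Q$ since $\gamma \in (\sqrt2, 2)$; hence, as laws of quantum surfaces with three marked boundary points,
$$
{\rm QT}(2,2,2) = \frac{1}{(Q-\gamma)^3}\,{\rm LF}_\mathbb{H}^{(\gamma,0),(\gamma,1),(\gamma,\infty)}.
$$
The marked points $0,1,\infty$ lie on $\partial\mathbb{H}$ in counterclockwise order, which matches the ordering built into $\widetilde{\rm QD}_{0,3}$. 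So it suffices to exhibit a constant $C' \in (0,\infty)$ with $\widetilde{\rm QD}_{0,3} = C'\,{\rm LF}_\mathbb{H}^{(\gamma,0),(\gamma,1),(\gamma,\infty)}$, which then gives the lemma with $C = C'(Q-\gamma)^3$.

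Second, I would prove this identity by building up from a simpler one by adding quantum-typical boundary points, mirroring the proof of Lemma~\ref{lem:embedding-2}. The input is the boundary analogue of \cite[Theorem 3.4]{ARS21} (equivalently \cite[Proposition 2.18]{AHS21} together with Remark 2.19 there), relating the unmarked quantum disk ${\rm QD}$ equipped with two boundary marked points to ${\rm LF}_\mathbb{H}^{(\gamma,0),(\gamma,\infty)}$ up to an explicit positive constant and modulo the residual scaling symmetry fixing $\{0,\infty\}$; equivalently one may phrase the input directly in terms of $\mathcal{M}_2^{\rm disk}(2)$. Adding one quantum-typical boundary marked point to both sides and applying Lemma~\ref{lem:typ-pt} replaces the Liouville field by ${\rm LF}_\mathbb{H}^{(\gamma,0),(\gamma,\infty),(\gamma,q)}\,dq$, while the disk side becomes ${\rm QD}$ with a $\nu_\phi^\#$-distributed third boundary point. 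For each fixed position of that third point I would apply a Möbius automorphism of $\mathbb{H}$ sending it to $1$ while fixing $\{0,\infty\}$, use the coordinate-change formula (Lemma~\ref{lem:coordinate-change}) — all three insertion weights equal $\gamma$, so a factor $|f'(\cdot)|^{-\Delta_\gamma}$ appears at each insertion — and integrate out the resulting Jacobian over $\mathbb{R}$. This integral converges and is strictly positive, exactly as the computation $\int_\mathbb{R} (x^2+1)^{-1}\,dx = \pi$ in Lemma~\ref{lem:embedding-2}, giving ${\rm QD}_{0,3} = C''\,{\rm LF}_\mathbb{H}^{(\gamma,0),(\gamma,1),(\gamma,\infty)}$ for an explicit $C'' \in (0,\infty)$. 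Since ${\rm QD}_{0,3}$ samples its three boundary points i.i.d.\ from $\nu_\phi^\#$ it is symmetric under permuting them, and $\widetilde{\rm QD}_{0,3}$ is its restriction to the counterclockwise branch, which is exactly the one recorded by the configuration $(0,1,\infty)$; the two cyclic orders contribute equally via the anticonformal involution $z \mapsto -\bar z$, so one reads off $\widetilde{\rm QD}_{0,3} = C'\,{\rm LF}_\mathbb{H}^{(\gamma,0),(\gamma,1),(\gamma,\infty)}$ with $C'$ a definite fraction of $C''$.

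The main obstacle is the bookkeeping around the residual conformal symmetry in the base identity: with only two boundary insertions, ${\rm LF}_\mathbb{H}^{(\gamma,0),(\gamma,\infty)}$ is defined only modulo the one-parameter dilation group fixing $\{0,\infty\}$, so ``equality as laws of quantum surfaces'' must be interpreted with care — this is precisely why the cleanest execution never isolates a two-marked-point base case, but adds the third quantum-typical point simultaneously with invoking the \cite{ARS21,AHS21} correspondence, so that one works throughout with ${\rm QD}_{0,3}$ and ${\rm LF}_\mathbb{H}^{(\gamma,0),(\gamma,1),(\gamma,\infty)}$, whose three marked points pin down $\mathbb{H}$ rigidly. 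A secondary, routine point is to check that the position integral is finite and nonzero, which follows from the explicit forms of the Jacobian and of the constant $C_\mathbb{H}^{(\beta_i,s_i)_i}$ in Definition~\ref{def:LF}. Since the paper only needs the existence of a finite positive $C$, one may also simply cite \cite[Proposition 2.18]{AHS21} and Remark 2.19 directly, which is the shortest route.
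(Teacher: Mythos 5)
Your proposal is correct, and your closing remark is in fact exactly what the paper does: its entire proof of Lemma~\ref{lem:QT} is the citation of \cite[Proposition 2.18]{AHS21} together with Remark 2.19 there, which directly identifies $\widetilde{\rm QD}_{0,3}$ with ${\rm QT}(2,2,2)$ up to a finite positive constant. The longer route you sketch — specializing Definition~\ref{def:QT} to $W_1=W_2=W_3=2$ so that all three insertions become $\gamma$, and then reproducing the mechanism of Lemma~\ref{lem:embedding-2} (add a quantum-typical boundary point via Lemma~\ref{lem:typ-pt}, apply a M\"obius map and the coordinate-change formula of Lemma~\ref{lem:coordinate-change}, integrate the Jacobian) — is a reasonable self-contained alternative, and you correctly flag its one delicate point: a putative two-boundary-insertion base identity ${\rm QD}_{0,2}\leftrightarrow {\rm LF}_\mathbb{H}^{(\gamma,0),(\gamma,\infty)}$ is only defined modulo the dilation group fixing $\{0,\infty\}$, so one should either quotient by that symmetry carefully or work from the start with three marked points pinning the embedding rigidly, which is essentially what \cite{AHS21} does. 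The symmetry bookkeeping for passing from ${\rm QD}_{0,3}$ to its counterclockwise restriction $\widetilde{\rm QD}_{0,3}$ is harmless since only the existence of some $C\in(0,\infty)$ is claimed. In short, your fallback coincides with the paper's proof, and your constructive sketch, while not carried out in the paper, is consistent with how the cited input is itself established.
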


\subsection{The outer boundary of the SLE$_\kappa$ bubble via conformal welding}\label{sec:bubble-weld}

For a measure $m$ on curves, we use $\overline{m}$ to denote the law of the outer boundary of a curve sampled from $m$. For example, 
Let $\eta$ be a sample from the SLE bubble measure $\mu_\kappa$ whose outer boundary is $\ep \eta$. Then
$\epmu$ is the law of $\ep \eta$ under $\mu_\kappa$. Recall that $\mu_\kappa$ is normalized such that $\mu_\kappa[E_i]=1$, where $E_i$ is event the $i$ is not contained in the unbounded connected component of $\bbH\setminus \ep\eta$.
Let $\mumu $ be the restriction of $\mu_\kappa$ to the event $E_i$. Namely, $\mumu = \mu_\kappa - \nu_\kappa$. In this subsection we present our first conformal welding result, which produces $\epmumu$ using conformal welding of quantum surfaces. 
In Section~\ref{subsec:relocate}, we do the same for $\epnu$.

We start with the definition of the disintegration of a measure on a quantum surface or a Liouville field over the boundary lengths. For more details, we refer to \cite[Section 2.6]{AHS20}. Suppose $M$ is the law of a quantum surface or a Liouville field. For a sample from $M$, let $\mathcal{L}$ be the quantum length of a specific boundary arc. Then, we can construct a family of measures $\{M(\ell)\}_{\ell > 0}$ such that $M(\ell)$ is supported on quantum surfaces with $\mathcal{L} = l$, and 
\begin{equation}
\label{eq:def-disintegration}
    M = \int_0^\infty M(\ell) d\ell\,.
\end{equation} We will clarify the specific boundary arc that we are considering in the context. See ${\rm QD}_{1,1}(\ell)$ and $\mathcal{M}_2^{\rm disk}(\gamma^2-2)(\ell)$ defined below for an example. Sometimes we will also consider several boundary arcs $\mathcal L_1, \mathcal L_2,\ldots$, and define the disintegration measure $\{M(\ell_1,\ell_2,\ldots)\}_{\ell_1>0,\ell_2>0,\ldots}$ with respect to them.

We now introduce the notion of conformal welding. Suppose $M_1$ and $M_2$ are the laws of a quantum surface or a Liouville field. We specify one boundary arc for each of their samples and consider the disintegration $\{M_1(\ell)\}_{\ell>0}$ and $\{M_2(\ell)\}_{\ell>0}$ as defined in \eqref{eq:def-disintegration}. Let $\ell>0$. Given a pair of quantum surface sampled from $M_1(\ell) \times M_2(\ell)$, we conformally weld them along their length $\ell$ boundary arcs according to the quantum length and obtain a quantum surface decorated with a curve. We denote the law of the resulting quantum surface as ${\rm Weld}(M_1(\ell),M_2(\ell))$. We define the law of the quantum surface ${\rm Weld}(M_1, M_2)$ as:
\begin{equation}
\label{eq:conf-weld}
{\rm Weld}(M_1, M_2) = \int_0^\infty {\rm Weld}(M_1(\ell),M_2(\ell)) d \ell\,.
\end{equation}
We refer to ${\rm Weld}(M_1, M_2)$ as the conformal welding of $M_1$ and $M_2$, and we will clarify the specific boundary arcs that we are considering in the context. See ${\rm Weld}(\mathcal{M}_{2}^{\rm disk}(\gamma^2-2), {\rm QD}_{1,1})$ defined below for an example. Sometimes, we will also consider the conformal welding of more than two measures of quantum surfaces: ${\rm Weld}(M_1,M_2,\ldots)$, which can be defined inductively. See ${\rm Weld}(\mathcal{M}_{2}^{\rm disk}(\gamma^2-2), \mathcal{M}_{2, \bullet}^{\rm disk}(2), \mathcal{M}_{2}^{\rm disk}(\gamma^2-2), \widetilde {\rm QD}_{0,3})$ introduced in Section~\ref{sec:weld-lcft} for an example.

We now give an example of conformal welding: ${\rm Weld}( \mathcal{M}_{2}^{\rm disk}(\gamma^2-2), {\rm QD}_{1,1})$ to clarify the definition of disintegration and conformal welding. This welded surface will be used in Proposition~\ref{prop:weld6}.

Recall from Definition~\ref{def:QD} that ${\rm QD}_{1,1}$ is a measure on a quantum surface with one bulk and one boundary marked point. Let $\mathcal{L}$ be the total quantum boundary length of a sample from this measure. Then, as in \eqref{eq:def-disintegration}, we can construct a family of measures $\{{\rm QD}_{1,1}(\ell)\}_{\ell > 0}$, where ${\rm QD}_{1,1}(\ell)$ is supported on quantum surfaces with $\mathcal{L} = \ell$, and we have ${\rm QD}_{1,1} = \int_0^\infty {\rm QD}_{1,1}(\ell) d\ell$. Also, recall from Definition~\ref{def:thin-disk} that $\mathcal{M}_{2}^{\rm disk}(\gamma^2-2)$ is a measure on two-pointed beaded quantum surfaces. As in \eqref{eq:def-disintegration}, we can construct the disintegration measure over its right quantum boundary length: $\{\mathcal{M}_{2}^{\rm disk}(\gamma^2-2)(\ell)\}_{\ell > 0}$. Then we have $\mathcal{M}_{2}^{\rm disk}(\gamma^2-2) = \int_0^\infty \mathcal{M}_{2}^{\rm disk}(\gamma^2-2)(\ell) d\ell$.

We now define ${\rm Weld}( \mathcal{M}_{2}^{\rm disk}(\gamma^2-2), {\rm QD}_{1,1})$ as the law on quantum surfaces obtained by conformally welding the right boundary arc of a sample from $\mathcal{M}_{2}^{\rm disk}(\gamma^2-2)$ with the entire boundary arc of a sample from ${\rm QD}_{1,1}$ conditioned that they have the same quantum length. We refer to Figure~\ref{fig:weld6} for an illustration. To be more precise, for $\ell>0$, we independently sample two quantum surfaces from $\mathcal{M}_{2}^{\rm disk}(\gamma^2-2)(\ell)$ and ${\rm QD}_{1,1}(\ell)$, respectively. These surfaces are then conformally welded along their length $\ell$ boundary arcs to obtain a quantum surface decorated with a curve. We denote the law of the resulting quantum surface as ${\rm Weld}(\mathcal{M}_{2}^{\rm disk}(\gamma^2-2)(\ell), {\rm QD}_{1,1}(\ell))$. Finally, as in~\eqref{eq:conf-weld}, ${\rm Weld}( \mathcal{M}_{2}^{\rm disk}(\gamma^2-2), {\rm QD}_{1,1})$ is defined as $\int_0^\infty {\rm Weld}( \mathcal{M}_{2}^{\rm disk}(\gamma^2-2)(\ell), {\rm QD}_{1,1}(\ell)) d \ell$.

We are ready to state our first conformal welding result.
\begin{proposition}
\label{prop:weld6}
There exists a constant $C_1  \in (0,\infty)$ such that
when viewed as laws of quantum surfaces decorated with a curve and marked points, we have: 
\begin{equation} 
\label{eq:forget-1}
{\rm LF}_\mathbb{H}^{(\gamma,i),(\frac{4}{\gamma}-\gamma,0)} \times \epmumu = C_1  {\rm Weld}(\mathcal{M}_{2}^{\rm disk}(\gamma^2-2), {\rm QD}_{1,1}) \,.
\end{equation}
That is, when $(\phi, \eta)$ is sampled from ${\rm LF}_\mathbb{H}^{(\gamma,i),(\frac{4}{\gamma}-\gamma,0)} (d\phi) \times \mumu(d\eta)$, the law of $(\mathbb{H}, \phi, \ep{\eta}, i, 0)$ viewed as a quantum surface decorated with a curve and marked points equals $C_1  {\rm Weld}(\mathcal{M}_2^{\rm disk}(\gamma^2-2), {\rm QD}_{1,1})$. Here, $(D_\infty,0,0)$ corresponds to $\mathcal{M}_2^{\rm disk}(\gamma^2-2)$, and $(D_{\ep \eta},0,i)$ corresponds to ${\rm QD}_{1,1}$; see also the following figure.
\begin{figure}[H]
\centering
\includegraphics[width=8cm]{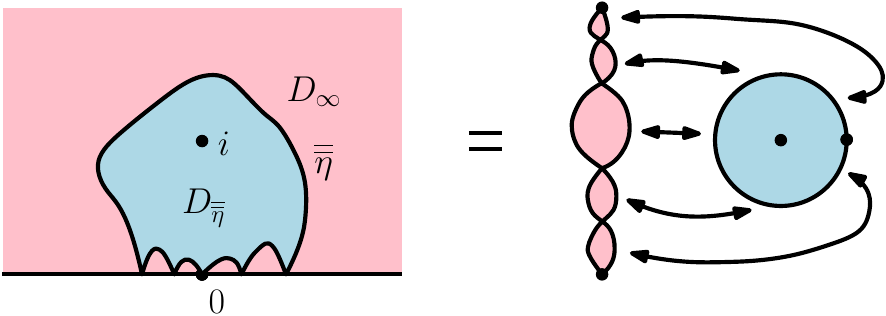}
\caption{Illustration of Proposition~\ref{prop:weld6}. The blue region corresponds to $D_{\ep \eta}$ and the pink region corresponds to $D_\infty$.} 
\label{fig:weld6}
\end{figure}

\end{proposition}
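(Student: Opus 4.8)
The plan is to derive Proposition~\ref{prop:weld6} from the conformal welding results of~\cite{Wu23}, which treat SLE bubbles in the simple regime $\kappa'\in(0,4)$, once we recognize $\ep\eta$ — the outer boundary of the SLE$_\kappa$ bubble with $\kappa\in(4,8)$ — as such a bubble. Since $16/\kappa=\gamma^2\in(2,4)\subset(0,4)$, this range is available, and the work is essentially to (a) identify $\epmumu$, including its normalization, with the interface measure of~\cite{Wu23}, (b) identify the two welded surfaces and the law of the welded quantum surface as a Liouville field with the prescribed insertions, and (c) make the implicit constants of~\cite{Wu23,AHS21,ARS21} explicit enough to know the proportionality constant is finite and positive.

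First I would record, using the duality for $\ep\eta$ established in Appendix~\ref{sec:app} (which builds on the continuity of ${\rm SLE}_\kappa(\kappa-6)$ and standard SLE duality), that under the normalized measure $\mumu=\mu_\kappa|_{E_i}$ the loop $\ep\eta$ is a simple loop rooted at $0$ enclosing $i$, and that its law is — up to the explicit normalization imposed by $\mu_\kappa[E_i]=1$ — the ${\rm SLE}_{16/\kappa}$-type bubble measure (restricted to the event of enclosing $i$) that appears as a welding interface in~\cite{Wu23}. The two prime ends of $0$ as seen from $D_\infty$ are the endpoints of the beaded surface lying outside $\ep\eta$, which is why a \emph{thin} quantum disk must appear on the outside.

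Next I would apply the conformal welding theorem of~\cite{Wu23}: welding the weight-$(\gamma^2-2)$ thin quantum disk $\mathcal{M}_2^{\rm disk}(\gamma^2-2)$ along its right boundary arc to the boundary loop of ${\rm QD}_{1,1}$, matched by quantum length and integrated over the common length as in~\eqref{eq:conf-weld}, produces a quantum surface decorated with an interface which, conditionally on the surface and independently of it, is distributed as the bubble from the previous step; the beaded structure of $\mathcal{M}_2^{\rm disk}(\gamma^2-2)$ matches the decomposition of $D_\infty$ into a chain of components, and the bulk point of ${\rm QD}_{1,1}$ becomes the marked point $i$ inside $D_{\ep\eta}$. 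It then remains to identify the law of the welded surface as a Liouville field. The bulk insertion is $(\gamma,i)$ because ${\rm QD}_{1,1}$ carries a $\gamma$-singularity at its bulk point — this is Lemma~\ref{lem:typ-pt}, equivalently Lemma~\ref{lem:embedding-2} together with~\eqref{eq:lem-embed-2-1} and Definition~\ref{def:M12}. The boundary insertion at the root $0$ equals $\frac4\gamma-\gamma$; I would obtain this by a weight computation at the point where the two ends of the weight-$(\gamma^2-2)$ thin disk and the marked boundary point of ${\rm QD}_{1,1}$ are glued, using the Poissonian description of thin disks (Definition~\ref{def:thin-disk}, Lemma~\ref{lem:thin-thick}) — the beads being weight-$2$ thick disks, which carry $\beta=\gamma$ at their endpoints — or by reducing to a thick-disk welding and taking a limit. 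This produces ${\rm LF}_\mathbb{H}^{(\gamma,i),(\frac4\gamma-\gamma,0)}$ on the surface side. Finally I would collect the multiplicative constants into a single $C_1\in(0,\infty)$: the normalization $\mu_\kappa[E_i]=1$, the factor $\frac{\gamma}{2(Q-\gamma)^2}$ from Lemma~\ref{lem:embedding-2}, the $(1-\frac{2W}{\gamma^2})^2$-type factors from the thin-disk decomposition (Lemma~\ref{lem:thin-thick}), and the implicit constant in Wu's welding identity.

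\textbf{Main obstacle.} I expect the principal difficulty to be the first step together with the bookkeeping in the third: making precise the identification of $\epmumu$ — including its normalization — with the interface measure of~\cite{Wu23}, which is phrased for ${\rm SLE}_{\kappa'}$ bubbles with $\kappa'\in(0,4)$ rather than for outer boundaries of ${\rm SLE}_\kappa$ bubbles with $\kappa\in(4,8)$, and extracting all the constants implicit in~\cite{Wu23,AHS21,ARS21}. The precise value of $C_1$ is irrelevant downstream, since the calculations of Section~\ref{sec:solve} only use that it is finite and nonzero, but the exact form of the two insertions $(\gamma,i)$ and $(\frac4\gamma-\gamma,0)$ is essential, so the weight accounting at the root (where the two thin-disk endpoints meet the marked point of ${\rm QD}_{1,1}$) must be carried out carefully, including the limiting argument that passes from the thick to the thin regime.
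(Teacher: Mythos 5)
Your plan follows essentially the same route as the paper: it reduces the statement to Wu's conformal welding identity for the $\SLE_{\gamma^2}(\gamma^2-4)$ bubble (the paper's Proposition~\ref{prop:Wu}, proved by adapting \cite{ARS21} together with the quantum-triangle welding of \cite{ASY22}, which is also where the insertions $(\gamma,i)$ and $(\frac{4}{\gamma}-\gamma,0)$ are identified, consistent with your weight count $2(\gamma^2-2)+2=2\gamma^2-2$ at the root) and to the identification $\epmumu=\widehat{\mathsf{m}}_{\gamma^2}$ via SLE duality, with the constant $C_1$ only needing to be finite and positive at this stage. The obstacle you single out is precisely what the paper's Lemma~\ref{lem:A.4} resolves: by classical duality the outer boundary of the approximating chordal $\SLE_\kappa$ from $0$ to $-\epsilon$ is an $\SLE_{\gamma^2}(\frac{\gamma^2}{2}-2,\gamma^2-4)$ with an extra force point at $0^-$, and an imaginary-geometry coupling shows this extra force point has no effect in the $\epsilon\to 0$ limit, giving the normalized equivalence with the bubble measure of \cite{Wu23}.
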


The paper~\cite{Wu23} proves the conformal welding result for the so-called $\SLE_{\gamma^2}(\rho)$ bubble measure. 
When $\rho=\gamma^2-4$, it is equivalent to Proposition~\ref{prop:weld6}  with $\epmumu$ replaced by
the $\SLE_{\gamma^2}(\gamma^2-4)$ bubble measure. The proof for this special case is a straightforward adaption of an argument in~\cite[Section 4]{ARS21} with an additional input from~\cite{ASY22} on the conformal welding of quantum triangles. For completeness, we recall this proof in Appendix~\ref{sec:app}. Given this,  Proposition~\ref{prop:weld6}  follows from the equivalence of the $\SLE_{\gamma^2}(\gamma^2-4)$ bubble measure and $\epmumu$, which is an instance of the SLE duality. We prove this duality hence Proposition~\ref{prop:weld6} in Appendix~\ref{sec:app}.
We will explicitly compute the welding constant $C_1$  in Section~\ref{sec:sol-C1}, which is an important step in the proof of Theorem~\ref{thm:moment}.

\subsection{Relocate the bulk marked point}\label{subsec:relocate}
In this subsection, we present a conformal welding result in Proposition~\ref{prop:weld3}, which produces the measure $\epnu$. Recall from Definitions~\ref{def:QD} and \ref{def:M2bulk} that ${\rm QD}_{0,1}$ is the law on quantum surfaces with one boundary marked point, and $\mathcal{M}_{2,\bullet}^{\rm disk}(\gamma^2-2)$ is the law on two-pointed beaded quantum surfaces with one bulk marked point. Similarly to the conformal welding ${\rm Weld}(\mathcal{M}_{2}^{\rm disk}(\gamma^2-2), {\rm QD}_{1,1})$, we can define the conformal welding ${\rm Weld}(\mathcal{M}_{2,\bullet}^{\rm disk}(\gamma^2-2), {\rm QD}_{0,1})$ obtained by gluing the right boundary arc of a sample from the first measure to the entire boundary arc of a sample from the second measure, see Figure~\ref{fig:weld3} for an illustration.
 
We need the concept of uniform embedding first considered in \cite{AHS21}, which was used to prove similar statements as Proposition~\ref{prop:weld3}.
Sample $(\bm{{\rm p}},\bm{{\rm q}},\bm{{\rm r}})$ from the measure $|(p-q)(q-r)(r-p)^{-1}| dpdqdr$ restricted to triples $(p,q,r)$ which are oriented counterclockwise on $\mathbb{R} = \partial \mathbb{H}$. Let $\mathfrak g$ be the conformal map such that $\mathfrak g(0) = \bm{{\rm p}}$, $\mathfrak g(1) = \bm{{\rm q}}$, and $\mathfrak g(2) = \bm{{\rm r}}$, and define $\mathbf{m}_\mathbb{H}$ to be the law of $\mathfrak g$. As explained in Section 2.5 of \cite{AHS21}, $\mathbf{m}_{\mathbb{H}}$ is the Haar measure on the conformal automorphism group ${\rm conf}(\mathbb{H})$ of $\mathbb{H}$, which is both left and right invariant. 
Recall from~\eqref{def:push} the definition of push-forward of random functions. Suppose $(D,h,z_1,\ldots, z_m)$ is an embedding of a quantum surface such that there exists a conformal map $f: D \rightarrow \mathbb{H}$. We denote by $\mathbf{m}_\mathbb{H} \ltimes (D,h,z_1,\ldots, z_m)$ the law of $(\mathbb{H}, (\mathfrak g \circ f) \bullet_\gamma  h, \mathfrak g \circ f ( z_1), \ldots, \mathfrak g \circ f (z_m))$, and call it a uniform embedding of $(D,h,z_1,\ldots, z_m)$ onto $\mathbb{H}$, where $\mathfrak g$ is sampled independently from $\mathbf{m}_{\mathbb{H}}$ and $\mathfrak g \circ f$ denotes the composition of $\mathfrak g$ and $f$. This definition does not depend on the choice of the embedding and the map $f$, given the quantum surface. In addition, we can adapt this notion to quantum surfaces decorated with curves.

We will use the following crucial observation from~\cite{AHS21}: if two quantum surfaces have the same law under the uniform embedding, then their law must be the same. We now use it to  prove the following proposition based on  Proposition~\ref{prop:weld6}. We emphasize that the welding constant $C_1$ remains the same as in Proposition~\ref{prop:weld6}, which will be computed in Section~\ref{sec:sol-C1}.

\begin{proposition}
\label{prop:weld3}
When viewed as laws of quantum surfaces decorated with a curve and marked points,
$$
{\rm LF}_\mathbb{H}^{(\gamma,i),(\frac{4}{\gamma}-\gamma,0)}  \times \epnu = C_1 {\rm Weld}(\mathcal{M}_{2, \bullet}^{\rm disk}(\gamma^2-2), {\rm QD}_{0,1}) \,\quad \textrm{with } C_1 \textrm{ from }\eqref{eq:forget-1}. $$
That is, when $(\phi, \eta)$ is sampled from ${\rm LF}_\mathbb{H}^{(\gamma,i),(\frac{4}{\gamma}-\gamma,0)} (d\phi) \times \nu_\kappa(d\eta)$, the law of $(\mathbb{H}, \phi, \ep{\eta}, i, 0)$ viewed as a quantum surface decorated with a curve and marked points equals $C_1  {\rm Weld}(\mathcal{M}_{2, \bullet}^{\rm disk}(\gamma^2-2), {\rm QD}_{0,1})$. Here, $(D_\infty,i,0,0)$ corresponds to $\mathcal{M}_{2, \bullet}^{\rm disk}(\gamma^2-2)$, and $(D_{\ep \eta},0)$ corresponds to ${\rm QD}_{0,1}$, as shown in the figure below.
\begin{figure}[H]
\centering
\includegraphics[width=8cm]{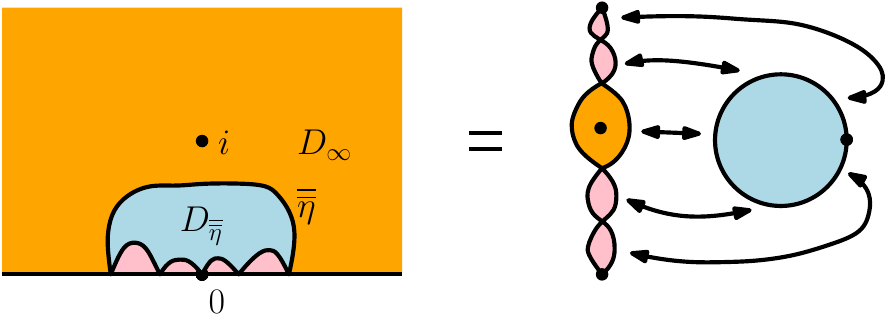}
\caption{Illustration of Proposition~\ref{prop:weld3}. The union of orange and pink regions corresponds to $D_\infty$, and the blue region corresponds to $D_{\ep \eta}$.}
\label{fig:weld3}
\end{figure}

\end{proposition}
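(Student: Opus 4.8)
The plan is to deduce Proposition~\ref{prop:weld3} from Proposition~\ref{prop:weld6} by \emph{relocating} the marked bulk point from the ``disk'' piece $D_{\ep\eta}$ (where it sits on the event $E_i$) to the ``complement'' piece $D_\infty$ (where it sits on the event $E_i^c$). The conceptual mechanism is that, by \cite[Theorem 3.4]{ARS21} (already used in the proof of Lemma~\ref{lem:embedding-2}), the field ${\rm LF}_{\mathbb{H}}^{(\gamma,i)}$, regarded as a measure on quantum surfaces carrying its bulk insertion as a marked point, is a constant multiple of ${\rm QD}_{1,0}$, whose bulk point is \emph{quantum typical}; the same is then true of ${\rm LF}_{\mathbb{H}}^{(\gamma,i),(\frac{4}{\gamma}-\gamma,0)}$ by Lemma~\ref{lem:typ-pt} (adding a $\gamma$-insertion $=$ reweighting by total quantum area and adding a quantum-typical bulk point). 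Consequently, in the welded surface on the right-hand side of \eqref{eq:forget-1} the marked bulk point is a quantum-typical point of the \emph{entire} surface, and whether it falls in the ${\rm QD}_{1,1}$ component or in the $\mathcal{M}_{2}^{\rm disk}(\gamma^2-2)$ component corresponds exactly, on the left-hand side, to the dichotomy $E_i$ versus $E_i^c$, i.e.\ to $\epmumu$ versus $\epnu$.

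Concretely, I would introduce the common ``parent'' configuration obtained by forgetting the magnitude-$\gamma$ insertion, i.e.\ by sampling from ${\rm LF}_{\mathbb{H}}^{(\frac{4}{\gamma}-\gamma,0)}\times\epmu$, reweighting by the total quantum area $\mu_\phi(\mathbb{H})$, and then adding a bulk marked point $q$ from $\mu_\phi^\#$. By the second identity of Lemma~\ref{lem:typ-pt} this parent measure equals $\int_{\mathbb{H}}\big({\rm LF}_{\mathbb{H}}^{(\gamma,q),(\frac{4}{\gamma}-\gamma,0)}\times\epmu\big)\,d^2q$ with $q$ marked. On the conformal-welding side, applying the very same operation (reweight by total area, add a typical bulk point) to the bulk-point-free version of Proposition~\ref{prop:weld6} splits the parent into two pieces according to which glued surface carries $q$: if $q$ lands in the ${\rm QD}_{0,1}$ component one recovers ${\rm QD}_{1,1}$ by Definition~\ref{def:QD}, and if $q$ lands in the $\mathcal{M}_{2}^{\rm disk}(\gamma^2-2)$ component one recovers $\mathcal{M}_{2,\bullet}^{\rm disk}(\gamma^2-2)$ by Definition~\ref{def:M2bulk}. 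I would then match the first piece with ${\rm LF}_{\mathbb{H}}^{(\gamma,i),(\frac{4}{\gamma}-\gamma,0)}\times\epmumu$ by using the coordinate change formula (Lemma~\ref{lem:coordinate-change}) to move $q$ to $i$ and the conformal covariance of the ${\rm SLE}_\kappa$ bubble measure under M\"obius maps of $\mathbb{H}$ fixing $0$, invoking Proposition~\ref{prop:weld6} to fix the constant. Since the relocation of the bulk point is, by construction, a mass-preserving operation performed identically on the two sides, the second piece yields $\epnu$ on the left and ${\rm Weld}(\mathcal{M}_{2,\bullet}^{\rm disk}(\gamma^2-2),{\rm QD}_{0,1})$ on the right, with the \emph{same} welding constant $C_1$. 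Throughout, I would organize the argument with the uniform embedding operator $\mathbf{m}_{\mathbb{H}}\ltimes(\cdot)$ and use the uniqueness principle from~\cite{AHS21} (equal uniform embeddings imply equal laws of quantum surfaces) to upgrade the equality of integrated field-and-curve measures to the claimed equality of quantum-surface measures.

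The main obstacle I anticipate is not any single computation but the bookkeeping in the middle step: one must check that ``reweight by total quantum area, add a quantum-typical bulk point, then restrict to the component containing it'' commutes cleanly with conformal welding and with the choice of embedding, and in particular that the covariance of the \emph{infinite} bubble measure $\mu_\kappa$ under $0$-fixing automorphisms does not leave behind a spurious Jacobian or change the normalization. Verifying this compatibility carefully — so that the constant that comes out is exactly the $C_1$ of \eqref{eq:forget-1} rather than a modified one, and that the $\eta$-dependence appearing after the change of variables genuinely cancels once one passes to the uniformly embedded picture — is where the care is required.
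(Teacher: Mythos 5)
Your overall strategy---read the bulk insertion as a quantum-typical point via Lemma~\ref{lem:typ-pt}, relocate it between the two welded components, and organize the comparison with the uniform embedding and the uniqueness principle of~\cite{AHS21}---is indeed the strategy of the paper's proof. However, the concrete relocation mechanism you describe has a genuine gap. You keep the boundary root pinned at $0$ and move only the bulk point, matching the piece of your parent measure with $q\in D_{\ep\eta}$ to ${\rm LF}_\mathbb{H}^{(\gamma,i),(\frac{4}{\gamma}-\gamma,0)}\times\epmumu$ via the unique $0$-fixing M\"obius map $g_q$ with $g_q(q)=i$, and you call this relocation ``mass-preserving''. It is not. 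By Lemma~\ref{lem:coordinate-change} and the root covariance~\eqref{eq:bubble_cov} of $\mu_\kappa$, the $q$-fiber of your parent measure, viewed as a law of curve-decorated quantum surfaces, equals $|g_q'(q)|^{2\Delta_\gamma}\,|g_q'(0)|^{\Delta_{\beta_0}+\frac{8-\kappa}{\kappa}}$ times the surface law of ${\rm LF}_\mathbb{H}^{(\gamma,i),(\frac{4}{\gamma}-\gamma,0)}\times\epmumu$, where $\beta_0=\frac4\gamma-\gamma$. Since $g_q(z)=\frac{-(\Im q)\,z}{(\Re q)\,z-|q|^2}$ (compare~\eqref{eq:fs}), one computes $|g_q'(q)|=\frac{1}{\Im q}$ and $|g_q'(0)|=\frac{\Im q}{|q|^2}$, and with $2\Delta_\gamma=2$ and $\Delta_{\beta_0}+\frac{8-\kappa}{\kappa}=1$ the factor is $\frac{1}{\Im q\,|q|^2}$, which is \emph{not} integrable over $q\in\mathbb H$. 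Hence your ``first piece'' is an infinite multiple of the left-hand side of Proposition~\ref{prop:weld6}; no finite constant can be extracted from this comparison, and the transfer of $C_1$ to the second piece collapses at the same point. (Note also that the ``bulk-point-free version of Proposition~\ref{prop:weld6}'' to which you apply the area-weighting on the welding side is not an available input; producing it is exactly the nontrivial intermediate step, namely~\eqref{eq:UEQD10}.)

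The missing idea is that the boundary root must be re-randomized simultaneously with the bulk point. The paper applies the full uniform embedding $\mathbf m_\mathbb{H}\ltimes$ to both sides of~\eqref{eq:forget-1}, sampling the automorphism through $(p,q)=(\mathfrak g(i),\mathfrak g(0))$ with Haar density $\frac{1}{\Im p\,|p-q|^2}\,d^2p\,dq$; the coordinate-change factors $|\mathfrak g'(i)|^{2\Delta_\gamma}|\mathfrak g'(0)|^{\Delta_{\beta_0}}$ together with the root covariance factor $|\mathfrak g'(0)|^{\frac{8-\kappa}{\kappa}}$ from~\eqref{eq:bubble_cov} then cancel the Haar density \emph{exactly} (this is~\eqref{eq:forget-4}), precisely because $2\Delta_\gamma=2$ and $\Delta_{\beta_0}+\frac{8-\kappa}{\kappa}=1$---the same exponent bookkeeping that fails to close in your $0$-fixing version, where only one of the two points moves. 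Only after this cancellation can one legitimately drop the bulk point to obtain~\eqref{eq:UEQD10} for ${\rm Weld}(\mathcal{M}_{2}^{\rm disk}(\gamma^2-2),{\rm QD}_{0,1})$, re-add a quantum-typical point in $D_\infty^q$, recognize the result as $\mathbf m_\mathbb{H}\ltimes\big[{\rm LF}_\mathbb{H}^{(\gamma,i),(\frac{4}{\gamma}-\gamma,0)}\times\epnu\big]$, and de-uniformize by the uniqueness principle, keeping the same constant $C_1$. You name these tools and correctly flag the normalization issue as the crux, but the proposal does not supply the resolving step, and the specific route you propose in its place would fail.
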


\begin{proof}
Applying the uniform embedding to both sides of \eqref{eq:forget-1} in Proposition~\ref{prop:weld6}, we get 
\begin{equation}
\label{eq:forget-2}
\mathfrak{m}_\mathbb{H} \ltimes \big[ {\rm LF}_\mathbb{H}^{(\gamma,i),(\frac{4}{\gamma}-\gamma,0)} \times  \epmumu \big] = C_1  \mathfrak{m}_\mathbb{H} \ltimes [{\rm Weld}(\mathcal{M}_{2}^{\rm disk}(\gamma^2-2), {\rm QD}_{1,1})].
\end{equation}
Recall the definition of push-forward of random curves before Definition~\ref{def:thick}. For $q \in \mathbb{R} = \partial \mathbb{H}$, define $\mu_\kappa^q$ as the ${\rm SLE}_\kappa$ bubble measure rooted at $q$, i.e., the push-forward measure $f_* \mu_\kappa$ of $\mu_\kappa$ under the map $f(z) = z+q$. We use $\overline{\mu}_\kappa^q$ be denote the law of the outer boundary of a loop sampled from $\mu_\kappa^q$. Similar to $D_{\ep \eta}$ and $D_\infty$, we define the domains $D_{\ep \eta}^q$ and $D_\infty^q$ for a non-simple loop $\eta$ rooted at $q$. We claim that:
\begin{equation}
\label{eq:forget-4}
\begin{aligned}
\mathbf{m}_\mathbb{H} \ltimes  \big[ {\rm LF}_\mathbb{H}^{(\gamma,i),(\frac{4}{\gamma}-\gamma,0)} \times \epmumu \big]= \mu_\phi(d^2 p) \mathbbm{1}\{ p \in D_{\ep \eta}^q \} \cdot  {\rm LF}_\mathbb{H}^{(\frac{4}{\gamma}-\gamma,q)}(d \phi) \times  \epmu^q(d\eta) \cdot dq .
\end{aligned}
\end{equation}
The right-hand side corresponds to the measure obtained by first sampling $q$ from the Lebesgue measure on $\mathbb{R}$, then sampling $(\phi,\eta)$ from ${\rm LF}_\mathbb{H}^{(\frac{4}{\gamma}-\gamma,q)}(d \phi) \times  \epmu^q(d\eta)$, and finally sampling $p$ from $\mu_\phi(d^2 p) \mathbbm{1}\{ p \in D_{\ep \eta}^q \}$. 
To prove~\eqref{eq:forget-4}, we must analyze $\mathfrak{m}_\mathbb{H} \ltimes \big[ {\rm LF}_\mathbb{H}^{(\gamma,i),(\frac{4}{\gamma}-\gamma,0)} \times  \epmumu \big]$. Viewing ${\rm LF}_\mathbb{H}^{(\gamma,i),(\frac{4}{\gamma}-\gamma,0)} \times  \epmumu$ as the law of an embedded curve-decorated surface, we apply a conformal map $\mathfrak g$ sampled independently from $\mathfrak{m}_\mathbb{H}$. One way to sample $\mathfrak g$ is the following. Sample $(p,q)$ according to $\frac{1}{{\rm Im} p \cdot |p-q|^2} d^2p dq$ and let $\mathfrak g$ be the conformal automorphism of $\bbH$ such that $(p,q) = (\mathfrak g(i),\mathfrak g(0))$; see e.g.\ Lemmas 5.2 and 5.3 of \cite{Wu23} for a proof of this fact.
Now we have
\begin{equation*}
\mathbf{m}_\mathbb{H} \ltimes  \big[ {\rm LF}_\mathbb{H}^{(\gamma,i),(\frac{4}{\gamma}-\gamma,0)} \times \epmumu \big] =   \mathfrak g_* {\rm LF}_\mathbb{H}^{(\gamma,i),(\frac{4}{\gamma}-\gamma,0)} \times    \mathfrak g_*\epmumu \cdot \frac{1}{{\rm Im} p \cdot |p-q|^2} d^2p dq .  
\end{equation*}
On the right-hand side, both $\mathfrak g_* {\rm LF}_\mathbb{H}^{(\gamma,i),(\frac{4}{\gamma}-\gamma,0)}(d\phi)$ and   $\mathfrak g_*\epmumu(d\eta)$ depend on $(p,q)$ since $\mathfrak g$ is sampled accordingly. In particular,  $|\mathfrak g'(i)| = {\rm Im} p $ and $|\mathfrak g'(0)| = \frac{|p-q|^2}{{\rm Im} p}$. By \cite{MR2979861} and \cite[Theorem 3.7]{Z22}, $\mu_\kappa$ is conformally covariant, namely for any conformal map $f$ from $\mathbb{H}$ onto $\mathbb{H}$,
\begin{align}\label{eq:bubble_cov}
f_*\mu_\kappa=f'(0)^{(8-\kappa)/\kappa}\mu_\kappa.
\end{align}
Combined with Lemma~\ref{lem:coordinate-change}, we have
\begin{equation}
\label{eq:covariance}
\mathfrak g_* {\rm LF}_\mathbb{H}^{(\gamma,i),(\frac{4}{\gamma}-\gamma,0)} = |\mathfrak g'(i)|^{2 \Delta_\gamma} |\mathfrak g'(0)|^{\Delta_{\frac{4}{\gamma} - \gamma}}   {\rm LF}_\mathbb{H}^{(\gamma,p),(\frac{4}{\gamma}-\gamma,q)}  \quad \mbox{and} \quad \mathfrak g_* \mu_\kappa = |\mathfrak g'(0)|^{\frac{8-\kappa}{\kappa}} \mu_\kappa^{q}.
\end{equation}
Since  $\mumu = \mu_\kappa \mathbbm{1}\{ i \in D_{\ep \eta} \}$, we see that $\mathbf{m}_\mathbb{H} \ltimes  \big[ {\rm LF}_\mathbb{H}^{(\gamma,i),(\frac{4}{\gamma}-\gamma,0)} \times \epmumu \big]$ equals
\begin{equation*}
|\mathfrak g'(i)|^{2 \Delta_\gamma} |\mathfrak g'(0)|^{\Delta_{\frac{4}{\gamma} - \gamma}}    {\rm LF}_\mathbb{H}^{(\gamma,p),(\frac{4}{\gamma}-\gamma,q)} \times   |\mathfrak g'(0)|^{\frac{8-\kappa}{\kappa}} \epmu^q(d\eta) \mathbbm{1}\{ p \in D_{\ep \eta}^q \}  \cdot \frac{1}{{\rm Im} p \cdot |p-q|^2} d^2p dq,
\end{equation*}
which further equals $ {\rm LF}_\mathbb{H}^{(\gamma,p),(\frac{4}{\gamma}-\gamma,q)} \times  \epmu^q(d\eta) \mathbbm{1}\{ p \in D_{\ep \eta}^q \}  \cdot  d^2p dq$ since 
$|\mathfrak g'(i)| = {\rm Im} p \mbox{ and }|\mathfrak g'(0)| = \frac{|p-q|^2}{{\rm Im} p}$.
By Lemma~\ref{lem:typ-pt},  we have ${\rm LF}_\mathbb{H}^{(\gamma,p),(\frac{4}{\gamma}-\gamma,q)} \cdot  d^2p=\mu_\phi(d^2 p)  {\rm LF}_\mathbb{H}^{(\frac{4}{\gamma}-\gamma,q)}(d \phi)$, which yields~\eqref{eq:forget-4}.

Combining \eqref{eq:forget-2} and~\eqref{eq:forget-4}, we obtain that
\begin{align}\label{eq:UEQD11}
&\mu_\phi(d^2 p) \mathbbm{1}\{ p \in D_{\ep \eta}^q \} \cdot  {\rm LF}_\mathbb{H}^{(\frac{4}{\gamma}-\gamma,q)}(d \phi) \times  \epmu^q(d\eta) \cdot dq = C_1 \mathbf{m}_\mathbb{H} \ltimes \left[ {\rm Weld}(\mathcal{M}_{2}^{\rm disk}(\gamma^2-2), {\rm QD}_{1,1})  \right].
\end{align}
This equation is equivalent to 
\begin{equation}\label{eq:UEQD10}
 {\rm LF}_\mathbb{H}^{(\frac{4}{\gamma}-\gamma,q)}(d \phi) \times  \epmu^q(d\eta) \cdot dq = C_1    \mathbf{m}_\mathbb{H} \ltimes \left[  {\rm Weld}(\mathcal{M}_{2}^{\rm disk}(\gamma^2-2), {\rm QD}_{0,1})  \right]
\end{equation}
as the left-hand side of \eqref{eq:UEQD11} can be obtained from the left-hand side of~\eqref{eq:UEQD10} by adding a bulk marked point according to the quantum area measure inside the domain $D_{\ep \eta}^q$. Now on both sides of~\eqref{eq:UEQD10}, adding a point to the domain $D_\infty^q$ sampled from its quantum area measure yields that
\begin{align*}
& \mu_\phi(d^2 p) \mathbbm{1}\{ p \in D_{\infty}^q \} \cdot  {\rm LF}_\mathbb{H}^{(\frac{4}{\gamma}-\gamma,q)}(d \phi) \times  \epmu^q(d\eta) \cdot dq = C_1    \mathbf{m}_\mathbb{H} \ltimes \left[  {\rm Weld}(\mathcal{M}_{2,\bullet}^{\rm disk}(\gamma^2-2), {\rm QD}_{0,1})  \right].
\end{align*}
Similar to \eqref{eq:forget-4}, we can show that the left-hand side above equals $\mathbf{m}_\mathbb{H} \ltimes  [{\rm LF}_\mathbb{H}^{(\gamma,i),(\frac{4}{\gamma}-\gamma,0)} \times \epnu ]$ hence 
$\mathbf{m}_\mathbb{H} \ltimes  [{\rm LF}_\mathbb{H}^{(\gamma,i),(\frac{4}{\gamma}-\gamma,0)} \times \epnu ]=C_1    \mathbf{m}_\mathbb{H} \ltimes \left[  {\rm Weld}(\mathcal{M}_{2,\bullet}^{\rm disk}(\gamma^2-2), {\rm QD}_{0,1})  \right]$.
Disintegrating with respect to the Haar measure $\mathbf{m}_\mathbb{H}$ concludes the proof.
\end{proof}

\subsection{Re-welding of quantum surfaces in Proposition~\ref{prop:weld3}}
\label{sec:weld-lcft}

In this section, we decompose and  rearrange the conformal welding picture  from Proposition~\ref{prop:weld3}  to prove Proposition~\ref{prop:weld5}, which will produce the following measure  $\nu_\kappa^{\gamma,\beta_0}$ on simple curves; see Figure~\ref{fig:weld5} (left).

\begin{definition}\label{def:simple}
Let $\beta_0 = \frac{4}{\gamma}-\gamma$.
Let $\eta$ be a sample from  $\nu_\kappa$ and $D_i$ be the connected component of $\bbH\setminus \eta$ containing $i\in \bbH$. Let  $a$ and $b$ represent the two endpoints of $\partial D_i \cap \mathbb{R}$ in such a way that $a,0,b$ are in counterclockwise order. Let $\nu_\kappa^{\gamma,\beta_0}$ be the law of the simple curve with endpoints $a$ and $b$ that separates the domains $D_i$ and $\mathbb{H} \backslash \overline{D}_i$.
\end{definition}

We use the notation $\nu_\kappa^{\gamma,\beta_0}$ because in Section~\ref{sec:reweight} we will extend to $\nu_\kappa^{\alpha,\beta}$ for general $\beta$. This extension captures the joint moment of  $|\psi'(i)|, |\varphi'(1)|$ mentioned at the beginning of Section~\ref{sec:welding}, which is our target. Proposition~\ref{prop:weld5} concerns the curve decorated quantum surface ${\rm LF}_\mathbb{H}^{(\gamma,i),(\beta_0,0)} \times \nu_\kappa^{\gamma,\beta_0}$. This topological configuration requires us to decompose $\mathcal{M}_{2, \bullet}^{\rm disk}(\gamma^2-2)$ into three pieces according to Lemma~\ref{lem:thin-thick}, then glue them to a sample from $\rm QD_{0,1}$. We prove in Lemma~\ref{lem:weld-decompose} that up to an explicit constant, the end result is equivalent to the conformal welding of four measures: ${\rm Weld}(\mathcal{M}_{2}^{\rm disk}(\gamma^2-2), \mathcal{M}_{2, \bullet}^{\rm disk}(2), \mathcal{M}_{2}^{\rm disk}(\gamma^2-2), \widetilde {\rm QD}_{0,3})$, as depicted in Figure~\ref{fig:weld-decompose}.   This measure is obtained by gluing the three boundary arcs of a sample from $\widetilde {\rm QD}_{0,3}$ with the right boundary arcs of samples from $\mathcal{M}_{2}^{\rm disk}(\gamma^2-2)$, $\mathcal{M}_{2, \bullet}^{\rm disk}(2)$, and $\mathcal{M}_{2}^{\rm disk}(\gamma^2-2)$. This welding process can be thought of as first sampling the four pieces from their product measure and then restrict to the event that their shared boundary lengths agree. This can be achieved rigorously by disintegrating the shared boundary lengths and gluing them one by one sequentially as described in detail at the beginning of Section~\ref{sec:bubble-weld}. The order of gluing does not affect the final outcome. 

\begin{lemma}
\label{lem:weld-decompose}
    When viewed as laws of quantum surfaces decorated with curves and marked points,
    $$
    {\rm Weld}(\mathcal{M}_{2, \bullet}^{\rm disk}(\gamma^2-2), {\rm QD}_{0,1}) = \big(\frac{4}{\gamma^2}-1\big)^2 {\rm Weld}(\mathcal{M}_{2}^{\rm disk}(\gamma^2-2), \mathcal{M}_{2, \bullet}^{\rm disk}(2), \mathcal{M}_{2}^{\rm disk}(\gamma^2-2), \widetilde {\rm QD}_{0,3})\,. 
    $$
\begin{figure}[H]
\centering
\includegraphics[width=8cm]{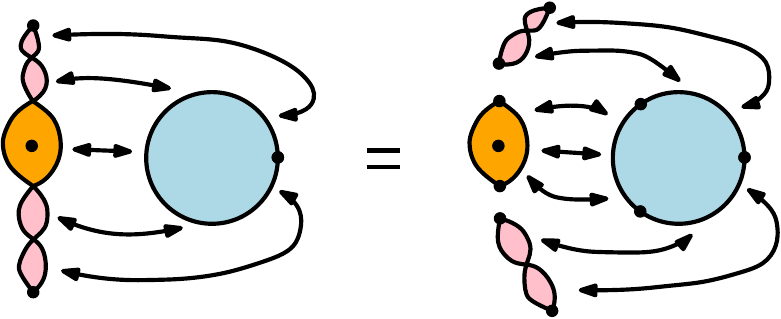}
\caption{Illustration of Lemma~\ref{lem:weld-decompose}. On the left-hand side, the union of the orange and pink regions corresponds to $\mathcal{M}_{2, \bullet}^{\rm disk}(\gamma^2-2)$ and the blue region corresponds to ${\rm QD}_{0,1}$. On the right-hand side, the pink regions correspond to $\mathcal{M}_{2}^{\rm disk}(\gamma^2-2)$, the orange region corresponds to $\mathcal{M}_{2, \bullet}^{\rm disk}(2)$, and the blue region corresponds to $\widetilde {\rm QD}_{0,3}$.}
\label{fig:weld-decompose}
\end{figure}
\end{lemma}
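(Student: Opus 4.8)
The plan is to deduce the identity from Lemma~\ref{lem:thin-thick} together with a re-association of the conformal welding; essentially all of the content sits in the boundary-length bookkeeping.

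First I would apply Lemma~\ref{lem:thin-thick}. Since $\gamma\in(\sqrt2,2)$ we have $\gamma^2-2\in(0,\gamma^2/2)$, $\gamma^2-(\gamma^2-2)=2$ and $1-\frac{2(\gamma^2-2)}{\gamma^2}=\frac4{\gamma^2}-1$, so the lemma with $W=\gamma^2-2$ gives, as laws of quantum surfaces,
\[
\mathcal{M}_{2,\bullet}^{\rm disk}(\gamma^2-2)=\Big(\frac4{\gamma^2}-1\Big)^{2}\,\mathcal{M}_{2}^{\rm disk}(\gamma^2-2)\times\mathcal{M}_{2,\bullet}^{\rm disk}(2)\times\mathcal{M}_{2}^{\rm disk}(\gamma^2-2),
\]
the right side being the spine-concatenation of three independent samples $\mathcal{D}_1,\mathcal{D}_2,\mathcal{D}_3$, with $\mathcal{D}_2$ carrying the bulk point. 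Since conformal welding depends on its arguments only through the glued boundary arcs and their quantum lengths, and the right boundary arc of $\mathcal{D}_1\oplus\mathcal{D}_2\oplus\mathcal{D}_3$ is the concatenation of the right boundary arcs of $\mathcal{D}_1,\mathcal{D}_2,\mathcal{D}_3$ (in that order), we get from \eqref{eq:conf-weld}
\[
{\rm Weld}(\mathcal{M}_{2,\bullet}^{\rm disk}(\gamma^2-2),{\rm QD}_{0,1})=\Big(\frac4{\gamma^2}-1\Big)^{2}\,{\rm Weld}(\mathcal{D}_1\oplus\mathcal{D}_2\oplus\mathcal{D}_3,{\rm QD}_{0,1}),
\]
and from now on I keep the two spine junction points $p_1$ (between $\mathcal{D}_1,\mathcal{D}_2$) and $p_2$ (between $\mathcal{D}_2,\mathcal{D}_3$) as extra marked points on the right boundary.

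Next I would re-associate this welding. Welding ${\rm QD}_{0,1}$ to the concatenated right boundary identifies its root $w_1$ with the common image of the two outer tips of the chain, and identifies $p_1,p_2$ with two boundary points $q_1,q_2$ of ${\rm QD}_{0,1}$ at quantum lengths $L_1$ and $L_1+L_2$ from $w_1$, where $L_i$ is the right boundary length of $\mathcal{D}_i$. Disintegrating over $(L_1,L_2,L_3)$ and using a Fubini-type change of variables (in the sense of \eqref{eq:def-disintegration} and \eqref{eq:conf-weld}), one finds
\[
{\rm Weld}(\mathcal{D}_1\oplus\mathcal{D}_2\oplus\mathcal{D}_3,{\rm QD}_{0,1})=\int_{L_1,L_2,L_3>0}{\rm Glue}\big(\mathcal{D}_1(L_1),\mathcal{D}_2(L_2),\mathcal{D}_3(L_3),\,{\rm QD}_{0,1}(L_1+L_2+L_3)\big)\,dL_1\,dL_2\,dL_3,
\]
where $\mathcal{D}_i(\cdot)$ and ${\rm QD}_{0,1}(\cdot)$ are the disintegrations over the relevant boundary lengths and ${\rm Glue}$ glues the right boundary of $\mathcal{D}_i$ to the $i$-th of the three boundary arcs of ${\rm QD}_{0,1}(\ell)$ cut out by $w_1,q_1,q_2$. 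On the other hand, unwinding Definition~\ref{def:QD} shows that adding two quantum-typical boundary points to ${\rm QD}_{0,1}$ produces ${\rm QD}_{0,3}$ (the $\nu_h(\partial\mathcal S)^2$ reweighting exactly cancels the two $\nu_h^{\#}$-samples), and since along the welding the three points occur in a deterministic cyclic order, the outcome is $\widetilde{\rm QD}_{0,3}$ with constant $1$ (after relabelling the two added points if necessary); equivalently, the disintegration of $\widetilde{\rm QD}_{0,3}$ over the lengths of its three boundary arcs is precisely ${\rm QD}_{0,1}(L_1+L_2+L_3)$ with its two further marked points placed at quantum-length offsets $L_1$ and $L_1+L_2$ from $w_1$. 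Substituting this into the integral above identifies it with ${\rm Weld}(\mathcal{M}_{2}^{\rm disk}(\gamma^2-2),\mathcal{M}_{2,\bullet}^{\rm disk}(2),\mathcal{M}_{2}^{\rm disk}(\gamma^2-2),\widetilde{\rm QD}_{0,3})$, which combined with the previous display is the assertion.

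The conceptual steps are short; the work, and the only real obstacle, is the bookkeeping in the last paragraph: (i) making the disintegration/change-of-variables manipulation rigorous in the sense of \eqref{eq:def-disintegration} and \eqref{eq:conf-weld} while correctly tracking which arcs are welded and the induced labelling of marked points on both sides; (ii) checking that the constant relating ``${\rm QD}_{0,1}$ with two extra quantum-typical boundary points in a fixed cyclic order'' and $\widetilde{\rm QD}_{0,3}$ is exactly $1$, so that no factor beyond $(\frac4{\gamma^2}-1)^2$ appears, and that the counterclockwise convention in Definition~\ref{def:QD} matches the orientation in which $\mathcal{D}_1,\mathcal{D}_2,\mathcal{D}_3$ are concatenated; and (iii) justifying that remembering the spine points $p_1,p_2$ in the decomposition from Lemma~\ref{lem:thin-thick} is legitimate and that they sit, as claimed, at the junctions immediately before and after the bead $\mathcal{M}_{2,\bullet}^{\rm disk}(2)$ containing the bulk point.
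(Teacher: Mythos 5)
Your proposal is correct and follows essentially the same route as the paper: apply Lemma~\ref{lem:thin-thick} with $W=\gamma^2-2$ to get the factor $(\frac{4}{\gamma^2}-1)^2$ and the three-piece concatenation, disintegrate over the three right-boundary lengths, and identify $\widetilde{\rm QD}_{0,3}(\ell_1,\ell_2,\ell_3)$ with ${\rm QD}_{0,1}(\ell_1+\ell_2+\ell_3)$ with two extra boundary marked points placed so the arcs have lengths $\ell_1,\ell_2,\ell_3$ counterclockwise, then conclude by the definition of conformal welding. The bookkeeping points you flag, including that the two quantum-typical points turn ${\rm QD}_{0,1}$ into $\widetilde{\rm QD}_{0,3}$ with constant $1$, are exactly what the paper's (shorter) proof relies on via Definition~\ref{def:QD}.
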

\begin{proof}
Using Lemma~\ref{lem:thin-thick} with $W=\gamma^2-2$, we have that ${\rm Weld}(\mathcal{M}_{2, \bullet}^{\rm disk}(\gamma^2-2), {\rm QD}_{0,1}) $ equals
\begin{equation*}
(\frac{4}{\gamma^2}-1)^2\int_{\mathbb{R}_+^3} {\rm Weld}(\mathcal{M}_{2}^{\rm disk}(\gamma^2-2;\ell_1) \times \mathcal{M}_{2, \bullet}^{\rm disk}(2;\ell_2) \times \mathcal{M}_{2}^{\rm disk}(\gamma^2-2;\ell_3), {\rm QD}_{0,1}(\ell_1 + \ell_2 + \ell_3)) d\ell_1 d\ell_2 d\ell_3\,.
\end{equation*}
For $\ell_1,\ell_2,\ell_3>0$, by Definition~\ref{def:QD}, the measure $\widetilde {\rm QD}_{0,3}(\ell_1, \ell_2, \ell_3)$ can be obtained from ${\rm QD}_{0,1}(\ell_1 + \ell_2 + \ell_3)$ by adding two marked points such that the three boundary arcs have quantum length $\ell_1,\ell_2,\ell_3$ counterclockwise. Combining this with the above identity and then applying the definition of conformal welding yields the desired lemma. \qedhere
\end{proof}
The following lemma describes the Liouville field for ${\rm Weld}(\mathcal{M}_2^{\rm disk}(\gamma^2-2), \widetilde {\rm QD}_{0,3}, \mathcal{M}_2^{\rm disk}(\gamma^2-2))$, where the welded surface is obtained by gluing the right boundary arcs of two samples from $\mathcal{M}_2^{\rm disk}(\gamma^2-2)$ with two boundary arcs of a sample from $\widetilde {\rm QD}_{0,3}$. From the proof we also get the precise law of  interfaces $\hat m$ but we will not need it.
\begin{lemma}
\label{lem:weld4}
There exists a constant $C_2 \in (0,\infty)$ such that when viewed as laws of quantum surfaces decorated with curves and marked points, we have:
    $$
   {\rm LF}_\mathbb{H}^{(\frac{2}{\gamma},0),(\frac{4}{\gamma}-\gamma,1), (\frac{2}{\gamma},\infty)} \times \widehat{m} = C_2 {\rm Weld}(\mathcal{M}_2^{\rm disk}(\gamma^2-2), \widetilde {\rm QD}_{0,3}, \mathcal{M}_2^{\rm disk}(\gamma^2-2))\,.
    $$
    Here, $\widehat{m}$ is a probability measure on two non-intersecting curves that connects $0, 1$, and $1, \infty$, respectively. The domains below the two curves correspond to $\mathcal{M}_2^{\rm disk}(\gamma^2-2)$, respectively, and the domain sandwiched between the two curves with $0,1,\infty$ on its boundary corresponds to $\widetilde {\rm QD}_{0,3}$; see Figure~\ref{fig:weld4}.
    \begin{figure}[H]
\centering
\includegraphics[width=8cm]{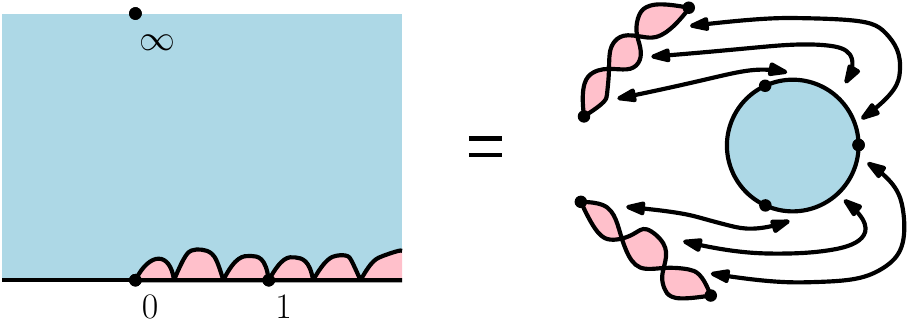}
\caption{Illustration of Lemma~\ref{lem:weld4}. The left-hand side shows the Liouville field with three boundary insertions, and the two random curves from $0$ to $1$, and $1$ to $\infty$, respectively. On the right-hand side, the two pink region corresponds to $\mathcal{M}_2^{\rm disk}(\gamma^2-2)$, and the blue region corresponds to $\widetilde {\rm QD}_{0,3}$.}
\label{fig:weld4}
\end{figure}
\end{lemma}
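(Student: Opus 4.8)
The plan is to derive this identity from the basic conformal welding of thick quantum disks established in the literature (the mating-of-trees welding of \cite{DMS14}, in the form used in \cite{AHS21,ASY22}), together with the identification $\widetilde{\rm QD}_{0,3} = C\,{\rm QT}(2,2,2)$ from Lemma~\ref{lem:QT} and the Liouville-field description of quantum triangles from Definition~\ref{def:QT}. The key structural fact I would invoke is that welding a weight-$(\gamma^2-2)$ disk onto a boundary arc of a thick disk produces another thick disk whose weight is the sum, and whose interface is a chordal ${\rm SLE}_\kappa(\rho)$-type curve with $\rho = -2 + (\gamma^2-2) = \gamma^2-4$; this is exactly the mechanism behind \cite[Theorem~2.2]{AHS20} and its generalizations. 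So the strategy has two layers: first weld one copy of $\mathcal{M}_2^{\rm disk}(\gamma^2-2)$ onto one marked boundary arc of $\widetilde{\rm QD}_{0,3}$, recognize the result as a three-pointed thick disk of appropriate weights decorated by a curve, then weld the second copy of $\mathcal{M}_2^{\rm disk}(\gamma^2-2)$ onto a second arc and recognize the final object.

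Concretely, I would proceed as follows. First, rewrite $\widetilde{\rm QD}_{0,3}$ as a constant times ${\rm QT}(2,2,2)$ via Lemma~\ref{lem:QT}, so that by Definition~\ref{def:QT} its field is (a constant times) ${\rm LF}_\mathbb{H}^{(\gamma,0),(\gamma,1),(\gamma,\infty)}$. Second, apply the welding of two thick quantum disks along a boundary arc: gluing $\mathcal{M}_2^{\rm disk}(\gamma^2-2)$ to the arc of ${\rm QT}(2,2,2)$ between the marked points $0$ and $\infty$ gives, up to an explicit constant, a three-pointed quantum surface whose underlying field is ${\rm LF}_\mathbb{H}^{(\beta_1,0),(\gamma,1),(\beta_3,\infty)}$ with the insertion weights shifted according to the weld: the welding rule converts a weight-$2$ marked point that receives a weight-$(\gamma^2-2)$ arm into a marked point of weight $\gamma^2$, equivalently shifts the Liouville insertion from $\gamma$ to $\beta = \gamma + \frac{2-\gamma^2}{\gamma} = \frac{2}{\gamma}$. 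After welding on both the $(0,\infty)$-side copies of $\mathcal{M}_2^{\rm disk}(\gamma^2-2)$, the two insertions at $0$ and $\infty$ become $\frac{2}{\gamma}$ each while the insertion at $1$ remains $\gamma$; but we want $\frac{4}{\gamma}-\gamma$ at the point $1$, so I must instead weld so that the weight-$(\gamma^2-2)$ arms meet at the boundary point $1$ as well, i.e. each weld contributes its endpoint at $1$. The correct bookkeeping: gluing $\mathcal{M}_2^{\rm disk}(\gamma^2-2)$ along the arc $(0,1)$ and another along the arc $(1,\infty)$ of $\widetilde{\rm QD}_{0,3}$ turns the weight-$2$ point at $1$ (which acquires two weight-$(\gamma^2-2)$ arms) into a point of weight $2\gamma^2-2$, i.e. shifts the insertion at $1$ from $\gamma$ to $\gamma + \frac{2-(2\gamma^2-2)}{\gamma} = \frac{4}{\gamma}-\gamma = \beta_0$, and turns the weight-$2$ points at $0$ and $\infty$ (each acquiring one weight-$(\gamma^2-2)$ arm) into weight-$\gamma^2$ points, shifting their insertions from $\gamma$ to $\frac{2}{\gamma}$. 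This yields precisely ${\rm LF}_\mathbb{H}^{(\frac{2}{\gamma},0),(\frac{4}{\gamma}-\gamma,1),(\frac{2}{\gamma},\infty)}$, with the two interfaces being ${\rm SLE}_\kappa(\gamma^2-4;\cdot)$-type curves from $0$ to $1$ and from $1$ to $\infty$, whose joint law is the claimed probability measure $\widehat m$; the product of all welding constants and the constant from Lemma~\ref{lem:QT} is a finite positive number $C_2$.

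The main obstacle I expect is purely bookkeeping of the multiplicative constants and of the precise form of the interface law: the basic welding theorems in \cite{AHS20,AHS21,ASY22} are typically stated up to an unspecified constant, whereas here (as the paper stresses for its later computation of $C_1$) one may want to keep track, and one must also verify that the two curves produced by the two successive welds are conditionally independent given the boundary lengths and that their marginal laws are the stated ${\rm SLE}$-variants — this requires care about which arcs are glued in which order and an appeal to the fact (used repeatedly in this section) that the order of gluing does not affect the outcome. A secondary subtlety is checking that the topological configuration in Figure~\ref{fig:weld4} — two non-crossing curves sharing the endpoint $1$, with a ``sandwiched'' triangle region — matches the one produced by the iterated weld, in particular that the weight-$(\gamma^2-2)$ arms attach on the correct (outer) sides so that the central region is the $\widetilde{\rm QD}_{0,3}$ piece. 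Once the configuration and independence are in place, the identity of the fields follows from the coordinate-change formula (Lemma~\ref{lem:coordinate-change}) and Lemma~\ref{lem:typ-pt} exactly as in the proof of Proposition~\ref{prop:weld3}, and I would not belabor those routine steps.
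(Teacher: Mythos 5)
Your proposal follows essentially the same route as the paper: identify $\widetilde{\rm QD}_{0,3}$ with ${\rm QT}(2,2,2)$ via Lemma~\ref{lem:QT}, apply the quantum-triangle welding theorem of \cite{ASY22} twice (this, rather than the disk--disk welding of \cite{AHS20}, is the relevant input), and your weight/insertion bookkeeping — the points $0,\infty$ each gaining one weight-$(\gamma^2-2)$ arm and the point $1$ gaining two, so that the insertions become $\frac{2}{\gamma},\frac{4}{\gamma}-\gamma,\frac{2}{\gamma}$ — matches the paper's application with $(W,W_1,W_2,W_3)=(\gamma^2-2,2,2,2)$ and $(\gamma^2-2,\gamma^2,2,\gamma^2)$. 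Since the lemma only asserts the existence of some $C_2\in(0,\infty)$ and does not require the precise law of $\widehat m$, the constant-tracking and interface-law issues you flag are not actual obstacles here.
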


\begin{proof}
Lemma~\ref{lem:weld4} is a direct consequence of \cite[Theorem 1.1]{ASY22}. Recall  the definition of quantum triangle ${\rm QT}(W_1, W_2, W_3)$ from Definition~\ref{def:QT}.
For $W,W_1,W_2,W_3>0$ with $W_1 + 2 = W_2 + W_3$ and $\frac{\gamma^2}{2} \not \in  \{W_1,W_2,W_3,W+W_1,W+W_2 \}$, \cite[Theorem 1.1]{ASY22} asserts the following. There exists a constant $C(W,W_1,W_2,W_3) \in (0,\infty)$ such that
    $$
    {\rm QT}(W + W_1, W + W_2,W_3) \times m = C(W,W_1,W_2,W_3) {\rm Weld}(\mathcal{M}_2^{\rm disk}(W), {\rm QT}(W_1,W_2,W_3)).
    $$
    Here, $m$ is an explicit SLE type probability measure on a random curve from the $W+W_2$-weighted point to the $W+W_1$-weighted point. The welded surface is obtained by gluing the right boundary arc of a sample from $\mathcal{M}_2^{\rm disk}(W)$ with the boundary arc between the $W_1$- and $W_2$-weighted points of a sample from ${\rm QT}(W_1,W_2,W_3)$. In this proof we allow the constant $C$ to vary from place to place.

    Applying \cite[Theorem 1.1]{ASY22} twice, with $(W, W_1, W_2, W_3 ) = (\gamma^2-2,2,2,2)$ and $(\gamma^2-2, \gamma^2, 2, \gamma^2)$ respectively, we see that the law of the quantum surface for  ${\rm Weld}(\mathcal{M}_2^{\rm disk}(\gamma^2-2), {\rm QT}(2,2,2), \mathcal{M}_2^{\rm disk}(\gamma^2-2))$ equals $C{\rm QT}(\gamma^2, 2\gamma^2-2, \gamma^2)$.
 Recall from Definition~\ref{def:QT} and Lemma~\ref{lem:QT} that \begin{equation}
    \label{eq:weld5-12}
    {\rm LF}_\mathbb{H}^{(\frac{2}{\gamma},0),(\frac{4}{\gamma}-\gamma,1), (\frac{2}{\gamma},\infty)} = C {\rm QT}(\gamma^2, 2\gamma^2-2, \gamma^2) \quad \mbox{and} \quad \widetilde{\rm QD}_{0,3} = C {\rm QT}(2,2,2)\,.\end{equation}
Putting these together concludes the proof.
\end{proof}

We are now ready to describe the conformal welding for $ {\rm LF}_\mathbb{H}^{(\gamma,i),(\beta_0,0)} \times \nu_\kappa^{\gamma,\beta_0}$.
Define the conformal welding ${\rm Weld}({\rm LF}_\mathbb{H}^{(\frac{2}{\gamma},0), (\beta_0,1), (\frac{2}{\gamma} ,\infty)},\mathcal{M}_{1,2}^{\rm disk}(\gamma))$ as the law of the quantum surface obtained by gluing one boundary arc of a sample from $\mathcal{M}_{1,2}^{\rm disk}(\gamma)$ with the boundary arc between two $\frac{2}{\gamma}$-singularity points of a sample from ${\rm LF}_\mathbb{H}^{(\frac{2}{\gamma},0), (\beta_0,1), (\frac{2}{\gamma} ,\infty)}$. See the right hand side of Figure~\ref{fig:weld5} for an illustration.

\begin{proposition}
\label{prop:weld5}
    When viewed as laws of quantum surfaces decorated with a curve and marked points,$$
    {\rm LF}_\mathbb{H}^{(\gamma,i),(\beta_0,0)} \times \nu_\kappa^{\gamma,\beta_0} = \frac{2C_1}{\gamma C_2} {\rm Weld}({\rm LF}_\mathbb{H}^{(\frac{2}{\gamma},0), (\beta_0,1), (\frac{2}{\gamma} ,\infty)}, \mathcal{M}_{1,2}^{\rm disk}(\gamma))
    $$
    with $C_1$ from Proposition~\ref{prop:weld6} and $C_2$ from Lemma~\ref{lem:weld4}. Here $(D_i, i, b, a)$ corresponds to $\mathcal{M}_{1,2}^{\rm disk}(\gamma)$, and $(\mathbb{H} \backslash \overline{D}_i, a, 0, b)$ corresponds to ${\rm LF}_\mathbb{H}^{(\frac{2}{\gamma},0), (\beta_0,1), (\frac{2}{\gamma} ,\infty)}$, where $D_i, a,b$ are as in Definition~\ref{def:simple}. See Figure~\ref{fig:weld5}.
    \begin{figure}[H]
\centering
\includegraphics[width=8cm]{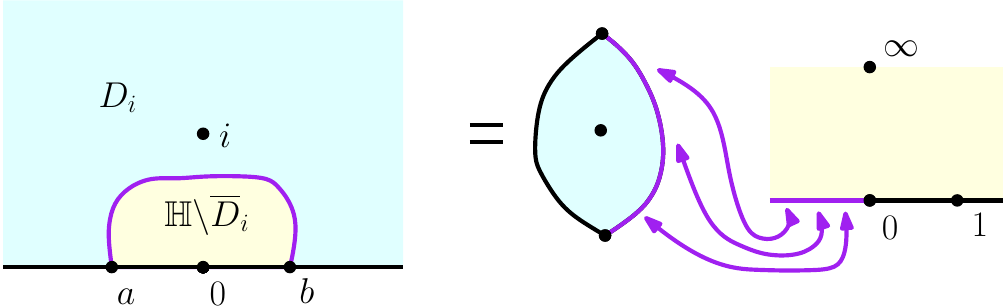}
\caption{Illustration of Proposition~\ref{prop:weld5}. The purple curve on the left-hand side corresponds to a sample from $\nu_\kappa^{\gamma,\beta_0}$. On the right-hand side, the light cyan region corresponds to $\mathcal{M}_{1,2}^{\rm disk}(\gamma)$, and the light yellow region corresponds to ${\rm LF}_\mathbb{H}^{(\frac{2}{\gamma},0), (\beta_0,1), (\frac{2}{\gamma} ,\infty)}$.}
\label{fig:weld5}
\end{figure}
\end{proposition}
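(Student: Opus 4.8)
The plan is to assemble Proposition~\ref{prop:weld5} by chaining together the three welding results already available --- Proposition~\ref{prop:weld3}, Lemma~\ref{lem:weld-decompose}, and Lemma~\ref{lem:weld4} --- together with the identification of $\mathcal{M}^{\rm disk}_{2,\bullet}(2)$ with $\mathcal{M}^{\rm disk}_{1,2}(\gamma)$ from Lemma~\ref{lem:embedding-2}. First I would record, by Proposition~\ref{prop:weld3}, that ${\rm LF}_\mathbb{H}^{(\gamma,i),(\beta_0,0)} \times \epnu = C_1 {\rm Weld}(\mathcal{M}_{2,\bullet}^{\rm disk}(\gamma^2-2), {\rm QD}_{0,1})$, and then substitute Lemma~\ref{lem:weld-decompose} to rewrite the right-hand side, up to the factor $(\frac{4}{\gamma^2}-1)^2$, as
\begin{equation*}
C_1\big(\tfrac{4}{\gamma^2}-1\big)^2 {\rm Weld}(\mathcal{M}_{2}^{\rm disk}(\gamma^2-2), \mathcal{M}_{2, \bullet}^{\rm disk}(2), \mathcal{M}_{2}^{\rm disk}(\gamma^2-2), \widetilde {\rm QD}_{0,3}).
\end{equation*}
Next I would group the pieces $\mathcal{M}_{2}^{\rm disk}(\gamma^2-2)$, $\widetilde {\rm QD}_{0,3}$, $\mathcal{M}_{2}^{\rm disk}(\gamma^2-2)$ and apply Lemma~\ref{lem:weld4}: this subcollection, welded along the two arcs of the central $\widetilde{\rm QD}_{0,3}$ bordering the two thin disks, has law $C_2^{-1} {\rm LF}_\mathbb{H}^{(\frac{2}{\gamma},0),(\frac{4}{\gamma}-\gamma,1),(\frac{2}{\gamma},\infty)} \times \widehat m$, where $\widehat m$ records the two interface curves. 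Since $\frac{4}{\gamma}-\gamma = \beta_0$, this is exactly the ${\rm LF}_\mathbb{H}^{(\frac{2}{\gamma},0),(\beta_0,1),(\frac{2}{\gamma},\infty)}$ surface appearing on the right of the proposition.

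The remaining ingredient is the $\mathcal{M}_{2,\bullet}^{\rm disk}(2)$ piece, which must be glued to the free boundary arc of the just-formed $\widetilde{\rm QD}_{0,3}$-sandwich. By Lemma~\ref{lem:embedding-2}, $\mathcal{M}_{2,\bullet}^{\rm disk}(2) = \frac{\gamma}{2(Q-\gamma)^2}\mathcal{M}^{\rm disk}_{1,2}(\gamma)$ as laws of quantum surfaces, and $Q-\gamma = \frac{2}{\gamma}-\frac{\gamma}{2}$ so $(Q-\gamma)^2 = (\frac{2}{\gamma}-\frac{\gamma}{2})^2 = \frac{1}{\gamma^2}(2-\frac{\gamma^2}{2})^2$; hence $\frac{\gamma}{2(Q-\gamma)^2} = \frac{\gamma^3}{2(2-\gamma^2/2)^2}$. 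Combining the constants $(\frac{4}{\gamma^2}-1)^2 = \frac{1}{\gamma^4}(4-\gamma^2)^2$ from Lemma~\ref{lem:weld-decompose}, the $\frac{\gamma}{2(Q-\gamma)^2}$ from Lemma~\ref{lem:embedding-2}, and $C_2^{-1}$ from Lemma~\ref{lem:weld4} should collapse, after elementary algebra, to $\frac{2}{\gamma}$ times $\frac{C_1}{C_2}$; I would carry out this bookkeeping carefully since the exact constant $\frac{2C_1}{\gamma C_2}$ is what the statement claims and it is needed downstream in Section~\ref{sec:solve}. The last step is geometric: one checks that welding $\mathcal{M}^{\rm disk}_{1,2}(\gamma)$ onto the $\widehat m$-sandwich along the appropriate boundary arc matches the topological configuration in Figure~\ref{fig:weld5}, with $(D_i,i,b,a)$ carrying $\mathcal{M}^{\rm disk}_{1,2}(\gamma)$ and $(\mathbb{H}\setminus\overline{D}_i,a,0,b)$ carrying ${\rm LF}_\mathbb{H}^{(\frac{2}{\gamma},0),(\beta_0,1),(\frac{2}{\gamma},\infty)}$, and that the interface of interest becomes precisely the simple curve from $a$ to $b$ separating $D_i$ from its complement, i.e.\ a sample from $\nu_\kappa^{\gamma,\beta_0}$ in the sense of Definition~\ref{def:simple}. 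This relies on the fact that under Proposition~\ref{prop:weld3} the curve $\ep\eta$ bounds $D_\infty = \mathbb{H}\setminus\overline{D}_{\ep\eta}$ against $D_{\ep\eta}$, and that $D_i$ is the component of $D_\infty$ (before adding the bulk point) touching $i$, so that the decompositions of $\mathcal{M}^{\rm disk}_{2,\bullet}(\gamma^2-2)$ via Lemma~\ref{lem:thin-thick} correspond exactly to slicing $D_\infty$ into $D_i$, a middle bead, and the rest.

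The main obstacle I anticipate is not any single deep step but the careful tracking of marked points, orientations, and boundary-arc identifications across the three weldings, so that the curve $\widehat m$ and the glued surfaces genuinely reassemble into the configuration of Definition~\ref{def:simple} rather than some reflected or relabeled variant --- in particular, verifying that the bulk marked point $i$ and the boundary insertion at $0$ land in the claimed components, and that the counterclockwise ordering $a,0,b$ is respected. A secondary but real difficulty is that ${\rm Weld}$ of four measures, and the re-association of which arcs are glued in which order, must be justified using the ``order of gluing does not matter'' principle invoked before Lemma~\ref{lem:weld-decompose}; I would spell out that the disintegrations over the three shared boundary lengths commute, so that the grouping used to apply Lemma~\ref{lem:weld4} is legitimate. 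The constant computation, while routine, is the other place where an error would propagate, so I would double-check that $\frac{1}{\gamma^4}(4-\gamma^2)^2\cdot\frac{\gamma^3}{2(2-\gamma^2/2)^2} = \frac{1}{\gamma^4}(4-\gamma^2)^2\cdot\frac{\gamma^3}{2}\cdot\frac{\gamma^2}{(2\gamma - \gamma^3/2)^2\cdot\gamma^{-2}}$ simplifies correctly; a clean way is to note $2-\frac{\gamma^2}{2} = \frac{4-\gamma^2}{2}$, whence $(2-\frac{\gamma^2}{2})^2 = \frac{(4-\gamma^2)^2}{4}$ and the product becomes $\frac{1}{\gamma^4}(4-\gamma^2)^2\cdot\frac{\gamma^3}{2}\cdot\frac{4}{(4-\gamma^2)^2} = \frac{2}{\gamma}$, exactly as needed.
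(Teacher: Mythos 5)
Your proposal is correct and follows essentially the same route as the paper: combine Proposition~\ref{prop:weld3} with Lemma~\ref{lem:weld-decompose}, apply Lemma~\ref{lem:weld4} to the sandwich $\mathcal{M}_2^{\rm disk}(\gamma^2-2),\widetilde{\rm QD}_{0,3},\mathcal{M}_2^{\rm disk}(\gamma^2-2)$, identify $\mathcal{M}_{2,\bullet}^{\rm disk}(2)$ with $\frac{\gamma}{2(Q-\gamma)^2}\mathcal{M}_{1,2}^{\rm disk}(\gamma)$ via Lemma~\ref{lem:embedding-2}, and pass to the marginal on the single interface (i.e.\ forget $\widehat m$), with the constant bookkeeping $(\frac{4}{\gamma^2}-1)^2\cdot\frac{\gamma}{2(Q-\gamma)^2}=\frac{2}{\gamma}$ carried out correctly. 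No gaps to report.
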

\begin{proof}
    By applying Lemma~\ref{lem:weld4} to the welded surface in Lemma~\ref{lem:weld-decompose}, we obtain that
    $$
    {\rm Weld}(\mathcal{M}_{2, \bullet}^{\rm disk}(\gamma^2-2), {\rm QD}_{0,1}) = \frac{1}{C_2}\big(\frac{4}{\gamma^2}-1\big)^2 {\rm Weld}(\mathcal{M}_{2, \bullet}^{\rm disk}(2), {\rm LF}_\mathbb{H}^{(\frac{2}{\gamma},0),(\frac{4}{\gamma}-\gamma,1), (\frac{2}{\gamma},\infty)} \times \widehat{m})\,.  
    $$
    Combining with Proposition~\ref{prop:weld3} and Lemma~\ref{lem:embedding-2}, we have:
    $$
    {\rm LF}_\mathbb{H}^{(\gamma,i),(\beta_0,0)} \times \overline{\nu}_\kappa = \frac{2C_1}{\gamma C_2} {\rm Weld}(\mathcal{M}_{1,2}^{\rm disk}(\gamma), {\rm LF}_\mathbb{H}^{(\frac{2}{\gamma},0),(\frac{4}{\gamma}-\gamma,1), (\frac{2}{\gamma},\infty)} \times \widehat{m})\,.  
    $$
Forgetting $\widehat{m}$ yields the desired result, since by Definition~\ref{def:simple}, the measure $\nu_\kappa^{\gamma, \beta_0}$ is the marginal law of $\overline{\nu}_\kappa$ on the interface between the two welded surfaces. \qedhere
    \end{proof}

\subsection{Proof of Proposition~\ref{prop:reweight}: the final conformal welding result}
\label{sec:reweight}

Recall $D_i, a,b,$ and $\nu^{\gamma,\beta_0}_\kappa$ from Definition~\ref{def:simple}, where $\beta_0 = \frac{4}{\gamma} - \gamma$. Consider two conformal maps, $\psi : (\mathbb{H}, i,0,x) \rightarrow (D_i, i,b,a)$ and $\varphi: ( \mathbb{H}, 0, 1 ,\infty) \rightarrow (\mathbb{H} \backslash \overline{D}_i, a,0,b)$ as in Figure~\ref{fig:psi}. 
 Recall from~\eqref{eq:KPZ} the scaling exponent $\Delta_\alpha = \frac{\alpha}{2}(Q-\frac{\alpha}{2})$ for $\alpha \in \mathbb{R}$. For any $\alpha, \beta \in \mathbb{R}$, let 
\begin{equation}
\label{eq:def-beta}
\nu_\kappa^{\alpha, \beta}:= |\psi'(i)|^{2 \Delta_\alpha - 2 \Delta_\gamma} \times |\varphi'(1)|^{\Delta_\beta - \Delta_{\beta_0}} \cdot \nu_\kappa^{\gamma, \beta_0}.
\end{equation}
Since the moment $\nu_\kappa[|\psi'(i)|^a |\varphi'(1)|^b]$ equals $\nu_\kappa^{\gamma,\beta_0}[|\psi'(i)|^a |\varphi'(1)|^b]$, by setting $a=2 \Delta_\alpha - 2 \Delta_\gamma$ and $b=\Delta_\beta - \Delta_{\beta_0}$, we see that $\nu_\kappa[|\psi'(i)|^a |\varphi'(1)|^b]$  equals the total mass of $\nu_\kappa^{\alpha, \beta}$.
In this section, we prove the following conformal welding result producing $\nu_\kappa^{\alpha, \beta}$, based on which we will compute $\nu_\kappa[|\psi'(i)|^a |\varphi'(1)|^b]$  in Section~\ref{sec:solve} and prove Theorem~\ref{thm:moment}.

\begin{proposition}
\label{prop:reweight}
For any $\alpha,\beta \in \mathbb{R}$, Proposition~\ref{prop:weld5} holds with 
${\rm LF}_\mathbb{H}^{(\gamma,i),(\beta_0,0)}$,
$\nu_\kappa^{\gamma,\beta_0}$,
${\rm LF}_\mathbb{H}^{(\frac{2}{\gamma},0), (\beta_0,1), (\frac{2}{\gamma} ,\infty)}$,
 and
$\mathcal{M}_{1,2}^{\rm disk}(\gamma)$ 
replaced by ${\rm LF}_\mathbb{H}^{(\alpha,i),(\beta,0)}$, $\nu_\kappa^{\alpha,\beta}$, 
${\rm LF}_\mathbb{H}^{(\frac{2}{\gamma},0), (\beta,1), (\frac{2}{\gamma} ,\infty)}$, and
$\mathcal{M}_{1,2}^{\rm disk}(\alpha)$, respectively. Namely,
\begin{equation}\label{eq:key-welding}
    {\rm LF}_\mathbb{H}^{(\alpha,i),(\beta,0)} \times \nu_\kappa^{\alpha,\beta} = \frac{2C_1}{\gamma C_2} {\rm Weld}({\rm LF}_\mathbb{H}^{(\frac{2}{\gamma},0), (\beta,1), (\frac{2}{\gamma} ,\infty)}, \mathcal{M}_{1,2}^{\rm disk}(\alpha)) 
\end{equation}
with the constants $C_1$ from Proposition~\ref{prop:weld6} and $C_2$ from Lemma~\ref{lem:weld4}. 
\end{proposition}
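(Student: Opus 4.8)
The idea is to upgrade the welding identity of Proposition~\ref{prop:weld5}, which treats the special insertion weights $(\gamma,\beta_0)$ on the disk side and $(\tfrac2\gamma,\beta_0,\tfrac2\gamma)$ on the complementary side, to arbitrary weights $(\alpha,\beta)$ by a Girsanov-type reweighting applied \emph{separately} to the two pieces being welded. The two conformal maps $\psi$ and $\varphi$ are precisely the uniformizing maps of the two welded quantum surfaces $D_i$ and $\mathbb H\setminus\overline D_i$, so changing the insertion at the bulk point $i$ from $\gamma$ to $\alpha$ costs a factor $|\psi'(i)|^{-2\Delta_\gamma}$ on one side (by the coordinate-change formula, Lemma~\ref{lem:coordinate-change}) matched against $|\psi'(i)|^{2\Delta_\alpha}$ on the other, and likewise changing the boundary insertion at the point corresponding to $0$ (which is $\varphi(1)$ in the global picture, with $\varphi: (\mathbb H,0,1,\infty)\to(\mathbb H\setminus\overline D_i, a,0,b)$) from $\beta_0$ to $\beta$ costs $|\varphi'(1)|^{\Delta_\beta-\Delta_{\beta_0}}$. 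Collecting these Radon--Nikodym factors is exactly the definition \eqref{eq:def-beta} of $\nu_\kappa^{\alpha,\beta}$, so the bookkeeping should close.

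\textbf{Key steps, in order.} First I would fix a ``good'' embedding of both welded surfaces so that the maps $\psi$ and $\varphi$ of Figure~\ref{fig:psi} are genuinely the conformal maps relating the abstract surfaces to their embeddings in $\mathbb H$: on the $\mathcal M_{1,2}^{\rm disk}(\cdot)$ side, $D_i$ is embedded in $\mathbb H$ with its three marked points $(i,b,a)$ pinned, and $\psi$ is its uniformization; on the Liouville-field side, $\mathbb H\setminus\overline D_i$ is embedded with $(a,0,b)$ pinned, and $\varphi$ is its uniformization. Second, I would apply Lemma~\ref{lem:coordinate-change} to both ${\rm LF}_\mathbb{H}^{(\gamma,i),(\beta_0,0)}$ and to the field describing $\mathcal M_{1,2}^{\rm disk}(\gamma)$: precisely, combining Definition~\ref{def:M12} with Lemma~\ref{lem:coordinate-change} gives that reweighting $\mathcal M_{1,2}^{\rm disk}(\gamma)$ by $|\psi'(i)|^{2\Delta_\alpha-2\Delta_\gamma}$ produces $\mathcal M_{1,2}^{\rm disk}(\alpha)$ (the $\gamma$-boundary insertions at $0$ and $x$ and the free Lebesgue integration over $x$ are unaffected by changing only the bulk weight), and reweighting ${\rm LF}_\mathbb{H}^{(\frac2\gamma,0),(\beta_0,1),(\frac2\gamma,\infty)}$ by $|\varphi'(1)|^{\Delta_\beta-\Delta_{\beta_0}}$ produces ${\rm LF}_\mathbb{H}^{(\frac2\gamma,0),(\beta,1),(\frac2\gamma,\infty)}$. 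Third, I would check that these two reweighting factors, when transported to the glued surface, multiply against the two factors appearing in the \emph{left}-hand side: the reweighting of ${\rm LF}_\mathbb{H}^{(\gamma,i),(\beta_0,0)}$ at its bulk point $i$ by $|\psi'(i)|^{-2\Delta_\gamma+2\Delta_\alpha}$ gives ${\rm LF}_\mathbb{H}^{(\alpha,i),(\beta,0)}$ together with the surviving factor on the curve law, which by definition is $\nu_\kappa^{\alpha,\beta}$. Since the welding operation $\rm Weld$ in \eqref{eq:conf-weld} is defined by disintegrating over the shared boundary length and then multiplying the two disintegrated measures, and since the reweighting factors $|\psi'(i)|$ and $|\varphi'(1)|$ depend only on the respective side (not on the matching boundary lengths), the reweighting commutes with $\rm Weld$; this is the one routine but essential point to verify. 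The constant $\tfrac{2C_1}{\gamma C_2}$ is untouched because it does not involve the insertion weights.

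\textbf{Main obstacle.} The delicate part is making precise that changing an insertion weight on \emph{one} of the two pieces of a conformal welding corresponds, on the \emph{welded} surface, exactly to the conformal-derivative factor of the uniformizing map of \emph{that piece}, with no cross term and no effect on the welding interface's law. Concretely, one must justify that for a fixed shared boundary length $\ell$, the map ${\rm Weld}(M_1(\ell),M_2(\ell))\mapsto {\rm Weld}(f\text{-reweight of }M_1(\ell),\, M_2(\ell))$ pushes forward to multiplication by the $f$-reweighting factor read off the embedded welded surface; this follows from the definition of $\sim_\gamma$ and the fact that $\psi,\varphi$ are determined by the quantum surfaces alone (independent of the embedding), but it should be stated carefully, perhaps isolating it as a short lemma of the form ``reweighting a component of a welding by $|g'(\cdot)|^s$ at a marked point equals reweighting the welded surface by $|(\text{uniformization of that component})'(\cdot)|^s$.'' Once that is in place, Proposition~\ref{prop:reweight} is immediate from Proposition~\ref{prop:weld5} by applying the reweighting with exponents $2\Delta_\alpha-2\Delta_\gamma$ (at $i$, on the $D_i$ side) and $\Delta_\beta-\Delta_{\beta_0}$ (at the point $0$, on the $\mathbb H\setminus\overline D_i$ side), and invoking Lemma~\ref{lem:coordinate-change} and Definitions~\ref{def:M12} and~\ref{def:LF} to re-identify both reweighted pieces.
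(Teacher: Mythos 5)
Your overall plan—upgrade Proposition~\ref{prop:weld5} to general $(\alpha,\beta)$ by a Girsanov-type reweighting, with the factors $|\psi'(i)|^{2\Delta_\alpha-2\Delta_\gamma}$ and $|\varphi'(1)|^{\Delta_\beta-\Delta_{\beta_0}}$ exactly accounting for the definition \eqref{eq:def-beta} of $\nu_\kappa^{\alpha,\beta}$—is the right bookkeeping and is indeed how the paper proceeds. But the mechanism you propose for implementing the change of insertion is wrong, and this is a genuine gap, not a presentational one. You claim, citing Lemma~\ref{lem:coordinate-change}, that reweighting $\mathcal{M}_{1,2}^{\rm disk}(\gamma)$ by the \emph{deterministic} factor $|\psi'(i)|^{2\Delta_\alpha-2\Delta_\gamma}$ produces $\mathcal{M}_{1,2}^{\rm disk}(\alpha)$, and that reweighting ${\rm LF}_\mathbb{H}^{(\frac2\gamma,0),(\beta_0,1),(\frac2\gamma,\infty)}$ by $|\varphi'(1)|^{\Delta_\beta-\Delta_{\beta_0}}$ produces ${\rm LF}_\mathbb{H}^{(\frac2\gamma,0),(\beta,1),(\frac2\gamma,\infty)}$. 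Lemma~\ref{lem:coordinate-change} only \emph{relocates} insertions under a conformal map; it never changes the insertion weight. Changing $\gamma$ to $\alpha$ at a point is a Girsanov tilt by the \emph{random} field functional $\lim_{\epsilon\to0}\epsilon^{\frac{\alpha^2-\gamma^2}{2}}e^{(\alpha-\gamma)\phi_\epsilon(i)}$ (cf.\ Lemma~\ref{lem:def-LF}), which is not equal to, nor replaceable by, a deterministic conformal-derivative factor. Moreover $|\psi'(i)|$ and $|\varphi'(1)|$ are not functionals of the individual pieces at all: they are determined by the interface $\eta$, i.e.\ by how the two surfaces are glued, so ``reweight each piece separately by these factors'' is not a well-defined operation on that piece's law, and your claimed commutation of the reweighting with ${\rm Weld}$ fails in the form stated (the factors are not measurable with respect to one side).

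The correct route, which the paper follows, is to weight \emph{both} sides of the identity in Proposition~\ref{prop:weld5} by one and the same field functional, namely $\epsilon^{\frac{\beta^2-\beta_0^2}{4}}e^{\frac{\beta-\beta_0}{2}X_\epsilon(1)}$ where $X=(\varphi^{-1})_*\phi$ is the field of the relevant component in its own uniformizing coordinates (and similarly $\epsilon^{\frac{\alpha^2-\gamma^2}{2}}e^{(\alpha-\gamma)X_\epsilon(i)}$ with $X=(\psi^{-1})_*\phi$ for the bulk point). Since this is a functional of the component quantum surface, it passes through the welding; on the welded side it changes the insertion $\beta_0\to\beta$ (Lemma~\ref{lem:reweight2}) and $\gamma\to\alpha$ after integrating over $x$ (the analogue of \eqref{eq:bulk-Girsanov}), while on the curve-decorated side an explicit Girsanov computation (Lemma~\ref{lem:reweight1}) produces the new insertion at the marked point \emph{together with} the deterministic factors $|\varphi'(1)|^{\Delta_\beta-\Delta_{\beta_0}}$ and $|\psi'(i)|^{2\Delta_\alpha-2\Delta_\gamma}$, which are then absorbed into the curve measure; finally one sends $\epsilon\to0$, doing first $\beta$ at $\alpha=\gamma$ and then $\alpha$. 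In other words, the conformal-derivative factors you put in by hand are the \emph{output} of transporting the Girsanov weighting between the component coordinates and the ambient coordinates (via the $Q\log|\varphi'|$ term in the pushforward and the variance shift), not the input that changes the insertion. Your proposal as written skips exactly this computation, which is the substance of the proof.
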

 
Our proof follows a strategy originating from  \cite{AHS21} and refined in~\cite{ARS21}. We first prove Proposition~\ref{prop:reweight} for $\alpha=\gamma$ and a general $\beta$, based on Proposition~\ref{prop:weld5}. We need the following two lemmas.

\begin{lemma}
\label{lem:reweight1}
For $(\phi,\eta)$ sampled from ${\rm LF}_\mathbb{H}^{(\gamma,i),(\beta_0,0)} \times \nu_\kappa^{\gamma,\beta_0}$, let $X = (\varphi^{-1})_* \phi = \phi \circ \varphi + Q\log|\varphi '|$ denote the push-back of $\phi$ under the map $\varphi$ so that $(\mathbb{H}, X, 0, 1, \infty) / \sim_\gamma = (\mathbb{H} \backslash \overline{D}_i, \phi, a, 0, b) / \sim_\gamma$. Given $\epsilon\in(0,\frac12)$, let $X_\epsilon(1)$ be the average of $X$ over $\partial B_\epsilon(1) \cap \mathbb{H}$. For any $\beta \in \mathbb{R}$, we have
    \begin{align}\label{eq:Girsanov1}
    \iint F(\phi) \epsilon^{\frac{\beta^2 - \beta_0^2}{4}} e^{\frac{(\beta - \beta_0)}{2} X_\epsilon(1)} {\rm LF}_\mathbb{H}^{(\gamma,i),(\beta_0,0)} (d\phi) \nu_\kappa^{\gamma,\beta_0}(d\eta)
    = \iint F(\phi) {\rm LF}_\mathbb{H}^{(\gamma,i),(\beta,0)}(d\phi) \nu_\kappa^{\gamma,\beta}(d\eta)\,,
    \end{align}
where $F$ is any non-negative measurable function depending only on the restriction of $\phi$ to $\mathbb{H} \backslash \varphi(B_\epsilon(1) \cap \mathbb{H})$.
\end{lemma}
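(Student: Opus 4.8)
\textbf{Proof proposal for Lemma~\ref{lem:reweight1}.}

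The plan is to recognize the left-hand side of~\eqref{eq:Girsanov1} as a Girsanov-type reweighting of the Liouville field ${\rm LF}_\mathbb{H}^{(\gamma,i),(\beta_0,0)}$ that inserts an extra boundary primary field at the point $a = \varphi(1) \in \partial D_i$, and then to transport this insertion through the conformal map $\varphi$ using the coordinate change formula (Lemma~\ref{lem:coordinate-change}). The key point is that $X_\epsilon(1)$ is (up to the known conformal factor $Q\log|\varphi'(1)|$ plus a deterministic $O(\epsilon)$ error and a Gaussian field average) a linear functional of $\phi$ concentrated near the point $a$, which lies on $\partial D_i \subset \partial(\mathbb{H}\setminus\overline{D}_i)$. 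The reweighting by $\epsilon^{(\beta^2-\beta_0^2)/4} e^{\frac{\beta-\beta_0}{2} X_\epsilon(1)}$ is therefore, in the $\epsilon \to 0$ limit, precisely the operation described in Lemma~\ref{lem:def-LF} (applied in the domain $\mathbb{H}\setminus\overline{D}_i$ rather than $\mathbb{H}$, which is harmless since that domain is conformally $\mathbb{H}$ via $\varphi$) that changes a $\beta_0$-insertion at $a$ into a $\beta$-insertion at $a$. Crucially, since $\nu_\kappa^{\gamma,\beta_0}$-a.s.\ the point $a$ is a boundary point of $D_i$ as well, and $\phi$ restricted to $\mathbb{H}\setminus\overline{D}_i$ is, conditionally on the interface $\eta$, a field obtained from the Liouville field on that domain with insertions only at $a,0,b$, the reweighting acts only on the ``$\mathbb{H}\setminus\overline{D}_i$ side'' and leaves the ``$D_i$ side'' untouched.

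Here are the steps in order. First I would condition on the curve $\eta$ (equivalently on the domains $D_i$ and $\mathbb{H}\setminus\overline{D}_i$ and the boundary points $a,b$), so that the statement reduces to a deterministic identity about Liouville fields on a fixed domain with a fixed marked boundary point, averaged against $\nu_\kappa^{\gamma,\beta_0}(d\eta)$. Second, I would apply Lemma~\ref{lem:coordinate-change} with the map $\varphi:(\mathbb{H},0,1,\infty)\to(\mathbb{H}\setminus\overline{D}_i, a,0,b)$ to express the restriction of $\phi$ to $\mathbb{H}\setminus\overline{D}_i$ in the $X$-coordinate; this is where the Jacobian factors $|\varphi'(1)|^{\Delta_{\beta_0}}$, $|\varphi'(0)|^{\Delta_\gamma}$, etc., appear, and it is also where the precise exponent $\Delta_\beta - \Delta_{\beta_0}$ in the definition~\eqref{eq:def-beta} of $\nu_\kappa^{\alpha,\beta}$ gets matched. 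Third, I would use the relation between $X_\epsilon(1)$ and $\phi$ near $a$: writing $X_\epsilon(1) = \phi_{\tilde\epsilon}(a) + Q\log|\varphi'(1)| + o(1)$ for an appropriate radius $\tilde\epsilon = \epsilon|\varphi'(1)|(1+o(1))$ (valid because $\varphi$ is conformal at $1$, with the half-disk structure preserved since $\varphi$ maps $\mathbb{H}$ to $\mathbb{H}\setminus\overline{D}_i$ respecting the boundary), and absorb the $\log|\varphi'(1)|$ term together with the $\epsilon$-powers into the clean form $\epsilon^{(\beta^2-\beta_0^2)/4}$. Fourth, I would invoke Lemma~\ref{lem:def-LF} (the $s_1\neq\infty$ case, applied in the half-disk chart near $a$), which states exactly that $\lim_{\epsilon\to 0}\epsilon^{\beta_i^2/4 - \beta_{0,i}^2/4} e^{\frac{\beta_i-\beta_{0,i}}{2}\phi_\epsilon(s_i)}$ turns a $\beta_{0,i}$-insertion into a $\beta_i$-insertion, to conclude that the reweighted ${\rm LF}_\mathbb{H}^{(\gamma,i),(\beta_0,0)}$ restricted to the $\mathbb{H}\setminus\overline{D}_i$ side becomes ${\rm LF}$ with a $\beta$-insertion at $a$. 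Finally, undoing the conditioning on $\eta$ and collecting the Jacobian factors gives precisely ${\rm LF}_\mathbb{H}^{(\gamma,i),(\beta,0)}(d\phi)\nu_\kappa^{\gamma,\beta}(d\eta)$ on the right.

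The main obstacle, I expect, will be making the localization argument rigorous: justifying that the reweighting factor $\epsilon^{(\beta^2-\beta_0^2)/4} e^{\frac{\beta-\beta_0}{2}X_\epsilon(1)}$, which a priori depends on $\phi$ through the whole curve $\eta$ and the global conformal map $\varphi$, can be replaced—up to errors that vanish in the limit and are uniformly integrable against ${\rm LF}_\mathbb{H}^{(\gamma,i),(\beta_0,0)}\times\nu_\kappa^{\gamma,\beta_0}$—by a factor depending only on $\phi$ near $a$, so that it commutes with the restriction of $F$ to $\mathbb{H}\setminus\varphi(B_\epsilon(1)\cap\mathbb{H})$. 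This requires a uniform-integrability / Cameron-Martin estimate for the boundary GFF average, analogous to the ones used in~\cite{AHS21} and~\cite{ARS21}, together with care that the marked point $a$ is genuinely a boundary point (not a double point of $\eta$) $\nu_\kappa^{\gamma,\beta_0}$-almost surely, which should follow from the regularity of the outer boundary $\ep\eta$ being a simple curve. One also has to check the Gaussian field contributions to $X_\epsilon(1)$ beyond the singular part converge appropriately; but this is a standard computation once the localization is in place, and I would cite the corresponding lemmas in~\cite{ARS21,SY23} rather than redo it.
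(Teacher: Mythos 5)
Your proposal goes wrong at the very first geometric identification, and the error propagates through the whole argument. The map in the paper is $\varphi\colon(\mathbb{H},0,1,\infty)\to(\mathbb{H}\setminus\overline D_i,a,0,b)$, so $\varphi(1)=0$ (the root of the bubble) and $\varphi(0)=a$; you take $\varphi(1)=a$. Consequently $X_\epsilon(1)$ probes $\phi$ near the \emph{origin}, i.e.\ exactly at the deterministic point where ${\rm LF}_\mathbb{H}^{(\gamma,i),(\beta_0,0)}$ already carries its $\beta_0$-insertion, and the reweighting in \eqref{eq:Girsanov1} simply upgrades that existing insertion from $\beta_0$ to $\beta$ at $0$; this is why the right-hand side retains the product form ${\rm LF}_\mathbb{H}^{(\gamma,i),(\beta,0)}(d\phi)\,\nu_\kappa^{\gamma,\beta}(d\eta)$. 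Your version instead inserts a new primary field at the curve endpoint $a$, an $\eta$-dependent point at which the product measure has no insertion at all (note also that under ${\rm LF}_\mathbb{H}^{(\gamma,i),(\beta_0,0)}\times\nu_\kappa^{\gamma,\beta_0}$ the field and the curve are independent, so "conditionally on $\eta$ the field on $\mathbb{H}\setminus\overline D_i$ has insertions at $a,0,b$" is not part of this statement). Carried out as written, your computation would produce a field law with an extra insertion at a random location and a Jacobian factor $|\varphi'(0)|$ rather than $|\varphi'(1)|$; it could not be rewritten as ${\rm LF}_\mathbb{H}^{(\gamma,i),(\beta,0)}\times\nu_\kappa^{\gamma,\beta}$, and the exponent $\Delta_\beta-\Delta_{\beta_0}$ on $|\varphi'(1)|$ in \eqref{eq:def-beta} would not be matched.

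There is a second, structural problem: the lemma asserts an \emph{exact} identity for every fixed $\epsilon\in(0,\frac12)$ (and it is applied later with $F$ depending on $\epsilon$), so a proof that only identifies the two sides in the $\epsilon\to0$ limit, modulo $o(1)$ errors controlled by uniform-integrability estimates, does not establish the statement. The paper's argument needs none of this machinery: one fixes $\eta$ (legitimate by the product structure), observes that harmonicity of $\log|\varphi'|$ gives the exact relation $X_\epsilon(1)=(\phi,\widehat\theta_\epsilon)+Q\log|\varphi'(1)|$ with $\widehat\theta_\epsilon=\varphi_*\theta_\epsilon$, and then performs a Girsanov shift at fixed $\epsilon$. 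Because $F$ only sees $\phi$ outside $\varphi(B_\epsilon(1)\cap\mathbb{H})$ and $\mathbb{E}[h(z)(h,\widehat\theta_\epsilon)]$ coincides with $G_\mathbb{H}(z,0)$ up to an additive constant on that region, the shift converts the $\beta_0$-insertion at $0$ into a $\beta$-insertion exactly and produces the factor $|\epsilon\varphi'(1)|^{-\frac{\beta^2-\beta_0^2}{4}}$; combined with the $|\varphi'(1)|^{\frac{(\beta-\beta_0)Q}{2}}$ from the first relation this yields precisely $|\varphi'(1)|^{\Delta_\beta-\Delta_{\beta_0}}$, which is then absorbed into $\nu_\kappa^{\gamma,\beta}$. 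To repair your proposal you should relocate the insertion change to the origin and replace the limiting/localization scheme by this fixed-$\epsilon$ Gaussian computation.
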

\begin{proof}
We first treat $\eta$ as a fixed curve and only deal with the integration with respect to the field $\phi$. Let $\theta_\epsilon$ be the uniform probability measure on $\partial B_\epsilon(1) \cap \mathbb{H}$ and $\widehat \theta_\epsilon = \varphi_* \theta_\epsilon$. Lemma~\ref{lem:reweight1} follows from the following two identities:
    \begin{equation}
    \label{eq:lem3.23-1}
    e^{\frac{\beta-\beta_0}{2}X_\epsilon(1)} = |\varphi'(1)|^{\frac{(\beta-\beta_0)Q}{2}} e^{\frac{\beta-\beta_0}{2} (\phi, \widehat \theta_\epsilon)};
    \end{equation}
    \begin{equation}
    \label{eq:lem3.23-2}
    \int F(\phi) e^{\frac{\beta-\beta_0}{2} (\phi, \widehat \theta_\epsilon)}
    {\rm LF}_\mathbb{H}^{(\gamma,i),(\beta_0,0)}(d\phi) = |\epsilon \varphi'(1)|^{-\frac{\beta^2-\beta_0^2}{4}} \int F(\phi){\rm LF}_\mathbb{H}^{(\gamma,i),(\beta,0)}(d\phi)\,.
    \end{equation}
    To prove \eqref{eq:lem3.23-1}, we observe that $\log |\varphi'|$ is harmonic, and therefore $(X,\theta_\epsilon) = (\phi \circ \varphi + Q\log|\varphi'|, \theta_\epsilon) = (\phi, \widehat \theta_\epsilon) + Q \log|\varphi'(1)|$. Next, we prove \eqref{eq:lem3.23-2}. Let $h$ be sampled from $P_{\mathbb{H}}$, $u(z) = - 2Q \log|z|_+ + \frac{\beta_0}{2} G_{\mathbb{H}}(z,0) + \gamma G_{\mathbb{H}}(z,i)$, and $\widetilde h(z) = h(z) +u(z)$. By Definition~\ref{def:LF} and Girsanov's theorem, we have
    \begin{align*}
        &\quad \int F(\phi) e^{\frac{\beta-\beta_0}{2}(\phi,\widehat \theta_\epsilon)} {\rm LF}_\mathbb{H}^{(\gamma,i),(\beta_0,0)}(d\phi) = C_\mathbb{H}^{(\gamma,i),(\beta_0,0)} \int F(\widetilde h + c)e^{\frac{\beta-\beta_0}{2}(\widetilde h + c, \widehat \theta_\epsilon)} e^{(\frac{\beta_0}{2} + \gamma-Q)c}dP_\mathbb{H}(h)dc  \\
        &=C_\mathbb{H}^{(\gamma,i),(\beta_0,0)} e^{\frac{\beta-\beta_0}{2}(u, \widehat \theta_\epsilon)} e^{\frac{(\beta- \beta_0)^2}{8} \mathbb{E}(h, \widehat \theta_\epsilon)^2 }\int F(\widetilde h + \frac{\beta-\beta_0}{2}\mathbb{E}[h(\cdot)(h, \widehat \theta_\epsilon)] + c)e^{(\frac{\beta}{2} + \gamma-Q)c}dP_\mathbb{H}(h)dc\,.
    \end{align*}
Let $\delta = (2\log|\cdot|_+, \widehat \theta_\epsilon)$. Using the identity that $\mathbb{E}[h(z) (h,\widehat \theta_\epsilon)] = G_{\mathbb{H}}(z,0) + \delta$ for $z \not \in \varphi(B_\epsilon(1) \cap \mathbb{H})$, and changing the variable $c' = \frac{\beta-\beta_0}{2}\delta +c$, we see that $\int F(\phi) e^{\frac{\beta-\beta_0}{2}(\phi,\widehat \theta_\epsilon)} {\rm LF}_\mathbb{H}^{(\gamma,i),(\beta_0,0)}(d\phi)$ equals
    \begin{align*}
        C_\mathbb{H}^{(\gamma,i),(\beta_0,0)} e^{\frac{\beta-\beta_0}{2}(u, \widehat \theta_\epsilon)} e^{\frac{(\beta-\beta_0)^2}{8} \mathbb{E}(h, \widehat \theta_\epsilon)^2 } e^{-(\frac{\beta}{2} + \gamma-Q)\frac{\beta-\beta_0}{2}\delta} \int F(\widetilde h + \frac{\beta-\beta_0}{2}G_\mathbb{H}(\cdot,i)  + c')e^{(\frac{\beta}{2} + \gamma-Q)c
        '}dP_\mathbb{H}(h)dc'.
    \end{align*}
Using the following identities
\begin{align*}
        &(G_{\mathbb{H}}(\cdot,0),\widehat \theta_\epsilon) = -2 \log|\epsilon \varphi'(1)| + \delta\,, \quad (G_{\mathbb{H}}(\cdot,i),\widehat \theta_\epsilon) = G_{\mathbb{H}}(0,i) +\delta\,,\\
        &\mathbb{E}(h,\widehat \theta_\epsilon)^2 = -2\log|\epsilon \varphi'(1)| + 2\delta\,,\qquad C_\mathbb{H}^{(\gamma,i),(\beta_0,0)}/C_\mathbb{H}^{(\gamma,i),(\beta,0)} = e^{\frac{\gamma(\beta_0-\beta)}{2} G_\mathbb{H}(0,i)},
\end{align*}
we can verify that the constant before the above integral equals to $|\epsilon \varphi'(1)|^{-\frac{\beta^2-\beta_0^2}{4}}   C_\mathbb{H}^{(\gamma,i),(\beta,0)}$. Using Definition~\ref{def:LF} again, we conclude~\eqref{eq:lem3.23-2}.

Combining \eqref{eq:lem3.23-1}, \eqref{eq:lem3.23-2}, and $|\varphi'(1)|^{\Delta_\beta -\Delta_{\beta_0}} \cdot \nu_\kappa^{\gamma,\beta_0}= \nu_\kappa^{\gamma,\beta}$, the left-hand side of~\eqref{eq:Girsanov1} equals
    $$
    \iint F(\phi)  |\varphi'(1)|^{\Delta_\beta -\Delta_{\beta_0}} {\rm LF}_\mathbb{H}^{(\gamma,i),(\beta,0)}(d\phi) \nu_\kappa^{\gamma,\beta_0}(d\eta) = \iint F(\phi) {\rm LF}_\mathbb{H}^{(\gamma,i),(\beta,0)}(d\phi) \nu_\kappa^{\gamma,\beta}(d\eta)\,.\qedhere
    $$
\end{proof}
\begin{lemma}
\label{lem:reweight2}
Let $\phi$ be a sample from ${\rm LF}_\mathbb{H}^{(\frac{2}{\gamma},0), (\beta_0,1), (\frac{2}{\gamma} ,\infty)}$. For any $\beta \in \mathbb{R}$, $\epsilon \in (0,\frac{1}{2})$, we have
$$
\int f(\phi) \times \epsilon^{\frac{1}{4}(\beta^2 - \beta_0^2)} e^{\frac{\beta - \beta_0}{2} \phi_\epsilon(1)} {\rm LF}_\mathbb{H}^{(\frac{2}{\gamma},0), (\beta_0,1), (\frac{2}{\gamma} ,\infty)}(d\phi) = \int f(\phi) {\rm LF}_\mathbb{H}^{(\frac{2}{\gamma},0), (\beta,1), (\frac{2}{\gamma} ,\infty)}(d\phi),
$$
where $f$ is any non-negative measurable function depending only on the restriction of $\phi$ to $\mathbb{H} \backslash B_\epsilon(1)$.
\end{lemma}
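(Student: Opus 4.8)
The plan is to prove Lemma~\ref{lem:reweight2} by essentially the same Girsanov-type computation already carried out in the proof of Lemma~\ref{lem:reweight1}, but in the simpler setting where there is no bulk insertion and no auxiliary curve $\eta$, so that we only need to track how adding the factor $\epsilon^{(\beta^2-\beta_0^2)/4} e^{(\beta-\beta_0)\phi_\epsilon(1)/2}$ transforms the boundary insertion at the point $1$ from weight $\beta_0$ to weight $\beta$. First I would recall from Definition~\ref{def:LF} that, writing $h$ for a sample from $P_\mathbb{H}$ and $u(z) = -2Q\log|z|_+ + \tfrac{2}{\gamma}G_\mathbb{H}(z,0) + \tfrac{\beta_0}{2}G_\mathbb{H}(z,1) + \tfrac{2}{\gamma}G_\mathbb{H}(z,\infty)$, a sample $\phi$ from ${\rm LF}_\mathbb{H}^{(\frac{2}{\gamma},0),(\beta_0,1),(\frac{2}{\gamma},\infty)}$ has the form $\phi = h + u + \bm c$ with $\bm c$ weighted by $C_\mathbb{H}^{(\frac{2}{\gamma},0),(\beta_0,1),(\frac{2}{\gamma},\infty)}e^{(\frac{2}{\gamma}+\frac{\beta_0}{2}+\frac{2}{\gamma}-Q)c}\,dc$. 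Let $\theta_\epsilon$ denote the uniform probability measure on $\partial B_\epsilon(1)\cap\mathbb{H}$, so $\phi_\epsilon(1) = (\phi,\theta_\epsilon)$.

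The key steps, in order, are: (i) apply Girsanov's theorem to the Gaussian field $h$ to absorb the exponential tilt $e^{\frac{\beta-\beta_0}{2}(h,\theta_\epsilon)}$, which shifts the law of $h$ by $\frac{\beta-\beta_0}{2}\mathbb{E}[h(\cdot)(h,\theta_\epsilon)]$ and produces the Gaussian prefactor $e^{\frac{(\beta-\beta_0)^2}{8}\mathbb{E}(h,\theta_\epsilon)^2}$; (ii) use that for $z\notin B_\epsilon(1)\cap\mathbb{H}$ one has $\mathbb{E}[h(z)(h,\theta_\epsilon)] = G_\mathbb{H}(z,1) + \delta$ with $\delta := (2\log|\cdot|_+,\theta_\epsilon)$, so the shift converts exactly the right fraction of the insertion at $1$ from $\beta_0$ to $\beta$, up to a constant depending on $\delta$ and an absolute shift in the $\bm c$-variable; (iii) perform the change of variables $c' = \frac{\beta-\beta_0}{2}\delta + c$ to restore the correct weight $e^{(\frac{2}{\gamma}+\frac{\beta}{2}+\frac{2}{\gamma}-Q)c'}\,dc'$ in front of the remaining integral; and (iv) collect all the resulting scalar factors — $e^{\frac{\beta-\beta_0}{2}(u,\theta_\epsilon)}$, $e^{\frac{(\beta-\beta_0)^2}{8}\mathbb{E}(h,\theta_\epsilon)^2}$, the Jacobian/constant $e^{-(\frac{2}{\gamma}+\frac{\beta}{2}+\frac{2}{\gamma}-Q)\frac{\beta-\beta_0}{2}\delta}$ from the change of variables, the ratio $C_\mathbb{H}^{(\frac{2}{\gamma},0),(\beta_0,1),(\frac{2}{\gamma},\infty)}/C_\mathbb{H}^{(\frac{2}{\gamma},0),(\beta,1),(\frac{2}{\gamma},\infty)}$, and the prescribed $\epsilon^{(\beta^2-\beta_0^2)/4}$ — and verify they multiply to $1$. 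The relevant elementary identities are $(G_\mathbb{H}(\cdot,1),\theta_\epsilon) = -2\log\epsilon + \delta$, $(G_\mathbb{H}(\cdot,0),\theta_\epsilon) = G_\mathbb{H}(1,0)+\delta$, $(G_\mathbb{H}(\cdot,\infty),\theta_\epsilon) = G_\mathbb{H}(1,\infty)+\delta$, $\mathbb{E}(h,\theta_\epsilon)^2 = -2\log\epsilon + 2\delta$, and $C_\mathbb{H}^{(\frac{2}{\gamma},0),(\beta_0,1),(\frac{2}{\gamma},\infty)}/C_\mathbb{H}^{(\frac{2}{\gamma},0),(\beta,1),(\frac{2}{\gamma},\infty)} = |1|_+^{(\frac{\beta_0^2}{2}-\frac{\beta^2}{2})/(-1)}\cdot\ldots$; in fact since the insertion point is $1$ with $|1|_+ = 1$ and the Green's-function cross terms with $0$ and $\infty$ are finite, the ratio of $C_\mathbb{H}$ constants is simply $e^{\frac{2}{\gamma}\cdot\frac{\beta_0-\beta}{2}(G_\mathbb{H}(0,1)+G_\mathbb{H}(\infty,1))}$ times a $|1|_+$-power that vanishes. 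After substituting these, all $\delta$-dependence and all $\epsilon$-dependence cancel, leaving the identity.

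I expect the main obstacle to be purely bookkeeping: unlike in Lemma~\ref{lem:reweight1}, here the insertion point $1$ interacts through the Green's function with \emph{two} other boundary insertions ($0$ and $\infty$), so one must be careful that the terms $e^{\frac{\beta-\beta_0}{2}(u,\theta_\epsilon)}$ and the $C_\mathbb{H}$-ratio jointly account for all the cross-term contributions $\frac{2}{\gamma}\cdot\frac{\beta-\beta_0}{2}G_\mathbb{H}(1,0)$ and $\frac{2}{\gamma}\cdot\frac{\beta-\beta_0}{2}G_\mathbb{H}(1,\infty)$ and that these are exactly the differences of the Green's-function cross terms appearing in $\log\big(C_\mathbb{H}^{(\frac{2}{\gamma},0),(\beta,1),(\frac{2}{\gamma},\infty)}\big)$. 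A convenient shortcut, rather than re-deriving everything, is to invoke Lemma~\ref{lem:def-LF}: the left-hand side of the desired identity is, up to the $\epsilon$-normalization already present, a reweighting of ${\rm LF}_\mathbb{H}^{(\frac{2}{\gamma},0),(\beta_0,1),(\frac{2}{\gamma},\infty)}$ by $e^{\frac{\beta-\beta_0}{2}\phi_\epsilon(1)}$, which by the $\epsilon\to 0$ description of Liouville fields with insertions adds a boundary singularity of strength $\frac{\beta-\beta_0}{2}\cdot 2 = \beta-\beta_0$ at the point $1$; combined with the existing $\beta_0$-insertion this yields a $\beta$-insertion, matching the right-hand side. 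Since the statement is asserted for every fixed $\epsilon\in(0,\tfrac12)$ (not just in the limit), I would carry out the explicit Girsanov computation as the rigorous proof and use the heuristic only as a sanity check; the computation is word-for-word parallel to the one establishing \eqref{eq:lem3.23-2} in the proof of Lemma~\ref{lem:reweight1}, with the bulk term $\gamma G_\mathbb{H}(\cdot,i)$ there replaced by the two boundary terms $\tfrac{2}{\gamma}G_\mathbb{H}(\cdot,0)$ and $\tfrac{2}{\gamma}G_\mathbb{H}(\cdot,\infty)$ here.
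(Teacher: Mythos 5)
Your proposal is correct and takes essentially the same route as the paper's (omitted) proof: a Girsanov computation word-for-word parallel to the one establishing \eqref{eq:lem3.23-2}, which is indeed simpler here because there is no conformal map and the spectator insertions at $0$ and $\infty$ only produce cross terms $G_\mathbb{H}(1,0)=G_\mathbb{H}(1,\infty)=0$ that cancel against the ratio of $C_\mathbb{H}$ constants. One harmless bookkeeping slip: by Definition~\ref{def:LF} a boundary insertion $\beta_i$ enters the shift $u$ with coefficient $\tfrac{\beta_i}{2}$ and the $\bm c$-weight through $\tfrac12\sum_i\beta_i$, so your coefficients $\tfrac{2}{\gamma}$ for the insertions at $0$ and $\infty$ should be $\tfrac{1}{\gamma}$ (and the $\bm c$-exponent $\tfrac{2}{\gamma}+\tfrac{\beta_0}{2}-Q$); since these terms are identical on both sides and the relevant Green's-function values vanish, the identity and your argument are unaffected.
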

\begin{proof}
 The proof is a simpler application of Girsanov theorem than \eqref{eq:lem3.23-2}. We omit the detail.
\end{proof}
\begin{proof}[Proof of Proposition~\ref{prop:reweight} when $\alpha =\gamma$ and $\beta\in \mathbb R$]
Fix $\eps\in (0,\frac12)$. By Proposition~\ref{prop:weld5}, the left-hand side of~\eqref{eq:Girsanov1} equals
    \begin{align*}
     \frac{2C_1}{\gamma C_2} {\rm Weld}({\rm LF}_\mathbb{H}^{(\frac{2}{\gamma},0), (\beta_0,1), (\frac{2}{\gamma} ,\infty)}, \mathcal{M}_{1,2}^{\rm disk}(\gamma))[F(\phi) \epsilon^{\frac{\beta^2 - \beta_0^2}{4}} e^{\frac{(\beta - \beta_0)}{2} X_\epsilon(1)}]\,.
    \end{align*}
    By Lemma~\ref{lem:reweight2}, this equals
    $\frac{2C_1}{\gamma C_2} {\rm Weld}({\rm LF}_\mathbb{H}^{(\frac{2}{\gamma},0), (\beta,1), (\frac{2}{\gamma} ,\infty)}, \mathcal{M}_{1,2}^{\rm disk}(\gamma))[F(\phi) ]$. 
    Comparing this expression with the right-hand side of~\eqref{eq:Girsanov1}  and sending $\eps$ to 0, we get
    \begin{equation*}
    \label{eq:prop3.19-2}
    {\rm LF}_\mathbb{H}^{(\gamma,i),(\beta,0)}(d\phi) \times \nu_\kappa^{\gamma,\beta}(d\eta) = \frac{2C_1}{\gamma C_2} {\rm Weld}({\rm LF}_\mathbb{H}^{(\frac{2}{\gamma},0), (\beta,1), (\frac{2}{\gamma} ,\infty)}, \mathcal{M}_{1,2}^{\rm disk}(\gamma))\,. \qedhere
    \end{equation*}
\end{proof}

We now prove Proposition~\ref{prop:reweight} with a general $\alpha$ based on the special case $\alpha=\gamma$ proved above.
\begin{proof}[Proof of Proposition~\ref{prop:reweight}]
The remaining argument from $\alpha=\gamma$ to a general bulk insertion $\alpha$ is almost identical with the above argument from $\beta=\beta_0$  to a general boundary insertion $\beta$. So we only point out the difference. First
based on Girsanov theorem and $|\psi'(i)|^{2\Delta_\alpha -2\Delta_{\gamma}} \cdot \nu_\kappa^{\gamma,\beta}= \nu_\kappa^{\alpha,\beta}$, we have the following analog of~\eqref{eq:Girsanov1}.
For $(\phi,\eta)$ sampled from ${\rm LF}_\mathbb{H}^{(\gamma,i),(\beta,0)} \times \nu_\kappa^{\gamma,\beta}$, let $X = (\psi^{-1})_* \phi = \phi\circ \psi+ Q\log|\psi'|$ denote the push-back of $\phi$ under the map $\psi$ so that $(\mathbb{H}, X, i,0,x) / \sim_\gamma = (D_i, \phi, i,b,a) / \sim_\gamma$. Given $\epsilon\in(0,\frac12)$, let $X_\epsilon(i)$ be the average of $X$ over $\partial B_\epsilon(i)$. For any $\beta \in \mathbb{R}$, we have
    \begin{align}\label{eq:Girsanov2}
    \iint F(\phi)  \epsilon^{\frac{\alpha^2 - \gamma^2}{2}} e^{(\alpha - \gamma) X_\epsilon(i)}  {\rm LF}_\mathbb{H}^{(\gamma,i),(\beta,0)} (d\phi) \nu_\kappa^{\gamma,\beta}(d\eta)
    = \iint F(\phi)  {\rm LF}_\mathbb{H}^{(\alpha,i),(\beta,0)}(d\phi) \nu_\kappa^{\alpha,\beta}(d\eta)\,.
    \end{align}
where  $F$ is any non-negative measurable function depending only on the restriction of $\phi$ to $\mathbb{H} \backslash \psi(B_\epsilon(i))$.
By the case of Proposition~\ref{prop:reweight} with $\alpha=\gamma$ proved above, the left-hand side of~\eqref{eq:Girsanov2} equals 
\begin{align}
\frac{2C_1}{\gamma C_2} {\rm Weld}({\rm LF}_\mathbb{H}^{(\frac{2}{\gamma},0), (\beta,1), (\frac{2}{\gamma} ,\infty)}, \mathcal{M}_{1,2}^{\rm disk}(\gamma))[F(\phi) \epsilon^{\frac{\alpha^2 - \gamma^2}{2}} e^{(\alpha - \gamma) X_\epsilon(i)}]\,. \label{eq:eps-bulk}
    \end{align}
We analyze~\eqref{eq:eps-bulk} by the following analog of Lemma~\ref{lem:reweight2}.
For fixed $x \in \mathbb{R}$, we have
\begin{equation}\label{eq:bulk-Girsanov}
    \int f(\phi)\times \epsilon^{\frac{\alpha^2 - \gamma^2}{2}} e^{(\alpha - \gamma) \phi_\epsilon(i)} {\rm LF}_\mathbb{H}^{(\gamma, i), (\gamma,0), (\gamma,x)} (d\phi)  = \int f(\phi) {\rm LF}_\mathbb{H}^{(\alpha, i), (\gamma,0), (\gamma,x)} (d\phi) \,,
\end{equation}where $f$ is any non-negative measurable function that depends only on the restriction of $\phi$ to $\mathbb{H} \backslash B_\epsilon(i)$.
Recall the definition of $\mathcal{M}_{1,2}^{\rm disk}(\alpha)$ from Definition~\ref{def:M12}. 
By integrating over $x\in \mathbb R$ in~\eqref{eq:bulk-Girsanov},  we see that \eqref{eq:eps-bulk} equals
$\frac{2C_1}{\gamma C_2} {\rm Weld}({\rm LF}_\mathbb{H}^{(\frac{2}{\gamma},0), (\beta,1), (\frac{2}{\gamma} ,\infty)}, \mathcal{M}_{1,2}^{\rm disk}(\alpha))[F(\phi) ]$. Comparing this expression with the right-hand side of~\eqref{eq:Girsanov2} and then sending $\epsilon$ to 0 yields the desired result.
\end{proof}

\section{Derivation of the SLE bubble exponent}
\label{sec:solve}
In this section we prove Theorem~\ref{thm:moment} based on  Proposition~\ref{prop:reweight}. The calculation relies on  the boundary structure constants for Liouville CFT. We recall their definitions in Section~\ref{subsec:GH}. In Section~\ref{sec:sol-gab}, we express the joint moment $\nu_\kappa[|\psi'(i)|^a |\varphi'(1)|^b]$ in terms of these structure constants. In Section~\ref{subsec:QD-id} we present a few exact results on quantum disks. 
We then carry out some   intermediate calculations in Sections~\ref{sec:sol-C1} and ~\ref{subsec:H} based on the conformal welding results in Section~\ref{sec:welding}, and the exact solvability of both the structure constants and quantum disks.  Finally, we prove Theorem~\ref{thm:moment} in Section~\ref{sec:analytic}.

\subsection{Boundary structure constants for Liouville CFT}\label{subsec:GH}
The boundary structure constants for Liouville CFT describe the correlation functions of Liouville CFT on the upper half plane where there is one bulk and one boundary insertions, or there are three boundary insertions. The most general case gives the distributional information of both the quantum area and boundary length, which is determined in~\cite{ARS21,ARSZ23}. For our purpose it suffices to use those that only encode the boundary length information, which is determined in \cite{RZ22}. 

We will frequently use the following notation. Suppose $M$ is the law of a quantum surface or a Liouville field. Let $\mathcal L$ be the total quantum length of its sample. We will right $|M(\ell)|$ as the density of $\mathcal L$ under $M$.  Namely, $M[f(\mathcal L)] = \int_0^\infty f(\ell)|M(\ell)| d\ell$ for any positive measurable function. See $| {\rm LF}_\mathbb{H}^{(\alpha,i),(\beta,0)}(\ell)|$ in Lemma~\ref{lem:LF1} for an example. Sometimes we will specify a few boundary arcs  with boundary lengths  $\mathcal L_1,\mathcal L_2,\cdots$. We write $|M(\ell_1,\ell_2,\cdots)|$ as the  joint density of their boundary length. See  $|\mathcal{M}_{1,2}^{\rm disk}(\alpha; \ell_1, \ell_2)|$ in Lemma~\ref{lem:LF2} for an example. 

We first introduce the bulk-boundary structure constant.
\begin{lemma}
\label{lem:LF1}
For $\beta<Q$ and $\frac{\gamma}{2} - \alpha <\frac{\beta}{2}<\alpha$, let $\phi$ be a sample from the Liouville field ${\rm LF}_\mathbb{H}^{(\alpha,i),(\beta,0)}$, where there is one bulk and one boundary insertion. Let $| {\rm LF}_\mathbb{H}^{(\alpha,i),(\beta,0)}(\ell)|$ be the density of the total quantum boundary length $\mathcal L=\nu_\phi(-\infty,\infty)$. Namely
\begin{equation}\label{eq:density}
    {\rm LF}_\mathbb{H}^{(\alpha,i),(\beta,0)}[f(\mathcal L)] =\int_0^\infty | {\rm LF}_\mathbb{H}^{(\alpha,i),(\beta,0)}(\ell)  | f(\ell) d\ell\,.
\end{equation}
Then, there exists an explicit function $\overline{G}(\alpha,\beta)$ that can be found in \cite[Eq.\ (1.28)]{RZ22} such that  \begin{equation}
\label{eq:def-G}
|{\rm LF}_\mathbb{H}^{(\alpha,i),(\beta,0)}(\ell))| = \frac{2}{\gamma}2^{-\frac{\alpha^2}{2}}\overline{G}(\alpha,\beta) \ell^{\frac{2}{\gamma}(\frac{\beta}{2} + \alpha - Q)-1}.
\end{equation}
\end{lemma}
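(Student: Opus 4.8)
\textbf{Proof proposal for Lemma~\ref{lem:LF1}.}

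The plan is to identify the total quantum boundary length density of the Liouville field ${\rm LF}_\mathbb{H}^{(\alpha,i),(\beta,0)}$ with (a constant multiple of) the bulk-boundary structure constant of Liouville CFT. First I would recall that, by Definition~\ref{def:LF}, a sample $\phi$ from ${\rm LF}_\mathbb{H}^{(\alpha,i),(\beta,0)}$ has the form $\phi(z) = h(z) - 2Q\log|z|_+ + \tfrac{\beta}{2}G_\mathbb{H}(z,0) + \alpha G_\mathbb{H}(z,i) + \bm c$, with $(h,\bm c)$ sampled from $C_\mathbb{H}^{(\alpha,i),(\beta,0)} P_\mathbb{H} \times [e^{(\frac{\beta}{2}+\alpha-Q)c}\,dc]$. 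The key observation is the exact scaling behaviour in the additive constant $\bm c$: shifting $\phi \mapsto \phi + t$ scales the quantum boundary length measure $\nu_\phi$ by $e^{\gamma t/2}$, hence scales $\mathcal L$ by the same factor, while the weight $e^{(\frac{\beta}{2}+\alpha-Q)c}\,dc$ transforms in the corresponding way. Integrating out $\bm c$ first — changing variables $c \mapsto c + \tfrac{2}{\gamma}\log\ell$ after disintegrating over $\mathcal L$ — produces precisely the power $\ell^{\frac{2}{\gamma}(\frac{\beta}{2}+\alpha-Q)-1}$ in~\eqref{eq:def-G}, with the prefactor being the integral of $F$ against $P_\mathbb{H}$ restricted to $\{\mathcal L = 1\}$ (after the shift), times $C_\mathbb{H}^{(\alpha,i),(\beta,0)}$.

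Next I would match this prefactor to the structure constant. The quantity $\lim_{\epsilon\to 0}\epsilon^{\frac{\alpha^2}{2} + \frac{\beta^2}{4} - \cdots}\,\mathbb{E}[\cdots]$ defining the bulk-boundary correlation function in the probabilistic construction of~\cite{RZ22}, evaluated at unit total boundary length with the bulk insertion at $i$ and boundary insertion at $0$, is exactly $\overline{G}(\alpha,\beta)$ up to the elementary normalization factors coming from the choice of embedding (the map sending the standard configuration to $(\mathbb{H},i,0)$) and the Green's function self-interaction terms already absorbed into $C_\mathbb{H}^{(\alpha,i),(\beta,0)}$. Here I would invoke Lemma~\ref{lem:def-LF}, which re-expresses ${\rm LF}_\mathbb{H}^{(\alpha,i),(\beta,0)}$ as the $\epsilon\to 0$ limit of ${\rm LF}_\mathbb{H}$ reweighted by $\epsilon^{\frac{\alpha^2}{2}+\frac{\beta^2}{4}}e^{\alpha\phi_\epsilon(i)+\frac{\beta}{2}\phi_\epsilon(0)}$, so that the density of $\mathcal L$ under this measure is by definition (a multiple of) the bulk-boundary structure constant of~\cite{RZ22}; tracking the $2^{-\alpha^2/2}$ factor comes from the difference between the unit disk normalization used in~\cite{RZ22} and the $\mathbb{H}$ normalization here, namely $(2\,{\rm Im}\,i)^{-\alpha^2/2} = 2^{-\alpha^2/2}$ appearing in $C_\mathbb{H}^{(\alpha,i),(\beta,0)}$, and the $\frac{2}{\gamma}$ is the Jacobian from the change of variables in $\bm c$.

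The main obstacle I anticipate is bookkeeping: one must carefully reconcile the various normalization conventions — the position of the insertions ($i$ versus the origin or a boundary point in the unit disk picture of~\cite{RZ22}), the $|z|_+$ regularization in $G_\mathbb{H}$, the choice of the additive-constant reference (circle/semicircle average zero), and the precise definition of $\overline{G}$ from~\cite[Eq.\ (1.28)]{RZ22} — to confirm that the constant is exactly $\tfrac{2}{\gamma}2^{-\alpha^2/2}\overline{G}(\alpha,\beta)$ with no stray factors. The range conditions $\beta < Q$ and $\tfrac{\gamma}{2}-\alpha < \tfrac{\beta}{2} < \alpha$ are exactly the Seiberg-type bounds under which the bulk-boundary structure constant is finite and the relevant GMC moments converge, so I would simply note that the identification is valid precisely on this range and cite~\cite{RZ22} for finiteness. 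The actual closed form of $\overline{G}$ is not needed for this lemma; it is enough that it is the explicit function in~\cite[Eq.\ (1.28)]{RZ22}.
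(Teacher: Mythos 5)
Your proposal is correct in substance, but it is worth knowing that the paper itself offers no argument at all here: its proof of Lemma~\ref{lem:LF1} is a two-line citation, stating that the result is one of the main theorems of \cite{RZ22} and that this particular formulation is taken from \cite[Lemma 7.8]{Wu23}. What you have written is essentially a reconstruction of the derivation carried out in those references: disintegrate over the additive constant $\bm c$, use that shifting $\phi\mapsto\phi+t$ scales $\nu_\phi$ by $e^{\gamma t/2}$, and change variables in $c$ to produce the exponent $\frac{2}{\gamma}(\frac{\beta}{2}+\alpha-Q)-1$ together with the Jacobian factor $\frac{2}{\gamma}$; the remaining prefactor is $C_\mathbb{H}^{(\alpha,i),(\beta,0)}$ times a negative GMC boundary moment, and the latter is (by definition, with \cite[Eq.\ (1.28)]{RZ22} supplying the closed form) the normalized structure constant $\overline{G}(\alpha,\beta)$, finite exactly under the Seiberg-type bounds $\beta<Q$, $\frac{\gamma}{2}-\alpha<\frac{\beta}{2}<\alpha$. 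One small remark that makes your anticipated bookkeeping lighter than you fear: for the specific insertion points $i$ and $0$ one has $|i|_+=|0|_+=1$ and $G_\mathbb{H}(i,0)=-2\log|i|+2\log|i|_++2\log|0|_+=0$, so the constant from Definition~\ref{def:LF} collapses to $C_\mathbb{H}^{(\alpha,i),(\beta,0)}=(2\,\mathrm{Im}\,i)^{-\alpha^2/2}=2^{-\alpha^2/2}$ on the nose, and the only nontrivial matching left is that the GMC moment agrees with the convention for $\overline{G}$ in \cite{RZ22} — which is precisely what \cite[Lemma 7.8]{Wu23} records. So your route is the honest proof behind the citation; the paper's approach buys brevity by outsourcing exactly the normalization-matching step you identify as the main obstacle.
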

\begin{proof}
This is one of the main results in~\cite{RZ22}. This formulation is from \cite[Lemma 7.8]{Wu23}.
\end{proof}

By~\eqref{eq:def-G}, if $\beta<Q$, $\frac{\gamma}{2} - \alpha <\frac{\beta}{2} <\alpha$, and in addition $\frac{\beta}{2}+\alpha -Q>0$, then for $\mu>0$ we have
    \begin{equation}
        \label{eq:prop4.14-3}
        {\rm LF}_\mathbb{H}^{(\alpha,i),(\beta,0)}[e^{-\mu \mathcal L}] = \frac{2}{\gamma}2^{-\frac{\alpha^2}{2}}\overline{G}(\alpha,\beta) \Gamma \Big{(}\frac{2}{\gamma}(\frac{\beta }{2}+ \alpha - Q)\Big{)} \mu^{\frac{2}{\gamma}(Q - \alpha - \frac{\beta}{2})}.
    \end{equation} 
In the literature, $ {\rm LF}_\mathbb{H}^{(\alpha,i),(\beta,0)}[e^{-\mu \mathcal L}]$ is often referred as the one-bulk-one-boundary structure constant without bulk potential, and $\mu$ is the so-called cosmological constant. The function $\overline G$ is referred to as the normalized structure constant. See e.g.~\cite{RZ22}.
We do not recall the full expression of $\overline{G}(\alpha,\beta)$ but only collect the  cases that we need here. 
\begin{lemma}
\label{lem:LF4}
For any $\alpha \in (\frac{\gamma}{2},Q)$, we have
$$
\overline G(\alpha,0) = \bigg(\frac{2^{-\frac{\gamma\alpha}{2}} 2 \pi}{\Gamma(1-\frac{\gamma^2}{4})}\bigg)^\frac{2(Q-\alpha)}{\gamma} \Gamma(\frac{\gamma\alpha}{2}-\frac{\gamma^2}{4})\,, \qquad \overline G(\alpha,\gamma)=\frac{1}{\pi}\overline G(\alpha,0)\,, \quad \mbox{and}
$$
$$
\overline G(\alpha,\frac{4}{\gamma}-\gamma)=\frac{2^{3-\frac{\gamma^2}{2}-\frac{4}{\gamma^2}}}{\pi^{\frac{4}{\gamma^2}-1}} \frac{\Gamma(1-\frac{\gamma^2}{4})^{\frac{4}{\gamma^2}-1} \Gamma(\frac{\gamma^2}{2}-1)}{\Gamma(2-\frac{4}{\gamma^2})\Gamma(\frac{\gamma^2}{4})}\times \frac{\Gamma(\frac{2\alpha}{\gamma}-\frac{4}{\gamma^2}) \Gamma(\frac{\gamma\alpha}{2}+1-\frac{\gamma^2}{2})}{\Gamma(\frac{2\alpha}{\gamma}-1)\Gamma(\frac{\gamma\alpha}{2}-1)}\overline G(\alpha,0)\,.
$$  
\end{lemma}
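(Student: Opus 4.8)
The proof of Lemma~\ref{lem:LF4} is by direct specialization of the explicit formula for $\overline G(\alpha,\beta)$ recorded in \cite[Eq.~(1.28)]{RZ22}. That formula expresses the normalized boundary structure constant as a product of powers of $2$, $\pi$, $\gamma$ and $\Gamma(1-\tfrac{\gamma^2}{4})$ times a ratio of Barnes double Gamma functions $\Gamma_b$ (with $b=\tfrac\gamma2$) evaluated at affine functions of $\alpha$, $\beta$ and $Q$. All three identities will be read off from this expression; the point of recording the range $\alpha\in(\tfrac\gamma2,Q)$ is precisely that for $\beta\in\{0,\gamma,\tfrac4\gamma-\gamma\}$ the relevant arguments stay in the region where the formula of \cite{RZ22} applies and the shift manipulations below are legitimate.

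First I would treat $\beta=0$. Here the boundary insertion at $0$ is trivial, so the $\Gamma_b$ factors carrying a $\beta$-dependence hit ``corner'' arguments: they either cancel in pairs or reduce, via $\Gamma_b(b)=\Gamma_b(1/b)=$ (explicit), to ordinary $\Gamma$'s. Collecting the surviving powers of $2^{-\gamma\alpha/2}$, $2\pi$ and $\Gamma(1-\tfrac{\gamma^2}{4})$ and combining the remaining double Gamma factors into a single $\Gamma(\tfrac{\gamma\alpha}{2}-\tfrac{\gamma^2}{4})$ yields the stated closed form for $\overline G(\alpha,0)$. (One could equivalently quote the disk bulk one-point function from \cite{Remy-Duke} and match normalizations, but it is cleaner to stay inside the \cite{RZ22} formula.)

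For $\beta=\gamma$ and $\beta=\tfrac4\gamma-\gamma$ I would compute the ratios $\overline G(\alpha,\gamma)/\overline G(\alpha,0)$ and $\overline G(\alpha,\tfrac4\gamma-\gamma)/\overline G(\alpha,0)$ directly, so that all $\Gamma_b$'s not involving $\beta$ cancel. The arguments of the $\beta$-dependent $\Gamma_b$'s contain $\tfrac\beta2$, so passing from $\beta=0$ to $\beta=\gamma$ shifts them by $b=\tfrac\gamma2$ (up for some, down for others), and passing to $\beta_0:=\tfrac4\gamma-\gamma$, for which $\tfrac{\beta_0}{2}=b^{-1}-b$, shifts them by $b^{-1}$ together with an inverse shift by $b$. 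Applying the Barnes shift equations
\[
\Gamma_b(x+b)=\frac{\sqrt{2\pi}\,b^{bx-1/2}}{\Gamma(bx)}\,\Gamma_b(x),\qquad
\Gamma_b(x+b^{-1})=\frac{\sqrt{2\pi}\,b^{1/2-x/b}}{\Gamma(x/b)}\,\Gamma_b(x)
\]
(and their inverses for the downward shifts) converts each $\beta$-dependent $\Gamma_b$ into $\Gamma_b$ times an explicit ordinary $\Gamma$; after the $\Gamma_b$'s cancel against those of $\overline G(\alpha,0)$ and the accumulated powers of $2\pi$, $b$ and $\gamma$ are collected, the $\beta=\gamma$ ratio collapses to $\tfrac1\pi$, while the $\beta=\beta_0$ ratio becomes $\dfrac{\Gamma(\tfrac{2\alpha}\gamma-\tfrac4{\gamma^2})\,\Gamma(\tfrac{\gamma\alpha}2+1-\tfrac{\gamma^2}2)}{\Gamma(\tfrac{2\alpha}\gamma-1)\,\Gamma(\tfrac{\gamma\alpha}2-1)}$ times the $\alpha$-independent prefactor of powers of $2$, $\pi$, $\Gamma(1-\tfrac{\gamma^2}4)$, $\Gamma(\tfrac{\gamma^2}2-1)$, $\Gamma(2-\tfrac4{\gamma^2})$ and $\Gamma(\tfrac{\gamma^2}4)$ displayed in the lemma; here one uses the Legendre/reflection identities for $\Gamma$ to put the $\alpha$-independent factors in the stated form.

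The computations are elementary once the shift equations are in hand, so the step I expect to be the real obstacle is the exact bookkeeping of the multiplicative constants — the interlocking powers of $2$, $\pi$, $\gamma$, and the various $\Gamma$ and $\Gamma_b$ normalizations entering \cite[Eq.~(1.28)]{RZ22}. This matters because these constants feed directly into the welding constants $C_1$, $C_2$ computed in Section~\ref{sec:solve}, and a single misplaced power would propagate into Theorem~\ref{thm:moment}. I would therefore cross-check the final expressions in a tractable special case (for instance $\gamma=\sqrt2$, or at parameters where a reflection symmetry gives an independent evaluation of $\overline G$) before using them downstream.
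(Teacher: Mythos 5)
Your proposal is correct and essentially matches the paper's proof: the paper also reads $\overline G(\alpha,0)$ off from the explicit results of \cite{RZ22} (quoting Theorem 1.7 there, where $\overline G(\alpha,0)=\overline U(\alpha)$) and obtains the ratios $\overline G(\alpha,\gamma)/\overline G(\alpha,0)$ and $\overline G(\alpha,\tfrac{4}{\gamma}-\gamma)/\overline G(\alpha,0)$ from the $\beta$-shift equation (Proposition 2.1 of \cite{RZ22}), which is exactly the content of the Barnes shift manipulations you describe. The only difference is packaging: you re-derive the shift relation at the level of the double Gamma factors in Eq.\ (1.28) instead of quoting it, which is the same computation.
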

\begin{proof}
This follows from Theorem 1.7 of \cite{RZ22}.  Indeed, $\overline G(\alpha,0)$ equals  $\overline U(\alpha)$ defined there whose expression is as given above.
     The values of $\overline G(\alpha,\beta)/\overline G(\alpha,0)$ for $\beta \in \{\frac{4}{\gamma}-\gamma,\gamma\}$ can be calculated using the shift equation for $\overline G(\alpha,\beta)$ as described in
     Proposition 2.1 of \cite{RZ22}.
\end{proof}

Recall $\mathcal{M}_{1,2}^{\rm disk}(\alpha)$ from Definition~\ref{def:M12}. A sample from $\mathcal{M}_{1,2}^{\rm disk}(\alpha)$ has one bulk marked point and two boundary ones. Let $|\mathcal{M}_{1,2}^{\rm disk}(\alpha; \ell_1, \ell_2)|$ be the joint density of the quantum lengths of the two boundary arcs. Namely, let $\mathcal L_1$ and $\mathcal L_2$ be the two boundary arc lengths of a sample from $\mathcal{M}_{1,2}^{\rm disk}(\alpha)$. Then
\begin{equation}\label{eq:def-M12-length}
    \mathcal{M}_{1,2}^{\rm disk}(\alpha) [f(\mathcal L_1,\mathcal L_2)] = \iint_0^\infty f(\ell_1,\ell_2)|\mathcal{M}_{1,2}^{\rm disk}(\alpha; \ell_1, \ell_2)|  d\ell_1d \ell_2
\end{equation}  
for any positive measurable function $f$. Our next lemma gives $|\mathcal{M}_{1,2}^{\rm disk}(\alpha; \ell_1, \ell_2)| $ in terms of $\overline G$.

\begin{lemma}
\label{lem:LF2}
For any $\alpha>\frac{\gamma}{2}$ and $\ell_1,\ell_2>0$, we have
$$
|\mathcal{M}_{1,2}^{\rm disk}(\alpha; \ell_1, \ell_2)| = \frac{2}{\gamma}2^{-\frac{\alpha^2}{2}}\overline{G}(\alpha,\gamma) (\ell_1+\ell_2)^{\frac{2(\alpha-Q)}{\gamma}}.
$$
\end{lemma}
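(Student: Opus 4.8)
The plan is to derive the joint boundary-length density of $\mathcal{M}_{1,2}^{\rm disk}(\alpha)$ by reducing it to the one-bulk-one-boundary structure constant from Lemma~\ref{lem:LF1}. Recall from Definition~\ref{def:M12} that a sample from $\mathcal{M}_{1,2}^{\rm disk}(\alpha)$ is obtained by sampling $(x,\phi)$ from ${\rm LF}_\mathbb{H}^{(\alpha,i),(\gamma,0),(\gamma,x)}(d\phi)\,dx$ and taking the marked surface $(\mathbb{H},\phi,i,0,x)/{\sim_\gamma}$. The two boundary arcs of this surface are the arcs of $\mathbb{R}$ cut out by $0$ and $x$, with quantum lengths measured by $\nu_\phi$. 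So the task is to compute, for $f$ a positive measurable function of two variables, the quantity
\begin{equation*}
\int_{\mathbb{R}} {\rm LF}_\mathbb{H}^{(\alpha,i),(\gamma,0),(\gamma,x)}\big[f(\nu_\phi(I_1(x)),\nu_\phi(I_2(x)))\big]\,dx,
\end{equation*}
where $I_1(x),I_2(x)$ are the two complementary boundary arcs determined by $0$ and $x$.

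First I would use Lemma~\ref{lem:typ-pt} in reverse: the two $\gamma$-insertions at $0$ and $x$ are exactly $\gamma$-quantum-typical boundary points, so integrating $x$ against Lebesgue measure and one of the $\gamma$-insertions against $\nu_\phi$ converts ${\rm LF}_\mathbb{H}^{(\alpha,i),(\gamma,0),(\gamma,x)}(d\phi)\,dx$ back into ${\rm LF}_\mathbb{H}^{(\alpha,i),(\gamma,0)}(d\phi)\,\nu_\phi(dx)$. Concretely, this means $\mathcal{M}_{1,2}^{\rm disk}(\alpha)$ is the law of $(\mathbb{H},\phi,i,0,x)$ where $\phi\sim{\rm LF}_\mathbb{H}^{(\alpha,i),(\gamma,0)}$ and $x$ is then sampled from $\nu_\phi$ on $\mathbb{R}$. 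Under this description the total boundary length $\mathcal L=\nu_\phi(\mathbb{R})$ has density $|{\rm LF}_\mathbb{H}^{(\alpha,i),(\gamma,0)}(\ell)|$ by Lemma~\ref{lem:LF1} (valid here since $\alpha>\tfrac{\gamma}{2}$ and $\beta=\gamma<Q$ with $\tfrac\gamma2-\alpha<\tfrac\gamma2<\alpha$), and conditionally on $\phi$ the point $x$ splits $\mathbb{R}$ into arcs of lengths $(\mathcal L_1,\mathcal L_2)$ with $\mathcal L_1+\mathcal L_2=\mathcal L$, where $\mathcal L_1$ is uniform on $[0,\mathcal L]$ by the quantum-typicality of $x$. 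Hence for $\ell_1,\ell_2>0$ with total $\ell=\ell_1+\ell_2$, the Jacobian of the change of variables $(\mathcal L,\mathcal L_1)\mapsto(\ell_1,\ell_2)$ is $1$ and the uniform density of $\mathcal L_1$ contributes a factor $\ell^{-1}$, giving
\begin{equation*}
|\mathcal{M}_{1,2}^{\rm disk}(\alpha;\ell_1,\ell_2)| = \ell^{-1}\,|{\rm LF}_\mathbb{H}^{(\alpha,i),(\gamma,0)}(\ell)|.
\end{equation*}

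Finally I would substitute the explicit form of $|{\rm LF}_\mathbb{H}^{(\alpha,i),(\gamma,0)}(\ell)|$ from~\eqref{eq:def-G} with $\beta=\gamma$, namely $\tfrac{2}{\gamma}2^{-\alpha^2/2}\overline G(\alpha,\gamma)\,\ell^{\frac{2}{\gamma}(\frac{\gamma}{2}+\alpha-Q)-1}$. The exponent simplifies: $\tfrac{2}{\gamma}(\tfrac{\gamma}{2}+\alpha-Q)-1 = 1+\tfrac{2(\alpha-Q)}{\gamma}-1 = \tfrac{2(\alpha-Q)}{\gamma}$, so after dividing by $\ell$ one more power is removed, leaving $\ell^{\frac{2(\alpha-Q)}{\gamma}-1}$ — wait, I should double-check: $\ell^{-1}\cdot\ell^{2(\alpha-Q)/\gamma}=\ell^{2(\alpha-Q)/\gamma-1}$. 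To match the claimed $(\ell_1+\ell_2)^{2(\alpha-Q)/\gamma}$ I need the density of $|{\rm LF}_\mathbb{H}^{(\alpha,i),(\gamma,0)}(\ell)|$ to carry exponent $\frac{2(\alpha-Q)}{\gamma}+1$, i.e. I should apply~\eqref{eq:def-G} literally: its exponent is $\frac{2}{\gamma}(\frac{\beta}{2}+\alpha-Q)-1$ which with $\beta=\gamma$ equals $\frac{2}{\gamma}(\alpha-Q)+1-1=\frac{2(\alpha-Q)}{\gamma}$; multiplying by $\ell^{-1}$ then gives exponent $\frac{2(\alpha-Q)}{\gamma}-1$, which disagrees with the stated lemma. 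So the correct normalization must come from tracking the area/point-insertion factors in Definition~\ref{def:M12} more carefully — in particular whether the $x$-integral against Lebesgue is the one removed by Lemma~\ref{lem:typ-pt} or whether an extra area reweighting from the bulk insertion intervenes. The main obstacle is precisely getting this bookkeeping of constants and powers of $\ell$ exactly right: one must verify that converting $dx$ into $\nu_\phi(dx)$ via Lemma~\ref{lem:typ-pt} is consistent with the definition of $\mathcal{M}_{1,2}^{\rm disk}(\alpha)$ as a pushforward, and that the resulting total-length density is $|{\rm LF}_\mathbb{H}^{(\alpha,i),(\gamma,0)}(\ell)|$ times a combinatorial factor that produces exactly $(\ell_1+\ell_2)^{2(\alpha-Q)/\gamma}$ after disintegration; once the exponent arithmetic and the $\tfrac2\gamma 2^{-\alpha^2/2}\overline G(\alpha,\gamma)$ prefactor are checked against Lemma~\ref{lem:LF1}, the result follows immediately.
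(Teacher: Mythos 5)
Your route is the same as the paper's (convert the two $\gamma$-insertions back into a quantum-typical boundary point via Lemma~\ref{lem:typ-pt}, then read off the length density from Lemma~\ref{lem:LF1}), but there is a genuine bookkeeping error that you notice and then fail to resolve, so the argument as written does not close. When you rewrite ${\rm LF}_\mathbb{H}^{(\alpha,i),(\gamma,0),(\gamma,x)}(d\phi)\,dx$ as ${\rm LF}_\mathbb{H}^{(\alpha,i),(\gamma,0)}(d\phi)\,\nu_\phi(dx)$ and then integrate out $x$, the marginal law of $\phi$ is \emph{not} ${\rm LF}_\mathbb{H}^{(\alpha,i),(\gamma,0)}$: it is the length-biased measure $\nu_\phi(\mathbb{R})\,{\rm LF}_\mathbb{H}^{(\alpha,i),(\gamma,0)}(d\phi)$, and only conditionally on $\phi$ is $x$ drawn from the normalized measure $\nu_\phi^\#$ (making $\mathcal L_1$ uniform on $[0,\mathcal L]$ with density $\mathcal L^{-1}$). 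You paired the \emph{unbiased} field density $|{\rm LF}_\mathbb{H}^{(\alpha,i),(\gamma,0)}(\ell)|$ with the \emph{normalized} point law, which drops a factor of $\ell=\ell_1+\ell_2$ and is exactly why your exponent came out as $\frac{2(\alpha-Q)}{\gamma}-1$ instead of $\frac{2(\alpha-Q)}{\gamma}$. Equivalently, if you keep $x$ sampled from the unnormalized $\nu_\phi(dx)$, then the conditional density of $\mathcal L_1$ given $\mathcal L=\ell$ is Lebesgue (density $1$ on $[0,\ell]$), not $\ell^{-1}$; either way the net factor is $1$, not $\ell^{-1}$.

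With the correct disintegration, the two factors cancel:
\begin{equation*}
|\mathcal{M}_{1,2}^{\rm disk}(\alpha;\ell_1,\ell_2)| \;=\; (\ell_1+\ell_2)\,\big|{\rm LF}_\mathbb{H}^{(\alpha,i),(\gamma,0)}(\ell_1+\ell_2)\big|\cdot\frac{1}{\ell_1+\ell_2} \;=\; \big|{\rm LF}_\mathbb{H}^{(\alpha,i),(\gamma,0)}(\ell_1+\ell_2)\big|,
\end{equation*}
and plugging $\beta=\gamma$ into~\eqref{eq:def-G} gives exponent $\frac{2}{\gamma}(\frac{\gamma}{2}+\alpha-Q)-1=\frac{2(\alpha-Q)}{\gamma}$ and prefactor $\frac{2}{\gamma}2^{-\alpha^2/2}\overline G(\alpha,\gamma)$, which is precisely the statement of the lemma and is how the paper concludes. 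Your final paragraph correctly flags the mismatch but attributes it vaguely to ``area/point-insertion factors'' rather than to the length-biasing of the field marginal, so the proof is left with a gap that the one-line correction above removes.
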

\begin{proof}
By Lemma~\ref{lem:typ-pt}, we have ${\rm LF}_{\mathbb{H}}^{(\alpha,i),(\gamma,0)}(d\phi) \nu_\phi(dx) = {\rm LF}_{\mathbb{H}}^{(\alpha,i),(\gamma,0),(\gamma,x)}(d\phi) dx$. From Definition~\ref{def:M12}  of $\mathcal{M}_{1,2}^{\rm disk}(\alpha)$, we see that $\mathcal{M}_{1,2}^{\rm disk}(\alpha)$ can be obtained by first sampling $\nu_\phi(\mathbb{R}) {\rm LF}^{(\alpha, i),(\gamma,0)}_{\mathbb{H}}(d\phi)$ and then sampling the point $x$ independently according to a probability measure proportional to $\nu_\phi$.  Therefore, 
$
|\mathcal{M}_{1,2}^{\rm disk}(\alpha; \ell_1, \ell_2)| = (\ell_1+\ell_2) |{\rm LF}^{(\alpha, i),(\gamma,0)}_{\mathbb{H}}(\ell_1+\ell_2)| \times \frac{1}{\ell_1+\ell_2}$. By Lemma~\ref{lem:LF1}, we conclude.\qedhere 
\end{proof}

We now introduce the three-point boundary structure constant $\overline H$. It describes the joint law of the quantum lengths of the three boundary arcs for a Liouville field on $\bbH$ with three boundary insertions. We only need the case where the three insertions are 
$\frac{2}{\gamma}, \beta, \frac{2}{\gamma}$ located at $0,1,\infty$ where $\beta\in (0,\gamma)\cup (\gamma,Q)$.
Hence we give a formal description in this case.
\begin{lemma}
\label{lem:LF3}
 Given $\beta \in (0, \gamma) \cup (\gamma, Q)$, consider a sample  from ${\rm LF}_\mathbb{H}^{(\frac{2}{\gamma},0), (\beta,1), (\frac{2}{\gamma} ,\infty)}$. Let $\mathcal L_{13}$ represent the quantum length of $(-\infty,0)$, $\mathcal L_{12}$ represent the quantum length of $(0,1)$, and $\mathcal L_{23}$ represent the quantum length of $(1,\infty)$. There exists an explicit function $\overline H^{(\frac{2}{\gamma}, \beta, \frac{2}{\gamma})}_{(\mu_1,\mu_2,\mu_3)}$ whose value can be found in Equations (1.26) and (1.30) of \cite{RZ22} such that given $\mu_1,\mu_2,\mu_3 \geq 0$ with $\mu_1+\mu_2+\mu_3 >0$, we have:
\begin{align*}
    {\rm LF}_\mathbb{H}^{(\frac{2}{\gamma},0), (\beta,1), (\frac{2}{\gamma} ,\infty)}[e^{-\mu_1 \mathcal L_{13} - \mu_2 \mathcal L_{12} - \mu_3 \mathcal L_{23} }]&= \frac{2}{\gamma} \Gamma(\frac{\beta}{\gamma} - 1) \overline H^{(\frac{2}{\gamma}, \beta, \frac{2}{\gamma})}_{(\mu_1,\mu_2,\mu_3)}\qquad \textrm{for }\beta\in (\gamma,Q);\\
    {\rm LF}_\mathbb{H}^{(\frac{2}{\gamma},0), (\beta,1), (\frac{2}{\gamma} ,\infty)}[e^{-\mu_1 \mathcal L_{13} - \mu_2 \mathcal L_{12} - \mu_3 \mathcal L_{23} } - 1 ] &= \frac{2}{\gamma} \Gamma(\frac{\beta}{\gamma} - 1) \overline H^{(\frac{2}{\gamma}, \beta, \frac{2}{\gamma})}_{(\mu_1,\mu_2,\mu_3)} \qquad \textrm{for }\beta\in (0,\gamma).
\end{align*}
\end{lemma}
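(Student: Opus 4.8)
The plan is to follow the route of Lemma~\ref{lem:LF1}: integrate out the constant mode of the Liouville field and recognize what remains as a multiple of the probabilistic boundary three-point structure constant of~\cite{RZ22} with vanishing bulk cosmological constant, whose explicit value is recorded in Equations~(1.26) and~(1.30) there. Using Definition~\ref{def:LF} and Lemma~\ref{lem:def-LF}, I would first write a sample $\phi$ from ${\rm LF}_\mathbb{H}^{(\frac{2}{\gamma},0),(\beta,1),(\frac{2}{\gamma},\infty)}$ as $\phi = h - 2Q\log|\cdot|_+ + \frac{2}{\gamma}G_\mathbb{H}(\cdot,0) + \frac{\beta}{2}G_\mathbb{H}(\cdot,1) + \frac{2}{\gamma}G_\mathbb{H}(\cdot,\infty) + \bm c$, where $h\sim P_\mathbb{H}$, the constant $C := C_\mathbb{H}^{(\frac{2}{\gamma},0),(\beta,1),(\frac{2}{\gamma},\infty)}$ is as in Definition~\ref{def:LF}, and $\bm c$ is sampled independently from $C\,e^{\frac{\beta-\gamma}{2}c}\,dc$ (here $\frac12(\frac2\gamma+\beta+\frac2\gamma)-Q=\frac{\beta-\gamma}{2}$).

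Then the three boundary lengths factor as $\mathcal L_{13}=e^{\gamma\bm c/2}\mathcal A_{13}$, $\mathcal L_{12}=e^{\gamma\bm c/2}\mathcal A_{12}$, $\mathcal L_{23}=e^{\gamma\bm c/2}\mathcal A_{23}$, where $\mathcal A_{13},\mathcal A_{12},\mathcal A_{23}$ are the boundary Gaussian multiplicative chaos masses of the field $h-2Q\log|\cdot|_++\frac{2}{\gamma}G_\mathbb{H}(\cdot,0)+\frac{\beta}{2}G_\mathbb{H}(\cdot,1)+\frac{2}{\gamma}G_\mathbb{H}(\cdot,\infty)$ over the arcs $(-\infty,0)$, $(0,1)$, $(1,\infty)$, each a.s.\ positive and finite by the Seiberg bounds $\frac2\gamma<Q$ at $0,\infty$ and $\beta<Q$ at $1$, together with $\gamma>\sqrt2$. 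Writing $Z:=\mu_1\mathcal A_{13}+\mu_2\mathcal A_{12}+\mu_3\mathcal A_{23}$, which is a.s.\ positive and finite since the $\mu_i$ are not all zero, I would integrate out $\bm c$ via the substitution $t=e^{\gamma c/2}Z$: for $\beta\in(\gamma,Q)$ this gives
\begin{equation*}
\int_\mathbb{R}e^{\frac{\beta-\gamma}{2}c}\,e^{-e^{\gamma c/2}Z}\,dc=\frac2\gamma\,Z^{1-\frac{\beta}{\gamma}}\int_0^\infty t^{\frac{\beta}{\gamma}-2}e^{-t}\,dt=\frac2\gamma\,\Gamma\!\Big(\frac{\beta}{\gamma}-1\Big)Z^{1-\frac{\beta}{\gamma}},
\end{equation*}
while for $\beta\in(0,\gamma)$, where $\frac{\beta}{\gamma}-1\in(-1,0)$, the same substitution applied to $e^{-e^{\gamma c/2}Z}-1$ produces the identical expression via the analytic continuation $\int_0^\infty t^{s-1}(e^{-t}-1)\,dt=\Gamma(s)$ valid for $s\in(-1,0)$ — this is precisely why the $-1$ must appear in that regime, since the $c$-integral of $e^{-e^{\gamma c/2}Z}$ alone diverges at $c\to-\infty$ (small total quantum length). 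Taking the expectation over $h$ and using Fubini, I would conclude that ${\rm LF}_\mathbb{H}^{(\frac{2}{\gamma},0),(\beta,1),(\frac{2}{\gamma},\infty)}[e^{-\mu_1\mathcal L_{13}-\mu_2\mathcal L_{12}-\mu_3\mathcal L_{23}}]$ (respectively, this quantity with $1$ subtracted when $\beta<\gamma$) equals $\frac2\gamma\,\Gamma(\frac{\beta}{\gamma}-1)\cdot C\,\mathbb{E}[Z^{1-\frac{\beta}{\gamma}}]$; it then remains to note that $C\,\mathbb{E}[Z^{1-\beta/\gamma}]$ is, by definition, the normalized boundary three-point structure constant $\overline H^{(\frac{2}{\gamma},\beta,\frac{2}{\gamma})}_{(\mu_1,\mu_2,\mu_3)}$ from Equations~(1.26) and~(1.30) of~\cite{RZ22}. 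The value $\beta=\gamma$ is excluded because $\Gamma(\frac{\beta}{\gamma}-1)$ has a pole there.

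The main obstacle I anticipate is the bookkeeping of normalizations needed to align Definition~\ref{def:LF} with the conventions of~\cite{RZ22} — in particular the treatment of the insertion at $\infty$, the associated modified $\epsilon$-renormalization recorded in Lemma~\ref{lem:def-LF}, and the explicit constant $C_\mathbb{H}^{(\frac{2}{\gamma},0),(\beta,1),(\frac{2}{\gamma},\infty)}$ — so that the prefactor comes out exactly as $\frac2\gamma\Gamma(\frac{\beta}{\gamma}-1)$ with no stray factor. Along the way one has to check that $\mathbb{E}[Z^{1-\beta/\gamma}]<\infty$ (a negative moment of boundary GMC when $\beta>\gamma$, and a positive moment of order in $(0,1)$ when $\beta<\gamma$, finite under the Seiberg bound $\beta<Q$ as in~\cite{RZ22}) and to justify Fubini; in the $\beta<\gamma$ case this uses the split of the $c$-integral at $c=0$ together with $|e^{-e^{\gamma c/2}Z}-1|\le(e^{\gamma c/2}Z)\wedge1$. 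Since this computation is in the same spirit as the proof of Lemma~\ref{lem:LF1} and as those carried out in~\cite{Wu23} and~\cite{ARSZ23}, an alternative would simply be to invoke those; the only feature actually needed in Section~\ref{sec:solve} is the explicit prefactor $\frac2\gamma\Gamma(\frac{\beta}{\gamma}-1)$.
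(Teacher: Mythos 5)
Your proposal is correct and is essentially the paper's own argument: the paper first records that the law of $\mu_1\mathcal L_{13}+\mu_2\mathcal L_{12}+\mu_3\mathcal L_{23}$ has density $\frac{2}{\gamma}\overline H^{(\frac{2}{\gamma},\beta,\frac{2}{\gamma})}_{(\mu_1,\mu_2,\mu_3)}\,l^{\frac{\beta}{\gamma}-2}\mathbbm{1}_{l>0}\,dl$ (citing \cite[Proposition 2.23]{ASY22} for exactly the constant-mode integration you perform by hand, and \cite[Theorem 1.8, Eq.\ (1.18)]{RZ22} to identify the resulting GMC moment with $\overline H$), and then integrates this density against $e^{-l}$, resp.\ $e^{-l}-1$, which is your computation with the order of the $\bm c$- and $h$-integrations swapped. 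One transcription slip to fix: in your decomposition of $\phi$ the insertions of weight $\frac{2}{\gamma}$ at $0$ and $\infty$ should enter with coefficient $\frac{1}{2}\cdot\frac{2}{\gamma}=\frac{1}{\gamma}$ in front of $G_\mathbb{H}$ (as in Definition~\ref{def:LF}), not $\frac{2}{\gamma}$; the rest of your argument (the zero-mode exponent $\frac{\beta-\gamma}{2}$, the $e^{\gamma\bm c/2}$ scaling of lengths, the Seiberg-bound checks, and the matching with \cite{RZ22}) already uses the correct singularity strengths.
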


\begin{proof} 
    We claim that for any $\beta \in (0,Q)$, the law of $\mu_1 \mathcal L_{13} + \mu_2 \mathcal L_{12} + \mu_3 \mathcal L_{23}$ is  $ \frac{2}{\gamma} \overline H^{(\frac{2}{\gamma}, \beta, \frac{2}{\gamma})}_{(\mu_1,\mu_2,\mu_3)} l^{\frac{\beta}{\gamma} - 2} \mathbbm{1}_{l>0} dl$. 
    For the case $\mu_1 = \mu_3 = 0$ and $\mu_2 = 1$, this density formula is derived in \cite[Proposition 2.23]{ASY22}. The same gives that the density of $\mu_1 \mathcal L_{13} + \mu_2 \mathcal L_{12} + \mu_3 \mathcal L_{23}$ is given by: $\mathbb{E}[(\mu_1 \nu_{\phi_0}(-\infty,0) + \mu_2 \nu_{\phi_0}(0,1) +\mu_3 \nu_{\phi_0}(1,\infty))^{1-\frac{\beta}{\gamma}}]\frac{2}{\gamma} l^{\frac{\beta}{\gamma}-2} \mathbbm{1}_{l>0} dl$, where $\phi_0$ is a random distribution on $\mathbb{H}$ as defined in \cite[Eq. (2.22)]{ASY22}. The moment in the expression is the same as $\overline H^{(\frac{2}{\gamma}, \beta, \frac{2}{\gamma})}_{(\mu_1,\mu_2,\mu_3)}$ defined in \cite[Eq. (1.18)]{RZ22}. Hence, applying \cite[Theorem 1.8]{RZ22} gives the claim. Lemma~\ref{lem:LF3} now follows from integration.\end{proof}
We do not recall the exact expression of $\overline H$ in~\cite{RZ22} which is fairly hard to work with. As we will see in Section~\ref{sec:sol-gab}, 
we only need the integral analyzed in our next lemma.
\begin{lemma}
\label{lem:analy1}
    Fix $\alpha \in (\gamma,Q)$. There exists  a complex neighborhood $V$ of $(2(Q-\alpha) ,Q)$ such that for $\beta\in V$ the following integral is absolutely convergent and analytic in $\beta\in V$:
    \begin{equation}
    \label{eq:def-intg}
    \int_0^\infty \overline H^{(\frac{2}{\gamma}, \beta,\frac{2}{\gamma})}_{(s,1,1)}    \frac{s^{\frac{2}{\gamma}(Q-\alpha)-1}}{1+s} ds\,.
    \end{equation}
\end{lemma}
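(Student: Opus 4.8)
The plan is to reduce the statement to a quantitative understanding of $\overline{H}^{(\frac{2}{\gamma},\beta,\frac{2}{\gamma})}_{(s,1,1)}$ as a function of $s$ and $\beta$, via the probabilistic representation underlying Lemma~\ref{lem:LF3}. Recall from its proof (following \cite[Prop.~2.23]{ASY22} and \cite[Eq.~(1.18)]{RZ22}) that for real $\beta\in(0,Q)$ one has
\begin{equation*}
\overline{H}^{(\frac{2}{\gamma},\beta,\frac{2}{\gamma})}_{(\mu_1,\mu_2,\mu_3)} = \mathbb{E}\big[(\mu_1 A + \mu_2 B + \mu_3 C)^{1-\beta/\gamma}\big],
\end{equation*}
where $A=\nu_{\phi_0}(-\infty,0)$, $B=\nu_{\phi_0}(0,1)$, $C=\nu_{\phi_0}(1,\infty)$ for the field $\phi_0$ of \cite[Eq.~(2.22)]{ASY22}, and this expectation is finite for every $\mu_i\geq 0$ not all zero. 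Writing $\rho_s$ for the density of $sA+B+C$, we thus have $\overline{H}^{(\frac{2}{\gamma},\beta,\frac{2}{\gamma})}_{(s,1,1)}=\int_0^\infty x^{1-\beta/\gamma}\rho_s(x)\,dx$, a Mellin-type transform which, for each fixed $s>0$, converges absolutely and locally uniformly on the strip $\{0<\mathrm{Re}\,\beta<Q\}$ (because $\int_0^\infty x^{1-\mathrm{Re}\,\beta/\gamma}\rho_s(x)\,dx=\mathbb{E}[(sA+B+C)^{1-\mathrm{Re}\,\beta/\gamma}]<\infty$ there); hence $\beta\mapsto\overline{H}^{(\frac{2}{\gamma},\beta,\frac{2}{\gamma})}_{(s,1,1)}$ is holomorphic on that strip, it agrees with the closed form of \cite{RZ22} on the real interval $(0,Q)$, and it satisfies $|\overline{H}^{(\frac{2}{\gamma},\beta,\frac{2}{\gamma})}_{(s,1,1)}|\leq \mathbb{E}[(sA+B+C)^{1-\mathrm{Re}\,\beta/\gamma}]$.

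Next I would turn the last expectation into an explicit power of $s$. Put $r=1-\mathrm{Re}\,\beta/\gamma$. If $r\leq 0$, then $sA+B+C\geq B+C$ gives $\mathbb{E}[(sA+B+C)^r]\leq \mathbb{E}[(B+C)^r]=\overline{H}^{(\frac{2}{\gamma},\mathrm{Re}\,\beta,\frac{2}{\gamma})}_{(0,1,1)}$, independent of $s$; if $r\in(0,1)$, then $sA+B+C\leq 2\max(1,s)(A+B+C)$ gives $\mathbb{E}[(sA+B+C)^r]\leq (2\max(1,s))^r\,\overline{H}^{(\frac{2}{\gamma},\mathrm{Re}\,\beta,\frac{2}{\gamma})}_{(1,1,1)}$. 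In either case there is a constant $C(\mathrm{Re}\,\beta)<\infty$, locally bounded on $(0,Q)$, with $|\overline{H}^{(\frac{2}{\gamma},\beta,\frac{2}{\gamma})}_{(s,1,1)}|\leq C(\mathrm{Re}\,\beta)\big(1+s^{(1-\mathrm{Re}\,\beta/\gamma)_+}\big)$ for all $s>0$. Substituting into \eqref{eq:def-intg} and inspecting the two ends: integrability near $s=0$ requires $\frac{2}{\gamma}(Q-\alpha)>0$, which holds since $\alpha<Q$; integrability near $s=\infty$ requires $(1-\mathrm{Re}\,\beta/\gamma)_+ + \frac{2}{\gamma}(Q-\alpha)<1$, which — using $\alpha>\gamma>\frac{2}{\gamma}$, hence $\frac{2}{\gamma}(Q-\alpha)<1$ — is automatic when $\mathrm{Re}\,\beta\geq\gamma$ and reduces to $\mathrm{Re}\,\beta>2(Q-\alpha)$ when $\mathrm{Re}\,\beta<\gamma$. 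Therefore, with $V:=\{\beta\in\mathbb{C}:2(Q-\alpha)<\mathrm{Re}\,\beta<Q\}$ — an open set containing the interval $(2(Q-\alpha),Q)$, which is nonempty because $\alpha>\gamma>Q/2$ — the integral in \eqref{eq:def-intg} converges absolutely for every $\beta\in V$, and $\int_0^\infty |\overline{H}^{(\frac{2}{\gamma},\beta,\frac{2}{\gamma})}_{(s,1,1)}|\,\frac{s^{\frac{2}{\gamma}(Q-\alpha)-1}}{1+s}\,ds$ is bounded uniformly on compact subsets of $V$.

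Finally, analyticity on $V$ follows from the standard principle: for each fixed $s$ the integrand is holomorphic in $\beta$ on $V$ (first paragraph) and on each compact subset of $V$ it is dominated by a fixed $L^1(ds)$ function (second paragraph), so one may differentiate under the integral sign — or invoke Morera's theorem together with Fubini — to conclude that $\beta\mapsto\int_0^\infty \overline{H}^{(\frac{2}{\gamma},\beta,\frac{2}{\gamma})}_{(s,1,1)}\,\frac{s^{\frac{2}{\gamma}(Q-\alpha)-1}}{1+s}\,ds$ is holomorphic on $V$. The one genuinely delicate step is the first paragraph: one must be sure that $\overline{H}^{(\frac{2}{\gamma},\beta,\frac{2}{\gamma})}_{(s,1,1)}$, as a function of complex $\beta$, is pole-free throughout $\{0<\mathrm{Re}\,\beta<Q\}$ and coincides with the Mellin transform displayed above. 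This is precisely where the probabilistic description of $\overline{H}$ from \cite{RZ22,ASY22} — rather than its unwieldy closed form — does the work, since a Mellin transform of a positive measure is automatically holomorphic and pole-free throughout its strip of absolute convergence. The remaining ingredients are elementary estimates.
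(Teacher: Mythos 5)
Your overall architecture --- real-$\beta$ bounds via monotonicity and homogeneity of the GMC representation, yielding an $s$-integrable dominating function, then Morera/Fubini --- is the same as the paper's, and your elementary estimates at $s\to 0$ and $s\to\infty$ (including the identification of the constraint $\mathrm{Re}\,\beta>2(Q-\alpha)$) are fine. The genuine gap is in the step you yourself flag as the delicate one: you treat the GMC masses $A,B,C$ (equivalently the field $\phi_0$) as independent of $\beta$, so that $\beta$ enters only through the exponent $1-\beta/\gamma$, and you then invoke the principle that a Mellin transform of a \emph{fixed} positive measure is holomorphic and pole-free on its strip of absolute convergence. But in \eqref{eq:def-threept} the weight $|x|_+^{\gamma\beta/2}\big(|x|\,|x-1|^{\gamma\beta/2}\big)^{-1}$ --- i.e.\ the $\beta$-insertion at $1$, together with the insertions at $0$ and $\infty$ --- sits \emph{inside} the measure whose total mass is raised to the power $1-\beta/\gamma$; the field $\phi_0$ of \cite[Eq.\ (2.22)]{ASY22} carries the $\beta$-dependent logarithmic singularity. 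Hence your density $\rho_s$ is itself a function of $\beta$, the proposed extension $\int_0^\infty x^{1-\beta/\gamma}\rho_s(x)\,dx$ is not a single well-defined function of complex $\beta$, and the automatic-holomorphy (and pole-free) claim on the whole strip $\{0<\mathrm{Re}\,\beta<Q\}$ collapses. For the same reason the inequality $\big|\overline H^{(\frac{2}{\gamma},\beta,\frac{2}{\gamma})}_{(s,1,1)}\big|\le\mathbb{E}\big[(sA+B+C)^{1-\mathrm{Re}\,\beta/\gamma}\big]$ is unjustified as stated: for complex $\beta$ one takes a complex power of a \emph{complex-valued} GMC integral, and $|z^{w}|=|z|^{\mathrm{Re}\,w}e^{-\mathrm{Im}(w)\arg z}$ requires control of $\arg z$.

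This is precisely the extra work in the paper's proof: it restricts to $|\mathrm{Im}\,\beta|<\lambda(\epsilon)$ so that the argument of the GMC integral in \eqref{eq:def-threept} stays in $(-\frac{\pi}{2},\frac{\pi}{2})$, which yields $\big|\overline H^{(\frac{2}{\gamma},\beta,\frac{2}{\gamma})}_{(s,1,1)}\big|\le 2\,\overline H^{(\frac{2}{\gamma},\mathrm{Re}\,\beta,\frac{2}{\gamma})}_{(s,1,1)}$ only on a thin strip around the real interval, and it quotes \cite[Theorem 1.8]{RZ22} both for the analyticity of $\beta\mapsto\overline H^{(\frac{2}{\gamma},\beta,\frac{2}{\gamma})}_{(s,1,1)}$ in a complex neighborhood of the real interval and for the continuity and positivity at $s=0$. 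So the analyticity in $\beta$ is an external input from \cite{RZ22}, not a consequence of positivity, and what one obtains (and all the lemma needs) is \emph{some} complex neighborhood $V$ of $(2(Q-\alpha),Q)$, not the full strip $\{2(Q-\alpha)<\mathrm{Re}\,\beta<Q\}$ that you claim. To repair your argument you must either restrict $\mathrm{Im}\,\beta$ and redo the modulus bound with control of the argument of the complex GMC integral, as the paper does, or otherwise justify holomorphy of the $\beta$-dependent GMC moment (e.g.\ by citing \cite[Theorem 1.8]{RZ22}); as written, the central analyticity claim is unsupported.
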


\begin{proof}  
By \cite[Equation (1.18)]{RZ22}, we can use the Gaussian multiplicative chaos to express $\overline H^{(\frac{2}{\gamma}, \beta,\frac{2}{\gamma})}_{(\mu_1,\mu_2,\mu_3)}$ as follows. For $\mu_1,\mu_2,\mu_3 \geq 0$ with $\mu_1+\mu_2+\mu_3 >0$,
       \begin{equation}
        \label{eq:def-threept}
        \overline H^{(\frac{2}{\gamma}, \beta,\frac{2}{\gamma})}_{(\mu_1,\mu_2,\mu_3)} = \mathbb{E} \Bigg[ \Big(\int_{\mathbb{R}} e^{\frac{\gamma}{2}h(x)} \frac{|x|_+^{\frac{\gamma\beta}{2}}}{|x|\cdot |x-1|^\frac{\gamma\beta}{2}}  d\mu(x) \Big)^{1-\frac{\beta}{\gamma}} \Bigg]\,,
        \end{equation}
where $h$ denotes the GFF on $\mathbb{H}$ defined above \eqref{eq:def-gff} and $e^{\frac{\gamma}{2}h(x)}d\mu(x) := \lim_{\epsilon \rightarrow 0} e^{\frac{\gamma}{2}h_\epsilon(x) - \frac{\gamma^2}{8}\mathbb{E} h_\epsilon(x)^2 }d\mu(x)$ with $d\mu(x) := \mu_1 \mathbbm{1}_{(-\infty,0)}(x) dx + \mu_2 \mathbbm{1}_{(0,1)}(x) dx + \mu_3 \mathbbm{1}_{(1,\infty)}(x) dx$. Following the notation in \cite{RZ22}, we define the fractional power using the argument within $(-\pi,\pi)$. 

Let $\epsilon>0$ be fixed. We first show that
there exists $\lambda = \lambda(\epsilon)>0$ such that for all $s\geq 0$ and $\beta \in U_{0,\epsilon} := \{z: 2(Q-\alpha)+\epsilon \leq {\rm Re} z  \leq Q-\epsilon, -\lambda < {\rm Im} z < \lambda \}$,
\begin{equation}\label{lem:int-bound}
    |\overline H^{(\frac{2}{\gamma}, \beta,\frac{2}{\gamma})}_{(s,1,1)}| \leq 2 \overline H^{(\frac{2}{\gamma}, {\rm Re} \beta,\frac{2}{\gamma})}_{(s,1,1)}.
\end{equation} 
By choosing a sufficiently small $\lambda$, we can ensure that ${\rm arg}((|x|_+/|x-1|)^\frac{\gamma\beta}{2}) \in (-\frac{\pi}{2}, \frac{\pi}{2})$ for all $x \in \mathbb{R}$ and $\beta \in U_{0,\epsilon}$. Therefore, the integral $\int_{\mathbb{R}} e^{\frac{\gamma}{2}h(x)} \frac{|x|_+^{\frac{\gamma\beta}{2}}}{|x|\cdot |x-1|^\frac{\gamma\beta}{2}}  d\mu(x)$ in \eqref{eq:def-threept} has arguments within $(-\frac{\pi}{2},\frac{\pi}{2})$. Using the inequality $|z^w| \leq e^{\frac{\pi}{2} |{\rm Im} w|} |z|^{{\rm Re}w}$ for all $z, w \in \mathbb{C}$ with $\arg z \in (-\frac{\pi}{2}, \frac{\pi}{2})$, we get~\eqref{lem:int-bound} by choosing a smaller $\lambda$ so that $\exp(\frac{\pi \lambda}{2 \gamma}) \leq 2$.

We now show that there exists a neighborhood $U_{1,\epsilon}$ of $[2(Q-\alpha)+\epsilon, Q-\epsilon]$ and a constant $C>0$ such that for all $s\geq 0$ and $\beta \in U_{1,\epsilon}$, \begin{equation}
    |\overline H^{(\frac{2}{\gamma},\beta,\frac{2}{\gamma})}_{(s,1,1)}| \leq C \min \{1, s^{1-\frac{{\rm Re} \beta}{\gamma}} \} .
    \label{eq:upper-bound-threept}
\end{equation}We first upper bound $|\overline H^{(\frac{2}{\gamma},\beta,\frac{2}{\gamma})}_{(s,1,1)}|$ when $s$ is away from zero. By~\eqref{eq:def-threept}, we have $\overline H^{(\frac{2}{\gamma}, \beta,\frac{2}{\gamma})}_{(s,1,1)} \leq \overline H^{(\frac{2}{\gamma}, \beta,\frac{2}{\gamma})}_{(s,0,0)} = \overline H^{(\frac{2}{\gamma}, \beta,\frac{2}{\gamma})}_{(1,0,0)} s^{1-\frac{\beta}{\gamma}}$ for $\beta \in (\gamma, Q)$; and $\overline H^{(\frac{2}{\gamma}, \beta,\frac{2}{\gamma})}_{(s,1,1)} \leq \overline H^{(\frac{2}{\gamma}, \beta,\frac{2}{\gamma})}_{(s,s,s)} = \overline H^{(\frac{2}{\gamma}, \beta,\frac{2}{\gamma})}_{(1,1,1)} s^{1-\frac{\beta}{\gamma}}$ for $\beta \in (0,\gamma)$;  and $\overline H^{(\frac{2}{\gamma}, \gamma,\frac{2}{\gamma})}_{(s,1,1)} = 1$. Combining these inequalities with \eqref{lem:int-bound}, we can choose a constant $C>0$ such that $|\overline H^{(\frac{2}{\gamma}, \beta,\frac{2}{\gamma})}_{(s,1,1)}| \leq C s^{1-\frac{{\rm Re} \beta}{\gamma}}$ for all $s>0 $ and $\beta \in U_{0,\epsilon}$. Next, we upper bound $|\overline H^{(\frac{2}{\gamma},\beta,\frac{2}{\gamma})}_{(s,1,1)}|$ when $s$ is close to zero. According to \cite[Theorem 1.8]{RZ22},  $0 < \overline H^{(\frac{2}{\gamma}, \beta,\frac{2}{\gamma})}_{(0,1,1)} < \infty$ when $\beta \in [2(Q-\alpha)+ \epsilon, Q-\epsilon]$ and $\overline H^{(\frac{2}{\gamma}, \beta,\frac{2}{\gamma})}_{(s,1,1)}$ is a continuous function in both $s\geq 0$ and $\beta$ within a small complex neighborhood of $[2(Q-\alpha)+ \epsilon, Q-\epsilon]$. Therefore, we can choose a sufficiently small constant $\delta = \delta(\epsilon)>0$ and a complex neighborhood $U_{1,\epsilon}$ of $[2(Q-\alpha)+ \epsilon, Q-\epsilon]$ such that $|\overline H^{(\frac{2}{\gamma}, \beta,\frac{2}{\gamma})}_{(s,1,1)}| < \frac{1}{\delta}$ for all $s \in [0,\delta)$ and $\beta \in U_{1,\epsilon}$. Combining the above arguments and enlarging the constant $C$, we prove~\eqref{eq:upper-bound-threept}.

  By \eqref{eq:upper-bound-threept} and the fact that $\alpha < Q$ and ${\rm Re} \beta \geq 2(Q-\alpha) + \epsilon$, we see that the integral in \eqref{eq:def-intg} is absolutely convergent and, moreover uniformly controlled for $\beta \in U_{1,\epsilon}$. By \cite[Theorem 1.8]{RZ22}, there exists another complex neighborhood $U_{2,\epsilon}$ of $[2(Q-\alpha)+ \epsilon, Q-\epsilon]$ such that for any fixed $s>0$, $\overline H^{(\frac{2}{\gamma}, \beta,\frac{2}{\gamma})}_{(s,1,1)}$ is analytic for $\beta \in U_{2,\epsilon}$. Combining these two properties and using Morera's theorem and Fubini's theorem, we see that the integral \eqref{eq:def-intg} is analytic in $U_\epsilon := U_{1,\epsilon} \cap U_{2,\epsilon}$. Therefore, it is absolutely convergent and analytic in $\cup_{\epsilon \leq 1} U_\epsilon$, which is a complex neighborhood of $(2(Q-\alpha),Q)$ as desired.
\end{proof}

\subsection{Expression of the joint moment via boundary structure constants} 
\label{sec:sol-gab}
Recall the measure $\nu_\kappa^{\alpha,\beta}$ from \eqref{eq:def-beta}, where $\beta_0 = \frac{4}{\gamma} - \gamma$  and the scaling exponent $\Delta_\alpha = \frac{\alpha}{2}(Q-\frac{\alpha}{2})$. 
For $\alpha, \beta \in \mathbb{R}$, let
\begin{equation}
\label{eq:def-gab}
g(\alpha,\beta) := \nu_\kappa\left[|\psi'(i)|^{2\Delta_\alpha - 2\Delta_\gamma} |\varphi'(1)|^{\Delta_\beta-\Delta_{\beta_0}}\right]\,.
\end{equation}
Namely, $g(\alpha,\beta)$ is the total mass of the measure $\nu_\kappa^{\alpha,\beta}$. Recall the conformal welding operation in Section~\ref{sec:welding}. It is of the form $\mathrm{Weld}(M_1,M_2,\cdots)$, where $M_1,M_2$ are laws of quantum surfaces with boundary. The quantum surfaces from the smaller surfaces are required to have equal boundary length on their interface but otherwise independent. This allows us to express the length distribution of boundary arcs of a sample from $\mathrm{Weld}(M_1,M_2,\cdots)$ in terms of the boundary length distributions under the individual laws $M_i$.
We now use this observation and a conformal welding result to  express $g(\alpha,\beta)$ via the boundary structure constants $\overline{G}$ and $\overline{H}$ in a certain parameter range. This idea will be used throughout this section.
\begin{proposition}
\label{prop:g(alpha,beta)}
For $ \gamma<\alpha<Q $ and $ \gamma \vee 2(Q-\alpha)  <\beta<Q$, we have
\begin{align}
g(\alpha,\beta) = \frac{4C_1\overline{G}(\alpha,\gamma)\Gamma(\frac{\beta}{\gamma}-1)}{\gamma^2C_2 \overline{G}(\alpha,\beta)\Gamma(\frac{2}{\gamma}(Q-\alpha))\Gamma (\frac{2}{\gamma}(\frac{\beta}{2} + \alpha - Q))}  \int_0^\infty \overline H^{(\frac{2}{\gamma}, \beta,\frac{2}{\gamma})}_{(s,1,1)}    \frac{s^{\frac{2}{\gamma}(Q-\alpha)-1}}{1+s} ds \,,  \label{eq:for-gab}
\end{align}
where  $C_1$ and $C_2$ are the constants defined in Proposition~\ref{prop:weld6} and Lemma~\ref{lem:weld4}, respectively.
\end{proposition}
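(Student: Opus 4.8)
\textbf{Proof plan for Proposition~\ref{prop:g(alpha,beta)}.}
The plan is to start from the conformal welding identity~\eqref{eq:key-welding} of Proposition~\ref{prop:reweight} and extract the total-mass identity by disintegrating both sides over the relevant boundary lengths. Recall that the total mass of $\nu_\kappa^{\alpha,\beta}$ is exactly $g(\alpha,\beta)$, and by Definition~\ref{def:LF} the field ${\rm LF}_\mathbb{H}^{(\alpha,i),(\beta,0)}$ is an infinite measure whose total mass is infinite; so I must not simply integrate out the field. Instead I will apply a positive linear functional (a Laplace transform in the boundary length) to both sides of~\eqref{eq:key-welding}: choose the test function $e^{-\mu\mathcal L}$ where $\mathcal L$ is the total quantum boundary length of the whole welded surface $\mathbb{H}$, and then pass $\mu\to 0$ (or, more cleanly, keep $\mu$ and match the $\mu$-dependence on both sides, which must agree identically since the curve-decorated surfaces agree as measures). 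First I would record that, since $\psi'(i)$ and $\varphi'(1)$ are measurable functions of the curve $\eta$ alone (not of $\phi$), we have
\begin{equation}
{\rm LF}_\mathbb{H}^{(\alpha,i),(\beta,0)}\times\nu_\kappa^{\alpha,\beta}\,[e^{-\mu\mathcal L_{\rm tot}}] = \nu_\kappa^{\alpha,\beta}\big[\,{\rm LF}_\mathbb{H}^{(\alpha,i),(\beta,0)}[e^{-\mu\mathcal L_{\rm tot}}\mid\eta]\,\big],
\end{equation}
but because of the conformal covariance this is awkward; it is cleaner to work directly on the welding side.

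The core computation is on the right-hand side of~\eqref{eq:key-welding}. A sample of ${\rm Weld}({\rm LF}_\mathbb{H}^{(\frac{2}{\gamma},0),(\beta,1),(\frac{2}{\gamma},\infty)},\mathcal{M}_{1,2}^{\rm disk}(\alpha))$ is obtained by gluing one boundary arc of $\mathcal{M}_{1,2}^{\rm disk}(\alpha)$ to the arc $(0,1)$ of the Liouville field along equal quantum length. Write $\mathcal L_{12}$ for that shared length, $\mathcal L_2$ for the other boundary arc of $\mathcal{M}_{1,2}^{\rm disk}(\alpha)$, and $\mathcal L_{13},\mathcal L_{23}$ for the arcs $(-\infty,0)$ and $(1,\infty)$ of the Liouville field. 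Then the disintegration of the welding over the gluing interface gives, with $s := \mathcal L_{13}/\mathcal L_{23}$ after a scaling change of variables (using the scale invariance built into Lemmas~\ref{lem:LF2} and~\ref{lem:LF3}),
\begin{align*}
&{\rm Weld}(\cdot,\cdot)[e^{-\mu\mathcal L_{\rm tot}}] \\
&\quad= \int_0^\infty \Big(\tfrac{2}{\gamma}2^{-\frac{\alpha^2}{2}}\overline G(\alpha,\gamma)\Big) \Big(\tfrac{2}{\gamma}\Gamma(\tfrac{\beta}{\gamma}-1)\Big)\, (\text{kernel involving }\overline H^{(\frac{2}{\gamma},\beta,\frac{2}{\gamma})}_{(\mu,\mu,\mu)}\text{ and }(\ell_{12}+\ell_2)^{\frac{2(\alpha-Q)}{\gamma}})\, d\ell\cdots,
\end{align*}
where the $\ell_{12}$-integral is carried out against Lemma~\ref{lem:LF2}'s density and the remaining arcs are handled by Lemma~\ref{lem:LF3}; here one has to be mindful of whether $\beta>\gamma$ or $\beta<\gamma$, but the hypothesis $\beta>\gamma$ places us in the first case of Lemma~\ref{lem:LF3}, so no ``$-1$'' subtraction is needed. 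Matching the $\mu$-power on the left-hand side, which by Lemma~\ref{lem:LF1} (applied to ${\rm LF}_\mathbb{H}^{(\alpha,i),(\beta,0)}$, valid since $\gamma<\alpha<Q$, $\frac\gamma2-\alpha<\frac\beta2<\alpha$, and $\frac\beta2+\alpha-Q>0$ under our range) equals $\frac{2}{\gamma}2^{-\frac{\alpha^2}{2}}\overline G(\alpha,\beta)\Gamma(\frac{2}{\gamma}(\frac\beta2+\alpha-Q))\mu^{\frac{2}{\gamma}(Q-\alpha-\frac\beta2)}$ times $g(\alpha,\beta)$, and cancelling the common $\mu$-powers and the constants $\frac{2}{\gamma}2^{-\frac{\alpha^2}{2}}$, produces~\eqref{eq:for-gab} after collecting the factor $\frac{2C_1}{\gamma C_2}$ from~\eqref{eq:key-welding}, a factor $\Gamma(\frac{2}{\gamma}(Q-\alpha))$ from performing the $s$-integral's scaling normalization (the Beta-type integral $\int_0^\infty \frac{s^{\frac{2}{\gamma}(Q-\alpha)-1}}{1+s}\,ds$ structure appears precisely because $(\ell_{12}+\ell_2)^{\frac{2(\alpha-Q)}{\gamma}}$ integrated against a one-dimensional length introduces exactly this kernel), and the reciprocals of $\overline G(\alpha,\beta)$ and the two Gamma factors.

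The main obstacle I anticipate is the careful bookkeeping of the several disintegrations and the associated Jacobians: one must disintegrate $\mathcal{M}_{1,2}^{\rm disk}(\alpha)$ over \emph{both} its boundary arcs, disintegrate the Liouville field ${\rm LF}_\mathbb{H}^{(\frac{2}{\gamma},0),(\beta,1),(\frac{2}{\gamma},\infty)}$ over its three arcs, impose the length-matching on the glued arc, and then recognize that the remaining degrees of freedom collapse (via the scaling covariance of all densities in Lemmas~\ref{lem:LF1}--\ref{lem:LF3}) to a single integration variable $s$, the ratio of the two unglued ``outer'' arc lengths. Getting the exponent $\frac{2}{\gamma}(Q-\alpha)-1$ and the $\frac{1}{1+s}$ kernel right, and confirming that the $\Gamma(\frac{2}{\gamma}(Q-\alpha))$ in the denominator is exactly what the scaling integral contributes, will require tracking the Lebesgue measures precisely. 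The absolute convergence of the final integral is not an issue here: Lemma~\ref{lem:analy1} guarantees it on the stated range $\gamma\vee 2(Q-\alpha)<\beta<Q$, so once the algebraic identity is established formally, Fubini and the convergence from Lemma~\ref{lem:analy1} make every interchange rigorous.
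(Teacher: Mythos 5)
Your overall strategy coincides with the paper's: apply the Laplace functional $e^{-\mu\mathcal L}$ in the total boundary length to both sides of~\eqref{eq:key-welding}, factor the left side as $g(\alpha,\beta)\,{\rm LF}_\mathbb{H}^{(\alpha,i),(\beta,0)}[e^{-\mu\mathcal L}]$ and evaluate it by Lemma~\ref{lem:LF1} (your check of the hypotheses, including $\frac{\beta}{2}+\alpha-Q>0$, is correct), disintegrate the welded side over its boundary arcs via Lemmas~\ref{lem:LF2} and~\ref{lem:LF3} (first case, since $\beta>\gamma$), and match the common power of $\mu$. However, your description of the welding is wrong, and the error is not cosmetic. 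In ${\rm Weld}({\rm LF}_\mathbb{H}^{(\frac{2}{\gamma},0),(\beta,1),(\frac{2}{\gamma},\infty)},\mathcal{M}_{1,2}^{\rm disk}(\alpha))$ the arc of $\mathcal{M}_{1,2}^{\rm disk}(\alpha)$ is glued to the arc of the Liouville field \emph{between the two $\frac{2}{\gamma}$-insertions}, i.e.\ to $(-\infty,0)$, whose length is $\mathcal L_{13}$: under $\varphi:(\mathbb{H},0,1,\infty)\to(\mathbb{H}\setminus\overline{D}_i,a,0,b)$ that arc is exactly the interface curve from $a$ to $b$, while $(0,1)$ and $(1,\infty)$ map to the two real-line pieces of $\partial(\mathbb{H}\setminus\overline{D}_i)$ and remain on the outer boundary. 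You instead glue along $(0,1)$. If you carry out your computation with that interface, the weight $e^{-\mu\mathcal L_{\rm tot}}$ hits $\mathcal L_{13},\mathcal L_{23}$ and the free disk arc, the Gamma representation of $(\ell_{12}+\ell_2)^{\frac{2}{\gamma}(\alpha-Q)}$ places the auxiliary variable $s$ in the middle slot of $\overline H$, and after the homogeneity rescaling you obtain $\int_0^\infty \overline H^{(\frac{2}{\gamma},\beta,\frac{2}{\gamma})}_{(1,s,1)}\frac{s^{\frac{2}{\gamma}(Q-\alpha)-1}}{1+s}\,ds$ rather than the integral of $\overline H^{(\frac{2}{\gamma},\beta,\frac{2}{\gamma})}_{(s,1,1)}$ in~\eqref{eq:for-gab}. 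These are genuinely different (the first slot is the arc between the two $\frac{2}{\gamma}$-insertions, the second is adjacent to the $\beta$-insertion; a change of variables shows equality only at one special value of $\alpha$), and since the $\mu$-power comes out the same either way, the mistake would not be caught by the $\mu$-matching step.

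With the interface corrected the rest of your plan goes through exactly as in the paper: the shared length is $\mathcal L_{13}=\ell$, the exposed boundary is $\ell_1+\ell_2+\ell_3$, Lemma~\ref{lem:LF2} supplies $(\ell+\ell_3)^{\frac{2}{\gamma}(\alpha-Q)}$, and writing this power as $\Gamma(\frac{2}{\gamma}(Q-\alpha))^{-1}\int_0^\infty e^{-s(\ell+\ell_3)}s^{\frac{2}{\gamma}(Q-\alpha)-1}\,ds$ is the actual mechanism that introduces $s$ and the $\Gamma(\frac{2}{\gamma}(Q-\alpha))$ in the denominator; your heuristic $s=\mathcal L_{13}/\mathcal L_{23}$ is not a change of variables that occurs anywhere and should be dropped. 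After Lemma~\ref{lem:LF3}, the $\ell_3$-integral yields $\frac{1}{\mu+s}$, and the homogeneity $\overline H_{(\mu s,\mu,\mu)}=\mu^{1-\frac{\beta}{\gamma}}\overline H_{(s,1,1)}$ isolates $\mu^{\frac{2}{\gamma}(Q-\alpha-\frac{\beta}{2})}$, which cancels against the left side from Lemma~\ref{lem:LF1} to give~\eqref{eq:for-gab}. Finally, the factorization on the left side is not awkward at all: $\mathcal L$ depends only on the field, so the product measure immediately contributes the total mass $g(\alpha,\beta)$ times the structure constant, exactly as you use later.
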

\begin{proof}
Recall  the conformal welding identity from Proposition~\ref{prop:reweight} for ${\rm Weld}({\rm LF}_\mathbb{H}^{(\frac{2}{\gamma},0), (\beta,1), (\frac{2}{\gamma} ,\infty)}, \mathcal{M}_{1,2}^{\rm disk}(\alpha))$.
By Proposition~\ref{prop:reweight}, and the definition of $g(\alpha,\beta)$, for $\mu>0$ we have
\begin{equation}
    \label{eq:prop4.14-1}
    g(\alpha,\beta) {{\rm LF}_\mathbb{H}^{(\alpha,i),(\beta,0)}[e^{-\mu \mathcal L}]} = \frac{2C_1}{\gamma C_2}\cdot {\rm Weld}({\rm LF}_\mathbb{H}^{(\frac{2}{\gamma},0), (\beta,1), (\frac{2}{\gamma} ,\infty)}, \mathcal{M}_{1,2}^{\rm disk}(\alpha)) [e^{-\mu \mathcal L}]\,,
\end{equation}
where on both sides $\mathcal L$ represents the total  boundary length of the quantum surface.

Write $K(\mu):= {\rm Weld}({\rm LF}_\mathbb{H}^{(\frac{2}{\gamma},0), (\beta,1), (\frac{2}{\gamma} ,\infty)}, \mathcal{M}_{1,2}^{\rm disk}(\alpha)) [e^{-\mu \mathcal L}]$.
Let $|{\rm LF}_\mathbb{H}^{(\frac{2}{\gamma},0), (\beta,1), (\frac{2}{\gamma} ,\infty)}(L_{13},L_{12},L_{23})|$ be the joint density of $\mathcal L_{13},\mathcal L_{12},\mathcal L_{23}$ in Lemma~\ref{lem:LF3}. Recall $|\mathcal{M}_{1,2}^{\rm disk}(\alpha; \ell_1,\ell_2)|$ from Lemma~\ref{lem:LF2}, which is the joint density of the two boundary arcs of a sample from $\mathcal{M}_{1,2}^{\rm disk}(\alpha; \ell_1,\ell_2)$. Then we have
    \begin{align*}
        K(\mu) =\int_{\mathbb{R}_+^4} |{\rm LF}_\mathbb{H}^{(\frac{2}{\gamma}, 0),(\beta,1),(\frac{2}{\gamma},\infty)}(\ell,\ell_1,\ell_2)| \times |\mathcal{M}_{1,2}^{\rm disk}(\alpha; \ell,\ell_3)| \times e^{-\mu(\ell_1+\ell_2+\ell_3)} d\ell d\ell_1 d\ell_2 d\ell_3\,.
    \end{align*}
    
By  Lemma~\ref{lem:LF2} and the fact that $\alpha>\frac{\gamma}{2}$, we have:
    \begin{align*}
        K(\mu)=\frac{2}{\gamma} 2^{-\frac{\alpha^2}{2}} \overline G(\alpha,\gamma) \int_{\mathbb{R}_+^4} |{\rm LF}_\mathbb{H}^{(\frac{2}{\gamma}, 0),(\beta,1),(\frac{2}{\gamma},\infty)}(\ell, \ell_1, \ell_2)| \times (\ell + \ell_3)^{\frac{2}{\gamma}(\alpha-Q)}  e^{-\mu(\ell_1 + \ell_2 + \ell_3)} d\ell d\ell_1 d\ell_2 d\ell_3\,.
    \end{align*}
Since $(\ell + \ell_3)^{\frac{2}{\gamma}(\alpha-Q)} \Gamma(\frac{2}{\gamma}(Q-\alpha))=  \int_0^\infty e^{-s(\ell + \ell_3)} s^{\frac{2}{\gamma}(Q-\alpha)-1} ds$ for $\ell>0, \ell_3>0$ and $\alpha<Q$, we have:
    \begin{align*}
        K(\mu)&=\frac{2^{1-\frac{\alpha^2}{2}} \overline G(\alpha,\gamma)}{\gamma \Gamma(\frac{2}{\gamma}(Q-\alpha))} \int_{\mathbb{R}_+^5} |{\rm LF}_\mathbb{H}^{(\frac{2}{\gamma}, 0),(\beta,1),(\frac{2}{\gamma},\infty)}(\ell, \ell_1, \ell_2)| \times  e^{-s \ell- \mu \ell_1 -\mu \ell_2 - (\mu +s) \ell_3} s^{\frac{2}{\gamma}(Q-\alpha)-1} d\ell d\ell_1 d\ell_2 d\ell_3 ds\,.
    \end{align*}
Interchanging the order of integration and using Lemma~\ref{lem:LF3} with $\beta \in (\gamma, Q)$, we have: 
    \begin{align*}
        K(\mu)&=\frac{2^{1-\frac{\alpha^2}{2}} \overline G(\alpha,\gamma)}{\gamma \Gamma(\frac{2}{\gamma}(Q-\alpha))} \int_{\mathbb{R}_+^2} {\rm LF}_\mathbb{H}^{(\frac{2}{\gamma}, 0),(\beta,1),(\frac{2}{\gamma},\infty)}[e^{-s \mathcal L_{13} - \mu \mathcal L_{12} - \mu \mathcal L_{23}}]  \times  e^{- (\mu +s) \ell_3} s^{\frac{2}{\gamma}(Q-\alpha)-1} d\ell_3 ds \\
        &=\frac{2^{2-\frac{\alpha^2}{2}} \overline G(\alpha,\gamma) \Gamma(\frac{\beta}{\gamma}-1)}{\gamma^2 \Gamma(\frac{2}{\gamma}(Q-\alpha))} \int_{\mathbb{R}_+^2} \overline H^{(\frac{2}{\gamma},\beta,\frac{2}{\gamma})}_{(s,\mu,\mu)}   e^{- (\mu +s) \ell_3} s^{\frac{2}{\gamma}(Q-\alpha)-1} d\ell_3 ds\,.
    \end{align*}
By integrating with respect to $\ell_3$ first, we obtain:
    $$
    K(\mu) = \frac{2^{2-\frac{\alpha^2}{2}} \overline G(\alpha,\gamma) \Gamma(\frac{\beta}{\gamma}-1)}{\gamma^2 \Gamma(\frac{2}{\gamma}(Q-\alpha))} \int_0^\infty \overline H^{(\frac{2}{\gamma},\beta,\frac{2}{\gamma})}_{(s,\mu,\mu)}   \frac{s^{\frac{2}{\gamma}(Q-\alpha)-1}}{\mu+s} ds\,.
    $$
By \eqref{eq:def-threept}, we have $ \overline H^{(\frac{2}{\gamma},\beta,\frac{2}{\gamma})}_{(\mu s,\mu,\mu)} = \mu^{1-\frac{\beta}{\gamma}} \overline H^{(\frac{2}{\gamma},\beta,\frac{2}{\gamma})}_{(s,1,1)}$. Therefore,
    \begin{equation}
    \label{eq:prop4.14-2}
        K(\mu)=\frac{2^{2-\frac{\alpha^2}{2}} \overline G(\alpha,\gamma) \Gamma(\frac{\beta}{\gamma}-1)}{\gamma^2 \Gamma(\frac{2}{\gamma}(Q-\alpha))}\int_0^\infty \overline H^{(\frac{2}{\gamma},\beta,\frac{2}{\gamma})}_{(s,1,1)}   \frac{s^{\frac{2}{\gamma}(Q-\alpha)-1}}{1+s} ds \cdot \mu^{\frac{2}{\gamma}(Q-\alpha - \frac{\beta}{2})}.
    \end{equation}
Since $\gamma<\alpha<Q$ and $2(Q-\alpha)  <\beta<Q$, by Lemma~\ref{lem:analy1}, the integral in~\eqref{eq:prop4.14-2} is finite. By \eqref{eq:prop4.14-1} and the expression~\eqref{eq:prop4.14-3} of ${\rm LF}_\mathbb{H}^{(\alpha,i),(\beta,0)}[e^{-\mu \mathcal L }] $, we obtain~\eqref{eq:for-gab}.
\end{proof}

\subsection{Identities on the length distribution of quantum disks}
\label{subsec:QD-id}

 In Lemma~\ref{lem:LF2}, we have already given the boundary length distribution of the boundary arcs for $\mathcal{M}_{1,2}^{\rm disk}(\alpha)$. In this section, we give two lemmas concerning the boundary length distribution for various quantum disks defined in Section~\ref{sec:welding}. Together with Lemma~\ref{lem:LF2}, they serve as basic ingredients when we perform computations based on Proposition~\ref{prop:g(alpha,beta)}. Both lemmas are essentially extracted from the literature.

Recall the quantum disks $\widetilde {\rm QD}_{0,3}$ and ${\rm QD}_{1,1}$ from Definition~\ref{def:QD}. Let $|\widetilde {\rm QD}_{0,3}(\ell_1, \ell_2, \ell_3)|$ be the joint density of the quantum lengths of the three boundary arcs of a sample from $\widetilde {\rm QD}_{0,3}$. This is in the same sense as how $|\mathcal{M}_{1,2}^{\rm disk}(\alpha; \ell_1, \ell_2)|$ is defined in~\eqref{eq:def-M12-length}. Similarly, let $|{\rm QD}_{1,1}(\ell)|$ be the density of the quantum length of the entire boundary of a sample from ${\rm QD}_{1,1}$. The values of $|\widetilde {\rm QD}_{0,3}(\ell_1, \ell_2, \ell_3)|$ and $|{\rm QD}_{1,1}(\ell)|$ are given in the following lemma:

\begin{lemma}
    \label{lem:QD}
    For any $\ell, \ell_1, \ell_2, \ell_3>0$, we have
 \begin{equation}
        \label{eq:qd1}
            |\widetilde {\rm QD}_{0,3}(\ell_1, \ell_2, \ell_3)| = E_1 (\ell_1 + \ell_2 + \ell_3)^{-\frac{4}{\gamma^2}-1} \quad \mbox{and} \quad |{\rm QD}_{1,1}(\ell)| = E_2 \ell^{-\frac{4}{\gamma^2}+1},
    \end{equation}
    where 
    \begin{equation}
        \label{eq:val-e1-e2}
        E_1  = \frac{( 2 \pi)^{\frac{4}{\gamma^2}-1}}{(1-\frac{\gamma^2}{4})\Gamma(1-\frac{\gamma^2}{4})^{\frac{4}{\gamma^2}}} \quad \mbox{and} \quad E_2 = \frac{\Gamma(\frac{\gamma^2}{4})}{4 \pi (Q-\gamma)^2} \Big( \frac{2 \pi }{\Gamma(1-\frac{\gamma^2}{4})} \Big)^{\frac{4}{\gamma^2}-1}.
    \end{equation}
\end{lemma}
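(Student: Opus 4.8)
The plan is to obtain both formulas by unwinding the definitions of $\widetilde{\rm QD}_{0,3}$ and ${\rm QD}_{1,1}$ and feeding in exact inputs that are already available (from Section~\ref{sec:welding} and from the literature); the actual content is bookkeeping of multiplicative constants rather than any new idea.

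For ${\rm QD}_{1,1}$ the argument is self-contained given what precedes. I would start from the identity ${\rm QD}_{1,1} = \frac{\gamma}{2(Q-\gamma)^2}\,{\rm LF}_\mathbb{H}^{(\gamma,i),(\gamma,0)}$ established inside the proof of Lemma~\ref{lem:embedding-2} (equation~\eqref{eq:lem-embed-2-1}). Since the quantum boundary length is invariant under $\sim_\gamma$, this identity of quantum-surface laws passes to densities, so $|{\rm QD}_{1,1}(\ell)| = \frac{\gamma}{2(Q-\gamma)^2}|{\rm LF}_\mathbb{H}^{(\gamma,i),(\gamma,0)}(\ell)|$. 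Then apply Lemma~\ref{lem:LF1} with $\alpha=\beta=\gamma$ (the constraints $\beta<Q$ and $\frac{\gamma}{2}-\alpha<\frac{\beta}{2}<\alpha$ hold since $\gamma\in(\sqrt{2},2)$), which gives $|{\rm LF}_\mathbb{H}^{(\gamma,i),(\gamma,0)}(\ell)| = \frac{2}{\gamma}2^{-\gamma^2/2}\overline G(\gamma,\gamma)\,\ell^{\frac{2}{\gamma}(\frac{\gamma}{2}+\gamma-Q)-1}$; using $Q=\frac{2}{\gamma}+\frac{\gamma}{2}$ the exponent collapses to $\frac{2}{\gamma}(\gamma-\frac{2}{\gamma})-1 = 1-\frac{4}{\gamma^2}$, matching the claimed power. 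It then remains to check $\frac{\gamma}{2(Q-\gamma)^2}\cdot\frac{2}{\gamma}2^{-\gamma^2/2}\overline G(\gamma,\gamma) = E_2$: substituting $\overline G(\gamma,\gamma)=\frac{1}{\pi}\overline G(\gamma,0)$ and $\overline G(\gamma,0)=\big(2^{-\gamma^2/2}2\pi/\Gamma(1-\gamma^2/4)\big)^{4/\gamma^2-1}\Gamma(\gamma^2/4)$ from Lemma~\ref{lem:LF4} (using $\frac{2(Q-\gamma)}{\gamma}=\frac{4}{\gamma^2}-1$), the powers of $2$ combine since $2^{-\gamma^2/2}\cdot 2^{-\frac{\gamma^2}{2}(\frac{4}{\gamma^2}-1)}=2^{-2}$, and one reads off $E_2=\frac{\Gamma(\gamma^2/4)}{4\pi(Q-\gamma)^2}\big(2\pi/\Gamma(1-\gamma^2/4)\big)^{4/\gamma^2-1}$. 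This part is a routine computation.

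For $\widetilde{\rm QD}_{0,3}$ I would first explain the shape of the answer. By Definition~\ref{def:QD} the three marked points of ${\rm QD}_{0,3}$ are sampled from $\nu_h^\#$, so conditionally on the total boundary length $L$ the three arc lengths are distributed as $L$ times a Dirichlet$(1,1,1)$ vector (and the counterclockwise restriction defining $\widetilde{\rm QD}_{0,3}$ contributes only a constant combinatorial factor); passing from the law of $L$ to the joint density of $(\ell_1,\ell_2,\ell_3)$ on $\mathbb{R}_+^3$ introduces an explicit factor $L^{-2}$, and together with the scale covariance of ${\rm QD}$ this forces the form $E_1(\ell_1+\ell_2+\ell_3)^{-4/\gamma^2-1}$. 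To pin down $E_1$ I would use that, again by Definition~\ref{def:QD}, ${\rm QD}_{0,3}$ is obtained from $\mathcal{M}_2^{\rm disk}(2)$ by forgetting its two marked points, reweighting by $\nu_h(\partial\mathcal{S})^{-2}\cdot\nu_h(\partial\mathcal{S})^3=\nu_h(\partial\mathcal{S})$, and then adding three $\nu_h^\#$-points; hence $E_1$ is determined by the (known) density of the total boundary length of the weight-$2$ quantum disk $\mathcal{M}_2^{\rm disk}(2)$ together with the two elementary factors above. The boundary-length law of $\mathcal{M}_2^{\rm disk}(2)$, with the correct normalization, is recorded in \cite{AHS20} (see also \cite[Section~2]{AHS21}). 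Alternatively, one may combine Lemma~\ref{lem:QT} and Definition~\ref{def:QT} (which give $\widetilde{\rm QD}_{0,3}$ as an explicit multiple of ${\rm LF}_\mathbb{H}^{(\gamma,0),(\gamma,1),(\gamma,\infty)}$) with the three-point boundary structure constant of \cite{RZ22} for the insertions $(\gamma,\gamma,\gamma)$, at the price of also fixing the constant $C$ in Lemma~\ref{lem:QT}.

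The only real obstacle is this constant-chasing: several of the input identities — \eqref{eq:lem-embed-2-1}, Lemma~\ref{lem:QT}, Definition~\ref{def:QD}, and the length laws in \cite{AHS20,RZ22} — are stated up to explicit or unspecified multiplicative constants, so the proof consists of carefully carrying these constants through and verifying that all the $2$-, $\pi$- and $\Gamma$-factors reassemble into the closed forms~\eqref{eq:val-e1-e2}. I do not anticipate any conceptual difficulty beyond this arithmetic.
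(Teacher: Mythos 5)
Your proposal is correct, and it has the same overall skeleton as the paper's proof (unwind Definition~\ref{def:QD} to reduce everything to a single one-variable boundary-length density, then quote an exact constant), but it differs in which exact inputs are used. The paper's argument is shorter: it writes $|{\rm QD}_{0,3}(\ell)|=\ell^3|{\rm QD}(\ell)|$, $|{\rm QD}_{0,2}(\ell)|=\ell^2|{\rm QD}(\ell)|$, $|{\rm QD}_{1,1}(\ell)|=\ell|{\rm QD}_{1,0}(\ell)|$, notes $|\widetilde{\rm QD}_{0,3}(\ell_1,\ell_2,\ell_3)|=|{\rm QD}_{0,3}(\ell_1+\ell_2+\ell_3)|/(\ell_1+\ell_2+\ell_3)^2$ (exactly your Dirichlet/uniform-points computation, so be careful that the Dirichlet$(1,1,1)$ density $2$ and the counterclockwise restriction $\tfrac12$ cancel to give precisely the factor $L^{-2}$ you state), and then cites \cite[Lemma~3.2]{ARS21} for $|{\rm QD}_{0,2}(\ell)|=E_1\ell^{-4/\gamma^2}$ and \cite[Eq.~(2.6) and Theorem~3.4]{ARS21} for $|{\rm QD}_{1,0}(\ell)|=E_2\ell^{-4/\gamma^2}$. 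Your treatment of ${\rm QD}_{1,1}$ is the nicer deviation: deriving $|{\rm QD}_{1,1}(\ell)|$ from the paper's own identity~\eqref{eq:lem-embed-2-1} together with Lemmas~\ref{lem:LF1} and~\ref{lem:LF4} is internally checkable (your exponent $1-\tfrac{4}{\gamma^2}$ and the constant chase, including the collapse of the powers of $2$ to $2^{-2}$, do reproduce $E_2$, and there is no circularity since those lemmas precede Lemma~\ref{lem:QD}); it buys self-containedness at the cost of invoking the explicit $\overline G$ values, whereas the paper's route is a one-line citation. For $E_1$ your input (the total boundary-length density of $\mathcal{M}_2^{\rm disk}(2)$ with its normalization) is equivalent, via Definition~\ref{def:QD}, to the $|{\rm QD}_{0,2}(\ell)|$ formula the paper quotes; the only caveat is to make sure the reference you lean on actually records that density with the exact constant in the present normalization conventions --- the cleanest quotable form is precisely \cite[Lemma~3.2]{ARS21}, so you may as well cite that, and your alternative via Lemma~\ref{lem:QT} should be avoided since the constant $C$ there is unspecified, as you already note.
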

\begin{proof}
Recall Definition~\ref{def:QD}. By the definition of $\rm {QD},\rm {QD}_{0,2}$, $\rm {QD}_{0,3}$, we have 
$|{\rm QD}_{0,2}(\ell)| = \ell^2 |{\rm QD}(\ell)|$ and
$|{\rm QD}_{0,3}(\ell)| = \ell^3 |{\rm QD}(\ell)|$. By Lemma 3.2 in \cite{ARS21}, we have $|{\rm QD}_{0,2}(\ell)| = E_1 \ell^{-\frac{4}{\gamma^2}}$. By the definition of  $\widetilde {\rm QD}_{0,3}$, we see that 
$ |\widetilde {\rm QD}_{0,3}(\ell_1, \ell_2, \ell_3)| $ equals $|{\rm QD}_{0,3}(\ell_1 + \ell_2 + \ell_3)| / (\ell_1 + \ell_2 + \ell_3)^2 = E_1  (\ell_1 + \ell_2 + \ell_3)^{-\frac{4}{\gamma^2}-1}$.

Similarly,  by definition $|{\rm QD}_{1,1}(\ell)| = \ell|{\rm QD}_{1,0}(\ell)|$. By \cite[Eq.~(2.6)]{ARS21} and the equation below Theorem 3.4 therein, we have $|{\rm QD}_{1,0}(\ell)| = E_2 \ell^{-\frac{4}{\gamma^2}}$. This gives $|{\rm QD}_{1,1}(\ell)|= E_2 \ell^{-\frac{4}{\gamma^2}+1}$. \qedhere
\end{proof}

Recall the two pointed quantum disk $\mathcal{M}_{2}^{\rm disk}(W)$ from Definitions~\ref{def:thick} and \ref{def:thin-disk}. The boundary length distribution can be computed via the so-called boundary reflection coefficient of Liouville theory determined in~\cite{RZ22}.  We only need the case when $W=\gamma^2-2$, which can be extracted from \cite{AHS21}.
\begin{lemma}
    \label{lem:two-pt-QD} Fix $\gamma\in (\sqrt{2},2)$.
    For any $\mu_1, \mu_2\geq0$ with $\mu_1+\mu_2>0$, we have
    \begin{equation}
        \label{eq:twopt-disk}
        \mathcal{M}_{2}^{\rm disk}(\gamma^2-2)[e^{-\mu_1 \mathcal L-\mu_2 \mathcal R}] = E_3 \frac{\mu_1-\mu_2}{\mu_1^{4/\gamma^2} - \mu_2^{4/\gamma^2}}\,.
    \end{equation} where $\mathcal L$ and $\mathcal R$ represent the left and right boundary lengths of a sample from $\mathcal{M}_{2}^{\rm disk}(\gamma^2-2)$, and
    \begin{equation}
        \label{eq:val-e3-e4}
        E_3 = \frac{(2 \pi)^{1-\frac{4}{\gamma^2}}\Gamma(1-\frac{\gamma^2}{4})^{\frac{4}{\gamma^2}}}{\Gamma(2-\frac{4}{\gamma^2})} \,.
    \end{equation}
\end{lemma}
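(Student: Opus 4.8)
The plan is to reduce to the Poisson-chain description of the thin quantum disk together with the (known) boundary-length law of the weight-$2$ disk, and then to track constants carefully. Since $\gamma\in(\sqrt2,2)$, the weight $W=\gamma^2-2$ lies in $(0,\frac{\gamma^2}{2})$, so Definition~\ref{def:thin-disk} applies with $\gamma^2-W=2$: a sample from $\mathcal{M}_2^{\rm disk}(\gamma^2-2)$ is obtained by sampling $T$ from $\frac{\gamma^4}{(4-\gamma^2)^2}\,{\rm Leb}_{\mathbb{R}_+}$, then a Poisson point process $\{(u,\mathcal D_u)\}$ on $\mathbb{R}_+\times(\text{quantum surfaces})$ with intensity ${\rm Leb}\times\mathcal{M}_2^{\rm disk}(2)$, and concatenating the $\mathcal D_u$ with $u\le T$. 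Under this description $\mathcal L=\sum_{u\le T}\ell_L(\mathcal D_u)$ and $\mathcal R=\sum_{u\le T}\ell_R(\mathcal D_u)$, where $\ell_L(\mathcal D),\ell_R(\mathcal D)$ denote the left and right boundary lengths of a block $\mathcal D$. Campbell's theorem for Poisson point processes then gives, conditionally on $T$,
\[
\bbE\!\left[e^{-\mu_1\mathcal L-\mu_2\mathcal R}\mid T\right]=\exp\!\big(-T\,A(\mu_1,\mu_2)\big),\qquad A(\mu_1,\mu_2):=\mathcal{M}_2^{\rm disk}(2)\big[1-e^{-\mu_1\ell_L-\mu_2\ell_R}\big],
\]
provided $A(\mu_1,\mu_2)<\infty$; integrating over $T$ yields $\mathcal{M}_2^{\rm disk}(\gamma^2-2)[e^{-\mu_1\mathcal L-\mu_2\mathcal R}]=\frac{\gamma^4}{(4-\gamma^2)^2}\,A(\mu_1,\mu_2)^{-1}$.

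It then remains to evaluate $A(\mu_1,\mu_2)$, for which I would use two facts about the weight-$2$ disk. First, its total boundary length density is $|\mathcal{M}_2^{\rm disk}(2)(\ell)|=|{\rm QD}_{0,2}(\ell)|=E_1\ell^{-4/\gamma^2}$, as recorded in the proof of Lemma~\ref{lem:QD}, with $E_1$ from~\eqref{eq:val-e1-e2}. Second, the two marked points of the weight-$2$ disk are quantum typical, so that conditionally on $\ell_L+\ell_R=\ell$ one has $\ell_L\sim\mathrm{Unif}[0,\ell]$ (this ``broken stick'' description, or equivalently the joint law of $(\ell_L,\ell_R)$ under $\mathcal{M}_2^{\rm disk}(2)$ up to the overall constant, can also be cited directly from~\cite{AHS21}). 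Writing $p=4/\gamma^2\in(1,2)$ and using $\frac1\ell\int_0^\ell e^{-\mu_1x-\mu_2(\ell-x)}\,dx=\frac1{\mu_1-\mu_2}\int_{\mu_2}^{\mu_1}e^{-t\ell}\,dt$ together with the elementary identity $\int_0^\infty\ell^{-p}(1-e^{-t\ell})\,d\ell=-\Gamma(1-p)\,t^{p-1}$ (valid for $p\in(1,2)$, which is precisely where $\gamma>\sqrt2$ enters and guarantees $A(\mu_1,\mu_2)<\infty$), one obtains after an application of Tonelli
\[
A(\mu_1,\mu_2)=-\frac{E_1\,\Gamma(1-4/\gamma^2)}{4/\gamma^2}\cdot\frac{\mu_1^{4/\gamma^2}-\mu_2^{4/\gamma^2}}{\mu_1-\mu_2}.
\]
Substituting this back gives~\eqref{eq:twopt-disk} with $E_3=-\frac{\gamma^4}{(4-\gamma^2)^2}\cdot\frac{4/\gamma^2}{E_1\,\Gamma(1-4/\gamma^2)}$, and it then remains only to check that this matches~\eqref{eq:val-e3-e4}: inserting the value of $E_1$, using $\Gamma(1-4/\gamma^2)=\Gamma(2-4/\gamma^2)/(1-4/\gamma^2)$, and verifying the algebraic identity $\frac{4}{\gamma^2}\,(1-\tfrac{\gamma^2}{4})(1-\tfrac{4}{\gamma^2})\cdot\frac{\gamma^4}{(4-\gamma^2)^2}=1$ (equivalently $\gamma^4-8\gamma^2+16=(\gamma^2-4)^2$) produces exactly $E_3=\frac{(2\pi)^{1-4/\gamma^2}\Gamma(1-\gamma^2/4)^{4/\gamma^2}}{\Gamma(2-4/\gamma^2)}$.

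The two one-dimensional integrals and the final algebraic simplification are routine. The only point requiring genuine care is the precise law of $(\ell_L,\ell_R)$ under $\mathcal{M}_2^{\rm disk}(2)$, and ensuring that the normalization conventions are consistent with those of~\cite{AHS21} and of Lemma~\ref{lem:QD}; once the broken-stick description and the constant $E_1$ are pinned down, the rest is bookkeeping, and I do not anticipate any substantive obstacle.
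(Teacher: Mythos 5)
Your argument is correct, but it is a genuinely different route from the one in the paper. The paper's proof is essentially a citation: it invokes \cite[Proposition 3.6, Eq.~(3.3), (3.5)]{AHS21}, which expresses $\mathcal{M}_{2}^{\rm disk}(\gamma^2-2)[e^{-\mu_1\mathcal L-\mu_2\mathcal R}]$ through the boundary reflection coefficient $\overline R(\tfrac{4}{\gamma},\mu_1,\mu_2)$, and then evaluates that coefficient via the shift equations for the double Gamma function $\Gamma_{\gamma/2}$. You instead bypass the reflection coefficient entirely: you use the bead (Poisson chain) structure of the thin disk from Definition~\ref{def:thin-disk}, Campbell's formula to reduce the Laplace transform to $A(\mu_1,\mu_2)=\mathcal{M}_2^{\rm disk}(2)[1-e^{-\mu_1\ell_L-\mu_2\ell_R}]$, and then the weight-$2$ disk inputs: the density $|{\rm QD}_{0,2}(\ell)|=E_1\ell^{-4/\gamma^2}$ (already quoted in the proof of Lemma~\ref{lem:QD} from \cite{ARS21}) together with the fact that the two marked points of $\mathcal{M}_2^{\rm disk}(2)$ are quantum typical, i.e.\ $\mathcal{M}_2^{\rm disk}(2)={\rm QD}_{0,2}$ (from \cite{DMS14,AHS20}), which gives the broken-stick conditional law of $\ell_L$. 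This is the one external input you correctly flag; it is the same fact the paper itself uses implicitly when identifying ${\rm QD}_{1,2}$ with $\mathcal{M}_{2,\bullet}^{\rm disk}(2)$, so your derivation is consistent with the paper's conventions, and I checked that your constant reproduces~\eqref{eq:val-e3-e4} exactly. What each approach buys: the paper's proof is shorter and leverages the general $\overline R(\beta;\mu_1,\mu_2)$ formula (valid for all weights), while yours is more elementary and self-contained for this particular weight, avoiding double Gamma functions, at the cost of only working because $W=\gamma^2-2$ is thin with complementary weight exactly $2$.

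One small slip in the write-up: the displayed identity $\frac{4}{\gamma^2}\bigl(1-\tfrac{\gamma^2}{4}\bigr)\bigl(1-\tfrac{4}{\gamma^2}\bigr)\cdot\frac{\gamma^4}{(4-\gamma^2)^2}=1$ should equal $-1$, since $\frac{4}{\gamma^2}\bigl(1-\tfrac{\gamma^2}{4}\bigr)\bigl(1-\tfrac{4}{\gamma^2}\bigr)=-\frac{(4-\gamma^2)^2}{\gamma^4}$. The sign is harmless because your expression for $E_3$ carries an explicit minus sign and $\Gamma(1-4/\gamma^2)<0$ for $\gamma\in(\sqrt2,2)$, so the final value of $E_3$ is positive and agrees with~\eqref{eq:val-e3-e4}; just state the identity with the minus sign (or absorb it as you do in the "equivalently" form $\gamma^4-8\gamma^2+16=(\gamma^2-4)^2$).
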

\begin{proof}
This follows directly from Propositions 3.6 in \cite{AHS21} and Eq.~(3.3) and (3.5) therein. The relevant boundary reflection coefficient is $\overline R(\frac{4}{\gamma}, \mu_1,\mu_2)$, whose value can be calculated explicitly using the shift equations of the double gamma function $\Gamma_{\frac{\gamma}{2}}(z)$ in \cite[Eq.~ (3.2)]{AHS21}. 
\end{proof}

\subsection{Explicit value of $C_1$}
\label{sec:sol-C1}
In this section, we determine the explicit value of $C_1$ introduced in Proposition~\ref{prop:weld6}.
\begin{proposition}
    \label{prop:val-c1}
    The value of $C_1$ is given by
    $C_1 =  \frac{\pi 2^{1-\frac{\gamma^2}{2}} (\gamma^2-4)^2 \sin(\frac{\pi \gamma^2}{4})}{\gamma^3 \sin(-\frac{4 \pi}{\gamma^2})}$.
\end{proposition}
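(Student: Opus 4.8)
The plan is to compute $C_1$ by evaluating both sides of the conformal welding identity in Proposition~\ref{prop:weld6} as laws of quantum surfaces on a quantity that we can access explicitly on each side --- namely, a Laplace transform in the boundary length. Recall that Proposition~\ref{prop:weld6} asserts
\[
{\rm LF}_\mathbb{H}^{(\gamma,i),(\frac{4}{\gamma}-\gamma,0)} \times \epmumu = C_1 \, {\rm Weld}(\mathcal{M}_{2}^{\rm disk}(\gamma^2-2), {\rm QD}_{1,1}),
\]
and the welded surface has a well-defined total quantum boundary length $\mathcal L$. The strategy is: (i) compute $\mathcal{M}_{\rm welded}[e^{-\mu \mathcal L}]$ for the right-hand side using the disintegration formula together with the length densities from Lemma~\ref{lem:QD} (for $|{\rm QD}_{1,1}(\ell)|$) and Lemma~\ref{lem:two-pt-QD} (for $\mathcal{M}_{2}^{\rm disk}(\gamma^2-2)$, whose right boundary gets glued while the left boundary contributes to $\mathcal L$); (ii) compute the corresponding Laplace transform ${\rm LF}_\mathbb{H}^{(\gamma,i),(\frac{4}{\gamma}-\gamma,0)}[e^{-\mu \mathcal L}]$ on the left-hand side using~\eqref{eq:prop4.14-3} with $(\alpha,\beta)=(\gamma,\frac{4}{\gamma}-\gamma)$, which requires the explicit value $\overline G(\gamma,\frac{4}{\gamma}-\gamma)$ obtainable from Lemma~\ref{lem:LF4}; and (iii) note that $\epmumu$ contributes only its total mass $\mumu[1]$ --- but this is exactly $\mu_\kappa[E_i]=1$ by our normalization, so no unknown constant from the SLE side enters. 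Matching the two expressions (they must have the same $\mu$-power, which is a useful consistency check) isolates $C_1$.

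Concretely, for the right-hand side: welding $\mathcal{M}_{2}^{\rm disk}(\gamma^2-2)$ along its right boundary (length $\mathcal R$) to the full boundary of ${\rm QD}_{1,1}$ (length $\ell$) imposes $\mathcal R = \ell$, and the surviving boundary of the welded surface is the left boundary of $\mathcal{M}_{2}^{\rm disk}(\gamma^2-2)$, of length $\mathcal L = \mathcal L_{\rm left}$. Thus
\[
{\rm Weld}(\mathcal{M}_{2}^{\rm disk}(\gamma^2-2), {\rm QD}_{1,1})[e^{-\mu \mathcal L}] = \int_0^\infty \mathcal{M}_{2}^{\rm disk}(\gamma^2-2)[e^{-\mu \mathcal L_{\rm left}} \mathbbm{1}_{\mathcal R \in d\ell}] \cdot |{\rm QD}_{1,1}(\ell)| \, d\ell.
\]
Using $|{\rm QD}_{1,1}(\ell)| = E_2 \ell^{-4/\gamma^2+1}$ and writing $\ell^{-4/\gamma^2+1} = \frac{1}{\Gamma(4/\gamma^2-1)}\int_0^\infty e^{-s\ell} s^{4/\gamma^2-2}\,ds$ (valid since $4/\gamma^2 - 1 > 0$ for $\gamma<2$), this becomes
\[
\frac{E_2}{\Gamma(4/\gamma^2-1)}\int_0^\infty \mathcal{M}_{2}^{\rm disk}(\gamma^2-2)[e^{-\mu \mathcal L - s \mathcal R}]\, s^{4/\gamma^2-2}\,ds = \frac{E_2 E_3}{\Gamma(4/\gamma^2-1)}\int_0^\infty \frac{\mu-s}{\mu^{4/\gamma^2}-s^{4/\gamma^2}}\, s^{4/\gamma^2-2}\,ds,
\]
by Lemma~\ref{lem:two-pt-QD}. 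The remaining one-dimensional integral is a standard Beta-type integral (substitute $s = \mu t$), producing $\mu^{4/\gamma^2 - 2 + 1 - 4/\gamma^2 + 1}= \mu^0$... one should be careful: the power of $\mu$ that comes out must match $\mu^{\frac{2}{\gamma}(Q-\alpha-\beta/2)} = \mu^{\frac{2}{\gamma}(Q - \gamma - \frac{2}{\gamma}+\frac{\gamma}{2})}$ from~\eqref{eq:prop4.14-3}; verifying this agreement is the bookkeeping heart of the argument. The integral evaluates via $\int_0^\infty \frac{(1-t)t^{a-1}}{1-t^{b}}\,dt$ to an expression in $\pi$, sines, and the exponents, using the reflection/partial-fraction identity for $\frac{1}{1-t^b}$.

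The main obstacle I anticipate is not conceptual but computational: carefully assembling the constants $E_2$ (from Lemma~\ref{lem:QD}), $E_3$ (from Lemma~\ref{lem:two-pt-QD}), the value of $\overline G(\gamma, \frac{4}{\gamma}-\gamma)$ (which requires plugging $\alpha = \gamma$ into the third formula of Lemma~\ref{lem:LF4} and simplifying a ratio of Gamma functions, where several arguments will collapse or telescope), the Gamma factor $\Gamma(\frac{2}{\gamma}(\frac{\beta}{2}+\alpha-Q))$ from~\eqref{eq:prop4.14-3}, and the value of the Beta-type integral --- and then checking that the whole product simplifies to the remarkably clean $C_1 = \frac{\pi 2^{1-\gamma^2/2}(\gamma^2-4)^2 \sin(\pi\gamma^2/4)}{\gamma^3 \sin(-4\pi/\gamma^2)}$. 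I expect massive cancellation: the $\Gamma(1-\gamma^2/4)$ powers from $E_2, E_3$ and from $\overline G$ should cancel, the $(2\pi)$ powers should cancel, and the reflection formula $\Gamma(z)\Gamma(1-z) = \pi/\sin(\pi z)$ applied to the leftover Gamma factors should generate the two sine terms. The factor $(\gamma^2-4)^2 = \gamma^2(Q-\gamma)^2 \cdot (\text{const})$ --- indeed $(Q-\gamma)^2 = (\frac{2}{\gamma}-\frac{\gamma}{2})^2 = \frac{(4-\gamma^2)^2}{4\gamma^2}$ --- should emerge from the $\frac{1}{(Q-\gamma)^2}$ in $E_2$ combined with Gamma-function poles/zeros at the special parameter values. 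A useful sanity check along the way: the power of $\mu$ on both sides must agree identically in $\gamma$, which pins down whether the disintegration has been set up with the correct boundary arcs.
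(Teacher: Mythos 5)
Your overall architecture is the same as the paper's: evaluate both sides of Proposition~\ref{prop:weld6} on a boundary-length observable, use $|{\rm QD}_{1,1}(\ell)|=E_2\ell^{1-4/\gamma^2}$, the two-point disk Laplace transform $E_3\frac{\mu-s}{\mu^{4/\gamma^2}-s^{4/\gamma^2}}$, and $\overline G(\gamma,\tfrac{4}{\gamma}-\gamma)$, noting that $\epmumu$ contributes total mass $1$. However, the specific observable you chose does not work: with $(\alpha,\beta)=(\gamma,\tfrac{4}{\gamma}-\gamma)$ one has $\tfrac{\beta}{2}+\alpha-Q=0$, so Lemma~\ref{lem:LF1} gives a boundary-length density proportional to $\ell^{-1}$, and the plain Laplace transform ${\rm LF}_\mathbb{H}^{(\gamma,i),(\frac{4}{\gamma}-\gamma,0)}[e^{-\mu\mathcal L}]$ diverges (logarithmically at $\ell\to 0$); equation~\eqref{eq:prop4.14-3}, which you invoke, explicitly requires $\tfrac{\beta}{2}+\alpha-Q>0$ and its prefactor here would be $\Gamma(0)$. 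Consistently, your right-hand side integral $\int_0^\infty\frac{(1-t)\,t^{4/\gamma^2-2}}{1-t^{4/\gamma^2}}\,dt$ also diverges, since the integrand behaves like $t^{-1}$ as $t\to\infty$. So matching $e^{-\mu\mathcal L}$ on the two sides compares $+\infty$ with $+\infty$ and cannot isolate $C_1$; the fact that the $\mu$-powers formally agree (both are $\mu^0$) is a symptom of this degeneracy, not a confirmation.

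The missing idea is to regularize by weighting with the boundary length, i.e.\ to compare $\mathcal L e^{-\mu\mathcal L}$ rather than $e^{-\mu\mathcal L}$ (equivalently, differentiate in $\mu$). This is exactly what the paper does: on the left, the $\ell^{-1}$ density becomes integrable and ${\rm LF}_\mathbb{H}^{(\gamma,i),(\frac{4}{\gamma}-\gamma,0)}[\mathcal L e^{-\mu\mathcal L}]=\tfrac{2}{\gamma}2^{-\gamma^2/2}\overline G(\gamma,\tfrac{4}{\gamma}-\gamma)\mu^{-1}$; on the right, differentiating the two-point disk formula in $\mu$ produces an integrand whose $t$-integral converges and in fact equals $\mu^{-1}$ exactly via an explicit antiderivative, so no Beta-type evaluation is needed there. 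With that modification the rest of your plan (assembling $E_2$, $E_3$, $\overline G(\gamma,\tfrac{4}{\gamma}-\gamma)$ and simplifying with the reflection formula) goes through and reproduces the stated value of $C_1$.
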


\begin{proof}
Recall the setting of Proposition~\ref{prop:weld6}. For $\mu>0$, let $J(\mu)={\rm Weld}(\mathcal{M}_2^{\rm disk}(\gamma^2-2), {\rm QD}_{1,1})[\mathcal L e^{-\mu \mathcal L}]$ where $\mathcal L$ represents the total boundary length of a sample of the welded surface. By Proposition~\ref{prop:weld6} and the fact that $\mumu$ there has total mass 1, we have 
\begin{equation}
\label{eq:c1-1}
{\rm LF}_\mathbb{H}^{(\gamma,i),(\frac{4}{\gamma}-\gamma,0)}[\mathcal L e^{-\mu \mathcal L}]= C_1 J(\mu)\,,
\end{equation}
where $\mathcal L$ represents the total boundary length of a sample from ${\rm LF}_\mathbb{H}^{(\gamma,i),(\frac{4}{\gamma}-\gamma,0)}$.
By Lemma~\ref{lem:LF1}, we have $|{\rm LF}_\mathbb{H}^{(\gamma,i),(\frac{4}{\gamma}-\gamma,0)}(\ell)| = \frac{2}{\gamma}2^{-\frac{\gamma^2}{2}} \overline G(\gamma,\frac{4}{\gamma}-\gamma) \ell^{-1}$. Therefore,
\begin{equation}
\label{eq:C3-1}
{\rm LF}_\mathbb{H}^{(\gamma,i),(\frac{4}{\gamma}-\gamma,0)}[\mathcal L e^{-\mu \mathcal L}]= \int_0^\infty \frac{2}{\gamma}2^{-\frac{\gamma^2}{2}} \overline G(\gamma,\frac{4}{\gamma}-\gamma) e^{-\mu \ell } d \ell =\frac{2}{\gamma}2^{-\frac{\gamma^2}{2}} \overline G(\gamma,\frac{4}{\gamma}-\gamma) \mu^{-1}. 
\end{equation}
Let $|\mathcal{M}_2^{\rm disk}(\gamma^2-2; \ell, \ell_1)|$ be the joint density of the left and right boundary lengths of  a sample from $\mathcal{M}_2^{\rm disk}(\gamma^2-2)$. By the definition of $J(\mu)$ in terms of conformal welding, we have 
\begin{equation*}
  J (\mu) = \iint_0^\infty |\mathcal{M}_2^{\rm disk}(\gamma^2-2; \ell, \ell_1)| \times |{\rm QD}_{1,1}(\ell_1)| \times \ell e^{-\mu \ell } d\ell d\ell_1 \,.
\end{equation*} 
By Lemma~\ref{lem:QD}, with the constant $E_2$ from there, we have:
\begin{equation*}
 J (\mu)= E_2 \iint_0^\infty  |\mathcal{M}_2^{\rm disk}(\gamma^2-2; \ell, \ell_1)| \times \ell_1^{-\frac{4}{\gamma^2}+1} \ell e^{-\mu \ell} d\ell d\ell_1 \,.
\end{equation*} 
Since $\ell_1^{-\frac{4}{\gamma^2}+1} \Gamma(\frac{4}{\gamma^2}-1) = \int_0^\infty e^{-t \ell_1}t^{\frac{4}{\gamma^2}-2} dt$, we can rewrite the above expression as:
\begin{equation}
\label{eq:C3-2}
\begin{aligned}
    J(\mu) = \frac{E_2}{\Gamma(\frac{4}{\gamma^2}-1)} \int_0^\infty \mathcal{M}_2^{\rm disk}(\gamma^2-2)[\mathcal L e^{-\mu \mathcal L - t \mathcal R} ]\times t^{\frac{4}{\gamma^2}-2} dt\,.
\end{aligned}
\end{equation}
By Lemma~\ref{lem:two-pt-QD}, we have $ 
\mathcal{M}_{2}^{\rm disk}(\gamma^2-2)[e^{-\mu \mathcal L-t \mathcal R}] = E_3\frac{\mu-t}{\mu^{4/\gamma^2} - t^{4/\gamma^2}}$ with the constant $E_3$ from Lemma~\ref{lem:two-pt-QD}. Taking the derivative with respect to $\mu$ gives:
$$
\mathcal{M}_{2}^{\rm disk}(\gamma^2-2)[\mathcal L e^{-\mu \mathcal L-t \mathcal R}] = E_3 \left(\frac{\frac{4}{\gamma^2}\mu^{4/\gamma^2-1}(\mu-t)}{(\mu^{4/\gamma^2} - t^{4/\gamma^2})^2}- \frac{1}{\mu^{4/\gamma^2} - t^{4/\gamma^2}} \right)\,.
$$
Substituting this equation into \eqref{eq:C3-2} we obtain:
\begin{equation}
\label{eq:C3-3}
\begin{aligned}
    J (\mu) &=\frac{E_2 E_3}{\Gamma(\frac{4}{\gamma^2}-1)}\int_0^\infty \left(\frac{\frac{4}{\gamma^2}\mu^{4/\gamma^2-1}(\mu-t)}{(\mu^{4/\gamma^2} - t^{4/\gamma^2})^2}- \frac{1}{\mu^{4/\gamma^2} - t^{4/\gamma^2}} \right) t^{\frac{4}{\gamma^2}-2} dt =\frac{E_2 E_3}{\Gamma(\frac{4}{\gamma^2}-1)} \mu^{-1}.
\end{aligned}
\end{equation}
In the second equality, we substituted $t$ with $\mu t$ and used the identity:
$$
\int_0^\infty \left(\frac{\frac{4}{\gamma^2}(1-t)}{(1 - t^{4/\gamma^2})^2}- \frac{1}{1 - t^{4/\gamma^2}} \right) t^{\frac{4}{\gamma^2}-2} dt = \frac{t^{4/\gamma^2-1}(1-t)}{1-t^{4/\gamma^2}}\bigg{|}_{t=0}^\infty =1\,.
$$
Combining \eqref{eq:c1-1}, \eqref{eq:C3-1}, and \eqref{eq:C3-3}, we get:
$$
C_1 = \frac{\frac{2}{\gamma}2^{-\frac{\gamma^2}{2}} \overline G(\gamma,\frac{4}{\gamma}-\gamma) \mu^{-1}}{\frac{E_2 E_3}{\Gamma(\frac{4}{\gamma^2}-1)} \mu^{-1}} = \frac{2^{1-\frac{\gamma^2}{2}}}{E_2 E_3 \gamma} \Gamma(\frac{4}{\gamma^2}-1) \overline G(\gamma,\frac{4}{\gamma}-\gamma) \,.
$$
Recall $E_2$ from Lemma~\ref{lem:QD}, $E_3$ from Lemma~\ref{lem:two-pt-QD}, and $\overline G(\gamma, \frac{4}{\gamma}-\gamma)$ from Lemma~\ref{lem:LF4}, we have
\begin{align*}
    C_1 &= \frac{2^{1-\frac{\gamma^2}{2}}}{\frac{\Gamma(\frac{\gamma^2}{4})}{4 \pi (Q-\gamma)^2} ( \frac{2 \pi }{\Gamma(1-\frac{\gamma^2}{4})} )^{\frac{4}{\gamma^2}-1} \frac{(2 \pi)^{1-\frac{4}{\gamma^2}}\Gamma(1-\frac{\gamma^2}{4})^{\frac{4}{\gamma^2}}}{\Gamma(2-\frac{4}{\gamma^2})} \gamma} \Gamma(\frac{4}{\gamma^2}-1) \frac{2^{3-\frac{\gamma^2}{2}-\frac{4}{\gamma^2}}}{\pi^{\frac{4}{\gamma^2}-1}} \frac{\Gamma(1-\frac{\gamma^2}{4})^{\frac{4}{\gamma^2}-1} \Gamma(\frac{\gamma^2}{2}-1)}{\Gamma(2-\frac{4}{\gamma^2})\Gamma(\frac{\gamma^2}{4})} \\
    &\quad \times \frac{\Gamma(2-\frac{4}{\gamma^2}) }{\Gamma(\frac{\gamma^2}{2}-1)} \bigg(\frac{2^{-\frac{\gamma^2}{2}} 2 \pi}{\Gamma(1-\frac{\gamma^2}{4})}\bigg)^{\frac{4}{\gamma^2}-1} \Gamma(\frac{\gamma^2}{4})=\frac{2^{1-\frac{\gamma^2}{2}} 4 \pi (Q-\gamma)^2 \Gamma(2-\frac{4}{\gamma^2}) \Gamma(\frac{4}{\gamma^2}-1)}{\gamma \Gamma(\frac{\gamma^2}{4}) \Gamma(1-\frac{\gamma^2}{4})}\,.
\end{align*}
Simplifying the last expression using the identity $\Gamma(x) \Gamma(1-x) = \frac{\pi}{\sin(\pi x)}$, we conclude the proof.
\end{proof}

\subsection{An integral formula for the boundary three-point structure constant}\label{subsec:H}

Recall the expression of the joint moment $\nu_\kappa [ |\psi'(i)|^{2\Delta_\alpha - 2\Delta_\gamma} |\varphi'(1)|^{\Delta_\beta-\Delta_{\beta_0}} ]$ from Proposition~\ref{prop:g(alpha,beta)}. The only term left to analyze is the integral $\int_0^\infty \frac{1}{C_2}\overline H^{(\frac{2}{\gamma}, \beta,\frac{2}{\gamma})}_{(s,1,1)} \frac{s^{\frac{2}{\gamma}(Q-\alpha)-1}}{1+s} ds$, where $C_2$ is the constant from Lemma~\ref{lem:weld4}. By Lemma~\ref{lem:analy1}, for $\alpha \in (\gamma,Q)$, this integral is analytic in $\beta$ in a complex neighborhood of $(2(Q-\alpha) ,Q)$. To prove Theorem~\ref{thm:moment}, we will need the exact evaluation of this integral when $\beta = \beta_0 = \frac{4}{\gamma}-\gamma$. This is done in the following proposition. Note that $\beta_0\in (2(Q-\alpha) ,Q)$ for $\alpha \in (\gamma,Q)$.
\begin{proposition}
    \label{lem:3pt-integral}  
Recall $E_1$ from Lemma~\ref{lem:QD} and $E_3$ from Lemma~\ref{lem:two-pt-QD}.  For $\gamma \in (\sqrt{2},2  )$,  let  
\begin{equation}
    \label{eq:def-e5}
    E_4 =\frac{E_1 E_3^2 \gamma }{2 \Gamma(\frac{4}{\gamma^2}-2)\Gamma(\frac{4}{\gamma^2}+1)} \,.
\end{equation}
For $\theta \in (0,\frac{4}{\gamma^2}-1)$, we have
    $$
    \int_0^\infty \frac{1}{C_2}\overline H^{(\frac{2}{\gamma}, \frac{4}{\gamma}-\gamma,\frac{2}{\gamma})}_{(s,1,1)}    \frac{s^{\theta-1}}{1+s} ds =  \frac{E_4\gamma^4\pi^2(\sin(\frac{\pi \gamma^2}{2}\theta)-\theta\sin(\frac{\pi\gamma^2}{2}))}{32\cos(\frac{\pi\gamma^2}{4}) \sin(\pi\theta) \sin(\frac{\pi\gamma^2}{4}\theta)\sin(\frac{\pi\gamma^2}{4}(\theta+1))}\,.
    $$
\end{proposition}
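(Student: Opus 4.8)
The plan is to compute the integral $\int_0^\infty \frac{1}{C_2}\overline H^{(\frac{2}{\gamma}, \frac{4}{\gamma}-\gamma,\frac{2}{\gamma})}_{(s,1,1)} \frac{s^{\theta-1}}{1+s}\,ds$ not by brute-force manipulation of the explicit $\overline H$ formula from \cite{RZ22}, which involves complicated double Gamma functions, but rather by interpreting the integral through a conformal welding of quantum surfaces and then applying the exact length-distribution results from mating-of-trees (Lemmas~\ref{lem:QD} and \ref{lem:two-pt-QD}). Concretely, I would go back to Lemma~\ref{lem:weld4}, which says ${\rm LF}_\mathbb{H}^{(\frac{2}{\gamma},0),(\frac{4}{\gamma}-\gamma,1), (\frac{2}{\gamma},\infty)} \times \widehat{m} = C_2 {\rm Weld}(\mathcal{M}_2^{\rm disk}(\gamma^2-2), \widetilde {\rm QD}_{0,3}, \mathcal{M}_2^{\rm disk}(\gamma^2-2))$. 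Taking the joint Laplace transform of the three boundary arc lengths $\mathcal L_{13}, \mathcal L_{12}, \mathcal L_{23}$ and using Lemma~\ref{lem:LF3}, the left side is $\frac{2}{\gamma}\Gamma(\frac{\beta}{\gamma}-1)\overline H^{(\frac{2}{\gamma},\beta,\frac{2}{\gamma})}_{(\mu_1,\mu_2,\mu_3)}$ with $\beta=\beta_0$, while the right side is an integral over the shared boundary lengths of the product of the known densities $|\mathcal{M}_2^{\rm disk}(\gamma^2-2)(\cdot,\cdot)|$ (via its Laplace transform in Lemma~\ref{lem:two-pt-QD}) and $|\widetilde {\rm QD}_{0,3}(\ell_1,\ell_2,\ell_3)| = E_1(\ell_1+\ell_2+\ell_3)^{-4/\gamma^2-1}$ (Lemma~\ref{lem:QD}). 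This expresses $\frac{1}{C_2}\overline H^{(\frac{2}{\gamma},\beta_0,\frac{2}{\gamma})}_{(s,1,1)}$ in terms of elementary/Gamma quantities up to an explicit integral.

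The key steps, in order, would be: (i) specialize the welding identity of Lemma~\ref{lem:weld4} with $\beta = \beta_0 = \frac{4}{\gamma}-\gamma$ and write out the Laplace transform of $(\mathcal L_{13},\mathcal L_{12},\mathcal L_{23})$ on both sides with cosmological constants $(s,1,1)$ for the three arcs — here $\widetilde{\rm QD}_{0,3}$ provides the three ``inner'' boundary arcs of lengths $\ell_1,\ell_2,\ell_3$ and the two $\mathcal{M}_2^{\rm disk}(\gamma^2-2)$ pieces glue along the arcs adjacent to $0$ and $\infty$; (ii) use $|\widetilde{\rm QD}_{0,3}(\ell_1,\ell_2,\ell_3)| = E_1(\ell_1+\ell_2+\ell_3)^{-4/\gamma^2-1}$ together with the beta-type integral $\int_0^\infty e^{-s(\ell_1+\ell_2+\ell_3)} s^{4/\gamma^2} ds = \Gamma(4/\gamma^2+1)(\ell_1+\ell_2+\ell_3)^{-4/\gamma^2-1}$ to trade the power-law weight for an auxiliary Laplace variable, exactly as in the proof of Proposition~\ref{prop:val-c1}; (iii) perform the resulting Gaussian/disk integrals using the two-pointed disk Laplace transform $\mathcal{M}_2^{\rm disk}(\gamma^2-2)[e^{-\mu_1\mathcal L - \mu_2 \mathcal R}] = E_3\frac{\mu_1-\mu_2}{\mu_1^{4/\gamma^2}-\mu_2^{4/\gamma^2}}$, yielding an explicit rational-in-powers integrand; (iv) finally evaluate the one- (or two-) dimensional integral in the auxiliary variables. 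The constant $E_4$ in \eqref{eq:def-e5} is clearly designed so that all the $E_1,E_3,\Gamma$ factors collapse neatly, leaving the trigonometric expression. After carrying this out and using $\Gamma(x)\Gamma(1-x) = \pi/\sin(\pi x)$ repeatedly, I expect the $s^{\theta-1}/(1+s)$ integral identity to emerge by matching the computed $\overline H^{(\frac{2}{\gamma},\beta_0,\frac{2}{\gamma})}_{(s,1,1)}$ against a standard Mellin--Barnes / beta integral.

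The main obstacle I anticipate is step (iv): the reduction produces an integral of the form $\int_0^\infty \int_0^\infty (\text{rational combination of } t^{4/\gamma^2}, u^{4/\gamma^2}, \dots)\, t^{a} u^{b}\, dt\, du$ (or after one integration a single integral like $\int_0^\infty \frac{P(t)\, t^{c-1}}{(1-t^{4/\gamma^2})^2}\,dt$), and these hypergeometric-type integrals must be evaluated in closed form. The trigonometric structure of the answer — in particular the numerator $\sin(\frac{\pi\gamma^2}{2}\theta)-\theta\sin(\frac{\pi\gamma^2}{2})$, which is linear in $\theta$ times a sine minus a sine — strongly suggests one should differentiate a simpler beta-function identity with respect to a parameter (mirroring how $\mathcal{M}_2^{\rm disk}(\gamma^2-2)[\mathcal L e^{-\mu\mathcal L-t\mathcal R}]$ arose by differentiating in $\mu$ in the proof of Proposition~\ref{prop:val-c1}), so the derivative of $\int_0^\infty \frac{t^{x-1}}{1-t^{4/\gamma^2}}\,dt$-type kernels will produce exactly such combinations. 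I would organize the computation so that the $\theta$-dependence is isolated into one such parametric integral, evaluate the base case by the reflection formula, and differentiate. A secondary subtlety is keeping track of principal values / convergence since $1/(1-t^{4/\gamma^2})$ has a pole at $t=1$; one must argue (as in Proposition~\ref{prop:val-c1}) that the combination of terms coming from the derivative of the two-pointed disk Laplace transform is genuinely integrable, so no regularization is lost, and the range $\theta\in(0,\frac{4}{\gamma^2}-1)$ is precisely what guarantees absolute convergence of \eqref{eq:def-intg} at both ends.
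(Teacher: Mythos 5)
Your plan follows the paper's own route essentially step for step: the paper's Lemma~\ref{lem:LF5} implements exactly your steps (i)--(iii) (Lemma~\ref{lem:weld4} with $\widehat m$ of total mass one, Lemma~\ref{lem:LF3} in its subtracted form since $\beta_0<\gamma$, the $\Gamma(\frac{4}{\gamma^2}+1)$ Laplace trick applied to $|\widetilde{\rm QD}_{0,3}|$, and Lemma~\ref{lem:two-pt-QD} with $(\mu_1,\mu_2)=(\mu,1)$ and $(\mu,0)$), and your step (iv) is the paper's Lemma~\ref{lem:lem4.8-int}, carried out by first integrating out $s$ via $\int_0^\infty \frac{s^{\theta-1}}{(\mu+s)(1+s)}\,ds=-\frac{\pi}{\sin(\pi\theta)}\cdot\frac{\mu^{\theta-1}-1}{\mu-1}$ and then using digamma/cotangent (reflection) identities of precisely the parametric-beta type you anticipate. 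The one caution for when you execute the last step: the delicate cancellation there happens as $\mu\to\infty$ (divergent $N^{\frac{\gamma^2}{2}-1}$ boundary terms cancel against the piece coming from the subtracted ``$-1$''), not at the pole $\mu=1$, which the $(\mu-1)$ factors in the numerator already tame.
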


We first establish an integral representation of $\overline H^{(\frac{2}{\gamma}, \frac{4}{\gamma}-\gamma,\frac{2}{\gamma})}_{(s,1,1)}$ using Lemma~\ref{lem:weld4}.
\begin{lemma}
    \label{lem:LF5}
    For any $\gamma \in (\sqrt{2},2)$, we have
    $$
    \frac{1}{C_2} \overline H^{(\frac{2}{\gamma}, \frac{4}{\gamma}-\gamma,\frac{2}{\gamma})}_{(s,1,1)} =  E_4\int_0^\infty  \Big(  \frac{\mu^{4/\gamma^2} (\mu-1)^2}{(\mu+s) (\mu^{4/\gamma^2} - 1)^2 } - \frac{1}{\mu^{4/\gamma^2-1}} \Big) d\mu \,.
    $$
\end{lemma}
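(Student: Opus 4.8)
\textbf{Proof plan for Lemma~\ref{lem:LF5}.}

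The plan is to start from the conformal welding identity in Lemma~\ref{lem:weld4}, namely
\[
{\rm LF}_\mathbb{H}^{(\frac{2}{\gamma},0),(\frac{4}{\gamma}-\gamma,1), (\frac{2}{\gamma},\infty)} \times \widehat{m} = C_2 \,{\rm Weld}(\mathcal{M}_2^{\rm disk}(\gamma^2-2), \widetilde {\rm QD}_{0,3}, \mathcal{M}_2^{\rm disk}(\gamma^2-2)),
\]
and extract from it an identity between the boundary-length density $\overline H^{(\frac{2}{\gamma}, \frac{4}{\gamma}-\gamma,\frac{2}{\gamma})}_{(s,1,1)}$ (which, by Lemma~\ref{lem:LF3}, governs the Laplace transform of the three boundary arc lengths of the left-hand field, up to the factor $\frac{2}{\gamma}\Gamma(\frac{\beta}{\gamma}-1) = \frac{2}{\gamma}\Gamma(\frac{2}{\gamma^2}-1)$) and the explicit length laws of the pieces on the right. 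First I would forget $\widehat m$ on both sides, so that the identity becomes one between measures on quantum surfaces decorated by three marked boundary points, with the three boundary arcs of $\widetilde{\rm QD}_{0,3}$ glued to the right boundary arcs of the two thin disks $\mathcal{M}_2^{\rm disk}(\gamma^2-2)$ along two of them. Then I would compute the Laplace transform of the three ``outer'' boundary arc lengths — call them $\mathcal L_{13},\mathcal L_{12},\mathcal L_{23}$, with cosmological constants $(s,\mu_2,\mu_3)$ — of the welded surface. Since the welding imposes equality of quantum lengths along the glued arcs and otherwise independence, this Laplace transform factorizes as a convolution/integral over the two shared boundary lengths, involving $|\widetilde{\rm QD}_{0,3}(\ell_1,\ell_2,\ell_3)|$ from Lemma~\ref{lem:QD} and $\mathcal{M}_2^{\rm disk}(\gamma^2-2)[e^{-\mu_1\mathcal L - \mu_2\mathcal R}]$ from Lemma~\ref{lem:two-pt-QD}.

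The key computation is therefore: write the Laplace transform of the left-hand side via Lemma~\ref{lem:LF3} (with $\beta = \frac{4}{\gamma}-\gamma \in (\gamma,Q)$ when $\gamma \in (\sqrt2,2)$, so $\frac{\beta}{\gamma}-1 = \frac{2}{\gamma^2}-1 > 0$, and we are in the ``no subtraction of $1$'' regime), and the right-hand side via the welding factorization. Concretely, following the same steps as in the proof of Proposition~\ref{prop:g(alpha,beta)}, I would: (i) write $|\widetilde{\rm QD}_{0,3}(\ell_1,\ell_2,\ell_3)| = E_1(\ell_1+\ell_2+\ell_3)^{-\frac{4}{\gamma^2}-1}$ and use the Gamma-integral identity $(\ell_1+\ell_2+\ell_3)^{-\frac{4}{\gamma^2}-1}\Gamma(\frac{4}{\gamma^2}+1) = \int_0^\infty e^{-t(\ell_1+\ell_2+\ell_3)} t^{\frac{4}{\gamma^2}} dt$ to decouple the three arcs; (ii) integrate out the two glued boundary lengths, each producing a factor $\mathcal{M}_2^{\rm disk}(\gamma^2-2)[e^{-(\mu_i+t)\mathcal L - t\mathcal R}] = E_3\frac{\mu_i}{(\mu_i+t)^{4/\gamma^2}-t^{4/\gamma^2}}$ (after relabeling, the glued side carries the auxiliary variable $t$, the free side carries $\mu_i + t$ or similar); (iii) integrate out the third free arc against $e^{-\mu_1\ell}$ directly; and (iv) compare with $\frac{2}{\gamma}\Gamma(\frac{2}{\gamma^2}-1)\overline H^{(\frac{2}{\gamma},\frac{4}{\gamma}-\gamma,\frac{2}{\gamma})}_{(\mu_1,\mu_2,\mu_3)}$. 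After setting $\mu_2 = \mu_3 = 1$, $\mu_1 = s$, and rescaling the $t$-integral by an appropriate power of the remaining variable (exploiting the homogeneity $\overline H^{(\frac{2}{\gamma},\beta,\frac{2}{\gamma})}_{(\mu s,\mu,\mu)} = \mu^{1-\frac{\beta}{\gamma}}\overline H^{(\frac{2}{\gamma},\beta,\frac{2}{\gamma})}_{(s,1,1)}$ from \eqref{eq:def-threept}), one should land on an integral over a single variable $\mu$ of the form $\int_0^\infty\big(\frac{\mu^{4/\gamma^2}(\mu-1)^2}{(\mu+s)(\mu^{4/\gamma^2}-1)^2} - \frac{1}{\mu^{4/\gamma^2-1}}\big)d\mu$. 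The subtracted term $\mu^{1-4/\gamma^2}$ is the divergent tail that must be subtracted to make the $\mu\to\infty$ integral converge; it should arise naturally from one of the $\mathcal L e^{-\mu\mathcal L}$-type derivatives or from a renormalization inherent in the thin-disk welding, exactly as the analogous subtracted term appeared in the proof of Proposition~\ref{prop:val-c1}. Tracking the product of constants $E_1, E_3^2$, the Gamma factors $\Gamma(\frac{4}{\gamma^2}+1)$ (from decoupling) and $\Gamma(\frac{4}{\gamma^2}-2)$ (from a second Gamma-integral used in integrating out one of the glued lengths) and $\frac{2}{\gamma}\Gamma(\frac{2}{\gamma^2}-1)$ from the left-hand side, together with the factors of $E_2$-type that cancel, I expect the overall constant to collapse to $E_4 = \frac{E_1 E_3^2 \gamma}{2\Gamma(\frac{4}{\gamma^2}-2)\Gamma(\frac{4}{\gamma^2}+1)}$ as defined in \eqref{eq:def-e5}.

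The main obstacle is bookkeeping rather than conceptual: one must be careful about \emph{which} boundary arc of each $\mathcal{M}_2^{\rm disk}(\gamma^2-2)$ is glued (the statement of Lemma~\ref{lem:weld4} specifies ``right boundary arcs''), and correspondingly which boundary-length variable of the two-pointed disk Laplace transform $E_3\frac{\mu_1-\mu_2}{\mu_1^{4/\gamma^2}-\mu_2^{4/\gamma^2}}$ gets the auxiliary Gamma-integral variable versus the free cosmological constant; getting the roles reversed changes the exponents in the final integrand. A second subtle point is the precise origin of the subtracted tail $\mu^{1-4/\gamma^2}$ and its coefficient: since the right-hand measure involves the \emph{infinite} measures $\mathcal{M}_2^{\rm disk}(\gamma^2-2)$ (thin disks), the naive iterated integral does not converge absolutely, and one must argue — as in the $C_1$ computation — that the correct finite answer is obtained by first taking a $\mathcal L e^{-\mu\mathcal L}$ derivative (or equivalently by a principal-value/analytic-continuation argument in an auxiliary parameter) before integrating, which is exactly what produces the $-\mu^{1-4/\gamma^2}$ counterterm. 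Once the integrand is pinned down, convergence of $\int_0^\infty(\cdots)d\mu$ for $s \geq 0$ and $\gamma\in(\sqrt2,2)$ (so $\frac{4}{\gamma^2}-1 \in (0,1)$) is routine: near $\mu = 0$ the integrand behaves like $\mu^{4/\gamma^2}/s - \mu^{1-4/\gamma^2}$ which is integrable since $4/\gamma^2 - 1 < 1$, near $\mu = 1$ the apparent pole of order $2$ is cancelled by the $(\mu-1)^2$ factor, and near $\mu = \infty$ the leading terms $\mu^{1-4/\gamma^2}$ cancel by construction leaving an integrable remainder of order $\mu^{-1-4/\gamma^2}$ or faster. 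The explicit evaluation of this single integral in terms of trigonometric functions is deferred to the remaining part of the proof of Proposition~\ref{lem:3pt-integral}, so Lemma~\ref{lem:LF5} itself only needs the integral representation, not its closed form.
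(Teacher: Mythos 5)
Your overall strategy coincides with the paper's: forget $\widehat m$ in Lemma~\ref{lem:weld4} (it has total mass $1$), compute the Laplace functional of the three outer boundary arcs of ${\rm Weld}(\mathcal{M}_2^{\rm disk}(\gamma^2-2),\widetilde{\rm QD}_{0,3},\mathcal{M}_2^{\rm disk}(\gamma^2-2))$ by decoupling the three arcs of $\widetilde{\rm QD}_{0,3}$ through the Gamma identity $(\ell_{13}+\ell_1+\ell_2)^{-\frac{4}{\gamma^2}-1}\Gamma(\frac{4}{\gamma^2}+1)=\int_0^\infty e^{-\mu(\ell_{13}+\ell_1+\ell_2)}\mu^{4/\gamma^2}d\mu$, integrating out the glued lengths with Lemma~\ref{lem:two-pt-QD}, and comparing with $\overline H$ via Lemma~\ref{lem:LF3}. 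However, there is a genuine error at the decisive step: for $\gamma\in(\sqrt2,2)$ one has $\beta_0=\frac{4}{\gamma}-\gamma<\gamma$ (since $\frac{4}{\gamma}-\gamma<\gamma$ is equivalent to $\gamma^2>2$), so you are in the regime $\beta\in(0,\gamma)$ of Lemma~\ref{lem:LF3}, where only the subtracted quantity ${\rm LF}_\mathbb{H}^{(\frac{2}{\gamma},0),(\beta_0,1),(\frac{2}{\gamma},\infty)}[e^{-s\mathcal L_{13}-\mathcal L_{12}-\mathcal L_{23}}-1]=\frac{2}{\gamma}\Gamma(\frac{4}{\gamma^2}-2)\,\overline H^{(\frac{2}{\gamma},\beta_0,\frac{2}{\gamma})}_{(s,1,1)}$ is finite; the unsubtracted Laplace transform is infinite, because the boundary-length density is proportional to $\ell^{\frac{\beta_0}{\gamma}-2}$ with exponent in $(-2,-1)$, hence not integrable at $\ell=0$. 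Your claims that $\beta_0\in(\gamma,Q)$ and that one is in the ``no subtraction of $1$'' regime are therefore false, and the Gamma factor is miscomputed as well: $\frac{\beta_0}{\gamma}-1=\frac{4}{\gamma^2}-2$ (which is negative), not $\frac{2}{\gamma^2}-1$.

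This is not mere bookkeeping, because the ``$-1$'' subtraction is exactly what produces the counterterm $-\mu^{1-4/\gamma^2}$ in the integrand and fixes its coefficient: one evaluates the welded-surface integral against $e^{-s\ell_{13}-\ell_{12}-\ell_{23}}-1$, and the contribution of the subtracted $1$ is $\mathcal{M}_2^{\rm disk}(\gamma^2-2)[e^{-\mu\mathcal L}]^2=E_3^2\mu^{2-8/\gamma^2}$ (Lemma~\ref{lem:two-pt-QD} with $(\mu_1,\mu_2)=(\mu,0)$), which after integrating $e^{-\mu\ell_{13}}$ in $\ell_{13}$ and multiplying by $\mu^{4/\gamma^2}$ gives precisely $E_3^2\mu^{1-4/\gamma^2}$. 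No $\mathcal L e^{-\mu\mathcal L}$ derivative or renormalization as in the $C_1$ computation is involved, and no second Gamma integral produces $\Gamma(\frac{4}{\gamma^2}-2)$: that factor enters only through the left-hand side via Lemma~\ref{lem:LF3}. With your regime and constants the two sides you compare are not both finite and the prefactors would not collapse to $E_4=\frac{E_1E_3^2\gamma}{2\Gamma(\frac{4}{\gamma^2}-2)\Gamma(\frac{4}{\gamma^2}+1)}$; once the regime is corrected and the subtraction is carried through, the remainder of your outline matches the paper's proof.
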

\begin{proof}

Recall the conformal welding in Lemma~\ref{lem:weld4}. For $s>0$, let 
$$O(s) := {\rm Weld}(\mathcal{M}_{2}^{\rm disk}(\gamma^2-2), \widetilde{{\rm QD}}_{0,3}, \mathcal{M}_{2}^{\rm disk}(\gamma^2-2))[e^{-s \mathcal L_{13} - \mathcal L_{12} - \mathcal L_{23}} - 1],$$ where $\mathcal L_{13}, \mathcal L_{12}, \mathcal L_{23}$ represent the quantum lengths of the three boundary arcs of a sample from the welded surface. In particular, $\mathcal L_{13}$ corresponds to the boundary arc of $\widetilde{{\rm QD}}_{0,3}$ that is part of the outer boundary of the welded surface. In Lemma~\ref{lem:weld4}, since  $\widehat m$  is a  probability  measure hence has total mass $1$, we have
\begin{equation}
    \label{eq:os-1}
    {\rm LF}_\mathbb{H}^{(\frac{2}{\gamma},0), (\frac{4}{\gamma}-\gamma,1), (\frac{2}{\gamma} ,\infty)}[e^{-s \mathcal L_{13} -  \mathcal L_{12} -  \mathcal L_{23} } - 1 ] = C_2 \cdot O(s)\,,
\end{equation}
where $\mathcal L_{13}, \mathcal L_{12}, \mathcal L_{23}$ represent the quantum lengths of the three boundary arcs of a sample from the Liouville field as defined in Lemma~\ref{lem:LF3}.
Using Lemma~\ref{lem:LF3} with $\beta = \frac{4}{\gamma}-\gamma < \gamma$, we have
\begin{equation}
\label{eq:h1}
 {\rm LF}_\mathbb{H}^{(\frac{2}{\gamma},0), (\frac{4}{\gamma}-\gamma,1), (\frac{2}{\gamma} ,\infty)}[e^{-s \mathcal L_{13} -  \mathcal L_{12} -  \mathcal L_{23} } - 1 ] = \frac{2}{\gamma} \Gamma(\frac{4}{\gamma^2}-2) \overline H^{(\frac{2}{\gamma}, \frac{4}{\gamma}-\gamma,\frac{2}{\gamma})}_{(s,1,1)}.
\end{equation}

Let $|\mathcal{M}_{2}^{\rm disk}(\gamma^2-2; \ell_1, \ell_2)|$ be the joint density of the left and right boundary lengths of a sample from $\mathcal{M}_{2}^{\rm disk}(\gamma^2-2)$. Also, recall $|\widetilde{{\rm QD}}_{0,3}(\ell_1, \ell_2, \ell_3)|$ from Lemma~\ref{lem:QD}. By the  definition of $O(s)$ in terms of conformal welding, we have:
\begin{align*}
O(s) &= \int_{\mathbb{R}_+^5} |\mathcal{M}_{2}^{\rm disk}(\gamma^2-2; \ell_1, \ell_{12})| \times |\widetilde{{\rm QD}}_{0,3}(\ell_{13}, \ell_1, \ell_2)| \times |\mathcal{M}_{2}^{\rm disk}(\gamma^2-2; \ell_2, \ell_{23})| \\
&\qquad \qquad \qquad \qquad \qquad \qquad \qquad \quad \times (e^{-s \ell_{13} - \ell_{12} - \ell_{23}} - 1) d\ell_1 d\ell_2 d\ell_{12} d\ell_{13} d\ell_{23}\,.
\end{align*}
By Lemma~\ref{lem:QD} and the identity $( \ell_{13} + \ell_1 + \ell_2)^{-\frac{4}{\gamma^2}-1}\Gamma(\frac{4}{\gamma^2}+1) = \int_0^\infty e^{-\mu (\ell_{13} + \ell_1 + \ell_2)} \mu^{\frac{4}{\gamma^2}} d\mu$, we get:
\begin{align*}
    O(s) &= \frac{E_1 }{\Gamma(\frac{4}{\gamma^2}+1)} \int_{\mathbb{R}_+^6} |\mathcal{M}_{2}^{\rm disk}(\gamma^2-2; \ell_1, \ell_{12})| \times |\mathcal{M}_{2}^{\rm disk}(\gamma^2-2; \ell_2, \ell_{23})|  \\
    & \qquad \qquad  \qquad \quad \times (e^{-(\mu+s) \ell_{13}} e^{-\mu \ell_1 - \ell_{12}} e^{-\mu \ell_2 - \ell_{23}} -e^{-\mu \ell_{13}} e^{-\mu \ell_1 } e^{-\mu \ell_2}  )\mu^{\frac{4}{\gamma^2}} d \mu d\ell_1 d\ell_2 d\ell_{12} d\ell_{13} d\ell_{23}\,.
\end{align*}
Recall from Lemma~\ref{lem:two-pt-QD} that $\mathcal{L}$ and $\mathcal{R}$ represent the boundary lengths of a sample from $\mathcal{M}_{2}^{\rm disk}(\gamma^2-2)$. Since the integrands are negative, we can interchange the order of integration and obtain that:
\begin{equation*}
    \begin{aligned}
        O(s) = \frac{E_1 }{\Gamma(\frac{4}{\gamma^2}+1)} \iint_0^\infty  \Big(e^{-(\mu+s)\ell_{13} }\mathcal{M}_{2}^{\rm disk}(\gamma^2-2)[e^{-\mu \mathcal L - \mathcal R}]^2 - e^{-\mu \ell_{13}}\mathcal{M}_{2}^{\rm disk}(\gamma^2-2)[e^{-\mu \mathcal L}]^2 \Big) \mu^{\frac{4}{\gamma^2}} d\mu d \ell_{13}\,.
    \end{aligned}
\end{equation*}
Applying Lemma~\ref{lem:two-pt-QD} twice with $(\mu_1,\mu_2) = (\mu, 1)$ and $(\mu,0)$ respectively, and then integrating against $\ell_{13}$, we obtain that:
\begin{equation}
\label{eq:os-2}
        O(s) = \frac{E_1 E_3^2}{\Gamma(\frac{4}{\gamma^2}+1)} \int_0^\infty  \Big(  \frac{\mu^{4/\gamma^2} (\mu-1)^2}{(\mu+s) (\mu^{4/\gamma^2} - 1)^2 } - \frac{1}{\mu^{4/\gamma^2-1}} \Big) d\mu \,.
\end{equation}
Combining \eqref{eq:os-1}, \eqref{eq:h1}, \eqref{eq:os-2} with the value of $E_4$ from \eqref{eq:def-e5} yields the desired result.
\end{proof}

Proposition~\ref{lem:3pt-integral} now follows from the elementary integral formula below.
\begin{lemma}
    \label{lem:lem4.8-int}
    For any $\gamma \in (\sqrt{2},2 )$ and $\theta \in (0,\frac{4}{\gamma^2}-1)$, we have
    $$
    \int_0^\infty \Big(\frac{\mu^{4/\gamma^2}  (\mu-1)(\mu^{\theta-1}-1)}{ (\mu^{4/\gamma^2} - 1)^2 }  + \frac{1}{\mu^{4/\gamma^2-1}} \Big) d \mu = \frac{\gamma^4\pi (\theta\sin(\frac{\pi\gamma^2}{2})- \sin(\frac{\pi \gamma^2}{2}\theta))}{32\cos(\frac{\pi\gamma^2}{4})  \sin(\frac{\pi\gamma^2}{4}\theta)\sin(\frac{\pi\gamma^2}{4}(\theta+1))}\,.
    $$
\end{lemma}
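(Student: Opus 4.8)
<br>

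The plan is to evaluate the integral
\[
I(\theta) := \int_0^\infty \Big(\frac{\mu^{4/\gamma^2}(\mu-1)(\mu^{\theta-1}-1)}{(\mu^{4/\gamma^2}-1)^2} + \frac{1}{\mu^{4/\gamma^2-1}}\Big)\,d\mu
\]
by a substitution that turns it into a standard beta-type integral, differentiated once. Write $p := 4/\gamma^2 \in (1,2)$ and note $\theta \in (0,p-1)$. First I would change variables $\mu = t^{1/p}$, so $d\mu = \frac1p t^{1/p-1}\,dt$; the factor $(\mu^p-1)^2 = (t-1)^2$ becomes rational in $t$, and the surviving powers of $\mu$ become fractional powers of $t$. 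The second term $\mu^{1-p}\,d\mu = \frac1p t^{1/p - p/p}\dots$ integrates against a pure power; the key point is that the combination in the integrand has been arranged precisely so that the $\mu\to\infty$ and $\mu\to 0$ divergences of the two pieces cancel, leaving a convergent integral, and after the substitution the bracket is a rational function of $t$ times $t^{\text{(fractional)}}$ minus a power. Expanding $(\mu-1)(\mu^{\theta-1}-1) = \mu^\theta - \mu - \mu^{\theta-1} + 1$ and distributing, $I(\theta)$ becomes a finite linear combination of integrals of the form $\int_0^\infty \frac{t^{a-1}}{(t-1)^2}\,dt$ (principal value / regularized) and $\int_0^\infty t^{b-1}\,dt$ (which individually diverge but combine).

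The cleanest route is to recall the classical formula, valid by contour integration or by differentiating the beta integral $\int_0^\infty \frac{t^{a-1}}{1+t}\,dt = \frac{\pi}{\sin\pi a}$: one has, for a suitable range of $a$,
\[
\int_0^\infty \frac{t^{a-1}}{(1-t)^2}\,dt = \frac{\pi(1-a)}{\tan\pi a} \cdot \frac{1}{\sin\pi a}\,?
\]
— more precisely the regularized value is $\pi(a-1)\cot(\pi a)/\sin(\pi a)$ up to the standard normalization; I would pin down the exact constant by differentiating $\int_0^\infty \frac{t^{a-1}}{1-t}\,dt = \pi\cot(\pi a)$ with respect to $a$, or equivalently use $\frac{d}{da}\!\left[\pi\cot\pi a\right] = -\pi^2/\sin^2\pi a$ together with an integration-by-parts that produces the $(1-t)^2$ denominator. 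Carrying each of the four monomials $\mu^\theta,\ -\mu,\ -\mu^{\theta-1},\ 1$ through the substitution $\mu=t^{1/p}$ yields exponents $a \in \{\tfrac{\theta+p}{p},\ \tfrac{1+p}{p},\ \tfrac{\theta-1+p}{p},\ \tfrac{p}{p}=1\}$ (roughly; I'd track these carefully), and summing the corresponding closed forms $\propto \pi(a-1)\cot(\pi a)/\sin(\pi a)$ gives a rational-trigonometric expression in $\pi p\theta/?$ etc. Then I would simplify, using $p = 4/\gamma^2$ so $\pi p = 4\pi/\gamma^2$, $\tfrac{\pi}{p} = \tfrac{\pi\gamma^2}{4}$, and repeatedly apply product-to-sum and double-angle identities ($\sin 2x = 2\sin x\cos x$, $\cos(x-y)-\cos(x+y)=2\sin x\sin y$) to collapse everything onto the claimed form with denominator $\cos(\tfrac{\pi\gamma^2}{4})\sin(\tfrac{\pi\gamma^2}{4}\theta)\sin(\tfrac{\pi\gamma^2}{4}(\theta+1))$ and numerator $\theta\sin(\tfrac{\pi\gamma^2}{2}) - \sin(\tfrac{\pi\gamma^2}{2}\theta)$.

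The main obstacle I anticipate is bookkeeping: the individual power integrals diverge, so one must either work with a regularization (analytic continuation in $\theta$ from a strip where each piece converges, then rearrange) or group the monomials in convergent pairs before integrating. The safest version is to introduce a convergence factor or to split $\int_0^1 + \int_1^\infty$ and note that under $\mu\mapsto 1/\mu$ combined with a shift in exponents, the $[1,\infty)$ part maps to a $[0,1]$ integral of the same shape, so that $I(\theta)$ reduces to $\int_0^1$ of an explicitly integrable combination with no boundary divergence. After that the trigonometric simplification, while lengthy, is routine; I would double-check the final constant by evaluating both sides at a convenient special value (e.g. the symmetry point or a limit $\theta\to 0$, where the numerator vanishes to first order and should match the left side vanishing). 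Once Lemma~\ref{lem:lem4.8-int} is in hand, Proposition~\ref{lem:3pt-integral} follows immediately by plugging into Lemma~\ref{lem:LF5}: substituting $\mu^{\theta-1}$ for the factor $\frac{\mu^{\theta-1}-1}{1+s}$ via $\int_0^\infty \frac{s^{\theta-1}}{1+s}\cdot(\text{kernel})\,ds$ and recognizing the resulting $\mu$-integral as exactly $I(\theta)$ (this is where the Mellin transform $\int_0^\infty \frac{s^{\theta-1}}{(\mu+s)}\,ds = \pi\mu^{\theta-1}/\sin\pi\theta$ enters, producing the extra $\sin\pi\theta$ in the denominator and converting $\theta\sin(\tfrac{\pi\gamma^2}{2}) - \sin(\tfrac{\pi\gamma^2}{2}\theta)$ up to sign into the stated $\sin(\tfrac{\pi\gamma^2}{2}\theta) - \theta\sin(\tfrac{\pi\gamma^2}{2})$), and collecting the constants $E_1, E_3, E_4$.
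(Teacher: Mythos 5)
Your plan is essentially the paper's own proof: the substitution $z=\mu^{4/\gamma^2}$, reduction to cotangent-type Mellin integrals, cancellation of the divergences by grouping/cutoff, and a final trigonometric collapse is exactly how the paper proceeds (it truncates at $N$, integrates by parts once to trade the double pole $(z-1)^{-2}$ for simple-pole integrals, evaluates these via $\int_0^\infty\frac{t^a-t^b}{t-1}\,dt=\pi(\cot\pi b-\cot\pi a)$ and digamma identities, and checks that the $N^{\gamma^2/2-1}$ pieces cancel), so your ``safest version'' is precisely the executed argument. Two technical slips in your sketch would need fixing in the write-up. First, differentiating $\int_0^\infty\frac{t^{a-1}}{1-t}\,dt=\pi\cot(\pi a)$ with respect to $a$ inserts a factor $\log t$ in the integrand and does not produce the $(1-t)^{-2}$ kernel; the correct mechanism is the integration by parts you also mention, which gives the regularized value $\pi(1-a)\cot(\pi a)$ for $\int_0^\infty\frac{t^{a-1}}{(1-t)^2}\,dt$ -- both of your tentative closed forms carry a spurious extra factor $1/\sin(\pi a)$. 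Second, after expanding $(\mu-1)(\mu^{\theta-1}-1)$ into monomials, the individual integrals have a genuine second-order pole at $t=1$ (so not even principal-value convergent) and some diverge at infinity; assigning them ``regularized'' values term by term is only legitimate once you fix a scheme and verify the divergent parts cancel, which is why the pairing/cutoff route (the one the paper takes, and the one you flag as safest) should be the actual proof rather than a fallback. With those corrections the bookkeeping and the final trigonometric simplification go through as you describe.
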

We need the following two lemmas to prove  Lemma~\ref{lem:lem4.8-int}.  For $z \in \mathbb{R}$, let $\psi(z) = \frac{\Gamma'(z)}{\Gamma(z)}$ be the digamma function, which satisfies the reflection relation:
\begin{equation}
\label{eq:reflection}
\psi(1-z) - \psi(z) = \pi \cot (\pi z)\,.
\end{equation}
\begin{lemma}
\label{lem:integral1}
For any $a,b>0$, we have $\int_1^\infty \frac{t^{-a} - t^{-b}}{t-1} dt = \psi(b) - \psi(a)$.
\end{lemma}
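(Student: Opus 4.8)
The plan is to recognize the integral as one of the classical integral representations of the digamma function, reached quickly via the substitution $t\mapsto 1/u$. Setting $t=1/u$ in $\int_1^\infty \frac{t^{-a}-t^{-b}}{t-1}\,dt$ (so $dt=-u^{-2}\,du$, $t^{-a}=u^a$, $t-1=(1-u)/u$) maps the domain onto $(0,1)$ and, after the elementary simplification $\frac{1}{(1-u)/u}\cdot(-u^{-2})=-\frac{1}{u(1-u)}$, turns the integral into
\[
\int_0^1 \frac{u^{a-1}-u^{b-1}}{1-u}\,du .
\]
By Gauss's formula $\psi(x)=-\gamma_E+\int_0^1\frac{1-u^{x-1}}{1-u}\,du$, valid for $x>0$, this last integral equals $\psi(b)-\psi(a)$, which is exactly the claim.

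To keep the argument self-contained (avoiding a black-box appeal to a named identity for $\psi$), I would instead expand $\frac{1}{t-1}=\sum_{k\ge 1}t^{-k}$ for $t>1$, so the integrand becomes $\sum_{k\ge 1}\bigl(t^{-a-k}-t^{-b-k}\bigr)$. Integrating term by term over $(1,\infty)$ — each term giving $\frac{1}{a+k-1}-\frac{1}{b+k-1}$, with no boundary contribution at $\infty$ because $a,b>0$ — yields $\sum_{n\ge 0}\bigl(\tfrac{1}{a+n}-\tfrac{1}{b+n}\bigr)$. Comparing with the Weierstrass series $\psi(x)=-\gamma_E+\sum_{n\ge 0}\bigl(\tfrac{1}{n+1}-\tfrac{1}{n+x}\bigr)$ (obtained by logarithmic differentiation of $\frac{1}{\Gamma(x)}=x e^{\gamma_E x}\prod_{n\ge 1}(1+\tfrac{x}{n})e^{-x/n}$, to which I would just give a one-line pointer), the $-\gamma_E$ and $\tfrac{1}{n+1}$ terms cancel in $\psi(b)-\psi(a)$ and we again land on $\sum_{n\ge 0}\bigl(\tfrac{1}{a+n}-\tfrac{1}{b+n}\bigr)$.

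Before either computation I would record the two routine justifications: (i) the original integral converges absolutely, since $t^{-a}-t^{-b}=(b-a)(t-1)+O((t-1)^2)$ near $t=1$ makes the integrand bounded there, while it is $O(t^{-1-\min(a,b)})$ as $t\to\infty$; and (ii) in the series approach, the interchange of $\sum_k$ and $\int$ is legitimate by splitting according to the sign of $b-a$ and applying monotone convergence (or dominated convergence with dominating function $\frac{|t^{-a}-t^{-b}|}{t-1}$), and the series $\sum_n\bigl(\tfrac{1}{a+n}-\tfrac{1}{b+n}\bigr)$ converges absolutely because its general term is $O(n^{-2})$.

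There is no genuinely hard step here; the only point needing a moment's care is this convergence and interchange bookkeeping, together with deciding whether to cite Gauss's integral formula for $\psi$ directly or derive the needed series identity from the $\Gamma$-product. I would take the first route for brevity and mention the second as the self-contained alternative.
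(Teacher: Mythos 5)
Your proof is correct, and it takes a genuinely different route from the paper's. You transform the integral by $t\mapsto 1/u$ into $\int_0^1 \frac{u^{a-1}-u^{b-1}}{1-u}\,du$ and invoke Gauss's integral representation of $\psi$ (or, in your self-contained variant, expand $\frac{1}{t-1}=\sum_{k\ge 1}t^{-k}$, integrate termwise with the sign-splitting/monotone-convergence justification, and match the resulting series $\sum_{n\ge 0}\bigl(\frac{1}{a+n}-\frac{1}{b+n}\bigr)$ against the Weierstrass series for $\psi$). The paper instead introduces a regulator: it starts from the identity $\int_0^\infty \frac{(1+t)^{-a}-1}{t^{1+\epsilon}}\,dt=\frac{\Gamma(-\epsilon)\Gamma(\epsilon+a)}{\Gamma(a)}$ (quoted from an earlier Liouville-CFT reference), takes the difference of the $a$- and $b$-versions, extends the resulting identity from $a,b\in(0,1)$ to all $a,b>0$ by analytic continuation, and then sends $\epsilon\to 0$, extracting $\psi(b)-\psi(a)$ from the expansions $\Gamma(-\epsilon)=-\frac{1}{\epsilon}+o(\frac{1}{\epsilon})$ and $\Gamma(\epsilon+a)=\Gamma(a)+\Gamma'(a)\epsilon+o(\epsilon)$. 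What each buys: the paper's argument recycles an identity already available in its toolkit and needs no classical $\psi$-representation, at the cost of a regularization, an analytic-continuation step, and a limit; your argument is shorter and more elementary, needing only standard real-variable convergence bookkeeping plus either a citation of Gauss's formula or a one-line pointer to the $\Gamma$-product, both of which are entirely standard. Either proof is acceptable for the role this lemma plays in Lemma~\ref{lem:lem4.8-int}.
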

\begin{proof}
    Fix $\epsilon \in (0,1)$. For $0<a<1$, we have $\int_0^\infty \frac{(1+t)^{-a} -1 }{t^{1+\epsilon}} dt =\frac{\Gamma(-\epsilon)\Gamma(\epsilon+a)}{\Gamma(a)}$; see e.g.\ Lemma 5.16 in \cite{RZ22}. Therefore, for $0<a,b<1$, we have
    \begin{equation}
    \label{eq:lemc.1-1}
    \begin{aligned}
    \int_0^\infty \frac{(1+t)^{-a} -(1+t)^{-b} }{t^{1+\epsilon}} dt &= \int_0^\infty \frac{(1+t)^{-a} -1 }{t^{1+\epsilon}} dt - \int_0^\infty \frac{(1+t)^{-b} -1 }{t^{1+\epsilon}} dt \\
    &= \frac{\Gamma(-\epsilon)\Gamma(\epsilon+a)}{\Gamma(a)} - \frac{\Gamma(-\epsilon)\Gamma(\epsilon+b)}{\Gamma(b)}\,.
    \end{aligned}
    \end{equation}
    By analytic continuations, we see that \eqref{eq:lemc.1-1} holds for any $a, b>0$. Taking the limit as $\epsilon$ approaches zero and replacing $t + 1$ with $t$ yields the desired result (since $\Gamma(-\epsilon) = -\frac{1}{\epsilon} + o(\frac{1}{\epsilon})$ and $\Gamma(\epsilon+a) = \Gamma(a) + \Gamma'(a) \epsilon +o(\epsilon)$ as $\epsilon \rightarrow 0$).
\end{proof}

\begin{lemma}
\label{lem:integral2}
    For any $-1<a,b<0$, we have $\int_0^\infty \frac{t^a-t^b}{t-1}dt = \pi (\cot(\pi b) - \cot(\pi a))$.
\end{lemma}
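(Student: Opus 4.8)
The plan is to reduce the integral over $(0,\infty)$ to a difference of two integrals over $(1,\infty)$, to which Lemma~\ref{lem:integral1} directly applies, and then to simplify the resulting digamma values using the reflection relation~\eqref{eq:reflection}.

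First I would record convergence. Since $t^a - t^b$ vanishes at $t=1$, the integrand extends continuously across $t=1$ (with value $a-b$ by L'H\^opital), so there is no singularity there; near $0$ it is $O(t^{\min(a,b)})$, which is integrable because $a,b>-1$; near $\infty$ it is $O(t^{a-1}) + O(t^{b-1})$, integrable because $a,b<0$. Thus $\int_0^\infty \frac{t^a-t^b}{t-1}\,dt$ is a convergent improper integral. Next I would split $\int_0^\infty = \int_0^1 + \int_1^\infty$ and apply the substitution $t\mapsto 1/t$ in the first piece. Using $\frac{1}{1/t-1} = -\frac{t}{t-1}$ and $dt = -t^{-2}\,dt$ (with the limits reversed) one gets
\[
\int_0^1 \frac{t^a-t^b}{t-1}\,dt = -\int_1^\infty \frac{t^{-a-1}-t^{-b-1}}{t-1}\,dt,
\]
hence
\[
\int_0^\infty \frac{t^a-t^b}{t-1}\,dt = \int_1^\infty \frac{t^a-t^b}{t-1}\,dt - \int_1^\infty \frac{t^{-a-1}-t^{-b-1}}{t-1}\,dt.
\]
Both integrals on the right converge separately, since each numerator again vanishes at $t=1$ and the four exponents $-a,-b,a+1,b+1$ all lie in $(0,1)$ by the hypothesis $-1<a,b<0$; so the split is legitimate.

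Then I would apply Lemma~\ref{lem:integral1} to each integral, with parameters $(-a,-b)$ for the first (writing $t^a = t^{-(-a)}$) and $(a+1,b+1)$ for the second, all of which are positive. This yields
\[
\int_0^\infty \frac{t^a-t^b}{t-1}\,dt = \big(\psi(-b)-\psi(-a)\big) - \big(\psi(b+1)-\psi(a+1)\big).
\]
Finally, regrouping as $\big(\psi(-b)-\psi(b+1)\big) - \big(\psi(-a)-\psi(a+1)\big)$ and using~\eqref{eq:reflection} with $z=b+1$, namely $\psi(-b)-\psi(b+1) = \psi(1-(b+1))-\psi(b+1) = \pi\cot(\pi(b+1)) = \pi\cot(\pi b)$ by $\pi$-periodicity of $\cot$, and likewise with $z=a+1$, I obtain $\int_0^\infty \frac{t^a-t^b}{t-1}\,dt = \pi\cot(\pi b) - \pi\cot(\pi a) = \pi(\cot(\pi b)-\cot(\pi a))$, as claimed. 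The only delicate points are the convergence bookkeeping and the $t\mapsto 1/t$ substitution; I do not anticipate any genuine obstacle here, so this lemma (and hence, combined with Lemmas~\ref{lem:LF5} and~\ref{lem:integral1}, the elementary integral formula in Lemma~\ref{lem:lem4.8-int} and Proposition~\ref{lem:3pt-integral}) should follow routinely.
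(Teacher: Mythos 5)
Your proposal is correct and follows essentially the same route as the paper's proof: split the integral at $t=1$, map the piece over $(0,1)$ to $(1,\infty)$ via $t\mapsto 1/t$, apply Lemma~\ref{lem:integral1} to both resulting integrals, and simplify $\psi(-b)-\psi(-a)+\psi(a+1)-\psi(b+1)$ with the reflection relation~\eqref{eq:reflection}. The extra convergence bookkeeping you include is fine but not a point of difference.
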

\begin{proof}
    We can compute the integral as follows:
    \begin{align*}
        \int_0^\infty \frac{t^a-t^b}{t-1}dt &= \int_1^\infty \frac{t^a-t^b}{t-1}dt + \int_0^1 \frac{t^a-t^b}{t-1}dt = \int_1^\infty \frac{t^a-t^b}{t-1}dt + \int_1^\infty \frac{t^{-b-1}-t^{-a-1}}{t-1}dt \\
        &=\psi(-b)-\psi(-a) +\psi(a+1) - \psi(b+1) = \pi(\cot(\pi b) - \cot(\pi a))\,.
    \end{align*}
   In the second equation, we changed the variable $t$ to $1/t$ in the second integral. The third equation follows from Lemma~\ref{lem:integral1}, and the last equation follows from the reflection relation \eqref{eq:reflection}.
\end{proof}
\begin{proof}[Proof of Lemma~\ref{lem:lem4.8-int}]
 Let $I= \int_0^\infty \Big(\frac{\mu^{4/\gamma^2}  (\mu-1)(\mu^{\theta-1}-1)}{ (\mu^{4/\gamma^2} - 1)^2 }  + \frac{1}{\mu^{4/\gamma^2-1}} \Big) d \mu$. Setting $z = \mu^{4/\gamma^2}$, we have
    \begin{align*}
    I &= \frac{\gamma^2}{4} \int_0^\infty \Big( \frac{z^{\frac{\gamma^2}{4}} (z^{\frac{\gamma^2}{4}} -1) (z^{\frac{\gamma^2}{4}(\theta-1)}-1) }{(z-1)^2} + z^{\frac{\gamma^2}{2}-2} \Big)dz\\
    &=\frac{\gamma^2}{4} \lim_{N \rightarrow \infty} \bigg(-\int_0^N  z^{\frac{\gamma^2}{4}} (z^{\frac{\gamma^2}{4}} -1) (z^{\frac{\gamma^2}{4}{(\theta-1)}}-1) \Big(\frac{1}{z-1}\Big)'dz  +  \frac{2}{\gamma^2-2}N^{\frac{\gamma^2}{2}-1} \bigg) \,.
    \end{align*}  In the second equality, we changed the integration interval to $(0,N)$ and then took $N$ to infinity. We also used the identity that $(\frac{1}{z-1})' = \frac{-1}{(z-1)^2}$ and $\int_0^N z^{\frac{\gamma^2}{2}-2} dz = \frac{2}{\gamma^2-2}N^{\frac{\gamma^2}{2}-1}$. By integration by parts, we obtain:
    \begin{equation}
    \label{eq:lem4.10-10}
    \begin{aligned}
        I&=\frac{\gamma^2}{4} \lim_{N \rightarrow \infty} \bigg( \frac{\gamma^2}{4} \int_0^N \frac{z^{\frac{\gamma^2}{4}-1} - z^{\frac{\gamma^2}{4}\theta-1}}{z-1}dz + \frac{\gamma^2}{2}\int_0^N \frac{z^{\frac{\gamma^2}{4}(\theta+1)-1} - z^{\frac{\gamma^2}{2}-1}}{z-1}dz \\
        &\qquad \qquad+\frac{\gamma^2{(\theta-1)}}{4}  \int_0^N \frac{z^{\frac{\gamma^2}{4}(\theta+1)-1} - z^{\frac{\gamma^2}{4}\theta-1}}{z-1}dz -\frac{N^{\frac{\gamma^2}{4}} (N^{\frac{\gamma^2}{4}}-1)(N^{\frac{\gamma^2}{4}{(\theta-1)}}-1) }{N-1} + \frac{2}{\gamma^2-2}N^{\frac{\gamma^2}{2}-1}  \bigg)\,.
    \end{aligned}
    \end{equation}

    By Lemma~\ref{lem:integral2} and the fact that $-1<\frac{\gamma^2}{4}-1,\frac{\gamma^2}{4}\theta-1,\frac{\gamma^2}{4}(\theta+1)-1<0$, we have
    \begin{equation}
        \label{eq:lem4.10-10-1}
       I_1:= \int_0^\infty \frac{z^{\frac{\gamma^2}{4}-1} - z^{\frac{\gamma^2}{4}\theta-1}}{z-1}dz = \pi\Big(\cot(\frac{\pi \gamma^2}{4}\theta) - \cot(\frac{\pi \gamma^2}{4}) \Big)\,,
    \end{equation}
    \begin{equation}
        \label{eq:lem4.10-10-2}
      I_2:=  \int_0^\infty \frac{z^{\frac{\gamma^2}{4}(\theta+1)-1} - z^{\frac{\gamma^2}{4}\theta-1}}{z-1}dz = \pi\Big(\cot(\frac{\pi \gamma^2}{4}\theta) - \cot(\frac{\pi \gamma^2}{4}(\theta+1)) \Big) \,.
    \end{equation}
        Since $\theta<\frac{4}{\gamma^2}-1$, we have $- \frac{N^{\frac{\gamma^2}{4}} (N^{\frac{\gamma^2}{4}}-1)(N^{\frac{\gamma^2}{4}{(\theta-1)}}-1) }{N-1} = N^{\frac{\gamma^2}{2}-1} +o_N(1)$. Therefore by \eqref{eq:lem4.10-10}, we have
\begin{equation}\label{eq:I}
    I= \frac{\gamma^4}{16} I_1 + \frac{\gamma^4{(\theta-1)}}{16} I_2 + \frac{\gamma^2}{4} \lim_{N \rightarrow \infty} \bigg( \frac{\gamma^2}{\gamma^2-2} N^{\frac{\gamma^2}{2}-1} + \frac{\gamma^2}{2}\int_0^N \frac{z^{\frac{\gamma^2}{4}(\theta+1)-1} - z^{\frac{\gamma^2}{2}-1}}{z-1}dz \bigg)\,.
\end{equation}

    For the integral in \eqref{eq:I}, we have
    \begin{equation}
    \label{eq:lem4.10-10-3}
        \begin{aligned}
        &\quad \int_0^N \frac{z^{\frac{\gamma^2}{4}(\theta+1)-1} - z^{\frac{\gamma^2}{2}-1}}{z-1}dz = \int_0^1  \frac{z^{\frac{\gamma^2}{4}(\theta+1)-1} - z^{\frac{\gamma^2}{2}-1}}{z-1}dz + \int_1^N  \frac{z^{\frac{\gamma^2}{4}(\theta+1)-1} - z^{\frac{\gamma^2}{2}-1}}{z-1}dz \\
        &=\int_1^\infty \frac{z^{-\frac{\gamma^2}{2}} - z^{-\frac{\gamma^2}{4}(\theta+1)}}{z-1}dz + \int_1^N  \frac{z^{\frac{\gamma^2}{4}(\theta+1)-1} - z^{\frac{\gamma^2}{2}-2}}{z-1}dz + \int_1^N  \frac{z^{\frac{\gamma^2}{2}-2} - z^{\frac{\gamma^2}{2}-1}}{z-1}dz \\
        &=\psi(\frac{\gamma^2}{4}(\theta+1)) - \psi(\frac{\gamma^2}{2}) + \psi(2-\frac{\gamma^2}{2}) - \psi(1-\frac{\gamma^2}{4}(\theta+1)) +o_N(1) -\frac{2}{\gamma^2-2}(N^{\frac{\gamma^2}{2}-1}-1)\\
        &=-\pi \cot(\frac{\pi \gamma^2}{4}(\theta+1)) + \pi \cot(\frac{\pi \gamma^2}{2}) - \frac{2}{\gamma^2-2}N^{\frac{\gamma^2}{2}-1} + o_N(1) \,.
        \end{aligned}
    \end{equation}
In the second equation, we changed the variable $z $ to $\frac{1}{z}$ in the first integral. The third and fourth equations follow from Lemma~\ref{lem:integral1}, Equation~\eqref{eq:reflection}, and the fact that $-\psi(\frac{\gamma^2}{2}) + \frac{2}{\gamma^2-2} = -\psi(\frac{\gamma^2}{2}-1)$.

    Combining~\eqref{eq:lem4.10-10-1}--\eqref{eq:lem4.10-10-3} and taking $N \rightarrow \infty$, we obtain that
    \begin{align*}
        I&=\frac{\gamma^4}{16} I_1 + \frac{\gamma^4(\theta-1)}{16} I_2 + \frac{\gamma^4 \pi}{8} \Big(-\cot(\frac{\pi \gamma^2}{4}(\theta+1)) + \cot(\frac{\pi \gamma^2}{2}) \Big) =\frac{\pi \gamma^4(\theta \sin(\frac{\pi\gamma^2}{2}) - \sin(\frac{\pi \gamma^2}{2}\theta))}{32\cos(\frac{\pi\gamma^2}{4})  \sin(\frac{\pi\gamma^2}{4}\theta)\sin(\frac{\pi\gamma^2}{4}(\theta+1))}\,. 
    \end{align*}
\end{proof}

\begin{proof}[Proof of Proposition~\ref{lem:3pt-integral}]
By Lemma~\ref{lem:LF5}, we have
\begin{equation}
\label{eq:prop4.10-111}
    \int_0^\infty \frac{1}{C_2}\overline H^{(\frac{2}{\gamma}, \frac{4}{\gamma}-\gamma,\frac{2}{\gamma})}_{(s,1,1)}    \frac{s^{\theta-1}}{1+s} ds= E_4  \int_0^\infty \Big(\int_0^\infty    \frac{\mu^{4/\gamma^2} (\mu-1)^2}{(\mu+s) (\mu^{4/\gamma^2} - 1)^2 } - \frac{1}{\mu^{4/\gamma^2-1}}  d\mu \Big)\frac{s^{\theta-1}}{1+s} ds \,.
\end{equation}
For any $\mu>0$ and $\theta \in (0,1)$, we have:
\begin{align*}
    &\int_0^\infty \frac{s^{\theta-1}}{(\mu+s)(1+s)}ds = \frac{-\pi}{\sin(\pi\theta)} \cdot \frac{\mu^{\theta-1}-1}{\mu-1} \quad \mbox{and} \quad \int_0^\infty \frac{s^{\theta-1}}{1+s}ds =\frac{\pi}{\sin(\pi \theta)} \,.
\end{align*}
Since the integrands in \eqref{eq:prop4.10-111} are negative, we can interchange the order of integration and apply the above identities. Therefore,
\begin{align*}
    \int_0^\infty \frac{1}{C_2}\overline H^{(\frac{2}{\gamma}, \frac{4}{\gamma}-\gamma,\frac{2}{\gamma})}_{(s,1,1)}    \frac{s^{\theta-1}}{1+s} ds =\frac{- E_4 \pi }{\sin(\pi \theta)} \int_0^\infty \Big(\frac{\mu^{4/\gamma^2} (\mu-1)(\mu^{\theta-1}-1)}{ (\mu^{4/\gamma^2} - 1)^2 }  + \frac{1}{\mu^{4/\gamma^2-1}} \Big)d \mu.
\end{align*}
 By Lemma~\ref{lem:lem4.8-int}, we get the desired result.  
\end{proof}

\subsection{Proof of Theorem~\ref{thm:moment} by analytic continuation}\label{sec:analytic}

Our proof is based on analytic continuation. We start with the following lemma.
\begin{lemma}\label{lem:analtic0}
Suppose $x_0, x_1, x_2 \in \mathbb{R}$ with $x_0 < x_1 < x_2$ and $g(\lambda)$ is an analytic function on an open domain $V\subset \mathbb{C}$ that contains $(x_0, x_2)$. Let $\nu$ be a (possibly infinite) measure on positive reals, and let $X$ be a positive random variable distributed according to $\nu$. We have
\begin{enumerate}
    \item If $\nu[e^{-\lambda X}] = g(\lambda)$ for $\lambda \in (x_1, x_2)$, then 
    $\nu[e^{-\lambda X}]$ is finite and analytic for all $\lambda$ with $\mathrm{Re} \lambda> x_0$, so it agrees with $g(\lambda)$ for $\lambda \in V \cap \{\mathrm{Re}\lambda> x_0\}$.

    \item If $\nu[e^{-\lambda X} - 1] = g(\lambda)$ for $\lambda \in (x_1, x_2)$, then 
    $\nu[e^{-\lambda X}-1]$ is finite and analytic for all $\lambda$ with $\mathrm{Re} \lambda> x_0$, so it agrees with $g(\lambda)$ for $\lambda \in V\cap \{\mathrm{Re}\lambda> x_0\}$.
\end{enumerate}
\end{lemma}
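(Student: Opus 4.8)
\textbf{Proof plan for Lemma~\ref{lem:analtic0}.}

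The plan is to treat both parts simultaneously, since part (2) reduces to a mild variant of part (1). First I would observe that for real $\lambda$ the map $\lambda \mapsto \nu[e^{-\lambda X}]$ (resp.\ $\nu[e^{-\lambda X}-1]$) is non-increasing in a suitable sense: if $\lambda_1 \le \lambda_2$ then $e^{-\lambda_2 X} \le e^{-\lambda_1 X}$, and more relevantly, finiteness of $\nu[e^{-\lambda X}]$ at some $\lambda$ implies finiteness for all larger $\lambda$ (the abscissa of convergence picture already invoked in the excerpt before Theorem~\ref{thm:tauberian}). Since $\nu[e^{-\lambda X}] = g(\lambda) < \infty$ for $\lambda \in (x_1,x_2)$, monotonicity gives $\nu[e^{-\lambda X}] < \infty$ for all real $\lambda > x_1$. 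The real work is to push the threshold of finiteness down from $x_1$ to $x_0$, and for that I would use the analytic continuation hypothesis: $g$ is analytic, hence finite, on the real interval $(x_0,x_2)$. The strategy is to show that the \emph{a priori} finite quantity $g(\lambda)$, for $\lambda\in(x_0,x_1]$, forces $\nu[e^{-\lambda X}]$ to be finite there as well, via a Vitali/monotone-limit argument: pick any $x_0 < \lambda' < x_1$; for $\lambda$ decreasing to $\lambda'$ through $(x_1,x_2)$ we have $e^{-\lambda X} \uparrow e^{-\lambda' X}$ pointwise, so by monotone convergence $\nu[e^{-\lambda' X}] = \lim_{\lambda \downarrow \lambda'} \nu[e^{-\lambda X}]$; but this limit must equal $\lim_{\lambda\downarrow\lambda'} g(\lambda) = g(\lambda') < \infty$ by continuity of $g$ on $(x_0,x_2)$. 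Hence $\nu[e^{-\lambda' X}] = g(\lambda') < \infty$ for all real $\lambda' \in (x_0,x_2)$.

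Next I would upgrade from the real interval to the complex half-plane $\{\operatorname{Re}\lambda > x_0\}$. For $\operatorname{Re}\lambda = \sigma > x_0$ we have $|e^{-\lambda X}| = e^{-\sigma X}$, which is $\nu$-integrable by the previous step (taking any real $\lambda'$ with $x_0 < \lambda' < \sigma$ and using monotonicity again, or directly applying the previous paragraph with $\lambda' = \sigma$). So $\nu[e^{-\lambda X}]$ is a well-defined finite quantity on $\{\operatorname{Re}\lambda > x_0\}$. To see it is analytic there, I would invoke the standard differentiation-under-the-integral-sign / Morera argument: on any compact subset $K$ of $\{\operatorname{Re}\lambda > x_0\}$, $\sigma$ is bounded below by some $\sigma_K > x_0$, so $|e^{-\lambda X}| \le e^{-\sigma_K X} \cdot \mathbf 1_{X \le 1} + e^{-\sigma_K X}\cdot\mathbf 1_{X>1}$; since $e^{-\sigma_K' X}$ is $\nu$-integrable for $x_0 < \sigma_K' < \sigma_K$, and $e^{-\sigma_K X} \le e^{-\sigma_K' X}$ on $\{X \ge 0\}$ — wait, one must be a touch careful near $X = 0$ where $e^{-\sigma X}$ is bounded by $1$ uniformly, so the domination is clean: $|e^{-\lambda X}| \le \max(1, e^{-\sigma_K' X})$ which is $\nu$-integrable whenever $e^{-\sigma_K' X}$ is and $\nu$ is finite on $\{X \ge c\}$ for $c>0$, which it is. Then $\lambda \mapsto e^{-\lambda X}$ is entire for each fixed $X$, and by Fubini on each triangle contour (Morera) together with the dominated-convergence justification of interchanging $\int_\gamma$ and $\int d\nu$, the function $\lambda\mapsto\nu[e^{-\lambda X}]$ is holomorphic on $\{\operatorname{Re}\lambda > x_0\}$.

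Finally, analyticity plus agreement on the open real interval $(x_1,x_2)$ (which has an accumulation point) forces $\nu[e^{-\lambda X}] = g(\lambda)$ on the connected open set $V \cap \{\operatorname{Re}\lambda > x_0\}$ by the identity theorem, completing part (1). For part (2) the argument is identical after replacing $e^{-\lambda X}$ by $e^{-\lambda X} - 1$: the only change is in the monotonicity/finiteness bookkeeping — since $e^{-\lambda X} - 1 \le 0$, finiteness of $\nu[e^{-\lambda X}-1]$ at one $\lambda$ gives finiteness for all larger real $\lambda$ just as before, and the monotone-limit step uses $e^{-\lambda X}-1 \downarrow e^{-\lambda' X}-1$ (both negative) as $\lambda \downarrow \lambda'$; the complex-domination step uses $|e^{-\lambda X}-1| \le |e^{-\sigma X} - 1| + |e^{-\sigma X}||e^{-i(\operatorname{Im}\lambda)X}-1|$ or more simply $|e^{-\lambda X}-1| \le e^{-\sigma X} + 1$ away from $X=0$ and $|e^{-\lambda X}-1| \le |\lambda| X$ near $X = 0$, giving an integrable dominant since $\nu[X\wedge 1]<\infty$ follows from $\nu[1 - e^{-\lambda X}]<\infty$. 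The main obstacle — and it is a minor one — is handling the behavior of the integrand near $X = 0$ in part (2) to ensure the dominating function is genuinely $\nu$-integrable; everything else is bookkeeping with monotone and dominated convergence and the identity theorem.
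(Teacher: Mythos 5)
Your overall skeleton (Morera/Fubini for analyticity on a half-plane, then the identity theorem) matches the paper's, but the crucial step — pushing finiteness of $\nu[e^{-\lambda X}]$ from the interval $(x_1,x_2)$, where the hypothesis gives it for free, down to all real $\lambda>x_0$ — is where your argument breaks. You write: ``pick any $x_0<\lambda'<x_1$; for $\lambda$ decreasing to $\lambda'$ through $(x_1,x_2)$ we have $e^{-\lambda X}\uparrow e^{-\lambda' X}$, so by monotone convergence $\nu[e^{-\lambda' X}]=\lim_{\lambda\downarrow\lambda'}\nu[e^{-\lambda X}]=g(\lambda')$.'' But $\lambda$ cannot decrease to $\lambda'<x_1$ \emph{through} $(x_1,x_2)$: any sequence in $(x_1,x_2)$ stays $\geq x_1$, so monotone convergence along it only yields finiteness at $x_1$ itself, not below. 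If instead you let $\lambda\downarrow\lambda'$ through values below $x_1$, then the identity $\nu[e^{-\lambda X}]=g(\lambda)$ is not known for those $\lambda$ — that is exactly what you are trying to prove — so the step is circular, and the limit could a priori be $+\infty$. Note that the conclusion genuinely requires the positivity structure: for a signed measure, a Laplace transform can admit an analytic continuation past its abscissa of convergence, so no soft continuity/monotonicity argument can close this gap.

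The paper's proof supplies the missing idea. Set $\widetilde\lambda:=\inf\{\lambda\in\mathbb R:\nu[e^{-\lambda X}]<\infty\}\leq x_1$, establish analyticity of $\nu[e^{-\lambda X}]$ on $\{\mathrm{Re}\,\lambda>\widetilde\lambda\}$ (your Morera argument works here), so that it agrees with $g$ there; then suppose $\widetilde\lambda>x_0$. Differentiating under the integral and letting $\lambda\downarrow\widetilde\lambda$ gives $\nu[X^n e^{-\widetilde\lambda X}]=(-1)^n g^{(n)}(\widetilde\lambda)$ for all $n$, and for $\lambda<\widetilde\lambda$ one expands $e^{-\lambda X}=\sum_n \frac{1}{n!}(\widetilde\lambda-\lambda)^n X^n e^{-\widetilde\lambda X}$, interchanging sum and integral by positivity, to find that $\nu[e^{-\lambda X}]$ equals the Taylor series of $g$ at $\widetilde\lambda$; since $g$ is analytic in a neighborhood of $\widetilde\lambda\in(x_0,x_2)$, this series converges for $\lambda$ slightly below $\widetilde\lambda$, contradicting the definition of $\widetilde\lambda$. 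This Pringsheim--Landau type argument (the abscissa of convergence of a Laplace transform of a positive measure is a singularity) is the essential content of the lemma and is absent from your proposal. Your treatment of part (2) inherits the same gap (and the monotonicity bookkeeping there is also off: for the relevant negative $\lambda$, $|e^{-\lambda X}-1|$ grows as $\lambda$ decreases, so finiteness at one point propagates upward, not downward — again only the Taylor/positivity argument reaches $x_0$).
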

\begin{proof}
We start with the first claim. Let $\widetilde \lambda := \inf \{ \lambda \in \mathbb{R} : \nu[e^{-\lambda X}]<\infty \}$. By assumption, $\widetilde \lambda \leq x_1$. Note that $e^{-\lambda X}$ is analytic in $\lambda$, and its integral with respect to $\nu$ can be uniformly controlled in any compact subset of $\{ \mathrm{Re} \lambda> \widetilde \lambda \}$. By Morera's theorem and applying Fubini's theorem, we deduce that $\nu[e^{-\lambda X}]$ is an analytic function in $\lambda$ for $\lambda \in \{ \mathrm{Re} \lambda> \widetilde \lambda \}$. Therefore, by analytic continuation, $\nu[e^{-\lambda X}] = g(\lambda)$ for $\lambda \in V\cap\{\mathrm{Re} \lambda> \widetilde \lambda\}$.

In order to show the first claim, it remains to show that $\widetilde \lambda \leq x_0$.  Suppose that $\widetilde \lambda > x_0$. By taking the derivative of $\nu[e^{-\lambda X}]$ with respect to $\lambda$ and then sending $\lambda$ to $\widetilde \lambda$, we obtain that $\nu[X^n e^{-\widetilde \lambda X}] = (-1)^n g^{(n)}(\widetilde \lambda)$ for any $n \geq 0$, where $g^{(n)}$ is the $n$-th derivative of $g$. Therefore, for $\lambda< \widetilde \lambda$, we have
$
\nu[e^{-\lambda X}] =  \nu [ \sum_{n=0}^\infty \frac{1}{n!}(\widetilde \lambda - \lambda)^n X^{n} e^{-\widetilde \lambda X} ] = \sum_{n=0}^\infty \frac{1}{n!}(\lambda - \widetilde \lambda )^n g^{(n)}(\widetilde \lambda)
$.
We can interchange the summation and integral because the integrands are positive. The summation in the last term is the Taylor expansion of $g(\lambda)$ around $\widetilde \lambda$. By assumption, there exists $\epsilon>0$ such that $g(\lambda)$ is analytic in $B_\epsilon(\widetilde \lambda)$. Hence, for $\lambda \in (\widetilde \lambda -\epsilon, \widetilde \lambda)$, the summation in the last term is absolutely convergent and thus finite. This contradicts with the definition of $\widetilde \lambda$. Therefore, $\widetilde \lambda \leq x_0$, and we obtain the first claim.

The second claim follows in an identical way. Let $\widetilde \lambda := \inf \{ \lambda \in \mathbb{R} : \nu[|e^{-\lambda X} - 1|]<\infty \}$. By assumption, $\widetilde \lambda \leq x_1$. In addition, we have $\nu[X \mathbbm{1}_{\{X \leq 1 \}}] <\infty$, $\nu[\mathbbm{1}_{\{X > 1 \}}] <\infty$, and $\nu[e^{-(\widetilde \lambda + \epsilon) X } \mathbbm{1}_{\{X > 1 \}}] <\infty$ for any $\epsilon>0$. For any compact subset $K$ of $\{ \mathrm{Re} \lambda> \widetilde \lambda \}$, there exist constants $\epsilon, C>0$ that depend on $K$ such that $|e^{-\lambda x} - 1| \leq C(x \mathbbm{1}_{\{x \leq 1 \}} +(1 + e^{-(\widetilde \lambda + \epsilon) x })\mathbbm{1}_{\{x > 1 \}})$ for all $x \geq 0$ and $\lambda \in K$. By Morera's theorem and Fubini's theorem, we deduce that $\nu[e^{-\lambda X} - 1]$ is an analytic function in $\lambda$ for $\lambda \in \{ \mathrm{Re} \lambda> \widetilde \lambda \}$. Therefore, by analytic continuation, $\nu[e^{-\lambda X} - 1] = g(\lambda)$ for $\lambda \in V\cap \{\mathrm{Re} \lambda> \widetilde\lambda\}$. In the same way, we can show that $\widetilde \lambda \leq x_0$, and this gives the desired result.
\end{proof}

We now use the analytic continuation in $\beta$  of $g(\alpha,\beta)$ from Proposition~\ref{prop:g(alpha,beta)} to prove the following.

\begin{lemma}\label{lem:diff}
        For $\alpha \in (\gamma,Q)$, let
    \begin{equation}
    \label{eq:def-fi}
    f(\alpha) = \frac{4C_1\overline{G}(\alpha,\gamma)\Gamma(\frac{4}{\gamma^2}-2)}{\gamma^2 \overline{G}(\alpha,\frac{4}{\gamma}-\gamma)\Gamma(\frac{2}{\gamma}(Q-\alpha))\Gamma (\frac{2}{\gamma}(\alpha-\gamma))}  \int_0^\infty \frac{1}{C_2}\overline H^{(\frac{2}{\gamma}, \frac{4}{\gamma}-\gamma,\frac{2}{\gamma})}_{(s,1,1)}    \frac{s^{\frac{2}{\gamma}(Q-\alpha)-1}}{1+s} ds\,.
    \end{equation}
Then, $\nu_\kappa\left[|\psi'(i)|^{2 \Delta_{\alpha_1} - 2} -|\psi'(i)|^{2 \Delta_{\alpha_2} - 2}\right] = f(\alpha_1) - f(\alpha_2)$ for $\alpha_1,\alpha_2 \in (\gamma,Q)$.
\end{lemma}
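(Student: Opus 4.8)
\textbf{Proof plan for Lemma~\ref{lem:diff}.}
The plan is to use Proposition~\ref{prop:g(alpha,beta)}, which gives $g(\alpha,\beta)$ for $\beta$ in the range $\gamma\vee 2(Q-\alpha)<\beta<Q$, together with the analytic continuation principle of Lemma~\ref{lem:analtic0} to push the identity down to $\beta=\beta_0=\frac4\gamma-\gamma$. The key point is that $g(\alpha,\beta)$ is the total mass of $\nu_\kappa^{\alpha,\beta} = |\psi'(i)|^{2\Delta_\alpha-2\Delta_\gamma}|\varphi'(1)|^{\Delta_\beta-\Delta_{\beta_0}}\cdot\nu_\kappa^{\gamma,\beta_0}$, and since $\Delta_\beta$ is a quadratic (hence monotone on one side of its vertex) function of $\beta$, the family $\{\Delta_\beta-\Delta_{\beta_0}\}$ sweeps out an interval as $\beta$ ranges over a suitable sub-interval; so $g(\alpha,\cdot)$ read through the substitution $\mu\mapsto\Delta_\beta-\Delta_{\beta_0}$ is exactly a Laplace-type transform of $|\varphi'(1)|$ (more precisely, of $-\log|\varphi'(1)|$) under the finite measure $|\psi'(i)|^{2\Delta_\alpha-2\Delta_\gamma}\nu_\kappa^{\gamma,\beta_0}$, to which part (1) of Lemma~\ref{lem:analtic0} applies.

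First I would fix $\alpha\in(\gamma,Q)$ and set $Y=-\log|\varphi'(1)|\ge 0$ (note $|\varphi'(1)|\le 1$ since $\varphi$ maps $\mathbb H$ into $\mathbb H\setminus\overline{D_i}$, so $Y$ is a genuine nonnegative random variable), under the measure $\nu^{(\alpha)}:=|\psi'(i)|^{2\Delta_\alpha-2\Delta_\gamma}\cdot\nu_\kappa^{\gamma,\beta_0}$. Then $g(\alpha,\beta)=\nu^{(\alpha)}[e^{-(\Delta_\beta-\Delta_{\beta_0})Y}]$. Writing $\Delta_\beta-\Delta_{\beta_0}=\tfrac12(\beta-\beta_0)(Q-\tfrac{\beta+\beta_0}{2})$, I would parametrize $\lambda=\lambda(\beta):=\Delta_\beta-\Delta_{\beta_0}$ and check that as $\beta$ ranges over $(\beta_0,\,\tfrac{2}{\gamma}\vee\gamma\text{-type bound})$ this is a strictly monotone bijection onto an interval $(0,\lambda_{\max})$; in particular Proposition~\ref{prop:g(alpha,beta)} provides a closed formula for $\nu^{(\alpha)}[e^{-\lambda Y}]$ valid on an open interval of $\lambda$'s that is bounded away from $0$. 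The right-hand side of \eqref{eq:for-gab}, re-expressed via the substitution, is manifestly analytic in $\lambda$ in a complex neighborhood of that interval: $\overline G(\alpha,\beta(\lambda))$ and $\Gamma(\tfrac2\gamma(\tfrac\beta2+\alpha-Q))$ are analytic and nonvanishing there, $\Gamma(\tfrac2\gamma(Q-\alpha))$ is a constant, and the integral $\int_0^\infty \overline H^{(\frac2\gamma,\beta,\frac2\gamma)}_{(s,1,1)}\frac{s^{\frac2\gamma(Q-\alpha)-1}}{1+s}\,ds$ is analytic in $\beta$ on a complex neighborhood of $(2(Q-\alpha),Q)$ by Lemma~\ref{lem:analy1}, hence analytic in $\lambda$ on a complex neighborhood of the corresponding $\lambda$-interval (which contains $\lambda=0$ in its interior, since $\beta_0\in(2(Q-\alpha),Q)$). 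Now apply Lemma~\ref{lem:analtic0}(1) with $x_0<0<x_1<x_2$ chosen so that $(x_1,x_2)$ lies in the interval furnished by Proposition~\ref{prop:g(alpha,beta)} and $(x_0,x_2)\subset V$: this yields that $\nu^{(\alpha)}[e^{-\lambda Y}]$ is finite and analytic for $\operatorname{Re}\lambda>x_0$, in particular at $\lambda=0$, and equals the analytic continuation of the formula there. Evaluating at $\lambda=0$ (i.e. $\beta=\beta_0$) gives $g(\alpha,\beta_0)=\nu^{(\alpha)}[1]=\nu_\kappa[|\psi'(i)|^{2\Delta_\alpha-2\Delta_\gamma}]$, and since $\Delta_\gamma=\tfrac\gamma2(Q-\tfrac\gamma2)=1$ (because $\gamma Q-\tfrac{\gamma^2}{2}=2+\tfrac{\gamma^2}{2}-\tfrac{\gamma^2}{2}=2$, so $\Delta_\gamma=1$), this is exactly $\nu_\kappa[|\psi'(i)|^{2\Delta_\alpha-2}]$; moreover the value of the continuation at $\beta=\beta_0$ is precisely $f(\alpha)$ as defined in \eqref{eq:def-fi} — here the $\Gamma(\tfrac\beta\gamma-1)$ from \eqref{eq:for-gab} becomes $\Gamma(\tfrac{\beta_0}{\gamma}-1)=\Gamma(\tfrac4{\gamma^2}-2)$ and $\Gamma(\tfrac2\gamma(\tfrac\beta2+\alpha-Q))$ becomes $\Gamma(\tfrac2\gamma(\alpha-\gamma))$ upon substituting $\beta_0=\frac4\gamma-\gamma$.

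Therefore $\nu_\kappa[|\psi'(i)|^{2\Delta_\alpha-2}]=f(\alpha)$ for every $\alpha\in(\gamma,Q)$, and subtracting the identity for $\alpha_1$ from that for $\alpha_2$ gives $\nu_\kappa[|\psi'(i)|^{2\Delta_{\alpha_1}-2}-|\psi'(i)|^{2\Delta_{\alpha_2}-2}]=f(\alpha_1)-f(\alpha_2)$, as claimed. The main obstacle I anticipate is the bookkeeping in the parameter change $\beta\leftrightarrow\lambda$: one must verify carefully that the open $\beta$-interval where Proposition~\ref{prop:g(alpha,beta)} is valid translates into a $\lambda$-interval of the form $(x_1,x_2)$ with $0<x_1$, that the neighborhood $V$ from Lemma~\ref{lem:analy1} (a complex neighborhood of $(2(Q-\alpha),Q)\ni\beta_0$) really does contain a complex interval $(x_0,x_2)$ with $x_0<0$ after the substitution, and that $Y=-\log|\varphi'(1)|$ is indeed $\ge 0$ so that the "abscissa of convergence" reasoning of Lemma~\ref{lem:analtic0}(1) applies; none of these steps is deep, but each requires care to state precisely. (An alternative, should the direct substitution be awkward, is to run Lemma~\ref{lem:analtic0} directly in the variable $\lambda$ after noting that $g(\alpha,\beta)$, as a function of $\lambda=\Delta_\beta-\Delta_{\beta_0}$, literally is $\nu^{(\alpha)}[e^{-\lambda Y}]$ by the very definition \eqref{eq:def-beta}; this avoids inverting the quadratic explicitly.)
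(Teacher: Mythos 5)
There is a genuine gap, and it is not just bookkeeping: your plan continues the single-$\alpha$ moment $g(\alpha,\beta)$ down to $\beta=\beta_0$ and concludes $\nu_\kappa[|\psi'(i)|^{2\Delta_\alpha-2}]=f(\alpha)$, but this quantity is infinite. The measure $\nu_\kappa$ has infinite total mass carried by small bubbles, on which $|\psi'(i)|^{2\Delta_\alpha-2}\to 1$; this is exactly why the paper only ever works with $\nu_\kappa[|\psi'(i)|^\lambda-1]$ or with differences of two powers. The flaw in your continuation argument is the analyticity check: the right-hand side of \eqref{eq:for-gab} carries the prefactor $\Gamma(\frac{\beta}{\gamma}-1)$ (which you only mention at the very end when substituting $\beta=\beta_0$), and this factor has a pole at $\beta=\gamma$. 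Since $\beta_0=\frac{4}{\gamma}-\gamma<\gamma\leq\gamma\vee 2(Q-\alpha)$, the point $\beta=\gamma$ lies strictly inside the interval you must cross, so the hypothesis of Lemma~\ref{lem:analtic0}(1) — analyticity of the target function on a domain containing $(x_0,x_2)$ with $x_0<0$ — fails. Moreover the pole is not removable at fixed $\alpha$: in the variable $\lambda=\Delta_\beta-\Delta_{\beta_0}$ it sits at $\lambda=\Delta_\gamma-\Delta_{\beta_0}=\frac{\gamma^2}{2}-1>0$, which is precisely the abscissa of convergence of $\nu^{(\alpha)}[e^{-\lambda Y}]$: the $\nu_\kappa$-mass of bubbles at scale $s$ is of order $s^{-(8-\kappa)/\kappa}$ and is only compensated by $|\varphi'(1)|^{\lambda}\asymp s^{\lambda}$ when $\lambda>\frac{8-\kappa}{\kappa}=\frac{\gamma^2}{2}-1$. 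So the single-$\alpha$ Laplace transform genuinely diverges before $\beta$ reaches $\beta_0$, and no choice of $x_0<0$ in Lemma~\ref{lem:analtic0} can exist.

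The paper's proof is built around exactly this obstruction: one must take the difference in $\alpha$ \emph{before} continuing in $\beta$. Writing $g(\alpha_1,\beta)-g(\alpha_2,\beta)=\Gamma(\frac{\beta}{\gamma}-1)\,(h(\alpha_1,\beta)-h(\alpha_2,\beta))$, one checks that $h(\alpha,\gamma)=\frac{4C_1}{\gamma^2C_2}$ is independent of $\alpha$ (using $\overline H^{(\frac{2}{\gamma},\gamma,\frac{2}{\gamma})}_{(s,1,1)}=1$ and $\int_0^\infty\frac{s^{x}}{1+s}\,ds=\Gamma(x+1)\Gamma(-x)$), so the pole of $\Gamma(\frac{\beta}{\gamma}-1)$ at $\beta=\gamma$ is cancelled and the right-hand side becomes analytic on a complex neighborhood of $(\widetilde\alpha,Q)\ni\beta_0$, with $\widetilde\alpha=2(Q-\alpha_1)\vee 2(Q-\alpha_2)$. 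Correspondingly, on the measure side the weight $|\psi'(i)|^{2\Delta_{\alpha_1}-2}-|\psi'(i)|^{2\Delta_{\alpha_2}-2}$ vanishes as $|\psi'(i)|\to1$, which is the extra decay that moves the abscissa of convergence below $0$; only then does Lemma~\ref{lem:analtic0}(1), applied to the (sign-definite) measure $\widehat\nu_\kappa=(|\psi'(i)|^{2\Delta_{\alpha_1}-2}-|\psi'(i)|^{2\Delta_{\alpha_2}-2})\cdot\nu_\kappa$ and $X=-\log|\varphi'(1)|$, give the identity at $\beta=\beta_0$ for the difference. Your $\lambda$-parametrization, the verification that $\beta_0\in(2(Q-\alpha),Q)$, and the substitution yielding $\Gamma(\frac{4}{\gamma^2}-2)$ and $\Gamma(\frac{2}{\gamma}(\alpha-\gamma))$ are all fine and go through once you make this change of order; as written, however, the central step and the intermediate claim it produces are incorrect.
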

\begin{proof}
Fix $\alpha_1,\alpha_2 \in (\gamma, Q)$. Let $\widetilde \alpha = 2(Q-\alpha_1) \vee 2(Q-\alpha_2)$. For $\alpha \in (\gamma, Q)$ and $\beta \in ( 2(Q-\alpha) , Q)$,  let $$h(\alpha, \beta) = \frac{4C_1\overline{G}(\alpha,\gamma)}{\gamma^2C_2 \overline{G}(\alpha,\beta)\Gamma(\frac{2}{\gamma}(Q-\alpha))\Gamma (\frac{2}{\gamma}(\frac{\beta}{2} + \alpha - Q))}  \int_0^\infty \overline H^{(\frac{2}{\gamma}, \beta,\frac{2}{\gamma})}_{(s,1,1)}    \frac{s^{\frac{2}{\gamma}(Q-\alpha)-1}}{1+s} ds\,.$$ 
By Lemma~\ref{lem:analy1}, the integral in the above equation is finite. In the case where $\beta \in (\gamma \vee \widetilde \alpha,Q)$, by Proposition~\ref{prop:g(alpha,beta)}, and taking the difference of $g(\alpha_1,\beta)$ and $g(\alpha_2,\beta)$, we obtain:
    \begin{equation}
    \label{eq:for-gab-d}
    \nu_\kappa[(|\psi'(i)|^{2\Delta_{\alpha_1} - 2} - |\psi'(i)|^{2\Delta_{\alpha_2} - 2})|\varphi'(1)|^{\Delta_\beta-\Delta_{\beta_0}}]= \Gamma(\frac{\beta}{\gamma}-1) (h(\alpha_1, \beta) - h(\alpha_2, \beta))\,.
    \end{equation}
    Next, we will use the first claim in Lemma~\ref{lem:analtic0} to show that \eqref{eq:for-gab-d} holds for any $\beta \in (\widetilde \alpha,Q)$. We first prove that the right-hand side of \eqref{eq:for-gab-d} is analytic for $\beta$ in a complex neighborhood of $(\widetilde \alpha,Q)$. By Lemma~\ref{lem:analy1} and the explicit formula of $\overline G$ in \cite[Theorem 1.7]{RZ22}, $h(\alpha_1, \beta)$ and $h(\alpha_2, \beta)$ are both analytic for $\beta$ in a complex neighborhood of $(\widetilde \alpha, Q) $. Therefore, the function $\Gamma(\frac{\beta}{\gamma}-1) (h(\alpha_1, \beta) - h(\alpha_2, \beta))$ is analytic for $\beta$ in a complex neighborhood of $(\widetilde \alpha, Q) $, except for a possible pole at $\beta = \gamma$. 

    We now show that $\beta = \gamma$ is not a pole through showing that $h(\alpha_1, \gamma) = h(\alpha_2, \gamma)$. By Lemma 3.10 in \cite{RZ22}, we have $\overline H^{(\frac{2}{\gamma}, \gamma,\frac{2}{\gamma})}_{(s,1,1)} = 1$ for any $s>0$, and thus 
    \begin{align*}
        h(\alpha, \gamma) &=\frac{4C_1\overline{G}(\alpha,\gamma)}{\gamma^2C_2 \overline{G}(\alpha,\gamma)\Gamma(\frac{2}{\gamma}(Q-\alpha))\Gamma (\frac{2}{\gamma}(\frac{\gamma}{2} + \alpha - Q))}  \int_0^\infty    \frac{s^{\frac{2}{\gamma}(Q-\alpha)-1}}{1+s} = \frac{4C_1}{\gamma^2 C_2 } \,.
    \end{align*}
    In the second equation, we used the identity $\int_0^\infty \frac{s^{x}}{1+s} ds =  \Gamma(x+1)\Gamma(-x)$ for any $-1<x<0$. Therefore, $h(\alpha_1, \gamma) = h(\alpha_2, \gamma)$. Combining this with the previous argument, we conclude that the right-hand side of \eqref{eq:for-gab-d}, as a function of $\beta$, is analytic in a complex neighborhood of $(\widetilde \alpha, Q)$. 

    Let $\widehat{\nu}_\kappa =(|\psi'(i)|^{2\Delta_{\alpha_1} - 2} - |\psi'(i)|^{2\Delta_{\alpha_2} - 2})\cdot \nu_\kappa$, and let $X = - \log |\varphi'(1)|$. Since $|\varphi'(1)|<1$, we have $X>0$. Let $x_0 = \Delta_{\widetilde \alpha} - \Delta_{\beta_0}, x_1 = \Delta_{\beta_0 \vee \widetilde \alpha} - \Delta_{\beta_0}$, and $x_2 = \Delta_Q - \Delta_{\beta_0}$. Then, $x_0 < x_1 < x_2$. We define the function $g(\lambda)$ in a small complex neighborhood of $(x_0,x_2)$ by requiring $g(\lambda) = \Gamma(\frac{\beta}{\gamma}-1) (h(\alpha_1, \beta) - h(\alpha_2, \beta))$, where $\beta$ is the unique solution to $\lambda = \Delta_\beta - \Delta_{\beta_0}$ with ${\rm Re} \beta < Q$. We see that $g(\lambda)$ is analytic in a small complex neighborhood of $(x_0,x_2)$. By \eqref{eq:for-gab-d}, $\widehat{\nu}_\kappa[e^{-\lambda X}] = g(\lambda)$ for $\lambda \in (x_1,x_2)$. Applying the first claim in Lemma~\ref{lem:analtic0} with $\widehat{\nu}_\kappa$, we obtain that $\widehat{\nu}_\kappa[e^{-\lambda X}] = g(\lambda)$ for $\lambda \in (x_0,x_2)$, and thus \eqref{eq:for-gab-d} holds for any $\beta \in (\widetilde \alpha, Q)$. Taking $\beta = \beta_0$ and using the fact that $f(\alpha) = \Gamma(\frac{\beta_0}{\gamma}-1) h(\alpha, \beta_0)$ yields the lemma.
    \end{proof}

By Lemma~\ref{lem:diff} and Proposition~\ref{lem:3pt-integral}, we get Theorem~\ref{thm:moment} restricted to $\alpha\in (\gamma,Q)$. Namely, we have: 
\begin{proposition}
\label{prop:for-moment}
For $\alpha \in (\gamma, Q)$ and $\theta = \frac{2}{\gamma}(Q-\alpha)$, we have
\begin{equation}
\label{eq:moment-analytic}
\nu_\kappa[|\psi'(i)|^{2 \Delta_{\alpha} - 2} -1] = 1 + \frac{\gamma^2 \Gamma(\frac{\gamma^2(1-\theta)}{4})\Gamma(\frac{\gamma^2(\theta+1)}{4}) }{8 \cos(\frac{\pi \gamma^2}{4}) \Gamma(\frac{\gamma^2}{2}-1) \sin(\frac{\pi \gamma^2 \theta}{4})}\Big(\sin(\frac{\pi \gamma^2\theta}{2}) - \theta\sin(\frac{\pi\gamma^2}{2})\Big)\,.
\end{equation}
\end{proposition}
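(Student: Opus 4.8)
The goal is to deduce Proposition~\ref{prop:for-moment} by combining Lemma~\ref{lem:diff} with the explicit integral formula in Proposition~\ref{lem:3pt-integral}, and then upgrading the analytic-continuation step in $\alpha$. First, I would substitute the value of the integral from Proposition~\ref{lem:3pt-integral} into the definition \eqref{eq:def-fi} of $f(\alpha)$. Writing $\theta=\frac{2}{\gamma}(Q-\alpha)$, and using that $\beta_0=\frac4\gamma-\gamma$, the term $s^{\frac{2}{\gamma}(Q-\alpha)-1}=s^{\theta-1}$ matches the hypothesis $\theta\in(0,\frac{4}{\gamma^2}-1)$ of Proposition~\ref{lem:3pt-integral}, which is exactly the range $\alpha\in(\gamma,Q)$ translated. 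So
\begin{equation*}
f(\alpha)=\frac{4C_1\overline G(\alpha,\gamma)\Gamma(\frac{4}{\gamma^2}-2)}{\gamma^2\overline G(\alpha,\frac4\gamma-\gamma)\Gamma(\theta)\Gamma(\frac{2}{\gamma}(\alpha-\gamma))}\cdot\frac{E_4\gamma^4\pi^2\big(\sin(\frac{\pi\gamma^2}{2}\theta)-\theta\sin(\frac{\pi\gamma^2}{2})\big)}{32\cos(\frac{\pi\gamma^2}{4})\sin(\pi\theta)\sin(\frac{\pi\gamma^2}{4}\theta)\sin(\frac{\pi\gamma^2}{4}(\theta+1))}.
\end{equation*}
Here I would plug in the explicit values: $C_1$ from Proposition~\ref{prop:val-c1}, $E_4$ from \eqref{eq:def-e5} together with $E_1$ from Lemma~\ref{lem:QD} and $E_3$ from Lemma~\ref{lem:two-pt-QD}, and the ratio $\overline G(\alpha,\gamma)/\overline G(\alpha,\frac4\gamma-\gamma)$ from Lemma~\ref{lem:LF4}. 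The latter ratio is where the $\alpha$-dependence (equivalently $\theta$-dependence via $\frac{2\alpha}{\gamma}=\frac{2Q}{\gamma}-\theta$) enters through products of Gamma functions; I would express everything in terms of $\theta$ and simplify using $\Gamma(x)\Gamma(1-x)=\pi/\sin(\pi x)$ and the reflection/duplication identities. The outcome should be that $f(\alpha)$ equals $1$ plus the expression appearing on the right-hand side of \eqref{eq:moment-analytic}; one should also check $f$ evaluated at the ``reference'' point gives the constant that makes $\nu_\kappa[|\psi'(i)|^{2\Delta_\alpha-2}-1]=f(\alpha)-1$ consistent. Concretely, one verifies $\lim_{\theta\to 1}$ (i.e. $\alpha\to\gamma$) of the bracketed trig expression vanishes like $(\theta-1)$ and cancels the $\sin(\pi\theta)$ in the denominator, so $f$ is finite there; and since $2\Delta_\gamma-2=0$, Lemma~\ref{lem:diff} with $\alpha_2=\gamma$ gives $\nu_\kappa[|\psi'(i)|^{2\Delta_{\alpha_1}-2}-1]=f(\alpha_1)-f(\gamma)$, so it remains to compute $f(\gamma)$ and check it equals $0$ — or more simply, absorb the constant by writing $\nu_\kappa[\,\cdot\,-1]=f(\alpha)-f(\gamma)$ and identifying $f(\gamma)$ with the correct normalization; the paper's statement has the ``$1+$'' precisely because $f(\gamma)=-1$, i.e. $f$ itself is the right-hand side of \eqref{eq:moment-analytic} minus... one sorts this out by a limiting computation.

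\textbf{Organization of the steps.} (i) Insert Proposition~\ref{lem:3pt-integral} into \eqref{eq:def-fi}; (ii) substitute all explicit constants $C_1,E_1,E_3,E_4$ and the Gamma-function expressions for $\overline G(\alpha,\gamma)$ and $\overline G(\alpha,\frac4\gamma-\gamma)$ from Lemma~\ref{lem:LF4}; (iii) convert $\alpha$ to $\theta=\frac2\gamma(Q-\alpha)$ everywhere and collect the pure-Gamma factors versus the pure-trigonometric factors; (iv) simplify the Gamma part using reflection and duplication formulas so that only $\Gamma(\frac{\gamma^2(1-\theta)}{4})$, $\Gamma(\frac{\gamma^2(\theta+1)}{4})$ and $\Gamma(\frac{\gamma^2}{2}-1)$ survive (the rest turning into sines/cosines); (v) assemble and check this equals $1+(\text{RHS of }\eqref{eq:moment-analytic})-1$, i.e. match with the target; (vi) apply Lemma~\ref{lem:diff} with $\alpha_2=\gamma$, using $2\Delta_\gamma-2=0$, to conclude $\nu_\kappa[|\psi'(i)|^{2\Delta_\alpha-2}-1]=f(\alpha)-f(\gamma)$ and evaluate $f(\gamma)$ by continuity. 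This yields \eqref{eq:moment-analytic} for all $\alpha\in(\gamma,Q)$, which is Proposition~\ref{prop:for-moment}.

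\textbf{Main obstacle.} The only real difficulty is bookkeeping: steps (ii)--(iv) involve a long product of Gamma and trigonometric factors coming from five different lemmas, each carrying its own normalization, and it is easy to drop a power of $2$ or $\pi$ or a sign. The structural point that makes it work — and that I would emphasize to keep the computation honest — is that the final answer must be \emph{even} in $\theta\leftrightarrow -\theta$ after the substitution $\theta^2=(1-\frac{\kappa}{4})^2-\frac{\kappa\lambda}{2}$ (equivalently $\lambda=2\Delta_\alpha-2$), since $|\psi'(i)|$ does not know about the sign of $\theta$; any expression that fails this evenness check after simplification signals an arithmetic error. A secondary subtlety is the limit $\theta\to 1$ where several sines vanish or blow up simultaneously: one must verify the cancellations are exact so that $f$ extends analytically across $\alpha=\gamma$, which is needed both for the identification of the additive constant and, implicitly, for Lemma~\ref{lem:diff} to be applied with $\alpha_2=\gamma$. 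Beyond these, the argument is a direct substitution with no conceptual content left — all the hard probabilistic and conformal-welding work, and the analytic continuation in $\beta$ and then in $\alpha$, has already been done in Lemma~\ref{lem:diff} and the earlier sections.
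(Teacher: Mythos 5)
Your plan is essentially the paper's own proof: plug the integral from Proposition~\ref{lem:3pt-integral} and the explicit constants ($C_1$ from Proposition~\ref{prop:val-c1}, $E_4$ via $E_1,E_3$, the ratio $\overline G(\alpha,\gamma)/\overline G(\alpha,\frac{4}{\gamma}-\gamma)$ from Lemma~\ref{lem:LF4}) into \eqref{eq:def-fi}, simplify in the variable $\theta=\frac{2}{\gamma}(Q-\alpha)$ using reflection identities, and then apply Lemma~\ref{lem:diff} with $\alpha_1=\alpha$ and $\alpha_2\to\gamma$, using $2\Delta_\gamma-2=0$ and the limit value $f(\gamma^+)=-1$ to produce the ``$1+$'' in \eqref{eq:moment-analytic}; this is exactly what the paper does. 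One small correction to your ``secondary subtlety'': $\alpha\to\gamma$ corresponds to $\theta\to\frac{4}{\gamma^2}-1\in(0,1)$, not $\theta\to 1$ (the point $\theta=1$, i.e.\ $\alpha=\frac{2}{\gamma}$, lies outside the range and plays no role); at $\theta=\frac{4}{\gamma^2}-1$ the simplified formula for $f$ is regular with value $-1$, and the cancellation one must track in the unsimplified expression is between $\Gamma(\frac{2}{\gamma}(\alpha-\gamma))^{-1}\to 0$ in \eqref{eq:def-fi} and the divergence of the integral coming from $\sin(\frac{\pi\gamma^2}{4}(\theta+1))\to 0$ in Proposition~\ref{lem:3pt-integral}, rather than the $\sin(\pi\theta)$ factor you mention.
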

\begin{proof}
We first compute the constant $E_4$ from Proposition~\ref{lem:3pt-integral}.
 Using the values of $E_1$ from~\eqref{eq:val-e1-e2} and $E_3$ from~\eqref{eq:val-e3-e4}, we obtain that
    \begin{align*}
    &\quad E_4 = \frac{\frac{( 2 \pi)^{\frac{4}{\gamma^2}-1}}{(1-\frac{\gamma^2}{4})\Gamma(1-\frac{\gamma^2}{4})^{\frac{4}{\gamma^2}}} \gamma }{2  \Gamma(\frac{4}{\gamma^2}-2)\Gamma(\frac{4}{\gamma^2}+1)}  \cdot \frac{(2 \pi)^{2-\frac{8}{\gamma^2}}\Gamma(1-\frac{\gamma^2}{4})^{\frac{8}{\gamma^2}}}{\Gamma(2-\frac{4}{\gamma^2})^2}\\
    &=\frac{( 2 \pi)^{-\frac{4}{\gamma^2}+1} \gamma \Gamma(1-\frac{\gamma^2}{4})^{\frac{4}{\gamma^2}}}{2(1-\frac{\gamma^2}{4}) \Gamma(\frac{4}{\gamma^2}-2)\Gamma(\frac{4}{\gamma^2}+1) \Gamma(2-\frac{4}{\gamma^2})^2 } =-\frac{(2 \pi)^{-1-\frac{4}{\gamma^2}} 4\gamma^3(\gamma^2-2) \sin^2(\frac{4 \pi}{\gamma^2}) \Gamma(1-\frac{\gamma^2}{4})^{\frac{4}{\gamma^2}}}{(\gamma^2-4)^2} \,,
    \end{align*}
where the last equation used $\Gamma(x+1) = x \Gamma(x)$ and $\Gamma(\frac{4}{\gamma^2}) \Gamma(1-\frac{4}{\gamma^2}) = \pi/\sin(\frac{4\pi}{\gamma^2})$.
    
In the expression~\eqref{eq:def-fi} for $f(\alpha)$, we plug in the values of $C_1$ from Proposition~\ref{prop:val-c1}, $\overline G( \alpha,\gamma)$ and $\overline G( \alpha,\frac{4}{\gamma}-\gamma)$ from Lemma~\ref{lem:LF4}, $\int_0^\infty \frac{1}{C_2}\overline H^{(\frac{2}{\gamma}, \frac{4}{\gamma}-\gamma,\frac{2}{\gamma})}_{(s,1,1)}    \frac{s^{\frac{2}{\gamma}(Q- \alpha)-1}}{1+s} ds$ from Proposition~\ref{lem:3pt-integral} and $E_4$ from above. This gives 
    \begin{align*}
        f(\alpha) &= \frac{4C_1 \Gamma(\frac{4}{\gamma^2}-2)}{\gamma^2 \Gamma(\frac{2}{\gamma}(Q- \alpha) \Gamma(\frac{2}{\gamma}( \alpha-\gamma))} \times \frac{\overline G( \alpha,\gamma)}{\overline G( \alpha,\frac{4}{\gamma}-\gamma)} \times \int_0^\infty \frac{1}{C_2}\overline H^{(\frac{2}{\gamma}, \frac{4}{\gamma}-\gamma,\frac{2}{\gamma})}_{(s,1,1)}    \frac{s^{\frac{2}{\gamma}(Q- \alpha)-1}}{1+s} ds\\
        &=\frac{4\frac{\pi 2^{1-\frac{\gamma^2}{2}} (\gamma^2-4)^2 \sin(\frac{\pi \gamma^2}{4})}{\gamma^3 \sin(-\frac{4 \pi}{\gamma^2})} \Gamma(\frac{4}{\gamma^2}-2)}{\gamma^2 \Gamma(\frac{2}{\gamma}(Q- \alpha) \Gamma(\frac{2}{\gamma}( \alpha-\gamma))}  \times \frac{\pi^{\frac{4}{\gamma^2}-2}}{2^{3-\frac{\gamma^2}{2}-\frac{4}{\gamma^2}}}\frac{\Gamma(2-\frac{4}{\gamma^2})\Gamma(\frac{\gamma^2}{4})}{\Gamma(1-\frac{\gamma^2}{4})^{\frac{4}{\gamma^2}-1} \Gamma(\frac{\gamma^2}{2}-1)}  \frac{\Gamma(\frac{2 \alpha}{\gamma}-1)\Gamma(\frac{\gamma \alpha}{2}-1)}{\Gamma(\frac{2 \alpha}{\gamma}-\frac{4}{\gamma^2}) \Gamma(\frac{\gamma \alpha}{2}+1-\frac{\gamma^2}{2})}\\
        &\quad \times \frac{-(2 \pi)^{-1-\frac{4}{\gamma^2}} 4\gamma^3(\gamma^2-2) \sin^2(\frac{4 \pi}{\gamma^2}) \Gamma(1-\frac{\gamma^2}{4})^{\frac{4}{\gamma^2}}}{(\gamma^2-4)^2}  \frac{\gamma^4\pi^2(\sin(\frac{\pi \gamma^2 \theta}{2}) - \theta\sin(\frac{\pi\gamma^2}{2})  )}{32\cos(\frac{\pi\gamma^2}{4}) \sin(\pi \theta) \sin(\frac{\pi\gamma^2 \theta}{4})\sin(\frac{\pi\gamma^2( \theta +1)}{4})}\,.
    \end{align*}
    Replacing $ \alpha$ with $Q- \frac{\gamma}{2} \theta $, after some algebraic computations we get \begin{align*}
        f(\alpha)&=\frac{\gamma^2(\gamma^2-2)\sin(\frac{\pi \gamma^2}{4}) \Gamma(\frac{\gamma^2}{4}) \Gamma(1-\frac{\gamma^2}{4}) \sin(\frac{4 \pi}{\gamma^2}) \Gamma(\frac{4}{\gamma^2}-2) \Gamma(2-\frac{4}{\gamma^2})}{16 \cos(\frac{\pi \gamma^2}{4}) \Gamma(\frac{\gamma^2}{2}-1)} \\
        &\quad \times \frac{\Gamma(\frac{4}{\gamma^2}- \theta) \Gamma(\frac{\gamma^2(1- \theta)}{4}) (\sin(\frac{\pi \gamma^2 \theta}{2}) - \theta\sin(\frac{\pi\gamma^2}{2}) )}{\Gamma( \theta)\Gamma(1- \theta)\sin(\pi \theta) \Gamma(\frac{4}{\gamma^2}- \theta-1) \Gamma(2-\frac{\gamma^2( \theta+1)}{4}) \sin(\frac{\pi \gamma^2  \theta}{4}) \sin(\frac{\pi \gamma^2 ( \theta+1)}{4})}\,.
    \end{align*}
    Using the following identities:
    \begin{align*}
        &\sin(\frac{\pi \gamma^2}{4}) \Gamma(\frac{\gamma^2}{4}) \Gamma(1-\frac{\gamma^2}{4}) = \pi\,; \quad\quad   \sin(\frac{4 \pi}{\gamma^2}) \Gamma(\frac{4}{\gamma^2}-2) \Gamma(2-\frac{4}{\gamma^2}) = \frac{\pi}{2-\frac{4}{\gamma^2}}\,;\\
        &\Gamma( \theta)\Gamma(1- \theta)\sin(\pi \theta) = \pi\,; \quad\quad \quad \quad  \frac{\Gamma(\frac{4}{\gamma^2}- \theta)}{\Gamma(\frac{4}{\gamma^2}- \theta-1)} = \frac{4}{\gamma^2}- \theta-1\,;\\
        &\Gamma(2-\frac{\gamma^2( \theta+1)}{4}) \sin(\frac{\pi \gamma^2 ( \theta+1)}{4}) = \frac{\pi(1-\frac{\gamma^2( \theta+1)}{4})}{\Gamma(\frac{\gamma^2( \theta+1)}{4}) }\,, 
    \end{align*}
we have    \begin{align*}
        f(\alpha) &=\frac{\gamma^2(\gamma^2-2)\pi  \frac{\pi}{2-\frac{4}{\gamma^2}}}{16 \cos(\frac{\pi \gamma^2}{4}) \Gamma(\frac{\gamma^2}{2}-1)} \times \frac{ (\frac{4}{\gamma^2}- \theta-1) \Gamma(\frac{\gamma^2(1- \theta)}{4}) (\sin(\frac{\pi \gamma^2 \theta}{2}) - \theta\sin(\frac{\pi\gamma^2}{2})  )}{\pi \sin(\frac{\pi \gamma^2  \theta}{4}) \frac{\pi(1-\frac{\gamma^2( \theta+1)}{4})}{\Gamma(\frac{\gamma^2( \theta+1)}{4}) }}\\
        &=\frac{ \gamma^2 \Gamma(\frac{\gamma^2(1- \theta)}{4})\Gamma(\frac{\gamma^2( \theta+1)}{4}) (\sin(\frac{\pi \gamma^2 \theta}{2}) - \theta\sin(\frac{\pi\gamma^2}{2}) )}{8 \cos(\frac{\pi \gamma^2}{4}) \Gamma(\frac{\gamma^2}{2}-1) \sin(\frac{\pi \gamma^2  \theta}{4})}\,.
    \end{align*}
    For $\alpha_1,\alpha_2 \in (\gamma, Q)$ and $\theta_i = \frac{2}{\gamma}(Q-\alpha_i)$, by Lemma~\ref{lem:diff}, we have $\nu_\kappa[|\psi'(i)|^{2 \Delta_{\alpha_1} - 2} -|\psi'(i)|^{2 \Delta_{\alpha_2} - 2}]$ equals
    \begin{align*}
 \frac{\gamma^2 \Gamma(\frac{\gamma^2(1- \theta_1)}{4})\Gamma(\frac{\gamma^2( \theta_1+1)}{4}) (\sin(\frac{\pi \gamma^2 \theta_1}{2}) - \theta_1\sin(\frac{\pi\gamma^2}{2})  )}{8 \cos(\frac{\pi \gamma^2}{4}) \Gamma(\frac{\gamma^2}{2}-1) \sin(\frac{\pi \gamma^2  \theta_1}{4})} 
 - \frac{ \gamma^2 \Gamma(\frac{\gamma^2(1-\theta_2)}{4})\Gamma(\frac{\gamma^2(\theta_2+1)}{4}) (\sin(\frac{\pi \gamma^2\theta_2}{2}) - \theta_2\sin(\frac{\pi\gamma^2}{2}) )}{8 \cos(\frac{\pi \gamma^2}{4}) \Gamma(\frac{\gamma^2}{2}-1) \sin(\frac{\pi \gamma^2 \theta_2}{4})}\,.
    \end{align*}
  Since $\Delta_\gamma=2$, taking $\alpha_1 = \alpha$ and sending $\alpha_2$ to $\gamma$ (namely, $\theta_2$ to $\frac{4}{\gamma^2}-1$), we conclude the proof.
\end{proof}
We now use the analytic continuation in $\alpha$ of the expression in Proposition~\ref{prop:for-moment} to prove Theorem~\ref{thm:moment}.

\begin{proof}[Proof of Theorem~\ref{thm:moment}]
Let $X=-\log |\psi'(i)|$. Since $|\psi'(i)|<1$,  we have $X>0$. 
Let $\hat g(\theta)$ be the right hand side of~\eqref{eq:moment-analytic}, which is an explicit meromorphic function. Recall that 
$\alpha = Q - \frac{\gamma}{2} \theta$. We define the function  $g(\lambda)$ by requiring 
$g(\lambda)=\hat g(\theta)$ when $\lambda=2\Delta_\alpha-2=-\frac{\gamma^2}{8} \theta^2 + \frac{1}{2}Q^2-2$. Since $f(\theta)=f(-\theta)$, we see that $g(\lambda)$ is a well defined meromorphic function. By inspection, we see that $g$ is analytic  on $\{ \mathrm{Re} z> \frac{2}{\kappa}-1\}$ and has a pole at $\frac{2}{\kappa}-1$. By Proposition~\ref{prop:for-moment}, we have $\nu_\kappa[e^{-\lambda X}-1]= g(\lambda)$ for $\lambda \in (0, 2\Delta_Q-2)$. Setting $\nu=\nu_\kappa$ and $(x_0,x_1,x_2) = (\frac{2}{\kappa}-1, 0, 2\Delta_Q-2)$ in the second claim of Lemma~\ref{lem:analtic0}, we get Theorem~\ref{thm:moment}.
\end{proof}

\section{Proof of Theorem~\ref{thm:transc}} \label{sec:trans}
In this section, we prove Theorem~\ref{thm:transc}. We begin with some background on number theory. The proof of these facts can be found in Chapter 3 of \cite{N56}. For two positive integers $a$ and $b$, let ${\rm gcd}(a,b)$ be their greatest common divisor. Recall that the field of rational numbers is denoted by $\mathbb{Q}$. For an algebraic number $u$, its minimal polynomial is defined as the monic polynomial of the lowest degree with rational coefficients such that $u$ is a root of the polynomial. 
The degree of $u$ refers to the degree of its minimal polynomial, and $u$ is rational if and only if the degree of $u$ is 1. Let $\mathbb{Q}[u]$ be the field extension of $\mathbb{Q}$ by $u$. 

We first recall the minimal polynomials for $\zeta_n := e^{\frac{2 \pi i}{n}}$ and $\zeta_n + \zeta_n^{-1} = 2 \cos(\frac{2 \pi}{n})$. For a positive integer $n \geq 1$, the cyclotomic polynomial $F_n(x)$ and its degree $\phi(n)$ are given by
\begin{equation} \label{eq:cyclotomic}
F_n(x) = \prod_{\substack{k=1\\ {\rm gcd}(k,n)=1}}^{n-1} (x- e^{2 \pi i k/n}) \quad \mbox{and} \quad \phi(n) = n \left(1-\frac{1}{p_1}\right)\left(1-\frac{1}{p_2}\right) \ldots \left(1-\frac{1}{p_r}\right)\,,
\end{equation}
where $p_1, p_2,\ldots,p_r$ range over all the distinct prime factors of $n$. Then the cyclotomic polynomial $F_n(x)$ is the minimal polynomial of $\zeta_n^k$ for all integer $k$ with ${\rm gcd}(k,n)=1$. For $n \geq 3$, we can find a monic polynomial $\psi_n(x)$ with integral coefficients such that $$\psi_n(x+x^{-1}) = x^{-\phi(n)/2} F_n(x)\,.$$ This polynomial $\psi_n(x)$ has degree $\phi(n)/2$ and is the minimal polynomial of $2\cos(\frac{2\pi k}{n})$ for all integer $k$ with ${\rm gcd}(k,n)=1$. In the case of $n=1$ or $2$, the minimal polynomials of $2 \cos(\frac{2 \pi}{1})$ and $2 \cos(\frac{2 \pi}{2})$ are respectively $x-2$ and $x+2$. We will now prove a lemma concerning rational numbers.
\begin{lemma}
    \label{lem:rational}
    For any rational numbers $q_1,q_2,q$, we have
    $$
    \cos(\pi q_1) = q \cos (\pi q_2) \quad \mbox{only if} \quad \big[ 2\cos(\pi q_1) \mbox{ and }2\cos (\pi q_2) \mbox{ are integers or } q \in \{0, 1,-1\} \big].
    $$
\end{lemma}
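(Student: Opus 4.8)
The plan is to reduce everything to the arithmetic of the algebraic integers $a:=2\cos(\pi q_1)$ and $b:=2\cos(\pi q_2)$. First I would record two facts that follow from the cyclotomic material recalled just above the lemma: writing $q_2/2=k/n$ in lowest terms, we have $b=2\cos(2\pi k/n)$ with $\gcd(k,n)=1$, so $b$ is a root of $\psi_n$ (for $n\ge 3$) or of $x-2$, $x+2$ (for $n=1,2$); in particular $b$ is an algebraic integer, and when $n\ge 3$ its full set of Galois conjugates is $\{2\cos(2\pi j/n):\gcd(j,n)=1\}$, a set of $\phi(n)/2$ real numbers all lying in $[-2,2]$. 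The same applies to $a$. I would also use the standard facts that a rational algebraic integer lies in $\mathbb{Z}$, and that the minimal polynomial of an algebraic integer has integer coefficients. The hypothesis $\cos(\pi q_1)=q\cos(\pi q_2)$ is exactly $a=qb$, and the goal, assuming $q\notin\{0,1,-1\}$, is to deduce $a,b\in\mathbb{Z}$.

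The case $b\in\mathbb{Z}$ is immediate: if $b=0$ then $a=0\in\mathbb{Z}$, while if $b\neq 0$ then $a=qb$ is rational and an algebraic integer, hence $a\in\mathbb{Z}$. So it remains to show that the case of irrational $b$ (equivalently $b\notin\mathbb{Z}$) cannot occur when $q\notin\{0,\pm1\}$. Suppose then $b$ has degree $d\ge 2$, with minimal polynomial $m(x)=x^d+c_{d-1}x^{d-1}+\dots+c_0\in\mathbb{Z}[x]$ and roots $b=b_1,\dots,b_d\in[-2,2]$; these are distinct (separability), nonzero (else $x\mid m$, contradicting irreducibility of degree $\ge 2$), and real. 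Hence $|c_0|=\prod_i|b_i|\le 2^d$, with equality impossible since it would force each $b_i\in\{\pm 2\}$ and so $m$ reducible. Since $q\in\mathbb{Q}^*$ we have $\mathbb{Q}(a)=\mathbb{Q}(b)$, and from $m(a/q)=0$, clearing denominators, the minimal polynomial of $a$ is $P(x)=x^d+qc_{d-1}x^{d-1}+\dots+q^dc_0=\prod_i(x-qb_i)$; as $a$ is an algebraic integer, $P\in\mathbb{Z}[x]$, so $q^kc_{d-k}\in\mathbb{Z}$ for $k=1,\dots,d$, and the conjugates of $a$ are exactly $qb_1,\dots,qb_d$.

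Now write $q=s/t$ in lowest terms. If a prime $p$ divided $t$, then from $q^dc_0\in\mathbb{Z}$ and $\gcd(s,t)=1$ we would get $t^d\mid c_0$, hence $p^d\mid c_0$, so $2^d\le p^d\le|c_0|\le 2^d$ — forcing $|c_0|=2^d$, which we just excluded. Therefore $t=1$, i.e.\ $q\in\mathbb{Z}$, and since $q\notin\{0,\pm1\}$ we have $|q|\ge 2$. But each conjugate $qb_i$ of $a=2\cos(\pi q_1)$ lies in $[-2,2]$, so $|b_i|\le 2/|q|\le 1$ for every $i$, giving $1\le|c_0|=\prod_i|b_i|\le 1$; thus every $|b_i|=1$, and being real, $b_i\in\{1,-1\}$ for all $i$ — contradicting the irreducibility of $m$ of degree $\ge 2$. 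This contradiction rules out irrational $b$, so $b\in\mathbb{Z}$, and then $a\in\mathbb{Z}$ by the previous paragraph; the lemma follows.

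I expect the only delicate point to be the bookkeeping of exactly which number-theoretic inputs are invoked and that they are legitimately covered by the cyclotomic facts stated just before the lemma (that $2\cos$ of a rational multiple of $\pi$ is an algebraic integer whose conjugates are again of this form, hence lie in $[-2,2]$), together with a careful treatment of the degenerate sub-cases ($b=0$, $n\le 2$, potential repeated roots). The core estimate — comparing $|c_0|=\prod_i|b_i|$ with the divisibility $p^d\mid c_0$ and with the bound $|b_i|\le 2/|q|$ — is short and elementary.
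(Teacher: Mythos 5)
Your proof is correct, and it takes a genuinely different route from the paper's. The paper first rewrites both sides as $2\cos(2\pi k_i/n_i)$, uses that $\mathbb{Q}[2\cos(2\pi k_1/n_1)]=\mathbb{Q}[2\cos(2\pi k_2/n_2)]$ sits inside $\mathbb{Q}[\zeta_{n_1}]\cap\mathbb{Q}[\zeta_{n_2}]=\mathbb{Q}[\zeta_{\gcd(n_1,n_2)}]$ to force $\tfrac12\phi(n_1)=\tfrac12\phi(n_2)\le\phi(\gcd(n_1,n_2))$, and then classifies the admissible pairs $(n_1,n_2)$ into three cases ($n_1=n_2$; $n_1=2n_2$ with $n_2$ odd; $n_1=3m$, $n_2=2m$), handling each separately — the last via an argument that $\zeta_{n_2}$ would lie in a field of too small degree. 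You instead work directly with the minimal polynomial $m$ of $b=2\cos(\pi q_2)$: integrality of the minimal polynomial $\prod_i(x-qb_i)$ of $a=qb$ forces $t^d\mid c_0$ for the denominator $t$ of $q$, which combined with $1\le|c_0|=\prod_i|b_i|<2^d$ (all conjugates of $b$ lie in $[-2,2]$, strict since $m$ is irreducible of degree $d\ge2$) kills any prime divisor of $t$; then $|q|\ge2$ is excluded because the conjugates $qb_i$ of $a$ also lie in $[-2,2]$, forcing $|b_i|\le1$ for all $i$ and hence $b_i=\pm1$, again contradicting irreducibility. All the inputs you invoke (the conjugates of $2\cos(2\pi k/n)$ are the $2\cos(2\pi j/n)$ with $\gcd(j,n)=1$, a rational algebraic integer is an integer, the minimal polynomial of an algebraic integer has integer coefficients, and $\mathbb{Q}(a)=\mathbb{Q}(b)$ for $q\in\mathbb{Q}^*$) are standard and consistent with the cyclotomic facts recalled before the lemma, and your degenerate cases ($b\in\mathbb{Z}$, $b=0$, $n\le2$) are handled correctly. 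What your approach buys: it avoids the cyclotomic-intersection theorem $\mathbb{Q}[\zeta_{n_1}]\cap\mathbb{Q}[\zeta_{n_2}]=\mathbb{Q}[\zeta_{\gcd(n_1,n_2)}]$ (cited from the literature in the paper) and the ensuing case analysis on $(n_1,n_2)$, giving a shorter, more self-contained argument; the paper's version, in exchange, yields slightly more structural information about exactly which pairs $(n_1,n_2)$ can occur when $q=\pm1$.
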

\begin{proof}
    Without loss of generality, we assume that $q_1,q_2 \geq 0$. For any rational number $r \geq 0$, we can find a pair of positive integers $(n,k)$ with ${\rm gcd}(n,k) = 1$ such that $\cos(\pi r) = \cos(\frac{2 \pi k}{n})$. This is possible because if $r = 0$, we can choose $(n, k) = (1, 1)$, and if $r > 0$, we can always find a pair of positive integers $(n, k)$ with ${\rm gcd}(n, k) = 1$ such that $r = \frac{2k}{n}$. Therefore, it suffices to show that for any positive integers $n_1,n_2,k_1,k_2$ with ${\rm gcd}(n_1,k_1) = {\rm gcd}(n_2,k_2) = 1$ and rational number $q$,
    \begin{equation}
    \label{eq:rational-eq}
    \cos \left(\frac{2\pi k_1}{n_1} \right) = q \cos \left(\frac{2\pi k_2}{n_2}\right) \: \mbox{only if} \: \left[ 2\cos\left(\frac{2\pi k_1}{n_1}\right) \mbox{ and }2\cos \left(\frac{ 2\pi k_2}{n_2}\right) \mbox{ are integers or } q \in \{0, 1,-1\} \right].
    \end{equation}
    
    Suppose that $n_1,n_2,k_1,k_2$ satisfy the equation in the left part of \eqref{eq:rational-eq}, we will now show that they satisfy the condition in the right part of \eqref{eq:rational-eq}. Let us assume that $q \neq 0$, and that at least one of $2\cos(\frac{2\pi k_1}{n_1})$ and $2\cos (\frac{2\pi k_2}{n_2})$ is not an integer. We will now prove that $q$ must be equal to $1$ or $-1$, which will yield \eqref{eq:rational-eq}. By the fact that the degree of $2 \cos(\frac{2\pi k}{n})$ is $\phi(n)/2$ for all $n \geq 3$ and $k$ with ${\rm gcd}(n,k)=1$, we see that $\cos(\frac{2\pi k}{n})$ is a rational number if and only if $n \in \{1,2,3,4,6\}$. Furthermore, in the case where $n \in \{1,2,3,4,6\}$, by enumerating all possible values, we have that $2\cos(\frac{2\pi k}{n})$ must be an integer. 
    Combining with the assumption that at least one of $2\cos(\frac{2\pi k_1}{n_1})$ and $2\cos(\frac{2\pi k_2}{n_2})$ is not an integer, along with \eqref{eq:rational-eq} and $q \neq 0$, we see that both $\cos(\frac{2\pi k_1}{n_1})$ and $ \cos (\frac{2\pi k_2}{n_2})$ are irrational numbers, and $\phi(n_1),\phi(n_2) \geq 3$. 

    Without loss of generality, we assume that $n_1\geq n_2$. Since $ \cos(\frac{2\pi k_1}{n_1}) = q  \cos(\frac{2\pi k_2}{n_2})$ and $q \neq 0$, we have \begin{equation}
    \label{eq:field-eq}
    \mathbb{K}:=\mathbb{Q}\left[2 \cos\left(\frac{2\pi k_1}{n_1}\right)\right] = \mathbb{Q}\left[2 \cos\left(\frac{2\pi k_2}{n_2}\right)\right]\,.
    \end{equation}
    On the other hand, we know that $\mathbb{K}$ is a subfield of $\mathbb{Q}[\zeta_{n_1}]$, and also of $\mathbb{Q}[\zeta_{n_2}]$. Let $m = {\rm gcd}(n_1,n_2)$, then $\mathbb{K}$ is a subfield of $\mathbb{Q}[\zeta_{n_1}] \cap \mathbb{Q}[\zeta_{n_2}] = \mathbb{Q}[\zeta_m]$ (for the equality, see e.g.\ Theorem 4.27(v) in \cite{N04}). Therefore, the degree of $\zeta_m$ should be greater than those of $2 \cos(\frac{2\pi k_1}{n_1})$ and $2 \cos(\frac{2\pi k_2}{n_2})$. Moreover, these two latter numbers have the same degree (recall that their ratio is equal to $q$, which is rational). Therefore, we have
    \begin{equation} \label{eq:rel_phi}
    \frac{1}{2}\phi(n_1) = \frac{1}{2}\phi(n_2) \leq \phi(m) = \phi({\rm gcd}(n_1,n_2))\,.
    \end{equation}
    Using the formula \eqref{eq:cyclotomic} for $\phi$, we can obtain from \eqref{eq:rel_phi} that any prime number $p \geq 5$ must appear with the same exponent in $n_1$ and $n_2$. Indeed, we would have otherwise that $\phi(n_1)$ or $\phi(n_2)$ is $\geq (p-1)\phi({\rm gcd}(n_1,n_2))$. Hence, by considering the possible powers of $2$ and $3$ appearing in the decompositions of $n_1$ and $n_2$ into prime factors, we see that there are only three possible cases where \eqref{eq:rel_phi} is satisfied and $\phi(n_1),\phi(n_2) \geq 3$, that we now consider successively.
    \begin{enumerate}
    \item $n_1 = n_2$. In this case, $2 \cos(\frac{2\pi k_1}{n_1})$ and $2 \cos(\frac{2\pi k_2}{n_2})$ have the same minimal polynomial $\psi_{n_1}(x)$. Using \eqref{eq:rational-eq}, we deduce that $2 \cos(\frac{2\pi k_2}{n_2})$ is also a root of the monic polynomial $q^{-\phi(n_1)/2} \psi_{n_1}(qx)$, and thus $\psi_{n_1}(x) = q^{-\phi(n_1)/2} \psi_{n_1}(qx)$. By comparing the constant coefficients and using the fact that $q$ is rational, we deduce that $q$ must be equal to $1$ or $-1$.
    \item $n_1 = 2n_2$ and $n_2$ is odd. We further divide our analysis into two subcases:
    \begin{enumerate}
        \item If $k_1$ is odd, then we have $q \cos(\frac{2 \pi k_2}{n_2}) = \cos(\frac{2 \pi k_1}{n_1}) =- \cos(\frac{ 2\pi((k_1+n_2)/2)}{n_2}) 
                $.
        \item If $k_1$ is even, then we have $
            q \cos(\frac{2 \pi k_2}{n_2}) = \cos(\frac{2 \pi k_1}{n_1}) = \cos(\frac{2 \pi (k_1/2)}{n_2})
            $.
        \end{enumerate}
        In both subcases, we can reduce the problem to case (1) and deduce that $q \in \{-1,1\}$.
    \item $n_1 = 3m$, $n_2 = 2m$, $m$ is an even number and ${\rm gcd}(3,m) = 1$. We now show that this case is impossible. Since $\zeta_m$ has degree $\phi(m) = \phi(n_1)/2$ (using the condition ${\rm gcd}(3,m) = 1$), we have $\zeta_m \in \mathbb{K}$. Since $2\cos(\frac{2\pi k_2}{n_2}) \in \mathbb{K}$ and $\cos(\frac{2\pi m' k_2}{n_2})$ can be expressed as an integer-valued polynomial of $\cos(\frac{2\pi k_2}{n_2})$ for all $m' \geq 1$, it follows that $2\cos(\frac{2\pi k_2 m'}{n_2}) \in \mathbb{K}$ for all $m' \geq 1$. Consequently, using that ${\rm gcd}(k_2,n_2) = 1$, we have $2\cos(\frac{2\pi}{n_2}) \in \mathbb{K}$, and thus $2\cos(\frac{2\pi }{n_2}) \zeta_m /(\zeta_m+1) = \zeta_{n_2} \in \mathbb{K}$. However, $\zeta_{n_2}$ has degree $\phi(n_2) > \phi(n_2)/2$, which leads to a contradiction.
    \end{enumerate}
    Therefore, we conclude that $q$ must be equal to $1$ or $-1$. This completes the proof of the lemma.
\end{proof}

We now present the proof of Theorem~\ref{thm:transc}.

\begin{proof}[Proof of Theorem~\ref{thm:transc}]
    Let $\rho = \sqrt{\kappa \xi/2+(1-\kappa/4)^2}$. Then, $\rho \in (\frac{\kappa}{4}-1,\frac{\kappa}{4}) \subset (0,2)$ and from \eqref{eq:backbone-generick}, it solves the equation:
    $$
    \sin \left(\frac{8 \pi}{\kappa}\right)\rho = \sin\left(\frac{8 \pi \rho}{\kappa}\right)\,.
    $$
    We argue by contradiction, and assume that $\rho$ is algebraic. Since $\kappa$ is rational, $\sin(\frac{8 \pi}{\kappa})$ is algebraic. Then $x := e^{i\frac{8 \pi}{\kappa} \rho} = (-1)^{\frac{8 \rho}{\kappa}}$ is also algebraic, as it satisfies $\sin(\frac{8 \pi}{\kappa}) \rho - \frac{1}{2i}(x - x^{-1}) = 0$. By the Gelfond-Schneider theorem (see e.g.\ Theorem 10.1 in \cite{N56}), $\frac{8 \rho}{\kappa}$ must be rational. Therefore, $\rho$ itself is necessarily rational. 
    
    We then use that $\cos(\frac{8\pi}{\kappa} - \frac{\pi}{2}) \rho = \cos(\frac{8 \pi \rho}{\kappa} - \frac{\pi}{2})$. Since $\rho$ is rational, Lemma~\ref{lem:rational} implies that $$
    \mbox{either }2 \cos\left(\frac{8\pi}{\kappa} - \frac{\pi}{2}\right) \mbox{ and }2\cos\left(\frac{8 \pi \rho}{\kappa} - \frac{\pi}{2}\right)\mbox{ are integers, or } \rho \in \{0,1,-1\}\,.$$ 
    
    We now discuss the two cases above separately:
    \begin{enumerate}
        \item If $\rho \in \{0,1,-1\}$, combined with the positivity of $\rho$, we have $\rho = 1$. Then $\xi = 1-\frac{\kappa}{8}$ is a double root to the equation \eqref{eq:backbone-generick}. So, it is also a root of the derivative of \eqref{eq:backbone-generick} with respect to $\xi$, which implies after some simple calculations that $\tan(\frac{8\pi}{\kappa}) = \frac{8 \pi}{\kappa}$. Note that $\tan(\frac{8 \pi}{\kappa})$ is an algebraic number (since $\kappa$ is rational), while $\frac{8 \pi}{\kappa}$ is a transcendental number. This leads to a contradiction.
        \item If $2 \cos(\frac{8\pi}{\kappa} - \frac{\pi}{2})$ and $2\cos(\frac{8 \pi \rho}{\kappa} - \frac{\pi}{2})$ are both integers, then they can only take values in $\{-2,-1,0,1,2\}$. Combined with the fact that $\rho \in (0,2)$ and $2\cos(\frac{8\pi}{\kappa} - \frac{\pi}{2})<0$, there are only three possible subcases:
        \begin{enumerate}
            \item $2\cos(\frac{8\pi}{\kappa} - \frac{\pi}{2}) = -2$, $2\cos(\frac{8 \pi \rho}{\kappa} - \frac{\pi}{2}) = -2$ and $\rho = 1$.
            \item $2\cos(\frac{8\pi}{\kappa} - \frac{\pi}{2}) = -1$, $2\cos(\frac{8 \pi \rho}{\kappa} - \frac{\pi}{2}) = -1$ and $\rho = 1$.
            \item $2\cos(\frac{8\pi}{\kappa} - \frac{\pi}{2}) = -2$, $2\cos(\frac{8 \pi \rho}{\kappa} - \frac{\pi}{2}) = -1$ and $\rho = \frac{1}{2}$.
        \end{enumerate}
        The first two subcases are impossible based on the discussion in case (1). The third subcase is also impossible by solving for the value of $\kappa$.
    \end{enumerate}
    In conclusion, $\rho$ is a transcendental number, and so is $\xi$. 
\end{proof}

\appendix

\section{Background on Bernoulli percolation} \label{sec:app_perc}

In this short section, we give more details on the results that we use for two-dimensional Bernoulli percolation at criticality. In particular, we provide a proof of the quasi-multiplicativity property for two black arms, which plays a central role when transferring the backbone exponent back to the discrete model. This proof uses a standard property of percolation known as the Russo-Seymour-Welsh (RSW) lemma, which can be stated as follows. Consider the rectangle $R_n = ([0,4n] \times [0,n]) \cap V_{\bbT}$ on the lattice $\bbT$, $n \geq 1$. At criticality ($p = p_c$), the probability that there exists a horizontal crossing in the difficult direction, i.e. a black path connecting a vertex on the left side of $R_n$ to a vertex on its right side (in other words, two vertices which have a neighbor with $x$-coordinate $< 0$ and $> 4n$, resp.), is bounded away from $0$ uniformly in $n$.

\begin{figure}[t]
\centering
\includegraphics[width=0.5\textwidth]{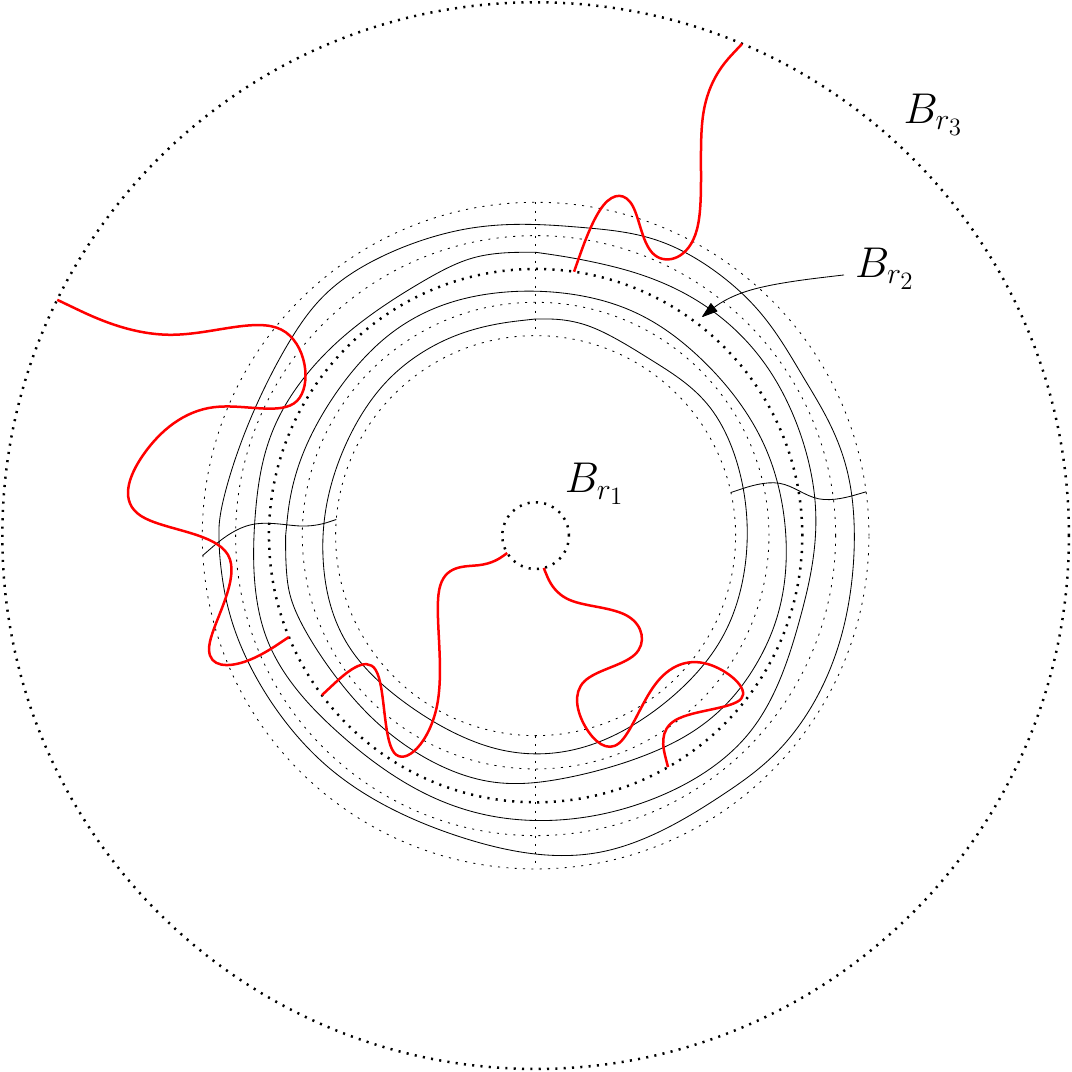}
\caption{This figure shows the construction used for the proof of Lemma~\ref{lem:qm}. We require the existence of four black circuits, one in each of the annuli $A_{(1+i\delta) r_2, (1+(i+1)\delta) r_2}$, $i \in \{-2,-1,0,1\}$, for some small fixed $\delta >0$, e.g. $\delta = \frac{1}{10}$. In addition we ask for the existence of two black paths, each connecting $\dout B_{(1-2\delta) r_2}$ and $\din B_{(1+2\delta) r_2}$, respectively in $A_{(1-2\delta) r_2, (1+2\delta) r_2} \cap \{\Re z > 0\}$ and $A_{(1-2\delta) r_2, (1+2\delta) r_2} \cap \{\Re z < 0\}$. All these paths can be used to connect the two pairs of arms, in $A_{r_1,r_2}$ and $A_{r_2,r_3}$, drawn in red.}
\label{fig:qm}
\end{figure}

This property can then be combined with a correlation inequality that we recall now, the Harris inequality. An event $A \subseteq \Omega$ is said to be increasing if its indicator function $\mathbbm{1}_A$ is non-decreasing for the natural partial order on $\Omega$, that is: for all $\omega$, $\omega' \in A$, $[\omega \in A$ and for all $v \in V_{\bbT}$,  $\omega_v \leq \omega'_v] \implies \omega' \in A$. The Harris inequality then states that for any two increasing events $A, B \subseteq \Omega$,
\begin{equation} \label{eq:Harris}
\text{for each $p \in [0,1]$,} \quad \bbP_p(A \cap B) \geq \bbP_p(A) \bbP_p(B).
\end{equation}
This inequality is a special case, for Bernoulli percolation, of the Fortuin-Kasteleyn-Ginibre (FKG) inequality, which applies to a variety of measures arising in statistical mechanics. It is a convenient tool that allows one to combine paths of the same color. Together with the RSW bounds, it can be used to deduce that at $p_c$, the existence of any prescribed ``macroscopic'' connections can be ensured with uniformly positive probability, which is what we use in the next proof.

\begin{proof}[Proof of Lemma~\ref{lem:qm}]
Let $r_1$, $r_2$, $r_3$ be as in the statement of the lemma. As noted above, the inequality $\pi_{BB}(r_1,r_3) \leq \pi_{BB}(r_1,r_2) \pi_{BB}(r_2,r_3)$ is clear, so we only need to prove that
$$c_1 \cdot \pi_{BB}(r_1,r_2) \pi_{BB}(r_2,r_3) \leq \pi_{BB}(r_1,r_3).$$
For this purpose, we can use the construction depicted in Figure~\ref{fig:qm}, denoting by $E_{r_2}$ the corresponding event. It follows from multiple applications of the RSW bound mentioned above, combined by applying repeatedly \eqref{eq:Harris}, that for some universal constant $c_1 > 0$,
\begin{equation} \label{eq:const_RSW}
\bbP_{p_c}(E_{r_2}) \geq c_1.
\end{equation}
If in addition to $E_{r_2}$, both $\cA_{BB}(A_{r_1,r_2})$ and $\cA_{BB}(A_{r_2,r_3})$ occur, then Menger's theorem implies that $\cA_{BB}(A_{r_1,r_3})$ occurs as well. Indeed, if we consider the union of the two pairs of arms (in each of $A_{r_1,r_2}$ and $A_{r_2,r_3}$) together with all paths requested in $E_{r_2}$, then at least two vertices need to be removed in order to disconnect $\dout B_{r_1}$ and $\din B_{r_3}$. Hence,
\begin{align*}
\pi_{BB}(r_1,r_3) = \bbP_{p_c}(\cA_{BB}(A_{r_1,r_3})) & \geq \bbP_{p_c}(E_{r_2} \cap \cA_{BB}(A_{r_1,r_2}) \cap \cA_{BB}(A_{r_2,r_3}))\\
& \geq \bbP_{p_c}(E_{r_2}) \cdot \bbP_{p_c}(\cA_{BB}(A_{r_1,r_2})) \cdot \bbP_{p_c}(\cA_{BB}(A_{r_2,r_3}))\\
& \geq c_1 \pi_{BB}(r_1,r_2) \pi_{BB}(r_2,r_3),
\end{align*}
where we used \eqref{eq:Harris} for the second inequality, since all three events which appear are increasing, and \eqref{eq:const_RSW} for the third one.

\end{proof}

\section{Proof of Description \ref{descrip}}\label{app:descrip}
In this section, we prove the statements in Description \ref{descrip}.
In Section~\ref{subsec:chordal_sle}, we define the chordal $\mathrm{SLE}_\kappa(\kappa-6)$ and provide some details about the description in \cite[Section 6.1]{MR3708206}: The sequence of loops in the CLE$_\kappa$ discovered by the chordal  $\mathrm{SLE}_\kappa(\kappa-6)$ induces a Poisson point process of SLE$_\kappa$ bubbles.
In Section~\ref{subsec:radial_sle}, we explore along a radial $\mathrm{SLE}_\kappa(\kappa-6)$ instead: This induces a sequence of bubbles which relates to the sequence of bubbles in Section~\ref{subsec:chordal_sle} by a suitable time change and time-dependent conformal maps.
It then takes some technical work to prove that the new sequence of bubbles is also a Poisson point process. 

\subsection{Chordal $\mathrm{SLE}_\kappa(\kappa-6)$ and $\mathrm{SLE}_\kappa$ bubbles}\label{subsec:chordal_sle}
Let us first define the chordal SLE$_\kappa(\kappa-6)$ process, for $\kappa\in(4,8)$. Recall that a chordal Loewner evolution  $(K_t)_{t>0}$ in $\mathbb{H}$ is parametrized by the equation
\begin{align}\label{eq:loewner}
\partial_t g_t =\frac{2dt}{g_t(z)-W_t}, \qquad g_0(z)=z,
\end{align}
where $g_t$ is the unique conformal map from $\bbH\setminus K_t$ onto $\bbH$ with $g_t(z)=z+ 2t/z + o(1/z)$ as $z\to \infty$, and $W_t$ is the driving function. Fix $a>0$.
Let $Y_t$ be a squared Bessel process with dimension $d=3-8/\kappa$, started at $Y_0=a^2/\kappa$. For $\kappa \in (4,8)$, we have $d \in (1,2)$,  so $Y_t$ a.s.\ hits $0$ infinitely many times. Let $X_t:=\sqrt{Y_t}$.
A chordal SLE$_\kappa(\kappa-6)$ started from $0$ targeting $\infty$ with marked point $-a$ 
is parametrized by \eqref{eq:loewner} with a driving function $W_t$ given by 
\begin{align}\label{eq:sle}
O_t= -a-2\kappa^{-1/2} \int_0^t X_s^{-1} ds, \quad W_t=O_t+\sqrt{\kappa} X_t.
\end{align}
We have $O_t=g_t(-a)$. 
It was shown in  \cite{MR3477777} that SLE$_\kappa(\kappa-6)$ is a.s.\ a continuous curve. 

For all $d>0, x>0$, we use $\bbP^d_x$ to denote the law of a Bessel process with dimension $d$ started from $x$.
In \cite{MR0656509}, an infinite measure on Bessel bridges from $0$ to $0$ was constructed as
\begin{align}\label{eq:bessel_bridge}
 \mu_{0,0}^d: =\lim_{\eps\to 0} \eps^{2-d} \bbP^d_\eps.
\end{align}
In our case, as $d=3-8/\kappa$, the exponent $2-d=-1+8/\kappa$ coincides with the one in \eqref{eq:sle_bubble}.
When $d<2$, it was further shown in \cite{MR0656509} that $X_t$ (i.e., the process previously defined which is a reflected $d$ dimensional Bessel process) is equal to the concatenation of its initial part from $X_0$ to $0$ and a sequence of bridges given by a Poisson point process with intensity $\mu_{0,0}^d$.
The measure on Bessel bridges gives rise the following measure on SLE$_\kappa$ bridges. 
\begin{definition}[SLE$_\kappa$ bridges]\label{def:bridge}
Let $(\wt X_t, 0\le t \le \tau )$ be a Bessel bridge sampled according to $c(\kappa) \mu_{0,0}^d$, where $c(\kappa)$ is the constant in \eqref{eq:sle_bubble}. If we inject $\wt X_t$ in the place of $X_t$ in \eqref{eq:sle}, then the resulting curve $(\omega_t, 0\le t\le \tau)$ is called an SLE$_\kappa$ bridge.
\end{definition}
\begin{lemma}\label{lem:sle_bridge}
Let $\omega$ be an SLE$_\kappa$ bridge. Let $e$ be an SLE$_\kappa$ bubble, distributed according to $c(\kappa)^{-1}\mu_\kappa$ (where $c(\kappa)$ and $\mu_\kappa$ are defined in \eqref{eq:sle_bubble}), stopped at the first time that $e$ hits $\bbR^-\setminus\{0\}$. Then $\omega$ is equal in distribution to $e$, as curves modulo monotone time reparametrization.
\end{lemma}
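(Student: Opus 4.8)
The plan is to establish the identity first at the level of the Loewner driving functions, and then transfer it to the curves themselves.

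\textbf{Driving-function identification.} The starting point is the standard $\mathrm{SLE}_\kappa(\rho)$ coordinate change: chordal $\mathrm{SLE}_\kappa$ from $0$ to $-\epsilon$ in $\bbH$, parametrized by half-plane capacity, is the Loewner chain \eqref{eq:loewner} whose driving pair $(O_t,W_t)$ is given by \eqref{eq:sle} with $a=\epsilon$, run up to $\sigma_\epsilon:=\inf\{t>0:X_t=0\}$. One checks directly that $X_t:=(W_t-O_t)/\sqrt\kappa$ is a dimension-$d$ Bessel process with $d=3-8/\kappa$ started at $X_0=\epsilon/\sqrt\kappa$, the identity $\rho=\kappa-6$ being equivalent to $\rho+2=\kappa(d-1)/2$. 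Since $\kappa\in(4,8)$ puts $\rho=\kappa-6$ in the range $(\kappa/2-4,\kappa/2-2)$ (equivalently $d\in(1,2)$), the curve hits its force point $-\epsilon$ at the a.s.\ finite time $\sigma_\epsilon$, so the chordal trace ends by touching $\bbR^-\setminus\{0\}$ precisely at $\sigma_\epsilon$; continuity of the trace here is the content of \cite{MR3477777}.

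\textbf{Passing to the bubble limit.} Hence $\mu_\kappa^\epsilon$ is the Loewner image of the law of this Bessel process killed at its first zero, i.e.\ of $\mathbb P^d_{\epsilon/\sqrt\kappa}$. Using the Bessel scaling $\epsilon^{-1+8/\kappa}=\kappa^{(2-d)/2}(\epsilon/\sqrt\kappa)^{2-d}$ and \eqref{eq:bessel_bridge}, the measure $\epsilon^{-1+8/\kappa}\mathbb P^d_{\epsilon/\sqrt\kappa}$ converges vaguely as $\epsilon\to0$ to a constant multiple of $\mu^d_{0,0}$, and a routine bookkeeping of the renormalizing constants --- which is where the precise choice of $c(\kappa)$ in \eqref{eq:sle_bubble} and the factor $\sqrt\kappa$ enter --- identifies this constant as $c(\kappa)$. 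Thus, on the driving-function level, $c(\kappa)^{-1}\mu_\kappa=\lim_\epsilon\epsilon^{-1+8/\kappa}\mu_\kappa^\epsilon$, read until its first $\bbR^-$-touch, is the Loewner image of a single Bessel bridge sampled from $c(\kappa)\mu^d_{0,0}$ injected into \eqref{eq:sle} --- which is exactly the definition of an $\mathrm{SLE}_\kappa$ bridge.

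\textbf{From driving functions to curves, and the main obstacle.} It remains to commute the vague limit above with the Loewner-to-curve map, i.e.\ to upgrade convergence of driving laws into convergence of curve laws modulo monotone reparametrization. The mechanism is continuity of the Loewner transform along $\mathrm{SLE}_\kappa(\kappa-6)$-type inputs: locally uniform convergence of $(O^\epsilon,W^\epsilon)$ on $[0,\sigma_\epsilon)$, together with equicontinuity up to the boundary of the inverse maps $g_t^{-1}$, forces the curves to converge modulo reparametrization. The genuine difficulty is that $\mu_\kappa$ is an \emph{infinite} measure, so one cannot directly invoke weak convergence of probability measures; instead I would restrict to the finite-mass events on which the bubble has half-plane capacity at $\sigma_\epsilon$ at least a fixed $\eta>0$ (equivalently, diameter at least $\eta$), prove tightness of the resulting restricted measures uniformly in $\epsilon$ with the help of a-priori $\mathrm{SLE}$ estimates (moment bounds on $|(g_{\sigma_\epsilon}^{-1})'|$, or Rohde--Schramm-type derivative bounds), invoke a.s.\ continuity of the Loewner map on the full-measure set of admissible driving functions, and pin down all subsequential limits by the matching driving laws; the fact that the statement is only up to monotone time reparametrization is what makes the degenerate endpoint behaviour of the bubble near its root (accumulation at $0$, infinitely many boundary contacts) harmless. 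I expect this last step --- the uniform-in-$\epsilon$ tightness and the exchange of the vague limit with the Loewner transform in the infinite-measure setting --- to be the principal technical burden; Steps 1 and 2 are routine once the conventions are fixed.
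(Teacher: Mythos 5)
Your proposal is correct in substance and rests on the same two pillars as the paper's proof: the Schramm--Wilson coordinate change identifying chordal $\SLE_\kappa$ from $0$ to $-\eps$ (up to the swallowing time of $-\eps$, modulo reparametrization) with $\SLE_\kappa(\kappa-6)$ driven by a Bessel process of dimension $d=3-8/\kappa$ started near $0$, and the Pitman--Yor limit \eqref{eq:bessel_bridge} defining the bridge measure. Where you diverge is in how the $\eps\to 0$ limit is handled. The paper performs the coordinate change at each fixed $\eps$ directly at the level of curve laws (citing \cite{MR2188260}) and then simply lets $\eps\to 0$: the right-hand side converges to $c(\kappa)^{-1}\mu_\kappa$ stopped at the first hit of $\bbR^-\setminus\{0\}$ \emph{by the very definition} \eqref{eq:sle_bubble}, whose existence as a limit of curve laws is part of the construction of the bubble measure in \cite{MR2979861,Z22}, while the left-hand side converges to the bridge law by Definition~\ref{def:bridge} and \eqref{eq:bessel_bridge}. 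Consequently the step you single out as the principal technical burden --- re-deriving convergence of curve laws from vague convergence of driving laws via restriction to macroscopic bubbles, tightness, Rohde--Schramm-type derivative estimates and identification of subsequential limits --- is not needed at all; it is a heavier detour that would presumably work, but the clean route is to use the already-established curve-level limit rather than re-proving it from the driving functions, and as written your proof is incomplete precisely at that (avoidable) step. On the positive side, your bookkeeping of the $\sqrt\kappa$ rescaling between the force point $-\eps$ and the Bessel starting point $\eps/\sqrt\kappa$, i.e.\ $\eps^{-1+8/\kappa}=\kappa^{(2-d)/2}(\eps/\sqrt\kappa)^{2-d}$, is more careful than the paper's one-line treatment, though note that the resulting multiplicative factor is a fixed power of $\kappa$ coming from this change of variables and is not literally the constant $c(\kappa)$, which is fixed by the normalization $\mu_\kappa(E_i)=1$; this only affects constants, not the identification modulo reparametrization.
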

\begin{proof}
Definition~\ref{def:bridge} and \eqref{eq:bessel_bridge} imply that the distribution of $\omega$ is equal to the limit as $\eps\to 0$ of $\eps^{-1+8/\kappa}$ times the probability law of a chordal SLE$_\kappa(\kappa-6)$ process $\omega_\eps$ from $0$ to $\infty$ with marked point $-\eps$, stopped at the first time that it disconnects $-\eps$ from $\infty$. By  \cite{MR2188260}, $\omega_\eps$ is further equal in distribution to an SLE$_\kappa$ from $0$ to $-\eps$ in $\bbH$, modulo monotone time reparametrization, also stopped at the first time that it disconnects $-\eps$ from $\infty$. Letting $\eps\to 0$ completes the proof.
\end{proof}

Now, let $a:=0$, so that $X_t$ is exactly equal to the concatenation of Bessel bridges $\{b_s, s\in J\}$, where $\{b_s, s\in J\}$ is a time-indexed Poisson point process with intensity $c(\kappa) \mu_{0,0}^d$ (the constant $c(\kappa)$ only rescales the time clock of the Poisson point process) and
$J\subset \bbR^+$ is the set of times $s>0$ at which a Bessel bridge $b_s$ occurs. In this case, the chordal SLE$_\kappa(\kappa-6)$ from $0$ to $\infty$ with marked point $0^-$, generated by \eqref{eq:sle}, is equal to the composition (as Loewner chains) of a Poisson point process of SLE$_\kappa$ bridges $\{\omega_s, s\in J\}$, so that each SLE$_\kappa$ bridge $\omega_s$ is generated by  \eqref{eq:sle} where we replace $X_t$ by $b_s$. 
For each SLE$_\kappa$ bridge $\omega$, let $x(\omega)$ be the left-most point on $\omega\cap\bbR$. Let $\varphi_\omega$ be the conformal map from the unbounded connected component of $\bbH\setminus \omega$ onto $\bbH$, with $\varphi_\omega(x(\omega))=0$ and $\varphi_\omega(z)\sim z$ as $z\to \infty$. For all $u>0$, let $\Phi_u$ be the composition of the conformal maps $\varphi_{\omega_s}$ for all $s\in J$ with $s\le u$, in the order of their appearance. For each $s\in J$, let $a(\omega_s)$ be the chordal capacity of $\omega_s$. Let
\begin{align*}
t(u):=\sum_{s\in J, s\le u} a(\omega_s).
\end{align*}
Then $\Phi_u(z)= g_{t(u)}(z)-W_{t(u)}$. These definitions can be extended to $u^-$ if we replace $s\le u$ by $s<u$.
The construction of CLE$_\kappa$ using the branching SLE$_\kappa(\kappa-6)$ by \cite{MR2494457} (see Section~\ref{subsec:link_bubble}) ensures that for each $s\in J$, $\Phi_{s^-}^{-1}(\omega_s)$ is part of a loop $\wt\ell_s$ in the  CLE$_\kappa$. Moreover, conditionally on $\omega_s$, the part $\Phi_{s^-}(\wt\ell_s)\setminus \omega_s$ is equal to an SLE$_\kappa$ from $x(\omega_s)$ to $0$ in the bounded connected component of $\bbH\setminus \omega_s$ that intersects $\bbR^-$. For $s\in J$, let $\wt e_s:=\Phi_{s^-}(\wt\ell_s)$.
Combined with Lemma~\ref{lem:sle_bridge}, we conclude that $\{\wt e_s, s\in J\}$ is a Poisson point process of SLE$_\kappa$ bubbles with intensity $\mu_\kappa$. Thus we have obtained the description given in \cite[Section 6.1]{MR3708206}.

As pointed out in \cite[Section 6.1]{MR3708206}, the set of loops $\{\wt\ell_s, s\in J\}$ is a strict subset of the non-nested CLE$_\kappa$. By the Markov property of the branching SLE$_\kappa(\kappa-6)$, conditionally on $\{\wt\ell_s, s\in J\}$, to obtain the remaining loops in the non-nested CLE$_\kappa$, it suffices to sample an independent non-nested CLE$_\kappa$ in each of the connected components of $\bbH\setminus \cup_{s\in J}\wt \ell_s$ which is not surrounded by any loop in $\{\wt \ell_s, s\in J\}$. This can be achieved by iterating the exploration process above.

\subsection{Radial  $\mathrm{SLE}_\kappa(\kappa-6)$ and proof of Description \ref{descrip}}
\label{subsec:radial_sle}

The radial $\mathrm{SLE}_\kappa(\kappa-6)$ was defined in \cite[Proposition 3.15]{MR2494457} by concatenating pieces of (time-changed) chordal SLE$_\kappa(\kappa-6)$ curves. The radial $\mathrm{SLE}_\kappa(\kappa-6)$ was shown in \cite[Proposition 3.16]{MR2494457} to be target-invariant. For example, a radial $\mathrm{SLE}_\kappa(\kappa-6)$ in $\bbH$ from $0$ to $i$ with marked point $0^-$ coincides (up to a time change) with a chordal SLE$_\kappa(\kappa-6)$  in $\bbH$ from $0$ to $\infty$ with marked point $0^-$, up to the first time that $i$ is disconnected from $\infty$ in $\bbH$. Note that even though the radial and chordal SLE$_\kappa(\kappa-6)$ curves coincide up to some stopping time, the sequences of bubbles they induce are not the same, since we do not use the same normalisation of conformal map for the two processes, and SLE$_\kappa$ bubbles are only conformally covariant instead of invariant.

In Description \ref{descrip}, we explore the CLE$_\kappa$ $\Gamma$ along a radial SLE$_\kappa(\kappa-6)$ curve $\eta$ from $0$, with marked point $0^-$, targeting $i$. This exploration discovers a sequence of loops $\{\ell_s, s\in I\}$, where $I\subset\bbR^+$ is a parametrization of the loops that indicates their order of appearance.  While the ordered sequence of loops is determined by $\Gamma$ and $\eta$, the exact time-parametrization by $I\subset\bbR^+$ is only defined up to monotone reparametrization for the moment. Later on, we will choose a specific $I$, and prove that $\{\ell_s, s\in I\}$ indeed gives rise to a Poisson point process of SLE$_\kappa$ bubbles $\{e_s, s\in I\}$ as claimed in Description \ref{descrip}. Note that $\{\ell_s, s\in I\}$ and $\{e_s, s\in I\}$ mutually determine each other, as explained in Description \ref{descrip}.

The target-invariance of SLE$_\kappa(\kappa-6)$ ensures that $\eta$ is equal in distribution to the chordal SLE$_\kappa(\kappa-6)$ defined in Section~\ref{subsec:chordal_sle} (modulo time change) up to the first time $\sigma$ that $\eta$ disconnects $i$ from $\infty$ in $\bbH$.
By Section~\ref{subsec:chordal_sle}, $\eta([0,\sigma])$ has discovered a sequence of loops $\{\wt \ell_s, s\in J_0\}$, where $J_0=J \cap [0, s_0]$ and $s_0$ is the unique point in $J$ such that $\eta(\sigma)$ belongs to $\wt\ell_{s_0}$. 
Consequently, there is an increasing function $\chi: [0,s_0] \to \bbR^+$  such that $I\cap [0,\chi(s_0)]= \{\chi(s), s\in J_0\}$ and $\ell_{\chi(s)}=\wt \ell_s$ for all $s\in J_0$. Below, we will show that for some explicit $\chi$, the sequence $\{\ell_s, s\in I, s\le \chi(s_0)\}$ is indeed related to a Poisson point process of SLE$_\kappa$ bubbles.

According to Section~\ref{subsec:chordal_sle},  $\{\wt \ell_s, s\in J_0\}$ induces a Poisson point process of SLE$_\kappa$ bubbles $\{\wt e_s, s\in J_0\}$ stopped at time $s_0$, where $\wt e_s= \Phi_{s^-}(\wt\ell_s)$.
 According to Description~\ref{descrip}, $\{\ell_s, s\in I, s\le \chi(s_0)\}$ also induces a sequence of bubbles $\{e_s, s\in I, s\le \chi(s_0)\}$, where $e_s=\Psi_{s^-}(\ell_s)$.  
Recall the definition of  $O_{u^-}$ in Description \ref{descrip}, which is the connected component containing $i$ of $\bbH \setminus \cup_{s\in I, s<u} \ep\ell_s$. Even though we have not yet specified the function $\chi$, the domain $O_{\chi(s)^-}$ for $s\le s_0$ does not depend on the choice of $\chi$: For all $s\le s_0$, 
both $\Psi_{\chi(s)^-}$ and $\Phi_{s^-}$ are conformal maps from $O_{\chi(s)^-}$ onto $\bbH$, with different normalizations. In particular,  $e_{\chi(s)}= f_{s} (\wt e_s)$, where $f_s:=\Psi_{\chi(s)^-}\circ \Phi_{s^-}^{-1}$. One can see that $f_s$ is the unique conformal map from $\bbH$ onto $\bbH$ which sends $0$ to $0$ and $\Phi_{s^-}(i)$ to $i$, and is given by
\begin{align}\label{eq:fs}
f_s(z)= \frac{-( \Im\Phi_{s^-}(i)) z}{(\Re \Phi_{s^-}(i)) z -|\Phi_{s^-}(i)|^2}.
\end{align}
Note that $f_s$ also does not depend on the choice of $\chi$ and $f_s'(0)\in(0,\infty)$ for all $s\le s_0$.
Let
\begin{align}\label{eq:chi}
\chi(s):=\int_0^s f_u'(0)^{(8-\kappa)/\kappa} du, \quad \text{so that } \chi^{-1}(s)=\int_0^s f_{\chi^{-1}(u)}'(0)^{(\kappa-8)/\kappa} du.
\end{align}
We use the convention that $\chi^{-1}(s)=s_0$ if $s\ge \chi(s_0)$.
Let $\wt \cF_s$ (resp.\ $\wt \cF_{s^-}$) be  the $\sigma$-algebra  generated by $\{\wt \ell_u, u\in J_0, u\le s\}$ (resp.\ $\{\wt \ell_u, u\in J_0, u< s\}$). 
Note that $\chi(s)$ is measurable w.r.t.\  $\wt\cF_{s^-}$. 
Let $\cF_{s}$ (resp.\ $\cF_{s^-}$) be the $\sigma$-algebra generated by $\{\ell_u, u\in I, u\le s, u\le \tau\}$ (resp.\ $\{\ell_u, u\in I, u< s, u\le \tau\}$). Then $\cF_{\chi(s)} =\wt\cF_{s}$ and $\cF_{\chi(s)^-} =\wt\cF_{s^-}$. Let us show that with the choice \eqref{eq:chi}, the following holds.

\begin{proposition}\label{prop:ap_ppp}
 The time-indexed sequence $\{e_s, s\in I, s\le \chi(s_0)\}$ is a Poisson point process with intensity $\mu_\kappa$, stopped at the first time $s>0$ such that the related sequence of loops $\{\ell_s, s\in I\}$ (generated according to Description~\ref{descrip}) contains a loop $\ell_s$ which disconnects $i$ from $\infty$ in $\bbH$.
\end{proposition}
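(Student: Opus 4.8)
\textbf{Proof proposal for Proposition~\ref{prop:ap_ppp}.}
The plan is to transfer the Poisson point process structure of $\{\wt e_s, s\in J_0\}$ (established in Section~\ref{subsec:chordal_sle}) to $\{e_s, s\in I, s\le \chi(s_0)\}$ via the explicit time change $\chi$ and the maps $f_s$. The central point is that $\mu_\kappa$ is conformally covariant with exponent $(8-\kappa)/\kappa$, i.e.\ $(f_s)_*\mu_\kappa = f_s'(0)^{(8-\kappa)/\kappa}\mu_\kappa$ (this is~\eqref{eq:bubble_cov}), and that the time change~\eqref{eq:chi} is designed precisely so that the density factor $f_s'(0)^{(8-\kappa)/\kappa}$ introduced by pushing forward under $f_s$ is absorbed by the change of time variable $ds = f_{\chi^{-1}(u)}'(0)^{(\kappa-8)/\kappa}\,du$. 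So the two effects cancel, leaving a Poisson point process with the \emph{same} intensity $\mu_\kappa$.

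First I would set up the filtrations carefully, checking the claims $\cF_{\chi(s)} = \wt\cF_{s}$ and $\cF_{\chi(s)^-} = \wt\cF_{s^-}$ and the measurability of $\chi(s)$ with respect to $\wt\cF_{s^-}$; this is where one uses that $O_{\chi(s)^-}$ does not depend on the choice of $\chi$, so that $\Phi_{s^-}$, $\Psi_{\chi(s)^-}$, and hence $f_s = \Psi_{\chi(s)^-}\circ\Phi_{s^-}^{-1}$ from~\eqref{eq:fs} are all measurable with respect to the past. Next, recall that $\{\wt e_s, s\in J_0\}$ is a $\wt\cF$-Poisson point process with intensity $\mu_\kappa$: by the exponential formula / Campbell's theorem it suffices to check that for any non-negative predictable functional $g$,
\begin{equation*}
\wt\bbE\Big[\exp\Big(-\sum_{s\in J_0} g(s,\wt e_s)\Big)\Big] = \wt\bbE\Big[\exp\Big(-\int_0^{s_0}\!\!\int \big(1-e^{-g(s,e)}\big)\,\mu_\kappa(de)\,ds\Big)\Big],
\end{equation*}
or, more robustly, verify the defining property that for disjoint predictable time sets the increments are independent Poisson random measures. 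Then I would apply the map $s\mapsto \chi(s)$ and $e\mapsto f_s(e)$: writing $u = \chi(s)$, the collection $\{e_u = f_{\chi^{-1}(u)}(\wt e_{\chi^{-1}(u)}), u\in I\cap[0,\chi(s_0)]\}$ has, conditionally on $\wt\cF_{\chi^{-1}(u)^-}$, intensity
\begin{equation*}
(f_{\chi^{-1}(u)})_*\mu_\kappa \cdot \frac{ds}{du} = f_{\chi^{-1}(u)}'(0)^{(8-\kappa)/\kappa}\,\mu_\kappa \cdot f_{\chi^{-1}(u)}'(0)^{(\kappa-8)/\kappa} = \mu_\kappa,
\end{equation*}
using~\eqref{eq:bubble_cov} for the first factor and~\eqref{eq:chi} for the Jacobian of the time change. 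Since the resulting intensity is deterministic (equal to $\mu_\kappa\,du$) and in particular independent of the past, the standard characterization of Poisson point processes (e.g.\ via the compensator being deterministic, or via a direct thinning argument on loops of diameter $>\eps$ followed by $\eps\to 0$) upgrades this to the statement that $\{e_u, u\in I, u\le\chi(s_0)\}$ is genuinely a Poisson point process with intensity $\mu_\kappa$. Finally, the stopping description follows because $\sigma$ is exactly the first time $\eta$ disconnects $i$ from $\infty$, which translates — through $e_s = \Psi_{s^-}(\ell_s)$ and the fact that $\Psi_{s^-}$ is a conformal map of the component containing $i$ — into: $s_0$, i.e.\ $\chi(s_0)$ in the new clock, is the first index at which $\ell_s$ (equivalently $\ep\ell_s$) disconnects $i$ from $\infty$ in $\bbH$.

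The main obstacle I anticipate is the rigorous handling of the time change $\chi$ and the predictability of $f_s$ in the Poisson-point-process framework: one must make sure that $\chi$ is a genuine (strictly increasing, continuous, adapted) time change — in particular that $\chi(s_0)<\infty$ almost surely and that $\chi$ does not collapse or explode on the dense set $J_0$ — and that the change-of-variables in the compensator is valid despite $J_0$ being a countable dense set with no ``first'' point. The clean way around the ``no next loop'' issue is the $\eps$-truncation already flagged in Description~\ref{descrip}: restrict to bubbles $\wt e_s$ whose image has diameter $>\eps$ (a locally finite set, genuinely indexed discretely), run the elementary change-of-variables there, and then let $\eps\to 0$ using that $\mu_\kappa$ gives finite mass to $\{$diam of image $>\eps\}$ in each of the relevant domains and that $\chi$ restricted to such loops is a bona fide discrete time change. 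Modulo this technical care, everything else is bookkeeping with~\eqref{eq:bubble_cov}, \eqref{eq:fs}, and~\eqref{eq:chi}.
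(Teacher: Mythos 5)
Your proposal is correct in substance but follows a genuinely different route from the paper. You transfer the Poisson structure abstractly: the pair (predictable time change $\chi$ from~\eqref{eq:chi}, predictable mark transformation $e\mapsto f_s(e)$ with $f_s$ as in~\eqref{eq:fs}) maps the compensator $ds\,\mu_\kappa(de)$ of $\{\wt e_s\}$ to $d\chi(s)\,(f_s)_*\mu_\kappa(de)=\mu_\kappa(de)\,du$ by the covariance~\eqref{eq:bubble_cov}, and you then invoke a Watanabe/Jacod-type characterization (deterministic compensator $\Rightarrow$ Poisson random measure), with an $\iota$-truncation to mark sets $A_\iota$ of finite $\mu_\kappa$-mass to handle $\sigma$-finiteness. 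The paper instead argues by hand: it freezes the map $f_{s_{n,k}}$ on dyadic-type stopping intervals of mesh $1/n$, uses the strong Markov property and~\eqref{eq:bubble_cov} to see that each frozen piece $\cN_{n,k}$, hence the concatenation $\cN_n$, is exactly a Poisson point process of intensity $\mu_\kappa$, and then proves almost sure convergence $\cN_n\to\cN$ of the point processes restricted to $A_\iota$ (times and bubbles), which is where the regularity inputs enter: Lemma~\ref{lem:no_atom} (no atoms at $\sup_{z\in e}|z|=\iota$), Lemma~\ref{lem:cadlag} (left-continuity of $s\mapsto f_s$), Lemma~\ref{lem:total_variation} (finite variation, hence boundedness, of $s\mapsto f_s'(0)^{(8-\kappa)/\kappa}$, which also gives your needed $\chi(s_0)<\infty$), and Lemma~\ref{eq:festimate} (a distortion bound preventing phantom large bubbles). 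What your approach buys is brevity and conceptual transparency — the exact cancellation between the Jacobian of $\chi$ and the covariance exponent is displayed in one line, and you avoid the $\eps$-discretization and the Hausdorff-convergence bookkeeping; what it costs is that the standard machinery must be set up carefully in this setting: you must verify predictability of $(s,e)\mapsto(\chi(s),f_s(e))$ and the filtration identities $\cF_{\chi(s)^\pm}=\wt\cF_{s^\pm}$ (which you flag), check $\mu_\kappa(A_\iota)<\infty$ (a scaling argument as in Lemma~\ref{lem:no_atom}), and — a point you pass over quickly — handle the fact that the process is only defined up to the stopping time $\chi(s_0)$: the deterministic-compensator characterization should be applied to an unstopped extension, e.g.\ by concatenating an independent Poisson point process of intensity $\mu_\kappa$ after the stopping time exactly as the paper does in its Step 3, after which the identification of $\chi(s_0)$ as the first time a loop $\ell_s$ disconnects $i$ from $\infty$ gives the stated stopped form. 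With these caveats addressed, your argument is a valid and arguably cleaner alternative to the paper's approximation scheme.
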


In order to prove Proposition~\ref{prop:ap_ppp}, we need to first collect a few lemmas. 
\begin{lemma}\label{lem:no_atom}
For $r>0$, let $D_r$ be the set of bubbles $e$ such that $\sup_{z\in e} |z| =r$. Then $\mu_\kappa(D_r)=0$.
\end{lemma}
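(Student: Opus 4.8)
The statement to prove is Lemma~\ref{lem:no_atom}: for each fixed $r>0$, the set $D_r$ of SLE$_\kappa$ bubbles $e$ with $\sup_{z\in e}|z|=r$ has $\mu_\kappa$-measure zero. The plan is to exploit the scaling covariance of $\mu_\kappa$ together with the fact that $\mu_\kappa$ is $\sigma$-finite, so that a positive atom at a single value of the ``radius'' functional $\rho(e):=\sup_{z\in e}|z|$ would force positive mass on uncountably many disjoint events, contradicting $\sigma$-finiteness.

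First I would recall the precise scaling behaviour. The bubble measure $\mu_\kappa$ is obtained from the $\eps\to 0$ limit in~\eqref{eq:sle_bubble}, and from~\eqref{eq:bubble_cov} (or directly from the construction via the squared Bessel bridge measure $\mu_{0,0}^d$ with $2-d=-1+8/\kappa$, which scales like $\mu_{0,0}^d(\lambda^2\,\cdot\,)=\lambda^{2(2-d)}\mu_{0,0}^d(\cdot)$ under time-space scaling of Bessel bridges) one gets that if $s_\lambda\colon z\mapsto \lambda z$ denotes scaling of $\bbH$, then $(s_\lambda)_*\mu_\kappa = \lambda^{(8-\kappa)/\kappa}\mu_\kappa$. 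Here I use that $\mu_\kappa$ is rooted at $0$, which is fixed by $s_\lambda$, and that a Loewner chain rooted at $0$ with capacity parametrization transforms in the standard way. Under $s_\lambda$ the functional $\rho$ transforms by $\rho(s_\lambda(e))=\lambda\,\rho(e)$, so $(s_\lambda)_*(\mu_\kappa|_{D_r})$ is supported on $D_{\lambda r}$ and equals $\lambda^{(8-\kappa)/\kappa}\,\mu_\kappa|_{D_{\lambda r}}$ restricted suitably; in particular $\mu_\kappa(D_{\lambda r}) = \lambda^{(8-\kappa)/\kappa}\mu_\kappa(D_r)$ for every $\lambda>0$.

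Next I would derive the contradiction. Suppose $\mu_\kappa(D_r)=c>0$ for some $r>0$. By the displayed scaling identity, $\mu_\kappa(D_{r'})=(r'/r)^{(8-\kappa)/\kappa}c>0$ for \emph{every} $r'>0$. But the sets $\{D_{r'}\}_{r'>0}$ are pairwise disjoint (the value of $\rho(e)$ is a well-defined positive real, and $e$ lies in exactly one $D_{r'}$), and they form an uncountable family. Now I invoke $\sigma$-finiteness of $\mu_\kappa$: write $\bbH$-bubble space as a countable union of sets of finite $\mu_\kappa$-mass; for instance, $\{e : \rho(e)\in[2^{-n},2^{n}],\ \mu_\kappa(E_i\text{-type events suitably truncated})\}$ — more cleanly, since $\mu_\kappa$ is $\sigma$-finite there is a countable measurable partition into finite-mass pieces, and in particular for each $n$ the set $\{e:\rho(e)\in[1/n,n]\}$ must have $\mu_\kappa$-mass that is the sum of its (at most countably many) atoms plus a non-atomic part; an uncountable family of disjoint subsets each of mass $\geq c_0>0$ (which we get by restricting $r'$ to, say, $[1/n,n]$ where the scaling constant is bounded below) cannot be contained in a finite-mass set. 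This is the standard fact that a $\sigma$-finite measure has at most countably many atoms, applied to the pushforward of $\mu_\kappa$ under the measurable map $e\mapsto\rho(e)\in(0,\infty)$: the image measure $\rho_*\mu_\kappa$ would then have an atom at every point of $(0,\infty)$, which is impossible for a measure that is $\sigma$-finite (it is $\sigma$-finite because $\mu_\kappa$ is and $\rho$ is measurable — one should check $\rho$ is measurable, which is routine since $\rho$ is a sup of continuous functionals $e\mapsto\max_{z\in e\cap \overline{B}_R}|z|$ over the curve topology used in the paper). Hence $\mu_\kappa(D_r)=0$.

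The only genuine points requiring care — and thus the main obstacle — are: (i) establishing the exact scaling covariance exponent $(8-\kappa)/\kappa$ for $\mu_\kappa$ with the chosen normalization, which should follow from~\eqref{eq:bubble_cov} in~\cite{Z22} applied to $f=s_\lambda$ (note $f'(0)=\lambda$), so this is really just a citation; and (ii) measurability of the functional $\rho$ with respect to the Borel $\sigma$-algebra on the curve space (equipped with the metric $d$ from Section~\ref{subsec:bb_exp}), together with the verification that $\mu_\kappa$ restricted to $\{\rho\le n\}$ or to complements of small neighbourhoods of the trivial bubble is finite — the latter is where one leans on the normalization $\mu_\kappa(E_i)=1$ and the finiteness statements already invoked in Section~\ref{subsec:bb_exp}. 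Everything else is the elementary measure-theoretic fact that a $\sigma$-finite measure has at most countably many atoms, applied to $\rho_*\mu_\kappa$. I would present the argument in roughly this order: (1) measurability of $\rho$ and $\sigma$-finiteness of $\rho_*\mu_\kappa$; (2) scaling covariance giving $\mu_\kappa(D_{\lambda r})=\lambda^{(8-\kappa)/\kappa}\mu_\kappa(D_r)$; (3) the countable-atoms contradiction; (4) conclude $\mu_\kappa(D_r)=0$ for all $r>0$.
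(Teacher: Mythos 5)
Your argument is essentially the paper's own proof: assume $\mu_\kappa(D_r)>0$, apply the scaling covariance~\eqref{eq:bubble_cov} with $f(z)=\lambda z$ to conclude every $D_{\lambda r}$ has positive mass, and contradict the finiteness of the mass of bubbles with $\sup_{z\in e}|z|$ in a fixed compact interval (equivalently, your countable-atoms formulation of $\sigma$-finiteness). The only blemish is a harmless sign slip, since $(s_\lambda)_*\mu_\kappa=\lambda^{(8-\kappa)/\kappa}\mu_\kappa$ gives $\mu_\kappa(D_{\lambda r})=\lambda^{(\kappa-8)/\kappa}\mu_\kappa(D_r)$, not $\lambda^{(8-\kappa)/\kappa}\mu_\kappa(D_r)$; the contradiction is unaffected.
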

\begin{proof}
Suppose that there exists $r>0$ such that $\mu_\kappa(D_r)>0$.
For $s>0$, let $y_s(z):= sz$ be the scaling of $\bbH$.
Then $y_s(D_r) = D_{sr}$. By conformal covariance \eqref{eq:bubble_cov} of $\mu_\kappa$, we have
$\mu_\kappa(D_{sr}) = s^{(\kappa-8)/\kappa} \mu_\kappa(D_r)>0$. In particular, the $\mu_\kappa$ measure of the set of loops with $\sup_{z\in e} |z| \in [1,2]$ would be infinite, which is impossible. This proves the lemma.
\end{proof}

Let $\lambda(z):=(i-z)/(i+z)$ be a conformal map from $\bbH$ onto $\bbD$. We use the following metric on $\bbH$
\begin{align*}
d_\lambda(z_1, z_2) := |\lambda(z_1) -\lambda(z_2)| \quad \text{ for all } z_1, z_2\in\bbH,
\end{align*}
which  induces the same topology as the Euclidean metric. Under $d_\lambda$, $\ol \bbH$ is compact. For any two compact sets $X, Y\subset\ol\bbH$, we also define the Hausdorff distance
\begin{align*}
d_\mathrm{H}(X, Y):= \max\big\{\sup_{x\in X} d_\lambda(x, Y), \, \sup_{y\in Y} d_\lambda(X,y) \big\}.
\end{align*}

\begin{lemma}\label{lem:cadlag}
Let $0< s_1<\ldots s_n$. For all $\delta>0$, there exists $\eps>0$, such that if $t\in (s_j -\eps, s_j]$ for some $1\le j\le n$, then
\begin{align}\label{eq:f_norm}
\sup_{z\in\bbH} d_\lambda \big(f_{t}(z), f_{s_j}(z)\big) \le \delta.
\end{align}
\end{lemma}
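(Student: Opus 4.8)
The plan is to reduce the statement to the left-continuity of the single point $\Phi_{t^-}(i)\in\mathbb{H}$, and then to upgrade this to uniform convergence of the maps by passing to the compact space $\overline{\mathbb{D}}$. First I would observe that, by the explicit formula~\eqref{eq:fs}, $f_t$ is determined by $w_t:=\Phi_{t^-}(i)\in\mathbb{H}$: it is the unique Möbius automorphism of $\mathbb{H}$ fixing $0$ and sending $w_t$ to $i$. Writing this automorphism as $f^{w}$, a direct check against~\eqref{eq:fs} gives, for $w=u+iv$ with $v>0$, that $f^{w}(z)=\frac{vz}{|w|^2-uz}$, whose coefficients depend rationally — hence continuously — on $w\in\mathbb{H}$. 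I would then conjugate by the map $\lambda(z)=(i-z)/(i+z)$: the composition $\lambda\circ f^{w}\circ\lambda^{-1}$ is an automorphism $z\mapsto e^{i\theta}\frac{z-\alpha}{1-\bar\alpha z}$ of the disc $\mathbb{D}$ whose parameters $(\theta,\alpha)\in(\mathbb{R}/2\pi\mathbb{Z})\times\mathbb{D}$ depend continuously on $w$ and remain in a compact subset of $(\mathbb{R}/2\pi\mathbb{Z})\times\mathbb{D}$ whenever $w$ stays in a compact subset of $\mathbb{H}$. Since such a family of disc automorphisms is uniformly Lipschitz on $\overline{\mathbb{D}}$, this yields that $w\mapsto f^{w}$ is continuous from $\mathbb{H}$ into $C(\overline{\mathbb{H}},\overline{\mathbb{H}})$ equipped with the uniform metric $d_\infty(g,h):=\sup_{z}d_\lambda(g(z),h(z))$. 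Consequently the lemma reduces to showing that $w_t\to w_{s_j}$ in $\mathbb{H}$ as $t\nearrow s_j$, with the relevant $w_t$'s confined to a fixed compact subset of $\mathbb{H}$, and then taking the minimum of the finitely many resulting $\epsilon_j$.

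To establish the convergence $w_t\to w_{s_j}$, I would use the Loewner description recorded just above~\eqref{eq:fs}, namely $\Phi_{u^-}(z)=g_{t(u^-)}(z)-W_{t(u^-)}$, where $t(u^-)=\sum_{s\in J,\,s<u}a(\omega_s)$ is increasing and left-continuous in $u$ (a sum of nonnegative capacities), so that $t(t^-)\nearrow t(s_j^-)$ as $t\nearrow s_j$. Since $f_\cdot$ is only used for times $\le s_0$, I may assume $s_n\le s_0$; then $i$ is never swallowed, $t(u^-)\in[0,t(s_0^-)]$ for all the times considered, the driving function $W=O+\sqrt\kappa\,X$ is continuous by~\eqref{eq:sle}, and $v\mapsto g_v(i)$ is continuous on $[0,t(s_0^-)]$ by the Loewner ODE~\eqref{eq:loewner}; moreover $v\mapsto\mathrm{Im}\,g_v(i)$ is strictly decreasing, hence $\ge\mathrm{Im}\,g_{t(s_0^-)}(i)>0$. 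It follows that $\{w_t:t\le s_0\}$ lies in the compact subset $\{w\in\mathbb{C}:\mathrm{Im}\,w\ge\mathrm{Im}\,g_{t(s_0^-)}(i),\ |w|\le\sup_{v\le t(s_0^-)}|g_v(i)-W_v|\}$ of $\mathbb{H}$, which supplies the compactness required above, and, by continuity of $g_\cdot(i)$ and $W$ on the compact interval $[0,t(s_0^-)]$, $w_t=g_{t(t^-)}(i)-W_{t(t^-)}\to g_{t(s_j^-)}(i)-W_{t(s_j^-)}=w_{s_j}$. Combined with the first paragraph, this proves the lemma.

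The step I expect to be the main obstacle — or at least the one requiring genuine care — is the passage from pointwise continuity of $w\mapsto f^{w}$ to uniform convergence over all of $\overline{\mathbb{H}}$ in the metric $d_\lambda$. One cannot argue directly on $\mathbb{H}$ with the Euclidean metric, since a small change of $w$ can displace points of $\overline{\mathbb{H}}$ near $\partial\mathbb{H}$ by an arbitrarily large Euclidean amount; conjugating by $\lambda$ is precisely what replaces $\overline{\mathbb{H}}$ by the compact space $\overline{\mathbb{D}}$, on which a family of Möbius maps with parameters in a compact set is genuinely equicontinuous. Apart from this, the argument only invokes the continuity of the chordal $\SLE_\kappa(\kappa-6)$ Loewner chain, already recorded from~\cite{MR3477777} in Section~\ref{subsec:chordal_sle}, together with elementary monotonicity and continuity properties of the capacity clock $t(\cdot)$ and of $v\mapsto g_v(i)$.
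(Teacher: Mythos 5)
Your proof is correct and follows essentially the same route as the paper's: the paper's (very terse) argument likewise rests on the c\`agl\`ad property of $s\mapsto\Phi_{s^-}$ combined with the explicit formula~\eqref{eq:fs}, and then takes the minimum of finitely many $\eps_j$. Your write-up simply supplies the details the paper leaves implicit, namely that $f_s$ depends only on the point $\Phi_{s^-}(i)$, which stays in a compact subset of $\bbH$ for $s\le s_0$, so that left-continuity of this point (from the Loewner/capacity description) upgrades, via the M\"obius formula and conjugation to the disc, to left-continuity of $s\mapsto f_s$ in the uniform $d_\lambda$ metric.
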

\begin{proof}
The left-hand side of \eqref{eq:f_norm} defines a distance between conformal maps from $\bbH$ onto itself. 
The function $s\mapsto f_s$ is \emph{c\`agl\`ad} for this distance, because $s\mapsto \Phi_{s^-}$ is \emph{c\`agl\`ad} (due to Section~\ref{subsec:chordal_sle}), and $f_s$ is given by \eqref{eq:fs}. By the left-continuity, for each $s_j$, there exists $\eps_j$, such that \eqref{eq:f_norm} holds for all $t\in(s_j-\eps_j, s_j]$. The lemma follows by letting $\eps:=\min(\eps_1,\ldots, \eps_j)$.
\end{proof}

\begin{lemma}\label{lem:total_variation}
Let $h(s):=f_s'(0)^{(8-\kappa)/\kappa}$. For $u>0$, $\eps>0$ and $k\in\bbN$, let $u_k:=u+ k \eps$. Let
\begin{align*}
\theta(u,\eps):=\one_{u< s_0} \sup_{u\le s < (u+\eps) \wedge s_0}  |h(s)-h(u)|, \quad V:=\limsup_{\eps\to 0}\sum_{k=0}^{\infty} \theta(u_{k}, \eps).
\end{align*}
Then $V<\infty$ a.s.
\end{lemma}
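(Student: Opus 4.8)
The plan is to show that the random quantity $V$ defined in Lemma~\ref{lem:total_variation} is almost surely finite by comparing the sum $\sum_{k\ge 0}\theta(u_k,\eps)$ with the total variation of a well-behaved deterministic-in-path function of $s$ on the compact interval $[0,s_0]$. The point is that $h(s)=f_s'(0)^{(8-\kappa)/\kappa}$, with $f_s$ given explicitly by~\eqref{eq:fs}, is a \emph{c\`agl\`ad} function of $s$ with only countably many jumps (the jumps occur precisely at the instants $s\in J_0$ where a new Bessel bridge, hence a new SLE$_\kappa$ bridge, begins), and moreover $h$ is of bounded variation on $[0,s_0]$. Once bounded variation is established, the conclusion is immediate: for any $\eps>0$ and any disjoint consecutive intervals $[u_k,u_{k+1})$, the supremum oscillation $\sup_{u_k\le s<u_{k+1}\wedge s_0}|h(s)-h(u_k)|$ is bounded by the variation of $h$ on that subinterval (this uses c\`agl\`ad-ness so that $h(u_k)$ is the left limit and the sup is over the interval), so $\sum_k\theta(u_k,\eps)\le \mathrm{Var}_{[0,s_0]}(h)$ uniformly in $\eps$, whence $V\le \mathrm{Var}_{[0,s_0]}(h)<\infty$ a.s.

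First I would reduce the bounded-variation claim for $h$ to a bounded-variation claim for $s\mapsto \Phi_{s^-}(i)$, equivalently for $s\mapsto g_{t(s)}(i)-W_{t(s)}$ in the notation of Section~\ref{subsec:chordal_sle}. From~\eqref{eq:fs}, $f_s'(0) = \mathrm{Im}\,\Phi_{s^-}(i)\,/\,|\Phi_{s^-}(i)|^2$ (a direct computation from $f_s(z)=-(\mathrm{Im}\,\Phi_{s^-}(i))z/((\mathrm{Re}\,\Phi_{s^-}(i))z-|\Phi_{s^-}(i)|^2)$), so $h(s)$ is a fixed smooth function of $\mathrm{Re}\,\Phi_{s^-}(i)$ and $\mathrm{Im}\,\Phi_{s^-}(i)$, both of which stay in a compact subset of $\bbR\times(0,\infty)$ on $[0,s_0]$ — the imaginary part of $\Phi_{s^-}(i)$ is bounded below because $i$ has not yet been disconnected, and it is bounded above and the real part is bounded because the loop-soup exploration has discovered only finitely many macroscopic loops. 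Hence it suffices that $s\mapsto \Phi_{s^-}(i)$ has finite total variation on $[0,s_0]$. For this I would invoke the Loewner flow: writing $u=t(s)$ for the chordal-capacity time change, $\Phi_{s^-}(i) = g_u(i)-W_u$ and $g_u(i)$ solves the Loewner ODE $\partial_u g_u(i) = 2/(g_u(i)-W_u)$, while $W_u = O_u+\sqrt\kappa\,X_u$ with $O_u$ monotone (of bounded variation) and $X_u$ the reflected Bessel process, which is itself a.s.\ of finite total variation on any compact time interval (being the modulus of a continuous semimartingale / a reflected Bessel, one can bound its variation using that it is a nonnegative continuous process which is a sum of a bounded-variation part and a martingale of bounded total variation after the appropriate transformation; more simply, on $[0,s_0]$ only finitely many excursions of macroscopic size occur and within each the Bessel bridge path has finite variation a.s.). Combining, $u\mapsto g_u(i)-W_u$ is a.s.\ of finite variation on the compact image $t([0,s_0])$, and since $t$ is a continuous increasing time change this transfers to $s\mapsto \Phi_{s^-}(i)$ on $[0,s_0]$.

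I expect the main obstacle to be making rigorous the finite-variation statement for the reflected-Bessel driving input uniformly on $[0,s_0]$, and correspondingly controlling the ``small'' bubbles: a priori there are infinitely many small Bessel bridges accumulating, and although each contributes only a tiny oscillation, one must check the sum of their contributions converges. The clean way around this is to note that the map from a small bubble to its contribution $\theta$ is controlled by the conformal radius / capacity of the bubble as seen from $i$, and the total capacity of all bubbles up to time $s_0$ is exactly $t(s_0)<\infty$; more precisely one shows $\theta(u_k,\eps)\le C\cdot(\text{capacity increment on }[u_k,u_{k+1}))$ plus the variation of the monotone part $O$, both of which telescope to finite quantities. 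An alternative, and perhaps the cleanest for a short appendix, is to avoid estimating $h$ directly and instead use Lemma~\ref{lem:cadlag} together with Lemma~\ref{lem:no_atom}: since $s\mapsto f_s$ is c\`agl\`ad, for any $\delta$ only finitely many $s_j\le s_0$ have jump size $\ge\delta$ (again because the exploration discovers finitely many macroscopic loops), so splitting the sum $\sum_k\theta(u_k,\eps)$ into the contributions near these finitely many large jumps (each bounded by a fixed constant since $h$ is bounded) and the remainder (where $h$ is $\delta$-close to a continuous function, contributing $O(\delta)$ per unit capacity-time), and letting $\eps\to0$ then $\delta\to0$, yields $V<\infty$. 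I would write up the argument via this second route, quoting Lemmas~\ref{lem:no_atom} and~\ref{lem:cadlag} and the finiteness of $t(s_0)$ as the essential inputs, and leaving the elementary estimate of $\theta$ in terms of capacity increments as a short computation.
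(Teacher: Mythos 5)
Your overall reduction is the right one, and it is the same as the paper's: bound $\sum_k\theta(u_k,\eps)$ by the total variation of $h$ on $[0,s_0]$ (oscillations over disjoint intervals sum to at most the total variation, uniformly in $\eps$), and then prove that $h(s)=f_s'(0)^{(8-\kappa)/\kappa}$, which by \eqref{eq:fs} is a smooth function of $\Phi_{s^-}(i)$ staying in a compact subset of $\bbH$, has finite variation. The flaw is in how you establish this last point. You write $\Phi_{s^-}(i)=g_{t(s)}(i)-W_{t(s)}$ with $W_u=O_u+\sqrt\kappa\,X_u$ and assert that the reflected Bessel process $X$ (equivalently, each Bessel bridge) is a.s.\ of finite total variation on compact time intervals. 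This is false: for $d=3-8/\kappa\in(1,2)$ the Bessel process is a continuous semimartingale with nondegenerate quadratic variation, so its total variation is a.s.\ infinite on every nondegenerate interval (and a continuous martingale of bounded variation is constant, so the ``appropriate transformation'' you invoke cannot exist). The point you are missing is that the roughness of $X$ never enters: the time change $t(s)$ only samples capacity times at which the current bridge has just been completed, i.e.\ times at which $X=0$, so $W_{t(s)}=O_{t(s)}$, and $O$ is monotone. Thus $h(s)=H_{t(s)}$ where $H_t$ is a smooth function of $g_t(i)$ (recentered by the monotone $O_t$), and $t\mapsto g_t(i)$ is differentiable in $t$ by the Loewner equation \eqref{eq:loewner} (its It\^o derivative has only a $dt$ term). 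This is exactly the paper's one-line argument: $H_t$ has finite variation on the compact capacity-time interval, and composing with the increasing time change $t(\cdot)$ gives finite variation of $h$ on $[0,s_0]$.

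Your proposed ``alternative route'' does not repair this. Splitting the jumps of the c\`agl\`ad function $s\mapsto f_s$ by size (via Lemma~\ref{lem:cadlag} and Lemma~\ref{lem:no_atom}) and arguing that away from the finitely many large jumps $h$ is $\delta$-close to a continuous function gives no control of $\sum_k\theta(u_k,\eps)$: as $\eps\to 0$ this sum increases to the total variation of $h$, and a continuous (or c\`agl\`ad) function can perfectly well have infinite variation, so ``$\delta$-closeness to a continuous function'' plus boundedness contributes nothing summable. Likewise the proposed bound ``$\theta(u_k,\eps)\le C\cdot$(capacity increment)'' is not justified anywhere and is not how the oscillation of $h$ relates to the half-plane capacities of the bubbles. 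In short: some bounded-variation input is unavoidable, and the correct (and short) source of it is the Loewner ODE for $g_t(i)$ together with monotonicity of $O_t$ and the fact that the Bessel component vanishes at the sampled times, not any variation bound on the Bessel path itself.
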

\begin{proof}
Taking the derivative of \eqref{eq:fs}, we get
$
f_s'(0)=\Im(\Phi_{s^-}(i))/|\Phi_{s^-}(i)|^2.
$
Note that for each $s\in [0, s_0]$, there exists $t(s)\in[0,\sigma]$ such that $\Phi_{s^-} = g_{t(s)}$. It follows that $ h(s)= H_{t(s)}$ where
$
H_t:=\left(\Im(g_t(i))/|g_t(i)|^2\right)^{(8-\kappa)/\kappa}.
$
The process $H_t$ has finite total variation on the finite interval $[0,\sigma]$, because its Ito derivative (using \eqref{eq:loewner}) has only the $dt$ term.
This implies that $h(s)$ has finite total variation on the interval $[0, s_0]$, which implies the lemma.
\end{proof}

\begin{lemma}\label{eq:festimate}
There exists a (random) $C>0$, such that for all $0\le s< t\le s_0$, 
\begin{align*}
\sup_{\theta \in [0,\pi]} |f_t\circ f_s^{-1} (r e^{i\theta})| \ge Cr.
\end{align*}
\end{lemma}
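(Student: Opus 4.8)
\textbf{Plan for the proof of Lemma~\ref{eq:festimate}.}

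The statement we must prove is a uniform lower bound: there is a random constant $C>0$ such that for all $0\le s<t\le s_0$ and all $r>0$,
\begin{equation*}
\sup_{\theta\in[0,\pi]}|f_t\circ f_s^{-1}(re^{i\theta})|\ge Cr.
\end{equation*}
The key observation is that $g_s:=f_t\circ f_s^{-1}$ is a conformal automorphism of $\bbH$ fixing $0$ (since each $f_u$ fixes $0$), and that moreover $g_s(i)=f_t(f_s^{-1}(i))=f_t(\Phi_{s^-}(i))$, because $f_s^{-1}(i)=\Phi_{s^-}(i)$ by the very definition of $f_s$ in~\eqref{eq:fs}; and $f_t(\Phi_{t^-}(i))=i$ while $\Phi_{s^-}(i)$ and $\Phi_{t^-}(i)$ both lie in the image domain. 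So $g_s$ is the Möbius automorphism of $\bbH$ with $g_s(0)=0$ and $g_s(w_s)=i$, where $w_s:=\Phi_{s^-}(i)$; such a map has the explicit form $g_s(z)=\frac{(\Im w_s)\,z}{-(\Re w_s)z+|w_s|^2}$ (up to the sign conventions of~\eqref{eq:fs}). The plan is: (i) record that $g_s$ is this explicit Möbius map; (ii) compute $|g_s(re^{i\theta})|$ explicitly and show that for a suitable choice of $\theta=\theta(s)\in[0,\pi]$ one gets $|g_s(re^{i\theta})|\ge c(w_s)\,r$ with a constant depending only on $w_s$; (iii) bound $c(w_s)$ below uniformly in $s\le s_0$ using that $s\mapsto w_s=\Phi_{s^-}(i)$ stays in a compact subset of $\bbH$.

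For step (ii): an automorphism of $\bbH$ fixing $0$ is $z\mapsto \dfrac{a z}{c z + d}$ with $a,d>0$, $c\in\bbR$, $ad=1$ after normalizing; its behaviour for large $|z|$ is $\sim a/c$ if $c\ne 0$, and for small $|z|$ is $\sim (a/d)z$. More useful here is the scale-invariant statement: $\sup_{\theta\in[0,\pi]}|g_s(re^{i\theta})|$ is comparable, uniformly over $r>0$, to $r$ times a quantity determined by $g_s$ alone. Concretely, write $g_s(z)=\lambda_s\,\frac{z}{1-\mu_s z}$ with $\lambda_s=\Im w_s/|w_s|^2>0$ and $\mu_s=\Re w_s/|w_s|^2\in\bbR$. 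Then $|g_s(re^{i\theta})|=\lambda_s r/|1-\mu_s r e^{i\theta}|$, and choosing $e^{i\theta}$ with $\theta\in[0,\pi]$ so that $\mu_s re^{i\theta}$ has nonnegative imaginary part and $|1-\mu_s re^{i\theta}|$ is \emph{not} large — e.g. $\theta=\pi/2$ gives $|1-i\mu_s r|=\sqrt{1+\mu_s^2 r^2}$, which is $\le 1+|\mu_s|r$, while if $|\mu_s|r$ is large we instead pick $\theta$ near $0$ or $\pi$ to nearly cancel the real part, making $|1-\mu_s re^{i\theta}|$ bounded; a short case analysis (small $|\mu_s|r$ vs. large $|\mu_s|r$) yields $\sup_\theta |g_s(re^{i\theta})|\ge c\,\lambda_s (1+|\mu_s|)^{-1} r$ for an absolute constant $c$, or even more simply one can just take $\theta$ with $e^{i\theta}= i$ when $|\mu_s|r\le 1$ and $\theta$ chosen to make $1-\Re(\mu_s re^{i\theta})$ of order $1$ otherwise. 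This reduces everything to a lower bound on $\lambda_s(1+|\mu_s|)^{-1}$, equivalently on $\Im w_s$ and an upper bound on $|w_s|$ and on $|\Re w_s|/|w_s|^2$.

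For step (iii): by the discussion in Section~\ref{subsec:chordal_sle}, $\Phi_{s^-}=g_{t(s)}$ for some $t(s)\in[0,\sigma]$, and $w_s=g_{t(s)}(i)$ where $(g_t)$ is the chordal Loewner flow of $\eta$ up to the disconnection time $\sigma$. Since $i$ is disconnected from $\infty$ only at time $\sigma$, the point $g_t(i)$ stays in $\bbH$ for all $t<\sigma$, and as $t\to\sigma^-$ we have (along the convergence of $\eta$ to a continuous curve and the fact that $\wt e_{s_0}$ surrounds $i$) that $\Im g_t(i)$ stays bounded below by a strictly positive random constant and $|g_t(i)|$ stays bounded above: indeed $\Im g_t(i)$ equals the conformal radius type quantity $\Im i\cdot |g_t'(i)|\cdots$ is controlled, but the cleanest route is to note that $w_{s_0^-}=\Phi_{s_0^-}(i)\in\bbH$ is a well-defined point (it is the preimage of $i$ under $\psi_{e_{s_0}}$, a prime end of the unexplored component containing $i$), hence $s\mapsto w_s$ extends continuously to $[0,s_0]$ with values in $\bbH$, so its image is a compact subset $K\subset\bbH$. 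Then $\inf_{w\in K}\Im w>0$, $\sup_{w\in K}|w|<\infty$, and $\sup_{w\in K}|\Re w|/|w|^2<\infty$, giving the desired uniform lower bound on $\lambda_s(1+|\mu_s|)^{-1}$ and hence the constant $C$.

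\textbf{Main obstacle.} The only genuinely delicate point is (iii): justifying that $w_s=\Phi_{s^-}(i)$ does not degenerate — i.e. does not approach $\bbR\cup\{\infty\}$ — as $s\uparrow s_0$. This is exactly the statement that the marked point $i$ is not ``swallowed'' before the loop $\ell_{s_0}$ (equivalently $\wt e_{s_0}$, the loop on which the disconnection happens) is fully traced, together with continuity of the flow up to and including that moment. I expect to handle it by invoking the continuity of $\mathrm{SLE}_\kappa(\kappa-6)$ from~\cite{MR3477777} (already cited in Section~\ref{subsec:chordal_sle}) and the fact that $\Phi_{s^-}$, being built from the Loewner maps $g_{t(s)}$ with $t(s)<\sigma$, is a conformal map of a domain still containing $i$; the limiting object $\Phi_{s_0^-}$ is the conformal map onto $\bbH$ of the connected component containing $i$ of $\bbH\setminus\bigcup_{s<s_0}\ep\ell_s$, which is a proper simply connected domain with $i$ in its interior, so $\Phi_{s_0^-}(i)\in\bbH$. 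Everything else is the elementary Möbius computation of steps (i)--(ii), which I would not write out in full.
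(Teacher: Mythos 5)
Your overall route coincides with the paper's: write the composite as an explicit M\"obius automorphism of $\bbH$ fixing $0$, bound $\sup_{\theta}|f_t\circ f_s^{-1}(re^{i\theta})|$ below by a quantity linear in $r$ whose coefficients are controlled by $\Phi_{\cdot^-}(i)$, and then show that $\Phi_{s^-}(i)$ stays in a fixed compact subset of $\bbH$ for $s\le s_0$. But step (i) contains a concrete error: $g_s:=f_t\circ f_s^{-1}$ is \emph{not} the automorphism sending $0\mapsto 0$ and $w_s=\Phi_{s^-}(i)\mapsto i$ --- that map is $f_s$ itself, and indeed the explicit formula you write, $z\mapsto \frac{(\Im w_s)z}{-(\Re w_s)z+|w_s|^2}$, is exactly \eqref{eq:fs}. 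What is true is $g_s(0)=0$ and $g_s^{-1}(i)=f_s(\Phi_{t^-}(i))$, so in the normal form $\lambda z/(1-\mu z)$ the coefficients depend on \emph{both} $\Phi_{s^-}(i)$ and $\Phi_{t^-}(i)$; concretely, with $x_u=\Re\Phi_{u^-}(i)$, $y_u=\Im\Phi_{u^-}(i)$, $r_u=|\Phi_{u^-}(i)|$, one has $f_t\circ f_s^{-1}(z)=\frac{-y_tr_s^2z}{(x_tr_s^2-x_sr_t^2)z-y_sr_t^2}$, which is what the paper computes. As written, your steps (ii)--(iii) therefore prove the estimate for $f_s$ rather than for $f_t\circ f_s^{-1}$. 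The repair is routine --- uniform bounds $\eps\le y_u\le r_u\le \eps^{-1}$ valid for \emph{all} $u\le s_0$ control both sets of coefficients --- but it has to be made.

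Two further points about step (iii), which you rightly single out as the heart of the matter. First, $s\mapsto \Phi_{s^-}(i)$ is not continuous on $[0,s_0]$: it jumps at every time of $J$, so ``extends continuously, hence compact image'' is not correct as stated. The clean argument (the paper's) is that $\Phi_{s^-}(i)=g_{t(s)}(i)$ for the continuous chordal Loewner flow $t\mapsto g_t(i)$, that $g_t(i)\notin\bbR$ for $t<\sigma$, and, crucially, that $t(s_0)<\sigma$ almost surely because the chordal $\mathrm{SLE}_\kappa(\kappa-6)$ begins tracing the final bubble strictly before the disconnection time; compactness of $\{g_t(i):t\in[0,t(s_0)]\}\subset\bbH$ then yields the uniform bounds. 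Your domain/prime-end argument only produces the single limiting value $\Phi_{s_0^-}(i)\in\bbH$ and does not by itself give uniformity in $s$ (one would need a regulated-path argument or, better, the route just described). Second, your claim that $\sup_\theta|g_s(re^{i\theta})|\ge c\,\lambda(1+|\mu|)^{-1}r$ holds ``uniformly over $r>0$'' is false when $\mu\ne0$, since the supremum equals $\lambda r/|1-|\mu|r|$ and tends to the finite limit $\lambda/|\mu|$ as $r\to\infty$; as in the paper's own proof (which treats $r<\eps/2$), you should restrict to a bounded range of $r$, which is all that is needed downstream in Proposition~\ref{prop:ap_ppp}.
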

\begin{proof}
For simplicity, we denote $x_s:= \Re \Phi_{s^-}(i)$, $y_s:= \Im \Phi_{s^-}(i)$ and $r_s:= |\Phi_{s^-}(i)|$. Note that $r_s^2=x_s^2+y_s^2$.
By~\eqref{eq:fs}, we have
\begin{align*}
f_s(z)=\frac{- y_s z}{x_s z - r_s^2}, \quad f_s^{-1}(z)=\frac{r_s^2 z}{z x_s + y_s }, \quad
f_t \circ f_s^{-1}(z)= \frac{-y_t r_s^2 z}{x_t r_s^2 z - x_s r_t^2 z- y_s r_t^2}.
\end{align*}
Fix $r>0$. We can see that $ |f_t\circ f_s^{-1}(r e^{i\theta})| $ achieves its supremum when $\theta=0$ or $\pi$, and 
\begin{align}\label{eq:frnorm}
\sup_{\theta \in [0,\pi]} |f_t \circ f_s^{-1}(r e^{i\theta})|=  \frac{ |y_t |r_s^2 r }{\left| |x_t r_s^2 -x_s r_t^2| r - y_s r_t^2 \right |}.
\end{align}
As we pointed out in the proof of Lemma~\ref{lem:total_variation}, $\Phi_{s^-}(i) =g_{t(s)}(i)$. The process $g_t(i)$ is continuous on $[0,\sigma]$, and $g_t(i)\not \in \bbR$ for all $t< \sigma$. Moreover, $t(s_0)<\sigma$ a.s., because $t(s_0)$ corresponds to the time that the chordal SLE$_\kappa(\kappa-6)$ started to trace the bubble $\ell_{s_0}$. This implies that there exists a (random) $\eps>0$, such that
$\eps^{-1}\ge \Im g_t(i) \ge \eps$ and $\eps^{-1}\ge |g_t(i)|\ge \eps$ for all $t$ in the compact interval $[0, t(s_0)]$.
This implies that
$\eps^{-1}\ge r_s\ge y_s \ge \eps$ for all $s\in[0, s_0]$.
For all $r<\eps/2$, \eqref{eq:frnorm} is at least
\begin{align*}
 \frac{ |y_t |r_s^2 r }{|x_t r_s^2|r +|x_s r_t^2| r +| y_s r_t^2  |} \ge \frac{\eps^{3} r}{\eps^{-2} +\eps^{-3}}.
\end{align*}
The lemma follows by letting $C:=\eps^6/(\eps+1)$.
\end{proof}

\begin{proof}[Proof of Proposition~\ref{prop:ap_ppp}]
We will construct a sequence of (stopped) Poisson point processes $\cN_n$ which converge  to $\cN:=\{e_s, s\in I, s\le \chi(s_0)\}$ in a certain sense that we will specify.
\medbreak

\noindent \textbf{Step 1. Construction of $\cN_n$.} Let $\eps:=1/n$. Let $s_{n,0}:=0$. For all $k\ge 0$, if $s_{n,k} <s_0$, then let $s_{n,k+1}:= s_{n,k} + f_{s_{n,k}}'(0)^{(\kappa-8)/\kappa} \eps$, otherwise let $s_{n, k+1}:= s_0$. For all $k\ge 0$, we have $s_{n,k+1} \in \wt\cF_{s_{n,k}}$. In particular, $s_{n,k}$ is a stopping time and $s_{n,k}\not \in J_0\setminus \{s_0\}$ a.s. Conditionally on $\wt\cF_{s_{n,k}}$ and on $s_{n,k}<s_0$, the process 
$$\wt\cN_{n,k}:=\{\wt e_{s_{n,k}+s}, s_{n,k}+s\in J_0, s_{n,k}+s \le s_{n, k+1}\}$$
is an independent Poisson point process with parameter $\mu_\kappa$, stopped at $(s_0 -s_{n,k})\wedge (s_{n, k+1} -s_{n,k})$. Note that the Poisson point process itself is independent from $\wt\cF_{s_{n,k}}$, but the stopping time depends on $\wt\cF_{s_{n,k}}$. It is clear that $s_{n, k+1} -s_{n,k}$ depends on $\wt\cF_{s_{n,k}}$, and we explain in the next paragraph the dependence of $s_0 -s_{n,k}$ on $\wt\cF_{s_{n,k}}$.

On the event $s_{n,k}< s_0$, let $\Gamma_k$ be the image under $\Psi_{s_{n,k}}$ of the CLE$_\kappa$ restricted to $\ol{O_{\chi(s_{n,k})}}$. Conditionally on $s_{n,k}< s_0$, $\Gamma_k$ is distributed as an CLE$_\kappa$ and is independent from $\wt\cF_{s_{n,k}}$.
The bubbles in $\wt \cN_{n,k}$ determine a sequence of loops $\{\Psi_{s_{n,k}}(\wt \ell_{s+s_{n,k}}), s+s_{n,k}\in J_0, s_{n,k}+s \le s_{n, k+1}\}$ in $\Gamma_k$. The time $s_0 - s_{n,k}$ corresponds to the first time that there is a loop in the previous sequence which disconnects $\Psi_{s_{n,k}}(i)$ from $\infty$ in $\bbH$. 

Let $\wh\cN_{n,k}$ be a new process obtained by rescaling the time of $\wt \cN_{n,k}$ by $f_{s_{n,k}}'(0)^{(8-\kappa)/\kappa}$. This scaling factor is measurable w.r.t.\ $\wt\cF_{s_{n,k}}$, but the Poisson point process which defines $\wt \cN_{n,k}$ (without the stopping time) is independent from $\wt\cF_{s_{n,k}}$. Hence the scaling factor acts like a constant. In particular, $\wh\cN_{n,k}$ is still a Poisson point process,
 with parameter $f_{s_{n,k}}'(0)^{(\kappa-8)/\kappa}\mu_\kappa$, stopped at time $(\wt s_0 - \wt s_{n,k}) \wedge (\wt s_{n, k+1} - \wt s_{n,k})$, where $\wt s_0:=f_{s_{n,k}}'(0)^{(8-\kappa)/\kappa} s_0$ and $\wt s_{n,j}:= f_{s_{n,k}}'(0)^{(8-\kappa)/\kappa} s_{n,j}$ for $j=k, k+1$. Note that
\begin{align*}
\wt s_{n, k+1} -\wt s_{n,k}= f_{s_{n,k}}'(0)^{(8-\kappa)/\kappa} (s_{n, k+1} -s_{n,k})= \eps.
\end{align*}
The time $\tau_k:=\wt s_0 - \wt s_{n,k}$ is the first time that a loop generated by $\wh\cN_{n,k}$ disconnects $\Psi_{s_{n,k}}(i)$ from $\infty$ in $\bbH$ (since $\wh\cN_{n,k}$ contains the same ordered sequence of bubbles as $\wt\cN_{n,k}$, the ordered sequence of loops that it generates is also the same as the one generated by $\wt\cN_{n,k}$, up to a time-rescaling).

Let $\cN_{n,k}$ be another new process obtained by taking the image of $\wh\cN_{n,k}$ under $f_{s_{n,k}}$, namely
\begin{align*}
\cN_{n,k}:=\left\{f_{s_{n,k}} \big(\wt e_{s_{n,k}+ f_{s_{n,k}}'(0)^{(\kappa-8)/\kappa} s}\big),\,\, s_{n,k}+f_{s_{n,k}}'(0)^{(\kappa-8)/\kappa} s\in J_0,  \, s_{n,k}+ f_{s_{n,k}}'(0)^{(\kappa-8)/\kappa} s \le s_{n, k+1} \right\}.
\end{align*}
By conformal covariance \eqref{eq:bubble_cov} of $\mu_\kappa$, we deduce that $\cN_{n,k}$ is still a Poisson point process, with parameter
\begin{align*}
f_{s_{n,k}}'(0)^{(\kappa-8)/\kappa} (f_{s_{n,k}})_*  \mu_\kappa=\mu_\kappa.
\end{align*}
The stopping time for $\cN_{n,k}$ is $\eps \wedge \tau_k$, the same as $\wh \cN_{n,k}$.

Now, let $\cN_n$ be the concatenation of the processes $\cN_{n,0}, \ldots, \cN_{n, k_0}$, where $k_0$ is the biggest $k$ such that $s_{n,k}< s_0$. Then $\cN_n$ is again a Poisson point process, with parameter $\mu_\kappa$ (same parameter as $\cN_{n,k}$) stopped at $T_n:=k_0 \eps + \tau_{k_0}$. 

\medbreak
\noindent \textbf{Step 2. Convergence of $\cN_n$ to $\cN$.} We aim to prove that $\cN_n$ converges to $\cN$, in the following sense. 
\begin{enumerate}[(1)]
\item\label{eq:stop_time} First, we will show that as $n\to \infty$ (recall $\eps=1/n$),
$
T_n \overset{a.s.}\longrightarrow \chi(s_0).
$
\item\label{eq:stop_time_bb} For $\iota>0$, let $A_\iota$ be the set of bubbles $e$ in $\bbH$ rooted at $0$ such that $\sup_{z\in e} |z|>\iota$. Suppose that in $\cN_n$, there are exactly $N_n$ bubbles in $A_\iota$, which occur at times $u_{n,1}< \cdots < u_{n, N_n} \le T_n$, denoted by $e^n_{u_{n,j}}$ for $1\le j \le N_n$. Suppose that in $\cN$, there are exactly $N$ bubbles in $A_\iota$, which occur at times $u_{1}< \ldots < u_{N}\le \chi(s_0)$. We will show that for all $\delta>0$, there exists $n_0>0$, such that for all $n\ge n_0$, we have $N_n=N$ and $|u_{n,j} - u_j|\le \delta$, $d_\mathrm{H}(e^n_{u_{n,j}}, e_{u_j})\le \delta$ for all $1\le j\le N$.
\end{enumerate}

Let us first show \eqref{eq:stop_time}. By definition, we have 
$
|s_0- s_{n, k_0}|\le  f_{s_{n,k_0}}'(0)^{(\kappa-8)/\kappa} \eps.
$
By Lemma~\ref{lem:total_variation}, we have $\sup_{s\in[0, s_0]} f_s'(0)^{(\kappa-8)/\kappa}< V<\infty$. So $|s_0- s_{n, k_0}|\to 0$. Since $\chi$ is continuous, we also have $|\chi(s_0)- \chi(s_{n, k_0})|\to 0$.
On the other hand, by \eqref{eq:chi}, for each $1\le k \le k_0-1$, 
\begin{align}\label{eq:sk}
|\chi(s_{n,k}) - k \eps|=\sum_{j=0}^{k-1} \bigg| \int_{s_{n,j}}^{s_{n, j+1}} f_{u}'(0)^{(8-\kappa)/\kappa} du  -\eps \bigg|
\le  \sum_{j=0}^{k-1} \theta(s_{n,j}, \eps) \eps \le V\eps \to 0,
\end{align}
where the last inequality follows from Lemma~\ref{lem:total_variation}. Combining with $|\chi(s_0)- \chi(s_{n, k_0})|\to 0$, we get $|\chi(s_0) -k_0\eps|\to 0$. Note that $\tau_{k_0}\le \eps$ by definition, hence \eqref{eq:stop_time} follows.

Let us now show \eqref{eq:stop_time_bb}. If we define $N(u)$ to be the number of bubbles in $A_\iota$ in $\cN$ up to time $u$, then $N(u)$ is \emph{c\`adl\`ag} and has jumps of size $1$. In particular, $N=N(\chi(s_0))<\infty$ a.s.
For each $1\le j\le N$, let $\wt u_j:=\chi^{-1}(u_j)$. We have $f_{\wt u_j}(\wt e_{\wt u_j}) =e_{u_j} \in A_\iota$.
Let $z_j \in e_{u_j}$ be the point that achieves  $\sup_{z\in e_{u_j}} |z|$. Let
 \begin{align*}
\delta_0: =\min_{1\le j\le N} \inf\{d_\lambda (z_j, z):  z \in \partial \cB(0, \iota) \cap \bbH  \}. 
\end{align*}
Note that $\delta_0>0$, because $A_\iota$ is an open set w.r.t.\ $d_\mathrm{H}$, and $N<\infty$. 
Since $\wt u_j\in [0, s_0]$ and $s_{n, k_0}\to s_0$ as $n\to\infty$, for $n$ big enough, say $n\ge n_0$, there is a unique integer $k(j) \in [0, k_0-1]$ such that $s_{n,k(j)}\le \wt u_j < s_{n,k(j)+1}$. By Lemma~\ref{lem:cadlag}, for all $\delta \in (0, \delta_0/2)$, there exists $\eps_0$, such that if $\eps \le \eps_0$, then
$$d_\mathrm{H} \big(f_{s_{n,k(j)}}(\wt e_{\wt u_j}), f_{\wt u_j}(\wt e_{\wt u_j})\big)\le \delta \, \text{ for all } 1\le j \le N.$$
It follows that $f_{s_{n,k(j)}}(\wt e_{\wt u_j}) \in A_\iota$ for $1\le j\le N$, hence these bubbles contribute to $N_n$. We increase $n_0$ so that $\eps_{n_0}\le \eps_0$, then $N_n\ge N$ for $n\ge n_0$. By Step 1, for each $1\le j\le N$, $\wt u_j$ corresponds to the time
\begin{align*}
\wh u_j= k(j)  \eps + f_{s_{n,k(j)}}'(0)^{(8-\kappa)/\kappa} (\wt u_j -s_{n, k(j)})
\end{align*}
in the process $\cN_n$. Since $u_j=\chi(\wt u_j)$, similarly to \eqref{eq:sk}, we have
\begin{align*}
|u_j - \wh u_j|\le  |\chi(s_{n, k(j)}) - k(j)  \eps | + \left|\int_{s_{n, k(j)}}^{\wt u_j}  f_{u}'(0)^{(8-\kappa)/\kappa} du -f_{s_{n,k}}'(0)^{(8-\kappa)/\kappa} (\wt u_j -s_{n, k(j)})  \right| \le V \eps.
\end{align*}
By increasing $n_0$, we can let $|u_j - \wh u_j|\le \delta$. 

To complete the proof of \eqref{eq:stop_time_bb}, it remains to show that $N_n\le N$ for $n$ big enough.
For the sake of contradiction, suppose that for all $n\ge n_0$, there is a time $\wh u^n$ in $[0, T_n]\setminus \{\wh u_1, \ldots, \wh u_N\}$ such that the bubble in $\cN_n$ at $\wh u^n$ is in $A_\iota$. Let $k_n$ be the unique integer in $[0, k_0]$ such that $k_n  \eps \le \wh u^n < (k_n +1) \eps$. Let
\begin{align}\label{eq:relationu}
\wt u^n:=s_{n, k_n} + f_{s_{n,k_n}}'(0)^{(\kappa-8)/\kappa} (\wh u^n - k_n \eps).
\end{align}
Then $ f_{s_{n,k_n}} (\wt e_{\wt u^n}) \in A_\iota$, and $u^n:=\chi (\wt u^n) \in [0, \chi(s_0)]\setminus \{u_1, \ldots, u_N\}$. 
This means that $e_{u^n}\not \in A_\iota$, namely $\sup_{z\in e_{u^n}}|z|\le \iota$. 
We have
\begin{align*}
e_{u^n}= f_{\wt u^n} (\wt e_{\wt e_n})= f_{\wt u^n} \circ f_{s_{n, k_n}}^{-1}\left(f_{s_{n,k_n}} (\wt e_{\wt u^n})\right).
\end{align*}
Since $ f_{s_{n,k_n}} (\wt e_{\wt u^n}) \in A_\iota$,  by Lemma~\ref{eq:festimate}, we have $e_{u^n} \in A_{C \iota}$. Suppose that $n(j)$ is a subsequence such that $\wt u^{n(j)}\to \wt u$ as $j\to \infty$, then $u^{n(j)}\to u=\chi(\wt u)$. 
The subsequence $(e_{u^{n(j)}})_{j \ge 0}$ must be stablized after some $j_0$, equal to $e_{u}$ for some $u\in I$, because otherwise, there would be infinitely many bubbles in $\cN$ in $A_{C\iota}$, leading to a contradiction. Note that $e_u \in A_{C\iota} \setminus A_\iota$. By \eqref{eq:relationu}, we also have $s_{n(j), k_{n(j)}} \to \wt u$. By Lemma~\ref{lem:cadlag}, for all $\delta_1>0$, there exists $\eps_1>0$, such that if $\eps\le \eps_1$, then for all $j\ge j_0$ (where we have possibly increased $j_0$ so that $n(j_0)\ge 1/\eps_1$),
\begin{align*}
d_{\mathrm H} \big(f_{s_{n(j),k_{n(j)}}}(\wt e_{\wt u}), e_{u} \big)=d_{\mathrm H} \big(f_{s_{n(j),k_{n(j)}}}(\wt e_{\wt u}), f_{\wt u} (\wt e_{\wt u}) \big)\le \delta_1.
\end{align*}
Since $ f_{s_{n(j),k_{n(j)}}}(\wt e_{\wt u})  \in A_\iota$, and $e_{u}\not \in A_\iota$, the only possibility is that $\sup_{z\in e_u} |z| =\iota$. However, this is impossible due to Lemma~\ref{lem:no_atom}.
This proves that $N_n\le N$ for $n\ge n_0$, and completes the proof of \eqref{eq:stop_time_bb}.

\medbreak
\noindent \textbf{Step 3. Conclusion.} In Step 1, we have constructed $\cN_n$, which is a Poisson point process with parameter $\mu_\kappa$, stopped at $T_n$. The result of Step 2 implies that $\cN_n$ converges a.s.\ to $\cN$ as point processes. By definition, $\cN$ is stopped at $\chi(s_0)$, and $\chi(s_0)$ is a stopping time for $\cN$. To see the Poisson point process nature of $\cN$, one can first construct the unstopped versions of $\cN_n$ and $\cN$, by concatenating to each of them (after their respective stopping time) a same independent Poisson point process with parameter $\mu_\kappa$. Then the unstopped versions of $\cN_n$ (which are Poisson point processes) converge a.s.\ to the unstopped version of $\cN$, which is consequently also Poisson point processes with parameter $\mu_\kappa$.
This completes the proof of Proposition~\ref{prop:ap_ppp}.
\end{proof}

Now, we are ready to conclude the proof of Description \ref{descrip}.
\begin{proof}[Proof of Description \ref{descrip}]
Proposition~\ref{prop:ap_ppp} shows that $\{e_s, s\in I, s\le \chi(s_0)\}$ is a Poisson point process stopped at the first time that a loop in $\{\ell_s, s\in I\}$ disconnects $i$ from $\infty$ in $\bbH$. If $\ep{e_{\chi(s_0)}}$ encircles $i$, then the stopping time $\tau$ in Description \ref{descrip} is equal to $\chi(s_0)$. Otherwise, let $D_i$ be the connected component containing $i$ of $\bbH\setminus e_{\chi(s_0)}$. Let $f$ be the conformal map from $D_i$ onto $\bbH$ with $f(i)=i$ and $f'(i)>0$.
Conditionally on $\{e_s, s\in I, s\le \chi(s_0)\}$, the image under $f$ of the CLE$_\kappa$ restricted to $\ol D_i$ is equal to an independent CLE$_\kappa$ in $\bbH$. We can then iterate the same exploration process for this new CLE$_\kappa$. The concatenation of these iterated processes construct the Poisson point process in Description \ref{descrip}.
\end{proof}

\section{Proof of Proposition~\ref{prop:weld6}} \label{sec:app}
We first recall the construction of the  $\SLE_{\gamma^2}(\gamma^2-4)$ bubble measure via a limiting procedure due to~\cite{Z22}. The $\SLE_{\gamma^2}(\gamma^2-4)$ bubble measure is defined by $\mathsf{m}_{\gamma^2} := c(\gamma) \lim_{\epsilon \rightarrow 0} \epsilon^{\frac{\gamma^2}{2}-1}\mathsf{m}^\epsilon_{\gamma^2}$, where $\mathsf{m}^\epsilon_{\gamma^2}$ is the law of a chordal ${\rm SLE}_{\gamma^2}(\gamma^2-4)$ from $0$ to $-\epsilon$ in $\mathbb{H}$ with the force point being $0^+$. We choose the constant $c(\gamma)$ such that $\mathsf{m}_{\gamma^2}[E_i] = 1$, where $E_i$ is the event that $i$ is enclosed by the loop sampled from $\mathsf{m}_{\gamma^2}$. Let $\widehat{\mathsf{m}}_{\gamma^2}$ be the restriction of $\mathsf{m}_{\gamma^2}$ on the event $E_i$.
The conformal welding  result for the $\SLE_{\gamma^2}(\gamma^2-4)$ bubble measure can be stated as follows. 
\begin{proposition}\label{prop:Wu}
Proposition~\ref{prop:weld6} holds with $\epmumu$ replaced by $\widehat{\mathsf{m}}_{\gamma^2}$. 
\end{proposition}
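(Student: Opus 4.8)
The plan is to recall the proof from \cite{Wu23}, which transplants the conformal-welding argument of \cite[Section~4]{ARS21}—originally carried out for SLE loops on a closed surface—to the half-plane setting of SLE bubbles, with the welding of quantum triangles from \cite[Theorem~1.1]{ASY22} serving as the finite-level input. Recall that $\mathsf m_{\gamma^2}=c(\gamma)\lim_{\epsilon\to0}\epsilon^{\gamma^2/2-1}\mathsf m_{\gamma^2}^\epsilon$, where $\mathsf m_{\gamma^2}^\epsilon$ is the law of a chordal $\SLE_{\gamma^2}(\gamma^2-4)$ curve $\eta^\epsilon$ from $0$ to $-\epsilon$ in $\bbH$ with force point $0^+$. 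The first step is a welding identity valid for each fixed $\epsilon>0$: decomposing $\bbH$ along $\eta^\epsilon$, the region on the force-point side ($D_\infty^\epsilon$, carrying $0^+$ on its boundary) is a two-pointed quantum disk of weight $(\gamma^2-4)+2=\gamma^2-2$, while the bubble-interior side ($D^\epsilon_{\ep\eta}$, bounded by $\eta^\epsilon$ and $[-\epsilon,0]$) is a weight-$2$ two-pointed quantum disk; decorating the latter by a quantum-typical bulk point and invoking \cite[Theorem~1.1]{ASY22} together with the disk-to-Liouville-field dictionary (Lemmas~\ref{lem:embedding-2}, \ref{lem:typ-pt}, and \ref{lem:QT}) yields, up to an explicit positive constant, an identity of the schematic form
\begin{equation*}
A_\epsilon\cdot {\rm LF}_{\bbH}^{(\gamma,i),(\beta_\epsilon,0),(\beta_\epsilon,-\epsilon)}\times \mathsf m_{\gamma^2}^\epsilon={\rm Weld}\big(\mathcal M_2^{\rm disk}(\gamma^2-2),\,\mathcal M_{2,\bullet}^{\rm disk}(2)\big),
\end{equation*}
where the bulk point of $\mathcal M_{2,\bullet}^{\rm disk}(2)$ is sent to $i$, the two disks are welded along the arc separating them, and the boundary insertion weight $\beta_\epsilon$ and the constant $A_\epsilon$ are dictated by the weights of the disks involved.

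The second step is to multiply both sides by $\epsilon^{\gamma^2/2-1}$ and let $\epsilon\to0$. On the left, $\epsilon^{\gamma^2/2-1}\mathsf m_{\gamma^2}^\epsilon\to c(\gamma)^{-1}\mathsf m_{\gamma^2}$ by the very definition of the bubble measure, while the two boundary marked points $0$ and $-\epsilon$ collide and the two Liouville insertions sitting there fuse: by the boundary Liouville operator product expansion (including the reflection term, which enters because the naive sum of the two insertion weights exceeds $Q$), the merged field converges—after absorbing the exact $\epsilon$-power produced by the fusion, which cancels against $\epsilon^{\gamma^2/2-1}$ and against the $\epsilon$-dependence of $A_\epsilon$—to ${\rm LF}_{\bbH}^{(\gamma,i),(\beta_0,0)}$ with $\beta_0=\tfrac4\gamma-\gamma$. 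This is the exact analog of the key limiting step in \cite[Section~4]{ARS21}, and it is the technical heart of the argument: one must simultaneously track the $\epsilon$-power, control the multiplicative constant, and justify interchanging the limit with the welding operation, which is defined through a disintegration over the welding-interface length. In parallel, on the right-hand side the two welded disks become, in the limit, $\mathcal M_2^{\rm disk}(\gamma^2-2)$ and a weight-$2$ beaded surface with its two boundary marked points merged into one, i.e.\ ${\rm QD}_{1,1}$.

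Finally, restricting both sides to the event $E_i$ that $i$ is enclosed by the outer boundary of the bubble—equivalently, that $i$ lies in the bubble-interior region—turns $\mathsf m_{\gamma^2}$ into $\widehat{\mathsf m}_{\gamma^2}$ and forces the bulk point $i$ to sit inside the $\mathcal M_{2,\bullet}^{\rm disk}(2)$-piece; using the thin-disk decomposition (Lemma~\ref{lem:thin-thick}) to recognize the glued inner surface as ${\rm QD}_{1,1}$ and collecting all the constants accumulated along the way into a single positive constant $C_1$, one obtains ${\rm LF}_{\bbH}^{(\gamma,i),(\frac4\gamma-\gamma,0)}\times\widehat{\mathsf m}_{\gamma^2}=C_1\,{\rm Weld}(\mathcal M_2^{\rm disk}(\gamma^2-2),{\rm QD}_{1,1})$, which is Proposition~\ref{prop:Wu}; here $C_1\in(0,\infty)$ is the same constant whose value is computed later in Section~\ref{sec:sol-C1}. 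The main obstacle, as indicated, is the $\epsilon\to0$ fusion limit together with its precise constant; the remaining steps are a careful but essentially routine bookkeeping of the already-established disk/field identifications.
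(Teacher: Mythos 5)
Your overall architecture (start from the chordal welding \eqref{eq:chordal}, pass to a limit in which the two endpoints of the interface merge, identify the limiting field) matches the paper's, but the mechanism you propose for the limit is different from the one in Appendix~\ref{sec:app}, and it has a genuine gap. The paper does \emph{not} fix the second endpoint at the Euclidean position $-\epsilon$ and fuse two boundary insertions via an operator product expansion. Instead, in Step~1 it embeds ${\rm Weld}(\mathcal M_2^{\rm disk}(\gamma^2-2),\mathcal M_2^{\rm disk}(2))$ with a bulk typical point at $i$ and the starting point at $0$ (so the endpoint $y$ is random), and conditions on the \emph{quantum length} of the arc $(y,0)$ lying in $[\eps,2\eps]$. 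This quantum-surface conditioning is what simultaneously converts the weight-$2$ marked piece into ${\rm QD}_{1,1}$, makes the interface law converge to $\widehat{\mathsf m}_{\gamma^2}$ \emph{with the field frozen} (which is how the product structure $\mathbb F\times\widehat{\mathsf m}_{\gamma^2}$ on the left-hand side is obtained --- a point your write-up never addresses), and leaves an a~priori unknown field law $\mathbb F$. The insertion $\beta_0=\frac4\gamma-\gamma$ is then pinned down in a separate Step~2 by comparing with the quantum-triangle welding of \cite{ASY22}, not by any fusion of insertions.

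The concrete error in your route is the claimed output of the fusion. At fixed $\epsilon$ the welded surface is $\mathcal M_2^{\rm disk}(\gamma^2)$ with marked points at $0$ and $-\epsilon$, so the two boundary insertions are each $\frac2\gamma$ (i.e.\ $\beta_\epsilon=\frac2\gamma$). The direct channel of the boundary OPE gives $\frac2\gamma+\frac2\gamma=\frac4\gamma$ and the reflected channel gives $2Q-\frac4\gamma=\gamma$; neither equals $\beta_0=\frac4\gamma-\gamma$. In weight terms, naively adding the two vertices gives weight $2\gamma^2$, whereas the correct merged vertex has weight $2(\gamma^2-2)+2=2\gamma^2-2$: the deficit of $2$ comes precisely from the two marked points of the weight-$2$ disk collapsing into a \emph{single} weight-$2$ point as the quantum length of the arc between them shrinks (compare $|{\rm QD}_{0,2}(\ell)|=\ell^2|{\rm QD}(\ell)|$ versus $|{\rm QD}_{0,1}(\ell)|=\ell|{\rm QD}(\ell)|$). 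A field-side OPE of the two insertions cannot see this surface-side collapse, so the step you identify as the ``technical heart'' would produce the wrong insertion (and the wrong $\epsilon$-power), and the argument as written does not close. If you want to salvage the $\epsilon\to0$ strategy, you must replace the OPE by the quantum-length conditioning of the paper (or otherwise justify why the merged weight is $2\gamma^2-2$), and you must separately establish the independence of the limiting curve and field.
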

Proposition~\ref{prop:Wu} is a special case of 
 Wu's result~\cite[Theorem 1.1]{Wu23} on the conformal welding for the $\SLE_{\gamma^2}(\rho)$ bubble measure. The argument for this case works for all $\SLE_{\gamma^2}(\rho)$ with $\gamma\in (0,2)$ and $\rho\in(-2,\gamma^2/2-2)$, which we sketch it in Section~\ref{subsec:sketch} for completeness. In Section~\ref{subsec:collapse}, we prove that $\epmumu=\widehat{\mathsf{m}}_{\gamma^2}$ which will conclude the proof of Proposition~\ref{prop:weld6}. 
\subsection{Sketch of the proof of Proposition~\ref{prop:Wu}}\label{subsec:sketch}
The proof consists of two steps. 

{\bf Step 1.} This step is a straightforward adaptation of an argument from~\cite[Section 4]{ARS21} with $\mathcal M^{\rm disk}_2(\gamma^2/2)$ there  replaced by $\mathcal M^{\rm disk}_2(\gamma^2-2)$.  We start with the conformal welding for chordal $\SLE_{\gamma^2}(\gamma^2-4)$ from~\cite{AHS20}. For $W=\gamma^2-2$ and some $C\in (0,\infty)$ we have
\begin{equation}\label{eq:chordal}
\mathcal M^{\rm disk}_2(W + 2) \times {\rm chordal} \mbox{ } {\rm SLE}_{\gamma^2}(W-2) = C {\rm Weld}(\mathcal M^{\rm disk}_2(W), \mathcal M^{\rm disk}_2(2)).    
\end{equation}
We first add a marked point to the sample from $\mathcal M^{\rm disk}_2(2)$ and embed the resulting welded surface to $\bbH$ with the starting point of the interface at 0 and the bulk marked point at $i$. Then the endpoint $y$ of the interface becomes a random point lying on $\mathbb R$. Conditioning on $y$, the law of the interface is the chordal $\SLE_{\gamma^2}(\gamma^2-4)$
from $0$ to $y$ conditioned on the event that $i$ is on its left.  Now fix $\eps$ small, we restrict to the event that the quantum length of the interval $(y,0)$ is between $\eps$ and $2\eps$. At the quantum surface level, this conditioning will not affect the sample from $\mathcal M^{\rm disk}_2(\gamma^2-2)$, but change the other surface to a sample from $\rm QD_{1,1}$. Hence the resulting picture is $\rm Weld(\mathcal M^{\rm disk}_2(\gamma^2-2), \rm QD_{1,1})$. On the other hand, the point $y$ converges to $0$ hence the law of the interface will converge to $\widehat{\mathsf{m}}_{\gamma^2}$. Moreover, this convergence holds even after we freeze the randomness of the field. This way, we get that the conformal welding of a sample from $\rm Weld(\mathcal M^{\rm disk}_2(\gamma^2-2), \rm QD_{1,1})$ can be described by $(\bbH, i,0, \phi, \eta)$, where $(\phi,\eta)$ is sampled from the product measure $\mathbb F\times \widehat{\mathsf{m}}_{\gamma^2}$, for some law $\mathbb F$ on fields.

{\bf Step 2.} To conclude the proof, we must show that $\mathbb F= C_3 {\rm LF}_\mathbb{H}^{(\gamma,i),(\frac{4}{\gamma}-\gamma,0)} $ for some constant $C_3\in (0,\infty)$. We first consider the conformal welding  $\rm Weld(\mathcal M^{\rm disk,\bullet}_2(\gamma^2-2), \rm QD_{0,1})$.
Let $(S_1,S_2,S_3)$ be a sample from $\mathcal M^{\rm disk}_2(\gamma^2-2) \times \mathcal M^{\rm disk}_2(2)\times \mathcal M^{\rm disk}_2(\gamma^2-2)$. Then by de-weighting the quantum area of the middle surface in Lemma~\ref{lem:thin-thick}, the concatenation of  $(S_1,S_2,S_3)$ has the law of $\mathcal M^{\rm disk}_2(\gamma^2-2)$ with a marked bead from the counting measure on all of its beads. Let us denote its law by $\mathcal M^{\rm disk,\bullet}_2(\gamma^2-2)$. As in the proof of Lemma~\ref{lem:weld-decompose},  $\rm Weld(\mathcal M^{\rm disk,\bullet}_2(\gamma^2-2), \rm QD_{0,1})$ equals $\rm Weld(\mathcal M^{\rm disk}_2(\gamma^2-2), \mathcal M^{\rm disk}_2(2),\mathcal M^{\rm disk}_2(\gamma^2-2),\widetilde{\rm QD}_{0,3})$. By repeatedly applying the conformal welding result for quantum triangles from~\cite{ASY22} (see the proof of Lemma~\ref{lem:weld4}), we obtain a quantum triangle with three weights given by $2\gamma^2-2, \gamma^2+2,\gamma^2+2$. The three corresponding log singularities are $\beta_0=\frac{4}{\gamma}-\gamma, 0,0$, respectively. 
Once we embed the welded surface to $(\bbH,-1,0,1)$ with $0$ being the $\beta_0$-insertion, we get a sample from $\LF^{(\beta_0,0)}_{\bbH}\times m$, where $m$ is a measure on simple curves that pass $-1,0,1$.
Now we consider  the conformal welding  $\rm Weld(\mathcal M^{\rm disk,\bullet}_2(\gamma^2-2), \rm QD_{1,1})$. Once it is embedded to $(\bbH,-1,0,1)$, the law of the field $\phi$, the curve $\eta$ and the additional bulk marked point $z$ is given by $ 1_{E_{z,\eta}} e^{\gamma \phi(z)}dz \cdot \LF^{(\beta_0,0)}_{\bbH}(d \phi) \times m(d\eta)$, where $E_{z,\eta}$ is the event that $z$ is in the region bounded by $\eta$ that corresponds to $\rm QD_{1,1}$.  By the coordinate change formula of Liouville field (Lemma~\ref{lem:coordinate-change}), if we embed a sample of  $\rm Weld(\mathcal M^{\rm disk,\bullet}_2(\gamma^2-2), {\rm QD}_{1,1})$ to $(\bbH, i,0)$ where $i$ is the bulk insertion and $0$ has the $\beta_0$-log singularity, then we get $\LF^{(\gamma, i),(\beta_0, 0)}\times m^{\bullet,\bullet}$, where $m^{\bullet,\bullet}$ is a measure on simple curves on $\bbH$ with two additional marked points on $\partial \bbH$.

Comparing the conclusions from Step 1 and Step 2, we see that modulo a multiplicative constant the measure $m^{\bullet,\bullet}$ can be obtained  from $\widehat{\mathsf{m}}_{\gamma^2}$ as follows. First sample $\eta$ from  $\widehat{\mathsf{m}}_{\gamma^2}$, then choose a connected component $B$ of $\bbH \setminus \eta$ according to the counting measure, and finally choose  the two endpoints of  $\partial B\cap \bbH$ as the two additional marked points. On the other hand, the field measure $\mathbb F$  must equal $C_3\LF_{\mathbb{H}}^{(\gamma, i), (\beta_0, 0)}$ for some $C_3>0$. The finiteness of $C_3$ can be seen from the fact that the event that the total boundary length lies in $[1,2]$ is finite on both sides. This concludes the proof of Proposition~\ref{prop:Wu}.

\begin{remark}\label{rmk:CR-formula} 
Recall the setting of Lemma~\ref{lem:er2}, which asserts $\bbE [|\psi_{e_\tau}'(i)|^\zeta]<\infty$ for $\zeta<1-2/\kappa$. 
We now recall the exact formula for $\bbE [|\psi_{e_\tau}'(i)|^\zeta]$ from \cite[Proposition 7.12]{Wu23} and \cite[Subsection 4.3]{ARSZ23} which is based on the conformal welding result in Proposition~\ref{prop:Wu}. Let $\beta = \frac{4}{\gamma} - \gamma \in (0,\gamma)$. In our notation,  the second equation above Lemma 4.3 in \cite{ARSZ23} states that for any $Q - \frac{\beta}{2} < \alpha < Q$,
\begin{equation*}
    \bbE [|\psi_{e_\tau}'(i)|^{2 - 2\Delta_\alpha}] =   \frac{ c_{\gamma, \beta} \Gamma(\frac{\gamma \alpha}{2} - \frac{\gamma^2}{4}) \Gamma_{\frac{\gamma}{2}}(\alpha ) }{ \Gamma(\frac{\gamma\alpha}{2}+\frac{\gamma\beta}{4}-\frac{\gamma^2}{4}) \Gamma_{\frac{\gamma}{2}}(\alpha \pm \frac{\beta}{2} ) }  \frac{ (\alpha - Q)  \Gamma_{\frac{\gamma}{2}}(Q -\alpha ) \Gamma_{\frac{\gamma}{2}}(\alpha + \frac{\beta}{2} - Q) }{  \Gamma( \frac{2}{\gamma}(Q -\alpha) +1 ) \Gamma( \frac{2}{\gamma}( \frac{\beta}{2} +\alpha - Q ) ) \Gamma_{\frac{\gamma}{2}}(2Q - \alpha - \frac{\beta}{2}) } .
\end{equation*}
Here $\Gamma_{\frac{\gamma}{2}}(x)$ is the double Gamma function and $\Gamma_{\frac{\gamma}{2}}(\alpha \pm \frac{\beta}{2} )$ means $\Gamma_{\frac{\gamma}{2}}(\alpha + \frac{\beta}{2} ) \Gamma_{\frac{\gamma}{2}}(\alpha - \frac{\beta}{2} )$. It is straightforward to verify that the right-hand side is analytic for $\alpha \in (\frac{\gamma}{2}, Q)$ (when $\alpha = Q - \frac{\beta}{2}$, both $\Gamma( \frac{2}{\gamma}( \frac{\beta}{2} +\alpha - Q ) )$ and $\Gamma_{\frac{\gamma}{2}}(\alpha + \frac{\beta}{2} - Q)$ have a single pole so the right-hand side is still analytic). Therefore, the analytic continuation argument in Lemma~\ref{lem:analtic0} yields that $\bbE [|\psi_{e_\tau}'(i)|^\zeta] < \infty$ for all $\zeta < 2 - 2 \Delta_{\frac{\gamma}{2}} = 1 - \frac{2}{\kappa}$. This proves Lemma~\ref{lem:er2}.

\end{remark}

\subsection{Duality for the ${\rm SLE}_\kappa$ bubble measure}
\label{subsec:collapse}
We now prove $\epmumu=\widehat{\mathsf{m}}_{\gamma^2}$ as stated in Lemma~\ref{lem:A.4}, which is an instance of the ${\rm SLE}$ duality. 
The proof follows from the limiting construction of ${\rm SLE}$ bubble measures and the classical duality for chordal SLE. Recall from \eqref{eq:sle_bubble} that $\mu_\kappa = c(\kappa) \lim_{\epsilon \rightarrow 0} \epsilon^{-1+8/\kappa} \mu_\kappa^\epsilon$ where $\mu_\kappa^\epsilon$ denotes the probability law of a chordal ${\rm SLE}_\kappa$ curve from $0$ to $-\epsilon$. The constant $c(\kappa)$ is chosen such that $\mu_\kappa[E_i] = 1$ where $E_i$ denotes the event that $i$ is enclosed by the outer boundary of the loop. The first step is to show that their outer boundaries also converge in law, as shown in Lemma~\ref{lem:appendix-outer-boundary}. 
\begin{lemma}
\label{lem:appendix-outer-boundary}
    For $\epsilon>0$, let $\widehat{\mu}_\kappa^\epsilon$ be the conditioning law of $\mu_\kappa^\epsilon$ on the event that the outer boundary of a loop encloses $i$. Then, $\lim_{\epsilon \rightarrow 0} \overline{\widehat{\mu}_\kappa^\epsilon} = \epmumu$. 
\end{lemma}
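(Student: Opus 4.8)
The statement to prove is Lemma~\ref{lem:appendix-outer-boundary}: the outer boundaries of the chordal $\SLE_\kappa$ curves from $0$ to $-\epsilon$, conditioned to enclose $i$, converge in law to $\overline{\widehat{\mu}}_\kappa$ as $\epsilon\to 0$. The strategy is to transfer the known convergence $\mu_\kappa^\epsilon\to\mu_\kappa$ (in the appropriate sense, after the $\epsilon^{-1+8/\kappa}$ rescaling) together with convergence of the masses $\mu_\kappa^\epsilon[E_i]$ to the conditional laws, and then to upgrade from convergence of the full curves to convergence of the outer boundaries by a continuity argument for the outer-boundary map. First I would recall from~\cite{Z22} the precise sense in which $\epsilon^{-1+8/\kappa}\mu_\kappa^\epsilon\to \mu_\kappa$: this is convergence of measures on curves (in the metric $d$ up to reparametrization, or in the Hausdorff-distance topology on curves), locally in the complement of a neighborhood of the root. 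Since $\mu_\kappa[E_i]=1$ and $E_i$ is (essentially) an open event in the curve topology — $i$ lies strictly inside the region enclosed by the outer boundary — one gets $\epsilon^{-1+8/\kappa}\mu_\kappa^\epsilon[E_i]\to \mu_\kappa[E_i]=1$, provided one checks that $\mu_\kappa$ assigns zero mass to the boundary of $E_i$, i.e. to loops whose outer boundary passes exactly through $i$. This last point is a standard a.s.\ statement: for a Poisson point process of bubbles or directly from absolute-continuity/conformal-covariance considerations (cf.\ Lemma~\ref{lem:no_atom} in Appendix~\ref{app:descrip}), the outer boundary almost surely avoids the fixed point $i$, and similarly for the approximating measures uniformly in $\epsilon$ small.

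Next I would combine these two facts. Conditioning is division by the mass of $E_i$: $\widehat{\mu}_\kappa^\epsilon = \mu_\kappa^\epsilon[\,\cdot\mid E_i] = \mu_\kappa^\epsilon(\cdot\cap E_i)/\mu_\kappa^\epsilon[E_i]$, and $\widehat{\mu}_\kappa = \mu_\kappa(\cdot\cap E_i)$ since $\mu_\kappa[E_i]=1$ already. So $\widehat{\mu}_\kappa^\epsilon = \big(\epsilon^{-1+8/\kappa}\mu_\kappa^\epsilon\big)(\cdot\cap E_i)\big/\big(\epsilon^{-1+8/\kappa}\mu_\kappa^\epsilon[E_i]\big)$, and by the portmanteau theorem applied to the event $E_i$ whose topological boundary is null for the limit, the numerator converges to $\mu_\kappa(\cdot\cap E_i)=\widehat{\mu}_\kappa$ while the denominator converges to $1$; hence $\widehat{\mu}_\kappa^\epsilon\to\widehat{\mu}_\kappa$ weakly as probability measures on curves. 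The remaining task is to push this forward under the outer-boundary map $\eta\mapsto\ep\eta$. I would argue that this map is continuous $\widehat{\mu}_\kappa$-almost everywhere in the curve topology: the outer boundary of $\eta$ is the boundary of the connected component of $\bbC\setminus\eta$ containing $\infty$ (equivalently $\partial D_\infty$), and for $\mu_\kappa$-typical $\eta$ — which has no "double pinch points" forming positive-measure separations and, by the $6$-arm-type a priori estimates available for $\SLE_\kappa$, $\kappa\in(4,8)$, has the property that small perturbations of $\eta$ in $d$ produce small perturbations of $D_\infty$ in the Carathéodory/Hausdorff sense — the map $\eta\mapsto\ep\eta$ is continuous. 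Then the continuous-mapping theorem gives $\overline{\widehat{\mu}_\kappa^\epsilon}=(\ep{\cdot})_*\widehat{\mu}_\kappa^\epsilon \to (\ep{\cdot})_*\widehat{\mu}_\kappa = \overline{\widehat{\mu}}_\kappa$, which is the claim.

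The main obstacle I expect is the last step: establishing almost-sure continuity of the outer-boundary map under the appropriate topology. One has to rule out that the approximating curves develop microscopic "fjords" near the outer boundary that survive in the limit, or conversely that the limit curve has pinch points where the outer boundary is unstable. This is exactly the kind of issue handled in the proof of Lemma~\ref{lem:scaling_limit} in the discrete setting; in the continuum it should follow from the SLE duality structure — the outer boundary $\ep\eta$ is itself an $\SLE_{16/\kappa}$-type curve, hence simple, so it does not self-touch — together with the Markovian/conformal-covariance properties of $\mu_\kappa$ and the fact that for $\SLE_\kappa$ with $\kappa\in(4,8)$, two macroscopic arcs of the curve cannot come arbitrarily close without touching (the continuum analog of the $6$-arm bound $\pi_6(r,r')\lesssim (r/r')^{2+\beta}$). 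An alternative, possibly cleaner route that sidesteps the delicate topology is to use the Poisson-point-process description of Appendix~\ref{app:descrip}: realize both $\widehat{\mu}_\kappa^\epsilon$ and $\widehat{\mu}_\kappa$ through Bessel bridges/excursions and quote~\cite{Z22} directly for the joint convergence of the bubble and its outer boundary. I would present the argument via portmanteau plus the continuous-mapping theorem as the primary line, and invoke the duality/a-priori-estimate input precisely at the point where almost-sure continuity of $\eta\mapsto\ep\eta$ is needed.
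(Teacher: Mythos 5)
The core of your argument is the claim that the outer-boundary map $\eta\mapsto\ep\eta$ is continuous $\widehat{\mu}_\kappa$-a.e.\ in the curve topology, so that the continuous-mapping theorem upgrades weak convergence of the conditioned curves to convergence of their outer boundaries. This is precisely where the proposal has a genuine gap, and you acknowledge it without closing it. For $\kappa\in(4,8)$ the limiting curve almost surely touches itself (and $\bbR$) at a dense set of points, and at any self-touch point the map $\eta\mapsto\ep\eta$ is \emph{not} continuous: one can perturb $\eta$ uniformly by an arbitrarily small amount so as to open the touch point into a narrow passage, which attaches a formerly enclosed bubble to the unbounded component and changes $\ep\eta$ by a macroscopic amount in the Hausdorff sense. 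Uniform (Skorokhod) closeness of $\eta^\epsilon$ to $\eta$ therefore gives no control by itself; one needs a probabilistic input showing that the specific approximations $\mu_\kappa^\epsilon$ do not produce such near-touches that close up only in the limit. The ingredients you point to (duality, a ``continuum six-arm bound'', or quoting \cite{Z22} for joint convergence of the bubble together with its outer boundary) are not developed into such an argument, and the last of these is not available off the shelf.

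The paper's proof avoids this stability question altogether by a re-coupling trick based on the domain Markov property: condition both $\mu_\kappa^\epsilon$ and $\mu_\kappa$ on hitting $\partial B_r(0)$, run the curves to their first hitting times of $\partial B_r(0)$, and map the unbounded complementary components to $\bbH$ sending the tip, $\infty$, and the target point to $0,\infty,-1$. Given the initial segments, both remaining pieces are chordal $\SLE_\kappa$ from $0$ to $-1$, so they can be re-sampled to be the \emph{same} curve; the two outer boundaries are then images of one fixed set under conformal maps $g^\epsilon$ and $g$ that converge uniformly on compacts away from $B_r(0)$, so $\ep{\eta^\epsilon}\to\ep\eta$ away from $B_r(0)$, and one sends $r\to0$ at the end (the conditioning on enclosing $i$ passes through this coupling). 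If you want to rescue your route, you would need to prove an a.s.\ stability statement of this strength; the cleaner fix is to adopt the Markov-property coupling, which makes the topological structure of the two curves identical away from the root and reduces the whole problem to convergence of conformal maps.
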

\begin{proof}
    Fix small $r>0$.  Let $\mu_{\kappa,r}^{\epsilon}$ (resp. $\mu_{\kappa,r}$)  be the probability measure obtained by conditioning $\mu_\kappa^{\epsilon}$ (resp. $\mu_\kappa$) on the event that the curve intersects $\partial B_r(0)$. 
    By \eqref{eq:sle_bubble}, we have $\lim_{\epsilon \rightarrow 0} \mu_{\kappa,r}^{\epsilon} = \mu_{\kappa,r}$. Let $\eta$ 
    and $\eta^\epsilon$ be curves sampled from $\mu_{\kappa,r}^{\epsilon}$ and  $\mu_{\kappa,r}$, respectively. For concreteness we parameterize $\eta$ and $\eta^\epsilon$   by $[0,1]$. By Skorohod's representation theorem, we can find a coupling such that $\eta^\epsilon$ converges to $\eta$ almost surely as $\epsilon$ tends to zero in the uniform topology.
    
    Let $\tau^\epsilon$ (resp. $\tau$) be the first time that $\eta^\epsilon$ (resp. $\eta$) hits $\partial B_r(0)$. 
Let $g^\eps$ (resp. $g$) be the conformal map from the infinite connected component of $\bbH \setminus \eta^\epsilon[0,\tau^\epsilon]$ (resp. $\bbH \setminus \eta[0,\tau]$) to $\mathbb{H}$ such that $(\eta^\epsilon(\tau^\epsilon), \infty, -\epsilon )$ (resp. $(\eta(\tau), \infty, 0)$) is mapped to $(0,\infty,-1)$. Then $g^\eps$ converges to $g$ uniformly on any compact set away from $B_r(0)$. 
Let $\eta_1^\eps=g^\eps(\eta^\eps[\tau^\eps,1])$ and $\eta_1=g(\eta[\tau,1])$. By domain Markov property, given $\eta^\epsilon[0,\tau^\epsilon]$ and $\eta[0,\tau]$, both $\eta^\eps_1$ and $\eta_1$  has the same law as a chordal ${\rm SLE}_\kappa$ curve on $\bbH$ from $0$ to $-1$. Therefore, we can re-sample $\eta^\eps[\tau^\eps,1]$ and $\eta[\tau,1]$ so that in our coupling $\eta_1^\eps=\eta_1$. 

Let $\ep\eta$ (resp. $\ep\eta^\eps$) be the outer boundary of $\eta$ (resp. $\eta^\eps$). Under our coupling, we must have $\ep\eta^\eps$ converge to 
 $\ep\eta$ within any compact set away from $B_r(0)$. This remains true if we further condition on the event that $i$ is enclosed by $\ep\eta$ or $\ep\eta^\eps$.
 Now sending $r$ to 0 we have $\lim_{\epsilon \rightarrow 0} \overline{\widehat{\mu}_\kappa^\epsilon} = \epmumu$. \qedhere
\end{proof}
\begin{lemma}
\label{lem:A.4}
    We have $\epmumu=\widehat{\mathsf{m}}_{\gamma^2}$.
\end{lemma}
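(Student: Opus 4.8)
The goal is to identify $\epmumu$ (the law of the outer boundary of a loop sampled from $\mumu$, i.e.\ from $\mu_\kappa$ restricted to the event $E_i$ that $i$ is enclosed by the outer boundary) with $\widehat{\mathsf{m}}_{\gamma^2}$ (the $\SLE_{\gamma^2}(\gamma^2-4)$ bubble measure restricted to the analogous event $E_i$), where $\gamma^2 = 16/\kappa$, so that $\gamma^2-4 = 16/\kappa - 4 \in (-2, \gamma^2/2 - 2)$ for $\kappa\in(4,8)$. The plan is to reduce this to the classical SLE duality statement that the outer boundary of a chordal $\SLE_\kappa$ curve (for $\kappa>4$) from a boundary point to another boundary point is, after reparametrization, a chordal $\SLE_{16/\kappa}(\tfrac{16}{\kappa}-4)$-type curve between the same two points, with the appropriate force point configuration; see e.g.~\cite{MR2494457,MR3477777,MSW-non-simple} for the relevant versions. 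Combining this with the two limiting constructions --- $\mu_\kappa = c(\kappa)\lim_{\eps\to 0}\eps^{-1+8/\kappa}\mu_\kappa^\eps$ and $\mathsf{m}_{\gamma^2} = c(\gamma)\lim_{\eps\to 0}\eps^{\gamma^2/2 - 1}\mathsf{m}^\eps_{\gamma^2}$, noting $-1+8/\kappa = \gamma^2/2 - 1$ --- will give the claim up to a multiplicative constant, and the normalizations $\mu_\kappa[E_i]=1=\mathsf{m}_{\gamma^2}[E_i]$ then pin down the constant to be $1$.

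Concretely, first I would invoke the chordal SLE duality at finite $\eps$: for each $\eps>0$, if $\eta^\eps$ is a chordal $\SLE_\kappa$ curve in $\bbH$ from $0$ to $-\eps$ (the law $\mu_\kappa^\eps$), then its outer boundary $\ep\eta^\eps$ agrees in law, modulo monotone reparametrization, with a chordal $\SLE_{16/\kappa}(16/\kappa - 4)$-type curve in $\bbH$ from $0$ to $-\eps$ with force point at $0^+$ --- this is exactly the curve underlying $\mathsf{m}^\eps_{\gamma^2}$. Care is needed here about which boundary arc the outer boundary traces and about the precise force-point placement; the correct statement is that $\ep\eta^\eps$ (the boundary of the connected component of $\bbH\setminus\eta^\eps$ containing $\infty$, or equivalently the component whose boundary meets $\partial\bbH$ on the correct side) is the relevant dual curve. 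I would cite the version of SLE duality already used implicitly in the construction of CLE via branching $\SLE_\kappa(\kappa-6)$ in~\cite{MR2494457} and the treatment in~\cite{MSW-non-simple}.

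Next I would pass to the limit. By Lemma~\ref{lem:appendix-outer-boundary}, $\overline{\widehat\mu_\kappa^\eps}$, the law of $\ep\eta^\eps$ conditioned on $\{\ep\eta^\eps$ encloses $i\}$, converges to $\epmumu$ as $\eps\to 0$ (after the renormalization, since $\widehat\mu_\kappa^\eps$ is $\mu_\kappa^\eps$ conditioned on an event whose $\mu_\kappa$-renormalized mass is positive and finite, the conditional law is the renormalized restriction in the limit). On the other hand, applying the finite-$\eps$ duality identifies $\overline{\widehat\mu_\kappa^\eps}$ with the law of $\mathsf{m}^\eps_{\gamma^2}$ conditioned on $E_i$, and since $\mathsf{m}_{\gamma^2} = c(\gamma)\lim \eps^{\gamma^2/2 - 1}\mathsf{m}^\eps_{\gamma^2}$ with $\mathsf{m}_{\gamma^2}[E_i]=1$, the same conditioning-and-limit argument (an exact analog of the proof of Lemma~\ref{lem:appendix-outer-boundary}, or one can cite~\cite{Z22} for the convergence of $\mathsf{m}^\eps_{\gamma^2}$ together with the fact that outer-boundary conditioning passes to the limit) shows this conditioned limit equals $\widehat{\mathsf{m}}_{\gamma^2}$. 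Since both limits describe the same sequence of measures, $\epmumu = \widehat{\mathsf{m}}_{\gamma^2}$; the two normalization conditions force the constant in front to be $1$, and Proposition~\ref{prop:weld6} follows by combining this with Proposition~\ref{prop:Wu}.

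The main obstacle, I expect, is the careful bookkeeping in the finite-$\eps$ duality statement: getting the force point of the dual $\SLE_{16/\kappa}(16/\kappa-4)$ process and the identification of which component's boundary constitutes the ``outer boundary'' exactly right, and checking that the conditioning event $E_i$ is compatible on both sides (i.e.\ that ``$i$ enclosed by $\ep\eta^\eps$'' translates to ``$i$ enclosed by the dual curve''). Everything else --- the renormalization exponents matching, the convergence of conditioned laws, and the constant being $1$ --- is either already established in the excerpt (Lemma~\ref{lem:appendix-outer-boundary}, the normalization conventions) or a routine repetition of that argument for $\mathsf{m}^\eps_{\gamma^2}$.
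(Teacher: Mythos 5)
There is a genuine gap, and it sits exactly at the step you flag as ``bookkeeping'' but then treat as routine. The finite-$\eps$ duality statement you invoke is not correct as stated: the outer boundary of a chordal $\SLE_\kappa$ from $0$ to $-\eps$ is \emph{not} distributed as the chordal $\SLE_{\gamma^2}(\gamma^2-4)$ from $0$ to $-\eps$ with a single force point at $0^+$ (the curve underlying $\mathsf{m}^\eps_{\gamma^2}$). The correct duality (this is what the paper cites from Zhan's work) gives a chordal $\SLE_{\gamma^2}(\tfrac{\gamma^2}{2}-2,\,\gamma^2-4)$ from $0$ to $-\eps$ with \emph{two} force points, at $0^-$ and $0^+$; call its law $\mathsf{n}^\eps_{\gamma^2}$. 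The extra force point at $0^-$ with weight $\tfrac{\gamma^2}{2}-2\neq 0$ changes the process, so your chain of identifications $\overline{\widehat{\mu}^\eps_\kappa}=\widehat{\mathsf{m}}^\eps_{\gamma^2}$ at finite $\eps$ is false, and the conclusion $\epmumu=\widehat{\mathsf{m}}_{\gamma^2}$ does not follow merely from matching renormalization exponents and normalizations. What your plan actually yields (combined with Lemma~\ref{lem:appendix-outer-boundary}) is $\lim_{\eps\to 0}\widehat{\mathsf{n}}^\eps_{\gamma^2}=\epmumu$, and the remaining task --- showing that the conditioned two-force-point family and the conditioned one-force-point family $\widehat{\mathsf{m}}^\eps_{\gamma^2}$ have the \emph{same} $\eps\to 0$ limit --- is the substantive content of the lemma, not a side check.

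The paper closes precisely this gap with a coupling argument from imaginary geometry: given $\eta^\eps$ sampled from $\mathsf{m}^\eps_{\gamma^2}$, one draws an auxiliary chordal $\SLE_{\gamma^2}(-\tfrac{\gamma^2}{2},\tfrac{\gamma^2}{2}-2)$ from $0$ to $-\eps$ in the region between $\eta^\eps$ and $[-\eps,0]$; after conditioning on $i$ being enclosed by $\eta^\eps$, the auxiliary curve has diameter tending to $0$ in probability, and mapping it out by a conformal map fixing $0$ and $i$ turns $\eta^\eps$ into a curve with law $\widehat{\mathsf{n}}^{x_\eps}_{\gamma^2}$ for some $x_\eps\to 0$. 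Comparing the two descriptions as $\eps\to 0$ identifies $\lim_\eps\widehat{\mathsf{m}}^\eps_{\gamma^2}$ with $\lim_\eps\widehat{\mathsf{n}}^\eps_{\gamma^2}=\epmumu$. Without an argument of this kind (or some other proof that the $0^-$ force point becomes invisible in the bubble limit after conditioning on enclosing $i$), your proposal does not establish the lemma; everything else in your outline (the matching exponents $-1+8/\kappa=\gamma^2/2-1$, the use of Lemma~\ref{lem:appendix-outer-boundary}, and the normalization fixing the constant to $1$) is fine and agrees with the paper.
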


\begin{proof}
By ${\rm SLE}$ duality (see \cite{Z08}), we see that the outer boundary of $\mu_\kappa^\epsilon$ has the same law as a chordal ${\rm SLE}_{\gamma^2}(\frac{1}{2}\gamma^2-2, \gamma^2-4)$ from $0$ to $-\epsilon$ with the force points being $0^-$ and $0^+$. We will denote its law by $\mathsf{n}_{\gamma^2}^\epsilon$. 
Let $\widehat{\mathsf{n}}_{\gamma^2}^\epsilon$ be the conditioning law of $\mathsf{n}_{\gamma^2}^\epsilon$ to the event that $i$ is enclosed by the curve. Then $\overline{\mu_\kappa^\epsilon} =  \mathsf{n}_{\gamma^2}^\epsilon$ and thus $\overline{\widehat{\mu}_\kappa^\epsilon} = \widehat{\mathsf{n}}_{\gamma^2}^\epsilon$. Combined with Lemma~\ref{lem:appendix-outer-boundary}, we see that $\lim_{\epsilon \rightarrow 0} \widehat{\mathsf{n}}_{\gamma^2}^\epsilon = \epmumu$. 

Recall that $\mathsf{m}^\epsilon_{\gamma^2}$ is the law of a chordal ${\rm SLE}_{\gamma^2}(\gamma^2-4)$ from $0$ to $-\epsilon$ with the force point being $0^+$.    Although $\mathsf{n}_{\gamma^2}^\epsilon \neq \mathsf{m}_{\gamma^2}^\epsilon$ due to the additional force point at $0^-$, we can  coupling them together and prove  $\epmumu=\widehat{\mathsf{m}}_{\gamma^2}$ by sending $\eps\to 0$.  Fix small $\epsilon>0$. 
Let $(\eta^\eps,\eta^\eps_0)$ be a pair of $\SLE$ curves where the marginal law of $\eta^\eps$ is chordal 
${\rm SLE}_{\gamma^2}(\gamma^2-4)$ on $\bbH$ from $0$ to $-\epsilon$ with the force point being $0^+$, and conditioning on $\eta^\eps$, the law of $\eta^\eps_0$ is a chordal ${\rm SLE}_{\gamma^2}( -\frac{1}{2}\gamma^2, \frac{1}{2}\gamma^2-2)$ from $0$ to $-\epsilon$ in the domain bounded by $\eta^\eps$ and $[-\eps, 0]$. The existence of such a pair is ensured by imaginary geometry~\cite{MR3477777}. Now we condition on the event that $i$ is enclosed by $\eta^\epsilon$, and we denote this conditioned probability measure by $\mathbb P^\eps$. The marginal law of $\eta^\eps$ under $\mathbb P^\eps$ is $\widehat{\mathsf{m}}_{\gamma^2}^\epsilon$. Moreover, under  $\mathbb P^\eps$ the diameter of $\eta^\eps_0$ converges to 0 in probability. 

Let $G^\eps$ be the event that $i$ is not enclosed by $\eta^\eps_0$. Then $\mathbb P^\eps[G^\eps]=1-o_\eps(1)$. 
On this event, let $f^\eps$ be the conformal map from the unbounded connected component of $\bbH\setminus \eta^\eps_0$ to $\bbH$ such that $f^\eps(0)=0$ and $f^\eps(i)=i$. Let $x_\eps=f^\eps(-\eps)<0$ and $\eta^\eps_2=f^\eps(\eta^\eps)$. Then the conditional law of $\eta^\eps_2$ is $\widehat{\mathsf{n}}_{\gamma^2}^{x_\epsilon}$. Sending $\eps\to 0$, this conditional law of $\eta^\eps_2$ weakly converges to $\epmumu$. On the other hand, since $\mathbb P^\eps[G^\eps]=1-o_\eps(1)$ and the diameter of $\eta^\eps_0$ converges to 0, the law of  $\eta^\eps$ under $\mathbb P^\eps$ also converges to the same limit $\epmumu$.  Therefore $\lim_{\eps\to 0}\widehat{\mathsf{m}}_{\gamma^2}^\epsilon= \epmumu$. Since $\lim_{\eps\to 0}\widehat{\mathsf{m}}_{\gamma^2}^\epsilon=\widehat{\mathsf{m}}_{\gamma^2}$, we conclude.
\end{proof}

\begin{proof}[Proof of Proposition~\ref{prop:weld6}]
    Combine Proposition~\ref{prop:Wu} and Lemma~\ref{lem:A.4}.
\end{proof}

\bibliographystyle{alpha}
\bibliography{theta}

\newcommand{\etalchar}[1]{$^{#1}$}
\begin{thebibliography}{CDCH{\etalchar{+}}14}

\bibitem[AB99]{AB99}
M.~Aizenman and A.~Burchard.
\newblock H\"{o}lder regularity and dimension bounds for random curves.
\newblock {\em Duke Math. J.}, 99(3):419--453, 1999.

\bibitem[ADA99]{ADA99}
Michael Aizenman, Bertrand Duplantier, and Amnon Aharony.
\newblock Path-crossing exponents and the external perimeter in 2{D} percolation.
\newblock {\em Phys. Rev. Lett.}, 83:1359--1362, 1999.

\bibitem[AHS17]{AHS-sphere}
Juhan Aru, Yichao Huang, and Xin Sun.
\newblock Two perspectives of the 2{D} unit area quantum sphere and their equivalence.
\newblock {\em Comm. Math. Phys.}, 356(1):261--283, 2017.

\bibitem[AHS20]{AHS20}
M.~Ang, N.~Holden, and X.~Sun.
\newblock Conformal welding of quantum disks.
\newblock {\em arXiv preprint arXiv:2009.08389}, 2020.

\bibitem[AHS21]{AHS21}
Morris Ang, Nina Holden, and Xin Sun.
\newblock Integrability of {SLE} via conformal welding of random surfaces.
\newblock {\em arXiv preprint arXiv:2104.09477}, 2021.

\bibitem[ARS21]{ARS21}
Morris Ang, Guillaume Remy, and Xin Sun.
\newblock {FZZ} formula of boundary liouville cft via conformal welding.
\newblock {\em arXiv preprint arXiv:2104.09478}, 2021.

\bibitem[ARSZ23]{ARSZ23}
Morris {Ang}, Guillaume {Remy}, Xin {Sun}, and Tunan {Zhu}.
\newblock {Derivation of all structure constants for boundary Liouville CFT}.
\newblock {\em arXiv e-prints}, page arXiv:2305.18266, May 2023.

\bibitem[AS21]{AS21}
Morris Ang and Xin Sun.
\newblock Integrability of the conformal loop ensemble.
\newblock {\em arXiv preprint arXiv:2107.01788}, 2021.

\bibitem[ASY22]{ASY22}
Morris Ang, Xin Sun, and Pu~Yu.
\newblock Quantum triangles and imaginary geometry flow lines.
\newblock {\em arXiv preprint arXiv:2211.04580}, 2022.

\bibitem[Bef08]{MR2435854}
Vincent Beffara.
\newblock The dimension of the {SLE} curves.
\newblock {\em Ann. Probab.}, 36(4):1421--1452, 2008.

\bibitem[BH57]{BH57}
S.~R. Broadbent and J.~M. Hammersley.
\newblock Percolation processes. {I}. {C}rystals and mazes.
\newblock {\em Proc. Cambridge Philos. Soc.}, 53:629--641, 1957.

\bibitem[BN11]{BN11}
Vincent Beffara and Pierre Nolin.
\newblock On monochromatic arm exponents for 2{D} critical percolation.
\newblock {\em Ann. Probab.}, 39(4):1286--1304, 2011.

\bibitem[BPZ84]{BPZ84}
A.~A. Belavin, A.~M. Polyakov, and A.~B. Zamolodchikov.
\newblock Infinite conformal symmetry in two-dimensional quantum field theory.
\newblock {\em Nuclear Phys. B}, 241(2):333--380, 1984.

\bibitem[Car92]{Ca92}
John~L. Cardy.
\newblock Critical percolation in finite geometries.
\newblock {\em J. Phys. A}, 25(4):L201--L206, 1992.

\bibitem[CDCH{\etalchar{+}}14]{CDHC-Ising}
Dmitry Chelkak, Hugo Duminil-Copin, Cl\'{e}ment Hongler, Antti Kemppainen, and Stanislav Smirnov.
\newblock Convergence of {I}sing interfaces to {S}chramm's {SLE} curves.
\newblock {\em C. R. Math. Acad. Sci. Paris}, 352(2):157--161, 2014.

\bibitem[CDCH16]{CDCH16}
Dmitry Chelkak, Hugo Duminil-Copin, and Cl\'{e}ment Hongler.
\newblock Crossing probabilities in topological rectangles for the critical planar {FK}-{I}sing model.
\newblock {\em Electron. J. Probab.}, 21:Paper No. 5, 28, 2016.

\bibitem[Cer21]{Cerle-disk}
Baptiste Cercl\'{e}.
\newblock Unit boundary length quantum disk: a study of two different perspectives and their equivalence.
\newblock {\em ESAIM Probab. Stat.}, 25:433--459, 2021.

\bibitem[CN06]{CN06}
Federico Camia and Charles~M. Newman.
\newblock Two-dimensional critical percolation: the full scaling limit.
\newblock {\em Comm. Math. Phys.}, 268(1):1--38, 2006.

\bibitem[DBN04]{DBN04}
Youjin Deng, Henk W.~J. Bl\"ote, and Bernard Nienhuis.
\newblock Backbone exponents of the two-dimensional q-state potts model: A monte carlo investigation.
\newblock {\em Phys. Rev. E}, 69:026114, Feb 2004.

\bibitem[DCHN11]{DCHN11}
Hugo Duminil-Copin, Cl\'{e}ment Hongler, and Pierre Nolin.
\newblock Connection probabilities and {RSW}-type bounds for the two-dimensional {FK} {I}sing model.
\newblock {\em Comm. Pure Appl. Math.}, 64(9):1165--1198, 2011.

\bibitem[DCMT21]{DCMT21}
Hugo Duminil-Copin, Ioan Manolescu, and Vincent Tassion.
\newblock Planar random-cluster model: fractal properties of the critical phase.
\newblock {\em Probab. Theory Related Fields}, 181(1-3):401--449, 2021.

\bibitem[DCST17]{DCST17}
Hugo Duminil-Copin, Vladas Sidoravicius, and Vincent Tassion.
\newblock Continuity of the phase transition for planar random-cluster and {P}otts models with {$1 \leq q \leq 4$}.
\newblock {\em Comm. Math. Phys.}, 349(1):47--107, 2017.

\bibitem[dG76]{DG76}
P.~G. de~Gennes.
\newblock On a relation between percolation theory and the elasticity of gels.
\newblock {\em J. Physique Lett.}, 37(1):1--2, 1976.

\bibitem[DHS13]{DHS13}
Michael Damron, Jack Hanson, and Philippe Sosoe.
\newblock Subdiffusivity of random walk on the 2{D} invasion percolation cluster.
\newblock {\em Stochastic Process. Appl.}, 123(9):3588--3621, 2013.

\bibitem[DKK{\etalchar{+}}20]{DC-invariance}
Hugo {Duminil-Copin}, Karol {Kajetan Kozlowski}, Dmitry {Krachun}, Ioan {Manolescu}, and Mendes {Oulamara}.
\newblock {Rotational invariance in critical planar lattice models}.
\newblock {\em arXiv e-prints}, page arXiv:2012.11672, December 2020.

\bibitem[DKRV16]{DKRV}
Fran\c{c}ois David, Antti Kupiainen, R\'{e}mi Rhodes, and Vincent Vargas.
\newblock Liouville quantum gravity on the {R}iemann sphere.
\newblock {\em Comm. Math. Phys.}, 342(3):869--907, 2016.

\bibitem[DMS21]{DMS14}
Bertrand Duplantier, Jason Miller, and Scott Sheffield.
\newblock Liouville quantum gravity as a mating of trees.
\newblock {\em Ast\'{e}risque}, 427, 2021.

\bibitem[DS11]{DS11}
Bertrand Duplantier and Scott Sheffield.
\newblock Liouville quantum gravity and {KPZ}.
\newblock {\em Invent. Math.}, 185(2):333--393, 2011.

\bibitem[Dup00]{MR1740371}
Bertrand Duplantier.
\newblock Conformally invariant fractals and potential theory.
\newblock {\em Phys. Rev. Lett.}, 84(7):1363--1367, 2000.

\bibitem[FKZD22]{FKZD22}
Sheng Fang, Da~Ke, Wei Zhong, and Youjin Deng.
\newblock Backbone and shortest-path exponents of the two-dimensional $q$-state potts model.
\newblock {\em Phys. Rev. E}, 105:044122, Apr 2022.

\bibitem[GA86]{GA86}
T.~Grossman and A.~Aharony.
\newblock Structure and perimeters of percolation clusters.
\newblock {\em Journal of Physics A: Mathematical and General}, 19(12):L745, 1986.

\bibitem[GHM20]{GHM-KPZ}
Ewain Gwynne, Nina Holden, and Jason Miller.
\newblock An almost sure {KPZ} relation for {SLE} and {B}rownian motion.
\newblock {\em Ann. Probab.}, 48(2):527--573, 2020.

\bibitem[GHS23]{GHS-survey}
Ewain Gwynne, Nina Holden, and Xin Sun.
\newblock Mating of trees for random planar maps and {L}iouville quantum gravity: a survey.
\newblock In {\em Topics in statistical mechanics}, volume~59 of {\em Panor. Synth\`eses}, pages 41--120. Soc. Math. France, Paris, 2023.

\bibitem[GKRV20]{GKRV-sphere}
Colin Guillarmou, Antti Kupiainen, R{\'e}mi Rhodes, and Vincent Vargas.
\newblock Conformal bootstrap in liouville theory.
\newblock {\em arXiv preprint arXiv:2005.11530}, 2020.

\bibitem[GL22]{GL22}
Shirshendu Ganguly and James~R. Lee.
\newblock Chemical subdiffusivity of critical 2{D} percolation.
\newblock {\em Comm. Math. Phys.}, 389(2):695--714, 2022.

\bibitem[GM]{GM}
Christophe Garban and Jean-Christophe Mourrat.
\newblock On the backbone exponent.
\newblock Unpublished, detailed slides available at \url{https://indico.in2p3.fr/event/12461/contributions/9699/attachments/8109/10089/Garban.pdf}.

\bibitem[GMS22]{GMS-RWRE}
Ewain Gwynne, Jason Miller, and Scott Sheffield.
\newblock An invariance principle for ergodic scale-free random environments.
\newblock {\em Acta Math.}, 228(2):303--384, 2022.

\bibitem[Gra99]{G99}
Peter Grassberger.
\newblock Conductivity exponent and backbone dimension in 2-d percolation.
\newblock {\em Physica A: Statistical Mechanics and its Applications}, 262(3):251--263, 1999.

\bibitem[HJS20]{HJS-4pt}
Yifei He, Jesper~Lykke Jacobsen, and Hubert Saleur.
\newblock Geometrical four-point functions in the two-dimensional critical {$Q$}-state {P}otts model: the interchiral conformal bootstrap.
\newblock {\em J. High Energy Phys.}, (12):Paper No. 019, 59, 2020.

\bibitem[HRV18]{HRV18}
Yichao Huang, R\'{e}mi Rhodes, and Vincent Vargas.
\newblock Liouville quantum gravity on the unit disk.
\newblock {\em Ann. Inst. Henri Poincar\'{e} Probab. Stat.}, 54(3):1694--1730, 2018.

\bibitem[HS23]{HS-Cardy}
Nina Holden and Xin Sun.
\newblock Convergence of uniform triangulations under the {C}ardy embedding.
\newblock {\em Acta Math.}, 230(1):93--203, 2023.

\bibitem[JZJ02]{JZJ02}
Jesper~Lykke Jacobsen and Paul Zinn-Justin.
\newblock A transfer matrix for the backbone exponent of two-dimensional percolation.
\newblock {\em Journal of Physics A: Mathematical and General}, 35(9):2131, feb 2002.

\bibitem[Kes80]{Ke80}
Harry Kesten.
\newblock The critical probability of bond percolation on the square lattice equals {${1\over 2}$}.
\newblock {\em Comm. Math. Phys.}, 74(1):41--59, 1980.

\bibitem[Kes86a]{Ke86a}
Harry Kesten.
\newblock The incipient infinite cluster in two-dimensional percolation.
\newblock {\em Probab. Theory Related Fields}, 73(3):369--394, 1986.

\bibitem[Kes86b]{Ke86b}
Harry Kesten.
\newblock Subdiffusive behavior of random walk on a random cluster.
\newblock {\em Ann. Inst. H. Poincar\'{e} Probab. Statist.}, 22(4):425--487, 1986.

\bibitem[Kes87]{Ke87}
Harry Kesten.
\newblock Scaling relations for {$2$}{D}-percolation.
\newblock {\em Comm. Math. Phys.}, 109(1):109--156, 1987.

\bibitem[KL22]{KL-Fuzzy}
Laurin {K{\"o}hler-Schindler} and Matthis {Lehmkuehler}.
\newblock {The fuzzy Potts model in the plane: Scaling limits and arm exponents}.
\newblock {\em arXiv e-prints}, page arXiv:2209.12529, September 2022.

\bibitem[KRV20]{KRV-DOZZ}
Antti Kupiainen, R\'{e}mi Rhodes, and Vincent Vargas.
\newblock Integrability of {L}iouville theory: proof of the {DOZZ} formula.
\newblock {\em Ann. of Math. (2)}, 191(1):81--166, 2020.

\bibitem[LG13]{LeGall-BM}
Jean-Fran\c{c}ois Le~Gall.
\newblock Uniqueness and universality of the {B}rownian map.
\newblock {\em Ann. Probab.}, 41(4):2880--2960, 2013.

\bibitem[LPSA94]{LPS94}
Robert Langlands, Philippe Pouliot, and Yvan Saint-Aubin.
\newblock Conformal invariance in two-dimensional percolation.
\newblock {\em Bull. Amer. Math. Soc. (N.S.)}, 30(1):1--61, 1994.

\bibitem[LSW01a]{LSW01a}
Gregory~F. Lawler, Oded Schramm, and Wendelin Werner.
\newblock Values of {B}rownian intersection exponents. {I}. {H}alf-plane exponents.
\newblock {\em Acta Math.}, 187(2):237--273, 2001.

\bibitem[LSW01b]{LSW01b}
Gregory~F. Lawler, Oded Schramm, and Wendelin Werner.
\newblock Values of {B}rownian intersection exponents. {II}. {P}lane exponents.
\newblock {\em Acta Math.}, 187(2):275--308, 2001.

\bibitem[LSW02]{LSW02}
Gregory~F. Lawler, Oded Schramm, and Wendelin Werner.
\newblock One-arm exponent for critical 2{D} percolation.
\newblock {\em Electron. J. Probab.}, 7:no. 2, 13, 2002.

\bibitem[LT71]{LT71}
B.~J. Last and D.~J. Thouless.
\newblock Percolation theory and electrical conductivity.
\newblock {\em Phys. Rev. Lett.}, 27:1719--1721, 1971.

\bibitem[Mie13]{Mier-BM}
Gr\'{e}gory Miermont.
\newblock The {B}rownian map is the scaling limit of uniform random plane quadrangulations.
\newblock {\em Acta Math.}, 210(2):319--401, 2013.

\bibitem[MS16]{MR3477777}
Jason Miller and Scott Sheffield.
\newblock Imaginary geometry {I}: interacting {SLE}s.
\newblock {\em Probab. Theory Related Fields}, 164(3-4):553--705, 2016.

\bibitem[MSW14]{MSWCLEgasket}
Jason Miller, Nike Sun, and David~B. Wilson.
\newblock The {H}ausdorff dimension of the {CLE} gasket.
\newblock {\em Ann. Probab.}, 42(4):1644--1665, 2014.

\bibitem[MSW17]{MR3708206}
Jason Miller, Scott Sheffield, and Wendelin Werner.
\newblock C{LE} percolations.
\newblock {\em Forum Math. Pi}, 5:e4, 102, 2017.

\bibitem[MSW21]{MSW-non-simple}
Jason Miller, Scott Sheffield, and Wendelin Werner.
\newblock Non-simple conformal loop ensembles on {L}iouville quantum gravity and the law of {CLE} percolation interfaces.
\newblock {\em Probab. Theory Related Fields}, 181(1-3):669--710, 2021.

\bibitem[MSW22]{MSW-simple}
Jason Miller, Scott Sheffield, and Wendelin Werner.
\newblock Simple conformal loop ensembles on {L}iouville quantum gravity.
\newblock {\em Ann. Probab.}, 50(3):905--949, 2022.

\bibitem[Nak07]{MR2417689}
Kenji Nakagawa.
\newblock Application of {T}auberian theorem to the exponential decay of the tail probability of a random variable.
\newblock {\em IEEE Trans. Inform. Theory}, 53(9):3239--3249, 2007.

\bibitem[Nar04]{N04}
W\l adys\l~aw Narkiewicz.
\newblock {\em Elementary and analytic theory of algebraic numbers}.
\newblock Springer Monographs in Mathematics. Springer-Verlag, Berlin, third edition, 2004.

\bibitem[Niv56]{N56}
Ivan Niven.
\newblock {\em Irrational numbers}.
\newblock The Carus Mathematical Monographs, No. 11. Mathematical Association of America; distributed by John Wiley \& Sons, Inc., New York, N.Y., 1956.

\bibitem[Nol08]{No08}
Pierre Nolin.
\newblock Near-critical percolation in two dimensions.
\newblock {\em Electron. J. Probab.}, 13:no. 55, 1562--1623, 2008.

\bibitem[Pol81]{Polyakov81}
A.~M. Polyakov.
\newblock Quantum geometry of bosonic strings.
\newblock {\em Phys. Lett. B}, 103(3):207--210, 1981.

\bibitem[PY82]{MR0656509}
Jim Pitman and Marc Yor.
\newblock A decomposition of {B}essel bridges.
\newblock {\em Z. Wahrsch. Verw. Gebiete}, (no. 4,):425--457, 1982.

\bibitem[Rei00]{Re00}
David Reimer.
\newblock Proof of the van den {B}erg-{K}esten conjecture.
\newblock {\em Combin. Probab. Comput.}, 9(1):27--32, 2000.

\bibitem[Rem20]{Remy-Duke}
Guillaume Remy.
\newblock The {F}yodorov-{B}ouchaud formula and {L}iouville conformal field theory.
\newblock {\em Duke Math. J.}, 169(1):177--211, 2020.

\bibitem[RZ22]{RZ22}
Guillaume Remy and Tunan Zhu.
\newblock Integrability of boundary {L}iouville conformal field theory.
\newblock {\em Comm. Math. Phys.}, 395(1):179--268, 2022.

\bibitem[SA92]{SA92}
Dietrich Stauffer and Amnon Aharony.
\newblock {\em Introduction to Percolation Theory}.
\newblock Taylor \& Francis, London, 2nd edition, 1992.

\bibitem[Sch00]{Sc00}
Oded Schramm.
\newblock Scaling limits of loop-erased random walks and uniform spanning trees.
\newblock {\em Israel J. Math.}, 118:221--288, 2000.

\bibitem[She09]{MR2494457}
Scott Sheffield.
\newblock Exploration trees and conformal loop ensembles.
\newblock {\em Duke Math. J.}, 147(1):79--129, 2009.

\bibitem[She16]{Shef-zipper}
Scott Sheffield.
\newblock Conformal weldings of random surfaces: {SLE} and the quantum gravity zipper.
\newblock {\em Ann. Probab.}, 44(5):3474--3545, 2016.

\bibitem[{She}22]{Sheffield-ICM}
Scott {Sheffield}.
\newblock {What is a random surface?}
\newblock {\em arXiv e-prints}, page arXiv:2203.02470, March 2022.

\bibitem[Smi01]{Sm01}
Stanislav Smirnov.
\newblock Critical percolation in the plane: conformal invariance, {C}ardy's formula, scaling limits.
\newblock {\em C. R. Acad. Sci. Paris S\'{e}r. I Math.}, 333(3):239--244, 2001.

\bibitem[Smi10]{Sm10}
Stanislav Smirnov.
\newblock Conformal invariance in random cluster models. {I}. {H}olomorphic fermions in the {I}sing model.
\newblock {\em Ann. of Math. (2)}, 172(2):1435--1467, 2010.

\bibitem[SS75]{SS75}
{A. S.} Skal and {B. I.} Shklovskii.
\newblock Topology of an infinite cluster in the theory of percolation and its relationship to the theory of hopping conduction.
\newblock {\em Sov. Phys. Semicond.}, 8:1029--1032, 1975.

\bibitem[SSW09]{SSW09}
Oded Schramm, Scott Sheffield, and David~B. Wilson.
\newblock Conformal radii for conformal loop ensembles.
\newblock {\em Comm. Math. Phys.}, 288(1):43--53, 2009.

\bibitem[Sta77]{St77}
H.~E. Stanley.
\newblock Cluster shapes at the percolation threshold: an effective cluster dimensionality and its connection with critical-point exponents.
\newblock {\em Journal of Physics A: Mathematical and General}, 10(11):L211, 1977.

\bibitem[SW01]{SW01}
Stanislav Smirnov and Wendelin Werner.
\newblock Critical exponents for two-dimensional percolation.
\newblock {\em Math. Res. Lett.}, 8(5-6):729--744, 2001.

\bibitem[SW05]{MR2188260}
Oded Schramm and David~B. Wilson.
\newblock S{LE} coordinate changes.
\newblock {\em New York J. Math.}, 11:659--669 (electronic), 2005.

\bibitem[SW12]{MR2979861}
Scott Sheffield and Wendelin Werner.
\newblock Conformal loop ensembles: the {M}arkovian characterization and the loop-soup construction.
\newblock {\em Ann. of Math. (2)}, 176(3):1827--1917, 2012.

\bibitem[SY23]{SY23}
Xin Sun and Pu~Yu.
\newblock Sle partition functions via conformal welding of random surfaces.
\newblock {\em arXiv preprint arXiv:2309.05177}, 2023.

\bibitem[vdBK85]{BK85}
J.~van~den Berg and H.~Kesten.
\newblock Inequalities with applications to percolation and reliability.
\newblock {\em J. Appl. Probab.}, 22(3):556--569, 1985.

\bibitem[Wid41]{MR0005923}
David~Vernon Widder.
\newblock {\em The {L}aplace {T}ransform}, volume vol. 6.
\newblock Princeton University Press, Princeton, N. J., 1941.

\bibitem[Wu23]{Wu23}
Da~Wu.
\newblock The {SLE} bubble measure via conformal welding of quantum disks.
\newblock {\em arXiv preprint arXiv:2302.13992}, 2023.

\bibitem[Zha08]{Z08}
Dapeng Zhan.
\newblock Duality of chordal {SLE}.
\newblock {\em Invent. Math.}, 174(2):309--353, 2008.

\bibitem[Zha22]{Z22}
Dapeng Zhan.
\newblock {SLE}$_\kappa(\rho)$ bubble measures.
\newblock {\em arXiv preprint arXiv:2206.04481}, 2022.

\end{thebibliography}

\end{document}